\newtheorem{thm}{Theorem}[subsection]
\crefname{thm}{theorem}{theorems}
\Crefname{thm}{Theorem}{Theorems}
\newaliascnt{lemma}{thm}
\newtheorem{lemma}[lemma]{Lemma}
\newaliascnt{prop}{thm}
\newtheorem{prop}[prop]{Proposition}
\crefname{prop}{proposition}{propositions}
\Crefname{prop}{Proposition}{Propositions}
\newaliascnt{cor}{thm}
\newtheorem{cor}[cor]{Corollary}
\crefname{cor}{corollary}{corollaries}
\Crefname{cor}{Corollary}{Corollaries}
\theoremstyle{definition}
\newaliascnt{rem}{thm}
\newtheorem{rem}[rem]{Remark}
\crefname{rem}{remark}{remarks}
\Crefname{rem}{Remark}{Remarks}
\newaliascnt{example}{thm}
\newtheorem{example}[example]{Example}
\newaliascnt{definition}{thm}
\newtheorem{definition}[definition]{Definition}
\newaliascnt{war}{thm}
\crefname{war}{warning}{warnings}
\Crefname{war}{Warning}{Warnings}
\newaliascnt{block}{thm}
\newtheorem{block}[block]{}
\crefname{block}{}{}
\Crefname{block}{}{}
\newaliascnt{conv}{thm}
\newtheorem{conv}[conv]{Convention}
\crefname{conv}{convention}{conventions}
\Crefname{conv}{Convention}{Conventions}
\numberwithin{equation}{section}
\crefname{subsection}{subsection}{subsections}
\Crefname{subsection}{Subsection}{Subsections}
\newcommand{\C}{\mathbb{C}}
\newcommand{\CP}{\mathbb{CP}}
\newcommand{\N}{\mathbb{N}}
\newcommand{\Z}{\mathbb{Z}}
\newcommand{\R}{\mathbb{R}}
\newcommand{\Q}{\mathbb{Q}}
\newcommand{\V}{\mathcal{V}}
\newcommand{\A}{\mathcal{A}}
\newcommand{\E}{\mathcal{E}}
\newcommand{\Nd}{\mathcal{N}}
\newcommand{\F}{\mathcal{F}}
\renewcommand{\S}{\mathcal{S}}
\newcommand{\X}{\tilde X}
\newcommand{\Stop}{\mathcal{S}_{\mathrm{top}}}
\newcommand{\Zmin}{Z_{\mathrm{min}}}
\newcommand{\Fnb}{F^{\mathrm{nb}}}
\newcommand{\Fcn}{F^{\mathrm{cn}}}
\newcommand{\bFcn}{F^{\mathrm{cn}}}
\newcommand{\Fcnm}{F^{\mathrm{cn}-}}
\renewcommand{\O}{\mathcal{O}}
\newcommand{\T}{\mathcal{T}}
\newcommand{\rk}{\mathop{\mathrm{rk}}}
\newcommand{\eu}{\mathop{\mathrm{eu}}}
\newcommand{\Vol}{\mathop{\rm Vol}\nolimits}
\newcommand{\convx}{\mathop{\rm conv}\nolimits}
\newcommand{\Gr}{\mathop{\rm Gr}\nolimits}
\newcommand{\supp}{\mathop{\rm supp}\nolimits}
\newcommand{\Hom}{\mathop{\rm Hom}\nolimits}
\newcommand{\wt}{\mathop{\rm wt}\nolimits}
\renewcommand{\div}{\mathop{\rm div}\nolimits}
\renewcommand{\mod}{\mathop{\rm mod}\nolimits}
\newcommand{\Sp}{\mathop{\rm Sp}\nolimits}
\newcommand{\pc}{\mathop{\rm pc}\nolimits}
\newcommand{\id}{\mathop{\rm id}\nolimits}
\renewcommand{\Re}{\mathop{\rm Re}\nolimits}
\newcommand{\scan}{\mathop{\sigma_{\rm can}}\nolimits}
\newcommand{\bscan}{\mathop{\bar\sigma_{\rm can}}\nolimits}
\newcommand{\spinc}{{\rm spin}^{\rm c}}
\newcommand{\Spin}{{\rm Spin}}
\newcommand{\Spinc}{{\rm Spin}^{\rm c}}
\newcommand{\BSpinc}{{\rm BSpin}^{\rm c}}
\newcommand{\KS}{\mathop{\rm KS}\nolimits}
\newcommand{\SO}{\rm SO}
\newcommand{\BSO}{\rm BSO}
\newcommand{\U}{{\rm U}}
\newcommand{\BU}{\rm BU}
\newcommand{\swun}{\mathop{\rm \bf sw}\nolimits}
\newcommand{\sw}{\mathop{\rm \bf sw}\nolimits^0}
\newcommand{\SW}{\mathop{\rm \bf SW}\nolimits^0}
\newcommand{\set}[2]{\left\{ #1 \,\middle\vert\, #2 \right\}}
\newcommand{\gen}[2]{\left\langle #1 \middle| #2 \right\rangle}
\renewcommand{\epsilon}{\varepsilon}
\newcounter{dummy}
\renewcommand{\thedummy}{\roman{dummy}}
\newcommand{\fa}[2]{\forall #1 :\, #2}
\newcommand{\mynd}[4]
{
    \begin{figure}[#2]\begin{center}
        \ifpdf
            \input{#1.pdf_t}
        \else
            \input{#1.pstex_t}
        \fi
		\ifthenelse{\equal{#3}{}}
        {
		}
		{
            \caption{#3\label{#4}}
        }
    \end{center}
    \end{figure}
}
\author{Baldur Sigur\dh sson}
\title{
The geometric genus and Seiberg--Witten invariant of Newton nondegenerate
surface singularities}
\date{\today}
\begin{document}
\thispagestyle{empty}
\setlength{\parindent}{0cm}
\begin{center}
{\huge \bf
\begin{spacing}{1}
The geometric genus and  \\
Seiberg--Witten invariant\\
of Newton nondegenerate  \\
surface singularities    \\
\end{spacing}}

\vspace{.5cm}

{\large
PhD thesis\\
Baldur Sigur\dh sson}
\end{center}

\vspace{1.8cm}

\mynd{fp}{h}{}{fig:fp}

\begin{figure}[b]
Adviser: N\'emethi Andr\'as

\vspace{.1cm}
Submitted to
Central European University\\
Department of Mathematics and its Applications\\
October 2015
\end{figure}
\setlength{\parindent}{15pt}

\newpage
\thispagestyle{empty}
\phantom{asdf}
\newpage

\thispagestyle{empty}
\begin{abstract}
Given a normal surface singularity $(X,0)$,
its link, $M$ is a closed differentiable
three dimensional manifold which carries much analytic information.
For example, a germ of a normal space is smooth if (and only if)
its link is the three sphere $S^3$ \cite{Mumford}
(it is even sufficient to assume that $\pi_1(M) = 1$).
The geometric genus $p_g$ is an analytic invariant of $(X,0)$
which, in general, cannot be recovered from the link.
However, whether $p_g = 0$ can be determined from the link \cite{Artin_iso}.
The same holds for the statement $p_g = 1$, assuming that $(X,0)$ is
Gorenstein \cite{Lauf_minell}.
It is an interesting question to ask whether, under suitable
analytic and topological conditions, the geometric genus (or other analytic
invariants) can be recovered from the link.
The Casson invariant conjecture \cite{Neu_Wa_Ca}
predicts that $p_g$ can be identified
using the Casson invariant in the case when
$(X,0)$ is a complete intersection and $M$ has
trivial first homology with integral coefficients
(the original statement identifies the
signature of a Milnor fiber rather then $p_g$, but in this case these
are equivalent data \cite{Laufer_mu,Wahl_smooth}).
The Seiberg--Witten invariant conjecture predicts that the
geometric genus of a Gorenstein
singularity, whose link has trivial first homology with
rational coefficients, can be calculated as
a normalized Seiberg--Witten invariant of the link.
The first conjecture is still open, but counterexamples
have been found for the second one.
We prove here the Seiberg--Witten invariant conjecture for hypersurface
singularities given by a function with Newton nondegenerate principal part.
We provide a theory of computation sequences and of the way they bound
the geometric genus.
Newton nondegenerate singularities can be resolved explicitly
by Oka's algorithm, and we exploit
the combinatorial interplay between the resolution graph and the
Newton diagram to show that in each step of the computation sequence
we construct, the given bound is sharp. Our method recovers
the geometric genus of $(X,0)$ explicitly from the link, assuming that 
$(X,0)$ is indeed Newton nondegenerate with a rational homology sphere link.
Assuming some additional information about the Newton diagram, we recover
part of the spectrum, as well as the Poincar\'e series associated with
the Newton filtration.
Finally, we show that the normalized Seiberg--Witten invariant associated
with the canonical $\spinc$ structure on the link coincides with
our identification of the geometric genus.
\end{abstract}

\newpage

\tableofcontents
\listoffigures
\vspace{0.5cm}
\begin{block}
The figure on the front page shows the Newton diagram of the singularity
given by the equation
$x_1^4 + x_1^3 x_2^2 + x_2^{10} + x_1^2 x_3^3 + x_2^3 x_3^4 + x_3^8 = 0$.
The gray polygon is $\Fcn_i$ (see \cref{def:cone_face})
for a certain $i$ in case \ref{rt:Newton} (see \cref{def:rt}).
\end{block}
\newpage

\section{Introduction}

This text was written in 2015, in partial fulfillment of the requirements
for the degree of doctor of philosophy in mathematics at Central European
University in Budapest, under the supervision of N\'emethi Andr\'as.

\subsection{Content}

In \cref{s:general} we recall some results on two dimensional singularities
and fix notation. These include
a formula for the geometric genus in terms of the Poincar\'e series
and a similar formula for the normalized Seiberg--Witten invariant of the link
in terms of the zeta function,
a general theory of computation sequences,
the polynomial part and periodic constant of a power series in one variable,
a short review of the spectrum of hypersurface singularities,
as well as a result of Saito on part of the spectrum.
In the last subsection we give a detailed presentation of our results
as well as an outline of the proofs.

In \cref{s:newton_diag} we recall the definition of Newton nondegeneracy
for a hypersurface singularity, and the construction of its Newton diagram.
We recall Oka's algorithm
for isloated singualarities of surfaces in $\C^3$
which provides
the graph of a resolution of the singularity from the Newton diagram and
discuss conditions of minimality and convenience.
Next we recall the Newton filtration and its associated Poincar\'e series.
In the last section we recall
Braun and N\'emethi's classification of Newton diagrams giving rise
to rational homology sphere links
which is crucial to the proof in \cref{s:SW}.

In all the following sections, we will assume that $(X,0)$ is a
hypersurface singularity, given by a function with Newton nondegenerate
principal part, with a rational homology sphere link. Furthermore,
$G$ is the resolution graph produced by Oka's algorithm from the
Newton diagram of this function.

In \cref{s:affine} we fix some notation regarding polygons in two dimensional
real affine space, and give a result on counting integral points in such
polygons.

In \cref{s:seq}, we construct three computation sequences on $G$
and prove a formula
which says that the intersection numbers along these sequences count the
integral points under the Newton diagram, or in the positive octant of $\R^3$.

In \cref{s:pg} we apply the formula from the previous section to prove that
the computation sequences constructed calculate the geometric genus, as well
as part of the spectrum and the Poincar\'e series associated with the Newton
filtration.  In particular, this gives a simple topological
identification of the geometric genus for two dimensional Newton nondegenerate
hypersurface singularities.

In \cref{s:SW}, we prove that one of the computation sequences constructed
in \cref{s:seq} calculates the normalized Seiberg--Witten invariant
for the canonical $\spinc$ structure on the link. As a corollary, we
prove the Seiberg--Witten invariant conjecture for $(X,0)$.

\subsection{Acknowledgements}

I would like to thank my adviser, N\'emethi Andr\'as, for his great
support and encouragement, and the many things he has taught me.
I would also like to thank Central European University and
Alfr\'ed R\'enyi Institute of Mathematics, particularly Stipsicz Andr\'as,
for providing me with the opportunity to stay in Budapest to study
mathematics.
I would also like to thank Patrick Popescu-Pampu for his numerous helpful
remarks, which I believe improved the text considerably.
Finally, I would like to thank my colleagues, friends and
family, whose moral support has been indispensable to my work.

\subsection{Notation}

The
\emph{content} \index{content}
of an integral vector $a\in\Z^N$ is the greatest
common divisor of its coordinates.
A \emph{primitive} vector is a vector
whose content is $1$.
If $p,q\in\Z^N$, then we say that
the segment $[p,q]$ is primitive if $q-p$ is a primitive vector.
If we consider $\Z^N$ as an affine space, and
$\ell:\Z^N\to\Z$ is an affine function, then its \emph{content}
is the index of its image as a coset in $\Z$.
Equivalently, the content $c$ of $\ell$ is the largest $c\in\Z$
for which there exists an
affine function $\tilde \ell:\Z^n\to\Z$ and a constant $b\in\Z$ so that
$\ell = c\tilde\ell + b$. An affine function is \emph{primitive} if its content
is $1$.

\newpage
\section{General theory and statement of results} \label{s:general}

In this section we will recall some facts about 
singularities and fix some notation. We will always assume that
$(X,0)$ \nomenclature[X]{$(X,0)$}{Germ of a singular space}
is a germ of a normal complex surface singularity, embedded in some
$(\C^N,0)$. Furthermore, when choosing a representative $X$ of the germ $(X,0)$,
we assume $X$ to be a contractible Stein space given as the intersection
of a closed analytic set and a suitably small ball around the origin, and
that $X$ is smooth outside the origin.

\subsection{The link}
\index{link}

In this section we denote by $S_r^{d-1}\subset\R^d$ the sphere with radius
$r$ around the origin in $\R^d$, by $B_r^d \subset\R^d$ the ball
with radius $r$ and by $\bar B_r^d$ its closure.
For the definition of
\emph{plumbing graphs}, \index{plumbing graph}
we refer to
\cite{Neu_plumb,Mumford,Nem_Szil,P-P_cfrac}.
Recall that each vertex $v$ of a plumbing graph is labelled by two integers,
the
\emph{selfintersection number} \index{selfintersection number}
$-b_v$ \nomenclature[b]{$-b_v$}{Selfintersection number}
and the \emph{genus} $g_v$.
Furthermore, denoting the vertex set of the graph by
$\V$, \nomenclature[V]{$\V$}{Vertex set of a resolution or plumbing graph}
then there
is an associated $|\V|\times |\V|$
\emph{intersection matrix} \index{intersection matrix}
$I$ \nomenclature[I]{$I$}{Intersection matrix}
with $I_{v,v} = -b_v$ and $I_{v,w}$ the number of edges between $v$ and $w$
if $v\neq w$.

\begin{definition}\label{def:link}
Let $(X,0)$ be a germ of an isolated surface singularity.
Its 
\emph{link} \index{link}
is the three dimensional manifold
$M = X\cap S^{2N-1}_r$ \nomenclature[M]{$M$}{Link of a singularity}
where
we assume given some embedding $(X,0) \to (\C^N,0)$ and the radius
$r>0$ is sufficiently
small. As a differentiable manifold, $M$ does not depend on the embedding 
$(X,0) \hookrightarrow (\C^N,0)$, or $r$ (see e.g. \cite{Looijenga}).
\end{definition}

The topology (or embedded topology) of a singularity is completely encoded
in its link (or the embedding $M\hookrightarrow S^{2N-1}_r$ of the link). 

\begin{prop}[\cite{Milnor_hyp,Looijenga}]
Let $(X,0)$ be a singularity embedded into $(\C^N,0)$ for some $N>0$
and let $r>0$ be small enough. Then the pair
$(\bar B_r^{2N}, X\cap \bar B_r^{2N})$ is homeomorphic to the cone
over the pair $(S^{2N-1},M)$. \qed
\end{prop}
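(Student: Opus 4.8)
The plan is to realise the homeomorphism as the flow of a ``radial'' vector field on $X$, in the spirit of Milnor's treatment of the conical structure. Identify $\C^N$ with $\R^{2N}$, let $\rho(z)=|z|^2$, and fix a representative $X$ as the intersection of a closed analytic set with a small closed ball; by hypothesis $X\setminus\{0\}$ is a smooth real-analytic submanifold of $\R^{2N}\setminus\{0\}$, on which $\rho$ restricts to a real-analytic function.

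\emph{Step 1 (no critical points near the origin).} First I would show that after shrinking there is $r_0>0$ such that $\rho|_{X\setminus\{0\}}$ has no critical point in $X\cap(\bar B_{r_0}^{2N}\setminus\{0\})$; equivalently, every sphere $S_r^{2N-1}$ with $0<r\le r_0$ meets $X\setminus\{0\}$ transversally. The set $\Sigma$ of such critical points is semianalytic, so if $0\in\overline\Sigma$ the curve selection lemma yields a real-analytic arc $\gamma\colon[0,\delta)\to\R^{2N}$ with $\gamma(0)=0$ and $\gamma(t)\in\Sigma$ for $t>0$. Then $t\mapsto\rho(\gamma(t))$ is a nonconstant real-analytic function whose derivative $\langle\nabla\rho(\gamma(t)),\gamma'(t)\rangle$ vanishes for $t>0$, since $\gamma'(t)$ is tangent to $X$ while $\nabla\rho(\gamma(t))$ is normal to $X$ at a critical point --- a contradiction.

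\emph{Step 2 (vector field and integration).} On $\R^{2N}\setminus\{0\}$ I would build a smooth vector field $w$ with $d\rho(w)\equiv 1$ that is tangent to $X\setminus\{0\}$ along $X\setminus\{0\}$: on a tubular neighbourhood of $X\cap(\bar B_{r_0}^{2N}\setminus\{0\})$ use a reparametrisation of the gradient of $\rho|_X$ (legitimate by Step 1), on the complement use the Euler field $z/(2|z|^2)$, and glue by a partition of unity that is identically $1$ near $X$, so that both the affine condition $d\rho(w)=1$ and the tangency survive. The forward flow of $-w$ then decreases $\rho$ at unit rate, stays in $\bar B_{r_0}^{2N}$, preserves $X$, and (since $\rho^{-1}(0)=\{0\}$) has flow lines converging monotonically to $0$. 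This produces a product structure $\bigl(\bar B_{r_0}^{2N}\setminus\{0\},\,X\cap(\bar B_{r_0}^{2N}\setminus\{0\})\bigr)\cong\bigl(S^{2N-1}\times(0,1],\,M\times(0,1]\bigr)$ compatible with $\rho$; extending it by sending the cone point to $0$ gives a continuous bijection from the cone on $(S^{2N-1},M)$, hence a homeomorphism by compactness once continuity at the vertex is checked. Since $r_0$ may be taken arbitrarily small and any smaller sphere already has the transversality of Step 1, the statement follows for all sufficiently small $r$.

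The main obstacle is \emph{Step 1}: near the singular point the critical points of the distance function cannot be controlled by a naive Sard or transversality argument, so the proof must pass through the curve selection lemma (equivalently, a {\L}ojasiewicz gradient inequality). The rest is routine differential topology, the only delicate point being continuity of the constructed map at the cone vertex; alternatively one could simply invoke the general local conical structure theorem for analytic sets of \cite{Milnor_hyp,Looijenga}.
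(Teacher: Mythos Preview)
The paper does not supply its own proof of this proposition: it is stated with a terminal \qed\ and attributed to \cite{Milnor_hyp,Looijenga}. Your proposal is precisely the classical argument found in those references --- curve selection to obtain transversality of small spheres with $X\setminus\{0\}$, followed by integration of a normalised radial vector field tangent to $X$ --- and it is correct; there is nothing to compare against beyond the citation.
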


\begin{block}
In \cite{Mumford}, Mumford proved that the germ of a normal two dimensional
space is smooth if and only if the link is simply connected. He also showed
that the link can always be described by a
\emph{plumbing graph}. \index{plumbing graph}
These graphs
were studied by Neumann in \cite{Neu_plumb} where he gave a calculus for
determining whether two graphs yield the same manifold. Furthermore,
every graph is equivalent to a unique
minimal graph \index{minimal graph}
which is easily determined
from the original graph.
A plumbing graph for the link may be obtained from a resolution as described
in \cref{ss:res}.
\end{block}

\begin{prop}[Grauert \cite{Gr_exzept}] \label{prop:exzept}
Let $M$ be the three dimensional manifold obtained from the plumbing graph $G$.
Then $M$ is the link of some singularity if and only if $G$ is
connected and the associated intersection matrix is negative definite.
\qed
\end{prop}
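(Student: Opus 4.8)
The plan is to prove the two implications separately; the substantial one is the construction of a singularity from a negative definite graph, which is the content of Grauert's theorem.

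\emph{Necessity.} Suppose $M$ is the link of a normal surface singularity $(X,0)$. First I would invoke the existence of a resolution $\pi\colon \tilde X \to X$, whose exceptional fibre $E = \pi^{-1}(0) = \bigcup_v E_v$ is a compact curve configuration; a sufficiently small neighborhood of $E$ in $\tilde X$ deformation retracts onto $E$ and has boundary $M$, so the dual graph $G'$ of $E$ is a plumbing graph for $M$ and $G$ is obtained from $G'$ by Neumann's plumbing calculus. Connectedness of $E$, hence of $G'$ and of $M$, follows from normality of $(X,0)$ via Zariski's main theorem (the fibre of a resolution of a normal point is connected). Negative definiteness of the intersection matrix of $E$ is the classical Mumford--Du Val argument: since $E$ is contracted to a point, every nonzero effective divisor $D$ supported on $E$ satisfies $D^2 < 0$, which one sees by comparing $D$ with the pullback and strict transform of a generic hyperplane section through $0$, or directly from the Hodge index theorem on a projective model. (One should note that negative definiteness is not preserved by arbitrary plumbing moves, so the statement is understood as: the link admits the connected, negative definite representative furnished by any resolution.)

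\emph{Sufficiency.} Starting from a connected graph $G$ with negative definite intersection matrix, I would first realize $G$ analytically: choose smooth compact curves $E_v$ of genus $g_v$ together with holomorphic line bundles of the appropriate degrees, and plumb the total spaces of these bundles, gluing by the standard transition maps over bidiscs, to obtain a smooth complex surface $\tilde X$ containing a curve configuration $E = \bigcup_v E_v$ whose dual graph is $G$ with self-intersections $-b_v$ (alternatively, realize $E$ inside a projective surface and pass to a neighborhood). The analytic heart, and the step I expect to be the main obstacle, is Grauert's contractibility criterion: because the intersection matrix of $E$ is negative definite, one constructs on a neighborhood of $E$ a strictly plurisubharmonic function that is an exhaustion near $E$ with $E$ contained in a single level set — equivalently, $E$ has a fundamental system of strongly pseudoconvex neighborhoods — so such a neighborhood $\tilde X$ is a $1$-convex space. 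Applying Remmert reduction, the maximal compact analytic subset $E$ is collapsed: there is a normal complex space $X$ with a distinguished point $0$ and a proper map $\tilde X \to X$ that is biholomorphic over $X \setminus \{0\}$ and sends $E$ to $0$; normality at $0$ holds because $\O_{X,0}$ is the ring of holomorphic functions on a punctured neighborhood of $E$ that remain bounded, which is integrally closed.

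It then follows that $(X,0)$ is a normal surface singularity admitting a resolution with exceptional divisor $E$, so its link is the boundary of a small neighborhood of $E$ in $\tilde X$, namely the plumbed manifold $M$ associated with $G$. Finally, if $G$ were disconnected the plumbed $3$-manifold would be disconnected, whereas the link of a singularity is always connected; this shows connectedness is genuinely necessary and closes the equivalence.
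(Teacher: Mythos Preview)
The paper does not give a proof of this proposition; it is stated as a cited result of Grauert with a \qed, so there is no argument in the text to compare against. Your sketch is correct and follows the classical route: Mumford's negative definiteness and Zariski's main theorem for necessity, and Grauert's contractibility criterion (via $1$-convexity and Remmert reduction) for sufficiency, which is precisely the content of the cited reference.

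Your parenthetical caveat is worth keeping in mind: as literally phrased, the proposition speaks of ``the'' plumbing graph $G$, but a given $3$-manifold admits many plumbing representatives and not all of them are negative definite (e.g.\ $S^3$ is plumbed from a single $(+1)$-vertex). The intended reading, consistent with the rest of the paper, is that $M$ is a singularity link if and only if it admits a connected negative definite plumbing representative; equivalently, that the minimal good resolution graph has these properties. Your proof establishes exactly this.
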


\begin{prop}[Mumford \cite{Mumford}] \label{prop:H}
Let $M$ be the three dimensional
manifold obtained from the plumbing graph $G$ and
assume that the associated intersection matrix is negative definite.
Let $g = \sum_{v\in\V} g_v$ be the sum of genera of the vertices of
$G$ and define $c$ as the first Betti number of the
topological realisation of the graph $G$, that is, number of independent
loops. Then $H_1(M,\Z)$ has rank $c+2g$ and torsion the cokernel
of the linear map given by the intersection matrix.
In particular, we have $H_1(M,\Q) = 0$ if and only if $G$ is a tree
and $g_v = 0$ for all vertices $v$. \qed
\end{prop}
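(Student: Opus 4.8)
The plan is to analyze the long exact sequence of the pair $(P, M)$, where $P$ is the plumbed $4$-manifold with boundary $M$ obtained from the graph $G$ by the standard plumbing construction, using disk bundles over the surfaces indexed by the vertices. First I would recall that $P$ is simply connected in the tree case, and more generally that $H_1(P,\Z)$ is free of rank equal to $c+2g$: it is built from $|\V|$ surfaces of genera $g_v$ glued along disks according to the edges, so a Mayer--Vietoris or CW computation gives $H_1(P,\Z) \cong \Z^{2g}\oplus\Z^{c}$, the $\Z^c$ coming from the loops in the underlying graph and the $\Z^{2g}$ from the genera, while $H_2(P,\Z)\cong\Z^{|\V|}$ is generated by the zero-sections of the disk bundles, with intersection form given precisely by the matrix $I$.

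Next I would write the relevant portion of the long exact sequence of $(P,M)$ in integral homology, together with Lefschetz/Poincar\'e--Lefschetz duality $H_k(P,M;\Z)\cong H^{4-k}(P;\Z)$. The key map is $H_2(P;\Z)\to H_2(P,M;\Z)\cong H^2(P;\Z)$, which, after identifying both sides with $\Z^{|\V|}$ via the bases above and using the universal coefficient theorem, is exactly the map given by the intersection matrix $I$. Since $I$ is assumed negative definite, it is injective, so $H_2(M;\Z)\to H_2(P;\Z)$ is zero and the sequence breaks into a short exact sequence
\[
0 \to \Z^{|\V|} \xrightarrow{\,I\,} \Z^{|\V|} \to H_1(M;\Z) \to H_1(P;\Z) \to 0,
\]
using that $H_1(P,M;\Z)\cong H^3(P;\Z)$ vanishes (as $P$ deformation retracts to a $2$-complex). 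From this sequence one reads off that the torsion subgroup of $H_1(M;\Z)$ is $\mathrm{coker}(I)=\Z^{|\V|}/I\Z^{|\V|}$ (finite since $\det I\neq 0$) and that the free rank of $H_1(M;\Z)$ equals the free rank of $H_1(P;\Z)$, namely $c+2g$. The final assertion is then immediate: $H_1(M;\Q)=0$ iff $c+2g=0$ iff $c=0$ and $g=0$, i.e. $G$ is a tree and every $g_v=0$.

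The main obstacle, and the step requiring the most care, is the identification of the connecting/duality map $H_2(P;\Z)\to H_2(P,M;\Z)$ with the intersection matrix under the chosen bases. This is standard but needs the explicit CW or handle structure of the plumbing: one must check that the zero-section classes form a basis of $H_2(P;\Z)$, that $H_2(P,M;\Z)$ is dually a free module of the same rank with no contribution from the genera or loops (the $2g+c$ extra classes in $H_1(P)$ contribute to $H_1(P,M)$, not to $H_2(P,M)$, because $P$ has the homotopy type of a $2$-complex), and that the pairing $H_2(P)\times H_2(P,M)\to\Z$ computed via duality reproduces the geometric intersection numbers $I_{v,w}$. Once this linear-algebraic bookkeeping is in place the rest is formal. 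Alternatively, one could cite \cite{Neu_plumb,Mumford} directly, but the exact-sequence argument is self-contained and makes both the rank and torsion statements transparent.
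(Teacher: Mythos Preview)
Your argument is correct and is the standard approach. Note, however, that the paper does not actually give a proof of this proposition: it is stated with attribution to Mumford and closed immediately with \qed. So there is no ``paper's own proof'' to compare against. That said, the paper does implicitly use exactly your exact-sequence argument later, in \cref{rem:Lp}, where the short exact sequence $0 \to L'/L \to H_1(M,\Z) \to H_1(E) \to 0$ is extracted from the long exact sequence of the pair $(\tilde X, M)$; this is precisely the sequence you derive, with $L'/L = \operatorname{coker}(I)$ and $H_1(E) \cong \Z^{2g+c}$.

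One small wording issue: you write that the $2g+c$ classes in $H_1(P)$ ``contribute to $H_1(P,M)$, not to $H_2(P,M)$''. But you already (correctly) noted $H_1(P,M) \cong H^3(P) = 0$, so they contribute to neither. The real reason $H_2(P,M) \cong H^2(P)$ is free of rank $|\V|$ is simply that $H_1(P)$ is free, so the $\operatorname{Ext}$ term in the universal coefficient theorem vanishes. This is a minor slip in exposition and does not affect the validity of the proof.
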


\begin{definition}
A closed three dimensional manifold $M$ is called a
\emph{rational homology sphere} \index{homology sphere!rational}
(\emph{integral homology sphere}) \index{homology sphere!integral}
if $H_i(M,\Q) \cong H_i(S^3,\Q)$ ($H_i(M,\Z) \cong H_i(S^3,\Z)$).
By Poincar\'e duality, this is equivalent to
$H_1(M,\Q) = 0$ ($H_1(M,\Z) = 0$).
\end{definition}

\subsection{Resolutions of surface singularities} \label{ss:res}

\begin{definition} \label{def:res}
Let $(X,0)$ be a normal isolated singularity. A
\emph{resolution} \index{resolution}
of $X$ is a holomorphic manifold
$\X$, \nomenclature[X]{$\X$}{Resolution of $X$}
together with a proper surjective map
$\pi:\X \to X$ \nomenclature[p]{$\pi$}{Resolution $\pi:\X\to X$}
so that
$E = \pi^{-1}(0)$ \nomenclature[E]{$E$}{Exceptional divisor of a resolution}
is a divisor in $\X$ and
the induced map $\X\setminus E \to X\setminus\{0\}$ is biholomorphic.
We refer to $E$ as the
\emph{exceptional divisor} \index{exceptional divisor}
of the resolution $\pi$.
We say that $\pi$ is a
\emph{good} resolution \index{good resolution}
if $E\subset \X$ is a
\emph{normal crossing divisor}, \index{normal crossing divisor}
that is, a union of smooth submanifolds
intersecting transversally, with no triple intersections.
We will always assume this condition.
Write $E = \cup_{v\in\V} E_v$, where
$E_v$ \nomenclature[Ev]{$E_v$}{Irreducible component of exceptional divisor}
are the irreducible components
of $E$.
Denote by $g_v$ the genus of (the normalisation of) the curve $E_v$
and by $-b_v$ the Euler number of the normal bundle of $E_v$ as a
submanifold of $\X$.
\end{definition}

\begin{definition} \label{def:res_graph}
Let $\pi:(\X,E)\to(X,0)$ be a (good) resolution as above. The
\emph{resolution graph} \index{resolution graph}
$G$ \nomenclature[G]{$G$}{Resolution or plumbing graph}
associated with $\pi$ is the graph with vertex set $\V$ and
$|E_v\cap E_w|$ edges between $v$ and $w$ if $v\neq w$ and no loops.
It is decorated with the
\emph{selfintersection numbers} \index{selfintersection number}
$-b_v$ 
and
\emph{genera} $g_v$ for $v\in\V$. We denote by
$\delta_v$ \nomenclature[d]{$\delta_v$}{Degree of a vertex}
the
\emph{degree} \index{degree}
of a vertex $G$, that is, $\delta_v = \sum_{w\neq v} |E_v\cap E_w|$.
\end{definition}

\begin{prop}[Mumford \cite{Mumford}]
Let $M$ be the link of a singularity admitting a resolution with resolution
graph $G$. Then $M$ is the plumbed manifold obtained from the plumbing
graph $G$.\qed
\end{prop}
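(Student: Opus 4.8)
The plan is to identify the link $M$ with the boundary of a closed regular neighbourhood of the exceptional divisor $E$ in the resolution $\X$, and then to recognise that neighbourhood as the $4$-dimensional plumbing associated with $G$. Fix a Stein representative $X$ as in \cref{ss:res}, and for small $\epsilon>0$ put $T_\epsilon=\pi^{-1}(X\cap\bar B^{2N}_\epsilon)$. Since $\pi$ is proper with $\pi^{-1}(0)=E$, the sets $T_\epsilon$ form a fundamental system of closed neighbourhoods of $E$; a compactness and transversality argument, together with the conical structure of the Milnor pair $(\bar B^{2N}_\epsilon, X\cap\bar B^{2N}_\epsilon)$, shows that for $\epsilon$ small enough $T_\epsilon$ is a \emph{regular} neighbourhood of $E$, that is, a compact smooth $4$-manifold with boundary deformation retracting onto $E$, and that its boundary is well defined up to diffeomorphism by uniqueness of regular neighbourhoods. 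As $\pi$ restricts to a biholomorphism $\X\setminus E\to X\setminus\{0\}$ and $S^{2N-1}_\epsilon$ avoids the origin, $\partial T_\epsilon=\pi^{-1}(X\cap S^{2N-1}_\epsilon)\cong X\cap S^{2N-1}_\epsilon=M$ (\cref{def:link}).

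Next, analyse the internal structure of $T:=T_\epsilon$ using that $\pi$ is a \emph{good} resolution: $E=\bigcup_{v\in\V}E_v$ is a normal crossings divisor, so each $E_v$ is smooth of genus $g_v$, distinct components $E_v,E_w$ meet transversally in exactly $|E_v\cap E_w|$ points, and there are no triple points. Near a double point $p\in E_v\cap E_w$ pick holomorphic coordinates $(z_1,z_2)$ on $\X$ with $E_v=\{z_2=0\}$ and $E_w=\{z_1=0\}$; a neighbourhood of $p$ in $T$ is then a bidisc $P_p\cong D^2\times D^2$ meeting $E_v$ in $D^2\times\{0\}$ and $E_w$ in $\{0\}\times D^2$. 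Excising all such bidiscs from $T$ leaves, over each $E_v$, the total space of the normal disc bundle of $E_v$ in $\X$ restricted to $E_v^\circ:=E_v\setminus\bigcup_p P_p$, a compact genus-$g_v$ surface with $\delta_v$ boundary circles; by \cref{def:res} this normal bundle has Euler number $-b_v$.

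Reassembling $T$ from these disc bundles by gluing the bidiscs $P_p$ back in is precisely Neumann's plumbing operation: at $p\in E_v\cap E_w$ one $D^2$-factor of $P_p$ is a fibre of the disc bundle over $E_v$ and a base chart of the disc bundle over $E_w$, and the other factor plays the reciprocal role, with orientations as prescribed. Hence $T$ is diffeomorphic to the plumbing $4$-manifold of the decorated graph having one disc bundle of Euler number $-b_v$ and genus $g_v$ per vertex and one plumbing per edge, namely $G$ (\cref{def:res_graph}); taking boundaries gives $M\cong\partial T\cong M(G)$, the plumbed $3$-manifold of $G$. Negative definiteness of the intersection matrix (\cref{prop:exzept}) holds automatically here but is not needed for this identification.

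I expect the first step to be the main obstacle: rigorously establishing that $T_\epsilon$ is a regular neighbourhood of $E$ for small $\epsilon$ and that $\partial T_\epsilon$ is independent of $\epsilon$ up to diffeomorphism. This rests on properness of $\pi$, on the conical structure of the Milnor pair, and on the standard but nontrivial fact that a compact normal crossings divisor in a complex surface admits an essentially unique regular neighbourhood. The remaining steps amount to unwinding the local holomorphic normal form near the double points and matching it against the combinatorial definition of plumbing, for which we refer to \cite{Neu_plumb,Nem_Szil,P-P_cfrac}.
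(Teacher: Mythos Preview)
The paper does not prove this proposition; it states it with a \qed and a citation to Mumford \cite{Mumford}. Your sketch is precisely the standard argument one finds in Mumford's original paper and in later expositions (e.g.\ \cite{Nem_Szil,P-P_cfrac}): identify the link with the boundary of a regular neighbourhood of the exceptional divisor, then use the normal-crossings hypothesis to decompose that neighbourhood into disc bundles glued via the plumbing operation. Your assessment of where the genuine work lies---the regular-neighbourhood identification and its uniqueness---is accurate.
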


\begin{prop}[Zariski's main theorem]
If $G$ is the graph of a resolution of a normal singularity, then
$G$ is connected.
\end{prop}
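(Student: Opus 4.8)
The plan is to deduce the connectedness of $G$ from the connectedness of the exceptional divisor $E$, and to obtain the latter from the normality of $(X,0)$ via Zariski's connectedness theorem (the ``main theorem'' in its cohomological form).

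First I would reduce the graph-theoretic statement to a topological one. Suppose $G$ were disconnected. Then we could partition the vertex set as $\V = \V_1 \sqcup \V_2$ with both parts nonempty and no edge joining $\V_1$ to $\V_2$; by \cref{def:res_graph} this means $E_v \cap E_w = \emptyset$ whenever $v \in \V_1$ and $w \in \V_2$. Hence $E = \bigl(\bigcup_{v \in \V_1} E_v\bigr) \sqcup \bigl(\bigcup_{w \in \V_2} E_w\bigr)$ is a disjoint union of two nonempty closed analytic subsets, so $E = \pi^{-1}(0)$ would be disconnected. Thus it suffices to prove that $E$ is connected.

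For that step I would use normality. Since $\pi : \X \to X$ is proper and restricts to a biholomorphism over $X \setminus \{0\}$, and $X$ is normal, one has $\pi_* \O_{\X} = \O_X$ (a proper modification of a normal space does not change the structure sheaf). Zariski's main theorem — equivalently, Stein factorization together with the theorem on formal functions — then forces every fiber of $\pi$ to be connected; in particular $E = \pi^{-1}(0)$ is connected, and by the reduction above $G$ is connected. A more geometric alternative, closer to the setting fixed at the start of the section: the chosen representative $X$ is a contractible Stein space, indeed by the cone proposition above it is the cone over $(S^{2N-1},M)$, so $X \setminus \{0\}$ is connected; since $\pi$ is a biholomorphism over $X \setminus \{0\}$ and $E$ has real codimension two, $\X \setminus E$ is connected and dense, hence $\X$ is connected, and a suitable representative of $\X$ deformation retracts onto $E$ (via a plumbing collar of $E$), so $E$ is connected.

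The genuine content — and the only place normality (resp., the cone structure) is really used — is the implication ``$\pi_*\O_{\X} = \O_X$ $\Rightarrow$ fibers of $\pi$ connected''; I expect this to be the main obstacle, everything else being formal. In the topological variant, the analogous nontrivial input is the existence of a deformation retraction of a representative of $\X$ onto $E$.
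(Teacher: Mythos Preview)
Your proposal is correct and matches the paper's approach: the paper's proof is a one-line citation to Hartshorne, Corollary III.11.4, which is precisely the Zariski connectedness statement you invoke (fibers of a proper birational morphism to a normal variety are connected), and the reduction from connectedness of $G$ to connectedness of $E$ is immediate from \cref{def:res_graph}. You simply spell out in detail what the paper leaves to the reference.
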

\begin{proof}
This follows from the fact that $E$ is a connected variety,
see e.g. \cite{Hartshorne_AG}, Corollary 11.4.
\end{proof}

\begin{block}
Given an embedding of $(X,0)$ into some smooth space $(\C^N,0)$, we may
take as a representative for the germ an intersection with a \emph{closed}
ball of sufficiently small radius. Then, the
resolution $\X$ is given as a manifold with boundary and
$\partial \X = M$. In particular, one can consider the perfect pairing
$H_2(\X,\Z)\otimes H_2(\X,M,\Z) \to \Z$ which induces a symmetric form
$(\cdot,\cdot):H_2(\X,\Z)^{\otimes2}\to\Z$.

The exceptional divisor $E$ is a strong homotopy retract of $\X$. In
particular, $H_2(\X,\Z) = \Z\gen{E_v}{v\in\V}$ and 
$H_2(\X,M,\Z) = \Hom(H_2(\X,\Z),\Z)$ is free.
If $v\neq w$, then $(E_v, E_w) = |E_v\cap E_w|$. Further,
$E_v^2 = (E_v, E_v)$ is the Euler number of the normal bundle
of the submanifold $E_v \subset \X$.
The intersection form is negative definite, in particular,
nondegenerate \cite{Mumford}. This means that the natural map
$H_2(\X,\Z) \to H_2(\X,M,\Z)$ may be viewed as an inclusion with
finite cokernel.
In particular, we may view $H_2(\X,M,\Z)$ as a lattice in
$H_2(\X,\Z)\otimes\Q$, containing $H_2(\X,\Z)$ with finite index.
\end{block}

\begin{definition} \label{def:intersection_form}
Let
$L = H_2(\X,\Z) = \Z\gen{E_v}{v\in\V}$
\nomenclature[L]{$L$}{Intersection lattice}
and
$L' = H_2(\X,M,\Z) = \Hom(L,\Z)$.
\nomenclature[Lp]{$L'$}{Dual intersection lattice}
We refer to these as the \emph{lattice} and the \emph{dual lattice}
associated with the resolution $\pi$. They are endowed with a partial
order by setting $l_1\geq l_2$ if and only if $l_1 - l_2$ is an effective
divisor.
The form
$(\cdot,\cdot):L\otimes L\to\Z$
\nomenclature{$(\cdot,\cdot)$}{Intersection form}
defined above is the
\emph{intersection form}. \index{intersection form}
We extend the intersection form to
$L_\Q = L\otimes\Q$ and $L_\R=L\otimes\R$
by linearity. Elements of $L$ (or $L_\Q$, $L_\R$)
will be referred to as \emph{cycles} with integral (rational, real)
coefficients. We set
$H = L'/L$. \nomenclature[H]{$H$}{Cokernel of the intersection form}
The intersection form is encoded in the
\emph{intersection matrix} \index{intersection matrix}
$I = ( (E_{v}, E_{w}) )_{v,w\in \V}$.
\nomenclature[Ip]{$I$}{Intersection matrix}
This matrix is invertible over $\Q$, and we write
$I^{-1} = (I_{v,w}^{-1})$.
\nomenclature[Iivw]{$I_{v,w}^{-1}$}{Entries of the inverted intersection
matrix}
\end{definition}

\begin{rem} \label{rem:Lp}
By the above discussion, it is clear that we have an identification
$L' = \set{l\in L_\Q}{\fa{l'\in L}{(l,l')\in\Z}}$.
Furthermore, one obtains the short exact sequence
\[
  0 \to L'/L \to H_1(M,\Z) \to H_1(E) \to 0
\]
from the long exact sequence of the pair $(\X,M)$,
which gives a canonical isomorphism between $H$ and the torsion submodule
of $H_1(M)$.
\end{rem}

\begin{definition} \label{def:Z_K}
The
\emph{canonical cycle} \index{canonical cycle}
$K\in L'$ \nomenclature[K]{$K$}{Canonical cycle}
is the unique cycle satisfying the
\emph{adjunction equalities} \index{adjunction equalities}
$(K,E_v) = -E_v^2 + 2g_v - 2$.
We define the
\emph{anticanonical cycle} \index{anticanonical cycle}
as
$Z_K = -K$. \nomenclature[ZK]{$Z_K$}{Anticanonical cycle}
We say that $G$ is
\emph{numerically Gorenstein} \index{numerically Gorenstein}
if $K \in L$.
\end{definition}

\begin{rem}
\begin{list}{(\thedummy)}
{
  \usecounter{dummy}
  \setlength{\leftmargin}{0pt}
  \setlength{\itemsep}{0pt}
  \setlength{\itemindent}{4.5pt}
}

\item
The nondegeneracy of the intersection form guarantees the existence of
$Z_K$ as a cycle with rational coefficients. By \cref{rem:Lp} we have
$Z_K\in L'$. For
hypersurface \index{hypersurface}
singularities (more generally, for
Gorenstein \index{Gorenstein}
singularities) we have, in fact, $Z_K\in L$.
Indeed, $K$ is numerically equivalent to the divisor defined by
any meromorphic differential form on $\X$. In the case of a hypersurface
singularity (or, more generally, a Gorenstein singularity),
there exists a meromorphic $2$-form on $\X$ whose divisor
is exactly $K$. For details, see e.g. \cite{Durfee_smoothings,Nemethi_FL}.

\item
This definition of the canonical cycle assumes that all components $E_v$
are smooth. If this is not the case, the correct formula also contains
a term counting the ``number of nodes and cusps'' on $E_v$, see e.g.
\cite{Nemethi_FL}.

\item
An isolated singularity $(X,0)$ is said to be
\emph{Gorenstein} \index{Gorenstein}
if the canonical line bundle $\Omega^2_{X\setminus\{0\}}$ in
a punctured neighbourhood around $0$ is trivial. Gorenstein singularities
are numerically Gorenstein \index{numerically Gorenstein}
\cite{Durfee_smoothings,Nemethi_FL}
and
hypersurfaces \index{hypersurface}
(more generally,
complete intersections) \index{complete intersection}
are Gorenstein
\cite{Nemethi_FL}.
Similarly, $(X,0)$ is said to be
$\Q$-Gorenstein \index{Gorenstein!$\Q$}
if some tensor power of $\Omega^2_{X\setminus\{0\}}$ is trivial.
\end{list}
\end{rem}

\begin{definition} \label{def:dual_c}
The
\emph{dual cycles} \index{dual cycles}
$E_v^* \in L'$, $v\in\V$, \nomenclature[Es]{$E^*_v$}{Dual cycles}
are defined by the linear
equations
$(E_v^*,E_w) = -\delta_{v,w}$, where $\delta_{v,w}$ is the Kronecker delta.
These exist and are well defined since the intersection matrix $I$ is
invertible over $\Q$.
In fact, we have $E_v^* = \sum_{w\in\V} -I^{-1}_{v,w} E_w$.
It follows that the family $(E_v^*)_{v\in\V}$ is a basis of
$L'$. In particular, we have $E_v^*\in L$ for all $v\in\V$ if and only if
$M$ is an integral homology sphere.
\index{homology sphere!integral}
\end{definition}

\begin{definition}
For a cycle $Z = \sum_{v\in\V} m_v E_v \in L$, write
$m_v(Z) = m_v$. \nomenclature[m]{$m_v(Z)$}{Coefficient of a cycle $Z$}
\end{definition}

\begin{lemma} \label{lem:Es_pos}
The entries $m_w^{\phantom*}(E_v^*) = -I_{v,w}^{-1}$ are positive.
\end{lemma}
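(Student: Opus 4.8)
The plan is to show that $-I^{-1}$ has strictly positive entries by exploiting the negative definiteness of the intersection matrix $I$ together with the fact that the resolution graph $G$ is connected (\cref{prop:exzept} and Zariski's main theorem). Writing $N = -I$, the matrix $N$ is symmetric, positive definite, has nonnegative off-diagonal entries (the number of edges between distinct vertices, which is $\geq 0$), and is \emph{irreducible} precisely because $G$ is connected. The claim is equivalent to $N^{-1} > 0$ entrywise, which is a standard fact about irreducible Stieltjes-type matrices; the task is to give a clean self-contained argument.

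First I would pass from $N$ to a matrix with a spectral gap: since $N$ is positive definite, there is $t > 0$ with $t > \rho(N) \geq \max_v N_{v,v}$, and I would set $A = tI_{\mathrm{id}} - N$ (where $I_{\mathrm{id}}$ denotes the identity matrix). Then $A$ has all entries nonnegative: the off-diagonal entries of $A$ are the nonnegative edge-counts, and the diagonal entries $t - N_{v,v}$ are positive by the choice of $t$. Moreover, connectedness of $G$ implies $A$ is irreducible: for any two vertices $v, w$ there is a path in $G$, and following that path shows some power $A^k$ has a positive $(v,w)$-entry (a product of positive diagonal entries and positive edge-entries along the path). Hence $\sum_{k \geq 0} A^k$, restricted to entries, has every entry strictly positive — for the diagonal from the $k=0$ term, for off-diagonal from the $A^k$ with $k$ the path length.

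Next I would relate this to $N^{-1}$. Because $N$ is positive definite with smallest eigenvalue $\lambda_{\min} > 0$, all eigenvalues of $A = tI_{\mathrm{id}} - N$ lie in the interval $[t - \rho(N), t - \lambda_{\min}] \subset [0, t)$, so the spectral radius of $t^{-1}A$ is $< 1$ and the Neumann series converges:
\[
  N^{-1} = (tI_{\mathrm{id}} - A)^{-1} = t^{-1}\sum_{k \geq 0} \bigl(t^{-1}A\bigr)^k = \sum_{k \geq 0} t^{-(k+1)} A^k.
\]
Every term on the right has nonnegative entries, and by the irreducibility argument above, for each pair $(v,w)$ at least one term has a strictly positive $(v,w)$-entry. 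Therefore $N^{-1} = -I^{-1}$ has all entries strictly positive, i.e. $m_w(E_v^*) = -I_{v,w}^{-1} > 0$ for all $v, w \in \V$.

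The main obstacle is the irreducibility step — making precise that connectedness of the plumbing graph forces $A^k$ to have a positive $(v,w)$-entry for $k$ equal to a path length. This is purely combinatorial (it is the statement that the adjacency-type matrix of a connected graph with positive loops is ``primitive enough'' for the Neumann series to be entrywise positive), but it deserves to be spelled out rather than asserted. Everything else — the existence of $t$, convergence of the Neumann series, nonnegativity of each term — is routine linear algebra given negative definiteness, which we already have from \cref{prop:exzept}. An alternative, if one prefers to avoid the matrix-power bookkeeping, is to argue by contradiction: if some entry of $N^{-1}$ vanished or were negative, one could produce a nonzero vector witnessing a failure of the minimum principle for the ``discrete Laplacian'' $N$, but the Neumann-series route above is the most transparent.
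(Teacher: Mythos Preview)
Your proof is correct, with one sign slip worth flagging: the off-diagonal entries of $N = -I$ are $-I_{v,w}$, i.e.\ the \emph{negatives} of the edge counts, hence nonpositive rather than nonnegative. This is exactly the Stieltjes/M-matrix sign convention, and your actual argument is unaffected since you work with $A = tI_{\mathrm{id}} - N$, whose off-diagonal entries are indeed the edge counts $I_{v,w} \geq 0$. The Neumann series step and the irreducibility-via-paths argument are sound.

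The paper takes a different, more lattice-theoretic route. It writes $E_v^* = Z_1 - Z_2$ as a difference of effective cycles with disjoint supports, observes that for $w \in \supp(Z_2)$ one has $(-Z_2, E_w) \leq (E_v^*, E_w) \leq 0$, whence $Z_2^2 \geq 0$ and so $Z_2 = 0$ by negative definiteness; then connectedness rules out any $w$ with $m_w(Z_1) = 0$, since such a $w$ adjacent to $\supp(Z_1)$ would give $(E_v^*, E_w) > 0$. Both arguments consume precisely the same two ingredients---negative definiteness and connectedness of $G$---but yours packages them as the standard M-matrix fact via a convergent series of nonnegative matrices, while the paper argues directly with cycles and intersection numbers. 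Your approach is cleaner if one is happy to invoke the matrix-analytic machinery; the paper's has the advantage of staying entirely in the divisor language used throughout and of being a two-line argument once one thinks to split into positive and negative parts.
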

\begin{proof}
Write
$E_v^* = Z_1 - Z_2$, where $m_v(Z_i) \geq 0$ for all $v$ and $i=1,2$, and
$Z_1, Z_2$ have disjoint supports (the \emph{support} of a cycle is
$\supp(Z) = \set{v\in\V}{m_v(Z) \neq 0}$). Since
$(-Z_2, E_v) \leq (Z, E_v) \leq 0$ for all $v\in \supp(Z_2)$, we find
$Z_2^2 \geq 0$, hence $Z_2 = 0$ by negative definiteness and so
$Z = Z_1$. We must show $\supp(Z_1) = \V$. Since $Z \neq 0$, if there
is a $v\in \V\setminus\supp(Z)$, we may assume that there is such a $v$
having a neighbour in $\supp(Z)$. This would give
$(Z, E_v) = \sum_{u\in\V_v\cap\supp(Z)} m_u(Z) > 0$ contradicting our
assumptions.
\end{proof}

\subsection{The topological semigroup} \label{ss:Lipman}

Throughout this subsection we assume given a good resolution $\pi:\X\to X$
as described in the previous subsection. We also assume that the link $M$
is a rational
homology sphere. \index{homology sphere!rational}

\begin{definition} \label{def:Stop}
The
\emph{Lipman cone} \index{Lipman cone}
is the set
\[
  \Stop = \set{Z\in L}{\fa{ v\in \V}{ (Z,E_v) \leq 0}}.
\nomenclature[Stop]{$\Stop$}{Lipman cone, topological semigroup}
\]
We also define
\[
  \Stop' = \set{Z\in L'}{\fa{ v\in \V}{ (Z,E_v) \leq 0}}.
\nomenclature[Stopp]{$\Stop'$}{Lipman cone, topological semigroup}
\]
\end{definition}

\begin{rem}
We have $\Stop' = \N\gen{E_v^*}{v\in\V}$ and $\Stop = \Stop' \cap L$.
\end{rem}

\begin{prop} \label{prop:gStop}
Let $g\in\O_{X,0}$ and define $Z\in L$ by setting $m_v(Z)$ equal to the
divisorial valuation of $\pi^*g$ along $E_v\subset \tilde X$. Then
$Z\in \Stop$.
\end{prop}
\begin{proof}
We have $(g) = \sum_{v\in\V} m_v(Z) E_v + S$ where $S$ is a divisor, none of
whose components are supported on $E$. In particular, we have $(E_v,S) \geq 0$
for all $v\in \V$. Furthermore, $(g)$ is linearly
equivalent to $0$ in the divisor group, which gives
$(E_v,(g)) = 0$ for all $v$. Thus, $(E_v,Z) = -(E_v,S) \leq 0$.
\end{proof}

\begin{definition}
Let $Z_i = \sum_v m_{v,i} E_v \in L'$, $i=1,2$. Define their
\emph{meet} \index{meet}
as $Z_1\wedge Z_2 = \sum_v \min\{m_{v,1},m_{v,2}\} E_v$.
\nomenclature{$\wedge$}{Meet operation}
\end{definition}

\begin{prop}[Artin \cite{Artin_iso}] \label{prop:meet}
The Lipman cone is closed under addition, and therefore makes up a semigroup.
The same holds for $\Stop'$.
Furthermore, if $Z_1,Z_2 \in \Stop'$, then $Z_1\wedge Z_2 \in \Stop'$.
\end{prop}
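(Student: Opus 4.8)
The plan is to prove the three assertions in turn, using only the negative definiteness of the intersection form and the characterization $\Stop' = \N\gen{E_v^*}{v\in\V}$ recorded just above. For the first two claims, that $\Stop$ and $\Stop'$ are closed under addition: this is immediate, since for $Z_1,Z_2$ with $(Z_i,E_v)\le 0$ for all $v$ we have $(Z_1+Z_2,E_v) = (Z_1,E_v)+(Z_2,E_v)\le 0$ by bilinearity, and $L$ (resp.\ $L'$) is closed under addition. So the substance of the proposition is entirely in the last sentence: the meet $Z_1\wedge Z_2$ of two elements of $\Stop'$ again lies in $\Stop'$.

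For the meet statement, I would fix $Z_1,Z_2\in\Stop'$, set $Z = Z_1\wedge Z_2 = \sum_v \min\{m_{v,1},m_{v,2}\}E_v$, and verify $(Z,E_v)\le 0$ for each $v\in\V$. Fix a vertex $v$. Without loss of generality $m_{v,1}\le m_{v,2}$, so $m_v(Z) = m_{v,1}$, and the idea is to compare $(Z,E_v)$ with $(Z_1,E_v)$. Writing out the intersection numbers in terms of coefficients, $(Z,E_v) = m_v(Z)\,E_v^2 + \sum_{w\in\V_v}m_w(Z)\,|E_v\cap E_w|$ where $\V_v$ denotes the neighbours of $v$, and similarly for $Z_1$. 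Since $m_v(Z) = m_{v,1}$ the diagonal terms agree, while for each neighbour $w$ we have $m_w(Z) = \min\{m_{w,1},m_{w,2}\}\le m_{w,1}$, and the coefficient $|E_v\cap E_w|\ge 0$. Hence the off-diagonal contribution to $(Z,E_v)$ is $\le$ the corresponding contribution to $(Z_1,E_v)$, giving $(Z,E_v)\le (Z_1,E_v)\le 0$. This completes the argument.

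One point that needs a remark is that $Z_1\wedge Z_2$ genuinely lies in $L'$ (not merely in $L_\Q$): this follows because $L'$ is a lattice generated, as an abelian group, and the coordinate-wise minimum of two elements of a lattice of the form $\Z^{\V}$ in each fixed $E_v$-coordinate need not a priori be in $L'$ — but here $Z_1,Z_2\in L'$ means each lies in $\Z\gen{E_v^*}{v\in\V}$, and $L'$ is precisely the set of $l\in L_\Q$ with $(l,E_v)\in\Z$ for all $v$ by \cref{rem:Lp}; since the computation above shows $(Z,E_v)$ is an integer combination of the integers $(Z_1,E_v),(Z_2,E_v)$ and integer coefficients, one checks $Z\in L'$ directly, or more simply observes that the coefficients $m_v(Z_i)$ of elements of $L'$ lie in a fixed lattice $\tfrac1{|H|}\Z$ closed under $\min$. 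I expect the only mildly delicate step to be this verification that the meet stays in $L'$; the inequality $(Z,E_v)\le 0$ itself is a short coefficient comparison as sketched.
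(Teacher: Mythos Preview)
Your argument for the inequality $(Z_1\wedge Z_2, E_v)\le 0$ is essentially identical to the paper's: both fix $v$, assume without loss of generality that $m_{v,1}\le m_{v,2}$, note that the diagonal terms of $(Z,E_v)$ and $(Z_1,E_v)$ agree, and bound the off-diagonal terms using $m_w(Z)\le m_{w,1}$ and $(E_v,E_w)\ge 0$ for $w\neq v$.

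You are right to flag the question of whether $Z_1\wedge Z_2$ actually lies in $L'$ --- the paper's proof does not address it --- but neither of your proposed fixes works. The displayed computation does \emph{not} exhibit $(Z,E_v)$ as an integer combination of $(Z_1,E_v)$ and $(Z_2,E_v)$; it mixes coefficients from both $Z_1$ and $Z_2$. And the observation that all coefficients lie in $\tfrac{1}{|H|}\Z$ only yields $Z\in\tfrac{1}{|H|}L$, which is strictly larger than $L'$ in general. In fact the meet statement for $\Stop'$ is false as literally written: for the $A_2$ graph (two vertices $v_1,v_2$ with $E_{v_i}^2=-2$ and one edge) one has $E_{v_1}^*=\tfrac{2}{3}E_{v_1}+\tfrac{1}{3}E_{v_2}$ and $E_{v_2}^*=\tfrac{1}{3}E_{v_1}+\tfrac{2}{3}E_{v_2}$, both in $\Stop'$, but $E_{v_1}^*\wedge E_{v_2}^*=\tfrac{1}{3}(E_{v_1}+E_{v_2})$ pairs with $E_{v_1}$ to give $-\tfrac{1}{3}\notin\Z$, so it is not in $L'$. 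The correct (and classically stated) version is for $\Stop$, where $Z_1\wedge Z_2\in L$ is automatic because $L$ is closed under coordinatewise minimum; this is also the only version used later (see \cref{def:Zmin}). So your instinct that this step was the delicate one was exactly right.
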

\begin{proof}
The first statement is clear, since $\Stop\subset L$ and
$\Stop'\subset L'$ are given by inequalities,
and are therefore each given as the set of integral points in a real convex
cone.
For the second statement, write $Z_i = \sum_v m_{v,i}$ and set
$m_v = \min\{m_{v,1},m_{v,2} \}$. Assuming
$Z_1, Z_2\in\Stop'$, and, say, $m_v = m_{v,1}$, we get
\[
\begin{split}
  (Z_1\wedge Z_2,E_v) &= m_{v,1} E_v^2 + \sum_{w\neq v} m_w (E_v, E_w) \\
   &\leq m_{v,1} E_v^2 + \sum_{w\neq v} m_{w,1} (E_v, E_w)
    =    (Z_1,E_v) \leq 0.
\end{split}
\]
\end{proof}

\begin{definition} \label{def:Zmin}
By \cref{lem:Es_pos}, the elements in $\Stop$ have positive entries.
Therefore, the partially ordered set $\Stop\setminus\{0\}$ has minimal
elements. Furthermor, by \cref{lem:Es_pos}, the meet $Z_1\wedge Z_2$ of two
elements $Z_1, Z_2\in\Stop\setminus\{0\}$ is again nonzero. Thus, the set
$\Stop\setminus\{0\}$ contains a unique minimal element. We denote this
element by
$\Zmin$ \nomenclature[Zmin]{$\Zmin$}{Artin's minimal cycle}
and call it
\emph{Artin's minimal cycle}, \index{Artin's minimal cycle}
or, the
\emph{minimal cycle}. \index{minimal cycle}
This element is often referred to as the
\emph{fundamental cycle}. \index{fundamental cycle}
\end{definition}

\subsection{Topological zeta and counting functions} \label{ss:ZQ}

\begin{block} \label{bl:series}
We will make use
of the set
$\Z[[t^L]] = \set{\sum_{l\in L}a_l t^l}{a_l \in \Z}$.
\nomenclature[ZtL]{$\Z[[t^L]]$}{Set of power series}
It is a group under addition, and has a partially defined multiplication.
More precisely, if $A(t) = \sum a_l t^l$ and $B(t) = \sum b_l t^l$ are 
elements of $\Z[[t^L]]$, then $A(t) \cdot B(t)$ is defined
if the sum $c_l = \sum_{l_1+l_2 = l} a_{l_1} b_{l_2}$
is finite for all $l\in L$, in which case we define
$A(t) \cdot B(t) = \sum_{l\in L} c_l t^l$.
In particular, $\Z[[t^L]]$ is a module over the ring of
Laurent polynomials \index{Laurent polynomials}
$\Z[t_1^{\pm 1}, \ldots, t_s^{\pm 1}]$ where $s = |\V|$.
A simple exercise also shows that if $A(t) = \sum a_lt^l$ is supported in
the
Lipman cone \index{Lipman cone}
(that is, $a_l = 0$ for $l\notin\Stop$) then
$A(t) \cdot \sum_{l\not\geq 0} t^l$ is well defined.

In precisely the same way, one obtains the set
$\Z[[t^{L'}]] = \set{\sum_{l\in L'} a_l t^l}{a_l\in\Z}$ which naturally
contains $\Z[[t^L]]$, and is contained in
$\Z[[t_1^{\pm1/d}, \ldots, t_s^{\pm1/d}]]$, where $d = |H|$.

One may modify this definition by introducing coefficients from any ring
$R$, thus obtaining $R[[t^L]]$.

For a discussion of these sets and some rings contained in them,
see e.g. \cite{Nem_Poinc}.
\end{block}

\begin{rem}
If $C\subset L_\R$ is a strictly convex cone (i.e. contains no nontrival
linear space) and $A(t),B(t) \in \Z[[t^{L'}]]$ as above, with
$a_{l'} = b_{l'} = 0$ if $l'\notin C$, then $A(t) \cdot B(t)$ is
well defined. As is easily seen, the set of such series thus form a
local ring, with maximal ideal the set of series $A(t)$ with $a_0 = 0$.
In particular, if $l'\in C$, and $l'\neq 0$ then the element $1-t^{l'}$
is invertible, in fact we have $(1-t^{l'})^{-1} = \sum_{k=0}^\infty t^{kl'}$.
Since this is independent of the cone $C$, we will assume this formula
without referring to $C$.
\end{rem}

\begin{definition}[\cite{CDGZ,Nem_Poinc}] \label{def:zeta}
For $l\in L'$, denote by $[l]\in H = L'/L$ the associated residue class.
Denote by
$\hat H = \Hom(H,\C)$ \nomenclature[Hh]{$\hat H$}{Pontrjagin dual of $H$}
the Pontrjagin dual of the group $H$.
The intersection product induces an isomorphism
$\theta:H\to\hat H$, $[l] \mapsto e^{2\pi i(l,\cdot)}$.
The
\emph{equivariant zeta function} \index{equivariant zeta function}
associated with the resolution graph $G$
is
\begin{equation} \label{eq:zeta}
  Z(t) = \prod_{v\in\V} (1 - [E_v^*] t^{E_v^*})^{\delta_v-2}
       \in \Z[H][[t^{L'}]].
\end{equation} \nomenclature[Z(t)]{$Z(t)$}{Equivariant zeta function}
The natural bijection $\Z[H][[t^{L'}]] \leftrightarrow \Z[[t^{L'}]][H]$
induces well defined series $Z_h(t) \in \Z[[t^{L'}]]$ for each $h\in H$
so that $Z(t) = \sum_{h\in H} Z_h(t) h$. It is clear that the series
$Z_h(t)$ is supported on the coset of $L$ in $L'$ corresponding to $h$, that
is, the coefficient of $l'\in L'$ in $Z_h(t)$ vanishes if $[l'] \neq h$.
In particular, we have
$Z_0(t) \in \Z[[t^L]]$,\nomenclature[Z0(t)]{$Z_0(t)$}{Zeta function}
where $0$ denotes the trivial
element of $H$. We call $Z_0(t)$ the
\emph{zeta function} \index{zeta function}
associated with
the graph $G$. Denote by
$z_{l'}\in\Z$
\nomenclature[zl]{$z_{l'}$}{Coefficient of (equivariant) zeta function}
the coefficients of $Z(t)$, i.e.
$Z(t) = \sum_{l'\in L'} z_{l'} [l'] t^{l'}$. Thus, we have
$Z_0(t) = \sum_{l\in L} z_{l} t^{l}$.

The
\emph{equivariant counting function} \index{equivariant counting function}
associated with $G$ is the series
$Q(t) = \sum_{l'\in L'} q_{l'} [l'] t^{l'} \in \Z[H][[t^{L'}]]$,
\nomenclature[Q(t)]{$Q(t)$}{Equivariant counting function}
where
$q_{l'} = \sum\set{z_{l'+l}}{l\in L,\,l\not\geq 0}$.
\nomenclature[ql]{$q_{l'}$}{Coefficient of (equivariant) counting function}
This yields a
decomposition $Q(t) = \sum_{h\in H} Q_h(t) h$ where $Q_h\in\Z[[t^{L'}]]$ as
above. In particular,
$Q_0(t) \in \Z[[t^L]]$. \nomenclature[Q0(t)]{$Q_0(t)$}{Counting function}
The series $Q_0(t)$ is called the
\emph{counting function} \index{counting function}
associated with $G$.
\end{definition}

\begin{rem}
\begin{list}{(\thedummy)}
{
  \usecounter{dummy}
  \setlength{\leftmargin}{0pt}
  \setlength{\itemsep}{0pt}
  \setlength{\itemindent}{4.5pt}
}

\item
The zeta function is supported on the Lipman cone, that is, writing
$Z_0(t) = \sum_{l\in L} z_l t^l$ we have $z_l = 0$ if $l\notin\Stop$.

\item
It is not immediately obvious why the series above is given the name
'zeta function'. This may be motivated by similarities to the formula
of A'Campo \cite{ACampo}.

\item
The counting function is a topological analogue of the Hilbert series.
This is made clear by \cref{prop:H_from_P} and the relationship between
the Poincar\'e series (see \cref{def:Hilb_Poinc}) and the zeta function
described in \cite{Nem_Poinc}. See also \cref{rem:SW}\cref{it:SW_MI}.
Its title is inspired with its connections with Ehrhart theory
\cite{LaszNem_Ehr}.

\item
In the more general 'equivariant setting', one studies invariants of the
universal abelian cover of a singularity along with its action of
the group $H$, see e.g. \cite{NeumWa_SpQ,Neu_AbCovQH,Nem_Nico_SW,Laszlo_th}.
This explains why the zeta function and counting function are considered
equivariant.

\end{list}
\end{rem}

\subsection{The geometric genus}

\begin{definition} \label{def:pg}
Let $(X,0)$ be a normal surface singularity.
The
\emph{geometric genus} \index{geometric genus}
of $(X,0)$ is defined as
$p_g = h^1(\X,\O_{\X})$, \nomenclature[pg]{$p_g$}{Geometric genus}
where $\X\to X$ is a resolution.
\end{definition}

The geometric genus of $(X,0)$ is defined in terms of a resolution. Using
the fact that any resolution is obtained by blowing up the minimal resolution,
as well as Lemma 3.3 from \cite{Lauf_rat}, one finds that $p_g$ is independent
of the resolution. This fact also follows from the following formula of Laufer:

\begin{prop}[Laufer \cite{Lauf_rat}, Theorem 3.4]
We have
\begin{equation} \label{eq:Laufer_sq_int}
  p_g = \dim_\C\frac{H^0      (X\setminus 0,\Omega^2_{X\setminus 0})}
                    {H^0_{L^2}(X\setminus 0,\Omega^2_{X\setminus 0})},
\end{equation}
where $H^0(X\setminus 0,\Omega^2_{X\setminus 0})$ is the set of germs of
holomorphic two forms defined around the origin, and 
$H^0_{L^2}(X\setminus 0,\Omega^2_{X\setminus 0})$ is the subset of
square integrable forms. \qed
\end{prop}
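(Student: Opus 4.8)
The plan is to transport everything to a fixed good resolution $\pi:\X\to X$ with exceptional divisor $E$, reinterpret the quotient in \cref{eq:Laufer_sq_int} as a local cohomology group along $E$, and then compute the dimension of that group by a vanishing theorem plus duality. Since $\pi$ restricts to a biholomorphism $\X\setminus E\xrightarrow{\sim}X\setminus 0$ and pull-back of holomorphic $2$-forms is compatible with this isomorphism (recall $X$ is smooth off the origin), there is a canonical identification $H^0(X\setminus 0,\Omega^2_{X\setminus 0})\cong H^0(\X\setminus E,\Omega^2_{\X})$. So the numerator of the quotient becomes global holomorphic $2$-forms on $\X\setminus E$.

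The key analytic input is the $L^2$-characterisation: a holomorphic $2$-form $\omega$ on $X\setminus 0$ is square integrable near $0$ if and only if its pull-back $\pi^*\omega$ — a priori a meromorphic $2$-form on $\X$ with poles only along $E$ — is in fact holomorphic on all of $\X$. One direction is a change-of-variables argument: the $L^2$-norm of a top-degree form is a birational invariant, $\pi$ being a biholomorphism off a set of measure zero, so $\omega\in L^2$ near $0$ iff $\pi^*\omega\in L^2$ near $E$. The other direction is the elementary local fact that on a smooth surface a meromorphic $2$-form with a genuine pole along a divisor is not locally square integrable there (e.g.\ $\int|z_1|^{-2}=\infty$), so $L^2$ forces holomorphicity; here one must also note that the bound is uniform along the compact set $E$. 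Consequently $H^0_{L^2}(X\setminus 0,\Omega^2_{X\setminus 0})$ is identified, via $\pi^*$, with the subspace $H^0(\X,\Omega^2_{\X})\subseteq H^0(\X\setminus E,\Omega^2_{\X})$, and the quotient in \cref{eq:Laufer_sq_int} becomes $H^0(\X\setminus E,\Omega^2_{\X})\big/H^0(\X,\Omega^2_{\X})$.

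To finish I would invoke the long exact sequence of cohomology with supports along $E$,
\[
0\to H^0(\X,\Omega^2_{\X})\to H^0(\X\setminus E,\Omega^2_{\X})\to H^1_E(\X,\Omega^2_{\X})\to H^1(\X,\Omega^2_{\X}),
\]
and note that the last term vanishes: by Grauert--Riemenschneider $R^1\pi_*\Omega^2_{\X}=0$, and since $X$ is Stein the Leray spectral sequence gives $H^1(\X,\Omega^2_{\X})=0$. Hence the quotient is isomorphic to $H^1_E(\X,\Omega^2_{\X})$. Formal (local) duality for the proper map $\pi$ — taking $\mathcal F=\Omega^2_{\X}=\omega_{\X}$ in the duality pairing $H^i_E(\X,\mathcal F)$ versus $H^{2-i}$ of $\mathcal F^{\vee}\otimes\omega_{\X}$ — then identifies $H^1_E(\X,\Omega^2_{\X})$ with the topological dual of $H^1(\X,\O_{\X})$; the latter is finite-dimensional of dimension $p_g$ because $R^1\pi_*\O_{\X}$ is coherent, supported at $0$, and $X$ is Stein. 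Comparing dimensions gives \cref{eq:Laufer_sq_int}. I expect the main obstacle to be the $L^2$-characterisation in the second paragraph: bridging the purely metric notion of square integrability and the algebraic condition of holomorphic extension across $E$ is where the genuine work lies, whereas the Grauert--Riemenschneider vanishing and the local duality statement can be treated as standard black boxes once the geometry has been set up.
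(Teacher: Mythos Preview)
The paper does not give a proof of this proposition: it is stated with a \qed\ and attributed to Laufer \cite{Lauf_rat}, Theorem~3.4. The block immediately following the statement does isolate exactly the analytic fact you flag as the crux --- that $\omega$ is square integrable if and only if $\pi^*\omega$ extends holomorphically across $E$ --- again crediting Laufer without argument.

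Your outline is correct and is essentially the standard proof. The identification of the quotient with $H^0(\X\setminus E,\Omega^2_{\X})/H^0(\X,\Omega^2_{\X})$ via the $L^2$-criterion, followed by the local-cohomology exact sequence, Grauert--Riemenschneider vanishing (using that $X$ is Stein), and formal duality along $E$ to reach $H^1(\X,\O_{\X})^*$, is exactly how one proves this in modern language. Since the paper offers no competing argument there is nothing to compare; your sketch stands on its own, and your assessment that the $L^2\Leftrightarrow$holomorphic-extension step is where the real analytic work lies is accurate.
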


\begin{block}
Assume that we have a
resolution \index{resolution}
$\pi:\X\to X$ as in \ref{ss:res} and
take $\omega \in H^0(X\setminus 0,\Omega^2_{X\setminus 0})$. 
By Laufer \cite{Lauf_rat}, $\omega$ is square integrable if and only if
$\pi^*(\omega)$ extends to a holomorphic form on $\X$.
\end{block}

\begin{definition} \label{def:Hilb_Poinc}
Assume given a resolution $\pi:\X\to X$, with notation as in \ref{ss:res}.
The
\emph{divisorial filtration} \index{divisorial filtration}
is a multiindex filtration of $\O_{X,0}$
by ideals, given by
\[
  \F(l) = \set{f\in\O_{X,0}}{\div(f) \geq l}
        = \pi_* H^0(\X,\O_{\X}(-l)), \quad l\in L.
\] \nomenclature[F(l)]{$\F(l)$}{Divisorial ideal}
For $l\in L$ we set
$h_l = \dim_\C \O_{X,0}/\F(l)$
\nomenclature[hl]{$h_l$}{Coefficient of Hilbert series}
and define the
\emph{Hilbert series} \index{Hilbert series}
as
\[
  H(t) = \sum_{l\in L} h_l t^l \in \Z[[t^L]].
\nomenclature[H(t)]{$H(t)$}{Hilbert series}
\]
The
\emph{Poincar\'e series} \index{Poincar\'e series}
is defined as
\[
  P(t) = \sum_{l\in L} p_l t^l =  -H(t) \prod_{v\in\V} (1-t_v^{-1}).
\]
\nomenclature[P(t)]{$P(t)$}{Poincar\'e series}
\nomenclature[pl]{$p_l$}{Coefficient of Poincar\'e series}
\end{definition}

\begin{rem}
Classically, a Hilbert series is the generating function of the
numbers $A/I_n$, where $(I_n)_{n\in\N}$ is a filtration of the algebra
$A$. Similarly, if $A = \oplus_{n=0}^\infty A_n$ is a graded algebra
(e.g. $A_n = I_n/I_{n+1}$), then its associated Poincar\'e series
is the generating function for the numbers $\dim_\C A_n$.
This coincides with our definition in the case when $\rk L = 1$.
\end{rem}

\begin{prop}[N\'emethi \cite{Nem_Poinc}]
The Poincar\'e series is supported on the Lipman cone, that is, if
$l\notin\Stop$ then $p_l = 0$. \qed
\end{prop}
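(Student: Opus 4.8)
The plan is to expand the product defining $P(t)$ into an alternating sum of shifted Hilbert coefficients and then to show that, when $l\notin\Stop$, this sum telescopes to zero; the one nontrivial ingredient is the vanishing of global sections of a negative-degree line bundle on an exceptional curve. Write $E_I=\sum_{v\in I}E_v$ for $I\subseteq\V$, with $E_\emptyset=0$. Since $\prod_{v\in\V}(1-t_v^{-1})=\sum_{I\subseteq\V}(-1)^{|I|}t^{-E_I}$ is a Laurent polynomial, the product $P(t)=-H(t)\prod_v(1-t_v^{-1})$ is a well-defined element of $\Z[[t^L]]$, and comparing the coefficients of $t^l$ gives
\[
  p_l=-\sum_{I\subseteq\V}(-1)^{|I|}h_{l+E_I}.
\]

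Next I would establish the auxiliary claim: if $l\in L$ and $v\in\V$ satisfy $(l,E_v)>0$, then $h_{l+E_v}=h_l$. Tensoring the ideal-sheaf sequence $0\to\O_\X(-E_v)\to\O_\X\to\O_{E_v}\to0$ with the invertible sheaf $\O_\X(-l)$ yields
\[
  0\to\O_\X(-l-E_v)\to\O_\X(-l)\to\O_\X(-l)|_{E_v}\to0 .
\]
Here $\O_\X(-l)|_{E_v}$ is a line bundle on the smooth projective curve $E_v$ of degree $(-l,E_v)=-(l,E_v)<0$, so $H^0\bigl(E_v,\O_\X(-l)|_{E_v}\bigr)=0$, and the inclusion $H^0(\X,\O_\X(-l-E_v))\hookrightarrow H^0(\X,\O_\X(-l))$ is an isomorphism. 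Under $\pi_*$ this is exactly the inclusion of ideals $\F(l+E_v)\subseteq\F(l)$ in $\O_{X,0}$, which is therefore an equality; taking colengths gives $h_{l+E_v}=h_l$.

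Finally I would assemble the argument. Assume $l\notin\Stop$ and pick $v_0\in\V$ with $(l,E_{v_0})>0$. Splitting the sum for $p_l$ according to whether $I$ contains $v_0$, and writing $\V'=\V\setminus\{v_0\}$, we get
\[
  p_l=-\sum_{I'\subseteq\V'}(-1)^{|I'|}\bigl(h_{l+E_{I'}}-h_{l+E_{I'}+E_{v_0}}\bigr).
\]
For every $I'\subseteq\V'$ we have $(l+E_{I'},E_{v_0})=(l,E_{v_0})+\sum_{w\in I'}(E_w,E_{v_0})\ge(l,E_{v_0})>0$, because $(E_w,E_{v_0})\ge0$ for $w\ne v_0$. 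Hence the auxiliary claim applied to the cycle $l+E_{I'}$ and the vertex $v_0$ gives $h_{l+E_{I'}+E_{v_0}}=h_{l+E_{I'}}$, so every summand is zero and $p_l=0$.

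The only genuinely substantial step is the negative-degree vanishing in the second paragraph; everything else is bookkeeping. What makes the telescoping go through uniformly is that adding an effective cycle supported away from $v_0$ can only increase the intersection number with $E_{v_0}$, so the hypothesis $(\,\cdot\,,E_{v_0})>0$ is preserved for all the shifts $l+E_{I'}$ that occur.
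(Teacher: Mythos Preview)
Your proof is correct. The paper does not give its own argument here---the proposition is stated with a citation to N\'emethi \cite{Nem_Poinc} and closed with \qedsymbol---so there is nothing to compare against; your expansion of the product, the vanishing $H^0(E_v,\O_\X(-l)|_{E_v})=0$ from negative degree, and the telescoping over subsets not containing $v_0$ are exactly the standard route.
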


\begin{block}
The Poincar\'e series is obtained by a simple formula from the Hilbert
series. There are, however, nonzero elements in $\Z[[t^L]]$ whose product
with $1-t_v^{-1}$ is defined and equals zero.
This means that, in principle, one cannot use this formula
to determine $H$ from $P$.
The following proposition guarantees that
one may nonetheless determine $H$ from $P$. The two series therefore
provide equivalent data.
\end{block}

\begin{prop}[N\'emethi \cite{Nem_Poinc}] \label{prop:H_from_P}
Let $H$ and $P$ be as in \cref{def:Hilb_Poinc}. Then, for any $l\in L$,
we have
\[
  h_l = \sum_{\substack{l'\in L\\l'\not\geq l}} p_{l'}.
\]
Equivalently, we have
$H(t) = \left(\sum_{l\not\geq 0} t^l\right) \cdot P(t)$. \qed
\end{prop}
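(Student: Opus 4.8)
The plan is to deduce the formula for $h_l$ directly from the definition $P(t) = -H(t)\prod_{v\in\V}(1-t_v^{-1})$, by a formal manipulation in $\Z[[t^L]]$, justified by the support properties of $H$ and $P$. First I would observe that $H(t)$ is supported on the Lipman cone $\Stop$ (since $h_l = 0$ unless $l \in \Stop$, because $\F(l) = \O_{X,0}$ when $l \notin \Stop'$ — every $f$ has $\div(f) \in \Stop$ by \cref{prop:gStop}, so $\div(f) \geq l$ fails only by the partial order, not the cone condition; more carefully, $h_l$ stabilizes and the relevant vanishing is of the increments). The key structural fact from \cref{bl:series} is that, since $H(t)$ is supported on the Lipman cone, the product $H(t) \cdot \sum_{l \not\geq 0} t^l$ is well defined. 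So the strategy is: take the defining relation for $P$, multiply both sides by $\sum_{l\not\geq 0} t^l$, and simplify the left-hand side.

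The central computation is the telescoping identity
\[
  \left(\sum_{l\not\geq 0} t^l\right) \cdot \left(-\prod_{v\in\V}(1-t_v^{-1})\right) = 1
\]
as an identity valid when applied to series supported on $\Stop$. To see this, note $-\prod_v(1-t_v^{-1}) = (-1)^{|\V|+1}\prod_v(1-t_v^{-1})$, and expanding $\prod_v (1 - t_v^{-1}) = \sum_{I\subseteq\V} (-1)^{|I|} t^{-\sum_{v\in I}E_v}$, one checks that convolving with $\sum_{l\not\geq 0}t^l$ collapses the alternating sum over subsets $I$ to a single surviving term. Concretely, for fixed $l\in L$, the coefficient of $t^l$ in $\left(\sum_{l'\not\geq 0}t^{l'}\right)\cdot\prod_v(1-t_v^{-1})$ is $\sum_{I\subseteq\V}(-1)^{|I|}[\,l + \textstyle\sum_{v\in I}E_v \not\geq 0\,]$; an inclusion–exclusion argument over the set of coordinates $v$ with $m_v(l) < 0$ shows this equals $(-1)^{|\V|}$ times the indicator that $l \not\geq 0$ is ``forced'', which after bookkeeping gives exactly $(-1)^{|\V|}\cdot[l=0]$ up to sign — I would verify the signs so that the net operator is the identity. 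Then multiplying $P(t) = -H(t)\prod_v(1-t_v^{-1})$ by $\sum_{l\not\geq 0}t^l$ and using associativity of these convolutions (legitimate because $H$ is Lipman-cone supported, so all intermediate sums are finite) yields $\left(\sum_{l\not\geq 0}t^l\right)\cdot P(t) = H(t)$, which is the second displayed formula; reading off the coefficient of $t^l$ gives $h_l = \sum_{l'\not\geq l} p_{l'}$.

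The main obstacle is the justification of associativity and of the finiteness of all the convolutions involved: $\Z[[t^L]]$ is only a partial ring, so one cannot blindly reassociate $\left(\sum_{l\not\geq 0}t^l\right)\cdot\left(H(t)\cdot\prod_v(1-t_v^{-1})\right)$ into $\left(\left(\sum_{l\not\geq 0}t^l\right)\cdot\prod_v(1-t_v^{-1})\right)\cdot H(t)$. The clean way to handle this is to work coefficientwise for a fixed target $l\in L$: because $H$ is supported on $\Stop$ and $\Stop$ is a strictly convex cone contained in $\{l : l\geq 0\}$ up to finite index considerations, only finitely many $l''\in\supp H$ satisfy $l'' \leq l$, and for each the inner sum defining the $\prod_v(1-t_v^{-1})$-convolution is finite; this makes the double sum absolutely a finite sum and associativity is then automatic. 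I would phrase this as: fix $l$, restrict attention to the finite set $\{l''\in\Stop : l'' \not\geq l\}$ is not finite, but $\{l'' : 0 \leq l - l'' \text{ has bounded... }\}$ — the correct finiteness is that $\supp H \subseteq \Stop$ meets $\{l'' \leq l\}$ in a finite set, which holds since $\Stop$ is strictly convex. With that finiteness in hand, the telescoping identity is a purely combinatorial inclusion–exclusion, and the proposition follows.
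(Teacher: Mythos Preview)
The paper does not give its own proof here---the proposition is cited from N\'emethi \cite{Nem_Poinc} and closed with \qed---so there is nothing in this text to compare your argument against. I therefore evaluate your proposal on its own terms.

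Your central identity is false. You assert that $\left(\sum_{l \not\geq 0} t^l\right)\cdot\bigl(-\prod_{v}(1-t_v^{-1})\bigr) = 1$, but in one variable this product is $\bigl(\sum_{l<0}t^l\bigr)(t^{-1}-1)$, whose coefficient at $t^m$ is $[m+1<0]-[m<0]=-[m=-1]$; so the product equals $-t^{-1}$, not $1$. Carrying out your inclusion--exclusion in general yields $\sum_{I}(-1)^{|I|}[\,l+E_I\not\geq 0\,]=(-1)^{|\V|+1}[\,l=-E\,]$, not $[l=0]$. In fact the ``equivalently'' display in the statement itself has a sign slip: the coefficient of $t^l$ in $\bigl(\sum_{m\not\geq 0}t^m\bigr)\cdot P(t)$ is $\sum_{l'\not\leq l}p_{l'}$, not $\sum_{l'\not\geq l}p_{l'}$; the intended product is $\bigl(\sum_{m\not\leq 0}t^m\bigr)\cdot P(t)$, and with \emph{that} correction your operator identity does hold. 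But this does not save the argument: the reassociation you need requires $\bigl(\sum_{m\not\leq 0}t^m\bigr)\cdot H(t)$ to be well defined, and it is not---already for $|\V|=2$ the coefficient at $t^0$ is an infinite sum of nonzero $h$-values. So the operator-level route is blocked.

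A second error: you claim $H$ is supported on $\Stop$. It is not. One has $h_l=h_{l^+}$ where $l^+$ is the coordinatewise maximum with $0$ (since $\div f\geq 0$ forces $\F(l)=\F(l^+)$), so $h_l\neq 0$ whenever $l\not\leq 0$. What \emph{is} supported on $\Stop$ is $P$, and this is what makes the first formula well-posed: by \cref{lem:Es_pos} every $E_w^*$ has strictly positive entries, so if $l'\in\Stop'$ has $m_{v_0}(l')<m_{v_0}(l)$ then each coefficient $a_w$ in $l'=\sum_w a_wE_w^*$ is bounded by $m_{v_0}(l)/m_{v_0}(E_w^*)$; hence $\Stop\cap\{l'\not\geq l\}$ is finite and $\sum_{l'\not\geq l}p_{l'}$ is a finite sum. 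The coefficientwise identity you are reaching for is then
\[
  \sum_{I\subseteq\V}(-1)^{|I|+1}\,[\,l''-E_I\not\geq l\,]\;=\;[\,l''=l\,],
\]
proved by splitting $\V$ according to the sign of $m_v(l'')-m_v(l)$; this is the right inclusion--exclusion, and combined with $p_{l'}=\sum_I(-1)^{|I|+1}h_{l'+E_I}$ it formally gives $\sum_{l'\not\geq l}p_{l'}=h_l$. The genuinely delicate step, which your sketch does not address, is justifying the interchange of the infinite sum over $l'\not\geq l$ with the expansion of each $p_{l'}$: once you pass to $h$-values the double sum is no longer absolutely finite, and truncating to a box produces boundary terms that do not vanish for trivial reasons. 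The argument in \cite{Nem_Poinc} handles this carefully; your ``associativity is then automatic'' does not.
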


\begin{prop}[N\'emethi \cite{Nem_Poinc,Nem_coh_splice}] \label{prop:pg_hl}
\index{geometric genus}
Assume the notation in \ref{ss:res} and \ref{ss:Lipman} and let $H$ and $P$
be as in \ref{def:Hilb_Poinc}.
Then, if $l\in L$ and $(l,E_v) \leq (Z_K,E_v)$ for all $v\in \V$,
then
\begin{equation} \label{eq:pg_hl}
  h_l = p_g + \frac{(Z_K - l,l)}{2}.
\end{equation}
In particular, if $(X,0)$ is numerically Gorenstein, then 
$h_{Z_K} = p_g$.
\qed
\end{prop}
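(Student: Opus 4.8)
The plan is to prove, for \emph{every} $l\in L$, the refined formula $h_l = p_g + \tfrac{(Z_K-l,l)}{2} - h^1(\X,\O_{\X}(-l))$, and then to show that under the hypothesis $(l,E_v)\le(Z_K,E_v)$ for all $v$ the correction term $h^1(\X,\O_{\X}(-l))$ vanishes.

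First I would establish the refined formula. Choose an auxiliary cycle $l_0\in L$ with $l_0\ge l$ and $l_0\ge 0$ (so that $h_{l_0}<\infty$, since $\mathfrak m^N\subset\F(l_0)$ for $N\gg 0$), and apply $\pi_*$ to the structure sequences
\[
  0\to\O_{\X}(-l_0)\to\O_{\X}\to\O_{l_0}\to 0,\qquad
  0\to\O_{\X}(-l_0)\to\O_{\X}(-l)\to\O_{l_0-l}(-l)\to 0.
\]
Using $\pi_*\O_{\X}=\O_{X,0}$, $\pi_*\O_{\X}(-l)=\F(l)$, $R^2\pi_*=0$ (fibres of $\pi$ are at most one-dimensional), and the fact that on the Stein space $X$ one has $h^1(\X,\cdot)=\dim_\C\bigl(R^1\pi_*(\cdot)\bigr)_0$ (in particular $h^1(\X,\O_{\X})=p_g$), the two resulting long exact sequences yield, upon taking Euler characteristics of finite-dimensional terms,
\[
  h_{l_0}=p_g+\chi(\O_{l_0})-h^1(\X,\O_{\X}(-l_0)),\quad
  \dim_\C\tfrac{\F(l)}{\F(l_0)}=\chi(\O_{l_0-l}(-l))-h^1(\X,\O_{\X}(-l_0))+h^1(\X,\O_{\X}(-l)).
\]
Subtracting (recall $h_l=h_{l_0}-\dim_\C\F(l)/\F(l_0)$) the $h^1(\X,\O_{\X}(-l_0))$ terms cancel, so it remains to check the combinatorial identity $\chi(\O_{l_0})-\chi(\O_{l_0-l}(-l))=\tfrac{(Z_K-l,l)}{2}$. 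This is immediate from Riemann--Roch on the (possibly non-reduced) curves $l_0$ and $l_0-l$ inside the smooth surface $\X$: for an effective cycle $D$ supported on $E$ one has $\chi(\O_D)=-\tfrac12(D,D+K)$ and $\deg\bigl(\O_{\X}(-l)|_D\bigr)=-(l,D)$, while $(D,K_{\X})=(D,K)$; one substitutes and expands.

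The heart of the argument is the vanishing $h^1(\X,\O_{\X}(-l))=0$ under $(l,E_v)\le(Z_K,E_v)$ for all $v$. Write $\O_{\X}(-l)=\omega_{\X}\otimes\O_{\X}(-l-K_{\X})$ with $\omega_{\X}=\O_{\X}(K_{\X})$ the canonical bundle of the smooth surface $\X$. Since the resolution is good each $E_v$ is smooth, and adjunction gives $(K_{\X},E_v)=2g_v-2-E_v^2=(K,E_v)=-(Z_K,E_v)$, so $(-l-K_{\X})\cdot E_v=(Z_K-l,E_v)\ge 0$ for all $v$; as every $\pi$-contracted curve is a non-negative combination of the $E_v$, the line bundle $\O_{\X}(-l-K_{\X})$ is $\pi$-nef, and it is $\pi$-big because $\pi$ is birational. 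Twisted Grauert--Riemenschneider vanishing then gives $R^1\pi_*\O_{\X}(-l)=0$, i.e.\ $h^1(\X,\O_{\X}(-l))=0$, and combined with the refined formula this proves $h_l=p_g+\tfrac{(Z_K-l,l)}{2}$. Finally, if $(X,0)$ is numerically Gorenstein then $Z_K\in L$, so $l=Z_K$ is an admissible cycle which trivially satisfies the hypothesis, and plugging in gives $h_{Z_K}=p_g$.

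I expect the vanishing step to be the main obstacle: it is the only place the hypothesis on $l$ is used, and invoking it as above relies on the twisted (Kawamata--Viehweg / Grauert--Riemenschneider) vanishing for a $\pi$-nef, $\pi$-big twist of $\omega_{\X}$ in the analytic setting. A self-contained alternative is Laufer's inductive proof of $h^1(\X,\O_{\X}(-l))=0$, carried out along a suitable computation sequence and reduced at each step to a cohomology computation for a line bundle on a single curve $E_v$; this is elementary but considerably longer. The only other subtlety, the treatment of non-effective $l$, is absorbed into the choice of the auxiliary cycle $l_0$ above, after which the remainder is formal.
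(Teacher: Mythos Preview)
The paper does not actually prove this proposition: it is stated with a citation to N\'emethi's papers and marked \qed, so there is no ``paper's own proof'' to compare against. Your argument is correct and is essentially the standard proof found in the cited references: establish the universal identity $h_l = p_g + \tfrac{(Z_K-l,l)}{2} - h^1(\X,\O_{\X}(-l))$ via the two structure sequences and Riemann--Roch on the exceptional cycles, then kill the correction term by a vanishing theorem. Your Euler-characteristic bookkeeping and the Riemann--Roch computation both check out, and you correctly translate the hypothesis $(l,E_v)\le(Z_K,E_v)$ into $\pi$-nefness of $-l-K_{\X}$, which is exactly what the relative Kawamata--Viehweg / generalized Grauert--Riemenschneider vanishing needs. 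Your own caveat is well placed: the only point where one might be asked for more detail is the analytic vanishing step, and the Laufer-style inductive proof you mention (peeling off one $E_v$ at a time and using that $\deg\O_{E_v}(-l-K_{\X})\ge 0$ forces $h^1$ of the relevant line bundle on $E_v\cong\mathbb{P}^1$ to vanish) is the elementary substitute that works directly in this rational-homology-sphere setting without invoking the general theorem.
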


\begin{rem}
The condition $l\in L$ and $(l,E_v) \leq (Z_K,E_v)$ for all $v\in \V$
is equivalent to $l \in (Z_K+\Stop')\cap L$. The formula \cref{eq:pg_hl}
holds for any $l\in L'$ satisfying the same conditions, once the term
$h_l$ is defined for such $l$. These are the coefficients of the
\emph{equivariant Hilbert series}, which will not be discussed here.
\end{rem}

\begin{block}
Combining \cref{prop:H_from_P} and \cref{prop:pg_hl}, one finds that the
geometric genus \index{geometric genus}
can be calculated once the Poincar\'e series is known.
In particular, if one finds a formula for the Poincar\'e series given
in terms of the link $M$, one automatically obtains a topological
identification of $p_g$. Although this is indeed impossible in general,
there are
certain cases where the Poincar\'e series, or just $p_g$, can be described
by topological invariants. As an example, we have the following result:
\end{block}

\begin{prop}[N\'emethi \cite{Nem_coh_splice,Nem_Poinc}]\label{prop:SWIC_known}
Let $(X,0)$ be a
\emph{splice quotient} \index{splice!quotient}
singularity \cite{NeumWa_SpQ}.
Then $P(t) = Z_0(t) \in \Z[[t^L]]$.
\end{prop}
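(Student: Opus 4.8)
The plan is to exhibit an explicit formula for the Poincar\'e series $P(t)$ of a splice quotient singularity in purely topological terms and recognize the right-hand side as $Z_0(t)$. For splice quotients, the key analytic input is that the divisorial filtration $\F$ is computed by the \emph{Neumann--Wahl splice diagram equations}: the universal abelian cover has an explicit description, and Neumann and Wahl's construction provides functions $z_v \in \O_{X,0}$ (one for each leaf, and more generally ``splice diagram functions'' attached to the nodes) whose divisors are prescribed multiples of the dual cycles $E_v^*$. First I would recall from \cite{NeumWa_SpQ,Nem_coh_splice} the resulting monomial/leading-term description of the associated graded ring $\bigoplus_{l\in L} \F(l)/\F(l+\cdots)$ of the divisorial filtration, which identifies it with a semigroup-type ring governed by the splice diagram. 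This gives a closed product formula for the Hilbert series $H(t)$, hence for $P(t) = -H(t)\prod_{v\in\V}(1-t_v^{-1})$, supported on $\Stop$ by the proposition of N\'emethi quoted above.

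Next I would compare this with the definition of $Z_0(t)$. By \cref{def:zeta}, $Z(t) = \prod_{v\in\V}(1-[E_v^*]t^{E_v^*})^{\delta_v-2}$, and $Z_0(t)$ is its identity-component; note that only nodes ($\delta_v \ge 3$) and leaves ($\delta_v = 1$) contribute nontrivially, exactly the vertices carrying splice-diagram data. The heart of the argument is therefore a matching of two product expansions: the combinatorial expansion of $\prod_v (1-[E_v^*]t^{E_v^*})^{\delta_v-2}$ (projected to $H$-degree zero) against the Poincar\'e series extracted from the splice-diagram graded ring. Both are built from the same dual cycles $E_v^*$ and the same splice-diagram weights (the edge determinants / the integers $d_{vw}$), so the identification reduces to a semigroup bookkeeping identity; this is where I would cite, or reprove along the lines of, N\'emethi's computation in \cite{Nem_Poinc}, where the ``splice diagram = zeta function'' dictionary is set up.

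The main obstacle I expect is the bookkeeping with the group $H = L'/L$: the splice quotient is a quotient of the universal abelian cover by $H$, so the filtration on $\O_{X,0}$ is the $H$-invariant part of the (equivariant) filtration on the cover, and one must check that restricting to $Z_0(t)$ — rather than the full equivariant $Z(t)$ — corresponds precisely to taking $H$-invariants. Concretely, the dual cycles $E_v^*$ generically do \emph{not} lie in $L$, so each local factor $(1-[E_v^*]t^{E_v^*})^{-1}$ mixes $H$-components, and isolating the trivial component is a genuine (if standard) computation; the Neumann--Wahl congruence conditions on the splice diagram are exactly what make this work out. I would organize this by first proving the statement on the universal abelian cover (where the monomial description of the graded ring is cleanest), and then descending to $(X,0)$ by taking $H$-invariants on both sides, matching the $H$-invariant part of the equivariant Poincar\'e series with $Z_0(t)$. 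Once the graded-ring description is in hand, everything else is formal manipulation of power series in $\Z[[t^L]]$ as set up in \cref{bl:series}.
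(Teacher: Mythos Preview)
The paper does not prove this proposition at all: it is stated as a result of N\'emethi, attributed to \cite{Nem_coh_splice,Nem_Poinc}, and left without proof. There is therefore nothing in the paper to compare your proposal against.

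As for the proposal itself, your outline is a fair summary of the architecture of N\'emethi's argument in those references --- pass to the universal abelian cover, use the Neumann--Wahl splice equations to get a monomial description of the associated graded ring of the divisorial filtration, match the resulting product formula against the factors $(1-[E_v^*]t^{E_v^*})^{\delta_v-2}$, and descend by taking $H$-invariants. But what you have written is closer to an annotated table of contents than a proof: the sentence ``this is where I would cite, or reprove along the lines of, N\'emethi's computation'' is precisely the point where all the work lies, namely Okuma's and N\'emethi's analysis of the divisorial filtration for splice quotients and the verification that the end-curve/congruence conditions force the graded pieces to have exactly the dimensions predicted by $Z_0$. If you intend this as more than a literature pointer, that step needs to be filled in; otherwise, simply citing \cite{Nem_coh_splice,Nem_Poinc} as the paper does is the honest move.
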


\begin{rem}
Rational,\index{rational}
minimally elliptic \index{elliptic!minimally}
and
weighted homogeneous \index{weighted homogeneous}
are examples of splice quotient singularities \cite{Okuma_certain,Neu_AbCovQH}.
\end{rem}

\subsection{The Seiberg--Witten invariants}

We will now discuss the
\emph{Seiberg--Witten invariants} \index{Seiberg--Witten invariant}
$\sw_M(\sigma)\in\Q$
associated with any three dimensional manifold $M$ with a
$\spinc$ structure \index{spinc structure@$\spinc$ structure}
$\sigma$.
The definition of these numbers is quite involved and we will only touch
the surface of the theory here. For details, see \cite{Lim} and references
therein. There are, however various identifications of the
Seiberg--Witten invariants. In \cite{MengTaubes}, Meng and Taubes proved that
in the case $H_1(M,\Q) \neq 0$, the Seiberg--Witten invariants are equivalent
to Milnor torsion. Nicolaescu then proved \cite{Nico_SW_tor} that in the
case of a
rational homology sphere, \index{homology sphere!rational}
the Seiberg--Witten invariants are given
by the
Casson--Walker invariant \index{Casson--Walker invariant}
and
Reidemeister--Turaev torsion. \index{Reidemeister--Turaev torsion}
In this case, $\sw_M(\sigma)$ is also given as the
normalized Euler characteristic \index{normalized Euler characteristic}
of either Ozsv\'ath and Szab\'o's
Heegaard--Floer homology \index{Heegaard--Floer homology}
\cite{Rustamov},
or N\'emethi's
lattice homology \index{lattice homology}
associated with $\sigma$,
see \cref{ss:lattice}.

As in \ref{ss:res}, we use the notation $H = H_1(M,\Z)$.

\begin{block}
We start with a short review of
$\spinc$ structures.
For more details, see e.g. \cite{Nico_notes, Nem_Nico_SW}.
For each $n\geq 0$
we have the group $\Spinc(n)$, along with a $\U(1)$ bundle
$\Spinc(n) \to \SO(n)$. This is (for $n\geq 0$) the $\U(1)$ bundle
corresponding to the nontrivial element in $H^2(\SO(n),\Z) = \Z/2\Z$.
Let $X$ be a CW complex, and
let $E\to X$ be an oriented real vector bundle of rank $n$ obtained via a map
$\rho:X\to \BSO(n)$. A
\emph{$\spinc$ structure} \index{spinc structure@$\spinc$ structure}
on $E$ is a lifting
$X\to \BSpinc(n)$ of $\rho$. Since $\ker(\Spinc(n) \to \SO(n)) = \U(1)$, the
difference
of two $\spinc$ structures is a $\U(1)$ bundle, which is zero if and only the
two structures coincide. The set
$\Spinc(E)$
\nomenclature[SpincE]{$\Spinc(E)$}{Set of $\spinc$ structures on the vector
bundle $E$}
of $\spinc$ structures on
$E$ is therefore a torsor over $H^2(X,\Z) = [X,\BU(1)]$, unless it is empty.
A $\spinc$ structure on
an oriented
manifold $M$ is by definition a $\spinc$ structure on
its tangent bundle, their set is denoted by
$\Spinc(M)$.
\nomenclature[SpincM]{$\Spinc(M)$}{Set of $\spinc$ structures on
the manifold $M$}
Denote the action by
$H^2(X,\Z)\times\Spinc(E) \ni (h,\sigma) \mapsto h\sigma \in \Spinc(E)$.

The map $\U(n) \to \SO(2n)$ factors through $\Spinc(2n)$. A complex structure
on a vector bundle of even rank therefore induces a $\spinc$ structure.
In particular, if $E$ has a complex structure, then $\Spinc(E) \neq \emptyset$.

Now, assume that $M$ is the boundary of a complex surface $\X$. We want to
construct the
\emph{canonical $\spinc$ structure}
\index{spinc structure@$\spinc$ structure!canonical}
$\scan\in\Spin(M)$
\nomenclature[sc]{$\scan$}{Canonical $\spinc$ structure on link}
on $M$.
Note first that since $\X$ is a complex manifold, its tangent bundle has
a complex structure which induces $\bscan \in \Spinc(\X)$.
Now, the tangent bundle of $\X$ splits on $M$ as $T\X|_M = \R\oplus TM$,
where we denote simply by $\R$ the trivial line bundle. Here, the first
summand is generated by an outwards pointing vector field. This yields
a lift $M\to\BSO(3)$ of the structure map defining $T\X|_M$, and this map
defines the tangent bundle of $M$. We therefore have lifts
of $M\to\BSO(4)$ to $\BSO(3)$ as well as $\BSpinc(4)$. Since
$\Spinc(3) = \Spinc(4)\times_{\SO(4)}\SO(3)$, this defines a lift
$M\to\BSpinc(3)$ of the structure map of the tangent space $TM$.
This is the canonical $\spinc$ structure $\scan$ on $M$.
By the above statements,
we get a bijection $H = H^2(M,\Z) \leftrightarrow \Spinc(M)$
given by $h \leftrightarrow h\scan$.
\end{block}

\begin{block}
Let $M$ be a compact oriented three dimensional differentiable manifold and
choose a
$\spinc$ structure \index{spinc structure@$\spinc$ structure}
$\sigma$ on $M$. We will assume throughout that the first
Betti number of $M$ is zero, that is, $H_1(M,\Q) = 0$.
Choose a Riemannian metric
$g$ and a closed two form $\eta$ on $M$. Assuming that $g$ and $\eta$ are
chosen sufficiently generic, one obtains a space of
\emph{monopoles}, \index{monopoles}
whose signed count we call the
\emph{unnormalized Seiberg--Witten invariant}
\index{Seiberg--Witten invariant!unnormalized}
and denote by
$\swun_M(\sigma,g,\eta)$.
This number depends on the choice of $g$ and $\eta$. 
The \emph{Kreck--Stolz invariant}
$\KS_M(\sigma,g,\eta)\in\Q$ \nomenclature[KS]{$\KS_M(\sigma,g,\eta)$}
{Kreck--Stolz invariant}
is another number defined by this data
\cite{Lim,Nico_SW_tor}. The
\emph{normalized Seiberg--Witten invariants}
\index{Seiberg--Witten invariant!normalized}
$\sw_M(\sigma)$ \nomenclature[sw]{$\sw_M(\sigma)$}{Seiberg--Witten invariant}
are defined as follows:
\end{block}

\begin{prop}[Lim \cite{Lim}]
The number 
\[
  \sw_M(\sigma) = \swun_M(\sigma,g,\eta) +\KS_M(\sigma,g,\eta)
\]
is independent of the choice of $g$ and $\eta$. \qed
\end{prop}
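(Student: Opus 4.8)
The statement to be proven is Lim's theorem, that the sum $\swun_M(\sigma,g,\eta) + \KS_M(\sigma,g,\eta)$ is independent of the auxiliary data $(g,\eta)$. Since this is a citation of \cite{Lim}, my plan is to sketch the mechanism rather than reconstruct Lim's analysis in full. The underlying principle is a standard \emph{wall-crossing plus index-theoretic correction} argument: as one varies $(g,\eta)$ through a generic path, the count $\swun_M(\sigma,g,\eta)$ jumps precisely when the path crosses a \emph{wall} in the space of parameters (the locus where reducible solutions to the Seiberg--Witten equations appear, i.e.\ where the relevant perturbed Dirac operator has a harmonic spinor or the cohomology class of $\eta$ hits the distinguished integral chamber), and the Kreck--Stolz term $\KS_M(\sigma,g,\eta)$ jumps by exactly the opposite amount, so that the sum is locally constant, hence constant since the parameter space is connected.

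First I would recall the structure of the moduli space of monopoles for generic $(g,\eta)$ under the hypothesis $H_1(M,\Q)=0$: it is a finite set of irreducible solutions, each carrying a sign from the orientation of the appropriate determinant line, whose signed total is $\swun_M(\sigma,g,\eta)$; the assumption $b_1(M)=0$ is what forces the irreducible part to be zero-dimensional and what makes reducibles isolated in the parameter space rather than generic. Second, I would analyze a generic one-parameter family $(g_s,\eta_s)$ joining two choices: transversality makes the parametrized moduli space a smooth one-manifold, and the only failure of compactness/properness of its projection to the $s$-interval occurs at finitely many values where a reducible is born or dies. Third, at each such crossing I would compute the jump in $\swun$ via the spectral flow of the family of Dirac-type operators, using the excision/gluing description of the local model near a reducible (a cone on a complex projective space, governed by the dimension of the relevant kernel). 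Fourth, I would recall the definition of $\KS_M(\sigma,g,\eta)$ as a combination of eta-invariants (of the odd signature operator and of the perturbed Dirac operator) together with a curvature integral, and invoke the Atiyah--Patodi--Singer index theorem for a bounding four-manifold to see that its variation across the same wall is governed by the same spectral-flow quantity with the opposite sign. Combining the third and fourth steps gives that the sum is unchanged across each wall, and connectedness of the space of pairs $(g,\eta)$ (the metrics form a convex set, the two-forms an affine space) finishes the argument.

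The main obstacle is the wall-crossing computation itself: one must match, term by term, the jump of the signed monopole count against the jump of the Kreck--Stolz invariant, and this requires care with orientations (the sign attached to each monopole, and the sign conventions in the eta-invariant normalization) and with the precise local model of the Seiberg--Witten equations near a reducible solution. Getting the two jumps to cancel rather than add is the entire content, and it is exactly the delicate point that \cite{Lim} handles; I would therefore present this step by appeal to that reference, while making explicit that the role of the hypothesis $H_1(M,\Q)=0$ is to guarantee (i) the monopole count is a well-defined finite signed number, and (ii) the reducibles, which create the walls, form a codimension-one locus whose complement is connected with locally constant invariant. The remaining steps — connectedness of parameter space, genericity/transversality, and the reduction of the full variation to a sum of wall crossings — are routine once the single-wall statement is in hand.
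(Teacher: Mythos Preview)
The paper does not prove this proposition at all: it is stated with a terminal \qed and attributed to Lim \cite{Lim}, i.e.\ treated as a black-box citation. Your proposal therefore goes well beyond what the paper does, supplying a sketch of the wall-crossing/spectral-flow mechanism that underlies Lim's argument; this is a reasonable outline of how such results are proved, but there is nothing in the paper to compare it against.
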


\begin{rem}
Lim also obtained results in the case when the first Betti number is greater
or equal to $1$. We will not discuss these results here, since our results
concern rational homology spheres only.
\end{rem}

\begin{block}
Let $M$ be a
rational homology sphere \index{homology sphere!rational}
with a $\spinc$ structure $\sigma$.
Denote by
$\lambda(M)$ \nomenclature[l(M)]{$\lambda(M)$}
{Casson--Walker--Lescop invariant of $M$}
the
\emph{Casson--Walker--Lescop invariant}\index{Casson--Walker--Lescop invariant}
of $M$, normalized as in \cite{Lescop}. Denote by
\[
  \T_{M,\sigma} = \sum_{h\in H} \T_{M,\sigma}(h) h \in\Q[H]
\] \nomenclature[Tms]{$\T_{M,\sigma}$}{Reidemeister--Turaev torsion}
the
\emph{Reidemeister--Turaev torsion} \index{Reidemeister--Turaev torsion}
defined in
\cite{Turaev_MRL,Turaev_book}.
The \emph{normalized} (or \emph{modified})
\emph{Reidemeister--Turaev torsion}
\index{Reidemeister--Turaev torsion!normalized}
is defined as
\[
  \T^0_{M,\sigma} = \sum_{h\in H}
      \left(\T_{M,\sigma}(h) - \frac{\lambda(M)}{|H|}\right)h \in\Q[H].
\]
\nomenclature[T0ms]{$\T^0_{M,\sigma}$}{Normalized Reidemeister--Turaev torsion}
These invariants are discussed in \cite{Nem_Nico_SW}.
\end{block}

\begin{rem} \label{rem:CWL}
The
\emph{Casson}, \index{Casson invariant}
\emph{Casson--Walker} \index{Casson--Walker invariant}
and
\emph{Casson--Walker--Lescop} \index{Casson--Walker--Lescop invariant}
invariants are successive generalizations. Casson introduced an integral
invariant
$\lambda_C(M)$ \nomenclature[lC(M)]{$\lambda_C(M)$}{Casson invariant of $M$}
for $M$ an integral homology sphere. For $M$ a
rational homology sphere, Walker defined
$\lambda_{CW}(M)$ \nomenclature[lCW(M)]
{$\lambda_{CW}(M)$}{Casson--Walker invariant of $M$}
satisfying
$\lambda_{CW}(M) = 2\lambda_C(M)$ if $M$ is an integral homology sphere.
In \cite{Lescop}, Lescop defined an invariant
$\lambda_{CWL}(M)$ \nomenclature[lCWL(M)]{$\lambda_{CWL}(M)$}
{Casson--Walker--Lescop invariant of $M$}
for any
closed oriented three dimensional manifold, satisfying
$\lambda_{CWL}(M) = \frac{|H_1(M,\Z)|}{2} \lambda_{CW}(M)$ whenever
$M$ is a rational homology sphere. We will follow the notation of
Lescop, that is, $\lambda = \lambda_{CWL}$.
\end{rem}

\begin{prop}[Nicolaescu \cite{Nico_SW_tor}]
Let $M$ be a
rational homology sphere \index{homology sphere!rational}
with a
$\spinc$ structure \index{spinc structure@$\spinc$ structure}
$\sigma$ and
set $\SW_{M,\sigma} = \sum_{h\in H} \sw(M,h\sigma) h \in \Q[H]$. Then
$\SW_{M,\sigma} = \T^0_{M,\sigma}$.
\end{prop}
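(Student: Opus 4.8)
\emph{Proof strategy.} The plan is to establish the coefficientwise identity $\sw_M(h\sigma) = \T_{M,\sigma}(h) - \lambda(M)/|H|$ for every $h\in H$; pairing this against $h$ then gives the asserted equality in $\Q[H]$. By Lim's proposition quoted above, for any generic pair $(g,\eta)$ one has $\sw_M(h\sigma) = \swun_M(h\sigma,g,\eta) + \KS_M(h\sigma,g,\eta)$ with the left-hand side independent of $(g,\eta)$, so the first task is to pick $(g,\eta)$ for which the monopole count is computable, and then to control the Kreck--Stolz term through the associated Dirac and signature $\eta$-invariants.

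For the ``variable part'' I would reduce to the case of positive first Betti number. Removing a tubular neighbourhood of a suitably chosen knot $\kappa\subset M$ produces a complement $N = M\setminus\nu(\kappa)$ with $b_1(N)=1$, on which the theorem of Meng and Taubes \cite{MengTaubes}, refined by Turaev so as to pin down the dependence on the $\spinc$ structure \cite{Turaev_MRL,Turaev_book}, identifies the Seiberg--Witten function with the Milnor--Reidemeister torsion. The plan is then to glue the solid torus back in and apply the Seiberg--Witten gluing/surgery formula: stretch a long neck along $\partial N$, analyse the finite-energy reducible and irreducible monopoles contributed by the solid-torus side, and track the correction term that appears. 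The outcome should be that $h\mapsto \swun_M(h\sigma,g,\eta) + \KS_M(h\sigma,g,\eta) - \T_{M,\sigma}(h)$ is a constant $-C_M$ independent of $h$.

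It then remains to identify $C_M$. Summing over $h$ gives $|H|\,C_M = \sum_h \T_{M,\sigma}(h) - \sum_h \sw_M(h\sigma)$, which I would evaluate by comparing surgery formulas. Every rational homology sphere arises from $S^3$ by a finite sequence of Dehn surgeries on knots in rational homology spheres, and $C_{S^3}=0$ since there $H$ is trivial and $\T$, $\sw$, $\lambda$ all vanish. Under a $p/q$-surgery, Walker's surgery formula expresses the jump of $\lambda_{CW}$, hence of $\lambda = \lambda_{CWL}$ (cf.\ \cref{rem:CWL}), as a Dedekind-sum term plus a term built from the Alexander polynomial; Turaev's surgery formula for torsion together with the Seiberg--Witten surgery formula should produce matching Dedekind-sum corrections. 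Thus $\sum_h \sw_M(h\sigma) - \sum_h \T_{M,\sigma}(h)$ transforms under surgery exactly as $-\lambda(M)$ does, and induction from the base case forces $C_M = \lambda(M)/|H|$, which is the required identity.

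The hard part will be the Seiberg--Witten gluing/surgery formula together with the behaviour of the Kreck--Stolz term --- equivalently of the relevant Dirac and signature $\eta$-invariants and the spectral flow --- under neck-stretching along the torus: this needs Atiyah--Patodi--Singer-type index computations, control of the adiabatic limit of the Dirac operator along the neck, and the precise matching of the reducible contribution with the Dedekind-sum term in Walker's formula. The remaining ingredients --- Meng--Taubes, Turaev's refinement, Walker's surgery formula, and the base case $S^3$ --- can be quoted.
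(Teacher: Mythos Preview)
The paper does not prove this proposition. It is stated with attribution to Nicolaescu \cite{Nico_SW_tor} and no proof is given in the paper; the result is simply quoted as background. So there is no ``paper's own proof'' to compare your proposal against.

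Your sketch is a reasonable high-level outline of the kind of argument Nicolaescu actually carries out in \cite{Nico_SW_tor}: reduce the identification of $\sw$ with torsion to the Meng--Taubes theorem via a surgery/excision argument, control the wall-crossing and $\eta$-invariant contributions, and pin down the constant using Walker's surgery formula for $\lambda$. But none of this is developed in the present paper, and filling in the ``hard part'' you flag (the gluing formula and the behaviour of the Kreck--Stolz term under neck-stretching) is precisely the technical core of Nicolaescu's paper. For the purposes of this thesis the correct move is simply to cite \cite{Nico_SW_tor}, as the author does.
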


\begin{rem}
Since we will only deal with
rational homology spheres, \index{homology sphere!rational}
we do not state the
corresponding statements in \cite{Nico_SW_tor} about three dimensional
manifolds with
nontrivial rational first homology.
\end{rem}

We will now describe the identification of the
normalized Seiberg--Witten invariants
\index{Seiberg--Witten invariant!normalized}
which we will use to prove the main theorem in \cref{s:SW}. 
Recall that the coefficients $q_{l'}$ were introduced in
\cref{def:zeta}.

\begin{prop}[N\'emethi \cite{Nem_SW,thebook}] \label{prop:Q_SW}
Assume the notation in \cref{ss:res} and \cref{ss:Lipman} and that $M$ is
a rational homology sphere. Take any $l'\in L'$ satisfying
$(l',E_v) \leq (Z_K,E_v)$ for all $v\in \V$. Then
\begin{equation} \label{eq:prop:Q_SW}
  q_{l'}
    = \sw_M([l']\scan) - \frac{(-Z_K + 2l')^2 + |\V|}{8}.
\end{equation}
\end{prop}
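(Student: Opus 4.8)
The statement relates the counting function coefficients $q_{l'}$ to Seiberg--Witten invariants; I would prove it by reducing to the already-known relation between $Q(t)$ and the Reidemeister--Turaev torsion and Casson--Walker--Lescop invariant, using \cref{prop:pg_hl} (in its equivariant form) as the bridge on the analytic side and Nicolaescu's identification $\SW_{M,\sigma} = \T^0_{M,\sigma}$ on the topological side. The plan is to first recall that the zeta function $Z(t)$ is, up to the combinatorial factor $\prod_v(1-[E_v^*]t^{E_v^*})^{\delta_v-2}$, a purely topological object, and that the Reidemeister--Turaev torsion $\T_{M,\sigma}$ is recovered as a suitable ``polynomial part''/limit of $Z(t)$ along the Lipman cone; this is precisely the content of the N\'emethi--Nicolaescu circle of results cited as \cite{Nem_SW,Nem_Nico_SW,thebook}. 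One then has to track the normalization: the passage from $Z(t)$ to $Q(t)$ (multiplication by $\sum_{l\not\geq 0}t^l$) corresponds on the torsion side to the passage from $\T_{M,\sigma}$ to a counting function whose leading behaviour picks up the Casson--Walker--Lescop term $\lambda(M)/|H|$, i.e. to $\T^0_{M,\sigma}$.

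The key steps, in order, would be: (i) fix the coset $h = [l']\in H$ and restrict attention to $Q_h(t)$, which is supported on that coset; (ii) invoke the identity $Q(t) = Z(t)\cdot\sum_{l\not\geq 0}t^l$ (valid since $Z$ is supported on $\Stop$, per the remark after \cref{def:zeta}) to express $q_{l'}$ as a finite sum $\sum_{l\in L,\,l\not\geq 0} z_{l'+l}$; (iii) recognize this sum, for $l'$ in the ``stable range'' $(l',E_v)\leq(Z_K,E_v)$, as agreeing with the corresponding sum for the equivariant Hilbert series in the splice-quotient-free generality where only the topological side is used — more precisely, quote the already-established formula (N\'emethi) that writes $q_{l'}$ in terms of $\T^0_{M,\sigma}(h)$ evaluated at the relevant coset together with a quadratic correction term; (iv) substitute Nicolaescu's $\SW_{M,\sigma}=\T^0_{M,\sigma}$ to replace the torsion by $\sw_M([l']\scan)$; (v) finally, identify the quadratic/linear correction term. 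For the last point one computes, using the adjunction equalities defining $Z_K$ and the definition of $\scan$ via the bounding complex surface, that the Kreck--Stolz-type normalization contributes exactly $-\big((-Z_K+2l')^2+|\V|\big)/8$; here $|\V|$ enters as the signature/second-Betti-number contribution of the plumbing four-manifold and $(-Z_K+2l')^2$ is the square of the relevant characteristic element representing the $\spinc$ structure $[l']\scan$.

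The main obstacle I anticipate is step (v): getting the constant and the quadratic term exactly right, including signs and the factor $1/8$. This requires carefully matching three normalizations — N\'emethi's normalization of $Z(t)$ and $Q(t)$, Lescop's normalization of $\lambda(M)$ (as flagged in \cref{rem:CWL}, with the awkward $|H_1(M,\Z)|/2$ factor), and Lim's normalization of $\sw_M$ via the Kreck--Stolz invariant — and checking that the characteristic element $-Z_K+2l'$ is indeed the one whose square appears in the surgery formula for $\KS_M$. A clean way to organize this is: prove the formula first for one convenient representative $l'$ in each coset (say the distinguished one closest to $Z_K$), where the quadratic term simplifies, and then propagate to all $l'$ in the stable range by comparing how both sides change under $l'\mapsto l'+E_v^*$ or $l'\mapsto l'-E_v$ — the left side changes by an explicit sum of $z$-coefficients, the right side by the change in $\sw_M$ under the $H$-action plus the change in the quadratic term, and these are matched using the Lipman-cone support of $Z(t)$ together with the defining linear relations $(E_v^*,E_w)=-\delta_{v,w}$ and $(Z_K,E_v)=-E_v^2+2g_v-2$. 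Everything else is bookkeeping on top of the cited theorems; no genuinely new geometric input beyond them is needed, since this proposition is essentially a repackaging of \cite{Nem_SW,thebook} in the form convenient for \cref{s:SW}.
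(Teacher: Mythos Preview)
The paper does not actually prove this proposition: it is stated as a result due to N\'emethi, with citations to \cite{Nem_SW,thebook}, and no proof is given in the text. There is therefore nothing in the paper to compare your proposal against; the statement functions as an imported black box whose proof lives entirely in the cited references.

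Your sketch is a reasonable summary of the circle of ideas behind the result in those references---the identification of the zeta function with Reidemeister--Turaev torsion in the stable range, Nicolaescu's theorem $\SW_{M,\sigma}=\T^0_{M,\sigma}$, and the normalization via the characteristic element $-Z_K+2l'$---and your caution about step~(v) (matching three normalizations) correctly identifies where the real work lies. But for the purposes of this paper no proof is expected or supplied; the proposition is simply quoted.
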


\begin{rem} \label{rem:SW}
\begin{list}{(\thedummy)}
{
  \usecounter{dummy}
  \setlength{\leftmargin}{0pt}
  \setlength{\itemsep}{0pt}
  \setlength{\itemindent}{4.5pt}
}

\item
In \cite{Laszlo_th}, L\'aszl\'o develops a general theory of multivariable
power series \index{power series}
and defines a notion of a
periodic constant \index{periodic constant}
(see also \cref{ss:pol_part}).
In his language, 
\cref{eq:prop:Q_SW} means that for $h\in H$, the periodic constant of $Z_h(t)$
is the number
\[
  \sw_M(h\scan) - \frac{(-Z_K + 2r_h)^2 + |\V|}{8},
\]
where $r_h$ is the unique element in $L'$ with $[r_h] = h$ and
$0\leq m_v(r_h) < 1$ for all $v\in \V$.

\item \label{it:SW_MI}
Although neither side of \cref{eq:prop:Q_SW} is generally easy to compute,
it shows that the normalized Seiberg--Witten invariant behaves with
respect to the zeta function as the geometric genus does with respect
to the Poincar\'e series, see \cref{prop:H_from_P,prop:pg_hl}.
In particular, N\'emethi's main identity $Z_0 = P$ \cite{Nem_Poinc} would
imply
the Seiberg--Witten invariant conjecture which is discussed in \cref{ss:SWIC}.

\item
In \cref{s:SW}, the left hand side of \cref{eq:prop:Q_SW} is calculated
in terms of a Newton diagram, given some nondegeneracy conditions (these
are defined in \cref{s:newton_diag}).

\end{list}
\end{rem}

\subsection{The Seiberg--Witten invariant conjecture} \label{ss:SWIC}

In this subsection we give a very brief account of the
Seiberg--Witten invariant conjecture
\index{Seiberg--Witten invariant conjecture}
of N\'emethi and Nicolaescu.

\begin{block}
In \cite{Nem_Nico_SW}, N\'emethi and Nicolaescu conjectured a topological
upper bound on the
geometric genus \index{geometric genus}
of a normal surface singularity,
whose link is a
rational homology sphere \index{homology sphere!rational}
in terms of the
normalized Seiberg--Witten invariant
\index{Seiberg--Witten invariant!normalized}
of the
link, \index{link}
and the
resolution graph. \index{resolution graph}
More precisely, the
\emph{Seiberg--Witten invariant conjecture (SWIC)}
\nomenclature[SWIC]{SWIC}{Seiberg--Witten Invariant Conjecture}
says that
\begin{equation} \label{eq:SWIC}
  \sw_{M}(\scan) - \frac{Z_K^2+|\V|}{8} \geq p_g,
\end{equation}
with equality if the singularity is
$\Q$-Gorenstein \index{Gorenstein!$\Q$}
(in particular,
Gorenstein). \index{Gorenstein}
If the singularity is a
complete intersection \index{complete intersection}
and the link is an
integral homology sphere, \index{homology sphere!integral}
then the
conjecture is equivalent with the
\emph{Casson invariant conjecture (CIC)} \index{Casson invariant conjecture}
\nomenclature[CIC]{CIC}{Casson Invariant Conjecture}
of Neumann and Wahl \cite{Neu_Wa_Ca}.
Although counterexamples have been found to the SWIC (see below), it is
still an interesting question to ask, under which conditions does
the SWIC hold?
\end{block}

\begin{example}
\begin{list}{(\thedummy)}
{
  \usecounter{dummy}
  \setlength{\leftmargin}{0pt}
  \setlength{\itemsep}{0pt}
  \setlength{\itemindent}{4.5pt}
}

\item
Neumann and Wahl proved the
CIC \index{Casson invariant conjecture}
for
weighted homogeneous \index{weighted homogeneous}
singularities
(equivalently, singularities of \emph{Brieskorn--Pham} type
\cite{Neu_AbCovQH}),
suspensions \index{suspension}
of plane curves
and certain
complete intersections \index{complete intersection}
in $\C^4$ \cite{Neu_Wa_Ca}.
They also note that in the case of Brieskorn--Pham
hypersurface \index{hypersurface}
singularities, that is, singularities given by an equation of
the form $x^p+y^q+z^r = 0$, the conjecture follows from work of
Fintushel and Stern \cite{Fin_Ste}.

The existence of complete intersection singularities with integral
homology sphere link, other than the ones listed above, is an interesting
open problem.

\item
N\'emethi and Nicolaescu proved the SWIC
for certain
rational \index{rational}
and
minimally elliptic \index{elliptic!minimally}
singularities
 \cite{Nem_Nico_SW}, for singularities with a
good $\C^*$ action \index{good $\C^*$ action}
\cite{Nem_Nico_SWII} and for
suspensions \index{suspension}
of irreducible
plane curve \index{plane curve}
singularities \cite{Nem_Nico_SWIII}.

\item
N\'emethi and Okuma proved the
CIC \index{Casson invariant conjecture}
for singularities of
splice \index{splice}
type
\cite{Nem_Ok_CIC}, as well as the SWIC for
splice quotients \index{splice!quotient}
\cite{Nem_Ok_SWIC}
(see \cite{NeumWa_Sp,NeumWa_SpQ} for definitions).

\item
Using
superisolated \index{superisolated}
singularities, Luengo Velasco, Melle Hern\'andez and
N\'emethi constructed counterexamples to the
SWIC \index{Seiberg--Witten invariant conjecture}
\cite{LVMHN_super}.
More precisely,
they constructed
hypersurface \index{hypersurface}
singularities (in particular,
Gorenstein) \index{Gorenstein}
for which \cref{eq:SWIC} does not hold.

\item
In \cite{Nem_Bal}, N\'emethi and the author formulated a different
topological characterization for the geometric genus which was
proved for superisolated singularities and Newton nondegenerate
singularities. In the latter case, the SWIC is proved
in \cref{s:SW}.

\end{list}
\end{example}

\subsection{Computation sequences}

In this section we will discuss
computation sequences \index{computation sequence}
and an upper bound
on the
geometric genus \index{geometric genus}
obtained by such sequences.

In \cite{Lauf_rat}, Laufer constructs a computation sequence to the
minimal cycle, \index{minimal cycle}
yielding a relatively simple procedure for its computation.
Furthermore, a numerical condition is given for
rationality, \index{rational}
i.e. whether $p_g = 0$.
In \cite{Lauf_minell}, a similar computation sequence
plays an essential role in
the topological characterisation of
minimally elliptic \index{elliptic!minimally}
singularities.
This idea was generalized by Yau in \cite{Yau_two} and by N\'emethi
in \cite{Nem_WE} for more general
elliptic \index{elliptic} singularities, where a bound of the type
\cref{eq:comp_seq} is given and equality characterized
by triviality of certain line bundles.

One of the main results
in \cite{Nem_Bal} is the existence of a computation sequence to the
anticanonical cycle \index{anticanonical cycle}
obtained
directly from the
resolution graph \index{resolution graph}
yielding equality in \cref{eq:comp_seq}
in the case of
Newton nondegenerate \index{Newton nondegenerate}
singularities,
thus giving a topological identification of the
geometric genus. \index{geometric genus}
This result, as well as some improvements, is described in
\cref{s:seq,s:pg}.

\begin{definition} \label{def:comp_seq}
Assume given a
resolution graph \index{resolution graph}
$G$ for a singularity $(X,0)$.
Let $Z \in L$ be an effective cycle. A
\emph{computation sequence} \index{computation sequence}
for $Z$
is a sequence $Z_0, \ldots, Z_k$ so that $Z_0 = 0$, $Z_k = Z$ and for
each $i$ we have a $v(i) \in \V$ so that $Z_{i+1} = Z_i + E_{v(i)}$.
Given such a computation sequence, its
\emph{continuation to infinity} \index{continuation to infinity}
is the sequence $(Z_i)_{i=0}^\infty$ recursively defined by
$Z_{i+1} = Z_i + E_{v(i)}$ where we extend $v$ to $\N$ by
$v(i') = v(i)$ if $i' \equiv i \, (\mod k)$.
\end{definition}

The main elements of the  statement and proof of the following theorem
are already present in \cite{Lauf_minell,Nem_WE}.
A different formulation can also be found in
\cite{Lattice_coh} Proposition 6.2.2.

\begin{thm} \label{thm:comp_seq}
Assume that each component $E_v$ of the exceptional divisor is a rational
curve.
Let $Z\in L$ be an effective cycle and $(Z_i)_{i=0}^k$ a
computation sequence \index{computation sequence}
for $Z$. Then
\begin{equation} \label{eq:comp_seq}
  h_Z \leq \sum_{i=0}^{k-1} \max\{ 0, (-Z_i,E_{v(i)})+1 \}
\end{equation}
and we have an equality if and only if the natural maps
$H^0(\X,\O_{\X}(-Z_i)) \to H^0(E_{v(i)},\O_{E_{v(i)}}(-Z_i))$
are surjective for all $i$.
\end{thm}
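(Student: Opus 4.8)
The plan is to telescope the length $h_Z$ along the computation sequence, comparing the successive quotients $\O_{X,0}/\F(Z_i)$ with $\O_{X,0}/\F(Z_{i+1})$. Since $Z_{i+1} = Z_i + E_{v(i)}$, the ideal $\F(Z_{i+1}) = \pi_*H^0(\X,\O_{\X}(-Z_{i+1}))$ is contained in $\F(Z_i)$, and $h_Z = h_{Z_k} = \sum_{i=0}^{k-1}(h_{Z_{i+1}} - h_{Z_i})$ because $h_{Z_0} = h_0 = 0$. So it suffices to bound each difference $h_{Z_{i+1}} - h_{Z_i} = \dim_\C \F(Z_i)/\F(Z_{i+1})$. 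The natural tool is the short exact sheaf sequence
\[
  0 \to \O_{\X}(-Z_{i+1}) \to \O_{\X}(-Z_i) \to \O_{E_{v(i)}}(-Z_i) \to 0,
\]
obtained by restricting to $E_{v(i)}$, where $\O_{E_{v(i)}}(-Z_i)$ denotes the restriction of the line bundle $\O_{\X}(-Z_i)$ to $E_{v(i)}$; note that $\deg_{E_{v(i)}}\O_{\X}(-Z_i) = (-Z_i, E_{v(i)})$.

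First I would take $\pi_*$ of this sequence and use that $\pi_*\O_{\X}(-Z_j) = \F(Z_j)$ together with the fact that $E_{v(i)}$ is a rational curve lying over the single point $0$, so $\pi_*\O_{E_{v(i)}}(-Z_i) = H^0(E_{v(i)}, \O_{E_{v(i)}}(-Z_i))$ as a $\C$-vector space. This gives an exact sequence of $\O_{X,0}$-modules
\[
  0 \to \F(Z_{i+1}) \to \F(Z_i) \to H^0(E_{v(i)}, \O_{E_{v(i)}}(-Z_i)),
\]
whence $h_{Z_{i+1}} - h_{Z_i} = \dim_\C \F(Z_i)/\F(Z_{i+1}) \leq \dim_\C H^0(E_{v(i)},\O_{E_{v(i)}}(-Z_i))$, with equality precisely when the last arrow is surjective, i.e. when $H^0(\X,\O_{\X}(-Z_i)) \to H^0(E_{v(i)},\O_{E_{v(i)}}(-Z_i))$ is surjective (the higher direct image $R^1\pi_*\O_{\X}(-Z_{i+1})$ being supported at $0$, its sections are exactly the cokernel). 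Next, since $E_{v(i)} \cong \CP^1$, a line bundle of degree $d = (-Z_i,E_{v(i)})$ has $h^0 = \max\{0, d+1\}$; substituting gives the termwise bound $h_{Z_{i+1}} - h_{Z_i} \leq \max\{0, (-Z_i, E_{v(i)}) + 1\}$, and summing over $i$ yields \cref{eq:comp_seq}.

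For the equality statement I would argue that equality in \cref{eq:comp_seq} holds if and only if equality holds in every termwise estimate, which (by the exact sequence above) is equivalent to surjectivity of all the maps $H^0(\X,\O_{\X}(-Z_i)) \to H^0(E_{v(i)},\O_{E_{v(i)}}(-Z_i))$. One direction is immediate from summing nonnegative terms; the other needs that each termwise inequality is in fact an equality, which again is read off the cohomology sequence. The main obstacle I anticipate is bookkeeping the identification $\pi_*\O_{E_{v(i)}}(-Z_i) = H^0(E_{v(i)},\O_{E_{v(i)}}(-Z_i))$ and making sure the connecting map in $\pi_*$-cohomology is exactly the restriction map appearing in the statement — i.e. checking that no contribution is lost when pushing forward, which relies on $E_{v(i)}$ being proper over $0$ and on $\O_{E_{v(i)}}$ having no higher cohomology beyond what the genus-zero hypothesis controls. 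Everything else is the standard degree computation on $\CP^1$ and a telescoping sum.
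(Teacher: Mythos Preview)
Your proposal is correct and follows essentially the same approach as the paper: both telescope $h_Z$ along the sequence, use the short exact sequence $0\to\O_{\X}(-Z_{i+1})\to\O_{\X}(-Z_i)\to\O_{E_{v(i)}}(-Z_i)\to 0$, and read off each step from the resulting long exact sequence together with $h^0(\CP^1,\O(d))=\max\{0,d+1\}$. The only cosmetic difference is that you pass through $\pi_*$ and the filtration $\F$, whereas the paper takes global sections on $\X$ directly; since $X$ is Stein these are the same computation, and your worry about identifying $\pi_*\O_{E_{v(i)}}(-Z_i)$ with $H^0(E_{v(i)},\O_{E_{v(i)}}(-Z_i))$ is immediate because $E_{v(i)}$ is proper over a point.
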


\begin{rem} \label{rem:comp_seq_pg}
If $G$ is
numerically Gorenstein, \index{Gorenstein!numerically}
then we have $h_{Z_K} = p_g$ by
\cref{prop:pg_hl}. Therefore, \cref{eq:comp_seq} gives a topological
bound on the
geometric genus \index{geometric genus}
in this case.
\end{rem}

\begin{proof}[Proof of \cref{thm:comp_seq}]
For any $i$, we have a short exact sequence
\[
\xymatrix{
  0 \ar[r] & \O_{\X}(-Z_{i+1}) \ar[r] & \O_{\X}(-Z_i) \ar[r] &
             \O_{E_{v(i)}}(-Z_i) \ar[r] & 0
}
\]
which yields the long exact sequence
\[
\xymatrix{
  0 \ar[r] & H^0(\X,\O_{\X}(-Z_{i+1})) \ar[r] & H^0(\X,\O_{\X}(-Z_i)) \ar[r] &
             H^0(\X,{\O_{E_{v(i)}}}(-Z_i)) 
    \ar`r/8pt[rd]`[lll]`[dlll]`/8pt[dll] [dll]\\
           & H^1(\X,\O_{\X}(-Z_{i+1})) \ar[r] & H^1(\X,\O_{\X}(-Z_i)) \ar[r] &
             H^1(\X,\O_{E_{v(i)}}(-Z_i)) \ar[r] & 0.
}
\]
Denote by $\beta_i$ the connection homomorphism of this sequence. We get
\[
\begin{split}
  h_Z =& \dim_\C \frac{H^0(\X,\O_{\X})}{H^0(\X,\O_{\X}(-Z))}\\
      =& \sum_{i=0}^{k-1}
	      \dim_\C \frac{H^0(\X,\O_{\X}(-Z_i)}{H^0(\X,\O_{\X}(-Z_{i+1}))}\\
      =& \sum_{i=0}^{k-1}
	      \dim_\C H^0(\X,\O_{E_{v(i)}}(-Z_i)) - \rk \beta_i.
\end{split}
\]
The statement now follows, since, on one hand,
$E_{v(i)}\cong\CP^1$ and the degree of the line bundle
$\O_{E_{v(i)}}(-Z_i)$ is $(-Z_i,E_{v(i)})$, and on the other hand,
the surjectivity condition is equivalent to $\rk \beta_i = 0$.
\end{proof}

\begin{rem} \label{rem:subseq}
Assume that for some $i$ we have $(Z_i,E_{v(i)}) > 0$. Then the group
$H^0(E_{v(i)},\O_{E_{v(i)}}(-Z_i))$ vanishes and the surjectivity condition
in \cref{thm:comp_seq} holds automatically. Furthermore,
the $i$\textsuperscript{th} summand in \cref{eq:comp_seq} vanishes.
Assume given a
subsequence \index{computation sequence!subsequence}
$i_1,\ldots, i_s$ of $0,\ldots, k-1$ so that if
$0\leq i\leq k-1$ and $i\neq i_r$ for all $r$, then $(Z_i,E_{v(i)}) > 0$.
Then \cref{thm:comp_seq} can be phrased entirely in terms of this subsequence,
that is, the sum on the right hand side of \cref{eq:comp_seq} can be taken
over the subsequence $(i_r)$ only, and the surjectivity condition is only
needed for the $i_r$\textsuperscript{th} terms.
\end{rem}

\subsection{Lattice cohomology and path lattice cohomology} \label{ss:lattice}

In \cite{Lattice_coh}, N\'emethi introduced
\emph{lattice cohomology} \index{lattice cohomology}
as well as the related
\emph{path lattice cohomology}. \index{lattice cohomology!path}
In general, lattice cohomology is associated with any
$\spinc$ structure \index{spinc structure@$\spinc$ structure}
on the link and the results given here have generalizations to this setting.
For simplicity, we will assume that $G$ is the
resolution graph \index{resolution graph}
of a
numerically Gorenstein \index{Gorenstein!numerically}
singularity $(X,0)$,
and we will only consider invariants
associated with the canonical $\spinc$ structure
$\scan$. \index{spinc structure@$\spinc$ structure!canonical}

In \cref{prop:eu_SW}, a connection between lattice cohomology
and the Seiberg--Witten invariants is obtained.
\Cref{prop:eu_path}, on the other hand, gives a connection with computation
sequences.

\begin{block}
Let $L = \Z\gen{E_v}{v\in\V}$ be the lattice associated with a resolution graph
$G$ as in \cref{ss:res}. We give $L_\R = L\otimes \R$ the structure of a
CW complex by taking as cells the cubes
$\square_{l,I} = \set{l+\sum_{v\in I} t_v E_v}{\fa{v\in I}{0<t_v<1}}$,
\nomenclature{$\square_{l,I}$}{Cube}
where $l\in L$ and $I \subset \V$.
Let
$\mathcal{Q}$ \nomenclature[Q]{$\mathcal{Q}$}{Set of cubes}
be the set of these cubes.
For $l\in L$ we set
$\chi(l) = (-l,l-Z_K)/2$. 
\nomenclature[chl]{$\chi(l)$}{Analytic euler characteristic extended to $L$}
Note that if $l$ is an effective
cycle, then $\chi(l)$ is the Euler characteristic of the
structure sheaf of the scheme defined by the ideal sheaf $\O_{\X}(-l)$.
The
\emph{weight function} \index{weight function}
$w$ \nomenclature[w]{$w$}{Weight function}
is defined on $\mathcal{Q}$ by setting
\[
  w(\square_{l,I}) = \max\set{\chi(l+\sum_{v\in I'} E_v)}{I'\subset I}.
\]
In this way, if $n\in\Z$, then
the set $S_n = \cup\set{\square_{l,I}}{w(\square_{l,I}) \leq n}$
is a subcomplex of $L_\R$.
Note that, by negative definiteness, $\chi$ is bounded from below on $L$
and the subcomplexes $S_n$ are finite.
\end{block}

\begin{definition}
Let $A \subset L_\R$ be a subcomplex. The $q^{\mathrm{th}}$
\emph{lattice cohomology} \index{lattice cohomology}
of the pair $(A,w)$ is defined as
\[
  \mathbb{H}^q(A,w) = \bigoplus_{n\in\Z} H^q(A\cap S_n,\Z),
\] 
for $q\geq 0$
We also set
$\mathbb{H}^*(A,w) = \oplus_{q\geq 0} \mathbb{H}^q(A,w)$.
\nomenclature[H*Aw]{$\mathbb{H}^*(A,w)$}{Lattice cohomology}
For any $q$ and $n$, the inclusion $A\cap S_n \subset A\cap S_{n+1}$
induces a map on cohomology which we denote by $U$. This
gives $\mathbb{H}^*(A,w)$ the structure of a $\Z[U]$ module.
Similarly, we get \emph{reduced lattice cohomology}
$\mathbb{H}^*_{\mathrm{red}}(A,w)$ by replacing cohomology
\nomenclature[H*redAw]{$\mathbb{H}^*_{\mathrm{red}}(A,w)$}
{Reduced lattice cohomology}
$H^*$ by reduced cohomology $\tilde H^*$.
\end{definition}

\begin{definition}
Let $G$ be a plumbing graph representing the link of a surface singularity
with a canonical cycle $K$.
The associated \emph{lattice cohomology} is
$\mathbb{H}^*(G,K) = \mathbb{H}^*(L_\R,w)$.

Assume that $G$ is numerically Gorenstein (i.e. $Z_K\in L$) and
let $(Z_i)_{i=0}^k$ be a computation sequence to $Z_K$. Let $\gamma$
be the subcomplex of $L_\R$ consisting of the zero dimensional cubes
$\{Z_i\}$ for $0\leq i\leq k$  and one dimensional cubes with
vertices $Z_i, Z_{i+1}$ for $0\leq i < k$.
We say that $\gamma$ is the \emph{path} associated to the computation sequence
$(Z_i)_{i=0}^k$.
We define the
\emph{path lattice cohomology} associated with the computation sequence
$(Z_i)_{i=0}^k$ as $\mathbb{H}^*(\gamma,w)$.
\end{definition}

\begin{rem}
In general, N\'emethi defines $\mathbb{H}^*(G,k)$ for any
\emph{characteristic element} $k \in L'$, an example of which is
the canonical cycle $K$. We restrict ourselves to this element since
none of our result concern characteristic elements other than $K$.
\end{rem}

For $l_1\leq l_2$ define the rectangle
\[
  R(l_1,l_2)
  = \bigcup \set{\square_{l,I}}{l_1\leq l \leq l+\sum_{v\in I}E_v \leq l_2}.
\]
\begin{prop}[N\'emethi \cite{Lattice_coh}] \label{prop:lch}
The inclusion
$R(0,Z_K) \cap S_n \subset S_n$ \index{anticanonical cycle}
is a homotopy equivalence
for all $n$. Furthermore, the complex $S_n$ is contractible if $n>0$.
\end{prop}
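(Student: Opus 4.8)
The plan is to establish both claims by exhibiting an explicit deformation retraction of each $S_n$ onto the part living inside the rectangle $R(0,Z_K)$, carried out one coordinate direction at a time. The key structural fact to exploit is the convexity of $\chi$ along lattice lines: for a fixed $v\in\V$, the function $j\mapsto\chi(l+jE_v)$ on $\Z$ satisfies $\chi(l+(j+1)E_v)-\chi(l+jE_v) = \chi(l+E_v)-\chi(l) + j(-E_v,E_v)$, so by negative definiteness (i.e.\ $(-E_v,E_v)=b_v>0$) it is a strictly convex function of $j$, attaining its minimum at a unique value (or pair of adjacent values) determined by the sign of $(l+Z_K/2 - \text{something},E_v)$; concretely $\chi(l+jE_v)$ is minimized near the $j$ making $(l+jE_v,E_v)$ as close as possible to $(Z_K,E_v)$. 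First I would record this one-variable convexity lemma precisely, together with its consequence: for any cube $\square_{l,I}$ and any $v$, pushing the cube in the $-E_v$ direction past the coordinate hyperplane $m_v=0$, or in the $+E_v$ direction past $m_v = m_v(Z_K)$, cannot decrease the weight $w$, because $w$ is a max of values of $\chi$ and each relevant value of $\chi$ is pushed uphill (or kept fixed) by moving away from the ``valley'' $[0,Z_K]$ in that coordinate.

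Next I would build the retraction. Order the vertices $v_1,\dots,v_s$ and define, for a point $x\in L_\R$ with coordinates $(x_1,\dots,x_s)$, the map $\rho_1$ that is the identity on the $x_1$-coordinate if $0\le x_1\le m_{v_1}(Z_K)$ and otherwise clamps $x_1$ to the nearer endpoint $0$ or $m_{v_1}(Z_K)$ (linearly interpolating over the cube structure so that it is cellular after a subdivision, or — cleaner — working simplicially/combinatorially with the cubical chain complex as in \cite{Lattice_coh}). The monotonicity of $w$ just established shows $w(\rho_1(\square))\le w(\square)$, so $\rho_1$ maps $S_n$ into itself and is homotopic to the identity through maps preserving $S_n$ (the straight-line homotopy also only clamps, hence only decreases $w$). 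Composing $\rho_1,\dots,\rho_s$ gives a deformation retraction of $S_n$ onto $R(0,Z_K)\cap S_n$, proving the first claim. For the second claim, suppose $n>0$; I would show $0\in S_n$ (indeed $w(\{0\})=\chi(0)=0\le n$) and then that $R(0,Z_K)\cap S_n$ — equivalently, by the retraction, $S_n$ itself — is star-shaped, or contractible, about the vertex $0$: for any lattice point $l\in R(0,Z_K)$ one checks $\chi$ along the ``staircase'' computation sequence from $0$ to $l$ never exceeds $\max\{0,\chi(l)\}$-type bounds, and more to the point one uses that $w$ restricted to $R(0,Z_K)$ has the single local-and-global minimum region forced by convexity in each coordinate, so $S_n\cap R(0,Z_K)$ collapses to a point when the threshold $n$ is at least the maximum weight on some spanning subcomplex containing $0$; the inequality $n>0$ is exactly what guarantees $0$ is already included and that there is no obstruction coming from the value $\chi=0$ level.

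The main obstacle I anticipate is the second statement rather than the first: proving contractibility of $S_n$ for $n>0$ needs more than coordinatewise convexity, because $R(0,Z_K)\cap S_n$ could a priori be disconnected or have higher homology even though each one-dimensional slice is an interval. To handle this I would argue that $S_n$ retracts further, again coordinate by coordinate, onto the single vertex $0$ (or a contractible neighborhood of it): having clamped into $R(0,Z_K)$, apply a second round of maps $\sigma_v$ that slide each coordinate monotonically toward $0$, using that along the segment from $l$ to $l-m_v(l)E_v$ inside $R(0,Z_K)$ the weight is controlled because $\chi(jE_v+\cdots)$ is increasing as $j$ moves from $0$ up to $m_v(Z_K)$ only after it has passed its minimum — here one must be careful that the valley of $\chi$ in the $E_v$ direction lies between $0$ and $m_v(Z_K)$, which follows from $0,Z_K\in\Stop$-type sign conditions on $(K,E_v)$ and $(E_v,E_v)$. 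The delicate point is that sliding toward $0$ might temporarily raise $w$ past the minimum of the valley; the fix is that for $n>0$ we have slack (the value $0=\chi(0)$ is strictly below any positive threshold), and one shows the needed intermediate weights stay $\le n$ using the adjunction values $(Z_K,E_v)=-E_v^2+2g_v-2$ and $g_v=0$. Once this monotone collapse is justified, contractibility for $n>0$ follows, and for the first statement no positivity of $n$ is needed since clamping into $R(0,Z_K)$ is weight-nonincreasing unconditionally.
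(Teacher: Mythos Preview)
The paper does not supply its own proof of this proposition; it is quoted from N\'emethi's lattice cohomology paper \cite{Lattice_coh}. Nonetheless, your proposal contains a genuine gap worth spelling out, since it concerns exactly the mechanism by which the retraction is built in the cited source.

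Your key claim for the first statement is that clamping a \emph{fixed} coordinate $v$ into the interval $[0, m_v(Z_K)]$ is weight-nonincreasing, i.e.\ that if $m_v(l) < 0$ then $\chi(l + E_v) \leq \chi(l)$, and dually past $m_v(Z_K)$. This is false. From $\chi(l+E_v) - \chi(l) = 1 - g_v - (l, E_v)$ and $(l, E_v) = -b_v m_v(l) + \sum_{w\in \V_v} m_w(l)$ one sees that if a neighbour $w$ of $v$ has $m_w(l)$ very negative, then $(l,E_v)$ can be forced below $1$ even when $m_v(l) < 0$. Concretely, for two $(-2)$-vertices $v,w$ joined by an edge one has $\chi(aE_v + bE_w) = a^2 - ab + b^2$; with $l = -E_v - 10E_w$ this gives $\chi(l) = 91$ but $\chi(l+E_v) = 100$, so moving the $v$-coordinate from $-1$ to $0$ strictly \emph{increases} $\chi$. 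Your map $\rho_v$ therefore does not send $S_{91}$ into itself, and the straight-line homotopy leaves $S_{91}$. Coordinate-by-coordinate clamping in a fixed order does not produce the required retraction.

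What actually works, and what N\'emethi uses, is a Laufer-type argument: one shows that if $l \not\geq 0$ then there \emph{exists} some $v$ with $m_v(l)<0$ and $(l,E_v)\geq 1$ (hence $\chi(l+E_v)\leq \chi(l)$), and symmetrically for $l \not\leq Z_K$. The existence of such a $v$ comes from negative definiteness: writing $l = l_+ - l_-$ with $l_\pm \geq 0$ of disjoint support and $l_-\neq 0$, one has $l_-^2 < 0$, so some $v \in \supp(l_-)$ satisfies $(l_-,E_v) < 0$, whence $(l, E_v) = (l_+,E_v) - (l_-,E_v) \geq 1$. The deformation retraction is then built one step at a time, at each stage \emph{choosing} a good coordinate rather than processing them in a prescribed order; this is precisely the generalized Laufer sequence mechanism already appearing in \cref{prop:x_cseq} of the present paper. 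In the example above, the good first move is in the $w$-direction: $\chi(-E_v - 9E_w) = 73 < 91$. Your outline for the second statement inherits the same defect and is in any case too vague; the actual argument in \cite{Lattice_coh} again proceeds by constructing such Laufer-type paths inside $R(0,Z_K)$ along which $\chi$ is controlled, not by a naive coordinatewise slide toward $0$.
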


\begin{cor}
The group $\mathbb{H}_{\mathrm{red}}^*(L_\R,w)$ is finitely generated.
\end{cor}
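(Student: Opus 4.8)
The plan is to read the result off from \cref{prop:lch} together with the two finiteness facts recorded just above it: that each subcomplex $S_n\subset L_\R$ is finite, and that $\chi$ is bounded from below on $L$. Unwinding the definitions, $\mathbb{H}^*_{\mathrm{red}}(L_\R,w)=\bigoplus_{q\ge 0}\bigoplus_{n\in\Z}\tilde H^q(S_n,\Z)$, so it is enough to show that only finitely many of these summands are nonzero and that each nonzero one is a finitely generated abelian group.

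First I would dispose of the two ``ends'' of the range of $n$. Since $\chi$ is bounded from below on $L$, there is an $m_0$ with $\chi(l)\ge m_0$ for every $l\in L$; as the weight function $w$ takes only values among the $\chi(l')$ with $l'\in L$, we get $S_n=\emptyset$ for all $n<m_0$, so those summands vanish. At the other end, \cref{prop:lch} states that $S_n$ is contractible for every $n>0$, whence $\tilde H^q(S_n,\Z)=0$ for all $q\ge 0$ and all $n>0$. Therefore the only summands that can be nonzero are those with $m_0\le n\le 0$, a finite set of integers.

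Next, for each such $n$ the complex $S_n$ is a finite CW complex, and its cells $\square_{l,I}$ have dimension $|I|\le|\V|$; hence $\tilde H^q(S_n,\Z)$ is a finitely generated abelian group and vanishes for $q>|\V|$. Assembling this, $\mathbb{H}^*_{\mathrm{red}}(L_\R,w)$ is a finite direct sum of finitely generated abelian groups, so it is finitely generated as an abelian group, and in particular finitely generated as a $\Z[U]$-module.

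I do not anticipate a real obstacle here: the argument is essentially bookkeeping, and the only mild care needed is at the boundary of the range of $n$ — keeping track that $S_n$ is empty below $m_0$ (where the convention for reduced cohomology of $\emptyset$ in nonnegative degrees contributes nothing anyway) and contractible above $0$. Everything substantive is already contained in \cref{prop:lch} and the finiteness of the $S_n$.
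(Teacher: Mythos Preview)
Your argument is correct and is precisely the reasoning the paper has in mind: the corollary is stated without proof immediately after \cref{prop:lch}, relying on the facts recorded just before it (that $\chi$ is bounded below and each $S_n$ is finite) together with the contractibility of $S_n$ for $n>0$. One minor remark: the first assertion of \cref{prop:lch}, the homotopy equivalence $R(0,Z_K)\cap S_n\simeq S_n$, would also suffice to replace the finiteness of $S_n$ (since $R(0,Z_K)$ is a bounded rectangle), but your route via the already-stated finiteness of $S_n$ is equally valid and slightly more direct.
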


\begin{definition}
Set $m = \min \chi$ and assume that $\mathbb{H}_{\mathrm{red}}^*(A,w)$
has finite rank. The
\emph{normalized Euler characteristic} \index{normalized Euler characteristic}
of lattice cohomology is defined as
\[
\begin{split}
  \eu(\mathbb{H}^*(A,w))
    &= -m + \sum_{q=0}^\infty (-1)^q \rk \mathbb{H}^q_{\mathrm{red}} \\
  \eu(\mathbb{H}^0(A,w))
    &= -m + \rk \mathbb{H}^0_{\mathrm{red}}
\end{split}
\]
\nomenclature[eu]{$\eu(\mathbb{H}^*(A,w))$}{Normalized Euler characteristic}
\end{definition}

\begin{prop}[N\'emethi \cite{Nem_SW}] \label{prop:eu_SW}
We have
\[
  \eu(\mathbb{H}^*(L_\R,w)) = \sw_M(\scan) - \frac{Z_K^2+|\V|}{8}.
\]
\qed
\end{prop}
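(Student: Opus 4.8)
The plan is to reduce the proposition to the single combinatorial identity
\[
  \eu(\mathbb{H}^*(L_\R,w)) = q_{Z_K},
\]
where $q_{Z_K}$ is the coefficient of $t^{Z_K}$ in the counting function $Q_0(t)$ of \cref{def:zeta}. This makes sense because the present subsection (hence this proposition) is stated for $G$ numerically Gorenstein, so $Z_K\in L$. Granting the identity, apply \cref{prop:Q_SW} with $l'=Z_K$: the hypothesis $(l',E_v)\le(Z_K,E_v)$ holds with equality, $[Z_K]=0$ in $H$, and $-Z_K+2Z_K=Z_K$, whence
\[
  q_{Z_K} = \sw_M(\scan) - \frac{Z_K^2+|\V|}{8},
\]
which is exactly the right-hand side. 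So the whole content is a topological comparison of the Euler characteristic of lattice cohomology with one coefficient of the counting function.

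For that comparison I would first pass to a finite model. By \cref{prop:lch}, $R(0,Z_K)\cap S_n\hookrightarrow S_n$ is a homotopy equivalence for every $n$ and $S_n$ is contractible for $n>0$; since $R(0,Z_K)$ is itself a contractible solid box of cubes, it follows that $\mathbb{H}^*_{\mathrm{red}}(L_\R,w)=\mathbb{H}^*_{\mathrm{red}}(R(0,Z_K),w)$ is concentrated in levels $m\le n\le 0$ (with $m=\min\chi$, attained inside $[0,Z_K]$ by the level-$m$ homotopy equivalence), and $\eu(\mathbb{H}^*(L_\R,w))=\eu(\mathbb{H}^*(R(0,Z_K),w))$. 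Now $R(0,Z_K)\cap S_n$ is the subcomplex spanned by the cubes $\square_{l,I}$ with $0\le l$, $l+\sum_{v\in I}E_v\le Z_K$ and $w(\square_{l,I})\le n$, so its ordinary Euler characteristic is $\sum(-1)^{|I|}$ over exactly those cubes. Feeding this into $\eu=-m+\sum_q(-1)^q\rk\mathbb{H}^q_{\mathrm{red}}=-m+\sum_{n\ge m}(\chi_{\mathrm{top}}(R(0,Z_K)\cap S_n)-1)$ and performing the finite Abel summation over $n$, the $n$-dependence collapses and one obtains the closed form
\[
  \eu(\mathbb{H}^*(L_\R,w)) = -\!\!\sum_{\substack{0\le l,\ I\subseteq\V\\ l+\sum_{v\in I}E_v\le Z_K}}\!\!(-1)^{|I|}\,w(\square_{l,I}).
\]
Since $\chi(l+E_v)-\chi(l)=(-l,E_v)+1$ by the adjunction equalities, on a $1$-dimensional cube $w$ is $\chi$ at the lower vertex plus $\max\{0,(-l,E_v)+1\}$; this is why restricting the formula to the path of a computation sequence $0=Z_0,\dots,Z_k=Z_K$ recovers precisely the right-hand side of \cref{thm:comp_seq}, and this computation-sequence organization is the bridge to the next step.

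It then remains to show that this alternating sum of weights equals $q_{Z_K}$. I would expand $Z_0(t)=\prod_{v\in\V}(1-t^{E_v^*})^{\delta_v-2}$, read off $q_{Z_K}=\sum\{z_{l'}: l'\in L,\ l'\not\ge Z_K\}$, and reorganize this truncated sum by grouping monomials according to partial sums along a fixed computation sequence to $Z_K$; the key local statement is that the contribution of a step $Z_i\mapsto Z_i+E_{v(i)}$ to the truncated zeta series matches $\max\{0,(-Z_i,E_{v(i)})+1\}$ — the same quantity produced on the lattice side. Matching the two reorganizations step by step, using negative definiteness (so that $\chi$ is proper and bounded below on $L$ and every sum in sight is finite), gives $\eu(\mathbb{H}^*(L_\R,w))=q_{Z_K}$ and completes the proof.

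The main obstacle is precisely this last matching. The left-hand side is an alternating sum of Betti numbers of the sublevel sets $S_n$, repackaged above as a signed sum of the nonlinear quantities $w(\square_{l,I})$, whereas $q_{Z_K}$ is a raw signed lattice-point count produced by the product formula for the zeta function; forcing the cube-wise inclusion–exclusion to collapse exactly onto the zeta-function expansion — that is, proving the combinatorics of the resolution graph makes these coincide — is the heart of the argument. A lesser, purely bookkeeping difficulty is keeping track of the additive normalization $-m$ and the $\Z[U]$-module truncation supplied by \cref{prop:lch}, so that no constant is lost in passing from $L_\R$ to the finite rectangle.
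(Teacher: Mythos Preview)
The paper does not prove this proposition; it is quoted from N\'emethi's paper \cite{Nem_SW} and closed with a bare \qed. So you are supplying a proof where the paper supplies none, and the relevant question is whether your argument stands on its own.

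Your reduction to the identity $\eu(\mathbb{H}^*(L_\R,w))=q_{Z_K}$ via \cref{prop:Q_SW} (with $l'=Z_K$) is correct, and the Abel-summation closed form
\[
  \eu(\mathbb{H}^*(L_\R,w)) = -\!\!\sum_{\square_{l,I}\subset R(0,Z_K)}\!\!(-1)^{|I|}\,w(\square_{l,I})
\]
is also correct (the bookkeeping with $m$ and $N$ cancels exactly as you describe). There is a mild circularity worry in that \cref{prop:Q_SW} and \cref{prop:eu_SW} are both cited from \cite{Nem_SW}, so one should check that the former is not itself derived from the latter there; but set that aside.

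The genuine gap is in your final matching step. You assert as the ``key local statement'' that the contribution of a single step $Z_i\mapsto Z_i+E_{v(i)}$ to the truncated zeta series equals $\max\{0,(-Z_i,E_{v(i)})+1\}$. This is \emph{not} a general combinatorial fact about resolution graphs: it is exactly \cref{eq:qZ_ident} of \cref{thm:qZ_ident}, whose proof occupies the whole of \cref{s:SW} and relies on the Newton nondegenerate hypothesis together with the very specific computation sequence of \cref{def:comp_seq_constr}. For an arbitrary graph and an arbitrary computation sequence to $Z_K$, the difference $q_{Z_{i+1}}-q_{Z_i}$ need not equal $\max\{0,(-Z_i,E_{v(i)})+1\}$. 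Worse, even if it did along some path $\gamma$, \cref{prop:eu_path} would then identify $q_{Z_K}$ with the \emph{path} Euler characteristic $\eu(\mathbb{H}^*(\gamma,w))$, not with $\eu(\mathbb{H}^*(L_\R,w))$; these two differ in general (cf.\ the inequality in the corollary following \cref{prop:eu_path}). So your step-by-step matching fails on both sides simultaneously, and the proposed route does not close. N\'emethi's actual proof in \cite{Nem_SW} proceeds by entirely different means (surgery formulas for the counting function and for lattice cohomology), not by a path-wise comparison.
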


\begin{prop}[N\'emethi \cite{Lattice_coh}] \label{prop:eu_path}
Assume that $G$ is numerically Gorenstein.
Let $(Z_i)_{i=0}^k$ be a
computation sequence \index{computation sequence}
for $Z_K\in L$ and let
$\gamma$ be the associated path. Then
\[
    \eu(\mathbb{H}^*(\gamma,w))
	=    \sum_{i=0}^{k-1} \max\{0,(-Z_i,E_{v(i)})+1\}.
\]
\end{prop}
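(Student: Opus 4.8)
The plan is to reduce the computation of $\eu(\mathbb{H}^*(\gamma,w))$, where $\gamma$ is a one-dimensional complex (a path), to an elementary one-dimensional count, and then to match that count termwise with the sum $\sum_{i=0}^{k-1}\max\{0,(-Z_i,E_{v(i)})+1\}$. First I would record the relevant values of the weight function along $\gamma$. The vertices of $\gamma$ are the cycles $Z_0=0,\ldots,Z_k=Z_K$, and the edge between $Z_i$ and $Z_{i+1}$ carries weight $w(\square_{Z_i,\{v(i)\}}) = \max\{\chi(Z_i),\chi(Z_{i+1})\}$. Using $\chi(l) = (-l,l-Z_K)/2$, a direct computation gives $\chi(Z_{i+1}) - \chi(Z_i) = \chi(Z_i+E_{v(i)}) - \chi(Z_i) = 1 - (Z_i + E_{v(i)}/2, E_{v(i)}) \cdot(\text{sign bookkeeping})$; more precisely one gets $\chi(Z_i + E_v) - \chi(Z_i) = \chi(E_v) - (Z_i,E_v) = 1 - (Z_i,E_v)$ using $E_v^2 = -(Z_K,E_v)-2+2g_v$ and $g_v=0$ (all $E_v$ rational, which holds here since $G$ is a resolution graph of a Newton nondegenerate hypersurface, but in any case the numerically Gorenstein hypothesis plus rationality is what is needed — I would simply invoke that all $E_v$ are rational curves as in \cref{thm:comp_seq}). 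Hence along the edge $i$ the weight increases by $1-(Z_i,E_{v(i)})$ exactly when $(Z_i,E_{v(i)})\le 0$ (so the edge weight equals $\chi(Z_{i+1})$) and decreases otherwise (edge weight equals $\chi(Z_i)$).

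Next I would compute $\mathbb{H}^*(\gamma,w)$ directly. Since $\gamma$ is homotopy equivalent to a point and one-dimensional, each $S_n\cap\gamma$ is a disjoint union of subintervals (closed subpaths), so $H^q(\gamma\cap S_n) = 0$ for $q\ge 2$, and $H^1(\gamma\cap S_n)=0$ as well because each connected component of a subgraph of a path is contractible. Therefore $\mathbb{H}^q_{\mathrm{red}}(\gamma,w)=0$ for $q\ge 1$, and $\mathbb{H}^0_{\mathrm{red}}(\gamma,w) = \bigoplus_n \tilde H^0(\gamma\cap S_n)$, whose total rank is $\sum_n (c_n - 1)$ where $c_n$ is the number of connected components of $\gamma\cap S_n$ (with the convention that the empty set contributes $-1$? — no: I would instead restrict to $n\ge m=\min\chi$ where $c_n\ge 1$, and note $\gamma\cap S_n = \emptyset$ for $n<m$). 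Thus $\eu(\mathbb{H}^*(\gamma,w)) = -m + \sum_{n\ge m}(c_n-1)$. The key combinatorial identity is then that $\sum_{n}(c_n - 1) + m$ equals the total "descent count" of the weight sequence along $\gamma$: writing the sequence of weights of vertices and edges in order, the number of components of $\gamma\cap S_n$ increases by one each time a strict local minimum of the weight profile sits below level $n$. A standard argument (telescoping / Morse-theoretic on the line) shows $-m + \sum_n(c_n-1) = \sum_{\text{local maxima of the weight profile}} (\text{height above the adjacent lower neighbor})$, and since the weight profile on $\gamma$ is determined by the vertex values $\chi(Z_i)$ (each edge value being the max of its two endpoints), this equals $\sum_{i:\,\chi(Z_{i+1})>\chi(Z_i)}(\chi(Z_{i+1})-\chi(Z_i))$, i.e. the sum of all positive increments $\chi(Z_{i+1})-\chi(Z_i)$ along the sequence.

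Finally I would match this with the right-hand side: $\chi(Z_{i+1})-\chi(Z_i) = 1 - (Z_i,E_{v(i)}) = (-Z_i,E_{v(i)})+1$, and this is positive exactly when $(Z_i,E_{v(i)})\le 0$, i.e. exactly when $\max\{0,(-Z_i,E_{v(i)})+1\} = (-Z_i,E_{v(i)})+1$; when $(Z_i,E_{v(i)})>0$ the increment is $\le 0$ and the corresponding summand $\max\{0,(-Z_i,E_{v(i)})+1\}$ vanishes (with the edge case $(Z_i,E_{v(i)})\in\{0\}$ giving increment $1$, consistently). Hence $\sum_{i:\,\chi(Z_{i+1})>\chi(Z_i)}(\chi(Z_{i+1})-\chi(Z_i)) = \sum_{i=0}^{k-1}\max\{0,(-Z_i,E_{v(i)})+1\}$, which is the claim.

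I expect the main obstacle to be the "telescoping on the line" step, i.e. establishing cleanly that $-m + \sum_{n}(\#\{\text{components of }\gamma\cap S_n\}-1)$ equals the sum of positive increments of the weight profile. This is intuitively the statement that for a piecewise-constant (or here integer-valued on a finite linear complex) function on an interval, the "total persistence of sublevel-set components" counts ascents; care is needed because the edge weights are maxima of endpoint weights rather than independent data, so one should first argue that subdividing and replacing each edge by the constant-max value does not change any $S_n\cap\gamma$ up to homotopy, and then run the ascent count on the resulting step function. The contribution $-m$ from the normalization, and the fact that the sequence starts at $Z_0 = 0$ with $\chi(0)=0$ (not necessarily $=m$), must be tracked carefully so the bookkeeping of the first component and of levels below $\chi(0)$ comes out right; I would handle this by noting $\chi(Z_k) = \chi(Z_K) = 0$ as well, so the profile returns to $0$, and the telescoping of increments is automatically consistent with the component count.
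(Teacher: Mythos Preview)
The paper does not give its own proof of this proposition; it is quoted from \cite{Lattice_coh} and used as a black box. So there is no in-paper argument to compare against. That said, your outline is the standard one, and it is essentially correct, with one clarification worth making explicit.

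Your identification $\chi(Z_{i+1})-\chi(Z_i)=(-Z_i,E_{v(i)})+1$ uses $\chi(E_{v(i)})=1$, which in turn needs $g_{v(i)}=0$. Numerical Gorensteinness alone does not give this; the ambient assumption in the paper (rational homology sphere link) does. You flag this, but you should state it as an actual hypothesis rather than waving at \cref{thm:comp_seq}: the formula as written is simply false if some $g_v>0$, since then $\chi(Z_{i+1})-\chi(Z_i)=1-g_{v(i)}-(Z_i,E_{v(i)})$.

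The step you worry about---the ``telescoping on the line''---is fine, and can be made precise exactly along the lines you suggest. With $w_i=\chi(Z_i)$, $w_0=w_k=0$, and $c_n$ the number of maximal runs of indices $i$ with $w_i\le n$, one checks $c_n=[w_k\le n]+u_n$ where $u_n=\#\{i:w_i\le n<w_{i+1}\}$. Splitting the sum $\sum_{n\ge m}(c_n-1)$ at $n=0$ and using $w_0=w_k=0$ gives
\[
  -m+\sum_{n\ge m}(c_n-1)=\sum_{n\in\Z}u_n
  =\sum_{i=0}^{k-1}\#\{n\in\Z:w_i\le n<w_{i+1}\}
  =\sum_{i=0}^{k-1}\max\{0,w_{i+1}-w_i\},
\]
which is exactly what you want. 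The endpoints being $0$ is what makes the $-m$ normalisation cancel cleanly; your remark that $\chi(0)=\chi(Z_K)=0$ is the right observation, and no further bookkeeping is needed.
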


Combining this result with \cref{thm:comp_seq,rem:comp_seq_pg} we have
the following
\begin{cor}
Assume that $G$ is numerically Gorenstein.
We have $p_g \leq \min_\gamma \eu(\mathbb{H}^*(\gamma,w))$, where $\gamma$
runs through complexes associated to any
computation sequence \index{computation sequence}
to
$Z_K$ \index{anticanonical cycle}
as in the proposition above.
\qed
\end{cor}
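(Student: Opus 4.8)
The plan is to obtain the inequality for a single computation sequence and then optimize over all of them, chaining exactly the three results quoted just above: \cref{thm:comp_seq}, \cref{rem:comp_seq_pg}, and \cref{prop:eu_path}. First I would fix an arbitrary computation sequence $(Z_i)_{i=0}^k$ for the anticanonical cycle $Z_K$ and let $\gamma$ be its associated path; since $G$ is numerically Gorenstein we have $Z_K\in L$, so such sequences (and hence such paths $\gamma$) exist. By \cref{rem:comp_seq_pg}, which in turn relies on \cref{prop:pg_hl}, the numerically Gorenstein hypothesis gives $h_{Z_K} = p_g$.

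Next I would apply \cref{thm:comp_seq} with $Z = Z_K$. Its hypothesis that every exceptional component $E_v$ is a rational curve is in force in the present situation (it is part of the setup of this subsection, and also follows from \cref{prop:H} once $M$ is a rational homology sphere), so the theorem yields
\[
  p_g \;=\; h_{Z_K} \;\leq\; \sum_{i=0}^{k-1}\max\{0,(-Z_i,E_{v(i)})+1\}.
\]
I would then rewrite the right-hand side using \cref{prop:eu_path}, which identifies precisely this sum with $\eu(\mathbb{H}^*(\gamma,w))$. Combining the two displays gives $p_g \leq \eu(\mathbb{H}^*(\gamma,w))$ for the chosen path $\gamma$.

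Finally, since the computation sequence was arbitrary, this inequality holds for every path $\gamma$ coming from a computation sequence to $Z_K$, so $p_g$ is a lower bound for the set $\{\eu(\mathbb{H}^*(\gamma,w))\}$. This set is a nonempty subset of $\Z$ (the normalized Euler characteristics are integers because $\chi$ takes integral values on $L$ in the numerically Gorenstein case) bounded below by $p_g$, so it attains a minimum, and $p_g \leq \min_\gamma \eu(\mathbb{H}^*(\gamma,w))$ as claimed. There is no real obstacle here: the statement is a formal consequence of the cited results, and the only points that need a word of justification are that the minimum is actually attained and that the rationality hypothesis of \cref{thm:comp_seq} is available in this setting.
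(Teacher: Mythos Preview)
Your proof is correct and follows exactly the approach the paper intends: the corollary is stated there as an immediate consequence of \cref{thm:comp_seq}, \cref{rem:comp_seq_pg}, and \cref{prop:eu_path}, with no further argument given. Your additional remarks about the rationality hypothesis of \cref{thm:comp_seq} and about the minimum being attained are reasonable clarifications but go beyond what the paper spells out.
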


\subsection{On power series in one variable} \label{ss:pol_part}

In this subsection we recall some facts about
power series \index{power series}
in one variable
and define the
polynomial part \index{polynomial part}
of the power series expansion of a rational
function.
Here, as well as in the sequel, we will identify a rational function
with its Taylor expansion at the origin.
In particular, we will identify the localization
$\C[t]_{(t)}$ with a subring of the ring of power series $\C[[t]]$.
Furthermore, we will generalize these definitions
to rational Puiseux series and prove a formula for these invariants
for special series constructed from simplicial cones.

N\'emethi and Okuma introduced the
periodic constant \index{periodic constant}
of a rational function \cite{Nem_Ok_CIC,Okuma_pgspq}.
Braun and N\'emethi used the polynomial part of a rational function
in their work
\cite{SW_surg}. See also \cite{Laszlo_th} for a discussion
and generalization of these invariants.

Recall that a
\emph{quasipolynomial} \index{quasipolynomial}
is a function $\Z\to\C$ of the form
$t \to \sum_{i=0}^d c_i(t) t^i$, where $c_i:\Z\to\C$ are
periodic functions. \index{periodic function}

\begin{prop}
Let $P \in \C[t]_{(t)}$ be a rational function, regular at the origin, and
consider its expansion at the origin
$P(t) = \sum_{i=0}^\infty a_i t^i$ with $a_i \in \C$.
Then there exists a
quasipolynomial \index{quasipolynomial}
function $i \mapsto a'_i$ so that
for $i$ large enough we have $a_i = a'_i$.
\qed
\end{prop}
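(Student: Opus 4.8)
The plan is to reduce the statement to the classical fact that the Taylor coefficients of a rational function with a pole only at a root of unity (or at $t=0$, which is excluded here since $P$ is regular at the origin) are eventually quasipolynomial. Concretely, write $P(t) = N(t)/D(t)$ in lowest terms with $N, D \in \C[t]$ and $D(0) \neq 0$. Factor $D$ over $\C$ as $D(t) = D(0)\prod_{j} (1 - t/\alpha_j)^{m_j}$, so that by partial fractions
\[
  P(t) = R(t) + \sum_{j} \sum_{s=1}^{m_j} \frac{c_{j,s}}{(1 - t/\alpha_j)^s},
\]
where $R(t) \in \C[t]$ is a genuine polynomial (the ``polynomial part'' in the naive sense, nonzero only when $\deg N \geq \deg D$) and the $c_{j,s} \in \C$ are constants. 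This decomposition is valid as an identity of rational functions, hence also as an identity of formal power series after expanding each summand at the origin.

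Next I would expand each elementary summand. For fixed $j$ and $s$ one has the standard binomial expansion
\[
  \frac{1}{(1 - t/\alpha_j)^s} = \sum_{i=0}^\infty \binom{i+s-1}{s-1} \alpha_j^{-i} t^i,
\]
and the coefficient $i \mapsto \binom{i+s-1}{s-1}\alpha_j^{-i}$ is of the form $p_{j,s}(i)\,\alpha_j^{-i}$ with $p_{j,s}$ a polynomial in $i$ of degree $s-1$. The polynomial summand $R(t)$ contributes only to finitely many coefficients $a_i$, namely $i \leq \deg R$, so it is irrelevant for $i$ large. Thus for $i > \deg R$ we have exactly $a_i = \sum_{j}\sum_{s} c_{j,s}\, p_{j,s}(i)\, \alpha_j^{-i}$. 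Defining $a_i' := \sum_{j}\sum_{s} c_{j,s}\, p_{j,s}(i)\, \alpha_j^{-i}$ for all $i \in \Z$ gives a function that agrees with $a_i$ for $i$ large, so it remains only to check that $i \mapsto a_i'$ is a quasipolynomial.

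For that final step I would invoke the hypothesis implicit in the setting: $P$ arises (in the applications) from zeta-type series whose denominators are products of factors $1 - t^{k}$, so every pole $\alpha_j$ is a root of unity; in the abstract statement as phrased this is not assumed, so one should either add it or note that $i \mapsto a_i'$ is a quasipolynomial precisely when every $\alpha_j$ with $c_{j,m_j}\neq 0$ is a root of unity. Assuming the $\alpha_j$ are roots of unity, let $e$ be the least common multiple of their orders. Then $i \mapsto \alpha_j^{-i}$ is periodic with period dividing $e$, and a product of a polynomial in $i$ with a periodic function is, by definition, a quasipolynomial; a finite sum of quasipolynomials is again a quasipolynomial, with period $e$ and degree $\max_j (m_j - 1)$. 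This completes the argument. The only genuine subtlety — and the step I expect to need the most care — is the bookkeeping that isolates the true polynomial part $R(t)$ and confirms it affects only finitely many coefficients, together with making explicit (or hypothesizing) that all relevant poles are roots of unity so that the ``eventually quasipolynomial'' conclusion actually holds; the partial-fraction decomposition and binomial expansions themselves are routine.
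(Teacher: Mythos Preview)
Your approach via partial fractions and the binomial expansion is the standard one, and the paper offers no proof at all (note the terminal \qed): it simply cites the result as well known.

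You are also right to flag the gap: as literally stated, the proposition is false. For instance $P(t)=1/(1-2t)\in\C[t]_{(t)}$ has coefficients $a_i=2^i$, which is not eventually a quasipolynomial in the paper's sense (periodic coefficients). The correct hypothesis, which you identify, is that all poles of $P$ lie at roots of unity, equivalently that the denominator divides some $1-t^N$. In the paper's applications this always holds: the series $P_C(t)$ of \cref{lem:cone_pol_part}, the Poincar\'e series $P^\A_X(t)$, and the reduced zeta functions $Z_0^v(t_v)$ all have denominators that are products of factors $1-t^k$, so the issue is a minor imprecision in the abstract statement rather than a problem for the thesis. Your diagnosis that one must ``add it or note'' the condition is exactly right; with that hypothesis in place, your argument is complete and is the proof one would expect.
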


\begin{definition} \label{def:pol_part}
Let $P$, $a_i$ and $a'_i$ be as in the proposition above. The
\emph{negative part} \index{negative part}
of $P$ is
$P^{\mathrm{neg}}(t) = \sum_{i=0}^\infty a'_i t^i$.
\nomenclature[Pneg]{$P^{\mathrm{neg}}(t)$}{Negative part of $P(t)$}
The
\emph{polynomial part} \index{polynomial part}
of $P$ is
$P^{\mathrm{pol}}(t) = P(t) - P^{\mathrm{neg}}(t)$.
\nomenclature[Ppol]{$P^{\mathrm{pol}}(t)$}{Polynomial part of $P(t)$}
The
\emph{periodic constant} \index{periodic constant}
of $P(t)$ is the number
\nomenclature[pc]{$\pc P(t)$}{Periodic constant of $P(t)$}
$\pc P(t) = P^{\mathrm{pol}}(1)$.
\end{definition}

\begin{lemma} \label{lem:pol_part_add}
The
polynomial part \index{polynomial part}
is additive. More precisely, if $P,Q\in\C[t]_{(t)}$,
then $(P+Q)^{\mathrm{pol}} = P^{\mathrm{pol}} + Q^{\mathrm{pol}}$.
\end{lemma}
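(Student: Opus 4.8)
\textbf{Proof plan for \cref{lem:pol_part_add}.}
The plan is to reduce additivity of the polynomial part to additivity of the negative part, since by definition $P^{\mathrm{pol}} = P - P^{\mathrm{neg}}$ and sums of rational functions regular at the origin are again regular at the origin. So it suffices to show $(P+Q)^{\mathrm{neg}} = P^{\mathrm{neg}} + Q^{\mathrm{neg}}$. First I would write the Taylor expansions $P(t) = \sum_i a_i t^i$, $Q(t) = \sum_i b_i t^i$, and $(P+Q)(t) = \sum_i (a_i+b_i) t^i$; let $a'_i$, $b'_i$ and $c'_i$ be the associated quasipolynomials guaranteed by the preceding proposition, so that $a_i = a'_i$, $b_i = b'_i$, and $a_i+b_i = c'_i$ for all sufficiently large $i$.

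The key observation is a uniqueness statement: the quasipolynomial $a'_i$ is uniquely determined by the requirement that it agree with $a_i$ for all large $i$. Indeed, if two quasipolynomials $f,g\colon\Z\to\C$ agree for all sufficiently large $i$, then $f-g$ is a quasipolynomial vanishing for all large $i$; writing it on a common period $N$ as $t\mapsto \sum_{j=0}^d c_j(t) t^j$ with each $c_j$ $N$-periodic, restriction to each residue class modulo $N$ gives an honest polynomial in $t$ with infinitely many zeros, hence identically zero, so $f=g$. Given this, since $a'_i + b'_i$ is a quasipolynomial agreeing with $a_i + b_i$ for all large $i$, uniqueness forces $c'_i = a'_i + b'_i$ for all $i$. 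Summing against $t^i$ yields $(P+Q)^{\mathrm{neg}} = P^{\mathrm{neg}} + Q^{\mathrm{neg}}$, and subtracting from $(P+Q) = P + Q$ gives the claim.

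I do not expect any serious obstacle here; the only point requiring a little care is the uniqueness of the quasipolynomial approximant, which is exactly the lemma above restricted to quasipolynomials, and follows from the elementary fact that a nonzero polynomial has finitely many roots applied residue-class by residue-class. One should also note in passing that the negative part $P^{\mathrm{neg}}(t) = \sum_i a'_i t^i$ is itself a well-defined element of $\C[[t]]$ and in fact lies in $\C[t]_{(t)}$ (it is a $\C$-combination of functions of the form $t^j/(1-\zeta t)^{k}$), so all the manipulations take place inside $\C[t]_{(t)} \subset \C[[t]]$ and the subtraction $P - P^{\mathrm{neg}}$ is legitimate.
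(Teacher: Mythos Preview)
Your proposal is correct and follows the same approach as the paper: reduce to additivity of the negative part, then conclude. The paper's proof is a single line asserting that $(P+Q)^{\mathrm{neg}} = P^{\mathrm{neg}} + Q^{\mathrm{neg}}$ is ``clear from definition''; you have simply supplied the uniqueness argument for the quasipolynomial approximant that justifies this claim, so your version is a more detailed execution of the same idea.
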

\begin{proof}
It is clear from definition that 
$(P+Q)^{\mathrm{neg}} = P^{\mathrm{neg}} + Q^{\mathrm{neg}}$.
The lemma follows.
\end{proof}

\begin{rem}
\begin{list}{(\thedummy)}
{
  \usecounter{dummy}
  \setlength{\leftmargin}{0pt}
  \setlength{\itemsep}{0pt}
  \setlength{\itemindent}{4.5pt}
}
\item
We may write $P(t) = p(t)/q(t)$ with $p(t), q(t) \in \C[t]$ and
$\gcd(p(t),q(t)) = 1$. Using the
Euclidean algorithm, we can write $p(t) = h(t) q(t) + r(t)$ with
$h(t), r(t) \in \C[t]$ and $\deg r(t) < \deg q(t)$ and furthermore, this
presentation is unique. It is a simple exercise to show that
$P^{\mathrm{neg}}(t) = r(t)/q(t)$ and $P^{\mathrm{pol}}(t) = h(t)$. In fact,
$P(t) = P^{\mathrm{neg}}(t) + P^{\mathrm{pol}}(t)$ is the unique presentation
of $P(t)$ as a sum of a polynomial and a fraction of negative degree.

\item
One finds easily that $\C[t]_{(t)} = \C[t] \oplus N$ where
\[
  N = \set{p\in\C[t]_{(t)}}{\lim_{t\to\infty} p(t) = 0},
\]
and that the
polynomial \index{polynomial part}
and
negative parts \index{negative part}
are the projections to these summands.
The additivity property \cref{lem:pol_part_add} follows immediately from
this observation.

\end{list}
\end{rem}

\begin{block}
Denote by $\C[[t^{1/\infty}]] = \cup_{n\in\Z_{>0}} \C[[t^{1/n}]]$ the
\emph{ring of Puiseux series}. \index{Puiseux series}
Thus, for any Puiseux series $P(t)$, there is
an $N>0$ so that $P'(t) = P(t^N) \in \C[[t]]$. We will say that $P(t)$ is
\emph{rational} \index{Puiseux series!rational}
if $P'(t)$ is rational for such a choice of $N$.
The statements and
definition above apply to this situation without much alteration.
In particular, if $P(t) \in \C[[t^{1/\infty}]]$ is rational, and $N$ is as
above, then we set
$P^{\mathrm{pol}}(t) = P'^{\mathrm{pol}}(t^{1/N})$ and
$P^{\mathrm{neg}}(t) = P'^{\mathrm{neg}}(t^{1/N})$.
Since $P'^{\mathrm{pol}}(t)$ is a polynomial, we find that
$P^{\mathrm{pol}}(t)$ is a finite
expression, that is, $P^{\mathrm{pol}}(t)$ is a
\emph{Puiseux polynomial}, \index{Puiseux polynomial}
$P^{\mathrm{pol}}(t) \in \C[t^{1/\infty}] = \cup_{n\in\Z_{>0}} \C[t^{1/n}]$.

Similarly as above, we have the field of
\emph{Laurent--Puiseux series} \index{Laurent--Puiseux series}
and the ring of
\emph{Laurent--Puiseux polynomials}
\index{Laurent--Puiseux polynomials}
\[
  \C((t^{1/\infty})) = \cup_{n\in\Z_{>0}} \C((t^{1/n})),\quad
  \C[t^{\pm1/\infty}] = \cup_{n\in\Z_{>0}} \C[t^{\pm1/n}].
\]
\end{block}

\begin{lemma} \label{lem:cone_pol_part}
Let $C\subset\R^{n+1}$ be a finitely generated rational strictly convex cone
of dimension $q$
and $\ell:\Z^{n+1}\to\Q$ a rational linear function so that
$\ker \ell \cap C = \{ 0 \}$. Define a series
\[
  P_C(t) = \sum_{p\in C\cap\Z^{n+1}} t^{\ell(p)} \in \C[[t^{1/\infty}]].
\]
Then, $P_C(t)$ is a
rational \index{Puiseux series!rational}
Puiseux series and $P_C^{\mathrm{pol}}(t) = 0$ and
\[
  ((1-t)P_C(t))^{\mathrm{pol}}
     = (-1)^{q-1}\sum_{p\in S\cap\Z^{n+1}} t^{1-\ell(p)}
\]
where $S = \set{p\in C^\circ\cap\Z^{n+1}}{\ell(p) \leq 1}$.
\index{polynomial part}
\end{lemma}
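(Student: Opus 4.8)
The plan is to reduce the statement to a one-dimensional computation about rational power series by a suitable slicing of the cone $C$, combined with the additivity of the polynomial part (\cref{lem:pol_part_add}), and to handle the ``higher-dimensional'' information purely through inclusion-exclusion over the faces of $C$. First I would dispose of the claim that $P_C(t)$ is a rational Puiseux series: since $C$ is a finitely generated rational cone, by standard commutative algebra (Ehrhart/Stanley theory) the generating function $\sum_{p\in C\cap\Z^{n+1}} t^{\ell(p)}$, after clearing denominators in $\ell$ via some $N>0$ with $N\ell$ integral, is a rational function in $t^{1/N}$; the hypothesis $\ker\ell\cap C=\{0\}$ guarantees that the exponents $\ell(p)$ form a discrete subset of $\Q_{\ge 0}$ with each value attained finitely often, so the series lies in $\C[[t^{1/\infty}]]$ and is rational.

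Next I would prove $P_C^{\mathrm{pol}}(t)=0$, i.e. that $P_C(t)\in N$ (the summand of fractions vanishing at infinity), by induction on the dimension $q$ of $C$. For $q=1$, $C\cap\Z^{n+1}=\N\gen{p_0}{}$ for the primitive generator $p_0$, so $P_C(t)=(1-t^{\ell(p_0)})^{-1}$, a rational function vanishing at $t=\infty$; hence its polynomial part is $0$. For the inductive step, I would write $C$ as a union of its relative interior $C^\circ$ and its proper faces, which are lower-dimensional rational strictly convex cones on which $\ell$ is again injective; the corresponding decomposition of lattice points gives $P_C(t) = P_{C^\circ}(t) + \sum_{\text{facets }F} (\text{lower-dim contributions})$, more precisely $P_C(t) = \sum_{F\preceq C} P_{F^\circ}(t)$ summed over all faces including $C$ itself. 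By induction the polynomial parts of the proper-face terms vanish, so it suffices to see $P_{C^\circ}^{\mathrm{pol}}=0$; but $P_{C^\circ}(t)$ is itself a rational function regular at $0$ and, by a generating-function identity for the interior of a simplicial cone (or by triangulating $C$ into simplicial cones and using Stanley reciprocity), it equals $\pm P_C(1/t)$ up to sign and a shift, which vanishes at infinity. Alternatively, and more cleanly, I would triangulate $C$ into simplicial cones $C_j$ (refining so that no new rays are needed would be ideal, but one can also just accept new rays and use inclusion–exclusion over intersections), reduce to $C$ simplicial, and for a simplicial cone write every lattice point uniquely as $\sum a_i p_i$ with $a_i\ge 0$ integers plus a point of the fundamental half-open parallelepiped, giving $P_C(t) = \Pi(t)\prod_i (1-t^{\ell(p_i)})^{-1}$ with $\Pi$ a Puiseux polynomial; this is visibly a fraction of negative degree, so its polynomial part is $0$.

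For the main formula I would apply $\mathrm{pol}$ to $(1-t)P_C(t)$ using the simplicial reduction above, but it is cleaner to argue via Euler characteristics: write $\mathbf 1_{C} = \sum_{F\preceq C}(-1)^{?}\mathbf 1_{-F^\circ}$-type reciprocity, i.e. use Stanley reciprocity for the cone, which states that as rational functions $P_{C}(1/t) = (-1)^{q} P_{C^\circ}(t)$ (with $\ell$ replaced by $-\ell$, hence exponents $-\ell(p)$). Multiplying the defining identity by $(1-t)$ and extracting the polynomial part, the negative part is the unique fraction of negative degree agreeing with $(1-t)P_C(t)$ for large exponents; the polynomial part is then the ``principal part at infinity,'' which by Stanley reciprocity is exactly $(-1)^{q-1}\sum_{p\in C^\circ,\ \ell(p)\le 1} t^{1-\ell(p)}$ — the factor $(1-t)$ contributing the shift $\ell(p)\mapsto 1-\ell(p)$ and the truncation $\ell(p)\le 1$, and the sign being $(-1)^{q-1}$ because $(1-t)$ flips one sign against the $(-1)^q$ of reciprocity. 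Concretely, I would verify this by first checking $q=1$ by hand: $(1-t)(1-t^{a})^{-1}$ with $a=\ell(p_0)$; if $a=1$ this is $1$ and indeed $C^\circ\cap\{\ell\le 1\}=\{p_0\}$ with $1-\ell(p_0)=0$, giving $t^0=1$, sign $(-1)^0=1$; if $a>1$ the polynomial part is $0$ and indeed no interior point has $\ell\le 1$. Then induct using the face decomposition $(1-t)P_C = \sum_{F\preceq C}(1-t)P_{F^\circ}$ and \cref{lem:pol_part_add}: the term for $F=C$ gives the $q$-dimensional interior contribution, and lower faces are handled by induction but—crucially—a face $F$ of dimension $q'<q$ contributes $(-1)^{q'-1}\sum_{p\in F^\circ,\ \ell(p)\le 1} t^{1-\ell(p)}$, and these terms must cancel in pairs; this cancellation is exactly the statement that $S=C^\circ\cap\{\ell\le 1\}$ sees only the open cone, which follows because $\sum_{F\preceq C}(-1)^{\dim F - 1}\mathbf 1_{F^\circ} = -\,\mathbf 1$ restricted appropriately—an Euler-relation identity for the face poset of $C$.

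The main obstacle I anticipate is getting the combinatorial signs and the face-cancellation exactly right: the clean way is to bypass the face induction entirely and instead reduce to $C$ simplicial by a triangulation argument (noting polynomial part is additive and that overlaps along lower-dimensional cones contribute zero polynomial part by the dimension induction), then for a simplicial cone use the explicit half-open parallelepiped decomposition together with Stanley reciprocity $\sum_{p\in C^\circ} t^{\ell(p)} = (-1)^q \big(\sum_{p\in C} t^{\ell(p)}\big)\big|_{t\to 1/t}$ to read off both $P_C^{\mathrm{pol}}=0$ and the shape of $((1-t)P_C)^{\mathrm{pol}}$ directly. So in the writeup I would: (1) establish rationality; (2) prove $P_C^{\mathrm{pol}}=0$ by triangulation plus the simplicial parallelepiped formula; (3) prove the main formula for simplicial $C$ via Stanley reciprocity and a direct check that multiplying by $(1-t)$ produces the shift $\ell(p)\mapsto 1-\ell(p)$, the truncation to $\ell(p)\le 1$, and the sign change to $(-1)^{q-1}$; (4) pass from simplicial to general $C$ using additivity of $\mathrm{pol}$ and the vanishing from step (2) on the lower-dimensional overlap cones.
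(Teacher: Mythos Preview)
Your approach is correct in outline, but it takes a genuinely different and more laborious route than the paper. The paper does not triangulate, does not use Stanley reciprocity for cones, and does not do any face inclusion--exclusion. Instead it slices $C$ by the level sets of $\ell$: choosing $r\in\Q$ with $\ell(\Z^{n+1})=r\Z$, the polytope $D=\ell^{-1}(r)\cap C$ has dimension $q-1$, and one simply writes $P_C(t)=\sum_{k\ge 0} a(k)\,t^{kr}$ with $a(k)=|kD\cap\Z^{n+1}|$. Ehrhart--Macdonald reciprocity says this $a$ extends to a quasipolynomial on all of $\Z$ with $a(-m)=(-1)^{q-1}|mD^\circ\cap\Z^{n+1}|$; since the coefficients already agree with a quasipolynomial for all $k\ge 0$, one gets $P_C^{\mathrm{pol}}=0$ immediately. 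Then $((1-t)P_C)^{\mathrm{pol}}=(-tP_C)^{\mathrm{pol}}$ by additivity, and the polynomial part of $-\sum_{k\ge 0}a(k)t^{kr+1}$ is read off directly as $\sum_{-1\le kr<0}a(k)t^{kr+1}$, which by reciprocity and the identification $mD^\circ = C^\circ\cap\ell^{-1}(mr)$ is exactly $(-1)^{q-1}\sum_{p\in S}t^{1-\ell(p)}$.

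Your Stanley-reciprocity route reaches the same destination---indeed the key observation that $P^{\mathrm{pol}}$ is the nonnegative-exponent part of the expansion of $P$ at $t=\infty$, combined with $P_C(1/t)=(-1)^q P_{C^\circ}(t)$ as rational functions, gives the formula in one line for \emph{general} $C$, so your steps (3)--(4) via simplicial reduction and face cancellation are unnecessary overhead. What the paper's slicing buys is that everything becomes a one-variable quasipolynomial computation with no cone combinatorics at all; what your approach buys is a conceptual explanation of the sign $(-1)^{q-1}$ as the reciprocity sign $(-1)^q$ times the $-1$ from $(1-t)$, which the paper leaves implicit in the Ehrhart--Macdonald statement.
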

\begin{proof}
There is an $r\in\Q$ so that $\ell(\Z^{n+1}) = r\Z$. Then
$D = \ell^{-1}(r) \cap C$ is a rational polyhedron of dimension $q-1$,
denote by $D^\circ$ its
relative interior. Define $a:\Z\to \Z$ by setting
\[
  a(k) = 
\begin{cases}
  |kD\cap\Z^{n+1}|                 &  k\geq 0, \\
  (-1)^{q-1}|kD^\circ\cap\Z^{n+1}| &  k< 0.
\end{cases}
\]
By classical results, see e.g. \cite{Stan} and references therein,
the function $a$ is a quasipolynomial.
Furthermore, it is clear by construction that
$P_C(t) = \sum_{k=0}^\infty a(k) t^{kr}$, and so $P_C^{\mathrm{pol}}(t) = 0$.
We also get
\[
  ((1-t)P_C(t))^{\mathrm{pol}}
     = (-t P_C(t))^{\mathrm{pol}}
     =
     \left(
       -\sum_{0\leq k} a(k) t^{kr + 1}
     \right)^{\mathrm{pol}}
     =
        \sum_{-1\leq kr < 0} a(k) t^{kr + 1}
\]
which proves the lemma.
\end{proof}

\begin{example}
\begin{list}{(\thedummy)}
{
  \usecounter{dummy}
  \setlength{\leftmargin}{0pt}
  \setlength{\itemsep}{0pt}
  \setlength{\itemindent}{4.5pt}
}

\item
Let $A = \oplus_{i=0}^\infty A_k$ be the coordinate ring of an affine surface
$X \subset \C^N$ with a
good $\C^*$ action \index{good $\C^*$ action}
as in \cite{Pinkham} (with the
origin a fixed point) and
$P(t) = \sum_{i=0}^\infty \dim_\C A_i t^i$ the associated
Poincar\'e series. \index{Poincar\'e series}
Assume that the singularity at the origin has a rational homology sphere
link.
In \cite{Pinkham}, Pinkham shows that $P(t)$ and $p_g$
can be described in therms of the link at the origin.
Using the notation introduced in this subsection, his results yield
\index{geometric genus} \index{periodic constant}
$p_g = \pc P(t)$.

\item
Let $G$ be a negative definite graph as in \cref{ss:res} and
assume that $G$ satisfies the
\emph{semigroup condition} \index{semigroup condition}
and the
\emph{congruence condition} \index{congruence condition}
described in \cite{NeumWa_SpQ} (or, equivalently, the
end curve condition, \index{end curve condition}
see \cite{NeumWa_End,Okuma_End}). Neumann and Wahl \cite{NeumWa_SpQ}
constructed a
singularity (more precisely, a set of singularities forming an equisingular
deformation) whose topological
type is given by $G$. Such a singularity is called a
\emph{splice quotient singularity}. \index{splice!quotient}
If $v\in\V$ and $G_i$ are the components of the complement of $v$ in
$G$, then these subgraphs satisfy the same conditions. Okuma showed
\cite{Okuma_pgspq} that the
geometric genus \index{geometric genus}
of a
splice quotient singularity is the sum of the
geometric genera of splice quotient singularities with graphs $G_i$
plus an error term, which is the
periodic constant \index{periodic constant}
associated with a series
in one variable obtained from $G$ and $v$. More precisely, this series
has two descriptions. On one hand it is the
Poincar\'e series \index{Poincar\'e series}
associated
with the graded ring associated with the
divisorial filtration \index{divisorial filtration}
on the local ring
of the singularity given by the divisor $E_v$. On the other hand, it is
the function $Z_0^v(t_v)$, obtained from
the topological
zeta function \index{zeta function}
$Z_0(t)$ (see \cref{ss:Lipman})
by the restriction $t_w = 1$ for $w\neq v$.

\item
In \cite{SW_surg}, Braun and N\'emethi obtain a surgery formula for the
normalized Seiberg--Witten invariant
\index{Seiberg--Witten invariant!normalized}
associated with the graphs $G$, $G_i$ in
the previous example, but with no assumption on the graph other than
negative definiteness. In place of the
geometric genus, \index{geometric genus}
the formula contains
a normalized version of the
Seiberg--Witten invariant of the canonical
$\spinc$ structure \index{spinc structure@$\spinc$ structure!canonical}
on the associated three dimensional manifold. The error
term is, as in the previous example, the
periodic constant \index{periodic constant}
of $Z^v_0(t_v)$.

\end{list}
\end{example}

\subsection{The spectrum} \label{ss:spec}

In this section we will recall some facts about the
spectrum, \index{spectrum}
a numerical
invariant coming from Hodge theory. Its construction would require a
lengthier treatment than is possible here, so we only mention the main results
required. The most important fact we need about the spectrum is
\cref{prop:Saito_exp}, which allows us to calculate part of the spectrum
from the Newton diagram. In \cref{s:pg}, we will show how to recover this
part of the spectrum directly, given only the knowledge of the resolution
graph, as well as the divisor of the function $x_1 x_2 x_3$.
In our applications, we will always assume that $n=2$.

\begin{block}
We start with a very small account of the results leading to the mixed
Hodge structure on the cohomology groups of the
Milnor fiber. \index{Milnor fiber}
Mixed Hodge structures \index{mixed Hodge structure}
were introduced by Deligne in \cite{DelHII,DelHIII}
where he constructs a mixed Hodge structure on the cohomology groups
of arbitrary algebraic varieties, generalizing the
Hodge decomposition \index{Hodge decomposition}
on K\"ahler manifolds \cite{Hodge}. Previously, Griffiths, Schmid
\cite{GriffPerI,GriffPerII,GriffPerIII,Schmid} and others had studied
variations of Hodge structures arising from deformations of complex
manifolds, as well as the case of flat families, possibly with singular fibers.
For these,
a limit of the Hodge structures appears (in a suitable sense), but this
must be viewed as a mixed Hodge structure, rather than a pure Hodge
structure. In \cite{Steenb_lim}, Steenbrink considers the same problem from
a different viewpoint and constructs a mixed Hodge complex calculating
this limit. In \cite{Steenb_Oslo}, these results are combined with
others to construct a limit mixed Hodge structure on the cohomology groups
of the Milnor fiber of an isolated hypersurface singularity.
\end{block}

\begin{block}
Let $n\in\N$ and 
$f:(\C^{n+1},0) \to (\C,0)$ be a singular map germ defining an isolated
hypersurface \index{hypersurface}
singularity $(X,0)$ in $\C^{n+1}$. Assume that $Y\subset \C^{n+1}$
is a subset yielding a \emph{good representative} of the
Milnor fibration, \index{Milnor fibration}
where $D\subset \C$ is some small disc (see e.g. \cite{Looijenga}). 
Setting $D^* = D\setminus\{0\}$ and $Y^* = Y\setminus f^{-1}(0)$ we obtain
a locally trivial fiber bundle $Y^*\to D^*$ whose fiber is the Milnor fiber.
For a $t\in D^*$, denote by
$m_t:Y_t \to Y_t$
the
geometric monodromy, \index{geometric monodromy} \index{monodromy}
and
$T_t:H^n(Y_t,\C) \to H^n(Y_t,\C)$ the induced map on cohomology,
the
algebraic monodromy. \index{algebraic monodromy}
We can assume that together, these form a diffeomorphism $m^*:Y^*\to Y^*$.

Take $\tilde D^*$ as the universal covering space of the punctured disc,
and set $Y_\infty = Y^*\times_{D^*} \tilde D^*$. Concretely, we may take
$\tilde D^*$ as an upper half plane, with the covering map given by the
function $u\mapsto r e^{2\pi i u}$, where $r$ is the radius 
of $D^*$. We obtain canonical
maps $k:Y_\infty \to Y$ and $f_\infty:Y_\infty \to D^*$, as well as monodromy
transformations $m_\infty:Y_\infty\to Y_\infty$
and $\tilde m^*: \tilde D^* \to \tilde D^*$
satisfying
$f_\infty m_\infty = \tilde m^* f_\infty$ and $km_\infty = m^* k$.
This is summarized in the following diagram.
\begin{equation}
\begin{gathered}
\xymatrix
{
                                                                &
  Y^*                 \ar@{_{(}->}[d]   \ar@(dl,ul)^{m^*}       &
                                                                \\
  Y_\infty   \ar[r]^k \ar[d]_{f_\infty} \ar@(dl,ul)^{m_\infty}  &
  Y                   \ar[d]_f                                  &
  X          \ar@{_{(}->}[l]_i \ar[d]                           \\
  \tilde D^*\ar[r] \ar@(dl,ul)^{\tilde m^*_{\hphantom{\infty}}} &
  D                                                             &
  \{ 0\}     \ar@{_{(}->}[l]
}
\end{gathered}
\end{equation}
The space $\tilde D^*$ is a half plane in $\C$, and so, in particular,
it is contractible. Therefore, the space $Y_\infty \cong Y_t\times \tilde D^*$
has the same homotopy type as any Milnor fiber $Y_t$.  Furthermore, this
homotopy equivalence is determined uniquely modulo the monodromy.
\end{block}

\begin{prop}[Monodromy theorem \cite{Landman,Schmid}] \label{prop:monodromy}
\index{Monodromy theorem}
The eigenvalues of the
monodromy \index{monodromy}
operator
$T_\infty:H^n(Y_\infty,\C)\to H^n(Y_\infty,\C)$ are roots of unity, that is,
there is an $N>0$ so that $T_\infty^N$ is unipotent. Furthermore, for such
an $N$, we have $(T^N_\infty-\id)^{n+1} = 0$.
\end{prop}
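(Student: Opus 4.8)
The plan is to reduce the global statement to an explicit local computation by means of an embedded resolution of the hypersurface, and then to propagate quasi-unipotence and the Jordan-block bound through the comparison between the Milnor fiber and the exceptional divisor.

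First I would record the elementary input. By Milnor's fibration theorem the Milnor fiber $Y_t$ has the homotopy type of a finite CW-complex (a bouquet of $n$-spheres), so $H^n(Y_\infty,\Z)\cong H^n(Y_t,\Z)$ is free of finite rank $\mu$ and $T_\infty$ acts on it as an element of $\mathrm{GL}(\mu,\Z)$. Hence the characteristic polynomial of $T_\infty$ is monic with integer coefficients and constant term $\pm 1$, so its eigenvalues are algebraic integers whose inverses are again algebraic integers. This alone does not put the eigenvalues on the unit circle — integral matrices with Salem or Pisot eigenvalues exist — so genuinely geometric information is required.

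That information is an embedded resolution $\pi\colon(\tilde Z,E)\to(\C^{n+1},f^{-1}(0))$, proper and biholomorphic over the complement of $f^{-1}(0)$, with $E=(f\circ\pi)^{-1}(0)=\sum_j m_j E_j$ a normal crossing divisor. Over a neighbourhood of a point lying on exactly the components $\{E_j\}_{j\in I}$ one has $f\circ\pi=u\cdot\prod_{j\in I}z_j^{m_j}$ in suitable local coordinates, with $u$ a unit and $|I|\le n+1$. The Milnor fibration of such a monomial is completely explicit: its monodromy has eigenvalues among the $\bigl(\mathrm{lcm}_{j\in I}m_j\bigr)$-th roots of unity and all of its Jordan blocks have size at most $|I|\le n+1$ (the standard iterated-mapping-torus computation for $\prod z_j^{m_j}$). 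Via the A'Campo--Clemens picture — the Milnor fiber of $f$ at $0$ retracts onto a space assembled from the strata of $E$ with $T_\infty$ acting through these local models, equivalently via the nearby-cycle complex $\psi_f\C$, whose stalks are computed by the local monomials — one sees that the characteristic polynomial of $T_\infty$ divides a product of factors $t^{m_j}-1$ (this is A'Campo's zeta-function formula), and a Mayer--Vietoris / stratification spectral sequence shows the nilpotency order of the unipotent part is bounded by the maximal number of components of $E$ meeting at a point, i.e. by $n+1$.

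Concretely, put $N=\mathrm{lcm}_j m_j$. Then $T_\infty^N$ is unipotent; since every Jordan block of $T_\infty$ already has size $\le n+1$ and raising to a power does not enlarge Jordan blocks in characteristic zero, the same holds for $T_\infty^N$, whence $(T_\infty^N-\id)^{n+1}=0$, and the same bound holds for any $N$ making $T_\infty^N$ unipotent. I expect the main obstacle to be exactly this globalization step: making precise the comparison between the Milnor fiber and the resolution (the Clemens collapse, or the identification of $\psi_f\C$ on a normal crossing divisor together with its weight filtration) and verifying that the relevant spectral sequence does not push the nilpotency order past $n+1$. An alternative, purely Hodge-theoretic route to the last inequality, once quasi-unipotence is in hand, is Steenbrink's limit mixed Hodge structure on $H^n(Y_\infty)$: the logarithm of the unipotent part is a morphism of mixed Hodge structures lowering weight by $2$, and since the weight filtration on $H^n(Y_\infty)$ is supported in $[0,2n]$, any such morphism kills the structure after $n+1$ steps, reproving $(T_\infty^N-\id)^{n+1}=0$ without revisiting the resolution strata.
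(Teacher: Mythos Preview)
The paper does not prove this proposition at all: it is stated with the citations \cite{Landman,Schmid} and no proof, as a standard background result feeding into the definition of the spectrum. So there is nothing to compare your argument against in the paper itself.

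Your outline is a reasonable sketch of the classical Landman--Clemens proof via embedded resolution, together with the alternative Schmid/Steenbrink weight-filtration argument for the nilpotency bound. The one place I would flag as genuinely incomplete is the ``globalization step'' you yourself identify: passing from the local Jordan-block bound on each stratum of the normal crossing divisor to the global bound on $H^n(Y_\infty)$ requires more than a Mayer--Vietoris spectral sequence, since extensions in a spectral sequence can in principle produce larger Jordan blocks than appear on any individual page. The actual argument (Clemens, or the weight filtration on $\psi_f$) controls this, but your sketch does not say how. Your Hodge-theoretic alternative is cleaner on this point and is essentially self-contained once one accepts Steenbrink's mixed Hodge structure, though note that invoking it here is slightly circular in the context of the paper, since \cref{block:MHS} presents that structure as building on the monodromy theorem rather than the reverse.
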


\begin{block} \label{block:MHS}
In \cite{Steenb_Oslo}, Steenbrink constructs a
mixed Hodge structure \index{mixed Hodge structure}
on the
\emph{vanishing cohomology} \index{vanishing cohomology}
$H^n(Y_\infty)$.
This means that on $H^n(Y_\infty,\Q)$ one has the
\emph{weight filtration} \index{weight filtration}
 \[
  0 = W_0 H^n(Y_\infty,\Q) \subset \ldots \subset W_{2n} H^n(Y_\infty,\Q)
  = H^n(Y_\infty,\Q)
\] \nomenclature[WkH]{$W_\cdot H^n(Y_\infty,\Q)$}{Weight filtration}
and on $H^n(Y_\infty,\C)$, the
\emph{Hodge filtration} \index{Hodge filtration}
\[
  H^n(Y_\infty,\C) = F^0 H^n(Y_\infty,\C) \supset \ldots \supset
  F^{n+1} H^n(Y_\infty,\C) = 0.
\] \nomenclature[FkH]{$F_\cdot H^n(Y_\infty,\C)$}{Hodge filtration}
Furthermore, these susbspaces are invariant under the semisimple part
of the monodromy operator $T_\infty$.
The filtrations induce graded objects
\[
  \Gr_k^W H^n(Y_\infty,\Q) = 
    \frac{ W_k H^n(Y_\infty,\Q) }{ W_{k-1} H^n(Y_\infty,\Q) },\quad
  \Gr^p_F H^n(Y_\infty,\C) = 
    \frac{ F^p H^n(Y_\infty,\C) }{ F^{p+1} H^n(Y_\infty,\C) }.
\]
Furthermore, as a subquotient of $H^n(Y_\infty,\C)$, the space
$\Gr_k^W H^n(Y_\infty,\Q) \otimes \C$ inherits the Hodge filtration making
it a Hodge structure of weight $k$.
For each of these spaces, we denote by
$(\cdot)_\lambda$ \nomenclature{$(\cdot)_\lambda$}{Eigenspace}
the eigenspace of the semisimple part of $T_\infty$ with eigenvalue $\lambda$.
\end{block}

\begin{definition} \label{def:spec}
The \emph{spectrum} of an isolated hypersurface singularity defined by
$f\in\O_{\C^{n+1},0}$ is the element
\begin{equation}\label{eq:def_spec}
  \Sp(f,0) =
  \sum_\lambda \sum_{p=0}^n
                      \dim_\C (\Gr^p_F H^n(Y_\infty,\C))_\lambda
                      \left(\frac{\log \lambda}{2\pi i} + n - p \right)
  \in \Z[\Q].
\end{equation} \nomenclature[Sp]{$\Sp(f,0)$}{Spectrum}
where we choose
$-1<\Re\left(\frac{\log \lambda}{2\pi i}\right)\leq 0$, and $(a)$
denotes the element corresponding to $a\in\Q$ in the group ring $\Z[\Q]$.
By \cref{prop:monodromy}, we then have $\frac{\log \lambda}{2\pi i} \in \Q$.
For any subset $I\subset \Q$ we define
$\Sp_I(f,0) = \pi_I(\Sp(f,0))$,
\nomenclature[SpI]{$\Sp_I(f,0)$}{Part of spectrum}
where $\pi_I: \Z[\Q] \to \Z[\Q]$ is the projection sending $(a)$ to 
$(a)$ if $a\in I$, but to $0$ if $a\notin I$. For simplicity, we also set
$\Sp_{\leq 0}(f,0) = \Sp_{]-\infty,0]}(f,0)$.

Since the coefficients in \cref{eq:def_spec} are nonnegative integers, we
may also write $\Sp(f,0) = \sum_{j=1}^\mu (l_j)$ where 
$l_1, \ldots, l_\mu\in\Q$ satisfy $l_1\leq\ldots\leq l_\mu$.
Here, $\mu = \dim_\C(Y_\infty,\C)$ is the
Milnor number. \index{Milnor number}
\nomenclature[m]{$\mu$}{Milnor number}
\end{definition}

\begin{block} \label{block:Hodge_numbers}
The
mixed Hodge structure \index{mixed Hodge structure}
on the
vanishing cohomology \index{vanishing cohomology}
induces 
\emph{Hodge numbers} \index{Hodge numbers}
\begin{equation} \label{eq:hpq}
  h^{p,q} = \dim_\C \Gr_F^p \Gr^W_{p+q} H^n(Y_\infty,\C).
\end{equation} \nomenclature[hpq]{$h^{p,q}$}{Hodge numbers}
The action of the semisimple part
of the
monodromy, \index{monodromy}
induces 
\emph{equivariant Hodge numbers} \index{Hodge numbers!equivariant}
\begin{equation} \label{eq:hpql}
  h^{p,q}_\lambda = \dim_\C
    \left( \Gr_F^p \Gr^W_{p+q} H^n(Y_\infty,\C) \right)_\lambda
\end{equation}\nomenclature[hpql]{$h^{p,q}_\lambda$}{Equivariant Hodge numbers}
for $\lambda \in \C$.
Equivalently to the definition above, we now have
\[
  \Sp(f,0) = \sum_{p,q,\lambda} h^{p,q}_\lambda
                      \left(\frac{\log \lambda}{2\pi i} + n - p \right)
           = \sum_{\alpha\in\Q} \sum_{q\in\Z}
		     h^{n+\lfloor -\alpha \rfloor,q}_{\exp(2\pi i \alpha)} (\alpha).
\]
\end{block}

\begin{prop}[\cite{ScherkSteenb} (7.3)] \label{prop:spec_prop}
We have the following properties.
\begin{enumerate}
\item \label{it:spec_prop_sym}
The spectrum is symmetric around $\frac{n-1}{2}$. More precisely, we have
$\Sp(f,0) = \iota \Sp(f,0)$, where $\iota:\Z[\Q] \to \Z[\Q]$ is the group 
automorphism sending $(a)$ to $(n-1-a)$ for $a\in\Q$.

\item \label{it:spec_prop_int}
The spectrum is contained in the interval $]-1,n[$. More precisely,
for every monomial
$(a)$ in the sum of \cref{eq:def_spec} with nonzero coefficient we have
$-1<a<n$.
\end{enumerate}
\qed
\end{prop}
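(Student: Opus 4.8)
The plan is to prove the symmetry statement~(i) by Poincaré--Lefschetz duality on the Milnor fibre, and then to deduce the sharp containment~(ii) formally from~(i) together with an elementary one-sided bound. First I would record that one-sided bound: every monomial $(\alpha)$ occurring with nonzero coefficient in the sum defining $\Sp(f,0)$ (see \cref{def:spec}) has $\alpha=\frac{\log\lambda}{2\pi i}+n-p$ for some eigenvalue $\lambda$ of $T_\infty$ with $\dim_\C(\Gr^p_F H^n(Y_\infty,\C))_\lambda>0$, and since $F^0H^n(Y_\infty,\C)=H^n(Y_\infty,\C)$ and $F^{n+1}H^n(Y_\infty,\C)=0$ we have $0\le p\le n$, hence $0\le n-p\le n$. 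By the Monodromy Theorem \cref{prop:monodromy} the eigenvalue $\lambda$ is a root of unity, so with the normalisation fixed in \cref{def:spec} we have $\frac{\log\lambda}{2\pi i}\in\,]-1,0]$. Therefore $\alpha>-1$ (because $n-p\ge0$) and $\alpha\le n$ (because $n-p\le n$); that is, $\Sp(f,0)$ is supported in $]-1,n]$.

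For the symmetry~(i), I would invoke the self-duality of the vanishing cohomology worked out by Steenbrink and Scherk--Steenbrink \cite{Steenb_Oslo,ScherkSteenb}: the cup product on $H^n(Y_t,\Q)$ is a nondegenerate $(-1)^n$-symmetric form, and on the vanishing cohomology it refines to a pairing compatible with the limit mixed Hodge structure of \cref{block:MHS} and with the semisimple part of $T_\infty$, the latter acting on the dual through $\lambda\mapsto\lambda^{-1}$. For $\lambda\neq1$ this is a perfect pairing between $H^n(Y_\infty)_\lambda$ and $H^n(Y_\infty)_{\lambda^{-1}}$ which identifies $\bigl(\Gr^p_F\Gr^W_{p+q}H^n(Y_\infty,\C)\bigr)_\lambda$ with $\bigl(\Gr^{\,n-p}_F\Gr^W_{2n-p-q}H^n(Y_\infty,\C)\bigr)_{\lambda^{-1}}$; for $\lambda=1$ one must additionally use that the monodromy logarithm $N$ is a morphism of mixed Hodge structures of type $(-1,-1)$ with $N^{j}\colon\Gr^W_{n+j}\xrightarrow{\,\sim\,}\Gr^W_{n-j}$. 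In the notation of \cref{block:Hodge_numbers} this yields $h^{p,q}_\lambda=h^{\,n-p,\,n-q}_{\bar\lambda}$ for $\lambda\neq1$, together with the analogous shifted identity governing the finitely many integer spectral numbers carried by the $\lambda=1$ part. Substituting these relations into the formula $\Sp(f,0)=\sum_{\alpha}\sum_{q} h^{\,n+\lfloor-\alpha\rfloor,\,q}_{\exp(2\pi i\alpha)}(\alpha)$ of \cref{block:Hodge_numbers} and carrying out the floor-function bookkeeping (using $\{\alpha\}+\{-\alpha\}=1$ for $\alpha\notin\Z$), one finds that a term at $\alpha$ is matched with one at $n-1-\alpha$; that is, $\Sp(f,0)=\iota\,\Sp(f,0)$.

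Finally, (ii) follows formally: by the first step $\Sp(f,0)$ is supported in $]-1,n]$, so, applying the involution $\iota$ and the first step again, it is also supported in $\iota(\,]-1,n]\,)=[-1,n[$; by~(i) these two supports coincide, whence $\Sp(f,0)$ is supported in $]-1,n]\cap[-1,n[\,=\,]-1,n[$, which is exactly~(ii). I expect the main obstacle to be the middle step: the compatibility of the cup product with the \emph{limit} mixed Hodge structure, and above all the correct treatment of the eigenvalue-$1$ part, where the pairing degenerates and must be routed through the monodromy operator $N$. Rather than reprove this, I would cite \cite{Steenb_Oslo,ScherkSteenb}, from which the proposition is quoted.
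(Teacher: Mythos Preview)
The paper does not prove this proposition at all: it is stated with a \qed and attributed to \cite{ScherkSteenb}~(7.3). So there is no ``paper's own proof'' to compare against; the result is quoted as input.

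Your outline is a correct and standard route to the statement. The elementary bound $\Sp(f,0)\subset\,]{-1},n]$ from $0\le p\le n$ and the normalisation of $\frac{\log\lambda}{2\pi i}$ is exactly right, and deducing the open interval $]-1,n[$ from the symmetry~(i) by intersecting with $\iota(\,]{-1},n])=[-1,n[$ is a clean formal step. For~(i) you correctly identify the mechanism: the nondegenerate pairing on the vanishing cohomology, compatible with the limit mixed Hodge structure and the semisimple monodromy, giving $h^{p,q}_\lambda=h^{n-p,n-q}_{\bar\lambda}$ for $\lambda\neq 1$ and the weight-shifted identity $h^{p,q}_1=h^{n+1-p,\,n+1-q}_1$ on the unipotent part; the floor-function computation then matches $\alpha$ with $n-1-\alpha$. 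You are also right that the only genuinely deep input is the compatibility of this pairing with the \emph{limit} mixed Hodge structure and the correct handling of the $\lambda=1$ block, and that this is precisely the content supplied by \cite{Steenb_Oslo,ScherkSteenb}. Since the paper itself simply cites that reference, your proposal is in the same spirit, only more explicit.
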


\begin{prop}[Saito \cite{Saito_exp}] \label{prop:Saito_exp}
Let $f\in\O_{\C^{n+1},0}$ define an isolated
hypersurface \index{hypersurface}
singularity $(X,0)\subset(\C^{n+1},0)$.
Assume furthermore that $f$ has
Newton nondegenerate \index{Newton nondegenerate}
principal part (for
definitions of diagrams and nondegeneracy, see \cref{ss:diag_nondeg}, for
the
Newton filtration, \index{Newton filtration}
(see \cref{def:Newt}).
The part of the
spectrum \index{spectrum}
lying in $]-1,0]$ is given by the Newton weight
function of monomials containing all three variables which are under the
Newton diagram. \index{Newton diagram}
That is, we have
$\Sp_{\leq 0}(f,0) = \sum_{p\in\Z^{n+1}_{>0} \cap \Gamma_-(f)} (\ell_f(x^p)-1)$.
\end{prop}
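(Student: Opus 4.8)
Since this is Saito's theorem on exponents of Newton nondegenerate singularities, the plan is to sketch the structure of its proof and how it meshes with the bookkeeping recorded in \cref{block:Hodge_numbers}. First I would reduce the claim to a statement about the top Hodge-graded piece. In the formula for the spectrum in \cref{block:Hodge_numbers}, the summands with $p=n$ contribute the spectral values $\tfrac{\log\lambda}{2\pi i}\in\ ]-1,0]$, whereas for $p<n$ the summands contribute $\tfrac{\log\lambda}{2\pi i}+(n-p)>0$ because of the normalisation $-1<\Re\tfrac{\log\lambda}{2\pi i}\leq0$. Hence $\Sp_{\leq0}(f,0)=\sum_\lambda\dim_\C\bigl(\Gr_F^n H^n(Y_\infty,\C)\bigr)_\lambda\,\bigl(\tfrac{\log\lambda}{2\pi i}\bigr)$, the sum over roots of unity $\lambda$, and the whole problem reduces to computing the eigenvalue decomposition of the single space $\Gr_F^n H^n(Y_\infty,\C)$ under the semisimple part of $T_\infty$.

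The next step is to bring in the Brieskorn lattice. By the theory of the Gauss--Manin system of an isolated hypersurface singularity (Brieskorn, Sebastiani, Scherk--Steenbrink \cite{ScherkSteenb}, Varchenko), $\Gr_F^n H^n(Y_\infty,\C)$ is identified with a graded piece of $\Omega^{n+1}_{\C^{n+1},0}/df\wedge d\Omega^{n-1}_{\C^{n+1},0}$ for the $V$-filtration, and a class in it is represented by a top-form $x^{m}\,dx_0\wedge\cdots\wedge dx_n$ with $m\in\Z^{n+1}_{\geq0}$. Writing $p=m+(1,\dots,1)$, so that $p\in\Z^{n+1}_{>0}$ automatically, the relevant invariant of this form is its order for the Newton filtration attached to $\Gamma_+(f)$ (see \cref{def:Newt}), namely $\ell_f(x^{p})$, and its spectral number should come out to be $\ell_f(x^{p})-1$, with the semisimple part of $T_\infty$ acting by $\exp(2\pi i\,\ell_f(x^p))$. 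The essential input here --- this is the content of Saito's theorem, sharpening earlier inequalities of Steenbrink and Varchenko --- is that for $f$ with Newton nondegenerate principal part the Newton filtration coincides on this piece with the Hodge/$V$-filtration, with no further shift and no unexpected dimension drop in the associated graded; combined with Kouchnirenko's identification of the Milnor number with the Newton number and the existence of a monomial basis of $\O_{\C^{n+1},0}/J_f$ adapted to $\ell_f$, the count of classes of a given Newton order then equals the corresponding lattice-point count.

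Granting this, the proposition becomes pure combinatorics. Among the monomial top-forms $x^m\,dx$ representing classes of $\Gr_F^n H^n(Y_\infty,\C)$, those whose spectral number $\ell_f(x^{m+(1,\dots,1)})-1$ lies in $]-1,0]$ are exactly those with $0<\ell_f(x^p)\leq1$, where $p=m+(1,\dots,1)$: the lower bound is automatic since $p\neq0$, and $\ell_f(x^p)\leq1$ is by definition the condition $p\in\Gamma_-(f)$, which here comes for free with $p\in\Z^{n+1}_{>0}$; conversely each such $p$ contributes an independent class by the tightness just invoked. Summing the contributions $(\ell_f(x^p)-1)$ over the set $\Z^{n+1}_{>0}\cap\Gamma_-(f)$ gives the asserted identity. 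Since the hypothesis is only that the principal part be Newton nondegenerate, not that $f$ be convenient, I would first reduce to the convenient case by replacing $f$ with $f+\sum_i x_i^N$ for $N\gg0$, which alters neither $\Gamma_-(f)$ in the region where $\ell_f\leq1$ nor the value $\Sp_{\leq0}(f,0)$.

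The step I expect to be the main obstacle is the middle one: showing that the combinatorially defined Newton order of a monomial form is precisely its spectral number, with no off-by-one discrepancy beyond the known normalisation, and that each Newton-graded piece of $\Gr_F^n H^n(Y_\infty,\C)$ has exactly the dimension predicted by counting lattice points below the diagram. This is where Newton nondegeneracy is indispensable --- it is what forces the $E_1$-degeneration and purity needed to pin down the Hodge filtration rather than merely bound it --- and carrying it out in detail, rather than citing \cite{Saito_exp}, would amount to essentially all of the work.
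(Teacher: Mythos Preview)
The paper does not prove this proposition: it is stated with attribution to Saito \cite{Saito_exp} and used as a black box, with no argument given. So there is no ``paper's own proof'' to compare against; the paper simply cites the result.

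Your outline is a reasonable summary of the architecture behind Saito's theorem---reduce to the top Hodge piece $\Gr_F^n$, identify it via the Brieskorn lattice and $V$-filtration, and invoke Newton nondegeneracy to make the Newton filtration match the $V$-filtration on that piece---and you correctly flag that the last step is where all the content lies and would require essentially reproducing \cite{Saito_exp}. Since the paper makes no attempt at this and treats the proposition purely as a citation, your sketch already goes beyond what the paper does; but as a self-contained proof it is (as you acknowledge) incomplete at exactly the crucial point.
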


\begin{rem}
\begin{list}{(\thedummy)}
{
  \usecounter{dummy}
  \setlength{\leftmargin}{0pt}
  \setlength{\itemsep}{0pt}
  \setlength{\itemindent}{4.5pt}
}

\item
The spectrum \index{spectrum}
is an invariant that depends only on the
Hodge filtration. \index{Hodge filtration}
A stronger invariant, the
\emph{spectral pairs} \index{spectral pairs}
\cite{Steenb_Oslo}, take the
weight filtration \index{weight filtration}
into account as well. In fact, the spectral pairs encode the same data as
the equivariant Hodge numbers. We will, however, not make any use of the
spectral pairs.

\item
In \cite{Nem_2spec}, N\'emethi shows that the mod 2 spectral pairs
are equivalent to the real Seifert form. In particular, the residue
mod 2 of the spectrum is determined by the topology of the embedding
$(X,0)\subset(\C^{n+1},0)$. This information, along with $\Sp_{\leq 0}(f,0)$
then determines the full spectrum $\Sp(f,0)$ by
\cref{prop:spec_prop}\ref{it:spec_prop_int}.

\end{list}
\end{rem}

\subsection{Statement of results}

Assume that $f\in\O_{\C^3,0} = \C\{x_1,x_2,x_3\}$ is the germ of a
holomorphic function 
in three variables defining an isolated
hypersurface \index{hypersurface}
singularity $(X,0)$ at the origin.
Assume, furthermore, that $f$ has
Newton nondegenerate \index{Newton nondegenerate}
principal part
(see \cref{ss:diag_nondeg}) and that
that the link $M$ of $(X,0)$ is a
rational homology sphere. \index{homology sphere!rational}
We denote by $p_g$ the
geometric genus \index{geometric genus}
of $(X,0)$.

The following theorem gathers the main results of the thesis.

\begin{thm} \label{thm:main}
\begin{list}{}
{
  \setlength{\leftmargin}{0pt}
  \setlength{\itemsep}{0pt}
  \setlength{\itemindent}{4.5pt}
}

\item[\bf\ref{rt:article}]
There exists a
computation sequence \index{computation sequence}
$(Z^{\ref{rt:article}}_i)_{i=0}^k$ for
$Z_K$
on the
minimal good resolution \index{minimal good resolution}
graph of $M$ satisfying
\begin{equation} \label{eq:thm_I}
  p_g = \sum_{i=0}^k \max\{0,(-Z^{\ref{rt:article}}_i,E_{v(i)})+1\}
      = \sw_M(\scan) - \frac{Z_K^2+|\V|}{8}.
\end{equation}
\index{Seiberg--Witten invariant!normalized}
\index{spinc structure@$\spinc$ structure!canonical}
\index{geometric genus}
Furthermore, this computation sequence can be easily calculated using the 
minimal resolution graph, see \cref{def:comp_seq_constr}.

\item[\bf\ref{rt:Newton}]
Assuming that the
Newton diagram \index{Newton diagram}
$\Gamma(f)$ is
convenient \index{convenient}
(see
\cref{def:convenient}) and that $G$ is the
resolution graph \index{resolution graph}
obtained from
Oka's algorithm \index{Oka's algorithm}
using this diagram (see \cref{block:oka_alg}),
there exists a
computation sequence \index{computation sequence}
$(Z^{\ref{rt:Newton}}_i)_{i=0}^k$ for
$\wt(f)$ (see \cref{def:wtdiv})
satisfying
\[
  P_X^\A(t) = \sum_{i=0}^\infty \max\{0, (-Z^{\ref{rt:Newton}}_i,E_{v(i)})+1\}
                                t^{r_i}
\]
where $P_X^\A(t)$ is the
Poincar\'e series \index{Poincar\'e series}
associated to the
Newton filtration \index{Newton filtration}
on $\O_{X,0}$ (see \cref{ss:Newt_filt}),
$(Z^{\ref{rt:Newton}}_{i=0})^\infty$ is the
continuation \index{continuation to infinity}
of
$(Z^{\ref{rt:Newton}})$ to infinity as in \cref{def:comp_seq} and
we set $r_i = m_{v(i)}(Z^{\ref{rt:Newton}}_i)$ for each $i\geq 0$.

The part of the
spectrum \index{spectrum}
$\Sp_{\leq 0}(f,0)$ is obtained
from this series by the equality
\[
  \Sp_{\leq 0}(f,0) = P_X^{\A,\mathrm{pol}}(t^{-1}),
\]
where we identify the ring of
Laurent--Puiseux polynomials \index{Laurent--Puiseux polynomials}
$\Z[t^{\pm1/\infty}]$ with the group ring $\Z[\Q]$.

Furthermore, this computation sequence $(Z^{\ref{rt:Newton}}_i)_{i=0}^k$
can be easily calculated, assuming only the
knowledge of $G$, the resolution graph, and the cycle $\wt(x_1x_2x_3)$
(see \cref{def:comp_seq_constr}).

\item[\bf\ref{rt:spec}]
Assuming that the
Newton diagram \index{Newton diagram}
$\Gamma(f)$ is
convenient \index{convenient}
(see
\cref{def:convenient}) and that $G$ is the resolution graph obtained from
Oka's algorithm \index{Oka's algorithm}
using this diagram (see \cref{block:oka_alg}),
there exists a
computation sequence \index{computation sequence}
$(Z^{\ref{rt:spec}}_i)_{i=0}^k$ for
$x(Z_K-E)$ (see \cref{prop:x_exist})
satisfying
\[
  \Sp_{\leq 0}(f,0) = \sum_{i=0}^{k-1}
                           \max \{0,(-Z^{\ref{rt:spec}}_i,E_{v(i)})+1 \} (r_i)
						   \in \Z[\Q],
\index{spectrum}
\]
where, for each $i$ we set
\[
  r_i = \frac{m_{v(i)}(Z^{\ref{rt:spec}}_i) + \wt_{v(i)}(x_1 x_2 x_3)}
             {\wt_{v(i)}(f)}.
\]
Furthermore, this sequence can be easily computed, assuming only the
knowledge of $G$, the resolution graph, and of the cycle $\wt(x_1x_2x_3)$
(see \cref{def:comp_seq_constr}).
\end{list}
\end{thm}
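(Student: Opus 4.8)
The plan is to follow the architecture sketched in the introduction: \cref{s:affine,s:seq} are purely combinatorial, while \cref{s:pg} and \cref{s:SW} convert the combinatorics into the analytic statements. The three cycles $Z^{\ref{rt:article}}=Z_K$, $Z^{\ref{rt:Newton}}=\wt(f)$ and $Z^{\ref{rt:spec}}=x(Z_K-E)$, together with their computation sequences, are fixed once and for all in \cref{def:comp_seq_constr} directly from $G$ (and, for the last two, from the cycle $\wt(x_1x_2x_3)$), so that the only thing to prove is that the associated sums $\sum_i\max\{0,(-Z_i,E_{v(i)})+1\}$ carry the claimed meaning. The backbone is a single counting formula, to be proved in \cref{s:seq}: Oka's algorithm describes $G$ as a configuration of chains of $\CP^1$'s glued along the edges and compact faces of the Newton diagram $\Gamma(f)$, with self-intersection numbers read off from continued-fraction expansions, and, feeding this description into the lattice-point count for plane lattice polygons of \cref{s:affine}, one shows that along each of the three sequences the numbers $(-Z_i,E_{v(i)})+1$ enumerate, step by step, the integral points of a prescribed slice of $\Z^3$ — for $Z^{\ref{rt:article}}$ the points of $\Z^3_{>0}$ on or under $\Gamma_-(f)$, and for the two weighted sequences the points of $\Z^3_{>0}$ of a fixed Newton weight. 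Summation then rewrites $\sum_i\max\{0,(-Z_i,E_{v(i)})+1\}$ as $\#(\Z^3_{>0}\cap\Gamma_-(f))$, respectively as the generating function of these counts graded by Newton weight.

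For \ref{rt:article}, \cref{thm:comp_seq} applied to the sequence for $Z_K$ gives $h_{Z_K}\le\sum_i\max\{0,(-Z_i,E_{v(i)})+1\}$, and since a hypersurface singularity is numerically Gorenstein, \cref{prop:pg_hl} identifies $h_{Z_K}=p_g$. To upgrade the inequality to an equality I would check the surjectivity criterion of \cref{thm:comp_seq} at every step: Newton nondegeneracy lets one write down, for each prescribed section of $\O_{E_{v(i)}}(-Z_i)$ on $E_{v(i)}\cong\CP^1$, a monomial of $\O_{X,0}$ whose divisorial valuations along the components of $E$ realise it, the point being that these valuations are linear functions of the exponent — the Newton weights — and that the required monomial exists precisely because of a lattice-point statement under $\Gamma(f)$. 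This yields $p_g=\sum_i\max\{0,(-Z_i,E_{v(i)})+1\}=\#(\Z^3_{>0}\cap\Gamma_-(f))$, already a topological identification of $p_g$ since the left-hand sequence depends only on the minimal good resolution graph (onto which Oka's graph minimalises; minor extra bookkeeping is needed when $\Gamma(f)$ is not convenient). For the Seiberg--Witten equality in \cref{eq:thm_I}, apply \cref{prop:Q_SW} with $l'=Z_K$: since $[Z_K]=0$ and $(-Z_K+2Z_K)^2=Z_K^2$, it reads $q_{Z_K}=\sw_M(\scan)-\tfrac{Z_K^2+|\V|}{8}$, so it suffices to prove $q_{Z_K}=\sum_i\max\{0,(-Z_i,E_{v(i)})+1\}$. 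This is the content of \cref{s:SW}: restricting the zeta function $Z_0(t)$ along the computation sequence produces a rational series in one variable which, via Oka's description of $G$, is assembled from the simplicial cones dual to the faces of $\Gamma(f)$; its periodic constant is computed with \cref{lem:cone_pol_part} and \cref{lem:pol_part_add}, and Braun and N\'emethi's classification of Newton diagrams with rational homology sphere link organises the computation into finitely many combinatorial types, in each of which the periodic constant is matched with $\#(\Z^3_{>0}\cap\Gamma_-(f))=q_{Z_K}$.

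The statements \ref{rt:Newton} and \ref{rt:spec} are obtained by the same mechanism with $Z_K$ replaced by $\wt(f)$, resp. by $x(Z_K-E)$, and with the exponents $r_i$ chosen so that the continuation to infinity of the sequence orders the enumerated lattice points by their Newton weight; summing the graded counting formula of \cref{s:seq} gives the two series identities. Along the way the series $\sum_i\max\{0,(-Z^{\ref{rt:Newton}}_i,E_{v(i)})+1\}t^{r_i}$ is recognised as the Poincar\'e series $P_X^\A(t)$ of the Newton filtration by comparing its graded pieces with the definitions of \cref{ss:Newt_filt}. Finally, the polynomial-part formulas for $\Sp_{\le0}(f,0)$ follow from Saito's description \cref{prop:Saito_exp}, namely $\Sp_{\le0}(f,0)=\sum_{p\in\Z^3_{>0}\cap\Gamma_-(f)}(\ell_f(x^p)-1)$, together with \cref{lem:cone_pol_part}: the negative part of the rational series at hand contributes no lattice point lying strictly under $\Gamma_-(f)$, so applying the (additive, by \cref{lem:pol_part_add}) operation $(\cdot)^{\mathrm{pol}}$ and substituting $t\mapsto t^{-1}$ isolates exactly the spectral numbers $\le0$, under the identification of $\Z[t^{\pm1/\infty}]$ with $\Z[\Q]$.

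The main obstacle is the counting formula of \cref{s:seq} and, with it, the step-by-step sharpness needed for \ref{rt:article}. Both rest on a faithful dictionary between the combinatorics of Oka's resolution — the shapes and lengths of the chains of $\CP^1$'s, their self-intersection numbers, and the way the cycles $Z_i$ grow along them — and the enumeration of integral points in the two-dimensional lattice polygons cut out on the compact faces of $\Gamma(f)$; this is exactly where the affine-geometric input of \cref{s:affine} is indispensable, and where most of the case analysis (guided by Braun and N\'emethi's list) has to be done. Once this dictionary is in place, each analytic conclusion — $p_g$, the Newton-filtration Poincar\'e series, $\Sp_{\le0}(f,0)$, and the normalized Seiberg--Witten invariant, hence the Seiberg--Witten invariant conjecture for $(X,0)$ — follows by feeding it into \cref{thm:comp_seq}, \cref{prop:pg_hl}, \cref{prop:Q_SW} and \cref{prop:Saito_exp}.
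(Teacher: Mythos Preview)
Your global architecture is right and matches the paper: the three computation sequences of \cref{def:comp_seq_constr}, the lattice-point counting formula of \cref{s:seq} (each step counts $|\bar P_i|$), then \cref{thm:comp_seq}+\cref{prop:pg_hl} for $p_g$, \cref{prop:Q_SW} for the Seiberg--Witten side, and Saito's \cref{prop:Saito_exp} for the spectrum. Two points, however, diverge from what the paper actually does.

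\textbf{Sharpness in \ref{rt:article}.} You propose to verify the surjectivity criterion of \cref{thm:comp_seq} by lifting each section of $\O_{E_{v(i)}}(-Z_i)$ to a monomial in $\O_{X,0}$. The paper does not lift sections. Instead it proves the \emph{reverse} inequality $\dim H^0(\O_{\tilde X}(-\bar Z_i))/H^0(\O_{\tilde X}(-\bar Z_{i+1}))\ge |\bar P_i|$ by exhibiting the monomials $(x^p)_{p\in\bar P_i}$ and showing they are linearly independent modulo $H^0(\O_{\tilde X}(-\bar Z_{i+1}))$; the key input is Ebeling--Gusein-Zade's criterion (\cref{lem:Eb_GZ}) that $\wt_n(g)<\div_n(g)$ forces $f_n\mid g_n$ in the Laurent ring, which is impossible when $\supp(g)$ fits in a dilate $r_iF_{\bar v(i)}$ with $r_i<1$. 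This is logically close to your idea but the mechanism is different, and your surjectivity check as stated (one monomial per section) would not obviously work without exactly this divisibility argument.

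\textbf{The Seiberg--Witten computation.} Your description of \cref{s:SW} is not what the paper does, and as written it is not a proof. The paper does \emph{not} restrict $Z_0(t)$ to a one-variable series along the computation sequence, does \emph{not} invoke \cref{lem:cone_pol_part} or periodic constants, and does \emph{not} assemble anything from simplicial cones. What it does is prove, for every step $i$, the identity
\[
  q_{\bar Z_{i+1}}-q_{\bar Z_i}=|\bar P_i|,
\]
and then telescope. To compute the left side it passes to the \emph{reduced} zeta function $Z_0^{\Nd}(t)$ in $|\Nd|$ variables (set $t_v=1$ for $v\notin\Nd$), writes each coefficient as a product $z_l^{\Nd}=\prod_{n\in\Nd}z_{l,n}^{\Nd}$, and gives closed formulas for the factors depending on $\delta_n-|\E_n|\in\{0,1,2,3\}$ (\cref{lem:factors}). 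The sum $q_{\bar Z_{i+1}}-q_{\bar Z_i}=\sum_{l\in S_i}z_l^{\Nd}$ is then evaluated by constructing, for each $p\in\bar P_i$, an explicit ``arm sequence'' of multiplicities along each arm of the diagram and checking, case by case (one node; central node; one, two, or three nondegenerate arms), that the contributions add up to $1$ per point $p$. Braun--N\'emethi's classification is used here, but to organise this arm-sequence bookkeeping, not to compute periodic constants. Your sentence ``the periodic constant is matched with $\#(\Z^3_{>0}\cap\Gamma_-(f))=q_{Z_K}$'' is circular: that equality is precisely the theorem.

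Parts \ref{rt:Newton} and \ref{rt:spec} you have essentially right: the Poincar\'e series identity is \cref{lem:PNseries} plus the partition $\amalg_i\bar P_i=\Z^3_{\ge0}$ of \cref{thm:pts}\ref{it:pts_N}, and the spectral statement is \cref{thm:spec_pol_part} (which is where \cref{lem:cone_pol_part} is actually used).
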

\begin{proof}
See \cref{thm:pg_as,thm:Newt,thm:qZ_ident}.
\end{proof}

\begin{block}
The computation sequences in the above theorem are constructed in
\cref{ss:alg}. In \cref{ss:pt_count} we define a sequence of sets
$P_i \subset\Z^3$ and prove the equation
$|P_i| = \max\{ 0, (-Z_i,E_{v(i)}) \}$ in cases \ref{rt:article},
\ref{rt:spec}, as well as a similar equation in case \ref{rt:Newton}.
In each case, this result is obtained in two steps.
In the technical \cref{lem:pts_faces} we show that for each $i$, the
set $P_i$ is given as the set of integral points in a diluted polygon
in an affine plane with a lattice.
We then apply \cref{thm:point_count} to relate the cardinality
of $P_i$ with the intersection number $(-Z_i,E_{v(i)})$.

In \cref{ss:pgspec} we apply a statement proved by Ebeling and Gusein-Zade
\cite{Ebeling_Gusein-Zade} to prove
\[
  |P_i| \leq \dim_\C \frac{H^0(\X,\O_{\X}(-Z_i)}{H^0(\X,\O_{\X}(-Z_{i+1}))}
\]
in cases \ref{rt:article} and \ref{rt:spec}. Along with \cref{thm:comp_seq},
this gives the above formula for the geometric genus. Furthermore, Saito's
result \cref{prop:Saito_exp} provides the formula for $\Sp_{\leq 0}(f,0)$.

In \cref{ss:PNfilt}, we give a formula for the Poincar\'e series
associated with the Newton filtration. For this, we use the fact
that the sets $P_i$ provide a partition of $\N^3$ which is
proved in \cref{s:seq}.
Our formula for the
Poincar\'e series is then proved using \cref{lem:PNseries}.

The equality to the right in \cref{eq:thm_I} is proved in \cref{s:SW}.
There, we relate the coefficients of the counting function with the sets
$P_i$. More precisely, we show that for each $i$ we have
\[
q_{Z_{i+1}} - q_{Z_i} = |P_i|.
\]
This rhymes with the results in the previous section, which can be
written $h_{Z_{i+1}} - h_{Z_i} = |P_i|$. The proof is technical and
is split into cases, each requiring attention to many details.
\end{block}

\begin{rem}
\begin{list}{(\thedummy)}
{
  \usecounter{dummy}
  \setlength{\leftmargin}{0pt}
  \setlength{\itemsep}{0pt}
  \setlength{\itemindent}{4.5pt}
}

\item
In \cref{ss:cyc_polyt} we explain the choice of minimal and convenient
diagrams in \cref{thm:main}.

\item
The result
$p_g = \sum_{i=0}^k \max\{0,(-Z^{\ref{rt:article}}_i,E_{v(i)})+1\}$
\index{geometric genus}
in \cref{rt:article} in the theorem above can be found in a joint article
of N\'emethi and the author \cite{Nem_Bal}.

\item
If $f$ is a weighted homogeneous polynomial, then the resolution
graph of $(X,0)$ has a unique node, say $n$. By construction, we also
have $(Z_i,E_{v(i)}) > 0$ unless $v(i) = n$
(see \cref{ss:Laufer_seq,ss:alg}). Using
\cref{lem:x_interpol}, if $v(i) = n$ and $m = m_n(Z_i)$, then one obtains
\[
  \max\{ 0, (-Z_i,E_{v(i)})+1 \}
  = \max \left\{ 0, mb_n
    - \sum_i \left\lceil \frac{\beta_i m}{\alpha_i} \right\rceil + 1 \right\}
\]
where $(\alpha_i/\beta_i)_i$ are the Seifert invariants of the link.
The first equality in \cref{eq:thm_I} therefore follows from
Theorem 5.7 of \cite{Pinkham} (note that we use here the assumption that
the central curve is rational). Furthermore, the second equality follows
from the already proved Seiberg--Witten invariant conjecture for
weighted homogeneous singularities \cite{Nem_Nico_SWII}.

\end{list}
\end{rem}

\newpage
\section{Newton diagrams and nondegeneracy} \label{s:newton_diag}

In this section we will recall the definition of a
Newton diagram \index{Newton diagram}
associated
with a function $f\in\O_{\C^{n+1},0}$, the nondegeneracy condition and some
important properties of singularities defined by nondegenerate functions.

In what follows, $f$ is a function germ around the origin in $\C^{n+1}$
and $(X,0)$ is the germ of the zero set of $f$. We will assume that
$X$ has an isolated singularity at the origin (see \ref{lem:isol}) and
that $f$ has
Newton nondegenerate \index{Newton nondegenerate}
principal part (\cref{def:nondeg}).
We will also assume that $n=2$, except for in
\cref{ss:diag_nondeg,ss:Newt_filt}
In the sections following this one, we will also assume that the link is a
rational homology sphere. \index{homology sphere!rational}

\subsection{Diagrams and nondegeneracy} \label{ss:diag_nondeg}

Let $f\in\O_{\C^{n+1},0}$ and
write $f = \sum_{p\in\N^3} a_p x^p$. We define the \emph{support}
of $f$ as $\supp(f) = \set{p\in\N^{n+1}}{a_p \neq 0}$. The
\emph{Newton polyhedron} \index{Newton polyhedron}
$\Gamma_+(f)$ \nomenclature[G+(f)]{$\Gamma_+(f)$}{Newton polyhedron}
of $f$ is the convex closure of
$\cup_{p\in\supp(f)} p+\R^{n+1}_{\geq 0}$.
The \emph{Newton diagram} \index{Newton diagram}
$\Gamma(f)$ \nomenclature[G(f)]{$\Gamma(f)$}{Newton diagram}
of $f$
is the union of compact two dimensional faces of the Newton polyhedron.
Here, a face $F \subset \Gamma_+(f)$ means the minimal set of any linear
function $\R^{n+1}\to\R$. We also denote by
$\Gamma_-(f)$ \nomenclature[G-(f)]{$\Gamma_-(f)$}{Under Newton diagram}
the union of segments
joining the origin in $\R^{n+1}$ with $\Gamma(f)$.

\begin{definition} \label{def:nondeg}
Let $F\subset \Gamma(f)$ be a compact face of the
Newton polyhedron \index{Newton polyhedron}
and define
$f_F(x) = \sum_{p\in F} a_p x^p$. \nomenclature[fF]{$f_F$}{Principal part}
We say that $f$ is
\emph{nondegenerate with respect to} $F$ if the set of
equations
$\frac{\partial}{\partial x_i} f_F = 0$ has no solution in $(\C^*)^3$.
We say that $f$ has \emph{Newton nondegenerate principal part}
if $f$ is nondegenerate with respect to every nonempty face of $\Gamma(f)$.
\end{definition}

\begin{rem}
The condition of nondegeneracy is equivalent to the zeroset of
$f_F$ in $(\C^*)^3$ being smooth for each face $F\subset\Gamma(f)$.
\end{rem}

\begin{prop}[Koushnirenko \cite{Kouchnirenko} Th\'eor\`eme 6.1]
For any $S\subset\N^{n+1}$, the function
$f = \sum_{p\in S} a_p x^p \in \O_{\C^{n+1},0}$ has Newton nondegenerate
principal part for a generic choice of coefficients $a_p\in\C^*, p\in S$.
\end{prop}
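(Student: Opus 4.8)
The plan is to present the set of admissible coefficient vectors as a finite intersection of Zariski-dense open subsets of the coefficient space, one condition for each compact face of the Newton polyhedron, and to verify the condition attached to a single face by a Bertini-type argument.

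First I would fix $S$ (assuming $0\notin S$, the case $0\in S$ being vacuous, since then $\Gamma(f)$ has no positive-dimensional compact face) and restrict to the Zariski-dense open set $U_0=\set{(a_p)_{p\in S}}{a_p\neq0\text{ for all }p}$. For $(a_p)\in U_0$ one has $\supp(f)=S$, so the Newton polyhedron, and hence $\Gamma(f)$, is the fixed polyhedral complex $\Gamma(S)$ attached to $\mathrm{conv}\big(\bigcup_{p\in S}(p+\R^{n+1}_{\geq0})\big)$; the latter has finitely many compact faces $F_1,\dots,F_m$, and whether $f$ is nondegenerate with respect to a given $F=F_j$ depends only on the coordinates $a_p$ with $p\in F\cap S$. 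It therefore suffices to produce, for each $j$, a Zariski-dense open $U_{F_j}\subseteq\C^S$ of vectors for which $f$ is nondegenerate with respect to $F_j$; then $U_0\cap\bigcap_{j}U_{F_j}$ is Zariski-dense open, and every vector in it gives an $f$ with Newton nondegenerate principal part in the sense of \cref{def:nondeg}.

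Fix a compact face $F$ of $\Gamma(S)$. If $F=\{p\}$ is a vertex, then $f_F=a_px^p$ with $a_p\neq0$ and $p\neq0$, so choosing $i$ with $p_i>0$ gives $\partial_if_F=a_pp_ix^{p-e_i}$, which never vanishes on $(\C^*)^{n+1}$; thus every vector with $a_p\neq0$ works. If $\dim F\geq1$, the monomials $\{x^p\mid p\in F\cap S\}$ have no common zero in the torus, so they span a base-point-free linear system on the smooth affine variety $(\C^*)^{n+1}$ and define a morphism $\Phi\colon(\C^*)^{n+1}\to\CP^{|F\cap S|-1}$; for a coefficient vector $(a_p)_{p\in F\cap S}$ the zero set $\{f_F=0\}$ is the $\Phi$-preimage of the corresponding hyperplane. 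By Bertini's theorem in characteristic zero, a general hyperplane has smooth preimage, so for $(a_p)_{p\in F\cap S}$ ranging over a Zariski-dense open subset the hypersurface $\{f_F=0\}\subset(\C^*)^{n+1}$ is nonsingular. Finally, $F$ lies on a supporting hyperplane $\{\ell=c_0\}$ of the Newton polyhedron with $\ell$ strictly positive and $c_0=\ell(p)>0$ for $p\in F$, so $f_F$ is $\ell$-quasi-homogeneous of positive degree; the Euler relation $c_0f_F=\sum_i\ell_ix_i\,\partial_if_F$ then shows that any common zero of the $\partial_if_F$ in $(\C^*)^{n+1}$ would be a singular point of $\{f_F=0\}$, and conversely. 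Hence nonsingularity of $\{f_F=0\}$ in the torus is exactly the condition of \cref{def:nondeg}, and the good vectors form the required $U_F$.

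The step I expect to be the crux is the nonemptiness of $U_F$ for each positive-dimensional face, i.e. the existence of at least one $f_F$ nondegenerate with respect to $F$; this is precisely where base-point-freeness of the monomial system and Bertini's theorem enter (alternatively, one exhibits explicit coefficients on $F$ and verifies the Jacobian criterion directly). Openness of $U_F$ is then routine (Bertini already yields a Zariski-open locus; or one compactifies the $x$-variables in a weighted projective space using the quasi-homogeneity of $f_F$ and invokes properness of the projection of the singular locus of the universal hypersurface), after which the reduction recorded above finishes the proof.
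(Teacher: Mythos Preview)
The paper does not give its own proof of this proposition; it is stated as a citation of Kouchnirenko's Th\'eor\`eme~6.1 and left without argument. Your Bertini-based proof is correct and is essentially the standard one. The reduction to a single face, the Euler identity $c_0 f_F=\sum_i\ell_i x_i\,\partial_i f_F$ identifying nondegeneracy with smoothness of $\{f_F=0\}$ in the torus, and the application of Bertini to the base-point-free monomial linear system are all sound. Note that Bertini already delivers a Zariski-open \emph{dense} set of hyperplanes with smooth preimage, so the ``crux'' you flag (nonemptiness of $U_F$) and the openness you postpone are both handled in that single stroke; your final paragraph is more cautious than necessary. Your treatment of the case $0\in S$ is also fine under the paper's convention that $\Gamma(f)$ is the union of compact top-dimensional faces.
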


The main statement in the following lemma can be found in Kouchnirenko's
article \cite{Kouchnirenko} as Remarque 1.13(ii).
The case $n=2$ is an observation made by Braun and N\'emethi
\cite{Newton_nondeg}.

\begin{lemma} \label{lem:isol}
Let $f\in\O_{\C^{n+1},0}$ define a singularity $(X,0)$ and assume that $f$ has
Newton nondegenerate \index{Newton nondegenerate}
principal part. Then $X$ has an isolated
singularity at $0$ if and only if for any
$I \subset \{ 1, 2, \ldots, n+1 \}$ we have
\[
  \left|
    \set{i}{\supp(f)-e_i \cap \R^I_{\geq0} \neq \emptyset}
  \right|
    \geq |I|.
\]
where
\[
  \R^I_{\geq 0}
     = \set{(p_1,\ldots, p_{n+1})\in\R^{n+1}}{\fa{i\notin I}{p_i = 0}}
\]
and $e_1, \ldots, e_{n+1}$ is the natural basis of $\R^{n+1}$.

In the case $n=2$, this is equivalent to the following condition.
The diagram $\Gamma(f)$ contains a point on each
coordinate hyperplane and a point at distance at most $1$ from each
coordinate axis.
\qed
\end{lemma}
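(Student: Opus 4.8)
The plan is to establish the equivalence between isolatedness of the singularity and the combinatorial condition on $\supp(f)$, and then to unwind what that combinatorial condition says concretely when $n=2$. First I would set up the standard criterion: since $f$ has Newton nondegenerate principal part, the critical locus of $f$ near $0$ is governed, stratum by stratum, by the torus strata $(\C^*)^I \times \{0\}^{I^c}$ of $\C^{n+1}$. For a fixed $I \subseteq \{1,\ldots,n+1\}$, the restriction of $f$ to the coordinate subspace $\R^I_{\geq 0}$ has its own Newton polyhedron, and nondegeneracy of $f$ with respect to the faces meeting that subspace means that $f|_{(\C^*)^I \times \{0\}}$ has at worst an isolated singularity at the origin of that stratum precisely when the restricted Newton polyhedron is \emph{convenient} on $\R^I$, i.e. meets every coordinate axis $\R e_i$, $i \in I$. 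The key computation is that $\partial f/\partial x_i$, restricted to the stratum $(\C^*)^I$, has support $(\supp(f) - e_i) \cap \R^I_{\geq 0}$ (after dropping terms that vanish identically on the stratum); the common zero set of all these partials on $(\C^*)^I$ is nonempty — hence the singularity fails to be isolated along that stratum — exactly when the number of indices $i$ for which this support is nonempty is strictly smaller than $\dim (\C^*)^I = |I|$, by the nondegeneracy-plus-dimension count of Kouchnirenko. Assembling this over all strata $I$ gives the stated inequality as a necessary and sufficient condition; this is the content cited as Remarque 1.13(ii) in \cite{Kouchnirenko}, and I would largely quote it, filling in only the identification of the supports of the restricted partials.

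For the second part, with $n=2$, I would simply translate the inequality $\left| \set{i}{(\supp(f)-e_i) \cap \R^I_{\geq 0} \neq \emptyset} \right| \geq |I|$ for each of the seven nonempty subsets $I \subseteq \{1,2,3\}$. The three singletons $I = \{j\}$ force, for each $j$, that some $p \in \supp(f)$ has $p - e_j \in \R^{\{j\}}_{\geq 0}$, i.e. $p$ is a multiple $me_j$ with $m \geq 1$ lying on the $j$-th axis — equivalently $\Gamma_-(f)$, and hence $\Gamma(f)$, meets every coordinate axis. Wait: more precisely the condition $(\supp(f)-e_j)\cap\R^{\{j\}}_{\geq 0}\neq\emptyset$ says there is a support point of the form $me_j$ with $m\ge 1$; combined with the observation that $\Gamma(f)$ meeting an axis is the same as a pure power of $x_j$ appearing, this gives ``$\Gamma(f)$ contains a point on each coordinate axis'' — which I should restate as the diagram containing a point \emph{on} each axis, matching the lemma's phrasing once one notes that a diagram point on the hyperplane $x_k=0$ together with the axis points yields the claim about hyperplanes. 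The two-element sets $I=\{j,k\}$ require at least two of the three supports $(\supp(f)-e_i)\cap\R^{\{j,k\}}_{\ge0}$ to be nonempty; since $(\supp(f)-e_i)\cap\R^{\{j,k\}}_{\ge0}$ can only be nonempty for $i\in\{j,k\}$ when the support point lies in the $(j,k)$-plane, or for the third index only trivially, this says $\Gamma(f)$ has a point in the coordinate plane $\{x_l=0\}$ ($l$ the omitted index) at distance $\le 1$ from one of the two axes of that plane; and running over all three choices of $I$ plus the already-established axis points, this is equivalent to ``$\Gamma(f)$ contains a point at distance at most $1$ from each coordinate axis.'' Finally $I=\{1,2,3\}$ needs all three partials to contribute, which follows automatically once the singleton and pair conditions hold, so it is not an extra constraint.

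The main obstacle I anticipate is not any of the combinatorial bookkeeping but the careful justification of the reduction to torus strata: namely, that for a Newton-nondegenerate $f$ the singular locus of $\{f=0\}$ can be analyzed separately on each stratum $(\C^*)^I\times\{0\}$, and that nondegeneracy with respect to the relevant faces guarantees the restricted function $f|_{(\C^*)^I}$ contributes a non-isolated critical point to $f$ exactly under the failure of the displayed inequality. One has to be slightly careful that a critical point of the restriction in the open torus orbit genuinely produces a singular point of $X$ lying on that stratum (rather than being killed by higher-order terms transverse to the stratum), and conversely that a non-isolated singularity of $X$ must be detected on some stratum; this is where the Newton nondegeneracy hypothesis does the real work, via Kouchnirenko's computation of the Jacobian ideal in terms of the diagram. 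Since the lemma attributes the general statement to \cite{Kouchnirenko} and the $n=2$ reformulation to \cite{Newton_nondeg}, I would present the stratum reduction as a recollection of Kouchnirenko's argument, give the support-of-the-partials identification in full, and then spend most of the written proof on the explicit case analysis over the seven subsets $I$ for $n=2$, which is elementary once the criterion is in hand.
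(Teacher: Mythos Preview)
The paper does not prove this lemma at all: it is stated with a trailing \qed and attributed to Kouchnirenko \cite{Kouchnirenko}, Remarque~1.13(ii), for the general statement and to Braun and N\'emethi \cite{Newton_nondeg} for the $n=2$ reformulation. So there is no argument in the paper to compare against; your plan to recall Kouchnirenko's stratum-by-stratum reduction and then unpack the $n=2$ combinatorics is the natural way to fill the gap.

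That said, your $n=2$ case analysis contains a genuine error. For a singleton $I=\{j\}$ the displayed inequality asks only that \emph{some} index $i\in\{1,2,3\}$ (not necessarily $i=j$) satisfies $(\supp(f)-e_i)\cap\R^{\{j\}}_{\ge 0}\neq\emptyset$. Unpacking this, one needs a support point of the form $p = t e_j + e_i$ with $t\ge 0$ (and if $i=j$ this reads $p=(t+1)e_j$). Thus the three singleton conditions say precisely ``$\supp(f)$ contains a point at distance $\le 1$ from each coordinate axis,'' not ``a point on each axis'' as you conclude. Conversely, it is the pair conditions $I=\{j,k\}$ (with $\{l\}$ the complement) that force a point on each coordinate hyperplane: at least two of the three indices must contribute, and since the contribution from $i=l$ asks for a support point with $p_l=1$, at least one of $i\in\{j,k\}$ must succeed, producing a point with $p_l=0$. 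You have the roles of singletons and pairs swapped. You also do not address the converse direction---that the two geometric conditions on $\Gamma(f)$ imply the inequality for every $I$, in particular for pairs and for $I=\{1,2,3\}$---which needs a short but separate check.
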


\begin{definition}\label{def:convenient}
We say that $f \in \O_{\C^3,0}$ is
\emph{convenient} \index{convenient}
(\emph{commode} in French) if
any of the following equivalent conditions is fulfilled.
\begin{itemize}
\item
$\supp(f)$ contains an element of each coordinate axis.
\item
The set $\R_{\geq0}^3 \setminus \Gamma_+(f)$ is bounded.
\item
$\R_{\geq 0} \Gamma(f) = \R_{\geq 0}^3$.
\end{itemize}
\end{definition}

\subsection{Oka's algorithm} \label{ss:Okas_alg}

A singularity $(X,0) \subset (\C^3,0)$ given by $f\in\O_{\C^3,}$
with Newton nondegenerate principal part as defined above has an
explicit resolution.
This is obtained through a modification of $\C^3$ constructed
from the Newton diagram $\Gamma(f)$.
Such modifications have been applied in any dimension,
see e.g. \cite{Var_zeta}, as
well as \cite{AGZVII} Chapter 8 and references therein.
Oka \cite{Oka} proved that in the case of surfaces, a relatively simple
algorithm
can be applied to the diagram to compute the graph associated to the
resolution of $(X,0)$ and that this does not depend on choices made
during the construction.
In this section we describe this algorithm and give related definitions.

We will use the notation from \ref{ss:res} for this
resolution. \index{resolution}

Recall that for integers $b_1, \ldots, b_s$ we have the
\emph{negative continued fraction} \index{continued fraction}
\[
  [b_1, \ldots, b_s] = b_1 - \frac{1}{b_2 - \ddots}.
\]
Further, the string $b_1, \ldots, b_s$ is referred to as the
\emph{negative continued fraction expansion} of the rational number above.
If we require $b_j \geq 2$ for $j \geq 2$, then the expansion is unique.
As we will never make use of positive continued fraction, we will often
simply say continued fraction. See \cite{P-P_cfrac} for a detailed
discussion of continued fractions and how they relate to the topology
of surface singularities, as well as \cite{BHPVdV} III.5 and
\cite{Laufer_book} Chapter II.

The statements in the following definition are not difficult to prove.
\begin{definition}
Let $A$ be a free abelian group of finite rank and take distinct primitive
elements $a,b\in A$.
\begin{itemize}
\item
The
\emph{determinant} \index{determinant}
$\alpha(a,b)$ \nomenclature[aab]{$\alpha(a,b)$}{Determinant of vectors $a,b$}
of $a,b$ is the 
greatest common divisor of maximal minors of the matrix whose rows are
given by the coordinate vectors of $a$ and $b$ with respect to some basis
of $A$.
\item
If $\alpha(a,b)>1$, then we define
the
\emph{denominator} \index{denominator}
$\beta(a,b)$ \nomenclature[bab]{$\beta(a,b)$}{Numerator of vectors $a,b$}
of $a,b$ as the unique integer
$0\leq \beta(a,b) < \alpha(a,b)$ for which $\beta(a,b) a + b$ has
content $\alpha(a,b)$.
\item
If $\alpha(a,b)=1$, we choose the denominator to be $\beta(a,b) = 1$ or
$\beta(a,b) = 0$.
\item
If $\alpha(a,b) > 1$, then
the
\emph{selfintersection numbers} \index{selfintersection number}
associated with $a,b$ are defined as 
$-b_1, \ldots, -b_s$, where $b_1, \ldots, b_s$ is the
continued fraction \index{continued fraction}
expansion of $\alpha(a,b)/\beta(a,b)$.
\item
The
\emph{canonical primitive sequence} \index{canonical primitive sequence}
associated with $a,b$ is the unique
sequence $a_1, \ldots, a_s \in A$ satisfying 
$a_{i-1} - b_i a_i + a_{i+1} = 0$
for $i=1, \ldots, s$, where $a_0 = a$ and $a_{s+1} = b$.
\item
If $\alpha(a,b) = 1$ and we choose $\beta(a,b) = 1$, then the
selfintersection numbers \index{selfintersection number}
associated with $a,b$ consist of a single $-1$, and the 
canonical primitive sequence \index{canonical primitive sequence}
is $a_1 = a+b$. If we choose $\beta(a,b) = 0$,
then both sequences are empty.
\end{itemize}
We refer to $\alpha(a,b)/\beta(a,b)$ as the \emph{fraction} associated with
$a,b\in A$.
\end{definition}

\begin{rem} \label{rem:alpbet}
We have $\gcd(\alpha(a,b),\beta(a,b)) = 1$. Thus, the fraction associated
with $a,b$ determines the
determinant \index{determinant}
and the
denominator. \index{denominator}
\end{rem}

\begin{block} \label{block:abc_comp}
If $a,b\in \Z^3$, then the determinant, denominator and canonical primitive
sequence can be calculated as follows. We must first make sure that
both vectors are primitive. Then, $\alpha(a,b)$ is obtained as
the content of the cross product $a\times b$. The denominator $\beta(a,b)$
is the unique number $0\leq \beta(a,b) < \alpha(a,b)$ for which
$\beta(a,b) a + b$ has content $\alpha(a,b)$. If the $i^{\mathrm{th}}$
coordinate $a_i$ of $a$ has no common factors with $\alpha(a,b)$,
then we have $\beta(a,b) = - b a^{-1}$, where the inverse is taken
in the ring $\Z/\alpha(a,b)\Z$. Otherwise, one can check manually
any number from $0$ to $\alpha(a,b)$ (using a computer is quite helpful).
Finally, the canonical primitive sequence is determined recursively
by $a_0 = a$, $a_1 = (\beta(a,b) a + b)/\alpha(a,b)$ and
$a_{i+1} = -a_{i-1} + b_i a_i$.
\end{block}

\begin{block} \label{block:oka_alg}
\index{Oka's algorithm}
We are now ready to construct the graph $G$. First, let
$\Nd^*$ \nomenclature[N*]{$\Nd^*$}{Index set for two dimensional faces}
be a
set indexing the two dimensional faces of $\Gamma_+(f)$, that is,
let $\set{F_n}{n\in\Nd^*}$ be the set of two dimensional faces of
$\Gamma_+(f)$. Let
$\Nd$ \nomenclature[N]{$\Nd$}{Index set for compact two dimensional faces}
be the subset of $\Nd^*$ corresponding to
compact faces.
For each $n\in\Nd^*$ let
$\ell_n$ \nomenclature[ln]{$\ell_n$}{Support function of Newton polyhedron}
be the unique integral primitive linear
function on $\R^3$ having
$F_n$ \nomenclature[Fn]{$F_n$}{Compact face of Newton polyhedron}
as its minimal set on $\Gamma_+(f)$.
For any $n\in\Nd$ and $n'\in\Nd^*$, let
$t_{n,n'}$ \nomenclature[tnn']{$t_{n,n'}$}{Length of segment $F_n\cap F_{n'}$}
be the one dimensional
combinatorial volume of $F_n\cap F_{n'}$. This is the same as the
number of components of $F_n\cap F_{n'}\setminus \Z^3$.
We also define
$\alpha_{n,n'} = \alpha(\ell_n,\ell_{n'})$
\nomenclature[ann']{$\alpha_{n,n'}$}{Determinant of $\ell_n$ and $\ell_{n'}$}
and
$\beta_{n,n'} = \beta(\ell_n,\ell_{n'})$,
\nomenclature[bnn']{$\beta_{n,n'}$}{Denominator of $\ell_n$ and $\ell_{n'}$}
where, if $\alpha_{n,n'} = 1$,
we choose $\beta_{n,n'} = 0$ if $n'\in\Nd$, but
$\beta_{n,n'} = 1$ if $n' \in \Nd^*\setminus\Nd$.

The graph
$G^*$ \nomenclature[G*]{$G^*$}{Extended graph}
is obtained as follows. First take
$\Nd^*$ as vertex set. Then, for any $n\in\Nd$ and $n'\in\Nd^*$,
add $t_{n,n'}$ copies of the
bamboo \index{bamboo}
depicted in \cref{fig:Bamboo}.

Let $\ell_{v_1}, \ldots, \ell_{v_s}$ be the
canonical primitive sequence \index{canonical primitive sequence}
associated with $\ell_n, \ell_{n'}$. We then have elements $\ell_v$
associated with all vertices of the graph $G^*$.
Let
$\V^*$ \nomenclature[V*]{$\V^*$}{Vertex set of $G^*$}
be the set of vertices of $G^*$ and for $v\in\V^*$,
let
$\V^*_v$ \nomenclature[Vv]{$\V^*_v$}{Set of neighbours in $G^*$}
be the set of neighbours of $v$ in $G^*$.

\mynd{Bamboo}{ht}{A bamboo.}{fig:Bamboo}

Let $\V$ be the set of vertices not in $\Nd^*\setminus \Nd$. Then,
define $G$ as the subgraph of $G^*$ generated by the vertex set $\V$.
The vertices $v_1, \ldots, v_s$ (as in \cref{fig:Bamboo})
are labelled with the
selfintersection numbers \index{selfintersection number}
associated with
$\ell_n, \ell_{n'}$, taken as (primitive) elements of $\Hom(\Z^3,\Z)$.
For $n\in\Nd$ we define the selfintersection number $-b_n$ as the unique
solution of the equation
\begin{equation} \label{eq:nbr_sum}
  -b_n \ell_n + \sum_{u\in\V^*_n} \ell_u = 0.
\end{equation}
Thus, for every $v\in\V$ we have a selfintersection number $-b_v$.
Furthermore, by the definition of $b_v$ for $v\in\V\setminus\Nd$,
\cref{eq:nbr_sum} holds with $n$ replaced by $v$.

For $G$ to be a
plumbing graph, \index{plumbing graph}
we must provide genera $[g_v]$ for
all $v\in \V$. For $n\in\Nd$, let $g_n$ be the number of integral points
in the relative interior of the polygon $F_n$. All other vertices get
genus $0$.
\end{block}

\begin{definition} \label{def:coord}
In addition to the linear functions $\ell_v$ for $v\in\V^*$ defined above,
let
$\ell_c$ \nomenclature[l1]{$\ell_1$}{Standard coordinate function in $\R^3$}
be the standard coordinate functions for $c=1,2,3$, that is,
$\ell_c(p) = p_c$ for $p = (p_1, p_2, p_3) \in \R^3$.
\end{definition}

\begin{definition}
For $n\in\Nd$, let $\Nd^*_n = \set{n'\in\Nd^*}{t_{n,n'} > 0}$ and
$\Nd_n = \Nd_n^*\cap\Nd$.
If $n\in \Nd$, $n'\in\Nd^*_n$ and $\beta_{n,n'} \neq 0$,
let $u_{n,n'} = v_1$ as in \cref{fig:Bamboo}.
If $\beta_{n,n'} = 0$, let
$u_{n,n'} = n'$.
\nomenclature[unn']{$u_{n,n'}$}{Neighbour of $n$ on branch containing $n'$}

\end{definition}

\begin{rem} \label{rem:Oka}
\begin{list}{(\thedummy)}
{
  \usecounter{dummy}
  \setlength{\leftmargin}{0pt}
  \setlength{\itemsep}{0pt}
  \setlength{\itemindent}{4.5pt}
}
\item \label{it:Oka_nodes}
Note that $\beta_{n,n'} = 0$ can only happen if $n'\in\Nd$, thus we always
have $u_{n,n'} \in \V$. In particular, we have
$\V^*_n = \V^{\phantom{*}}_n$ for $n\in\Nd$.
Furthermore, this convention guarantees that the set $\Nd$ is precisely
the set of nodes in the graph $G$, that is, the set of vertices
of degree at least $3$.

\item
If $t_{n,n'} > 1$, we must choose a neighbour $u_{n,n'}$ out of a set
of $t_{n,n'}$ elements. By construction, however, the functional
$\ell_u$ is well defined, for any such choice.
The numbers $m_u(\psi(l))$ (see \cref{lem:Sl_struct}) and
$m_u(Z)$, where $x(Z) = Z$ (see \cref{ss:Laufer_seq}), are also well defined
in this case.

\item
An implementation of the algorithm \cref{block:oka_alg} is available at
\cite{Oka.gp}.

\item \label{it:rem_Oka_n1}
The definition of the graph $G$ is specific to hypersurfaces in $\C^3$.
For $f\in\O_{\C^n+1,0}$, however, we can index the $n$ dimensional compact
faces of $\Gamma_+(f)$ by a set $\Nd$ and obtain primitive linear support
functions $\ell_n:\R^{n+1}\to\R$ as above.

\end{list}
\end{rem}

\begin{rem}
For $n\in\Nd$, the existence of $b_n$ is not obvious, but can be seen as
follows. Let $H$ be the hyperplane in $\R^3$ defined by $\ell_n = m$, where
$m$ is the value of $\ell_n$ on $F_n$. It follows from the definition
of the
canonical primitive sequence \index{canonical primitive sequence}
that for any $u\in\V_n$, the affine
function $\ell_u|_H$ is in fact primitive, and its minimal set on $F_n$
is $F_n\cap F_{n'}$, where $u$ is assumed to lie on a bamboo connecting
$n$ and $n'\in\Nd^*$. We now see that there is a natural correspondence
between the neighbours $u\in\V_n$ and the primitive segments of the
boundary $\partial F_n$. It is simple to show that under these conditions,
the sum $\sum_{u} \ell_u|_H$ is constant (see e.g. proof of
\cref{thm:point_count}).
Since $\ell_n$ is by definition
also constant on $H$, the existence of $b_n$ follows.
Furthermore, since $\ell_n$ is primitive, we have $b_n\in\Z$.
Finally, since all $\ell_v$ are positive on the open positive quadrant,
we must have $b_n > 0$.
\end{rem}

Recall that for $Z\in L$, we denote by $m_v(Z)$ the coefficient of $E_v$,
that is, $Z = \sum_{v\in\V} m_v(Z) E_v$.
\begin{lemma} \label{lem:mod_ZKE}
Let $n\in\Nd$ and $n'\in\Nd_n$. Then, for $u= u_{n,n'}$ we have
$\alpha_{n,n'}m_u(Z_K-E) = \beta_{n,n'}m_n(Z_K-E) + m_{n'}(Z_K-E)$.
Similarly, if $n'\in\Nd^*_n\setminus\Nd$, then
$\alpha_{n,n'}m_u(Z_K-E) = \beta_{n,n'}m_n(Z_K-E) - 1$.
\end{lemma}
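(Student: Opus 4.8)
The idea is that the sequence of coefficients $m_v(Z_K-E)$ restricted to a bamboo satisfies \emph{the same} three–term linear recursion as the support functions $\ell_v$, and that this recursion, together with the endpoint values, determines the coefficient at the first bamboo vertex by a fixed linear formula which the $\ell$–sequence already pins down.

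First dispose of the degenerate case: if $\beta_{n,n'}=0$ then, by the conventions in \cref{block:oka_alg}, $\alpha_{n,n'}=1$ and $n'\in\Nd$, so $u_{n,n'}=n'$ and the asserted identity reads $m_{n'}(Z_K-E)=m_{n'}(Z_K-E)$. So assume $\beta_{n,n'}\neq 0$; then the bamboo joining $n$ to $n'$ is nonempty, say with vertices $v_1,\dots,v_s$ ($s\geq 1$) as in \cref{fig:Bamboo}, so $u=u_{n,n'}=v_1$. Set $v_0:=n$; if $n'\in\Nd$ set $v_{s+1}:=n'$, and if $n'\in\Nd^*\setminus\Nd$ let $v_{s+1}$ be a formal symbol. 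I will show both $(\ell_{v_i})_{i=0}^{s+1}$ and $(m_{v_i}(Z_K-E))_{i=0}^{s+1}$ — where in the arrowhead case we \emph{declare} $m_{v_{s+1}}(Z_K-E):=-1$ — solve the same recursion.

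\textbf{Step 1 (the recursion for $Z_K-E$).}
For any $v\in\V$ we have $E_v^2=-b_v$, so by \cref{def:Z_K} $(Z_K,E_v)=-(K,E_v)=2-2g_v-b_v$, while $(E,E_v)=\sum_{w\in\V}(E_w,E_v)=-b_v+\delta_v$; hence $(Z_K-E,E_v)=2-2g_v-\delta_v$. A bamboo vertex has $g_{v_i}=0$; moreover $\delta_{v_i}=2$ for all relevant $i$ when $n'\in\Nd$, and for $1\le i\le s-1$ when $n'$ is an arrowhead, whereas the pruned end $v_s$ then has $\delta_{v_s}=1$ (as $n'\notin G$). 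Writing $Z_K-E=\sum_w m_w(Z_K-E)E_w$ and expanding $(Z_K-E,E_{v_i})$ along the edges of $G$, the equality $(Z_K-E,E_{v_i})=2-2g_{v_i}-\delta_{v_i}$ becomes, for $1\le i\le s$,
\[
  m_{v_{i-1}}(Z_K-E)-b_{v_i}\,m_{v_i}(Z_K-E)+m_{v_{i+1}}(Z_K-E)=0,
\]
where for $i=s$ in the arrowhead case the missing third term is exactly supplied by the convention $m_{v_{s+1}}(Z_K-E)=-1$ (indeed this equation is just $(Z_K-E,E_{v_s})=1$ rearranged; the small case $s=1$, where $v_1$ is both first and pruned-end vertex, works the same way). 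So $(m_{v_i}(Z_K-E))_{i=0}^{s+1}$ solves the bamboo recursion with boundary data $m_n(Z_K-E)$ at $v_0$ and $m_{n'}(Z_K-E)$, resp. $-1$, at $v_{s+1}$.

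\textbf{Step 2 (the $\ell$–sequence and the universal solution).}
By construction $\ell_{v_1},\dots,\ell_{v_s}$ is the canonical primitive sequence of $\ell_n,\ell_{n'}$, so $\ell_{v_{i-1}}-b_{v_i}\ell_{v_i}+\ell_{v_{i+1}}=0$ for $1\le i\le s$ (with $\ell_{v_0}=\ell_n$, $\ell_{v_{s+1}}=\ell_{n'}$), i.e. each coordinate of $(\ell_{v_i})$ solves the same recursion; and by \cref{block:abc_comp}, $\alpha_{n,n'}\ell_{v_1}=\beta_{n,n'}\ell_n+\ell_{n'}$. The submatrix of the intersection matrix on $\{v_1,\dots,v_s\}$ is a principal submatrix of a negative definite matrix, hence negative definite, so the recursion with homogeneous boundary values $y_0=y_{s+1}=0$ has only the zero solution; thus a solution is determined by $(y_0,y_{s+1})$ and $y_1$ is a fixed $\Q$–linear function of them. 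Since $\ell_n$ and $\ell_{n'}$ are linearly independent, two of the coordinate pairs $\bigl((\ell_n)_j,(\ell_{n'})_j\bigr)$ are linearly independent, so the relation $\alpha_{n,n'}\ell_{v_1}=\beta_{n,n'}\ell_n+\ell_{n'}$ identifies this linear function as $(y_0,y_{s+1})\mapsto(\beta_{n,n'}y_0+y_{s+1})/\alpha_{n,n'}$. Applying it to the solution from Step 1 gives $\alpha_{n,n'}m_u(Z_K-E)=\beta_{n,n'}m_n(Z_K-E)+m_{n'}(Z_K-E)$ when $n'\in\Nd$, and $\alpha_{n,n'}m_u(Z_K-E)=\beta_{n,n'}m_n(Z_K-E)-1$ when $n'$ is an arrowhead.

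\textbf{Main obstacle.}
The genuinely delicate point is Step 1: correctly translating the intersection numbers at the two ends of the bamboo, and in particular recognizing that for $n'$ an arrowhead one has $(Z_K-E,E_{v_s})=1$ (not $0$), so that the inhomogeneity at the pruned end is precisely absorbed by the formal boundary value $-1$; once one sees that $m_\bullet(Z_K-E)$ obeys the same bamboo recursion as $\ell_\bullet$, Step 2 is routine linear algebra with continued fractions.
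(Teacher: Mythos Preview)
Your proof is correct. The paper's own argument is a one-line forward reference: it observes that $(Z_K-E,E_v)=0$ whenever $\delta_v=2$ and then invokes the more general \cref{lem:mod}, whose formulas \cref{eq:mod_Nd,eq:mod_Nds} are in turn justified there by exactly the canonical-primitive-sequence identity $\alpha_{n,n'}\ell_{v_1}=\beta_{n,n'}\ell_n+\ell_{n'}$ from \cref{block:abc_comp}. So you have essentially inlined the proof of \cref{lem:mod} for the specific cycle $Z_K-E$: the same three-term bamboo recursion, the same endpoint bookkeeping (your observation that $(Z_K-E,E_{v_s})=1$ at a pruned end is precisely why the paper's general formula \cref{eq:mod_Nds} carries the term $-(l',E_e)$, which here equals $-1$), and the same linear-algebra conclusion. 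Your version is more self-contained and makes the ``main obstacle'' explicit, while the paper's version packages the argument once and reuses it; there is no substantive difference in method.
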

\begin{proof}
This follows from the more general \cref{lem:mod}, since
$(Z_K-E,E_v) = 0$ if $\delta_v = 2$.
\end{proof}

\begin{prop} \label{prop:isol}
Take $f\in\O_{\C^3,0}$ as above with
Newton nondegenerate \index{Newton nondegenerate}
principal part defining
an isolated
hypersurface \index{hypersurface}
singularity $(X,0)$. The
link \index{link}
of $(X,0)$ is a
rational homology sphere \index{homology sphere!rational}
if and only if $\Gamma(f) \cap \Z_{>0}^3 = \emptyset$.
\end{prop}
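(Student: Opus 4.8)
The plan is to read off both invariants governing $H_1(M,\Q)$ from Oka's description of $G$ and match them to lattice points of $\Gamma(f)$. By \cref{prop:H}, the link $M$ is a rational homology sphere if and only if $G$ is a tree and $g_v=0$ for every $v\in\V$; so it suffices to control the conjunction of these two conditions by $\Gamma(f)\cap\Z_{>0}^3$. Concretely, I would prove the single identity
\[
  \#\bigl(\Gamma(f)\cap\Z_{>0}^3\bigr)=b_1(G)+\sum_{v\in\V}g_v ,
\]
where $b_1(G)$ is the first Betti number of the graph (the number $c$ of \cref{prop:H}). Granting this, $\Gamma(f)\cap\Z_{>0}^3=\emptyset$ is equivalent to $b_1(G)=0$ together with $\sum_v g_v=0$, hence (as $G$ is connected) to $G$ being a tree with all genera zero, which by \cref{prop:H} is exactly the rational homology sphere condition.

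For the genus term, \cref{block:oka_alg} gives $g_v=0$ for $v\notin\Nd$ and $g_n=\#\bigl(\mathrm{relint}\,F_n\cap\Z^3\bigr)$ for $n\in\Nd$; since the support function $\ell_n$ of a compact face $F_n$ has strictly positive coordinates, no relative-interior point of $F_n$ can lie on a coordinate hyperplane, so these points all lie in $\Z_{>0}^3$, and conversely a lattice point of $\Gamma(f)$ lying in $\Z_{>0}^3$ in the relative interior of a two-dimensional face contributes to exactly one $g_n$. For the Betti term, I would use that collapsing the bamboos of $G$ to single edges and pruning the resulting pendant edges changes neither connectedness nor $b_1$: the remaining graph has the compact faces $\{F_n\}_{n\in\Nd}$ as vertices (each $F_n$ is a polygon, hence of degree $\geq 3$ in $G$) and, between $F_n$ and $F_{n'}$, one edge for each primitive sub-segment of the shared edge $F_n\cap F_{n'}$, i.e.\ $t_{n,n'}$ edges; since this graph is connected,
\[
  b_1(G)=\Bigl(\textstyle\sum_{\{n,n'\}}t_{n,n'}\Bigr)-|\Nd|+1 ,
\]
the sum being the total lattice length of the \emph{internal} edges of $\Gamma(f)$, those shared by two compact faces.

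It remains to match the right-hand sides, for which I would stratify the lattice points of $\Gamma(f)$ by the face of $\Gamma(f)$ in whose relative interior they lie. The two-dimensional strata give $\sum_v g_v$ as above. A point in the relative interior of an internal edge has, by the same positivity argument applied to that edge, all coordinates positive, and such an edge carries $t_{\cdot}-1$ of them. The key structural input is that every edge of $\Gamma(f)$ which is \emph{not} internal — equivalently, which is adjacent to an unbounded two-dimensional face of $\Gamma_+(f)$ — lies on a coordinate hyperplane, and likewise every vertex of $\Gamma(f)$ that is not an endpoint of an internal edge lies on a coordinate hyperplane; this is where the isolated-singularity hypothesis and the shape of $\Gamma_+(f)$ (cf.\ \cref{lem:isol}), which control the unbounded faces of $\Gamma_+(f)$, enter. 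Granting this, the lattice points of the one-skeleton of $\Gamma(f)$ lying in $\Z_{>0}^3$ are exactly those on the union $\Gamma^{\mathrm{int}}$ of internal edges, minus the endpoints that happen to lie on a coordinate hyperplane, and an Euler-characteristic count on $\Gamma(f)$ — a contractible manifold with boundary, the boundary being precisely the part of the one-skeleton on $\{x_1x_2x_3=0\}$, so $\chi(\partial\Gamma(f))=0$ — converts the discrepancy $|\Nd|-\#\{\text{internal edges}\}+\#\{\text{positive vertices on internal edges}\}$ into $1$, giving the claimed identity. I expect the main obstacle to be precisely this last bookkeeping: showing that $\Gamma(f)$ has no pinch points and is a contractible manifold with boundary lying on the coordinate hyperplanes, so that the face, edge and vertex counts balance; alternatively, one may simply invoke the classification of Braun and N\'emethi, which already contains this statement.
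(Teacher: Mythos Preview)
Your approach follows the same skeleton as the paper's first proof: match $\sum_v g_v$ with interior lattice points of the $2$-faces, match $b_1(G)$ with the ``excess'' combinatorics of the internal $1$-skeleton, and use the isolated-singularity hypothesis to push all remaining lattice points onto the coordinate hyperplanes. The paper does not bother with the exact identity $\#(\Gamma(f)\cap\Z_{>0}^3)=b_1(G)+\sum_v g_v$; it argues the equivalence directly, showing that $c\neq 0$ forces either some $t_{n,n'}>1$ (an interior lattice point on an internal edge) or a cycle of faces around a common vertex (an interior vertex, which is then positive), and conversely that any lattice point off $\partial\Gamma(f)$ forces $g>0$ or $c>0$. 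Your Euler-characteristic bookkeeping establishes strictly more, and your anticipated obstacle (that $\Gamma(f)$ is a disk with boundary on the coordinate hyperplanes) is exactly the point the paper also leans on implicitly when it asserts that a cycle of faces shares a common $0$-face.

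One statement in your outline is not quite right and should be weakened. You claim that every non-internal edge of $\Gamma(f)$ lies on a coordinate hyperplane. This is false in general: the paper notes that a boundary segment not contained in a coordinate hyperplane must, up to permutation, have the form $[(a,0,b),(0,1,c)]$. The point is that such a segment is primitive in the second coordinate, so it carries no interior lattice points, and both endpoints lie on coordinate hyperplanes. Thus the correct input is that every \emph{lattice point} on a non-internal edge lies on a coordinate hyperplane, which is exactly what your count needs. With this correction your argument goes through.

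The paper also records a second, independent route via \cref{prop:Saito_exp}: the number $\#(\Gamma(f)\cap\Z_{>0}^3)$ is the multiplicity of $0$ in the spectrum, which is nonzero if and only if $1$ is a monodromy eigenvalue, which is equivalent to $H_1(M,\Q)\neq 0$. That avoids Oka's graph entirely.
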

\begin{proof}
Let $g$ and $c$ be as in \cref{prop:H}. We see immediately that $g = 0$
if and only if for each $n\in \Nd$, the face $F_n$ contains no integral
points in its relative interior.

If $c \neq 0$, then we must have at least one of
the following possibilities: there are $n_1, n_2\in\Nd$ with $t_{n_1,n_2} > 1$,
or, there are $n_1, \ldots, n_s \in \Nd$ so that
$t_{n_i,n_{i+1}} \neq 0$ for $i=1, \ldots, s$ (where we set $n_{s+1} = n_1$)
and $\cap_{i=1}^s F_{n_i}$ is a zero dimensional face of $\Gamma(f)$. 
In the first case, $F_{n_1} \cap F_{n_2} \subset \Gamma(f)$ contains an
integral point with positive coordinates and in the second case the point
in $\cap_{i=1}^s F_{n_i}$ is such a point.

From this we see that $g = c = 0$ if and only if any integral
point $p\in \Gamma(f) \cap \Z^3$ lies on the boundary $\partial \Gamma(f)$.
But every segment of the boundary of $\Gamma(f)$
which is not contained in some coordinate hyperplane has the form
$[(a,0,b),(0,1,c)]$ for some $a,b,c \in \N$ modulo permutation of coordinates,
and so all integral points on the boundary of $\Gamma(f)$ lie on some
coordinate hyperplane. The proposition follows.

Alternatively, by Saito's result \cref{prop:Saito_exp}, the multiplicity of
$0$ in the spectrum is precisely the number of integral points in $\Gamma(f)$
with positive coordinates (we use here the convention given in
\cref{def:spec}, in \cref{prop:Saito_exp}, the `exponents' are normalized
between $0$ and $n+1$). But $0$ has nonzero multiplicity in the spectrum
if and only if the monodromy has $1$ as an eigenvalue, which is equivalent
to $M$ having nontrivial rational homology (see e.g. \cite{Nem_sti} 3.6).
\end{proof}

We end this subsection with the following result which can greatly simplify
calculations.

\begin{prop} \label{prop:content}
Let $[p_1, p_2] \subset F_n$ be an edge of one of the faces of the
Newton diagram $\Gamma(f)$, thus, $[p_1, p_2] = F_n\cap F_{n'}$ for some
$n'\in\Nd_n^*$.
Let $q_1, q_2\in \partial F_n\cap\Z^3$ so that
$[p_1,q_1]$ and $[p_2, q_2]$ are the primitive segments adjacent
to $[p_1, p_2]$ in $\partial F_n$
and set $\alpha_1 = \ell_{n'}(q_1-p_1)$ and $\alpha_2 = \ell_{n'}(q_2 - p_2)$.
If $p_1$ is a
regular vertex \index{regular vertex}
of $F_n$, then
$\alpha_{n,n'} = \alpha_1$ and $\alpha_1 | \alpha_2$
(see \cref{def:reg_sing} for regular vertices).
\end{prop}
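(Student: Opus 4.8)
The plan is to transfer the question into the two-dimensional plane $H$ spanned by $F_n$, namely $H=\{\ell_n=m_n\}$ where $m_n$ is the constant value of $\ell_n$ on $F_n$, equipped with the rank-two lattice $\Lambda=\ker\ell_n\cap\Z^3$ of integral vectors parallel to $H$. The whole proposition reduces to computing the content of the affine function $\ell_{n'}|_H$ --- equivalently, the positive generator of $\ell_{n'}(\Lambda)\subseteq\Z$ --- in two different ways: once from the combinatorics of $F_n$ near $p_1$, and once from the data entering Oka's algorithm.

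First I would record the elementary geometric input. If $w$ denotes the primitive vector along the edge $[p_1,p_2]=F_n\cap F_{n'}$, then $\ell_{n'}(w)=0$, since $\ell_{n'}$ is constant on that edge. Moreover $F_{n'}$ is precisely the face of $\Gamma_+(f)$ on which $\ell_{n'}$ attains its minimum, and $F_n\cap F_{n'}=[p_1,p_2]$, so any point of $F_n$ not lying on $[p_1,p_2]$ has strictly larger $\ell_{n'}$-value than $p_1$; applying this to $q_1$ and $q_2$ (both on $\partial F_n$, but on edges other than $[p_1,p_2]$) gives $\alpha_1,\alpha_2>0$.

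The key step is to show that the content of $\ell_{n'}|_H$ equals $\alpha_{n,n'}$. For this I would use the recursion defining the canonical primitive sequence recalled in \cref{block:abc_comp}: writing $u=u_{n,n'}$, one has $\ell_u=(\beta_{n,n'}\ell_n+\ell_{n'})/\alpha_{n,n'}$, hence $\ell_{n'}=\alpha_{n,n'}\ell_u-\beta_{n,n'}\ell_n$ as linear functions on $\R^3$. Restricting to $\Lambda=\ker\ell_n\cap\Z^3$ annihilates the $\ell_n$-term, so $\ell_{n'}|_\Lambda=\alpha_{n,n'}\cdot(\ell_u|_\Lambda)$. By the remark following \cref{block:oka_alg}, $\ell_u|_H$ is primitive for every $u\in\V_n$, and $u_{n,n'}$ is a neighbour of $n$ in $G$ by construction; hence $\ell_u|_\Lambda$ has content $1$ and $\ell_{n'}(\Lambda)=\alpha_{n,n'}\Z$. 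The degenerate conventions $\alpha_{n,n'}=1$ with $\beta_{n,n'}\in\{0,1\}$ (where $u_{n,n'}$ is either $n'$ or the first bamboo vertex) are covered by the very same identity.

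Finally I would feed in the regularity of $p_1$. By the definition of a regular vertex (\cref{def:reg_sing}), the two primitive edge vectors of $F_n$ issuing from $p_1$ --- namely $w$ and $q_1-p_1$ --- form a $\Z$-basis of $\Lambda$, so $\ell_{n'}(\Lambda)=\Z\,\ell_{n'}(w)+\Z\,\ell_{n'}(q_1-p_1)=\alpha_1\Z$. Comparing with the previous paragraph and using $\alpha_1>0$ yields $\alpha_{n,n'}=\alpha_1$. For the divisibility, note $q_2-p_2\in\Lambda$, so $\alpha_2=\ell_{n'}(q_2-p_2)\in\ell_{n'}(\Lambda)=\alpha_1\Z$, that is, $\alpha_1\mid\alpha_2$. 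I expect the only real obstacle to be bookkeeping: checking that $\ell_{n'}=\alpha_{n,n'}\ell_u-\beta_{n,n'}\ell_n$ holds uniformly across all the $\alpha_{n,n'}=1$ sub-cases, and matching whatever precise normalization of ``regular vertex'' is fixed in \cref{def:reg_sing} with the lattice-basis statement used here.
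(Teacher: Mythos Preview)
Your proposal is correct and follows essentially the same approach as the paper: identify $\alpha_{n,n'}$ as the content of $\ell_{n'}|_H$, then use the regularity of $p_1$ to compute this content as $\alpha_1$ via the lattice basis $(w,q_1-p_1)$. Your derivation of the content identification via $\ell_{n'}=\alpha_{n,n'}\ell_u-\beta_{n,n'}\ell_n$ is a bit more explicit than the paper's ``simple calculation'', but the argument is the same.
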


\begin{proof}
A simple calculation shows that $\alpha_{n,n'}$ can be identified as
the content
of the affine function $\ell_{n'}|_H$, where $H$ is the affine plane
containing $F_n$, that is, the smallest
positive integer $c$ for which there is an integer $0\leq r < c$ and
an integral functional $\ell:H\to\R$ so that
$\ell_{n'}|_H = c\ell+r$. It follows that there are
$a_1, a_2 \in \N$ so that $\alpha_1 = a_1 \alpha_{n,n'}$ and
$\alpha_2 = a_2 \alpha_{n,n'}$.
Since $p_1$ is a regular vertex of $F_n$, the points $p_1, p_2, q_1$
form an integral affine basis for
$H$,
hence $a_1 = 1$, and so $\alpha_1 = \alpha_{n,n'}$ and
$\alpha_2 = a_2 \alpha_{n,n'}$.
\end{proof}

\begin{rem}
Assume that $(X,0)$ is as in \cref{prop:isol} and that the link of
$(X,0)$ is a
rational homology sphere. \index{homology sphere!rational}
Then, by \cref{cor:reg_vx}, any edge of a face $F_n$ of the Newton diagram
contains a
regular vertex \index{regular vertex}
of $F_n$ as an endpoint.
\end{rem}

\begin{example}
Let
$f(x_1,x_2,x_3)
= x_1^4 + x_1^3x_2^2 + x_2^{10} + x_1^2x_3^3 + x_2^3x_3^4 + x_3^8$.
A simple calculation shows that the Newton polyhedron is given by the
inequalities
\[
  \langle (11, 5, 7),\cdot \rangle \geq  43, \quad
  \langle ( 6, 3, 4),\cdot \rangle \geq  24, \quad
  \langle (32,12,21),\cdot \rangle \geq 120, \quad
  \langle (15, 8, 6),\cdot \rangle \geq  48,
\]
as a subset of the positive octant. These vectors are the normal vectors
to the faces of the diagram.
In this case, the set $\Nd$
contains four elements, and the set $\Nd^*$ contains three elements,
one corresponding to each coordinate hyperplane.
On the left hand side of \cref{fig:ex}, we see the Newton diagram
$\Gamma(f)$.
On the right hand side, filled circles represent compact faces of
the Newton polyhedron and crosses represent noncompact ones. The segments
joining $n$ and $n'$ in this picture represent $t_{n,n'}$.
\mynd{ex}{ht}{A Newton diagram and its dual graph in the plane.}{fig:ex}
By calculation, we obtain the plumbing graph shown in \cref{fig:ex_graph},
with additional vertices corresponding to the elements of $\Nd^*\setminus\Nd$.

We carry out the calculations described in \cref{block:abc_comp} for
the pairs of vectors $(11,5,7), (6,3,4)$ and $(11,5,7), (15,8,6)$
and $(32,12,21),(0,0,1)$.

For the first pair, we have $(11,5,7)\times(6,3,4) = (-1,-2,3)$ which is
a primitive vector, hence $\alpha = 1$. By convention, we take
$\beta = 0$, yielding an empty canonical primitive sequence.

For the second pair, we have $(11,5,7)\times(15,8,6) = (-26,39,13)$,
thus $\alpha = 13$. Since $\gcd(13,11) = 1$, we
obtain $\beta = -11^{-1}\cdot 15 = 1$ in
$\Z/13\Z$, since $15 \equiv -11 \,(\mod 13)$. Since $13/1 = [13]$, the
canonical primitive sequence has length $1$ and consists of the vector
$((11,5,7)+(15,8,6))/13 = (2,1,1)$.

For the third pair, we have $(32,12,21)\times(0,0,1) = (12,-32,0)$, yielding
$\alpha = 4$. Furthermore, we have $\beta = -21^{-1}\cdot 1 = 3$
in $\Z/4\Z$. We have $4/3 = [2,2,2]$, and so the canonical primitive sequence
is given as $(24, 9, 16)$, $(16, 6, 11)$, $(8, 3, 6)$.

\mynd{ex_graph}{ht}{A plumbing graph obtained by Oka's algorithm.}{fig:ex_graph}
\end{example}

\subsection{On minimality} \label{ss:minimality}

In this subsection, we will recall some results on
minimality of plumbing \index{minimal graph}
graphs on one hand, and of
Newton diagrams \index{Newton diagram}
on the other. In 
\cite{Kouchnirenko}, Kouchnirenko introduces the condition of
convenience, \index{convenient}
(see \cref{def:convenient}), the assumption of
which can be of great convenience,
but does not actually reduce the generality when working with isolated
singularities. This is because for a given $f\in\O_{\C^3,0}$ with
Newton nondegenerate \index{Newton nondegenerate}
principal part, defining an isolated singularity,
the function $f+\sum_{i=1}^3 x_i^d$, for $d$ large enough,
defines an analytically equivalent singularity, and the
Newton diagram of the new function is convenient.

\begin{block} \label{block:Neu_min}
In \cite{Neu_plumb}, Neumann showed that if $M$ is an oriented three
dimensional manifold
which can be represented by a
plumbing graph, \index{plumbing graph}
then there is a unique
\emph{minimal plumbing graph} \index{minimal graph}
representing $M$. If the
intersection matrix \index{intersection matrix}
associated with
the plumbing graph $G$ is negative definite, then minimality, in this sense,
means that $G$ contains no vertex $v$ with $\delta_v \leq 2$ and $E_v^2 = -1$.
A minimal representative can be obtained by blowing down $-1$ curves
whenever possible.
\end{block}

\begin{block} \label{block:Newton_min}
In \cite{Newton_nondeg}, Braun and N\'emethi provide generators for
an equivalence relation on the set of Newton diagrams. If two functions
with Newton nondegenerate principal part have equivalent diagrams, then
they define topologically equivalent singularities. In fact, they can be
connected by a topologically constant deformation.
The generators can be described as follows. Let $P\subset\N^3$
so that $f = \sum_{p\in P} a_p x^p$ defines an isolated singularity
for generic coefficients $(a_p)_{p\in P}$. Equivalently, we assume
that $P$ contains a point at distance at least $1$ from each coordinate axis,
see \cref{lem:isol}. If $p\in\N^3$ so that
$\Gamma(f) \subset \Gamma(f+x^p)$, then these diagrams are equivalent.
If the two diagrams are not equal, then there are two possibilities.
One is that there is a face $F\subset\Gamma(f+x^p)$ so that
$\Gamma(f+x^p) = \Gamma(f)\cup F$ and
$\Gamma(f)\cap F \subset\partial\Gamma(f)$. The other is that
there is a face $F\subset\Gamma(f+x^p)$ and $F\cap\Gamma(f)$ is a
(two dimensional) face in $\Gamma(f)$.
A
Newton diagram \index{Newton diagram}
is
\emph{minimal} \index{Newton diagram!minimal}
if it is a minimal element of its equivalence class with respect
to inclusion.
In general, minimal diagrams are not
convenient. \index{convenient}
They do, however, have the advantage that
if one applies
Oka's algorithm \index{Oka's algorithm}
on a minimal diagram as in \cref{block:oka_alg},
then the output is a
minimal plumbing graph. \index{minimal graph}
\end{block}

The following proposition essentially repeats some of the results of
\cite{Newton_nondeg}:

\begin{prop} \label{prop:min}
Let $f\in\O_{\C^3,0}$ have
Newton nondegenerate \index{Newton nondegenerate}
principal part, defining an
isolated singularity at $0$ with a
rational homology sphere \index{homology sphere!rational}
link, \index{link}
which is not an $A_n$ singularity. Let $G$ be the
resolution graph \index{resolution graph}
constructed
in \cref{block:oka_alg} from $\Gamma(f)$.
\begin{enumerate}
\item \label{it:min_corr}
There is a bijective correspondence between
nodes \index{nodes}
$n\in\Nd$ in $G$
and two dimensional faces $F_n\subset \Gamma(f)$ and for each $n\in\Nd$
there is a bijection between neighbours $u\in \V_n$ of $n$ and primitive
segments of the boundary $\partial F_n$ of $F_n$. In particular,
$\Vol_1(\partial F_n) = \delta_n$.

\item \label{it:min_min}
If $\Gamma(f)$ is a
minimal Newton diagram \index{minimal Newton diagram}
then $G$ is a
minimal plumbing graph. \index{minimal graph}
\end{enumerate}
\end{prop}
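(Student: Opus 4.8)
The plan for part \ref{it:min_corr} is bookkeeping with \cref{block:oka_alg}. From that construction the vertex set is $\V=\Nd\sqcup\{\text{bamboo vertices}\}$; every bamboo vertex lies on a path (dangling towards a vertex of $\Nd^*\setminus\Nd$, or joining two elements of $\Nd$) and so has degree $\leq2$, while the convention fixing $\beta_{n,n'}$ ensures that every neighbour of a vertex $n\in\Nd$ already belongs to $\V$, so that $\V^*_n=\V_n$ and $\delta_n=\sum_{n'\in\Nd^*_n}t_{n,n'}$ (this is \cref{rem:Oka}). Since $\partial F_n$ is the union of the edges $F_n\cap F_{n'}$, $n'\in\Nd^*_n$, and $t_{n,n'}=\Vol_1(F_n\cap F_{n'})$ is by definition the number of primitive segments of that edge, this yields simultaneously the equality $\Vol_1(\partial F_n)=\delta_n$ and a bijection between $\V_n$ and the primitive segments of $\partial F_n$ (matching, along each $n'$-bamboo, the $t_{n,n'}$ neighbours with the $t_{n,n'}$ primitive segments of the corresponding edge; this matching can be made canonical, cf. the remark preceding \cref{lem:mod_ZKE}). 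Finally $F_n$ is a genuine two-dimensional lattice polygon, so $\partial F_n$ has at least three primitive segments; hence $\delta_n\geq3$, and together with the fact that all other vertices have degree $\leq2$ this identifies $\Nd$ with the node set, giving the asserted correspondence $\Nd\leftrightarrow\{\text{two-dimensional faces of }\Gamma(f)\}$.

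For part \ref{it:min_min} I would first reduce to a purely combinatorial condition on $\Gamma(f)$. As $(X,0)$ is an isolated hypersurface singularity it is normal, so the intersection form of $G$ is negative definite \cite{Mumford}, and by \cref{block:Neu_min} $G$ is minimal exactly when it has no vertex $v$ with $\delta_v\leq2$ and $E_v^2=-1$. By part \ref{it:min_corr} such a $v$ must be a bamboo vertex. On the bamboo joining $n\in\Nd$ to $n'\in\Nd^*$ the selfintersection numbers $-b_1,\dots,-b_s$ are the negative continued fraction expansion of $\alpha_{n,n'}/\beta_{n,n'}$, with $b_j\geq2$ for $j\geq2$ by normalisation; if $\alpha_{n,n'}\geq2$ then $\beta_{n,n'}\geq1$ (as $\ell_{n'}$ is primitive), hence $\alpha_{n,n'}/\beta_{n,n'}>1$ and also $b_1\geq2$, so this bamboo carries no $(-1)$-vertex. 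If $\alpha_{n,n'}=1$, the convention gives either $\beta_{n,n'}=0$ and an empty bamboo (when $n'\in\Nd$), or $\beta_{n,n'}=1$ and a bamboo consisting of a single $(-1)$-vertex which, since $n'\notin\V$, has degree $1$ in $G$ (when $n'\in\Nd^*\setminus\Nd$). Therefore $G$ is minimal if and only if $\alpha_{n,n'}\geq2$ for every $n\in\Nd$ and every non-compact $n'\in\Nd^*_n$.

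It then remains to prove that if $\Gamma(f)$ is minimal and $(X,0)$ is not an $A_n$ singularity, then $\alpha_{n,n'}\geq2$ for all non-compact neighbours $n'$; this is the only place where minimality of the diagram, the rational homology sphere hypothesis, and the $A_n$ exclusion enter, and it is essentially the assertion recorded in \cref{block:Newton_min}, due to Braun and N\'emethi \cite{Newton_nondeg}. The plan is: assume $\alpha_{n,n'}=1$ with $F_{n'}\subset\{x_3=0\}$, so $\ell_{n'}=\ell_3$; by \cref{cor:reg_vx} the edge $e=F_n\cap F_{n'}$ has an endpoint $p$ that is a regular vertex of $F_n$, and then \cref{prop:content} gives $\alpha_{n,n'}=\ell_{n'}(q-p)$ for the primitive segment $[p,q]$ of $\partial F_n$ adjacent to $e$ at $p$, i.e. $F_n$ rises to height only $1$ over $\{x_3=0\}$ at the regular vertex $p$. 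One then recognises this local configuration as one of the generators of the diagram equivalence relation of \cref{block:Newton_min} applied in reverse — peeling $F_n$ down onto, or absorbing it into, a neighbouring face — and checks that the resulting strictly smaller diagram still defines an isolated Newton nondegenerate singularity with rational homology sphere link equivalent to $(X,0)$, contradicting minimality. The only configuration admitting no such move is $\Gamma(f)$ being a single triangle meeting all three coordinate hyperplanes, which is precisely the $A_n$ case and is excluded. The delicate point — and the main obstacle — is making this reduction move precise and verifying it does not leave the class of diagrams under consideration; alternatively one simply invokes \cite{Newton_nondeg} for this last implication.
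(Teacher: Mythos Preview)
Your proposal is correct and follows essentially the same route as the paper. The paper's own proof is extremely terse: part \ref{it:min_corr} is dismissed as ``follows directly from construction'', and for part \ref{it:min_min} the paper makes exactly your reduction to the statement that $\alpha_{n,n'}>1$ whenever $n'\in\Nd^*\setminus\Nd$, then cites \cite{Newton_nondeg}, Proposition 3.3.11, for that fact. Your write-up simply supplies the bookkeeping the paper omits, and your final deferral to \cite{Newton_nondeg} matches the paper's.
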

\begin{proof}
\ref{it:min_corr} follows directly from construction. For \ref{it:min_min},
however, one must prove that if $n\in\Nd$ and $n'\in\Nd^*\setminus \Nd$
with $t_{n,n'} \geq 1$, then $\alpha_{n,n'} > 1$. This is proved
in \cite{Newton_nondeg} Proposition 3.3.11.
\end{proof}

\subsection{Association of cycles and polyhedrons} \label{ss:cyc_polyt}

In this section we will describe two methods of associating a cycle
to a function. On the other hand, we will associate a Newton polyhedron
to any cycle which will allow us to use the geometry of the
Newton diagram to prove properties of the
computation sequences \index{computation sequence}
defined in \cref{s:seq}.

\begin{definition} \label{def:wtdiv}
Let $g\in\O_{\C^3,0}$ and denote by
$\bar g$ the corresponding element in $\O_{X,0} = \O_{\C^3,0}/(f)$.
\begin{itemize}
\item
For any $v\in\V^*$ let
$\wt_v(g) = \min_{p\in\supp(g)} \ell_v(p)$
if
$g\neq 0$, otherwise set $\wt_v(g) = \infty$. Further,
let $\wt(g) = \sum_{v\in\V} \wt_v(g) E_v \in L$.
\item
For any $v\in\V^*$ let $\wt_v(\bar g) = \max_{h\in\O_{\C^3,0}} \wt_v(g+hf)$
if $\bar g \neq 0$, otherwise let $\wt_v(\bar g) = \infty$.
Further, let
$\wt(\bar g) = \sum_{v\in\V} \wt_v(\bar g) E_v \in L$.
\nomenclature[wt]{$\wt(g)$}{Weight of $g$}
\item
For any $v\in\V$, let $\div_v$ be the divisorial valuation on $\O_{X,0}$
associated with the exceptional divisor $E_v$. Furthermore,
let $\div(g) = \div(\bar g) = \sum_{v\in\V} \div_v(g) E_v \in L$.
\nomenclature[divg]{$\div(g)$}{Divisorial valuation of $g$}
\end{itemize}
\end{definition}

\begin{rem} \label{rem:wtdiv}
\begin{list}{(\thedummy)}
{
  \usecounter{dummy}
  \setlength{\leftmargin}{0pt}
  \setlength{\itemsep}{0pt}
  \setlength{\itemindent}{4.5pt}
}

\item
To any $v\in\V$ there corresponds a component, say $D_v$,
of the exceptional divisor
of the modification of $\C^3$ inducing the resolution of $X$. 
Then $\wt_v$ is the divisorial valuation on $\O_{\C^3,0}$ associated
with $D_v$. However, $\wt$ and $\div$ are generally not the same on
$\O_{X,0}$, see \ref{lem:Eb_GZ}.

\item
The divisorial valuations $\div_v$ on $\O_{X,0}$ and $\wt_v$ 
on $\O_{\C^3,0}$, as well as the order functions $\wt_v$ on $\O_{X,0}$
have been considered by many authors, see e.g.
\cite{Ebeling_Gusein-Zade,Lemah,Nem_Poinc,Hamm}.

\item \label{it:rem_wtdiv_n1}
The above definitions are not restricted to the surface case. The weights
$\wt_n$ can be defined on $\O_{\C^{n+1},0}$ for any $n\geq 0$
using the linear functions $\ell_n$ for $n\in\Nd$ from 
\cref{rem:Oka}\cref{it:rem_Oka_n1}.
The divisorial filtrations $\div_n$ are defined similarly on
$\O_{X,0} = \O_{\C^{n+1},0}/(f)$ assuming $n\geq 2$.

\end{list}
\end{rem}

\begin{definition} \label{def:HGam}
Let $Z\in L$ and $v\in\V$.
Start by defining the associated hyperplane and halfspace
\[
\begin{split}
  H_v^=(Z)    &= \set{p\in\R^3}{\ell_v(p) =    m_v(Z)} \\
  H_v^\geq(Z) &= \set{p\in\R^3}{\ell_v(p) \geq m_v(Z)}.
\end{split}
\nomenclature[Hv=Z]{$H_v^=(Z)$}{Hyperplane associated with $v$ and $Z$}
\nomenclature[Hv=Z]{$H_v^\geq(Z)$}{Halfspace associated with $v$ and $Z$}
\]
Since $H_v^=(Z)$ only depends on the
number $m = m_v(Z)$, we also set $H_v^=(m) = H_v^=(Z)$ and
$H_v^\geq(m) = H_v^\geq(Z)$.
We define the
\emph{Newton polyhedron} \index{Newton polyhedron}
of $Z$ as
\[
  \Gamma_+(Z) = \R_{\geq0}^3 \cap \bigcap_{v\in\V} H_v^\geq (Z).
\nomenclature[GZ]{$\Gamma_+(Z)$}{Newton polyhedron associated with $Z$}
\]
The \emph{face} corresponding to a node $n\in\Nd$ is
\[
  F_n(Z) = \Gamma_+(Z) \cap H^=_n(Z).
\nomenclature[G+Z]{$F_n(Z)$}{Face associated with $n$ and $Z$}
\]
The
\emph{polygon} \index{polygon}
corresponding to $n\in\Nd$ is
\[
  \Fnb_n(Z) = H^=_n(Z) \cap \bigcap_{u\in\V_n} H^\geq_u(Z).
\nomenclature[Fnb]{$\Fnb_n(Z)$}{Polygon associated with $n$ and $Z$}
\]
\end{definition}

\begin{rem}
\begin{list}{(\thedummy)}
{
  \usecounter{dummy}
  \setlength{\leftmargin}{0pt}
  \setlength{\itemsep}{0pt}
  \setlength{\itemindent}{4.5pt}
}

\item
Note that for any $Z\in L$, the
Newton polyhedron \index{Newton polyhedron}
$\Gamma_+(Z)$ and
its faces $F_n(Z)$ are, by definition, subsets of the positive
octant $\R^3_{\geq 0}$. The
polygons \index{polygon}
$\Fnb_n(Z)$, however, may contain
points with negative coordinates.

\item
By \cref{rem:Oka}\cref{it:Oka_nodes} we have $\V_n = \V_n^*$ for any
$n\in\Nd$. Therefore, $\Fnb_n(Z)$ is always a finite
polygon \index{polygon} \index{polygon}
(or empty).

\item
The $F_n(Z)$ is not necessarily a polygon, it can consist of a segment,
as single
point or be empty. It is in any case a bounded subset of an affine plane given
by finitely many inequalities.

\end{list}
\end{rem}

We finish this subsection by a well known formula for the
anticanonical cycle \index{anticanonical cycle}
$Z_K$.

\begin{prop}[Merle and Teissier \cite{Merle_Teissier} 2.1.1, Oka \cite{Oka} 9.1]\label{prop:Z_K}
We have
\[
  Z_K - E = \wt(f) - \wt(x_1 x_2 x_3).
\]
\qed
\end{prop}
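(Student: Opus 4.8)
The plan is to verify the identity $Z_K - E = \wt(f) - \wt(x_1x_2x_3)$ by checking that both sides satisfy the same defining linear system over $\Q$, using the invertibility of the intersection matrix. Since $Z_K = -K$ is characterized by the adjunction equalities $(Z_K, E_v) = E_v^2 - 2g_v + 2 = -b_v - 2g_v + 2$ for all $v \in \V$, and $-b_v$ together with $g_v$ is determined explicitly by Oka's algorithm (equation \cref{eq:nbr_sum} and the genus assignment in \cref{block:oka_alg}), it suffices to show that $W := \wt(f) - \wt(x_1x_2x_3) + E$ satisfies $(W, E_v) = -b_v - 2g_v + 2$ for every $v\in\V$. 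Because the intersection form is nondegenerate, this determines $W$ uniquely, hence $W = Z_K$ and the claim follows.

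First I would translate everything into the language of the linear functions $\ell_v$, $v\in\V^*$, and the coordinate functions $\ell_1,\ell_2,\ell_3$ from \cref{def:coord}. By \cref{prop:gStop} and the construction of the toric modification underlying Oka's algorithm, the divisorial valuation of $\pi^*f$ along the exceptional divisor component $D_v$ is $\wt_v(f) = \min_{p\in\supp(f)}\ell_v(p) = \ell_v(F_n)$ when $v = n \in \Nd$ (the minimum being attained on the face $F_n$), and similarly $\wt_v(x_1x_2x_3) = \ell_v(1,1,1) = \ell_v(e_1)+\ell_v(e_2)+\ell_v(e_3)$. So on the toric side, $m_v(W) = \ell_v(F_n) - \ell_v(1,1,1) + 1$ for nodes, with the values on bamboo vertices interpolated by the continued-fraction relations $a_{i-1} - b_i a_i + a_{i+1} = 0$. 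The key point is that the intersection number $(W, E_v)$ can be computed combinatorially: for $v$ an interior bamboo vertex with $\delta_v = 2$, the relation $\ell_{u_-} - b_v \ell_v + \ell_{u_+} = 0$ among the canonical primitive sequence forces $(W, E_v) = 0$, which matches $-b_v - 0 + 2 = 2 + E_v^2$ exactly when $E_v^2 = -b_v$ and $g_v = 0$ — wait, one must be careful: $(Z_K, E_v) = -b_v - 2g_v + 2$, and for a $(-2)$-bamboo vertex with $g_v = 0$ this is $0$, consistent.

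The technical heart is the computation at a node $n\in\Nd$. There one must show
\[
  (W, E_n) = m_n(W)(-b_n) + \sum_{u\in\V_n} m_u(W)\, (E_n, E_u)
           = -b_n - 2g_n + 2,
\]
using \cref{eq:nbr_sum} (i.e. $-b_n\ell_n + \sum_{u\in\V_n}\ell_u = 0$ as functionals on $\R^3$) together with the identification of $g_n$ as the number of interior lattice points of $F_n$ (from \cref{block:oka_alg}), and the correspondence in \cref{prop:min}\ref{it:min_corr} between neighbours $u\in\V_n$ and primitive boundary segments of $F_n$. Concretely, plugging in $m_v(W) = \ell_v(F_n) - \ell_v(1,1,1) + 1$, the terms involving $\ell_v(F_n)$ collapse by \cref{eq:nbr_sum}, the terms involving $\ell_v(1,1,1)$ reassemble into a Pick-type formula for $F_n$ relating its area, boundary lattice length $\delta_n$, and interior point count $g_n$, and the remaining constant terms count $\delta_n$. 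I expect the main obstacle to be exactly this lattice-geometry bookkeeping at the node — carefully matching the Pick's formula contribution (area, $\Vol_1(\partial F_n) = \delta_n$, $g_n$) against the adjunction value $-b_n - 2g_n + 2$ — and, as a secondary subtlety, handling the bamboos running out to the non-compact faces $n'\in\Nd^*\setminus\Nd$ where $\wt$ of $x_1x_2x_3$ contributes the "$-1$" seen in \cref{lem:mod_ZKE}. Both are essentially the content of Merle–Teissier and Oka, so I would cite \cite{Merle_Teissier,Oka} for the detailed verification while recording the structure of the argument as above.
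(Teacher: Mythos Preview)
The paper does not give its own proof of this proposition: it is stated with a terminal \qed and attributed to Merle--Teissier and Oka. Your outline---verify the adjunction equalities for $W := \wt(f) - \wt(x_1x_2x_3) + E$ vertex by vertex, using \cref{eq:nbr_sum} on bamboos and a Pick-type count at nodes---is exactly the standard argument (essentially Oka's).

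One computational slip to flag: for an interior bamboo vertex $v$ with $\delta_v = 2$ and neighbours $u_\pm$, the relation $\ell_{u_-} - b_v\ell_v + \ell_{u_+} = 0$ kills the $\wt(f)$ and $\wt(x_1x_2x_3)$ contributions, but the $+E$ term contributes $(E,E_v) = -b_v + \delta_v = 2 - b_v$. So $(W,E_v) = 2 - b_v$, not $0$; this matches $(Z_K,E_v) = -b_v - 2g_v + 2 = 2 - b_v$ for \emph{every} such vertex, not just the $(-2)$-curves. Your ``wait'' catches the target value correctly but not the source of the discrepancy. With that fixed, the bamboo case is clean, and your identification of the two genuine subtleties---the Pick bookkeeping at nodes and the boundary behaviour on legs toward $\Nd^*\setminus\Nd$---is accurate.
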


\begin{cor}
We have $\Gamma_+(Z_K-E) = (\Gamma_+(f)-(1,1,1)) \cap \R^3_{\geq0}$.
\end{cor}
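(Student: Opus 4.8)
The idea is to translate the identity into a statement purely about the polyhedron $\Gamma_+(f)$, and then treat the compact and the unbounded facets of $\Gamma_+(f)$ separately. First I would feed \cref{prop:Z_K} into \cref{def:HGam}: since $\supp(x_1x_2x_3)=\{(1,1,1)\}$ we have $\wt_v(x_1x_2x_3)=\ell_v(1,1,1)$, so $m_v(Z_K-E)=\wt_v(f)-\ell_v(1,1,1)$ for every $v\in\V$, and hence each defining halfspace of $\Gamma_+(Z_K-E)$ is a translate, $H_v^\geq(Z_K-E)=\{p\in\R^3:\ell_v(p+(1,1,1))\geq\wt_v(f)\}$. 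Since translation commutes with intersection, this gives
\[
  \Gamma_+(Z_K-E)=\R^3_{\geq0}\cap\Bigl(\Bigl(\bigcap_{v\in\V}\{p\in\R^3:\ell_v(p)\geq\wt_v(f)\}\Bigr)-(1,1,1)\Bigr).
\]
So it suffices to prove: a point $q\in\R^3$ with all coordinates $\geq1$ lies in $\Gamma_+(f)$ if and only if $\ell_v(q)\geq\wt_v(f)$ for all $v\in\V$; the corollary then follows by applying this to $q=p+(1,1,1)$ with $p\in\R^3_{\geq0}$.

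The ``only if'' direction is immediate and uses no restriction on $q$: each $\ell_v$ is nonnegative on $\R^3_{\geq0}$, so the minimum of $\ell_v$ over $\Gamma_+(f)=\convx(\supp f)+\R^3_{\geq0}$ is attained at a point of $\supp f$ and equals $\wt_v(f)$, whence $q\in\Gamma_+(f)$ forces $\ell_v(q)\geq\wt_v(f)$. For the converse I would use that the full-dimensional polyhedron $\Gamma_+(f)$ equals the intersection of the closed halfspaces $\{p:\ell_n(p)\geq\wt_n(f)\}$ as $n$ ranges over $\Nd^*$, the index set of the $2$-dimensional faces $F_n$ of $\Gamma_+(f)$, with $\ell_n$ the primitive integral support functional of $F_n$ and $\wt_n(f)=\min_{\supp f}\ell_n=\min_{\Gamma_+(f)}\ell_n$. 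For $n\in\Nd$ we have $n\in\V$, so $\ell_n(q)\geq\wt_n(f)$ is exactly part of the hypothesis; the work is in the unbounded faces $n\in\Nd^*\setminus\Nd$.

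For such $n$, the recession cone of $F_n$ is $\R^3_{\geq0}\cap\ker\ell_n\neq\{0\}$, so $\ell_n$ vanishes on a nonzero vector of $\R^3_{\geq0}$; being bounded below on $\Gamma_+(f)$ it is moreover nonnegative on $\R^3_{\geq0}$. Hence either $\ell_n=e_i$ for some $i$, or $\ell_n$ has exactly one vanishing coordinate. In the first case \cref{lem:isol} says $\Gamma(f)$ meets the hyperplane $\{x_i=0\}$, so $\wt_n(f)=\min_{p\in\supp f}p_i=0\leq q_i=\ell_n(q)$. In the second case write $\ell_n=ae_i+be_j$ with $a,b\geq1$ and $k$ the remaining index; by \cref{lem:isol} (with $n=2$) $\supp f$ contains a point $p^*$ at distance at most $1$ from the $x_k$-axis, i.e.\ with $(p^*_i,p^*_j)\in\{(0,0),(1,0),(0,1)\}$, so $\wt_n(f)\leq\ell_n(p^*)\leq a+b\leq aq_i+bq_j=\ell_n(q)$. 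Thus $\ell_n(q)\geq\wt_n(f)$ for all $n\in\Nd^*$, so $q\in\Gamma_+(f)$, which finishes the argument.

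The main obstacle is exactly this last step. It is \emph{not} true in general that $\Gamma_+(f)=\R^3_{\geq0}\cap\bigcap_{v\in\V}\{\ell_v\geq\wt_v(f)\}$: the functionals attached to $\V$ can fail to recover precisely those facet inequalities of $\Gamma_+(f)$ that are concentrated near the coordinate hyperplanes and coordinate axes (one can see this already on non-convenient isolated examples such as $f=x_1^2x_2+x_1x_2^2+x_3^{100}$). The translation by $(1,1,1)$ is therefore essential, and it has to be combined with the isolatedness input of \cref{lem:isol} in the manner above to control those missing inequalities. Everything else is bookkeeping with \cref{prop:Z_K} and the standard halfspace description of a polyhedron.
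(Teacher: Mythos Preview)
Your argument is correct. The paper states this corollary immediately after \cref{prop:Z_K} without proof, so you are supplying the details the author leaves to the reader. Your reduction via $q=p+(1,1,1)$ is the natural one, and the heart of the matter is exactly what you identify: the halfspaces $\{\ell_v\geq\wt_v(f)\}$ for $v\in\V$ need not cut out $\Gamma_+(f)$ on the nose, but after the shift into $\{q_i\geq1\}$ the missing noncompact-facet constraints become automatic, and this is where the isolatedness condition of \cref{lem:isol} enters. Your case split on $\ell_n$ (two vanishing coordinates versus one) and the bound $\wt_n(f)\leq\ell_n(p^*)\leq a+b\leq\ell_n(q)$ via a point of $\supp(f)$ near the $x_k$-axis are both clean; note that for the latter you should cite the original $\supp(f)$-formulation of \cref{lem:isol} rather than the $\Gamma(f)$-formulation, since you need the point $p^*$ in $\supp(f)$ (or alternatively just use that any point of $\Gamma(f)\subset\Gamma_+(f)$ already bounds $\wt_n(f)=\min_{\Gamma_+(f)}\ell_n$). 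A tiny cosmetic point: in the case $\ell_n=e_i$ you write ``$\wt_n(f)=\min_{p\in\supp f}p_i=0$'', but the step through $\supp f$ is unnecessary---a point of $\Gamma(f)$ with $p_i=0$ directly gives $\wt_n(f)=\min_{\Gamma_+(f)}p_i=0$.
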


\subsection{The Newton filtration} \label{ss:Newt_filt}

In this subsection, we will assume that $f\in\O_{\C^{n+1},0}$ has Newton
nondegenerate principal part and defines an isolated singularity
$(X,0)\subset (\C^{n+1},0)$. We make the further assumption that
$\Gamma(f)$ has at least one face of dimension $n$.
Kouchnirenko defines a filtration
on $\O_{\C^{n+1},0}$ called the
\emph{Newton filtration} \index{Newton filtration}
\cite{Kouchnirenko}.
In this subsection we provide an equivalent definition.
With our definition, the filtration is indexed with rational numbers, whereas
Kouchnirenko's filtration is index by integers, see \cref{rem:Nfilt_index}.
Furthermore, in our discussion, more emphasis is laid on the filtration
of $\O_{X,0}$ than on that of $\O_{\C^{n+1},0}$.

We emphasize that all the results in this section assume that the Newton
diagram indeed has a face of dimension $n$. Note, however, that in the
case $n=2$, the diagram $\Gamma(f)$ contains a face unless the
singularity is of type $A_k$ for some $k\geq 1$ by
\cite{Newton_nondeg} Remark 2.1.5.

\begin{definition} \label{def:Newt}
Let $f\in\O_{\C^{n+1},0}$ and assume that the
Newton diagram \index{Newton diagram}
$\Gamma(f)$ contains at least one compact face of dimension $n$.
Recall the family $(\ell_n)_{n\in\Nd}$ of linear support
functions of compact faces of $\Gamma(f)$ as in
\cref{block:oka_alg} (see also \cref{rem:Oka}\cref{it:rem_Oka_n1})
and the weights $\wt_n$ in \cref{def:wtdiv} (see also
\cref{rem:wtdiv}\cref{it:rem_wtdiv_n1}).
For $p\in\R_{\geq 0}^{n+1}$, we set
$\ell_f(p) = \min_{n\in\Nd} \ell_n(p)/\wt_n(f)$.
\nomenclature[lf]{$\ell_f$}{Newton weight function}
If $\Gamma(f)$ is a convenient diagram, then
$\ell_f$ is the unique function on $\R^{n+1}_{\geq 0}$ which
restricts to a linear function on each ray and takes the value $1$ on
$\Gamma(f)$. In general, $\ell_f$ is the largest such function which
is concave.
For any $0\neq g\in\O_{\C^{n+1},0}$
 representing $\overline g\in\O_{X,0}$, we set
\[
  \ell_f(          g) = \min_{p\in\supp(g)} \ell_f(p),\quad
  \ell_f(\overline g) = \max_{h\in(f)}      \ell_f(g+h).
\nomenclature[l]{$\ell_f$}{Newton weight function}
\] 
We refer to $\ell_f$ as the
\emph{Newton weight function}.  \index{Newton weight function}
This yields the
\emph{Newton filtrations} \index{Newton filtration}
on $\O_{\C^{n+1},0}$ and $\O_{X,0}$
\[
  \A_{\C^{n+1}}(r) = \set{g\in\O_{\C^{n+1},0}}{\ell_f(g) \geq r}, \quad
  \A_{   X}(r) = \set{\overline g\in\O_{   X,0}}{\ell_f(\overline g) \geq r}
\nomenclature[ACrf]{$\A_{\C^{n+1}}(r)$}{Newton filtration on $\O_{\C^{n+1},0}$}
\nomenclature[AXrf]{$\A_X(r)$}{Newton filtration on $\O_{X,0}$}
\]
for $r\in\Q$ and the associated graded rings
\[
\begin{split}
  A_{\C^{n+1}} = \oplus_r A_{\C^{n+1},r}, &\quad 
      A_{\C^{n+1},r} = \A_{\C^{n+1}}(r) / \cup_{s>r} \A_{\C^{n+1}}(s), \\
  A_X      = \oplus_r A_{X   ,r}, &\quad 
      A_{X   ,r} = \A_{X}(r)    / \cup_{s>r} \A_{X}(s).
\end{split}
\nomenclature[ACr]{$A_{\C^{n+1}}$}{Graded associated with Newton filtration}
\nomenclature[AXr]{$A_X$}{Graded associated with Newton filtration}
\]
The associated
\emph{Poincar\'e series} \index{Poincar\'e series}
are given as
\[
  P^\A_{\C^{n+1}}(t) = \sum_{r\in\Q} \dim_\C A_{\C^{n+1},r} t^r, \quad
  P^\A_X(t) = \sum_{r\in\Q} \dim_\C A_{X,r} t^r.
\nomenclature[PAC]{$P^\A_{\C^{n+1}}(t)$}{Poincar\'e series associated
with Newton filtration}
\]
\nomenclature[PAX]{$P^\A_X(t)$}{Poincar\'e series associated
with Newton filtration}
\end{definition}

\begin{rem} \label{rem:Nfilt_index}
It follows that there is an $m\in\Z$ so that if $A_{\C^{n+1},r} \neq 0$ then
$r\in\frac{1}{m}\N$.
In \cite{Kouchnirenko}, the Newton filtration is normalized in such a way
that for $r$ big, we have $A_{\C^{n+1},r} \neq 0$ if and only if $r\in\N$.
For the rest of this subsection, we will fix such a value
for $m$ and use it in proofs.
\end{rem}

\begin{lemma} \label{lem:PNseries}
We have $P^\A_{\C^{n+1}}(t) = \sum_{p\in\N^{n+1}} t^{\ell_f(x^p)}$ and
$P^\A_X(t) = (1-t) P^\A_{\C^{n+1}}(t)$.
\end{lemma}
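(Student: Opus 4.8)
The plan is to prove the two identities separately, the first by direct enumeration of monomials, the second by a formal manipulation with the series $\sum_{l\not\geq 0} t^l$-type operator (here in one variable, just $(1-t)^{-1}$).

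For the first identity $P^\A_{\C^{n+1}}(t) = \sum_{p\in\N^{n+1}} t^{\ell_f(x^p)}$: the point is that the monomials $x^p$, $p\in\N^{n+1}$, project to a $\C$-basis of each graded piece $A_{\C^{n+1},r}$. First I would observe that the Newton filtration on $\O_{\C^{n+1},0}$ is a monomial filtration, meaning $\A_{\C^{n+1}}(r)$ is spanned as a vector space by the monomials it contains, namely those $x^p$ with $\ell_f(x^p)\geq r$; this is immediate from $\ell_f(g) = \min_{p\in\supp(g)}\ell_f(p)$ in \cref{def:Newt}. Consequently $\cup_{s>r}\A_{\C^{n+1}}(s)$ is spanned by the monomials $x^p$ with $\ell_f(x^p) > r$, and the quotient $A_{\C^{n+1},r}$ has as a basis the (images of the) monomials $x^p$ with $\ell_f(x^p) = r$ exactly. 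Hence $\dim_\C A_{\C^{n+1},r} = |\set{p\in\N^{n+1}}{\ell_f(x^p) = r}|$, and summing over $r$ with the weight $t^r$ gives $P^\A_{\C^{n+1}}(t) = \sum_r |\set{p}{\ell_f(x^p)=r}| t^r = \sum_{p\in\N^{n+1}} t^{\ell_f(x^p)}$. One should note here that for each fixed $r$ this set is finite (the level sets of $\ell_f$ meet $\N^{n+1}$ in finitely many points, since $\ell_f$ is positive and proper-ish on the positive octant once $\Gamma(f)$ has a top-dimensional face), so the series is well defined in $\Z[[t^{1/m}]]$.

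For the second identity $P^\A_X(t) = (1-t)P^\A_{\C^{n+1}}(t)$: the exact sequence of filtered modules $0\to (f)\to \O_{\C^{n+1},0}\to \O_{X,0}\to 0$ should be used, where $(f)$ carries the induced filtration. The key claim is that multiplication by $f$ induces, for each $r$, an isomorphism $A_{\C^{n+1},r-1}\xrightarrow{\sim} (f)_r := \A_{\C^{n+1}}(r)\cap(f)\,/\,\cup_{s>r}(\A_{\C^{n+1}}(s)\cap(f))$ — i.e. the associated graded of the ideal $(f)$ is a shift of $A_{\C^{n+1}}$ by $\wt$-degree $1$. This rests on the fact that $\ell_f$ is "multiplicative on the principal part": $\ell_f(gf) = \ell_f(g) + \ell_f(f) = \ell_f(g) + 1$, because the initial (Newton) form of a product is the product of the initial forms — this uses Newton nondegeneracy, or more precisely just that the principal parts $f_F$ are not zero-divisors in the relevant graded ring, which holds because the graded ring $A_{\C^{n+1}}$ is a domain (a subring of a Laurent polynomial ring via the valuations $\wt_n$). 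Granting the shift isomorphism, the long exact sequence of associated graded pieces (which is exact because we are taking associated graded of a short exact sequence of filtered modules with the induced filtration) gives $\dim_\C A_{X,r} = \dim_\C A_{\C^{n+1},r} - \dim_\C A_{\C^{n+1},r-1}$, and multiplying by $t^r$ and summing yields $P^\A_X(t) = P^\A_{\C^{n+1}}(t) - t\,P^\A_{\C^{n+1}}(t) = (1-t)P^\A_{\C^{n+1}}(t)$.

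I expect the main obstacle to be the second identity, specifically verifying cleanly that the associated graded of $(f)\subset\O_{\C^{n+1},0}$ with the \emph{induced} filtration is exactly $f\cdot A_{\C^{n+1}}$ with a degree shift of $1$ — equivalently, that if $h\in\O_{\C^{n+1},0}$ and $\ell_f(hf)\geq r$ then $h$ can be chosen (modulo higher filtration) with $\ell_f(h)\geq r-1$, i.e. there is no "unexpected cancellation" raising the Newton weight of $hf$ above $\ell_f(h)+1$. This is precisely the statement that the initial form map is multiplicative, and it is where Newton nondegeneracy (through the domain property of the graded ring, cf.\ the valuations $\wt_n$) enters; the surface of this is touched on in \cref{def:wtdiv} and \cref{rem:wtdiv}, and I would cite Kouchnirenko \cite{Kouchnirenko} for the precise statement that the Newton-graded ring is reduced/domain along each face. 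The first identity, by contrast, is essentially bookkeeping once one records that a monomial filtration has monomial associated graded.
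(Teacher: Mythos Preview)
Your overall strategy matches the paper's: the first identity by monomial bookkeeping, the second via the short exact sequence $0 \to A_{\C^{n+1},r-1} \xrightarrow{\cdot f} A_{\C^{n+1},r} \to A_{X,r} \to 0$ on graded pieces. The first part is correct and is essentially what the paper says in one sentence.

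There is, however, a gap in your justification of the key identity $\ell_f(gf) = \ell_f(g) + 1$. You claim that $A_{\C^{n+1}}$ is a domain, but this is false whenever $\Gamma(f)$ has more than one top-dimensional face: if $p_1, p_2 \in \N^{n+1}$ are such that the minimum $\min_n \ell_n(\cdot)/\wt_n(f)$ is achieved at distinct nodes for $p_1$ and for $p_2$, then strict concavity of $\ell_f$ typically gives $\ell_f(p_1+p_2) > \ell_f(p_1) + \ell_f(p_2)$, so $\overline{x^{p_1}} \cdot \overline{x^{p_2}} = 0$ in $A_{\C^{n+1}}$. Your parenthetical about embedding via the valuations $\wt_n$ does not repair this, since the different $\wt_n$ define incompatible gradings and their product takes values in a ring that is itself not a domain. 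The correct argument is simpler and does \emph{not} use Newton nondegeneracy at all: concavity of $\ell_f$ (from convexity of $\Gamma_+(f)$) together with $\ell_f|_{\supp(f)} \geq 1$ gives $\ell_f(gf) \geq \ell_f(g) + 1$; for the reverse inequality, pick a single $n \in \Nd$ with $\ell_f(g) = \wt_n(g)/\wt_n(f)$, note that $\wt_n$ is a genuine valuation on $\O_{\C^{n+1},0}$ (the graded ring for the \emph{single} linear form $\ell_n$ is the polynomial ring, a domain), so $\wt_n(gf) = \wt_n(g) + \wt_n(f)$ and hence $\ell_f(gf) \leq \wt_n(gf)/\wt_n(f) = \ell_f(g) + 1$. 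With this identity in hand your exact-sequence argument goes through and coincides with the paper's, which phrases it as a $3\times 3$ diagram with exact rows and columns.
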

\begin{proof}
The first statement follows from the fact that for any $r\in\Q$,
the residue classes of the monomials $x^p$ with $\ell_f(p) = r$ form
a basis of $A_{\C^{n+1},r}$.

For the second statement, note first that for any $g\in\O_{\C^{n+1},0}$ we have
\[
  \ell_f(fg) = \ell_f(g) + 1.
\]
To see this, note first that by the convexity of $\Gamma_+(f)$,
the function $\ell_f$ is concave, that is,
$\ell_f(p_1+p_2) \geq \ell_f(p_1) + \ell_f(p_2)$ for
$p_1, p_2 \in \R_{\geq 0}^{n+1}$. Furthermore, we have $\ell_f(p) \geq 1$ for
$p\in\supp(f)$ by construction. This shows $\ell_f(gf) \geq \ell_f(g) + 1$.
To prove equality, take $n\in\Nd$
so that $\ell_f(g) = \wt_n(p_1)/\wt_n(f)$. Then
$\wt_n(gf) = \wt_n(g) + \wt_n(f)$, giving
$\ell_f(gf) \leq \wt_n(gf)/\wt_n(f) =  \ell_f(g)+1$.

This shows that for any $r\in\Q$ we have a sequence
\[
\xymatrix
{
    0                   \ar[r] 
  & A_{\C^{n+1},r-1 }   \ar[r]^{\cdot f} 
  & A_{\C^{n+1},r   }   \ar[r]
  & A_{X,r          }   \ar[r] 
  & 0
}
\]
whose exactness proves the lemma. The rows of the diagram
\[
\xymatrix{
  & 0                                         \ar[d]
  & 0                                         \ar[d]
  & 0                                         \ar[d]
  &                                                      \\
    0                                         \ar[r]
  & \A_{\C^{n+1}}\left(r-1+\frac{1}{m}\right) \ar[r] \ar[d]
  & \A_{\C^{n+1}}\left(r-1            \right) \ar[r] \ar[d]
  & A_{\C^{n+1},r-1}                          \ar[r] \ar[d]
  & 0                                                    \\
    0                                         \ar[r]
  & \A_{\C^{n+1}}\left(r  +\frac{1}{m}\right) \ar[r] \ar[d]
  & \A_{\C^{n+1}}\left(r              \right) \ar[r] \ar[d]
  & A_{\C^{n+1},r}                            \ar[r] \ar[d]
  & 0                                                    \\
    0                                         \ar[r]
  & \A_X     \left(r  +\frac{1}{m}\right)     \ar[r] \ar[d]
  & \A_X     \left(r              \right)     \ar[r] \ar[d]
  & A_{X,r}                                   \ar[r] \ar[d]
  & 0                                                    \\
  & 0                                    
  & 0                                    
  & 0                                    
  &                                                  
}
\]
are exact and the columns are complexes. Furthermore, the first two columns
are exact. The exactness of the third column now follows from the long exact
sequence associated with a short exact sequence of complexes.
\end{proof}

\begin{thm} \label{thm:spec_pol_part}
Let $f\in\O_{\C^{n+1},0}$ have Newton nondegenerate
principal part, defining an isolated singularity $(X,0)$.
Assume further that $\Gamma(f)$ has at least one face of dimension $n$.
Identify the
spectrum \index{spectrum}
(see \cref{ss:spec}) with its image under the canonical
isomorphism $\Z[t^{\pm1/\infty}] \cong \Z[\Q]$.
The
polynomial part \index{polynomial part}
of the
Poincar\'e series \index{Poincar\'e series}
associated with the
Newton filtration \index{Newton filtration}
then recovers the part $\Sp_{\leq0}(f,0)$ of the spectrum
via the formula
\[
  \Sp_{\leq 0}(f,0) = P^{\A,\mathrm{pol}}_X(t^{-1}).
\] \index{polynomial part}
\end{thm}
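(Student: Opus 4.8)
The plan is to combine Saito's theorem (\cref{prop:Saito_exp}) with \cref{lem:PNseries} and the elementary properties of the polynomial part developed in \cref{ss:pol_part}. By \cref{lem:PNseries} we have $P^\A_X(t) = (1-t)P^\A_{\C^{n+1}}(t)$ and $P^\A_{\C^{n+1}}(t) = \sum_{p\in\N^{n+1}} t^{\ell_f(x^p)}$, so the first task is to identify $P^\A_{\C^{n+1}}(t)$ as (a sum of) series of the type treated in \cref{lem:cone_pol_part}, namely $P_C(t) = \sum_{p\in C\cap\Z^{n+1}} t^{\ell(p)}$ for rational strictly convex cones $C$. When $\Gamma(f)$ is convenient, $\ell_f$ is piecewise linear, linear on each cone of a fan refining the dual fan of $\Gamma_+(f)$; decomposing $\R^{n+1}_{\geq0}$ into the (closed) maximal cones $C_n$ of this fan on which $\ell_f = \ell_n/\wt_n(f)$ and applying inclusion–exclusion over faces of the fan, one writes $P^\A_{\C^{n+1}}(t)$ as an alternating sum of series $P_{C}(t)$ over the cones $C$ of the fan, with $\ell$ the restriction of $\ell_f$ (which is rational linear and has empty kernel intersection with $C$ since $\ell_f>0$ off the origin). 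In the general (non-convenient) case one first reduces to the convenient case as in \cref{block:Newton_min}/\cref{ss:minimality}: replacing $f$ by $f+\sum x_i^d$ for $d\gg0$ changes neither $(X,0)$ up to analytic equivalence, nor $\Sp_{\leq 0}(f,0)$ (which by \cref{prop:Saito_exp} depends only on integral points strictly under $\Gamma(f)$, and for $d$ large the diagram is unchanged in the relevant region), nor $P^{\A,\mathrm{pol}}_X$ — one should check that the added monomials only affect $P^\A_X$ in degrees $\geq 1$, i.e. contribute to the negative part, not the polynomial part.

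The second task is to compute $P^{\A,\mathrm{pol}}_X$. By \cref{lem:pol_part_add} the polynomial part is additive, so $P^{\A,\mathrm{pol}}_X(t) = \big((1-t)P^\A_{\C^{n+1}}(t)\big)^{\mathrm{pol}}$ is the corresponding alternating sum of $\big((1-t)P_{C}(t)\big)^{\mathrm{pol}}$. Now \cref{lem:cone_pol_part} gives, for a $q$-dimensional cone $C$,
\[
  \big((1-t)P_C(t)\big)^{\mathrm{pol}} = (-1)^{q-1}\sum_{p\in C^\circ\cap\Z^{n+1},\ \ell_f(p)\leq 1} t^{1-\ell_f(p)}.
\]
When we substitute $t\mapsto t^{-1}$ this becomes $(-1)^{q-1}\sum t^{\ell_f(p)-1}$. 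The inclusion–exclusion over the fan collapses: a monomial $x^p$ with $p\in\R^{n+1}_{>0}$ lies in the relative interior of a unique cone of the fan (the minimal cone containing $p$), and summing the signs $(-1)^{q-1}$ over the faces of that minimal cone that contain $p$ in their relative interior telescopes — in fact, grouping by which cone of the fan $p$ is \emph{interior} to, and observing that every such $p$ with all coordinates positive lies in the interior of a full-dimensional cone $C_n$ (for $n\in\Nd$) except for a correction coming from lower-dimensional cones, the alternating sum over all cones of the fan reduces to the single contribution of the top-dimensional cones with sign $(-1)^{(n+1)-1} = (-1)^n$, which for $n=2$ is $+1$. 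The upshot should be $P^{\A,\mathrm{pol}}_X(t^{-1}) = \sum_{p\in\Z^{n+1}_{>0},\ \ell_f(x^p)\leq 1} t^{\ell_f(x^p)-1}$ (for $n$ even), and since $\ell_f(x^p)\leq 1$ is exactly the condition $p\in\Gamma_-(f)$, i.e. $p$ lies on or under the Newton diagram, this is precisely $\sum_{p\in\Z^{n+1}_{>0}\cap\Gamma_-(f)}(\ell_f(x^p)-1)$ under the identification $\Z[t^{\pm1/\infty}]\cong\Z[\Q]$, which by \cref{prop:Saito_exp} equals $\Sp_{\leq0}(f,0)$.

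The main obstacle I expect is the bookkeeping of the inclusion–exclusion over the fan and verifying that the signs collapse correctly to give exactly the interior lattice points counted once with sign $(-1)^n$ — this requires care about which cones are full-dimensional, what happens on the coordinate hyperplanes (points $p$ with some $p_i=0$ must contribute $0$ to the final answer, matching the fact that such $x^p$ have $\ell_f(x^p)$ possibly $\leq 1$ but are not in $\Z^{n+1}_{>0}$), and handling the boundary case $\ell_f(x^p)=1$ (points exactly on $\Gamma(f)$, contributing the monomial $t^0 = (0)$) consistently. A secondary subtlety is making precise the reduction from general to convenient diagrams: one must confirm that the non-convenient diagram's $\ell_f$, being the largest concave function restricting linearly to rays and equal to $1$ on $\Gamma(f)$, still yields a series amenable to \cref{lem:cone_pol_part} after the convenient modification, and that the discrepancy lives entirely in the negative part. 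Once these combinatorial points are settled, the identification with Saito's formula is immediate.
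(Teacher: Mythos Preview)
Your approach is essentially the paper's: write $P^\A_{\C^{n+1}}$ via Kouchnirenko's inclusion--exclusion $P^\A_{\C^{n+1}}(t) = \sum_{\sigma\in\Sigma} (-1)^{\dim\sigma} P_\sigma(t)$ over cones $\R_{\geq 0}\sigma$ on faces $\sigma\subset\Gamma(f)$ not contained in a coordinate hyperplane, multiply by $(1-t)$ (\cref{lem:PNseries}), take polynomial parts termwise (\cref{lem:pol_part_add}), apply \cref{lem:cone_pol_part}, and compare with Saito's formula. However, your sign bookkeeping is confused. In \cref{lem:cone_pol_part} the cone $\R_{\geq 0}\sigma$ has dimension $q = \dim\sigma + 1$, so the sign $(-1)^{q-1}$ from that lemma is $(-1)^{\dim\sigma}$, which cancels \emph{exactly} with the inclusion--exclusion sign. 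Hence \emph{every} cone contributes with coefficient $+1$, not just the top-dimensional ones, and there is no parity obstruction---the argument works for all $n$, not only $n=2$. Since the relative interiors $(\R_{\geq 0}\sigma)^\circ = \R_{>0}\sigma^\circ$ for $\sigma\in\Sigma$ partition $\R_{>0}^{n+1}\setminus\{0\}$ (a point $p$ with all coordinates positive hits $\Gamma(f)$ along its ray in the relative interior of a unique face, necessarily not in a coordinate hyperplane), the sum collapses directly to $\sum_{p\in\Z_{>0}^{n+1}\cap\Gamma_-(f)} t^{1-\ell_f(p)}$. Your talk of ``summing over faces of that minimal cone'' and ``telescoping'' is looking for a further cancellation that is not needed: each interior lattice point already lies in exactly one open cone.

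For the non-convenient reduction, the paper's argument is less routine than you indicate. One must show that the cones over faces $\sigma\in\Sigma_d\setminus\overline\Sigma$ (faces of the convenient diagram $\Gamma(f_d)$ not already in $\Gamma(f)$) contain no integer points $p\in\Z_{>0}^{n+1}$ with $\ell_{f_d}(p)\leq 1$. The paper argues by contradiction: such a $p$ would satisfy $\ell(p)<m$ for some support function $\ell$ of a \emph{noncompact} face of $\Gamma_+(f)$, and since $\ell$ vanishes on some coordinate axis $e_i$, the points $p+ke_i$ all lie outside $\Gamma_+(f)$, forcing $P^{\A,\mathrm{pol}}_{X_d}(1)\to\infty$ as $d\to\infty$, contradicting the analytic constancy of $(X_d,0)$ for large $d$. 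Your phrase ``added monomials only affect $P^\A_X$ in degrees $\geq 1$'' names the right target but does not supply this mechanism.
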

\begin{proof}
First assume that $\Gamma(f)$ is
convenient. \index{convenient}
By \cref{prop:Saito_exp}, it is enough to prove
\begin{equation} \label{eq:pf:spec_pol_part}
  P^{\A,\mathrm{pol}}_X(t) = \sum_{p\in\Z_{>0}^{n+1}\cap \Gamma_-(f)}
                             t^{1-\ell_f(x^p)}.
\end{equation}
For any face $\sigma$, let
$P_\sigma(t) = \sum_{p\in\R_{\geq 0}\sigma\cap\Z^{n+1}} t^{\ell_f(p)}$.
Let $\Sigma$ be the set of faces not contained in
a coordinate hyperplane.  Then,
similarly as in \cite{Kouchnirenko}, using \cref{lem:PNseries}, we find
\begin{equation} \label{eq:PA_cones}
  P^\A_{\C^{n+1}}(t) = \sum_\sigma (-1)^{\dim \sigma} P_\sigma(t)
\end{equation}
hence
\begin{equation} \label{eq:PAX_cones}
  P^\A_X(t) = (1-t)\left(\sum_\sigma (-1)^{\dim \sigma} P_\sigma(t)\right).
\end{equation}
If $\sigma\in\Sigma$, then the function $\ell_f$ restricts to a
linear function on any cone $\R_{\geq 0}\sigma$ satisfying the
conditions in \cref{lem:cone_pol_part}. Therefore,
\cref{eq:PAX_cones} gives \cref{eq:pf:spec_pol_part}.

Assume now that $\Gamma(f)$ is not
convenient. \index{convenient}
For any $d\in\N$, set $f_d = f + \sum_{i=1}^{n+1} x_i^d$.
Then, if $d$ is large enough, then
$f$ and $f_d$ are analytically equivalent and
$\Gamma(f) \subset \Gamma(f_d)$.
Then $\Sp_{\leq 0}(f,0) = \Sp(f',0)$, so if $(X',0)$ is the germ
defined by $f'$, setting  it suffices to show
that
\[
  \left(
    P_X^\A(t) - P_{X'}^\A(t)
  \right)^{\mathrm{pol}}
  = 0
\]
We consider $\Gamma(f)$ as a subcomplex of $\Gamma(f_d)$. We define
$\Sigma$ and $\Sigma_d$ as the set of cells contaned in $\Gamma(f)$ and
$\Gamma(f_d)$ respectively, but not in a coordinate hyperplane
Also, let
$\overline\Sigma$ be the set of all cells in $\Gamma(f)$.
Similarly as above, for any face $\sigma$ we define
\[
  P _\sigma(t) = \sum_{p\in\R_{\geq 0}\sigma\cap\Z^{n+1}} t^{\ell_f(p)}, \quad
  P_{\sigma,d}(t) = \sum_{p\in\R_{\geq 0}\sigma\cap\Z^{n+1}} t^{\ell_{f'}(p)}.
\]
Since $\ell_f$ and $\ell_{f'}$ coincide on any face contained in $\Gamma(f)$,
we get
\[
  P_X^\A(t) - P_{X'}^\A(t)
  =
  \sum_{\sigma\in\Sigma'\setminus\overline\Sigma}
  P_\sigma(t)^{\phantom{f}} - P'_\sigma(t).
\]
It is therefore enough to prove that for any
$\sigma\in\Sigma'\setminus\overline\Sigma$ we have 
$P_\sigma^{\mathrm{pol}}(t) = 0 = (P'_\sigma(t))^{\mathrm{pol}}$.
By \cref{lem:cone_pol_part} it suffices to show that
\begin{equation} \label{eq:no_pts}
  \set{p\in\R_{>0}\sigma^\circ\cap\Z_{>0}^{n+1}}
      {\ell_{f'}(p) \leq 1}
  = \emptyset,
\end{equation}
where $\sigma^\circ$ is the relative interior of $\sigma$ (note that since
$\ell_{f'} \geq \ell_f$, the corresponding statement for $\ell_f$ follows).
To prove \cref{eq:no_pts}, assume that the set on the left hand side contains
a point $p$. By construction, there exists a linear support function
$\ell$ of $\Gamma_+(f)$ so that the minimal
set of $\ell$ on $\Gamma_+(f)$ is a noncompact face and
$0<\ell(q) < m$ where $m = \min_{\Gamma_+(f)} \ell$, for any
$q \in \sigma^\circ$. It follows that $\ell(p) < m$.
Furthermore, if $e_1, \ldots, e_{n+1}$ is the standard basis of $\R^{n+1}$, 
then there is an $i$ so that $\ell(e_i) = 0$, since otherwise, the minimal
set of $\ell$ on $\Gamma_+(f)$ would be compact.
This implies that for all
$k\in\N$, we have $p + k e_i \notin \Gamma_+(f)$.
Since $\Gamma_+(f) = \cap_{d=0}^{\infty} \Gamma_+(f_d)$, this means that for
any $k$ there exists a $d$ so that $p+ke_i\in\Gamma_-(f_d)$, thus,
$\lim_{d\to\infty} P_{X_d}^{\A,\mathrm{pol}}(1) = \infty$.
But since the analytic type of $(X_d,0)$ is constant for $d$ large, the
number $P_{X_d}^{\A,\mathrm{pol}}(1) = \Sp_{\geq 0}(f_d,0)(1)$
is also constant for $d$ large. This finishes the proof.
\end{proof}

\subsection{The anatomy of Newton diagrams} \label{ss:anatomy}

In this subsection we will recall some classification results of Braun
and N\'emethi \cite{Newton_nondeg} which will serve as basis for the
case-by-case analysis in \cref{s:SW}. We will also fix some notation.
We assume that $f\in\O_{\C^3,0}$ has Newton nondegenerate principal
part and defines an isolated singularity $(X,0)$. The graph $G$ with
vertex set $\V$ and the subset $\Nd\subset\V$ are
defined as in \cref{block:oka_alg}. The set $\Nd$ then consist of the
nodes of the graph $G$.

\begin{definition} \label{def:legs}
Let $n\in \Nd$. A
\emph{leg} \index{leg}
of $n$ is a sequence of vertices 
$v_1, \ldots, v_s \in \V$ so that for $j=1,\ldots,s-1$ we have
$\V_{v_j} = \{v_{j-1}, v_{j+1}\}$ where we set $v_0 = n$ and
$\delta_{v_s} = 1$. In this case, $v_s$ is called the
\emph{end} \index{end}
of the leg.
The set of all ends of legs of $n$ is denoted by
$\E_n$, \nomenclature[En]{$\E_n$}{Ends of a node $n$}
and we set
$\E = \cup_{n\in\Nd} \E_n$. \nomenclature[Ef]{$\E$}{Ends}
We say that the vertices in $\E$ are the \emph{ends} of the graph $G$.
If $e\in \E$, then there are unique $n_e\in \Nd$ and
$n^*_e\in\Nd^*\setminus\Nd$ so that $e\in\E_n$ and $e$ lies on the
bamboo \index{bamboo}
connecting $n_e$ and $n_e^*$.  For $e = v_s \in\E$ as above, define
$\alpha_e/\beta_e = [b_{v_1}, \ldots, b_{v_s}]$ as
the \emph{fraction} of $e$, where $\alpha_e, \beta_e \in \N$ and
$\gcd(\alpha_e, \beta_e) = 1$. Thus,
$\alpha_e = \alpha_{n^{\vphantom{*}}_e, n^*_e}$
\nomenclature[ae]{$\alpha_e$}{$\alpha_{n^{\vphantom{*}}_e, n^*_e}$}
and
$\beta_e = \beta_{n^{\vphantom{*}}_e, n^*_e}$.
\nomenclature[be]{$\beta_e$}{$\beta_{n^{\vphantom{*}}_e, n^*_e}$}
Define also 
$u_e = u_{n^{\vphantom{*}}_e,n_e^*}$.
\nomenclature[ue]{$u_e$}{Neighbour of node on leg containing end $e$}
A
\emph{leg group} \index{leg group}
is a maximal nonempty set of legs of a fixed $n\in \Nd$
for which the ratio $\alpha_e/\beta_e$
is fixed, where $e$ is the end of the leg.
\end{definition}

\begin{example}
The graph in \cref{fig:ex_graph} has four nodes, one of which has no
legs, the others have two legs each. The decorations of the legs
$\alpha_e/\beta_e$ are $2/1, 2/1$; $4/3, 3/1$ and $2/1, 3/1$.
\end{example}

\begin{definition} \label{def:central} 
\begin{itemize}
\item
A two dimensional triangular face of $\Gamma(f)$ is called a
\emph{central triangle} \index{central triangle}
if it intersects all three coordinate hyperplanes, but none of the coordinate
axis. The corresponding node is called a
\emph{central node}.
\item
A
\emph{trapezoid} \index{trapezoid}
in $\Gamma(f)$ is a face whose vertices (modulo permutation)
are of the form
$(0,p,a)$, $(q,0,a)$, $(r_1,r_2,0)$, $(r_1', r_2',0)$ where
$(r_1',r_2',0) - (r_1, r_2,0) = k(-q,p,0)$ for some $k>0$.
\item
An edge in $\Gamma$ (a one dimensional face) is called a
\emph{central edge} \index{central edge}
if it intersects all three coordinate hyperplanes.
\end{itemize}

A
\emph{central face} \index{central face}
is a central triangle or a trapezoid.
\end{definition}

\begin{definition} \label{def:arm}
The collection of faces of $\Gamma(f)$ (of positive dimension)
whose vertices lie
on the union of two of the coordinate hyperplanes is called an
\emph{arm}. \index{arm}
If the intersection of the two planes is the $x_i$ axis, then we say that
the arm \emph{goes in the direction of the $x_i$ axis}. An arm is
\emph{degenerate} \index{arm!degenerate}
if it does not contain a two dimensional face.
\mynd{arms}{ht}{A Newton diagram with arms consisting of 2, 4, 3 faces in
the direction of the $x_1$, $x_2$, $x_3$ axis.}{fig:arms}
\end{definition}

\mynd{class}{ht}{Examples of Newton diagrams with a central triangle.
The first one has two nondegenerate arms, the second has only one.}{class}
\begin{prop}[Braun and N\'emethi \cite{Newton_nondeg} Proposition 2.3.9]
  \label{prop:anatomy}
Let $f\in \O_{\C^3,0}$ be a function germ with
Newton nondegenerate \index{Newton nondegenerate}
principal
part, defining an isolated singularity $(X,0)$ with a
rational homology sphere \index{homology sphere!rational}
link. \index{link}
Then exactly one of the following hold (see
\ref{block:oka_alg} for definition of $t_{n,n'}$):
\begin{enumerate}
\item \label{it:anatomy_central_face}
$\Gamma(f)$ has a
central face \index{central face}
and three
(possibly degenerate) arms. \index{arm}
We have $\Nd = \cup_{\kappa=1}^3 \{ n_0^\kappa, \ldots,  n_{j^\kappa}^\kappa\}$
where $n_0^1 = n_0^2 = n_0^3$ is the central face
and the arm in the direction of the $x_\kappa$ axis is
$F_{n_1^{(\kappa)}} \cup \ldots \cup F_{n_{j^{(\kappa)}}^{(\kappa)}}$
in the nondegenerate
case, or the corresponding edge of $F_{n_0}$ in the degenerate case.
We have
$t_{n_{r\vphantom{'}}^{\kappa\vphantom{'}}, n_{r'\vphantom{'}}^{\kappa'}} = 1$
if $\{r,r'\} = \{0,1\}$
or $\kappa=\kappa'$ and $|r-r'| = 1$ and
$t_{n_{r\vphantom{'}}^{\kappa\vphantom{'}}, n_{r'\vphantom{'}}^{\kappa'}} = 0$
otherwise (recall definiton of $t_{n,n'}$ in \cref{block:oka_alg}).
\item \label{it:anatomy_central_edge}
There exist $c>0$
central edges. \index{central edge}
We have
$\Nd = \cup_{\kappa=1}^2 \{ n_1^\kappa, \ldots,  n_{j^\kappa}^\kappa \}$ with
$n_r^1 = n_{c-r}^2$ if $1\leq r\leq c-1$.
Further, we have
$t_{n_{r\vphantom{'}}^{\kappa\vphantom{'}}, n_{r'\vphantom{'}}^{\kappa'}} = 1$
if $\kappa = \kappa'$ and
$|r-r'| = 1$ or $\kappa\neq \kappa'$ and $|r - (c-r')| = 1$ and
$t_{n_{r\vphantom{'}}^{\kappa\vphantom{'}}, n_{r'\vphantom{'}}^{\kappa'}} = 0$
otherwise.
\end{enumerate}
In case \ref{it:anatomy_central_edge}, if $j_\kappa \geq c$, we set
$n^{\kappa'}_0 = n^\kappa_c$, where $\{\kappa,\kappa'\} = \{1,2\}$.
\end{prop}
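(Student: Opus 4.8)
The plan is to reduce the statement to a purely combinatorial classification of the possible shapes of $\Gamma(f)$ and then to carry it out by bootstrapping from the structure of a single two-dimensional face to that of the whole diagram. The inputs are the following: $\Gamma(f)$ is a topological disk sitting inside $\R^3_{\geq 0}$ and facing the origin; by \cref{lem:isol} it meets each coordinate hyperplane and comes within lattice distance one of each coordinate axis; and by \cref{prop:isol} (and the facts established in its proof) every integral point of $\Gamma(f)$ lies on a coordinate hyperplane, so that in particular each two-dimensional face $F_n$ has no interior integral point, while every one-dimensional face of $\Gamma(f)$ not contained in a coordinate hyperplane is of the form $[(a,0,b),(0,1,c)]$ modulo permutation of the coordinates.

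The first step is to classify the individual faces. By the classical fact that a planar lattice polygon with no interior integral point has lattice width at most two --- with equality only for the ``double simplex'' $\convx\{(0,0),(2,0),(0,2)\}$ --- each face $F_n$ is, up to unimodular equivalence, either this double simplex or a polygon contained in the strip between two parallel primitive lattice lines at lattice distance one. In the second, generic, case $F_n$ is a (possibly degenerate) trapezoid, and its two slanted edges are automatically primitive; since every integral point of $\partial F_n$ lies on a coordinate hyperplane, and since a segment inside $\R^3_{\geq 0}$ which is not contained in any coordinate hyperplane but meets two distinct ones in lattice points is necessarily primitive, one checks that each of the two ``long'' edges of $F_n$ is either contained in a single coordinate hyperplane or is itself primitive. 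Sorting the possibilities by how many of the three hyperplanes carry the vertices of $F_n$ yields exactly: central triangles and trapezoids (three hyperplanes, no coordinate axis), the faces that make up an arm (exactly two hyperplanes), and the double-simplex triangles, which occur only when $\Gamma(f)$ is a single triangle and are treated separately.

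The second step is to pass to the global picture using the disk topology of $\Gamma(f)$. Its boundary circle is cut into at most three arcs according to which coordinate hyperplane its points lie on, the transition points lying on or within lattice distance one of the coordinate axes by \cref{lem:isol}. The dichotomy is then whether or not some two-dimensional face meets all three coordinate hyperplanes. If one does, it is a central triangle or a trapezoid by the local classification; cutting $\Gamma(f)$ along the edges of this face that do not lie in its topological boundary decomposes the rest of the disk into at most three sub-complexes, each of which meets only two coordinate hyperplanes and hence, again by the local classification together with convexity of $\Gamma_+(f)$, is an unbranched chain of trapezoidal arm faces glued edge to edge along primitive cross-hyperplane segments, with consecutive faces meeting in exactly one such segment. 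This is case~\ref{it:anatomy_central_face}, and reading off the adjacencies $F_n\cap F_{n'}$ gives the stated indexing $\Nd=\bigcup_{\kappa=1}^3\{n_0^\kappa,\dots,n_{j^\kappa}^\kappa\}$ together with the values of $t_{n,n'}$. If on the other hand no two-dimensional face meets all three hyperplanes, then moving along $\partial\Gamma(f)$ the passage from the part meeting one hyperplane to the part meeting another cannot cross a two-dimensional face, hence occurs across a one-dimensional central edge; convexity of $\Gamma_+(f)$ then forces all central edges to be parallel and forces the faces of $\Gamma(f)$ to form a single chain running between two ``$A_n$-type'' ends, which is case~\ref{it:anatomy_central_edge}, with $n_r^1=n_{c-r}^2$ and the stated $t_{n,n'}$ coming out of the same adjacency bookkeeping. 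The two cases are mutually exclusive because they are distinguished by the presence of a face meeting all three hyperplanes, and exhaustiveness is precisely the dichotomy just described.

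The main obstacle is the length and care of the combinatorial bookkeeping rather than any single conceptual point. One must verify that the sub-complexes in the central-face case really are unbranched chains (this uses convexity of $\Gamma_+(f)$ hand in hand with primitivity of cross-hyperplane edges), treat degenerate arms --- an arm reduced to a single edge of the central face --- as well as degenerate trapezoids and parallelograms uniformly with honest trapezoids, and separately dispose of the $A_n$ singularities, whose diagram is a single triangle touching all three coordinate axes, a degenerate instance of the central-edge case. A careful enumeration of the ways a trapezoidal face can be hinged to its neighbours --- which is essentially the content of \cite{Newton_nondeg}, Proposition~2.3.9 --- then finishes the argument.
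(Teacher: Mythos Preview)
The paper does not prove this proposition: it is stated with a citation to Braun and N\'emethi \cite{Newton_nondeg}, Proposition 2.3.9, and no proof is given in the thesis itself. There is therefore nothing in the paper to compare your attempt against.

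That said, your sketch is a reasonable outline of how such a classification goes, and it correctly identifies the key ingredients available in the paper: the isolatedness criterion of \cref{lem:isol}, the rational homology sphere criterion of \cref{prop:isol} forcing all integral points of $\Gamma(f)$ onto coordinate hyperplanes, and the resulting constraint that interior edges are primitive. Your dichotomy (some face meets all three hyperplanes versus none does) is the right organizing principle. However, as you yourself acknowledge in the final sentence, the actual work lies in the ``careful enumeration of the ways a trapezoidal face can be hinged to its neighbours,'' and you defer this back to \cite{Newton_nondeg}. So what you have written is a proof plan rather than a proof: the combinatorial case analysis that establishes unbranchedness of the arms, rules out multiple central faces, and pins down the exact adjacency pattern $t_{n,n'}\in\{0,1\}$ is not carried out. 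This is not a gap in the sense of a wrong step, but the proposal as it stands is circular if its purpose is to replace the citation.
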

\mynd{classct}{ht}{Examples of Newton diagrams, one with a central
trapezoid, the other with three central edges.}{classct}

\begin{prop}[Braun and N\'emethi] \label{prop:arm_hands}
Let $n_r = n_r^\kappa$, $r=1, \ldots, j = j^\kappa$ be an arm as in
\cref{prop:anatomy},
\ref{it:anatomy_central_face} or \ref{it:anatomy_central_edge}.
Assume that the arm goes in the direction of $x_3$.
\begin{enumerate}
\item
For any $1\leq r < j$, the numbers $\alpha_e, \beta_e$ are independent of
the choice of $e\in \E_{n_r}$. Furthermore, we have either
$\ell_{n_e^*} = \ell_1$ for all $e\in\E_{n_r}$, or 
$\ell_{n_e^*} = \ell_2$ for all $e\in\E_{n_r}$.
That is, $n_r$ has a unique leg group. \index{leg group}

\item \label{it:arm_hands_finger}
There are two distinct integral functions
$\tilde\ell_1, \tilde\ell_2 : \R^3 \to \R$
so that $\set{\ell_{n^*_e}}{e\in\E_{n_j}} = \{ \tilde\ell_1, \tilde\ell_2 \}$.
After possibly permuting the coordinates $x_1, x_2$, we have
$\tilde\ell_1 = \ell_1$ and either $\tilde\ell_2 = \ell_2$, or there is
an $a\in\Z_{\geq 0}$ so that $\tilde\ell_2 = a\ell_2 + \ell_1$.

\item
With $\tilde\ell_1, \tilde\ell_2$ as above, set
$\E_{n_j}^\lambda = \set{e\in\E_{n_j}}{\ell_{n^*_\lambda} = \tilde\ell_e}$
\nomenclature[Efnj]{$\E_{n_j}^\lambda$}{Subset of $\E_{n_j}$}
for $\lambda=1,2$.
We then have
integers $\alpha_\lambda, \beta_\lambda$ for $\lambda=1,2$ so that
$\alpha_e = \alpha_\lambda$
and $\beta_e = \beta_\lambda$ for $e\in\E_{n_j}^\lambda$.
That is, $n_r$ has exactly two leg groups.
Furthermore, if
$\tilde\ell_2 = \ell_2$, then $\gcd(\alpha_1,\alpha_2) = 1$, but
if $\tilde\ell_2 = a\ell_2 + \ell_1$, then $\alpha_1 | \alpha_2$.
\end{enumerate}
\end{prop}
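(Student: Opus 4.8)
The plan is to translate the statement entirely into the combinatorics of $\Gamma(f)$ and then extract it from the gross anatomy already established in \cref{prop:anatomy}. Recall from Oka's algorithm (\cref{block:oka_alg}) and \cref{prop:min}\,\ref{it:min_corr} that the legs of a node $n\in\Nd$ correspond to the primitive segments of $\partial F_n$, that the legs coming from a single edge $F_n\cap F_{n'}$ with $n'\in\Nd^*\setminus\Nd$ all have $\ell_{n_e^*}=\ell_{n'}$, and that they share the determinant and denominator $\alpha_{n,n'},\beta_{n,n'}$. Thus the whole proposition amounts to deciding, for each face $F_{n_r}$ of the arm, which noncompact faces of $\Gamma_+(f)$ it meets in a one–dimensional face, and with which support functions. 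Two facts drive everything: the edge classification obtained in the proof of \cref{prop:isol} (an edge of $\partial\Gamma(f)$ not contained in a coordinate hyperplane has, up to permutation of coordinates, the form $[(a,0,b),(0,1,c)]$, so one of its endpoints lies at lattice distance $1$ from a coordinate axis), and \cref{lem:isol} (for an isolated singularity $\Gamma(f)$ contains a point at lattice distance $\le 1$ from each coordinate axis). Throughout we assume, as permitted, that the arm goes in the direction of the $x_3$ axis, so every vertex of every $F_{n_r}$ lies on $\{x_1=0\}\cup\{x_2=0\}$, and we write $G^{(i)}$ for the (unique) noncompact face of $\Gamma_+(f)$ contained in $\{x_i=0\}$.

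For part (i), fix $1\le r<j$. By \cref{prop:anatomy} the face $F_{n_r}$ meets exactly two compact faces, namely $F_{n_{r-1}}$ (or the central face, when $r=1$) and $F_{n_{r+1}}$, so $F_{n_r}$ has exactly two edges not lying in a coordinate hyperplane; being a convex polygon it has at least one further edge, which must lie in $\{x_1=0\}$ or $\{x_2=0\}$ (it cannot rest on a coordinate axis, so it cannot lie in $\{x_3=0\}$). The key step is to show $F_{n_r}$ cannot carry edges in both $\{x_1=0\}$ and $\{x_2=0\}$: if it did, it would be a quadrilateral (or larger) with the two "rungs" $F_{n_{r-1}}\cap F_{n_r}$ and $F_{n_r}\cap F_{n_{r+1}}$ separating an edge in $\{x_1=0\}$ from an edge in $\{x_2=0\}$; each rung then joins $\{x_1=0\}$ to $\{x_2=0\}$ and hence, by the edge classification, carries an endpoint at lattice distance $1$ from the $x_3$ axis, and combining this with convexity of $F_{n_r}$ and the absence of lattice points in the interior of $\Gamma(f)$ (\cref{prop:isol}) yields a contradiction. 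Granting this, all legs of $n_r$ come from the single edge $F_{n_r}\cap G^{(i)}$ for one fixed $i\in\{1,2\}$, so $\ell_{n_e^*}=\ell_i$ for all $e\in\E_{n_r}$, and $\alpha_e=\alpha(\ell_{n_r},\ell_i)$, $\beta_e=\beta(\ell_{n_r},\ell_i)$ are manifestly independent of $e$; this is precisely the assertion that $n_r$ has a unique leg group.

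For part (ii), the face $F_{n_j}$ at the free end of the chain contains a point of $\Gamma(f)$ at lattice distance $\le 1$ from the $x_3$ axis, since this extremal point of the diagram belongs, by the shape of the arm in \cref{prop:anatomy}, to its tip. If that distance is $0$ there is a vertex $(0,0,c)\in F_{n_j}$, so $F_{n_j}$ meets both $G^{(1)}$ and $G^{(2)}$ in edges through $(0,0,c)$, giving $\tilde\ell_1=\ell_1$ and $\tilde\ell_2=\ell_2$. If the distance is $1$, say a vertex $(1,0,c)$ after permuting $x_1\leftrightarrow x_2$, then $\Gamma_+(f)$ contains no point with $x_1=x_2=0$, hence has a noncompact $2$–face $G$ whose recession cone contains the $x_3$ direction; its primitive support function vanishes on $(0,0,1)$, so $\ell_G=(p,q,0)$. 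The edge $F_{n_j}\cap G$ contains a vertex at lattice distance $1$ from the $x_3$ axis, which forces $p=1$ or $q=1$, and after a further possible swap $x_1\leftrightarrow x_2$ we may take $\ell_G=\ell_1$ (when $q=0$) or $\ell_G=a\ell_2+\ell_1$ with $a\ge 1$. Since $F_{n_j}$ has only one edge shared with a compact face ($F_{n_{j-1}}\cap F_{n_j}$), it meets at least two noncompact faces; a further short argument with the no–interior–lattice–point condition (of the same flavour as the crux in part (i)) shows it meets exactly two, namely $G^{(1)}$ and the face just described, which yields $\{\ell_{n_e^*}:e\in\E_{n_j}\}=\{\tilde\ell_1,\tilde\ell_2\}$ with $\tilde\ell_1=\ell_1$ and $\tilde\ell_2\in\{\ell_2,\,a\ell_2+\ell_1\}$.

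For part (iii), the legs of $n_j$ split as $\E_{n_j}^1\sqcup\E_{n_j}^2$ according to which of $\tilde\ell_1,\tilde\ell_2$ equals $\ell_{n_e^*}$, and on each part $\alpha_e=\alpha(\ell_{n_j},\tilde\ell_\lambda)=:\alpha_\lambda$ and $\beta_e=\beta(\ell_{n_j},\tilde\ell_\lambda)$ are constant; since $\alpha_1\ne\alpha_2$ (the few residual degenerate possibilities being disposed of by inspection) these are exactly the two leg groups. Writing $\ell_{n_j}=(b_1,b_2,b_3)$ one computes directly from the definition of the determinant that $\alpha(\ell_{n_j},\ell_1)=\gcd(b_2,b_3)$ and $\alpha(\ell_{n_j},\ell_2)=\gcd(b_1,b_3)$, so in the case $\tilde\ell_2=\ell_2$ we get $\gcd(\alpha_1,\alpha_2)=\gcd(\gcd(b_2,b_3),\gcd(b_1,b_3))=\gcd(b_1,b_2,b_3)=1$. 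In the case $\tilde\ell_2=a\ell_2+\ell_1$ one instead applies \cref{prop:content} to the edge $F_{n_j}\cap G^{(1)}$ of $F_{n_j}$, which by the remark following \cref{prop:content} has a regular vertex of $F_{n_j}$ as an endpoint; the proposition then gives exactly the divisibility $\alpha_1\mid\alpha_2$. The main obstacle I expect is the combinatorial case analysis underlying parts (i) and (ii): proving that an interior arm face touches only one of the two coordinate planes in an edge, and that the hand touches exactly two noncompact faces. This is where the characterisation of rational–homology–sphere links by the absence of interior lattice points (\cref{prop:isol}) and the edge classification must be used in an essential and somewhat delicate way; once the geometry is pinned down, the number–theoretic assertions in part (iii) are routine.
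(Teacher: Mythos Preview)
The paper's own proof is a one-line citation to Lemmas 2.3.5 and 5.2.5 of \cite{Newton_nondeg}, so you are attempting considerably more than the paper does. Your outline is sound and correctly identifies the relevant ingredients: the edge classification implicit in the proof of \cref{prop:isol}, the rational-homology-sphere condition (no interior lattice points in $\Gamma(f)$), and \cref{prop:content} for the divisibility in part~(iii). The explicit determinant computations in part~(iii) for the case $\tilde\ell_2=\ell_2$ are correct.

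That said, the gaps you yourself flag are exactly where the real work lies, and your sketch does not close them. In part~(i), the claim that an interior arm face cannot carry edges on both $\{x_1=0\}$ and $\{x_2=0\}$ requires a genuine case analysis using the classification of empty polygons (\cref{prop:poly_class}); your contradiction sketch (``both rungs have an endpoint at lattice distance $1$ from the $x_3$ axis, combine with emptiness'') is suggestive but does not by itself produce a forbidden interior lattice point. The analogous step in part~(ii), that $F_{n_j}$ meets exactly two noncompact faces, is similarly incomplete. One point you do not flag: in part~(iii), the assertion ``since $\alpha_1\ne\alpha_2$'' is not established in the divisibility case $\tilde\ell_2=a\ell_2+\ell_1$, and in any event leg groups are defined by the ratio $\alpha_e/\beta_e$, not by $\alpha_e$ alone. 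In the coprime case it does follow from your own computation $\gcd(\alpha_1,\alpha_2)=1$ together with $\alpha_\lambda>1$, the latter being a consequence of minimality of the diagram (see the proof of \cref{prop:min}\ref{it:min_min}); but you should make that dependence explicit. None of this invalidates the approach---it is the right one---but a complete argument along these lines would essentially reproduce the content of the cited lemmas in \cite{Newton_nondeg}.
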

\begin{proof}
This follows from Lemma 2.3.5 and 5.2.5 of \cite{Newton_nondeg}.
\end{proof}

\begin{example}
The diagram in \cref{fig:arms} has three nondegenerate arms. Consider
the arm going in the direction of the $x_\kappa$ axis.
The functions $\tilde \ell_1, \tilde \ell_2$ in
\cref{prop:arm_hands}\ref{it:arm_hands_finger}
are, given in coordinates,
$(0,1,0)$ and $(0,1,a)$ for some $a>0$ in the case $\kappa=1$,
$(1,0,0)$ and $(0,0,1)$ in the case $\kappa = 2$ and
$(1,0,0)$ and $(0,1,0)$ in the case $\kappa = 3$.
\end{example}

\newpage
\section{Two dimensional real affine geometry} \label{s:affine}

In this section we describe some technical results about
polygons in affine spaces.
If $H\subset \R^3$ is a hyperplane given by an affine equation with integral
coefficients so that $H\cap\Z^3\neq \emptyset$, then there exists an affine
isomorphism $H\to \R^2$, restricting to an isomorphism
$H\cap \Z^3 \to \Z^2$. When dealing with such hyperplanes in $\R^3$, we 
implicitly assume such an identification given, which allows us to apply
results obtained in $\R^2$.

\subsection{General theory and classification}

\begin{definition}
An
\emph{integral polygon} \index{integral polygon}
$F$ is the convex hull $\convx(P)$ of
a finite set of integral points spanning $\R^2$ as an affine space.
A
\emph{vertex} \index{integral polygon!vertex of}
of $F$
is an element $p\in P$ so that $\convx(P\setminus\{p\}) \neq F$.
An
\emph{edge} \index{integral polygon!edge of}
of $F$ is a segment contained in the boundary of $F$
whose endpoints are vertices.
\end{definition}

\begin{definition} \label{def:reg_sing}
A
\emph{regular vertex} \index{regular vertex}
$p$ of $F$ is a vertex having the property that
primitive vectors parallel to the two boundary segments having $p$ as
an endpoint form an integral basis of $\R^2$. A vertex which is not
regular is called
\emph{singular}. \index{singular vertex}
\end{definition}

The boundary $\partial F$ and relative interior $F^\circ$ of a polygon $F$ will
have their usual meaning and an \emph{internal point} of $F$ is nothing
but an element of $F^\circ$. By an \emph{integral affine isomorphism}
we mean an $\R$-affine automorphism $\R^2 \to \R^2$ restricting to a
$\Z$-affine automorphism $\Z^2 \to \Z^2$.

\begin{definition}
An integral polygon $F\subset \R^2$ is
\emph{empty} \index{integral polygon!empty}
if
$F^\circ \cap \Z^2 = \emptyset$.
\end{definition}

\begin{example} \label{ex:Newton_poly}
If $F_n \subset \Gamma(f)$ as above where $f$ has
Newton nondegenerate \index{Newton nondegenerate}
principal part and defines an isolated singularity with a
rational homology sphere \index{homology sphere!rational}
link, \index{link}
then $F_n$ is an
empty polygon \index{integral polygon!empty}
in the hyperplane
$H^=_n(\wt (f))$ by \cref{prop:isol}.
\end{example}

The simple proof of the following classification result is left to
the reader. For this, \cref{fig:polygons} may be of help.

\begin{prop} \label{prop:poly_class}
Let $F\subset \R^2$ be an
empty integral polygon. \index{integral polygon!empty}
Then, after, perhaps, applying an integral
affine isomorphism on $\R^2$, one of the following holds.
\begin{itemize}

\item
\emph{Big triangle:} We have $F = \convx \{ (0,0), (2,0), (0,2) \}$.
\index{integral polygon!big triangle}

\item
\emph{$t$-triangle:} We have $F = \convx \{ (0,0), (t,0), (0,1) \}$ for some
$t\geq 0$.
\index{integral polygon!t-triangle@$t$-triangle}

\item
\emph{$t$-trapezoid:} We have
$F = \convx \{ (0,0), (t,0), (0,1), (1,1) \}$ for some $t\geq 0$.
\index{integral polygon!t-trapezoid@$t$-trapezoid}

\item
\emph{$(t,s)$-trapezoid:} We have
$F = \convx \{ (0,0), (t,0), (0,1), (s,1) \}$ for some $t\geq s>1$.
\index{integral polygon!ts-trapezoid@$(t,s)$-trapezoid}
\qed
\end{itemize}
\end{prop}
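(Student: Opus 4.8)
The plan is to organize the classification around the \emph{lattice width} of $F$, that is, the minimum of $\max_{p\in F}\ell(p)-\min_{p\in F}\ell(p)$ over all primitive linear functionals $\ell\colon\Z^2\to\Z$; since $F$ is two-dimensional this number is at least $1$. I would prove that an empty polygon has lattice width at most $2$, that lattice width $1$ forces $F$ into a strip and hence into one of the triangle/trapezoid families, and that lattice width $2$ forces $F$ to be the big triangle.

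First I would dispose of lattice width $1$. Choosing a primitive $\ell$ realizing the width and an integral affine isomorphism turning $\ell$ into the second coordinate with $\min_F\ell=0$, we obtain $F\subset\{0\le y\le 1\}$; every lattice point of $F$ then lies on $y=0$ or $y=1$ and hence on $\partial F$, so emptiness is automatic and there is nothing to check in this range. With $F_0=F\cap\{y=0\}$ and $F_1=F\cap\{y=1\}$, two-dimensionality forces both to be nonempty with at least one a genuine segment and $F=\convx(F_0\cup F_1)$. A translation followed by a shear $(x,y)\mapsto(x+ky,y)$, which preserves the strip, normalizes the left endpoints of $F_0$ and $F_1$ onto the $y$-axis. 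If one of $F_0,F_1$ is a single point then $F=\convx\{(0,0),(0,1),(t,1)\}$ (or its mirror), carried to the $t$-triangle by $(x,y)\mapsto(x,1-y)$; if both are segments then $F=\convx\{(0,0),(t,0),(0,1),(s,1)\}$ with $t,s\ge1$, and after possibly applying $(x,y)\mapsto(x,1-y)$ we may take $t\ge s$, yielding a $t$-trapezoid when $s=1$ and a $(t,s)$-trapezoid when $s\ge2$. This is all routine normalization.

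Next I would treat the case in which every lattice width of $F$ is at least $2$ and show $F$ is integral affinely isomorphic to the big triangle. Fix any edge $e$ of $F$ and coordinates with $e=[(0,0),(b,0)]$, $b\ge1$, and $F\subset\{y\ge0\}$; let $h=\max_F y$. Since the width of $F$ in the $y$-direction is at least $2$ we have $h\ge2$. For each integer $j$ with $0<j<h$ the relative interior of the slice $F_j=F\cap\{y=j\}$ lies in $\inter F$, hence contains no lattice point, which forces $F_j$ to lie in a unit interval $[n_j,n_j+1]\times\{j\}$. Comparing with the triangle $\convx\{(0,0),(b,0),(p,h)\}\subset F$, whose slice at height $1$ has length $b(h-1)/h$, gives $b(h-1)/h\le1$, whence $b\le2$; and if $b=2$ then $h=2$, the slice $F_1$ is forced to be exactly a unit interval with integral endpoints (so $p$ is even, else a lattice point would sit in $\inter F$), and a unimodular shear carries $F$ onto $2\Delta$ while emptiness forbids any further vertex. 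The remaining possibility is $b=1$ for \emph{every} edge, i.e.\ all edges of $F$ primitive; this is excluded by extracting from the edge directions of such an $F$ a primitive functional of width $\le1$, contradicting the standing hypothesis --- this is where the diagonal directions enter, as they are the ones that are small on the ``thin slanted'' configurations.

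The hard part will be exactly this last elimination: showing that an empty polygon with all edges primitive cannot have lattice width $\ge2$ in every direction. Everything else reduces to the normalizations above and the one short slicing estimate, but here one must combine the width inequalities in several directions with the convexity of the left and right boundary chains of $F$ (their successive horizontal increments being monotone) in order to locate the small functional. As a shortcut one could instead invoke the known classification of lattice polygons without interior lattice points; but the self-contained argument sketched above is short enough to write out, which is presumably why the authors call it simple.
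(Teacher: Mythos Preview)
The paper does not actually prove this proposition: immediately before the statement it says ``The simple proof of the following classification result is left to the reader. For this, \cref{fig:polygons} may be of help,'' and the statement ends with a \qed. So there is no authored argument to compare your proposal against.

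Your lattice-width organization is a standard and correct strategy. The width-$1$ case is handled cleanly: once $F$ sits in $\{0\le y\le 1\}$ all vertices lie on the two lines and the shear normalization produces exactly the listed families. For width $\ge 2$, the slicing estimate is right (noting that ``no interior lattice point in the slice at integer height $j$'' gives the stronger conclusion $[l(j),r(j)]\subset[n_j,n_j+1]$, not merely length $\le 1$), the bound $b(h-1)/h\le 1$ correctly forces $b\le 2$, and the case $b=2$ is pinned down to $2\Delta$ as you describe.

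Where your proposal is not yet a proof is exactly where you flag it: the elimination of the case ``every edge primitive and width $\ge 2$ in every direction''. You assert one can ``extract from the edge directions a primitive functional of width $\le 1$'' and mention diagonal directions, but give no mechanism. Since this is the only substantive step left, you should write it out rather than gesture at it. One short route: with all edges primitive, Pick's theorem gives $\mathrm{Area}(F)=n/2-1$ for an $n$-gon; for $n=3$ this is $1/2$, so the triangle is unimodular and visibly of width $1$; for $n=4$ this is $1$, and triangulating along a diagonal gives two unimodular triangles, which after normalization forces $F$ to be integrally affinely equivalent to $\convx\{(0,0),(1,0),(0,1),(-1,1)\}$, again of width $1$; the cases $n\ge 5$ can be reduced to these by observing that a regular vertex (which such an $F$ must have) places $F$ in a quadrant with $(1,1)\notin F^\circ$, bounding $F$ inside $\{x+y\le 1\}$ or forcing $(1,1)\in\partial F$, and then iterating. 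None of this is hard, but it is the content of the proof and should be on the page.
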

\mynd{polygons}{ht}{Empty polygons}{fig:polygons}

\begin{definition} \label{def:top_edge}
If F is a $t$-trapezoid as above with $t>1$, then the edge $[(0,1),(1,1)]$
is called the
\emph{top edge}. \index{top edge}
This edge can be identified independently of
coordinates as the unique edge of length one, whose adjacent edges both
have length one.
\end{definition}

\begin{cor} \label{cor:reg_vx}
If $F\subset \R^2$ is an
empty polygon \index{integral polygon!empty}
and $p\in F$ is a
singular vertex, \index{singular vertex}
then $F$ is a $t$-triangle with $t>1$, and assuming $F$ is
of the form given in \cref{prop:poly_class}, we have $p = (0,1)$.
Equivalently, $F$ is a triangle and the opposing edge to $p$ is not
primitive. \qed
\end{cor}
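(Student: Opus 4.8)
The plan is to derive the corollary directly from the classification in \cref{prop:poly_class}. First I would record the routine observation that an integral affine isomorphism of $\R^2$ maps empty polygons to empty polygons, carries vertices to vertices, preserves the regular/singular dichotomy of vertices (being unimodular, it sends an integral basis of edge directions to an integral basis), and preserves primitivity of segments. Consequently it suffices to check, for each of the four normal forms of \cref{prop:poly_class} — the big triangle, a $t$-triangle ($t\ge 1$), a $t$-trapezoid, and a $(t,s)$-trapezoid — which of its vertices are singular.

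Then I would go through the cases by the same mechanical computation: at a vertex $q$ of $F$, take primitive direction vectors $u,w$ along the two incident edges; then $q$ is regular iff $\det(u\,w)=\pm 1$. For the big triangle $\convx\{(0,0),(2,0),(0,2)\}$ one finds $\det=\pm1$ at all three vertices, so there is no singular vertex; the analogous computation at the four vertices of a $t$-trapezoid and of a $(t,s)$-trapezoid also gives $\det=\pm1$ at each vertex. For the $t$-triangle $\convx\{(0,0),(t,0),(0,1)\}$, the vertices $(0,0)$ and $(t,0)$ are regular, while at $(0,1)$ the incident primitive directions are $(0,-1)$ and $(t,-1)$, whose determinant is $t$; hence $(0,1)$ is singular precisely when $t>1$. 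This already proves that a singular vertex forces $F$ to be a $t$-triangle with $t>1$ and the singular vertex to be $(0,1)$.

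For the coordinate-free reformulation: in a $t$-triangle with $t>1$ the edges $[(0,0),(0,1)]$ and $[(t,0),(0,1)]$ have primitive differences $(0,1)$ and $(-t,1)$, whereas $[(0,0),(t,0)]$ has content $t>1$; the edge of $F$ opposite the vertex $p=(0,1)$ is exactly $[(0,0),(t,0)]$, so it is not primitive, and $F$ is a triangle. Conversely, under the standing hypothesis that $p$ is a singular vertex, the main part of the argument already identifies $F$ as a $t$-triangle with $t>1$ and $p$ as the vertex $(0,1)$, so the two formulations of the conclusion are equivalent; I would phrase this carefully so as not to claim the (false) statement that every triangle vertex opposite a non-primitive edge is singular — the big triangle, with all edges non-primitive and all vertices regular, is exactly the obstruction that the hypothesis "$p$ singular" rules out.

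I do not expect a genuine obstacle: the entire content sits inside \cref{prop:poly_class}, and the remaining work is the short determinant check at each vertex of each normal form. The only points requiring care are (a) stating the affine-invariance remarks precisely enough that the normal-form computation implies the general claim, and (b) handling the degenerate parameter values $t\in\{0,1\}$ of the normal forms, where some listed "polygons" collapse to unimodular triangles and hence still contribute no singular vertex.
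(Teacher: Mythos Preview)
Your proposal is correct and is exactly the verification the paper intends: the corollary is stated with a bare \qed, so the expected argument is precisely to run through the four normal forms of \cref{prop:poly_class} and compute the $2\times 2$ determinants at each vertex, as you do. Your observation about the ``Equivalently'' clause and the big triangle is also well taken; the equivalence is meant under the standing hypothesis that $p$ is singular, and you handle this correctly.
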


\begin{example}
\begin{list}{(\thedummy)}
{
  \usecounter{dummy}
  \setlength{\leftmargin}{0pt}
  \setlength{\itemsep}{0pt}
  \setlength{\itemindent}{4.5pt}
}

\item
An exercise shows that the only
Newton diagrams \index{Newton diagram}
as in
\cref{ex:Newton_poly} containing
big triangles \index{integral polygon!big triangle}
are
$\Gamma(x_1^{2a} + x_2^{2b} + x_3^{2c})$ where $a,b,c$ are pairwise 
coprime positive integers.

\item
Similarly, a Newton diagram as in \cref{ex:Newton_poly} cannot contain
a $(t,s)$-trapezoid.
In fact, in \cite{Newton_nondeg}, Braun and N\'emethi show that such
a diagram can contain at most one $t$-trapezoid.

\end{list}
\end{example}

\begin{definition}
Let $F \in \R^2$ be an integral polygon and $S\subset F$ an edge. The
unique primitive integral affine function
$\ell_S:\R^2\to\R$
\nomenclature[lS]{$\ell_S$}{Support function of an integral polygon}
satisfying
$\ell_S|_S \equiv 0$ and $\ell_S|_F\geq 0$ is called the \emph{support function}
of $S$ with respect to $F$.

More generally, if $r\in\R_+$, then we have the dilated polygon $rF$ which
is not
necessarily integral, but the term \emph{edge} retains its meaning.
The support function of an edge $S\subset rF$ is the unique primitive integral
affine function $\ell_S:\R^2\to \R$ satisfying
\nomenclature[lrS]{$\ell_{rS}$}{Support function of a dilated polygon}
$\ell_S|_S \equiv m_S \in ]-1,0]$ and $\ell_S|_{rF} \geq m_S$.
\end{definition}

\begin{lemma}
Let $p,q,r\in F$ be vertices of an empty polygon so that the segments
$[p,q]$ and $[q,r]$ are edges of $F$, $[p,q]$ is primitive and $q$ is regular.
Then $\ell_{[q,r]}(p) = 1$.
\end{lemma}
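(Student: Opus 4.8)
The plan is to reduce the statement to a fact about a $\Z$-basis of $\Z^2$, so the first step is to translate $q$ to the origin. Concretely, set $v = p - q$ and let $w$ be the primitive integral vector pointing from $q$ towards $r$, so that the (translated) edge $[q,r]$ spans the line $\R w$. Because the segment $[p,q]$ is primitive by hypothesis, $v$ is itself a primitive integral vector parallel to the edge $[p,q]$; and the primitive integral vector parallel to $[q,r]$ is $\pm w$. Since $q$ is a regular vertex of $F$ and its two incident edges are precisely $[p,q]$ and $[q,r]$, \cref{def:reg_sing} tells us that $\{v, w\}$ is an integral basis of $\Z^2$.

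Next I would analyze the support function directly. Write $\ell_{[q,r]}(x) = L(x - q)$ where $L\colon\R^2\to\R$ is the linear part. By definition of the support function, $\ell_{[q,r]}$ vanishes identically on the edge $[q,r]$, so $L(w) = 0$; and $\ell_{[q,r]}$ is primitive, i.e.\ $L$ maps $\Z^2$ onto $\Z$. Evaluating $L$ on the basis $\{v,w\}$ and using $L(w)=0$ we get $L(\Z^2) = L(v)\,\Z$, so primitivity forces $L(v) = \pm 1$. Finally, since $p\in F$ and $\ell_{[q,r]} \ge 0$ on $F$, we obtain $\ell_{[q,r]}(p) = L(v) \ge 0$, hence $L(v) = 1$ and $\ell_{[q,r]}(p) = 1$, as claimed.

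The argument is essentially immediate once the hypotheses are unwound, so I do not anticipate a genuine obstacle; the only care needed is bookkeeping. One must use the primitivity of the segment $[p,q]$ (not merely of its direction) to identify $p-q$ with the primitive edge vector, and one must extract the correct sign at the end from the nonnegativity of the support function on $F$. It is worth noting in passing that $p$ does not lie on the line through $[q,r]$ — it is a vertex of the two-dimensional convex polygon $F$ joined to $q$ by a different edge — which is consistent with the computed value being $\pm 1 \neq 0$ rather than $0$; the emptiness of $F$ is not actually used.
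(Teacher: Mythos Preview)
Your proof is correct and is precisely the unwinding that the paper has in mind: the paper's own proof is the single sentence ``This follows more or less from the definition.'' Your observation that the emptiness of $F$ plays no role is also correct.
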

\begin{proof}
This follows more or less from the definition.
\end{proof}

\subsection{Counting lattice points in dilated polygons}

\begin{definition}
Let $F$ be an empty integral polygon with an edge
$S = [p,q]\subset \partial F$. The 
\emph{content} \index{content}
$c_S$ \nomenclature[cS]{$c_S$}{Content of segment}
of $S$ is the content of the vector $q-p$.
\end{definition}

\begin{thm} \label{thm:point_count}
Let $F\subset \R^2$ be an empty integral polygon and
$r\in\R_+$. Furthermore, for any edge $S\subset \partial F$ with
$\ell_{rS}|_{rS} \equiv 0$, choose $\epsilon_S \in \{0,1\}$, for other
edges let $\epsilon_S = 0$. Let $F^- = F \setminus \cup_{\epsilon_S = 1} S$.
Then, there is a number $c_{rF^-} \in \Z$ satisfying
$\sum_{S\subset\partial F} c_{S} (\ell_{rS} - \epsilon_S) \equiv c_{rF^-}$
and
\[
  \max \{ 0, c_{rF^-} + 1 \} =
\begin{cases}
  |rF^- \cap \Z^2|                        & \textrm{if}\,\, r   < 1, \\
  |rF^- \cap \Z^2| - |(r-1)F^- \cap \Z^2| & \textrm{if}\,\, r\geq 1.
\end{cases}
\]
\end{thm}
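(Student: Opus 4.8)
The plan is to reduce the statement to the classification of empty polygons in Proposition~\ref{prop:poly_class} and handle each case by an explicit (but short) count. First I would observe that the existence and well-definedness of the constant $c_{rF^-}$ follows from a general fact about affine functions: the sum $\sum_{S\subset\partial F} c_S(\ell_{rS}-\epsilon_S)$, restricted to the affine plane containing $rF$, is a \emph{constant} function. Indeed, each $\ell_{rS}$ is an affine function on $\R^2$, and the sum of the linear parts, weighted by the contents $c_S$, telescopes to zero because the boundary $\partial F$ is a closed polygon (the edge vectors $c_S\cdot(\text{primitive direction})$ sum to zero, and the support functions' gradients are the inward normals scaled by $c_S$; going around the polygon these cancel). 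This is precisely the kind of ``sum over boundary segments is constant'' argument already invoked in the remark following \cref{block:oka_alg} and in the proof sketch of \cref{prop:content}. So $c_{rF^-}$ is a genuine integer depending on $r$, on $F$, and on the choice of the $\epsilon_S$.

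Next I would compute both sides explicitly in each of the four normal forms of \cref{prop:poly_class}. For a $t$-triangle $F = \convx\{(0,0),(t,0),(0,1)\}$, dilating by $r$ gives a triangle with vertices $(0,0),(rt,0),(0,r)$; its integral points lie on the lines $y=k$ for $0\le k\le \lfloor r\rfloor$, and on line $y=k$ there are $\lfloor rt - kt\rfloor + 1$ of them (before removing any edge), so $|rF\cap\Z^2|$ is a sum of floor functions, and the ``difference'' $|rF\cap\Z^2| - |(r-1)F\cap\Z^2|$ telescopes row-by-row to something of the form $\lfloor rt\rfloor + 1 - (\text{correction from removed edges})$, which matches $c_{rF^-}+1$ when that quantity is nonnegative, and matches $0 = |rF^-\cap\Z^2|$ in the degenerate small-$r$ range where $rF^-$ has no lattice points. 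The $\max\{0,\cdot\}$ on the left is exactly what absorbs the case where the dilated (edge-deleted) polygon is lattice-point-free. The big triangle, $t$-trapezoid and $(t,s)$-trapezoid cases are analogous finite computations; the trapezoid cases split into the sub-polygon between $y=0$ and $y=1$ plus, for the $(t,s)$-trapezoid, the fact that no interior lattice points appear between those two lines. In each case I would identify $c_{rF^-}$ by plugging a convenient lattice vertex of $rF$ into $\sum_S c_S(\ell_{rS}-\epsilon_S)$ and reading off the value.

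The main obstacle I anticipate is bookkeeping the $\epsilon_S$ and the deleted edges correctly: the support functions $\ell_{rS}$ of a \emph{dilated} polygon are normalized so that $\ell_{rS}|_{rS} \equiv m_S \in {]-1,0]}$, not identically $0$, so I must track the offset $m_S$ (which depends on $r \bmod$ the relevant denominator) and verify that $\ell_{rS}|_{rS}\equiv 0$ happens exactly when $r$ makes that edge pass through lattice points, which is the situation in which the choice $\epsilon_S\in\{0,1\}$ is offered. The cleanest route is: (i) prove the constancy of $\sum_S c_S\ell_{rS}$ once and for all on the affine plane; (ii) use Ehrhart-type row counting (as in \cref{lem:cone_pol_part}, where a quasipolynomial row-count is already in play) to get $|rF^-\cap\Z^2|$ and its backward difference; (iii) match the two by evaluating the constant at a single explicit point. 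I would present (i) and (iii) in general and relegate (ii) to the case analysis, citing \cite{Stan} for the standard Ehrhart facts so that the four cases become one-line verifications rather than a long grind.
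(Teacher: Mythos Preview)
Your proposal is correct in outline and would work, but it takes a noticeably different route from the paper's proof, and there is one place where your plan would not go as smoothly as you suggest.

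\medskip

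\textbf{Constancy of the sum.} Your argument (edge vectors of a closed polygon sum to zero, hence so do the inward normals weighted by $c_S$) is correct and is essentially equivalent to the paper's approach. The paper proves it instead by triangulating $F$ into $1$-triangles and using that $\ell_{S_1}+\ell_{S_2}+\ell_{S_3}\equiv 1$ for a $1$-triangle; the interior edges cancel in pairs. Both arguments are fine.

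\medskip

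\textbf{The counting step: here the approaches differ.} You plan to compute $|rF^-\cap\Z^2|$ and $|(r-1)F^-\cap\Z^2|$ separately by row-counting with floor functions, then subtract. This does telescope in the easy cases, but the paper uses a much more direct geometric trick that you should know about: for $r\ge 1$ and a suitably chosen vertex $p$ of $F$, one has $(r-1)F^-+p\subset rF^-$, so
\[
|rF^-\cap\Z^2|-|(r-1)F^-\cap\Z^2|=|(rF^-\setminus((r-1)F^-+p))\cap\Z^2|,
\]
and this set consists exactly of the lattice points of $rF^-$ lying on the edge(s) through $p$ (one edge for the $t$-triangle, two adjacent edges for the $t$-trapezoid; see \cref{fig:points}). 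Because the adjacent support functions restrict to primitive affine functions on those lines (regularity of the vertex, \cref{cor:reg_vx}), the $1$-dimensional count is immediately $c_{rF^-}+1$, read off by evaluating $\sum_S c_S(\ell_{rS}-\epsilon_S)$ at the corner point. The $\epsilon_S$ bookkeeping becomes a trivial shift of endpoints on a segment rather than a modification of a double sum. Your row-by-row method recovers the same number but obscures why the answer is literally the constant $c_{rF^-}$.

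\medskip

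\textbf{One genuine obstacle in your plan.} Invoking Ehrhart/\cite{Stan} to make the four cases ``one-line verifications'' will not work as stated: standard Ehrhart reciprocity concerns integer dilations and quasipolynomials, whereas here $r\in\R_+$ is arbitrary and the answer involves $\max\{0,\cdot\}$, which is not quasipolynomial in $r$. \Cref{lem:cone_pol_part} is about polynomial parts of generating series over cones, not about real dilations of polygons, so that citation does not buy you the verification either. You would still have to do the explicit floor-function computation in each case and each $\epsilon$-subcase by hand. That is doable, but it is exactly the bookkeeping you hoped to avoid; the paper's translate-and-restrict-to-a-line argument sidesteps it entirely.
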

\begin{rem}
If we consider $\R^2$ as an abstract affine plane only (with
the affine lattice $\Z^2\subset\R^2$), then
the number $c_{rF^-}$ above depends on the polygon $rF^-$ and cannot be
determined from $F^-$ and $r$ alone unless one fixes an origin.
\end{rem}

\begin{definition}
We call the number $c_{rF^-}$ in the theorem above the
\emph{content} \index{content}
of the dilated polygon $rF^-$ with the \emph{boundary conditions}
$\epsilon_S$, $S\subset \partial F$.
\nomenclature[crF-]{$c_{rF^-}$}{Content of dilated polygon
with boundary conditions}
\end{definition}

\begin{proof}[Proof of \cref{thm:point_count}]
We start by showing that the sum 
$\sum_{S\subset F} c_{S} (\ell_{rS} - \epsilon_S)$ is a constant function.
Since the epsilons are already constant, it is enough to show that
$\sum_{S\subset F} c_{S} \ell_{rS}$ is constant, i.e., assume $\epsilon_S = 0$
for all edges $S$.  Furthermore, for any
$S$, the difference $\ell_S - \ell_{rS}$ is a constant (since the segments
$S$ and $rS$ are parallel), so we may assume
that $r = 1$. In the case when $F$ is a $1$-triangle, we have sides
$S_1, S_2, S_3$ and a simple exercise shows that
$\ell_{S_1}+\ell_{S_2}+\ell_{S_3} = 1$, hence, $\sum_S c_S \ell_S \equiv 1$.
If $F$ is any integral polygon, take an integral triangulation of $F$, that is,
write $F = \cup_k F_k$ where $F_k$ are $1$-triangles, and
$\dim(F_k \cap F_h) \leq 1$ for $k\neq h$. We then get
$\sum_{S\subset \partial F} c_S \ell_S
= \sum_k \sum_{S\subset \partial F_k} c_S \ell_S$ which is a constant
by the above result. Here, we have equality because in the second sum,
if $S = F_k\cap F_h$, then $\ell_S$ is counted twice, with opposite sign
and if $S \subset \partial F$ is a primitive boundary segment, then
$\ell_S$ is counted once.

We define $c_{rF-}$ as the value of this constant function.
We will prove the theorem in the cases of a $t$-triangle or a $t$-trapezoid.
One proves the theorem in the cases of a big triangle or a $(t,s)$-trapezoid
using similar methods.

We start with the case when $F$ is a $t$-triangle, and $\epsilon_S = 0$
for all edges $S$. Write
$\partial F = S_1\cup S_2 \cup S_3$, where the $S_k$ are edges and
$S_1$ has length $t$, thus $S_2$ and $S_3$ have length $1$.
If $r\geq1$, then $(r-1) F^- + p \subset r F^-$ and we have
\[
  |rF^-\cap\Z^2| - |(r-1)F^-\cap\Z^2| = 
  |rF^-\setminus ((r-1)F^-+p) \cap\Z^2|.
\]
Furthermore, we have
\begin{equation} \label{eq:point_count_pf}
  rF^-\setminus ((r-1)F^-+p) \cap\Z^2
  = \set{p\in rF^-\cap\Z^2}{\ell_{rS_1}(p) = 0}.
\end{equation}
Note that in the case when $r<1$, the set $rF^-\cap\Z^2$ is also given by
the right hand side above. Therefore, to prove the lemma, we must prove
\begin{equation} \label{eq:pc_pf}
  \max \{0, c_{rF^-}+1\}
  = \left|\set{p\in rF^-\cap\Z^2}{\ell_{rS_1}(p) = 0}\right|.
\end{equation}
Since the endpoints of the segment $S_1$ are
both regular vertices, the support functions $\ell_{rS_2}, \ell_{rS_3}$
restrict
to primitive functions $\ell_{rS_2}|_{L_1},\ell_{rS_3}|_{L_1}:L_1 \to \R$,
where
$L_1 = \set{p\in\R^2}{\ell_{rS_1}(p) = 0}$. Therefore, if the right hand side
of \cref{eq:pc_pf} is nonempty, then it is given as
$\{ p_0, \ldots, p_c\}$ where $\ell_{rS_2}(p_k) = k$ and
$\ell_{rS_3}(p_k) = c-k$. The result therefore
follows by evaluating the sum
$\sum_{S\subset\partial F} c_S\ell_{rS}$ at
the point $p_0$.
If the set is empty, then
there is a unique point $p_0\in L_1$ with $\ell_{rS_2}(p_0) = 0$,
and we must have $\ell_{rS_2}(p_0)<0$, hence the result.

Now, assuming that $\epsilon_{S_1} = 0$ and $\epsilon_{S_2} = 1$ or
$\epsilon_{S_3} = 1$, then the right hand side of
\cref{eq:point_count_pf} is given as
$\{p_{\epsilon_{S_2}}, \ldots, p_{c-\epsilon_{S_3}}\}$ and the result
is verified in the same way. If $\epsilon_{S_1} = 1$, then, instead of
\cref{eq:point_count_pf}, we have
\begin{equation} \label{eq:point_count_pf_1}
  rF^-\setminus ((r-1)F^-+p) \cap\Z^2 = \set{p\in rF^-\cap\Z^2}{\ell_1(p) = 1}.
\end{equation}
If this set is not empty, then it is given as
$\{p_{\epsilon_{S_2}}, \ldots, p_{c-\epsilon_{S_3}}\}$ where
$\ell_{rS_2}(p_k) = k$ and $\ell_{rS_3}(p_k) = c-k$, hence
$(t(\ell_{rS_1}-1) + (\ell_{rS_2}-\epsilon_{S_2})
+ (\ell_{rS_3}-\epsilon_{S_3}))(p_{\epsilon_{S_2}})
= \ell_{rS_3}(p_0) - \epsilon_{S_3} = c - \epsilon_{S_2} - \epsilon_{S_3}
=|\{p_{\epsilon_{S_2}}, \ldots, p_{c-\epsilon_{S_3}}\}|-1$.
The result follows in a similar way as above if \cref{eq:point_count_pf_1}
is empty.

\mynd{points}{ht}{Counting points in $rF^-\setminus ((r-1)F^- + p)$
when $F$ is a trapezoid.}{fig:points}
The lemma is proved using a similar method if $F$ is a $t$-trapezoid.
Assuming this, write $\partial F = S_1\cup S_2\cup S_3\cup S_4$, where
the edge $S_1$ has length $t$, and $S_k$ and $S_{k+1}$ intersect in a vertex.
If $r\geq 1$, then
\[
  |rF^-\cap\Z^2| - |(r-1)F^-\cap\Z^2| = 
  |rF^-\setminus ((r-1)F^-+p) \cap\Z^2|.
\]
where $p$ is the intersection point of $S_3$ and $S_4$. We get
\begin{equation} \label{eq:pc_pf_trap}
  rF^-\setminus ((r-1)F^-+p) \cap\Z^2
  = \set{p\in rF^-\cap\Z^2}{\ell_{rS_1}(p) = \epsilon_{S_1}\,\textrm{or}\,
                            \ell_{rS_2}(p) = \epsilon_{S_2}}.
\end{equation}
We see then that the right hand side above is given as
$\{p_{\epsilon_{S_2}},\ldots, p_{c - \epsilon_{S_4}}\} \cup
\{p'_{\epsilon_{S_1}},\ldots, p'_{c'-\epsilon_{S_3}}\}$, where
$p_{\epsilon_{S_2}} = p'_{\epsilon_{S_1}}$ and
\[
\begin{array}{rclcrcl}
  \ell_{rS_1}(p _k) &=& \epsilon_{S_1},      &\quad&
  \ell_{rS_2}(p'_k) &=& \epsilon_{S_2},              \\
  \ell_{rS_2}(p _k) &=& k,                   &\quad&
  \ell_{rS_1}(p'_k) &=& k,                           \\
  \ell_{rS_4}(p _k) &=& c  - k,              &\quad&
  \ell_{rS_3}(p'_k) &=& c' - k.                      \\
\end{array}
\]
In particular, if we set $q = p_{\epsilon_{S_2}} = p'_{\epsilon_{S_1}}$, we
get
\[
\begin{array}{rclcrcl}
  \ell_{rS_1}(q) &=&      \epsilon_{S_1},        &\quad&
  \ell_{rS_2}(q) &=&      \epsilon_{S_2},                \\
  \ell_{rS_3}(q) &=& c' - \epsilon_{S_1},        &\quad&
  \ell_{rS_4}(q) &=& c  - \epsilon_{S_2}.                \\
\end{array}
\]
This gives
\[
\begin{split}
  \sum_{S\subset\partial F} c_S(\ell_S - \epsilon_S)(q)
    &= t(\epsilon_{S_1} - \epsilon_{S_1})
      + (\epsilon_{S_2} - \epsilon_{S_2})
      + (c' - \epsilon_{S_1} - \epsilon_{S_3})
      + (c  - \epsilon_{S_2} - \epsilon_{S_4})\\
    &= c - \epsilon_{S_2} - \epsilon_{S_4} +
	   c' - \epsilon_{S_1} - \epsilon_{S_3}+1.
\end{split}
\]
The right hand side above is the cardinality of the right hand side of
\cref{eq:pc_pf_trap}, if this set is nonempty, otherwise it is 
nonpositive. This finishes the proof.
\end{proof}

\newpage
\section{Construction of sequences} \label{s:seq}

In this section we will construct computation sequences for certain
cycles on the
resolution graph \index{resolution graph}
of
Newton nodegenerate \index{Newton nondegenerate}
surface singularities
described in \cref{ss:Okas_alg} and compare the intersection numbers on the
right hand side of \cref{eq:comp_seq} with a lattice point count
``under the diagram''. In \cref{s:pg} we will use these results to identify
the
geometric genus \index{geometric genus}
topologically and in \cref{s:SW} we make the same
identification of the
normalized Seiberg--Witten invariant
\index{Seiberg--Witten invariant!normalized}
of the canonical
$\spinc$ structure.

In \cref{ss:Laufer_seq} we give a technical result which essentially allows us
to work in a reduced lattice. These ideas are already present in
\cite{Nemethi_OzsSzInv,Lasz_Nem_Red,Nem_Rom_Sd}.
In \cref{ss:alg} we give an algorithm, which explicitly constructs the
computation sequences \index{computation sequence}
which we will consider.
In \cref{ss:pt_count} we compute some intersection numbers coming from these
computation sequences.

\subsection{Laufer sequences} \label{ss:Laufer_seq}

In this section, $L$ is the lattice associated with a
resolution graph \index{resolution graph} $G$
of a normal surfaces singularity with a rational homology sphere link as
described in \cref{ss:res}. As before, $\V$ is the set of vertices
in $G$ and let $\Nd$ be the set of nodes (this coincides with the
construction \cref{block:oka_alg}, see \cref{prop:min}\ref{it:min_corr}).
In applications of the results presented, $G$ will be the graph constructed
by
Oka's algorithm \index{Oka's algorithm}
in \cref{ss:Okas_alg}.
We will describe the Laufer operator $x$ on $L$ and the
associated
generalized Laufer sequences. \index{generalized Laufer sequence}
N\'emethi considers a similar operator
in \cite{Nemethi_OzsSzInv} in a specific case, and in 
\cite{Laszlo_th} L\'aszl\'o provides a general theory. Many of the proofs
in this subsection can be found in these sources. See also
\cite{Nem_Bal}.

For $Z\in L$, we define an element $x(Z)$. Under certain conditions, 
one may expect $Z\leq x(Z)$, in which case $x(Z)$ can be calculated
by the computation sequence constructed in \cref{prop:x_cseq}.
This sequence has the property that if it contains $Z_i$ and
$Z_{i+1} = Z_i+E_{v(i)}$, then $(Z_i,E_{v(i)}) > 0$.
These steps are therefore trivial, in the sense of
\cref{rem:subseq}.

\begin{prop} \label{prop:x_exist}
Let $Z\in L$. There exists a unique cycle
$x(Z)$ \nomenclature[xZ]{$x(Z)$}{Laufer operator}
satisfying the
following properties:
\begin{enumerate}
\item \label{it:x_exists_eq}
$m_n(x(Z)) = m_n(Z)$ for all $n\in\Nd$.
\item \label{it:x_exists_ineq}
$(x(Z),E_v) \leq 0$ for all $v\in\V\setminus\Nd$.
\item \label{it:x_exists_min}
$x(Z)$ is minimal with respect to the above conditions.
\end{enumerate}
\end{prop}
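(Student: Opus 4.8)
The plan is to construct $x(Z)$ explicitly as a meet (componentwise minimum) of a suitable family of cycles, thereby obtaining existence, uniqueness, and minimality simultaneously. First I would observe that the conditions \ref{it:x_exists_eq} and \ref{it:x_exists_ineq} are each "downward closed" in a useful sense: if $Z_1, Z_2$ both satisfy $m_n(\cdot) = m_n(Z)$ for $n \in \Nd$ and $(\cdot, E_v) \leq 0$ for $v \in \V \setminus \Nd$, then so does $Z_1 \wedge Z_2$. The equality part is immediate since $\min\{m_n(Z), m_n(Z)\} = m_n(Z)$. The inequality part is exactly the computation already carried out in the proof of \cref{prop:meet}: for $v \in \V \setminus \Nd$, writing $m_v = \min\{m_{v,1}, m_{v,2}\}$ and assuming $m_v = m_{v,1}$, one gets $(Z_1 \wedge Z_2, E_v) \leq (Z_1, E_v) \leq 0$ using $E_v^2 < 0$ and that the off-diagonal entries $(E_v, E_w)$ are nonnegative.

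Next I would check that the set $\mathcal{C}$ of cycles satisfying \ref{it:x_exists_eq} and \ref{it:x_exists_ineq} is nonempty and that the meet of all its elements still lies in $\mathcal{C}$ (so that a \emph{minimum}, not merely minimal elements, exists). Nonemptiness: one can exhibit a concrete element, e.g. take $Z' = \sum_{n \in \Nd} m_n(Z) E_n^{*,G \setminus \Nd}$-type correction, or more simply argue that adding a large effective cycle supported on $\V \setminus \Nd$ to $Z$ makes all the relevant intersection numbers negative (here one uses that $G \setminus \Nd$ is a disjoint union of bamboos, i.e. strings of rational curves, whose intersection matrices are negative definite, so one can solve for a cycle on each bamboo with prescribed boundary behaviour); alternatively, since the relevant subgraph is a forest of strings, $\mathcal{C}$ is visibly a translate of the Lipman-type cone of that forest intersected with an affine subspace, hence nonempty. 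For the meet being a minimum: since all elements of $\mathcal{C}$ agree on $\Nd$ and differ only on the finitely many bamboo vertices, and on each bamboo the coefficients are bounded below (again by negative definiteness of the bamboo's intersection form, as in the argument of \cref{lem:Es_pos}), the infimum over $\mathcal{C}$ of each coefficient is attained; call the resulting cycle $x(Z)$. By the closure property from the previous paragraph (extended to arbitrary, not just pairwise, meets — which is fine since only finitely many distinct coefficient values occur), $x(Z) \in \mathcal{C}$, and by construction it is $\leq$ every element of $\mathcal{C}$, giving \ref{it:x_exists_min}. Uniqueness is then automatic: any cycle satisfying all three conditions must be $\leq x(Z)$ by \ref{it:x_exists_min} applied to it, and $\geq x(Z)$ since $x(Z)$ is the minimum of $\mathcal{C}$.

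The main obstacle I anticipate is establishing the lower bound on the bamboo coefficients cleanly — i.e., showing that $\inf_{Z'' \in \mathcal{C}} m_v(Z'')$ is finite for each $v \in \V \setminus \Nd$. The cleanest route is: fix $Z_0 \in \mathcal{C}$; for any $Z'' \in \mathcal{C}$, the difference $Z'' - Z_0$ is supported on $\V \setminus \Nd$ and satisfies $(Z'' - Z_0, E_v) \leq -(Z_0, E_v)$ only in a one-sided sense, so instead I would note that $x(Z) := \bigwedge_{Z'' \in \mathcal{C}} Z''$ can be computed bamboo-by-bamboo: on a bamboo $B$ with endpoints attached to nodes $n, n'$ (or to a node and a leaf), the coefficients $m_n(Z), m_{n'}(Z)$ are fixed, and the condition $(\cdot, E_v) \leq 0$ along $B$ together with minimality forces the coefficients to be a specific piecewise-linear (ceiling-function) interpolation between the boundary values, exactly the "canonical primitive sequence" arithmetic of \cref{block:oka_alg}; this both proves finiteness and gives an explicit formula, matching the computation-sequence description promised in \cref{prop:x_cseq}. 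I would present the abstract meet-theoretic argument as the proof and relegate the explicit bamboo formula to the cited \cref{prop:x_cseq}, so the proof here stays short: nonemptiness of $\mathcal{C}$, closure of $\mathcal{C}$ under (finite) meets via the \cref{prop:meet} computation, boundedness below of coefficients via negative-definiteness of the bamboo subgraphs, hence existence of the minimum, hence uniqueness.
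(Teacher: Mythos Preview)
Your proposal is correct and follows essentially the same approach as the paper: the paper reduces to the subgraph $\overline G = G\setminus\Nd$, observes that the problem becomes finding the minimal element of a translate of the Lipman cone of $\overline G$, and then appeals to the Artin-type argument (closure under meet via \cref{prop:meet}, boundedness below via \cref{lem:Es_pos}) exactly as for $\Zmin$ in \cref{def:Zmin}. Your write-up unpacks these steps explicitly rather than citing them, and your bamboo-by-bamboo discussion anticipates the explicit formula in \cref{lem:x_interpol}, but the underlying argument is the same.
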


\begin{proof}
Let $\overline G = G \setminus \Nd$ be the subgraph of $G$ generated
by the vertex set $\V\setminus\Nd$. Finding an element $x(Z)$ satisfying
the above conditions is clearly equivalent to finding a minimal element
$Z_{\overline G}$ in the lattice $L_{\overline G}$ associated with $\overline G$
satisfying
\begin{enumerate}[(i')]
\stepcounter{enumi}
\item \label{it:x_exists}
For all $v\in \V \setminus \Nd$ we have
$(Z_{\overline G}, E_v)_{\overline G} \leq - \sum_{n\in\Nd \cap \V_v} m_n$.
\end{enumerate}
The existence of a minimal element satisfying (\ref{it:x_exists}'),
as well as its uniqueness, now follows in a similar way as that of
the
minimal cycle, \index{minimal cycle}
see \cref{def:Zmin}
\end{proof}

\begin{rem}
The above proposition and its proof hold if we replace $\Nd$ with any subset
of $\V$.
\end{rem}

\begin{prop} \label{prop:x_cseq}
If $Z\leq x(Z)$ then $x(Z)$ can be calculated
using a
computation sequence \index{computation sequence}
as follows. Start by setting $Z_0 = Z$.
Then, assuming that $Z_i$ has been defined, if we have
$(Z_i,E_v) \leq 0$ for all $v\in\V\setminus \Nd$, then
$Z_i = x(Z)$. Otherwise, there is a $v(i)$ so that
$(Z_i,E_{v(i)}) > 0$ and we define $Z_{i+1} = Z_i + E_{v(i)}$.
\end{prop}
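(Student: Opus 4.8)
The plan is to verify that the algorithm described in the statement terminates and produces exactly $x(Z)$. The key observation is that each step strictly increases the cycle, while staying bounded above by $x(Z)$, so the process cannot go on forever.

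First I would show by induction on $i$ that $Z \le Z_i \le x(Z)$ as long as $Z_i$ is defined and the algorithm has not stopped. The base case $Z_0 = Z \le x(Z)$ is the hypothesis. For the inductive step, suppose $Z \le Z_i \le x(Z)$ and that there is a vertex $v(i) \in \V\setminus\Nd$ with $(Z_i, E_{v(i)}) > 0$. Since $Z_i \le x(Z)$ and $Z_i \ne x(Z)$ (as $x(Z)$ satisfies $(x(Z),E_v)\le 0$ for all $v\in\V\setminus\Nd$ by \cref{prop:x_exist}\ref{it:x_exists_ineq}, whereas $Z_i$ does not), we may write $x(Z) = Z_i + D$ with $D$ a nonzero effective cycle. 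I claim $v(i) \in \supp(D)$: if not, then $(D, E_{v(i)}) = \sum_{w \ne v(i)} m_w(D)(E_{v(i)}, E_w) \ge 0$, because $D$ is effective and $m_{v(i)}(D) = 0$, so $(x(Z), E_{v(i)}) = (Z_i, E_{v(i)}) + (D, E_{v(i)}) > 0$, contradicting \cref{prop:x_exist}\ref{it:x_exists_ineq}. Hence $m_{v(i)}(D) \ge 1$, which gives $Z_{i+1} = Z_i + E_{v(i)} \le Z_i + D = x(Z)$; and clearly $Z_{i+1} \ge Z_i \ge Z$. This completes the induction.

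Next, termination: the sequence $(Z_i)$ is strictly increasing and bounded above by $x(Z)$, so it must be finite; say it stops at $Z_N$. By construction the algorithm stops at $Z_N$ precisely when $(Z_N, E_v) \le 0$ for all $v \in \V\setminus\Nd$. Moreover only vertices in $\V\setminus\Nd$ are ever added, so $m_n(Z_N) = m_n(Z_0) = m_n(Z)$ for all $n \in \Nd$. Thus $Z_N$ satisfies conditions \ref{it:x_exists_eq} and \ref{it:x_exists_ineq} of \cref{prop:x_exist}. Combined with $Z_N \le x(Z)$ from the induction and the minimality of $x(Z)$ (condition \ref{it:x_exists_min}), we conclude $Z_N = x(Z)$.

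Finally, to see this is genuinely a computation sequence in the sense of \cref{def:comp_seq}, note that $Z_0 = Z$ need not be $0$, but one prepends a computation sequence from $0$ to $Z$ (which exists since $Z$ is effective — one adds the $E_v$'s one coefficient at a time); the concatenation is a computation sequence for $x(Z)$. The only mild subtlety worth spelling out is the strictness of $Z_i < Z_{i+1}$, which is immediate since $E_{v(i)}$ is a nonzero effective cycle, together with the fact that $Z_i \ne x(Z)$ at every non-terminal step, which I addressed above; I expect no real obstacle beyond bookkeeping.
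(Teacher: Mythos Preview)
Your proof is correct and follows essentially the same approach as the paper: the key inductive step---showing that if $Z_i\le x(Z)$ and $(Z_i,E_{v(i)})>0$ then $Z_i+E_{v(i)}\le x(Z)$---is argued identically (the paper phrases it contrapositively, assuming $m_{v(i)}(Z_i)=m_{v(i)}(x(Z))$ and deriving $(Z_i,E_{v(i)})\le 0$), and you simply spell out termination and the final equality $Z_N=x(Z)$ more explicitly than the paper does. One small caveat: in your closing remark about prepending a sequence from $0$ to $Z$, you assume $Z$ is effective, which is not part of the hypotheses; this aside is unnecessary anyway, since the proposition is only describing the sequence from $Z$ to $x(Z)$ (and in the paper's applications it is always spliced into a larger computation sequence).
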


The assumption $Z\leq x(Z)$ seems difficult to verify without knowing
$x(Z)$. In our application of this statement in \cref{def:comp_seq_constr},
however, it will follow from the properties of $x$ listed in the next
proposition.

\begin{proof}[Proof of \cref{prop:x_prop}]
It is enough to prove the following: If $Z \leq x(Z)$ and
$v\in\V\setminus\Nd$ so that $(Z,E_v) > 0$, then $Z+E_v \leq x(Z)$.
Indeed, assuming the contrary, we have $m_v(Z) = m_v(x(Z))$, hence
$(Z,E_v) = (x(Z),E_v) - (x(Z)-Z, E_v) \leq 0$, a contradiction.
\end{proof}

\begin{definition}
The operator $x$ is called the \emph{Laufer operator}. The computation sequence
in \cref{prop:x_cseq} is called the \emph{generalized Laufer sequence}.
\end{definition}

\begin{prop} \label{prop:x_prop}
The operator $x$ satisfies the following properties:
\begin{enumerate}

\item \label{it:x_prop_mon}
If $Z_1, Z_2 \in L$ and $m_n(Z_1) \leq m_n(Z_2)$ for all $n\in\Nd$ then
$x(Z_1)\leq x(Z_2)$.

\item \label{it:x_prop_idemp}
$x(x(Z)) = x(Z)$ for all $Z\in L$.

\item \label{it:x_prop_integ}
Let $Z\in L$ and $Z'\in L_\Q$ and assume that $m_n(Z) = m_n(Z')$ for all
$n\in\Nd$ and $(Z',E_v) = 0$ for all $v\in\V\setminus\Nd$. Then
$x(Z) \geq Z'$, with equality if $Z'\in L$.

\end{enumerate}
\end{prop}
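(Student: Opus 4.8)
The plan is to deduce all three properties directly from the defining conditions \ref{it:x_exists_eq}--\ref{it:x_exists_min} in \cref{prop:x_exist}, using the reformulation in terms of the reduced lattice $L_{\overline G}$ together with the standard minimal-cycle arguments (as in \cref{lem:Es_pos} and \cref{def:Zmin}). Throughout, the key tool is the same as in the proof of \cref{prop:x_cseq} (that is, \cref{prop:x_prop}'s companion argument): if $W$ is any cycle satisfying $(W,E_v)\le 0$ for all $v\in\V\setminus\Nd$ and $m_n(W)\ge m_n(Z)$ for all $n\in\Nd$, then $W\ge x(Z)$; this follows because $W\wedge x(Z)$ still satisfies both conditions (the meet inequality is exactly \cref{prop:meet} applied on $\overline G$ with the inhomogeneous bound from \ref{it:x_exists}$'$), so minimality of $x(Z)$ forces $W\wedge x(Z)=x(Z)$, i.e. $W\ge x(Z)$. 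I would state and prove this ``comparison lemma'' first, since each of (i)--(iii) is an instance of it.

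For \ref{it:x_prop_mon}: apply the comparison lemma with $Z=Z_1$ and $W=x(Z_2)$. We have $(x(Z_2),E_v)\le 0$ for $v\in\V\setminus\Nd$ by \ref{it:x_exists_ineq}, and $m_n(x(Z_2))=m_n(Z_2)\ge m_n(Z_1)=m_n(x(Z_1))$ for $n\in\Nd$ by hypothesis and \ref{it:x_exists_eq}. Hence $x(Z_2)\ge x(Z_1)$. For \ref{it:x_prop_idemp}: the cycle $x(Z)$ itself satisfies $m_n(x(Z))=m_n(x(Z))$ trivially and $(x(Z),E_v)\le 0$ for $v\in\V\setminus\Nd$, so it is a candidate in the minimization problem defining $x(x(Z))$; thus $x(x(Z))\le x(Z)$. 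Conversely the comparison lemma with $W=x(x(Z))$ (noting $m_n(x(x(Z)))=m_n(x(Z))=m_n(Z)$ and the inequalities) gives $x(x(Z))\ge x(Z)$, hence equality.

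For \ref{it:x_prop_integ}: here $Z'\in L_\Q$ with $m_n(Z')=m_n(Z)$ for $n\in\Nd$ and $(Z',E_v)=0$ for $v\in\V\setminus\Nd$. The inequality $x(Z)\ge Z'$ should come from a rational version of the comparison argument: consider $Z'\wedge x(Z)$ (meet taken in $L'_\Q$, i.e. coefficientwise minimum); on the nodes it agrees with both, and on $\V\setminus\Nd$ one checks $(Z'\wedge x(Z),E_v)\le 0$ by the same case analysis as in \cref{prop:meet} (whichever of the two has the smaller $E_v$-coefficient, its inequality dominates, using $(Z',E_v)=0\le 0$ and $(x(Z),E_v)\le 0$). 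So $Z'\wedge x(Z)$ is, in the rational lattice, below $x(Z)$ and satisfies the defining inequalities with the correct node coefficients; minimality of $x(Z)$ among \emph{integral} such cycles does not immediately apply, so instead I would argue that $x(Z) - (Z'\wedge x(Z))\ge 0$ is an \emph{integral} effective cycle supported on $\V\setminus\Nd$ with $(x(Z)-(Z'\wedge x(Z)),E_v)=(x(Z),E_v)-(Z'\wedge x(Z),E_v)$, and run the \cref{def:Zmin}-style connectivity/negative-definiteness argument on $\overline G$ to conclude it must be zero unless it violates an inequality; tracking signs shows $Z'\wedge x(Z)=x(Z)$, i.e. $x(Z)\ge Z'$. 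When $Z'\in L$, equality follows because then $Z'$ is itself an integral competitor in the minimization defining $x(Z)$, giving $x(Z)\le Z'$ as well. I expect the main obstacle to be precisely this rational refinement in \ref{it:x_prop_integ}: one must be careful that the meet of an integral and a rational cycle, and the negative-definiteness argument bounding effective cycles on the subgraph $\overline G$, are set up so that no integrality is silently assumed where only $Z'\in L_\Q$ holds.
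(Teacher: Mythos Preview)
Your arguments for \ref{it:x_prop_mon} and \ref{it:x_prop_idemp} are correct. The comparison lemma via meets is a clean alternative to the paper's approach for \ref{it:x_prop_mon}, which instead builds an explicit intermediate cycle by keeping the non-node coefficients of $x(Z_2)$ and replacing the node coefficients by $m_n(Z_1)$; both are short and essentially equivalent. For \ref{it:x_prop_idemp} the paper is even briefer: since $m_n(x(Z))=m_n(Z)$, the minimisation problems defining $x(x(Z))$ and $x(Z)$ are literally identical.

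Your argument for \ref{it:x_prop_integ} does not work, and the issue is more than the integrality wrinkle you flag. The meet strategy points the wrong way: even if you could invoke minimality, the conclusion $Z'\wedge x(Z)=x(Z)$ means $m_v(x(Z))\le m_v(Z')$ for every $v$, i.e.\ $x(Z)\le Z'$, which is the \emph{opposite} of the claimed inequality. In addition, $x(Z)-(Z'\wedge x(Z))$ is not integral in general (at any $v$ with $m_v(Z')<m_v(x(Z))$ the coefficient is $m_v(x(Z))-m_v(Z')\in\Q\setminus\Z$ whenever $m_v(Z')\notin\Z$), and it is already $\ge 0$ by definition of the meet, so a negative-definiteness argument cannot force it to vanish.

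The paper's route is to drop the meet entirely and look at $Z'_1:=x(Z)-Z'$ directly. Since $m_n(x(Z))=m_n(Z)=m_n(Z')$, the cycle $Z'_1$ is supported on $\V\setminus\Nd$; for $v\in\V\setminus\Nd$ one has $(Z'_1,E_v)=(x(Z),E_v)-(Z',E_v)\le 0-0=0$. Applying the argument of \cref{lem:Es_pos} on each connected component of $\overline G=G\setminus\Nd$ then gives $Z'_1\ge 0$, i.e.\ $x(Z)\ge Z'$. If moreover $Z'\in L$, then $Z'$ itself satisfies \ref{it:x_exists_eq} and \ref{it:x_exists_ineq}, so minimality yields $x(Z)\le Z'$ and hence equality. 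This is the fix you were reaching for; the point is to apply the subgraph-positivity argument to $x(Z)-Z'$ rather than to something involving the meet.
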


\begin{proof}
For \ref{it:x_prop_mon}, define $Z'\in L$ by $m_n(Z') = m_n(Z_1)$ for
$n\in\Nd$ and $m_v(Z') = m_v(x(Z_2))$ for $v\in\V\setminus\Nd$. Then
$Z'$ satisfies the first two conditions in \cref{prop:x_exist} for
$Z = Z_1$. By definition, we get $x(Z_1) \leq Z' \leq x(Z_2)$.

\ref{it:x_prop_idemp} follows immediately from definition.

For \ref{it:x_prop_integ}, let $\overline G = G \setminus \Nd$.
Assume that $Z_1\in L$ satisfies \ref{it:x_exists_eq} and \ref{it:x_exists_ineq}
of \cref{prop:x_exist}.
Write $Z_1 = Z' + Z'_1$ where
$\supp(Z'_1) \cap \Nd = \emptyset$. Then, we have $(Z'_1,E_v) \leq 0$
for all $v\in\V\setminus\Nd$. Applying \cref{lem:Es_pos} to each connected
component of $\overline G$ we find $Z'_1\geq 0$. If $Z'\in L$, then
$Z'_1\in L$ and by minimality, $Z'_1 = 0$.
\end{proof}

\begin{lemma} \label{lem:x_interpol}
Let $Z\in L$ and take $n\in \Nd$ and $n'\in\Nd^*_n$. Let $u\in\V_n$
be the neighbour of $n$ in the connected component of $G \setminus n$
containing $n'$. If $Z = x(Z)$, then
\begin{equation} \label{eq:x_interpol}
  m_u(Z) = 
    \left\lceil
     \frac{\beta_{n,n'} m_n(Z) + m_{n'}(Z)}{\alpha_{n,n'}}
    \right\rceil,
\end{equation}
where we set $m_{n'}(Z) = 0$ if $n'\in\Nd^*\setminus\Nd$.
\end{lemma}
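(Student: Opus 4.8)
The plan is to analyze the local structure of the graph along the leg (bamboo) connecting the node $n$ to $n'$, and to use the defining property of $x(Z)$ together with the structure of the canonical primitive sequence. First I would recall that $u = u_{n,n'}$ is the first vertex on this bamboo, that the bamboo is a chain of vertices $v_1=u, v_2, \ldots, v_s$ with selfintersection numbers $-b_1, \ldots, -b_s$ coming from the continued fraction expansion of $\alpha_{n,n'}/\beta_{n,n'}$, and that the associated linear functionals $\ell_{v_1}, \ldots, \ell_{v_s}$ form the canonical primitive sequence with $\ell_{v_0} = \ell_n$ and $\ell_{v_{s+1}} = \ell_{n'}$, satisfying $\ell_{v_{j-1}} - b_j \ell_{v_j} + \ell_{v_{j+1}} = 0$.

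Next I would exploit the hypothesis $Z = x(Z)$: by \cref{prop:x_exist}\ref{it:x_exists_ineq} we have $(Z, E_{v_j}) \leq 0$ for every internal vertex $v_j$ on the bamboo, i.e. $m_{v_{j-1}}(Z) + m_{v_{j+1}}(Z) \leq b_j m_{v_j}(Z)$ for $j=1,\ldots,s$ (reading $m_{v_0}(Z) = m_n(Z)$ and $m_{v_{s+1}}(Z) = m_{n'}(Z)$, with the latter set to $0$ when $n'\in\Nd^*\setminus\Nd$). On the other hand, minimality \ref{it:x_exists_min} forces $m_u(Z)$ to be as small as possible subject to these inequalities and to the values $m_n(Z), m_{n'}(Z)$ being fixed. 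The standard computation with negative continued fractions — this is essentially the content of \cref{prop:x_prop}\ref{it:x_prop_integ} applied to the $\Q$-cycle $Z'$ on the bamboo which interpolates linearly and makes $(Z', E_{v_j}) = 0$ for all internal $v_j$ — shows that this minimal value is exactly the $\Q$-linear-interpolation value rounded up. Concretely, the unique $\Q$-solution of $m_{v_{j-1}} - b_j m_{v_j} + m_{v_{j+1}} = 0$ with the prescribed endpoint values has $m_{v_1} = (\beta_{n,n'} m_n(Z) + m_{n'}(Z))/\alpha_{n,n'}$: one checks this by noting that applying $\ell$-functionals, the relation $(Z_K-E)$-type identity in \cref{lem:mod_ZKE} is the special case, and the general interpolation formula follows from Cramer's rule on the tridiagonal system whose determinant is $\alpha_{n,n'}$ (and the relevant minor is $\beta_{n,n'}$, by the very definition of $\beta$ via the canonical primitive sequence).

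I would then combine these two facts: $m_u(Z)$ is an integer, it is at least the $\Q$-interpolation value $(\beta_{n,n'} m_n(Z) + m_{n'}(Z))/\alpha_{n,n'}$ by \cref{prop:x_prop}\ref{it:x_prop_integ} (since $Z \geq Z'$ coefficientwise on the bamboo), and it cannot exceed the ceiling of that value, because replacing $m_u(Z)$ by that ceiling (and propagating along the bamboo via $x$ restricted to $\overline G$) would still satisfy all the constraints, contradicting minimality if $m_u(Z)$ were strictly larger. Hence $m_u(Z) = \lceil (\beta_{n,n'} m_n(Z) + m_{n'}(Z))/\alpha_{n,n'} \rceil$, which is \cref{eq:x_interpol}.

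The main obstacle I anticipate is making the "cannot exceed the ceiling" direction fully rigorous: one must verify that rounding up at $v_1$ and then solving the reduced minimality problem on the rest of the bamboo $v_2, \ldots, v_s$ genuinely produces a cycle satisfying all the inequalities $(Z, E_{v_j}) \leq 0$ with the fixed value $m_{n'}(Z)$ at the far end — this is a monotonicity/propagation argument along the chain that mirrors the proof of \cref{prop:x_exist}, and the continued-fraction identity $\alpha_{n,n'}/\beta_{n,n'} = [b_1,\ldots,b_s]$ is exactly what guarantees it closes up consistently at $v_s$. The cleanest route is probably to invoke \cref{lem:mod} (cited in the proof of \cref{lem:mod_ZKE}), which presumably already records the linear relation $\alpha_{n,n'} m_u = \beta_{n,n'} m_n + m_{n'} + (\text{correction from } (Z,E)\text{-values along the leg})$, and then observe that when $Z = x(Z)$ the corrections are precisely accounted for by the ceiling.
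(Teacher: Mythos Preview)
Your overall strategy matches the paper's: the lower bound $m_u(Z) \geq (\beta_{n,n'} m_n(Z) + m_{n'}(Z))/\alpha_{n,n'}$ comes from comparing with the rational interpolation via \cref{prop:x_prop}\ref{it:x_prop_integ}, and the upper bound comes from exhibiting an integral candidate on the bamboo that satisfies all the inequalities and realises the ceiling at $v_1$.

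The difference is in how that candidate is built. You propose to fix $m_{v_1}$ equal to the ceiling and then ``propagate'' toward the far end, but you correctly flag that closing up at $v_s$ with the prescribed value $m_{n'}(Z)$ is the non-obvious part. The paper sidesteps this entirely with a cleaner trick: it leaves $m_0 = m_n(Z)$ alone but replaces the far endpoint $m_{s+1} = m_{n'}(Z)$ by the smallest integer $m'_{s+1} \geq m_{s+1}$ with $\beta_{n,n'} m_0 + m'_{s+1} \equiv 0 \pmod{\alpha_{n,n'}}$. With this adjusted endpoint the tridiagonal system $m'_{r-1} - b_r m'_r + m'_{r+1} = 0$ has \emph{integral} solutions, and one computes directly that $m'_1 = \lceil (\beta_{n,n'} m_n(Z) + m_{n'}(Z))/\alpha_{n,n'} \rceil$. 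Monotonicity of $x$ in the far endpoint (\cref{prop:x_prop}\ref{it:x_prop_mon}) then gives $m_1 \leq m'_1$, which is the upper bound.

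Your suggested fallback to \cref{lem:mod} is not the right route: that lemma appears later, is about the sublattice $V_Z^\Nd$, and logically depends on the present statement rather than supplying it. The endpoint-adjustment trick is what makes the argument close cleanly.
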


\begin{proof}
Let $v_1, \ldots, v_s$ be the vertices of the
bamboo \index{bamboo}
between $n$ and $n'$
as in \cref{fig:Bamboo}. We will assume that $s\geq 2$, since, in the cases
$s=0$ or $s=1$, the lemma is a simple consequence of the definition.
Set also $v_0 = n$ and $v_{s+1} = n'$. The condition
$Z = x(Z)$ then implies that the sequence $(m_r)_{r=0}^{s+1}$, given by
$m_r = m_{v_r}(Z)$ is the minimal family satisfying
$m_0 = m_n(Z)$, $m_{s+1} = m_{n'}(Z)$ and
$m_{r-1} - b_{v_r} m_r + m_{r+1} \leq 0$ for $1\leq r \leq s$.
Let $m_0' = m_0$ and
\[
   m'_{s+1}
    = \inf \set{m\in\Z}
	           {m\geq m_{s+1},\,\beta_{n,n'} m_0 + m \equiv 0\,
			                                         (\mod\alpha_{n,n'})}.
\]
Since $\beta_{n,n'} m_0 + m_{s+1} \equiv 0\, (\mod\alpha_{n,n'})$, the
equations
\begin{equation} \label{eq:string}
\begin{gathered}
\begin{array}{lclclcrcl}
           &-& b_1 m'_1 &+& m'_2     &=& -m'_0     &     &            \\
  m'_{r-1} &-& b_1 m'_r &+& m'_{r+1} &=&     0     &\quad&  1 < r < s \\
  m'_{s-1} &-& b_1 m'_s & &          &=& -m'_{s+1} &     &            \\
\end{array}
\end{gathered}
\end{equation}
have integral solutions $m_1',\ldots,m_s'$, see e.g.
\cite{Riemenschneider_DefQ} or \cite{BHPVdV}, III.5.
Furthermore, we have
\[
  m'_1
   = 
     \frac{\beta_{n,n'} m'_0(Z) + m'_{s+1}(Z)}{\alpha_{n,n'}}
   =
    \left\lceil
     \frac{\beta_{n,n'} m_n(Z) + m_{n'}(Z)}{\alpha_{n,n'}}
    \right\rceil.
\]
By \cref{prop:x_prop}\ref{it:x_prop_integ}, $m'_1,\ldots,m'_s$ is the minimal
sequence satisfying $m'_{r-1} - b_r m'_r + m'_{r+1} \leq 0$ for all
$1\leq r \leq s$, and by \cref{prop:x_prop}\ref{it:x_prop_mon} we have
$m_1 \leq m'_1$. Thus, we have proved the $\leq$ part of \cref{eq:x_interpol}.

For the opposite inequality, set $m''_0 = m_0$, $m''_{s+1} = m_{s+1}$
and take $m''_1,\ldots,m''_s$ as the rational solution of
\cref{eq:string}, with $m'_0$ and $m'_{s+1}$ on the right side
replaced with $m''_0$ and $m''_{s+1}$. Then we have
$m_{r-1} -b_rm_r + m_{r+1}$ for $0<r<s+1$, and so
$m_r \geq m''_r$ for all $r$ by \cref{prop:x_prop}\ref{it:x_prop_mon}.
But we also have
\[
  m''_1
   = 
     \frac{\beta_{n,n'} m_0(Z) + m_{s+1}(Z)}{\alpha_{n,n'}},
\]
hence, the $\geq$ part of \cref{eq:x_interpol}.
\end{proof}

\begin{lemma} \label{lem:x_0Z}
Let $G$ be a graph constructed by
Oka's algorithm \index{Oka's algorithm}
as in \cref{ss:Okas_alg}
from the
Newton diagram \index{Newton diagram}
$\Gamma(f)$. We have $x(0) = 0$. Furthermore, 
\begin{enumerate}
\item \label{it:x_0Z_a}
If $G$ is the graph of a
minimal good resolution \index{minimal good resolution}
(i.e. $\Gamma(f)$ is a
minimal diagram) \index{minimal Newton diagram}
then $x(Z_K) = Z_K$.

\item \label{it:x_0Z_N}
If $\Gamma(f)$ is 
convenient, \index{convenient}
then $x(\wt(f)) = \wt(f)$.

\item \label{it:x_0Z_s}
Without the assumption of minimality or convenience,
we have $x(Z_K-E) = Z_K-E + Z_{\mathrm{legs}}$ where
$Z_{\mathrm{legs}}$ is the support of all
legs \index{leg}
in $G$ (see \cref{def:legs}).
\end{enumerate}
\end{lemma}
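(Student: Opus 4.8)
The plan is to verify each claim by exhibiting a cycle $Z'$ satisfying conditions \ref{it:x_exists_eq} and \ref{it:x_exists_ineq} of \cref{prop:x_exist} and then invoking minimality, using \cref{prop:x_prop}\ref{it:x_prop_integ} whenever $Z'$ happens to be integral. First, $x(0) = 0$ is immediate: the zero cycle satisfies $m_n(0) = 0$ for all $n \in \Nd$ and $(0, E_v) = 0 \leq 0$ for all $v$, and it is obviously minimal among effective such cycles — but one should note the Laufer operator is defined on all of $L$, so the argument is that $0$ meets \ref{it:x_exists_eq}--\ref{it:x_exists_ineq} and any $Z_1$ meeting them with $m_n(Z_1) = m_n(0)$ has $x(0) \le Z_1$ by minimality while $0$ itself is such a $Z_1$, forcing $x(0)=0$; alternatively this is the special case $Z'=0\in L$ of \cref{prop:x_prop}\ref{it:x_prop_integ}.

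For \ref{it:x_0Z_a}, I would use \cref{prop:x_prop}\ref{it:x_prop_integ} with $Z = Z' = Z_K$. The anticanonical cycle satisfies $(Z_K, E_v) = (-K, E_v) = E_v^2 - 2g_v + 2 = -\delta_v$ by \cref{def:Z_K} and the adjunction equalities, since in Oka's resolution every vertex has $g_v = 0$ except possibly the nodes; but when $\Gamma(f)$ is minimal, the nodes $n \in \Nd$ carry genus $g_n$ equal to the number of interior lattice points of $F_n$, and by \cref{prop:isol} (rational homology sphere link) these $F_n$ are empty polygons, so $g_n = 0$ as well. Hence $(Z_K, E_v) = -\delta_v$, which in particular is $0$ precisely when $\delta_v = 0$ — but actually what I need is $(Z_K, E_v) = 0$ is false in general; rather, I want $m_n(Z_K)$ for $n\in\Nd$ to be data I hold fixed, and to check $(Z_K, E_v) \le 0$ for all $v \in \V\setminus\Nd$. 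Since $(Z_K, E_v) = -\delta_v \le 0$ always, and for a minimal graph every $v \in \V\setminus\Nd$ has $\delta_v \in \{1,2\}$ so $(Z_K,E_v) \le 0$ holds, $Z_K \in L$ satisfies \ref{it:x_exists_eq} (trivially, equality for nodes) and \ref{it:x_exists_ineq}. To get $x(Z_K) = Z_K$ rather than merely $x(Z_K) \le Z_K$, apply \cref{prop:x_prop}\ref{it:x_prop_integ} with $Z' = Z_K$: one needs $(Z_K, E_v) = 0$ for $v\in\V\setminus\Nd$, which fails at leaves. The correct route is instead minimality directly — $x(Z_K)$ is the minimal cycle with the node-coefficients of $Z_K$ and nonpositive intersection off $\Nd$; I must argue no cycle strictly below $Z_K$ has this property, which follows because decreasing any $m_v$ with $v\in\V\setminus\Nd$ of degree $\le 2$ from its $Z_K$-value makes $(\cdot,E_v)$ strictly positive at a neighbor, exactly as in the proof of \cref{lem:Es_pos}.

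For \ref{it:x_0Z_N}, the argument is analogous with $\wt(f)$ in place of $Z_K$: by \cref{prop:Z_K}, $Z_K - E = \wt(f) - \wt(x_1x_2x_3)$, and when $\Gamma(f)$ is convenient the graph produced by Oka's algorithm has the property that $(\wt(f), E_v) \le 0$ for all $v$ with equality forced off the nodes by the defining relation \cref{eq:nbr_sum} together with the primitivity of the $\ell_v$; I would check that $\wt(f)$ restricted to each bamboo is the minimal extension of its node-endpoint values, which is precisely the content of \cref{lem:x_interpol}. For \ref{it:x_0Z_s}, the point is that without convenience/minimality the cycle $Z_K - E$ may fail condition \ref{it:x_exists_ineq} exactly along the legs — i.e. at ends $e \in \E$ and their adjacent $u_e$ — because of the $-1$ contributions from noncompact faces seen in \cref{lem:mod_ZKE} (the case $n'\in\Nd^*_n\setminus\Nd$, giving $\alpha_{n,n'}m_u(Z_K-E) = \beta_{n,n'}m_n(Z_K-E) - 1$). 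I would compute $(Z_K - E + Z_{\mathrm{legs}}, E_v)$ for $v$ on a leg and show it becomes $\le 0$ after adding the reduced structure $Z_{\mathrm{legs}}$, and that this augmented cycle is minimal with the prescribed node-coefficients; the identity $(Z_K - E, E_v) = 0$ when $\delta_v = 2$ from \cref{lem:mod_ZKE} localizes the discrepancy to the legs. The main obstacle I anticipate is case \ref{it:x_0Z_s}: correctly bookkeeping the intersection numbers along a leg — which requires combining the recursion defining the canonical primitive sequence with the off-by-one from the noncompact face — and verifying minimality of $Z_K - E + Z_{\mathrm{legs}}$ rather than just feasibility, since adding $Z_{\mathrm{legs}}$ could a priori overshoot.
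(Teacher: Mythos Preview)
Your computation $(Z_K, E_v) = -\delta_v$ is incorrect: with $g_v = 0$ the adjunction formula gives $(Z_K, E_v) = E_v^2 + 2 = 2 - b_v$, while $2 - \delta_v$ is $(Z_K - E, E_v)$. Under minimality one still has $b_v \geq 2$ for $v \in \V\setminus\Nd$, so $(Z_K, E_v) \leq 0$ does hold and $Z_K$ satisfies conditions \ref{it:x_exists_eq}--\ref{it:x_exists_ineq} of \cref{prop:x_exist}, giving $x(Z_K) \leq Z_K$. But the reverse inequality is a genuine gap: your sketch ``decreasing some $m_v$ makes a neighbour's intersection positive'' does not rule out simultaneous decreases along an entire bamboo, and \cref{prop:x_prop}\ref{it:x_prop_integ} is inapplicable precisely because $(Z_K, E_v) = 2 - b_v$ can be strictly negative. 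By contrast, for \ref{it:x_0Z_N} the paper shows $(\wt(f), E_v) = 0$ (equality, not merely $\leq 0$) for every $v \in \V\setminus\Nd$, by evaluating \cref{eq:nbr_sum} at a point $p \in F_n \cap F_{n'}$ and using that convenience forces $\ell_{n'}(p) = 0$ when $n' \in \Nd^* \setminus \Nd$; then \cref{prop:x_prop}\ref{it:x_prop_integ} applies directly, so no separate minimality argument is needed there.

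For \ref{it:x_0Z_a} and \ref{it:x_0Z_s} the paper avoids a direct minimality argument altogether by using the generalized Laufer sequence of \cref{prop:x_cseq}: once one knows $Z \leq x(Z)$, running that sequence from $Z$ terminates exactly at $x(Z)$. For \ref{it:x_0Z_s} one first checks $Z_K - E \leq x(Z_K - E)$ via a rational interpolation and \cref{prop:x_prop}\ref{it:x_prop_integ}, then observes $(Z_K - E, E_e) = 2 - \delta_e = 1$ at each end $e$ and walks the sequence up each leg one vertex at a time, arriving at $Z_K - E + Z_{\mathrm{legs}}$, where one verifies all off-node intersections are $\leq 0$. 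Part \ref{it:x_0Z_a} is then \emph{deduced from} \ref{it:x_0Z_s}: start from $x(Z_K - E) + \sum_{n\in\Nd} E_n \leq x(Z_K)$ (monotonicity, \cref{prop:x_prop}\ref{it:x_prop_mon}) and run the Laufer sequence along each bamboo joining adjacent nodes; this terminates precisely at $Z_K$, where $(Z_K, E_v) = 2 - b_v \leq 0$ by minimality. This computation-sequence device is exactly what supplies the minimality you flagged as the main obstacle, and the logical order---\ref{it:x_0Z_s} before \ref{it:x_0Z_a}---matters.
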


\begin{proof}
The equality $x(0) = 0$ follows from \cref{prop:x_prop}\ref{it:x_prop_integ}.
Similarly, \ref{it:x_0Z_N} follows from the same lemma, once we show
that if $v\in\V\setminus\Nd$, then $(\wt(f),E_v) = 0$. For such a $v$, there
are $n\in\Nd$ and $n'\in\Nd^*_n$ so that $v$ is on a bamboo connecting
$n$ and $n'$ as in \cref{fig:Bamboo}.
Set $v_0 = n$ and $v_{s+1} = n'$.
We then get $\wt_{v_r}(f) = \ell_{v_r}(p)$ for $0\leq r \leq s$, as well
as $\ell_{v_{s+1}}(p) = 0$ (this follows from convenience).
We therefore get
$(\wt(f), E_{v_r}) =
\ell_{v_{r-1}}(p) - b_{v_r} \ell_{v_r}(p) + \ell_{v_{r+1}}(p) = 0$
for $1\leq r < s$, and
$(\wt(f),E_{v_s}) = 
\ell_{v_{s-1}}(p) - b_{v_s} \ell_{v_s}(p) =
\ell_{v_{s-1}}(p) - b_{v_s} \ell_{v_s}(p) + \ell_{v_{r+1}}(p) = 0$.

Next, we prove \ref{it:x_0Z_s}.
We start with showing $Z_K-E \leq x(Z_K-E)$. By negative
definiteness, there exists a rational solution $(m_v)_{v\in\V\setminus\Nd}$
to the linear equations
$-b_v m_v + \sum_{u\in\V_v\setminus\Nd} m_v
= -\sum_{n\in\Nd\cap\V_v} m_n(Z_K-E)$ for $v\in\V\setminus\Nd$.
Take $Z\in L_\Q$ with
$m_n(Z) = m_n(Z_K-E)$ for $n\in\Nd$ and $m_v(Z) = m_v$ for
$v\in\V\setminus\Nd$ and set $Z_1 = Z_K-E - Z$. Then $Z_1$ is supported
on $\V\setminus\Nd$ and we have $(Z_1,E_v) = (Z_K-E,E_v) \geq 0$ for
$v\in\V\setminus\Nd$. By \cref{lem:Es_pos}, we have $Z_1 \leq 0$,
thus $Z_K-E \leq Z \leq x(Z_K-E)$ by
\cref{prop:x_prop}\ref{it:x_prop_integ}. Now, if $e\in\E$, we have
$(Z_K-E,E_e) = 1$, and so we can start a computation sequence
as in \cref{prop:x_cseq} with $e$. Using the notation $v_1, \ldots, v_s$ as in
\cref{def:legs}, we show that if we already have a
computation sequence \index{computation sequence}
$v_s, v_{s-1}, \ldots, v_{r-1}$ for some $r>1$,
we may take $v_r$ as the next element. But this follows from 
the fact that $(Z_K-E - E_{v_s} - \ldots - E_{v_{r-1}}, E_r) = 1$. 
Thus, we get a computation sequence starting with $Z_K-E$, ending
with $Z_K-E+Z_{\mathrm{legs}}$, at which point we have
$(Z_K-E+Z_{\mathrm{legs}},E_v) \leq 0$ for all $v\in\V\setminus\Nd$.
Indeed, if $v\in\V\setminus\Nd$ is not on a leg, then
$(Z_K-E+Z_{\mathrm{legs}},E_v) = (Z_K-E,E_v) = \delta_v-2 = 0$.
If $v = v_1$ with the notation above, then we get
$(Z_K-E+Z_{\mathrm{legs}},E_v) = (Z_K,E_v) - 1 = -b_v+1\leq 0$
and if $v=v_r$ with $r>1$, we get 
$(Z_K-E+Z_{\mathrm{legs}},E_v) = (Z_K,E_v)= -b_v+2\leq 0$.
This proves $x(Z_K-E) = Z_K-E+Z_{\mathrm{legs}}$.

Finally, we prove \cref{it:x_0Z_a}.
To calculate $x(Z_K)$, we can construct a computation sequence as in
\cref{prop:x_cseq} starting at $Z_K-E+Z_{\mathrm{legs}}+\sum_{n\in\Nd}E_n$,
since $Z_K-E+Z_{\mathrm{legs}} = x(Z_K-E) \leq x(Z_K)$ by
\cref{prop:x_prop}\ref{it:x_prop_mon} and the above
computations, and therefore,
$Z_K-E+Z_{\mathrm{legs}}+\sum_{n\in\Nd}E_n \leq x(Z_K)$.
This sequence is similar to the above.
Take any $n,n'\in\Nd$ with $n'\in\Nd_n$ and a bamboo $v_1, \ldots, v_s$
connecting $n,n'$ as in \cref{fig:Bamboo}. We can then take
$v_1, \ldots, v_s$ as the start of the computation sequence. This is becasue
if
$Z = Z_K-E+Z_{\mathrm{legs}}+\sum_{n\in\Nd}E_n + E_{v_1}+\ldots+E_{v_{r-1}}$,
then $(Z,E_{v_r}) 
  = m_{v_{r-1}}(Z_K) - b_{v_1} (m_{v_r}(Z_K) - 1) + m_{v_{r+1}}(Z_K)-1
  = (Z_K,E_{v_r}) + b_{v_r} - 1 = 1$ by the adjunction equalities.
Now, the concatenation of all the sequences along such bamboos gives a sequence
which ends at $Z_K$. Furthermore, we have
$(Z_K,E_v) \leq 0$ for all $v\in\V\setminus\Nd$ by the minimality assumption,
so this is where the sequence
stops.
\end{proof}

\subsection{Algorithms} \label{ss:alg}

In this subsection we give three different constructions for a computation
sequence, each having some good properties.

\begin{definition} \label{def:rt}
The
\emph{ratio test} \index{ratio test}
is a choice of a node $n \in \Nd$ given a cycle
$Z\in L$.
More precisely, we consider the following three minimising conditions:
\begin{enumerate}[I.]
\item \label{rt:article}
Given $Z\in L$, choose $n$ to minimise the fraction
\[
  \frac{m_n(Z)}{m_n(Z_K-E)}.
\]

\item \label{rt:Newton}
Given $Z\in L$, choose $n$ to minimise the fraction
\[
  \frac{m_n(Z)}{\wt_n(f)}.
\]

\item \label{rt:spec}
Given $Z\in L$, choose $n$ to minimise the fraction
\[
  \frac{m_n(Z)+\wt_n(x_1 x_2 x_3)}{\wt_n(f)}.
\]
\end{enumerate}
If given a choice between more than one nodes minimizing the given fraction,
we choose one maximising the intersection number $(Z,E_n)$.
We also define $Z^{\ref{rt:article}} = Z_K$,
$Z^{\ref{rt:Newton}} = \wt(f)$ and $Z^{\ref{rt:spec}} = x(Z_K-E)$.
Note that since we assume that $(X,0)$ is a hypersurface singularity,
it is Gorenstein. In particular, $Z_K \in L$.
\end{definition}

\begin{definition} \label{def:comp_seq_constr}
Computation sequence $*$ = \ref{rt:article},\ref{rt:Newton},\ref{rt:spec}
is defined recursively as follows. Start by setting $\bar Z_0 = 0$. Given
$\bar Z_i$, if $\bar Z_i = Z^*$, then stop the algorithm. Otherwise,
choose $\bar v(i) \in \Nd$ according to ratio test $*$ and set
$\bar Z_{i+1} = x(\bar Z_i + E_{\bar v(i)})$.
We obtain a computation sequence $(Z_i)$ for $Z^*$
by connecting
$\bar Z_i + E_{\bar v(i)}$ and $\bar Z_{i+1}$ using the
generalized Laufer sequence \index{generalized Laufer sequence}
from \cref{prop:x_cseq}.
This is possible since we have
$\bar Z_i = x(\bar Z_i) \leq x(\bar Z_i+E_{\bar v(i)}) = \bar Z_{i+1}$
by \cref{prop:x_prop}, and so
$\bar Z_i + E_{\bar v(i)} \leq x(\bar Z_i+E_{\bar v(i)})$
(the inequality gives
$m_v(\bar Z_i+E_{\bar v(i)}) \leq m_v(x(\bar Z_i+E_{\bar v(i)}))$
for $v\neq \bar v(i)$, whereas
$m_{\bar v(i)}(\bar Z_i+E_{\bar v(i)}) = m_v(x(\bar Z_i+E_{\bar v(i)}))$
by definition).
Note also that by \cref{lem:x_0Z} and
\cref{conv:alg}, we have $x(Z^*) = Z^*$ in each case.

In case \ref{rt:article}, we will only consider the
finite sequence going from $0$ to $Z_K$.
In case and \ref{rt:spec}, similarly,
we will only consider the
finite sequence going from $0$ to $x(Z_K-E)$.
In the case \ref{rt:Newton}
we continue the sequence to infinity, as in \cref{def:comp_seq},
yielding an infinite sequence $(\bar Z_i)_{i=0}^\infty$.
\end{definition}

\begin{block}
In order to see that we do indeed get a computation sequence for
$Z^*$, it is enough to show that if $\bar Z_i < Z^*$, then
$\bar Z_{i+1} \leq Z^*$. In case \ref{rt:article}, the assumption
$\bar Z_i < Z^*$ gives $(m_n(\bar Z_i)-1) / m_n(Z_K-E) \leq 1$ for
all $n\in\Nd$. If equality holds for all $n$, then $\bar Z_i = Z_K$
and we are at the end of the algorithm. Otherwise, it follows from
the ratio test that $\bar v(i)$ has been chosen so that
$(m_n(\bar Z_i)-1) / m_n(Z_K-E) < 1$, which gives
$(m_n(\bar Z_{i+1})-1) / m_n(Z_K-E) \leq 1$ for all $n$, thus
$\bar Z_i \leq Z_K$ by \cref{prop:x_prop}\ref{it:x_prop_mon}
and \cref{lem:x_0Z}\ref{it:x_0Z_a}. A similar proof holds in
the other cases.
\end{block}

\begin{rem}
Ratio tests \ref{rt:article}, \ref{rt:Newton} were chosen in such a way
that the sets $P_i$ in \cref{def:a_i} are always contained in the
cone generated by $F_n(Z_K-E)$, $F_n$, respectively
(see \cref{def:C_r} and \cref{lem:pts_faces}). Ratio test \cref{rt:spec}
results in a similar, but shifted, statement.
\end{rem}

\begin{conv} \label{conv:alg}
In case \ref{rt:article}, we will assume that the diagram $\Gamma(f)$ is
minimal, \index{minimal Newton diagram}
whereas in cases \ref{rt:Newton} and \ref{rt:spec}, we will
assume that the diagram is
convenient. \index{convenient}
This is motivated by the following
facts.

By \cref{prop:min}, the minimal resolution
graph is obtained by
Oka's algorithm, \index{Oka's algorithm}
assuming that $\Gamma(f)$ is a
minimal diagram. Therefore, although we use our knowledge of the diagram
$\Gamma(f)$ in the proofs of our statements, the statements themselves 
can be made entirely in terms of the link $M$. In particular, the
geometric genus can be computed using only the
link. \index{link}

In cases \ref{rt:Newton} and \ref{rt:spec}, we already assume the knowledge
of $\wt(x_1x_2x_3)$ in order to construct the computation sequence.
Given a diagram $\Gamma(f)$ of an arbitrary function $f \in \O_{\C^3,0}$
with Newton nondegenerate principal part, defining an isolated singularity
with
rational homology sphere \index{homology sphere!rational}
link, let $f' = f + \sum_{c=1}^3 x_c^d$, where
$d\in\N$ is large. Then $f$ and $f'$ define analytically equivalent germs.
Furthermore, let $G'$ be the graph obtained from running
Oka's algorithm on the
diagram $\Gamma(f')$. For any $e\in \E$, set
$\gamma_e = -(Z_K-E+\wt(f),E_e)+1$ if $(Z_K-E+\wt(f),E_e) \neq 0$, but
$\gamma_e = 0$ otherwise.
For a suitable choice for $d$, the graph $G'$ is then obtained from the graph
$G$ by blowing up each end $\gamma_e$ times.
This means that $d$ is chosen so that the combinatorial volume of the
boundary of the resulting convenient diagram is as large as possible.
Therefore, assuming that $\Gamma(f)$ is convenient imposes no restriction
in generality if we already assume the knowledge of $\wt(x_1x_2x_3)$.
\end{conv}

\begin{rem} \label{rem:EO52}
\begin{list}{(\thedummy)} 
{
  \usecounter{dummy}
  \setlength{\leftmargin}{0pt}
  \setlength{\itemsep}{0pt}
  \setlength{\itemindent}{4.5pt}
}
\item
The number $k$ will be fixed throughout as the number of steps in the sequence
$(\bar Z_i)_i^k$. However, it depends on which case we are following. In order
not to complicate the notation, this is not indicated. In case
\ref{rt:article} we have
$k = \sum_{n\in\Nd} m_n(Z_K)$,
in case \ref{rt:spec} we have $k = \sum_{n\in\Nd}m_n(Z_K-E)$ and
in case \ref{rt:Newton}, we have $k = \sum_{n\in\Nd} \wt_n(f)$.

\item \label{it:EO52ii}
Note that $(\bar Z_i)$
\nomenclature{$(\bar Z_i)$}{Subsequence of computation sequence}
forms a
subsequence \index{computation sequence!subsequence}
of $(Z_i)$ as in \cref{rem:subseq}.
That is, by \cref{prop:x_cseq}, we have $(Z_i,E_{v(i)}) > 0$
unless there is an $i'$ so that $Z_i = \bar Z_{i'}$ and
$v(i) = \bar v(i)$.
From the viewpoint of \cref{thm:comp_seq}, the only interesting
part of the
computation sequences \index{computation sequence}
constructed in \cref{def:comp_seq_constr}
is formed by the terms $\bar Z_i$.
\end{list}
\end{rem}

\subsection{Intersection numbers and lattice point count} \label{ss:pt_count}

In this subsection we assume that we have constructed a
sequence \index{computation sequence}
$(\bar Z_i)_{i=0}^k$ as in the previous subsection.
Note that by \cref{rem:EO52}\ref{it:EO52ii}, these are the nontrivial
steps of the computation sequence $(Z_i)$.
The main result is
\cref{thm:pts} which connects numerical data obtained from the sequence
$(\bar Z_i)_{i=0}^k$ with a lattice point count associated with the
Newton diagram. \index{Newton diagram}
The difficult part of proving this result is the technical
\cref{lem:pts_faces} which says that the set $\bar P_i$ consists of the
integral points in a dilated polygon in an affine hyperplane. The
number of these points is then obtained using \cref{thm:point_count}
and this format is compared with the intersection number
$(\bar Z_i,E_{\bar v(i)})$.

We remark that in this section, and in what follows, in
cases \ref{rt:Newton} and \ref{rt:spec}, we assume that the Newton diagram
$\Gamma(f)$ is convenient. In case \ref{rt:article}, however, we
assume that $\Gamma(f)$ is minimal.

\begin{definition} \label{def:a_i}
In cases \ref{rt:article}, \ref{rt:Newton}, \ref{rt:spec}, for any $i$,
define 
\[
  a_i = \max\{ 0, (-Z_i,E_{v(i)}) + 1 \}.
\]
Furthermore, set
$P_i = (\Gamma_+(Z_i) \setminus \Gamma_+(Z_{i+1}))\cap \Z^3$.
Set also $\bar a_i = a_{i'}$ and $\bar P_i = P_{i'}$ 
\nomenclature[Pi]{$\bar P_i$}{Set of integral point at step $i$}
if $\bar Z_i = Z_{i'}$. Thus, in cases
\ref{rt:article}, \ref{rt:spec} we have a sequence $(\bar a)_{i=0}^{k-1}$,
whereas in case \ref{rt:Newton} we consider the infinite sequence
$(\bar a)_{i=0}^\infty$.
\end{definition}

\begin{rem}
Since the sequence $(Z_i)$ is increasing, it follows from definition that
the sequence $(\Gamma_+(Z_i))$ is decreasing.
\end{rem}

\begin{thm} \label{thm:pts}
Assume the notation introduced above and in \cref{ss:cyc_polyt} as well as
the sequence $(\bar Z_i)_{i=0}^k$ defined in \cref{def:comp_seq_constr}.
In case \ref{rt:Newton}, consider as well the continuation of the sequence
as in \cref{def:comp_seq}. Then the following hold:
\begin{enumerate}
\item \label{it:pts_as}
In cases \ref{rt:article}, \ref{rt:spec} we have
$\Z_{\geq 0}^3 \setminus \Gamma_+(Z_K-E) = \amalg_{i=0}^{k-1} \bar P_i$ and
$|\bar P_i| = \bar a_i$ for all $i=0, \ldots, k-1$.

\item \label{it:pts_N}
In case \ref{rt:Newton} we have
$\Z_{\geq 0}^3 = \amalg_{i=0}^\infty \bar P_i$.
In particular, 
$\Z_{\geq 0}^3 \setminus \Gamma_+(\wt(f)) = \amalg_{i=0}^k \bar P_i$.
Furthermore, we have $|\bar P_i| = \bar a_i$ if $i<k$ and
$|\bar P_i| - |\bar P_{i-k}| = \bar a_i$ if $i\geq k$.

\end{enumerate}
\end{thm}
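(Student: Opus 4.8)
\textbf{Proof plan for \cref{thm:pts}.}
The plan is to reduce everything to the combinatorial statement in \cref{lem:pts_faces} (which identifies each $\bar P_i$ with the integral points of a dilated polygon in an affine hyperplane) together with the lattice-point count of \cref{thm:point_count}, and to handle the ``partition'' claims separately from the ``counting'' claims. First I would establish the partition of $\Z_{\geq 0}^3$ (respectively $\Z_{\geq0}^3\setminus\Gamma_+(Z_K-E)$). Since the cycles $Z_i$ increase, the polyhedra $\Gamma_+(Z_i)$ decrease, so the sets $P_i = (\Gamma_+(Z_i)\setminus\Gamma_+(Z_{i+1}))\cap\Z^3$ are pairwise disjoint and their union over $0\le i<k$ is $(\Gamma_+(Z_0)\setminus\Gamma_+(Z_k))\cap\Z^3$. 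In case \ref{rt:article}, $Z_0=0$ so $\Gamma_+(Z_0)=\R^3_{\ge0}$, and $Z_k=Z_K$; here one must check $\Gamma_+(Z_K)\cap\Z^3 = \Gamma_+(Z_K-E)\cap\Z^3$ at the level of lattice points — this is where convenience/the corollary to \cref{prop:Z_K} identifying $\Gamma_+(Z_K-E)$ with $(\Gamma_+(f)-(1,1,1))\cap\R^3_{\ge0}$ comes in. In case \ref{rt:spec}, $Z_k=x(Z_K-E)$ and one uses \cref{lem:x_0Z}\ref{it:x_0Z_s} plus the fact that adding the leg support does not change the associated Newton polyhedron. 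In case \ref{rt:Newton}, one shows $\bigcap_i \Gamma_+(Z_i) = \emptyset$ by observing that along the continued-to-infinity sequence every node coefficient $m_n(\bar Z_i)\to\infty$ (by construction of the ratio test, which keeps all the fractions $m_n(\bar Z_i)/\wt_n(f)$ growing together), hence $\Gamma_+(\bar Z_i)$ shrinks past any given lattice point; convenience guarantees $\Gamma_+$ is a bounded distance (in each coordinate) computable from the $\wt_n(f)$. That gives $\Z^3_{\ge0}=\amalg_i \bar P_i$, and truncating at step $k$ gives the statement for $\Z^3_{\ge0}\setminus\Gamma_+(\wt(f))$, since $\bar Z_k = \wt(f) = Z^{\ref{rt:Newton}}$.

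Next I would prove $|\bar P_i| = \bar a_i$. By \cref{rem:EO52}\ref{it:EO52ii}, for indices $i$ that are \emph{not} of the form $i=i'$ with $\bar Z_{i'}$ a term of the coarse sequence, the step is a generalized Laufer step, so $(Z_i,E_{v(i)})>0$, hence $a_i=0$; and for such a step $\Gamma_+(Z_i)=\Gamma_+(Z_{i+1})$ because increasing $m_{v(i)}$ for a non-node $v(i)$ with $(Z_i,E_{v(i)})>0$ does not cut off any new region of the positive octant (the hyperplane $H^=_{v(i)}$ is not a face-supporting hyperplane — formally one checks $\ell_{v(i)}(p)\geq m_{v(i)}(Z_{i+1})$ already holds on $\Gamma_+(Z_i)$). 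So both $\bar P_i$ and $\bar a_i$ are supported exactly on the coarse steps, and it suffices to treat $\bar Z_i \to \bar Z_i + E_{\bar v(i)} \to \bar Z_{i+1} = x(\bar Z_i+E_{\bar v(i)})$. Writing $P_{i,0}, P_{i,1},\dots$ for the pieces produced along the Laufer subsequence from $\bar Z_i+E_{\bar v(i)}$ to $\bar Z_{i+1}$, those all vanish as above, so $\bar P_i = (\Gamma_+(\bar Z_i)\setminus\Gamma_+(\bar Z_i+E_{\bar v(i)}))\cap\Z^3$, i.e. the set of lattice points $p$ with $\ell_{\bar v(i)}(p) = m_{\bar v(i)}(\bar Z_i)$ that lie in $\Gamma_+(\bar Z_i)$. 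Now invoke \cref{lem:pts_faces}: this set is exactly $rF^-\cap\Z^2$ for a suitable dilation $r$ of an empty polygon $F$ (a face $F_{\bar v(i)}$ of the relevant diagram) in the hyperplane $H^=_{\bar v(i)}$, with boundary conditions $\epsilon_S$ read off from whether the neighbouring coefficients have ``caught up''. Then \cref{thm:point_count} gives $|rF^-\cap\Z^2| = \max\{0,c_{rF^-}+1\}$ when $r<1$ and $|rF^-\cap\Z^2|-|(r-1)F^-\cap\Z^2|=\max\{0,c_{rF^-}+1\}$ when $r\ge1$, and a direct computation (the content $c_{rF^-}$ expanded via \cref{eq:nbr_sum} and \cref{lem:x_interpol}, which express the neighbour coefficients $m_u(\bar Z_i)$ as the ceilings $\lceil(\beta m_n + m_{n'})/\alpha\rceil$) identifies $c_{rF^-}+1$ with $(-\bar Z_i,E_{\bar v(i)})+1$, hence $\max\{0,c_{rF^-}+1\}=\bar a_i$.

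Finally, the two regimes $r<1$ versus $r\ge1$ in \cref{thm:point_count} correspond precisely to the dichotomy in \ref{it:pts_N}: for $i<k$ the dilation factor satisfies $r<1$, giving $|\bar P_i|=\bar a_i$ directly (and this covers both cases in \ref{it:pts_as}, where the sequence stops at $k$); for $i\ge k$ in case \ref{rt:Newton}, periodicity of the continued sequence means the step at index $i$ repeats the step at index $i-k$ but with the polygon dilated by one more unit, so $r\ge1$ and $|\bar P_i|-|\bar P_{i-k}| = \max\{0,c_{rF^-}+1\} = \bar a_i$; here one needs that $\bar P_{i-k}$ is the lattice-point set of $(r-1)F^-$ and $\bar P_i$ that of $rF^-$ for the \emph{same} $F^-$ and boundary conditions, which follows because the ratio test and the construction of $x$ depend only on the node coefficients modulo the period, so $\bar v(i)=\bar v(i-k)$, the same face $F$, and the same $\epsilon_S$ are selected.

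\textbf{Main obstacle.} I expect \cref{lem:pts_faces} — not invoked here but relied on — to carry the real weight; taking it as given, the hard part remaining is bookkeeping: correctly matching the \emph{boundary conditions} $\epsilon_S$ on the polygon edges to the arithmetic of which neighbour coefficients $m_u(\bar Z_i)$ have or have not reached the ceiling value $\lceil(\beta_{n,n'}m_n(\bar Z_i)+m_{n'}(\bar Z_i))/\alpha_{n,n'}\rceil$ (i.e. whether the facet $H^=_{u}$ is strictly or non-strictly active), and then verifying the identity $c_{rF^-}+1 = (-\bar Z_i,E_{\bar v(i)})+1$ case-by-case across the polygon types of \cref{prop:poly_class}. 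The periodicity argument in \ref{it:pts_N} is the other delicate point, since it requires that continuing the computation sequence to infinity genuinely reproduces a ``shifted copy'' of each step, which rests on the translation-equivariance of the Laufer operator $x$ and of the ratio test along the direction $\wt(f)$.
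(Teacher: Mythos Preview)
Your plan is correct and follows essentially the same route as the paper: identify $\bar P_i$ with $\Fcnm_i\cap\Z^3$ via \cref{lem:pts_faces}, then apply \cref{thm:point_count}, matching the content $c_{\Fcnm_i}$ with $(-\bar Z_i,E_{\bar v(i)})$.

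Two places where you overestimate the work. First, the identity $c_{\Fcnm_i}=(-\bar Z_i,E_{\bar v(i)})$ does \emph{not} require a case split over the polygon types of \cref{prop:poly_class}. It follows uniformly: evaluating the relation $-b_{\bar v(i)}\ell_{\bar v(i)}+\sum_{u\in\V_{\bar v(i)}}\ell_u\equiv 0$ (which is \cref{eq:nbr_sum}) at any $p\in H^=_{\bar v(i)}(\bar Z_i)$ gives
\[
  (-\bar Z_i,E_{\bar v(i)})=\sum_{u\in\V_{\bar v(i)}}\bigl(\ell_u(p)-m_u(\bar Z_i)\bigr),
\]
and by \cref{lem:ceil} each $m_u(\bar Z_i)$ equals $\lceil\ell_u|_{S}\rceil+\epsilon_{i,u}$ on the relevant edge $S$, so $\ell_u-m_u(\bar Z_i)+\epsilon_{i,u}$ is precisely the support function $\ell_S$ of \cref{thm:point_count}. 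Summing gives $c_{\Fcnm_i}$ directly, no polygon-type casework needed.

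Second, your worry about periodicity in \ref{it:pts_N} is not really a separate obstacle. By construction (\cref{def:comp_seq}) the continuation satisfies $\bar v(i+k)=\bar v(i)$ and $m_n(\bar Z_{i+k})=m_n(\bar Z_i)+\wt_n(f)$ for all $n\in\Nd$; since $x$ is determined by node multiplicities and the ratio test depends only on $m_n(\bar Z_i)/\wt_n(f)$, the boundary data $\epsilon_{i,u}$ repeat with period $k$ automatically. The dilation factor simply shifts by $1$, so $\Fcnm_{i+k}$ is the same polygon as $\Fcnm_i$ dilated one unit further, which is exactly the $r\geq 1$ regime of \cref{thm:point_count}.

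The partition statements you spell out (vanishing of $P_i$ on Laufer steps via \cref{lem:null_nopts}, and the endpoint checks $\Gamma_+(Z_K)\cap\Z^3=\Gamma_+(Z_K-E)\cap\Z^3$ in case \ref{rt:article} and $\Gamma_+(x(Z_K-E))\cap\Z^3=\Gamma_+(Z_K-E)\cap\Z^3$ in case \ref{rt:spec}) are handled in the paper inside the proof of \cref{lem:pts_faces} rather than in the proof of \cref{thm:pts} itself, but your reasoning for them is sound.
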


In order to simplify the proof of \cref{thm:pts}, we start with some lemmas.
The proof of the theorem is given in the end of the section.
For the next definition, recall \cref{def:HGam}

\begin{definition} \label{def:C_r}
To each node $n\in\Nd$ in the graph we associate a cone $C_n$
and at each step in the algorithm we record the minimal fraction from
the ratio test.
\begin{itemize}
\item
In case \ref{rt:article} we set 
\[
  C_n = \R_{\geq 0} F_n(Z_K-E), \quad
  \bar r_i = \frac{m_{v(i)}(\bar Z_i)}{m_{v(i)}(Z_K-E)}.
\]
Furthermore, for any $n\in \Nd$, set  $\epsilon_{i,n} = 1$ if
$m_n(\bar Z_i) = \bar r_i m_n(Z_K-E) + 1$, but
$\epsilon_{i,n} = 0$ otherwise.
\item
In case \ref{rt:Newton} we set 
\[
  C_n = \R_{\geq 0} F_n,\quad
  \bar r_i = \frac{m_{v(i)}(\bar Z_i)}{\wt_{v(i)}(f)}.
\]
Furthermore, for any $n\in \Nd$, set $\epsilon_{i,n} = 1$ if
$m_n(\bar Z_i) = \bar r_i \wt_n(f) + 1$, but
$\epsilon_{i,n} = 0$ otherwise.
\item
In case \ref{rt:spec} we set 
\[
  C_n = (\R_{\geq 0} F_n - (1,1,1))\cap\R_{>-1}^3 ,\quad
  \bar r_i = \frac{m_{v(i)}(\bar Z_i) + \wt_{v(i)}(x_1x_2x_3)}{\wt_{v(i)}(f)}.
\]
Furthermore, for any $n\in \Nd$, we set $\epsilon_{i,n} = 1$ if
$m_n(\bar Z_i) + \wt_n(x_1x_2x_3) = \bar r_i \wt_n(f) + 1$,
but $\epsilon_{i,n} = 0$ otherwise.
\nomenclature[Cn]{$C_n$}{Cone associated with $n$}
\nomenclature[rn]{$\bar r_i$}{Ratio at step $i$}
\nomenclature[ev]{$\epsilon_{v,i}$}{Boundary values at step $i$}
\end{itemize}

Fix a step $i$ of the
computation sequence \index{computation sequence}
in cases \ref{rt:article}, 
\ref{rt:Newton}, \ref{rt:spec}. For $n\in\Nd_{\bar v(i)}$,
take $u = u_{\bar v(i),n}\in\V_{\bar v(i)}$ and define
$\epsilon_{i,u} = 1$
if $\epsilon_{i,n} = 1$ and
$\beta_{\bar v(i),n} m_{\bar v(i)}(\bar Z_i) + m_n(\bar Z_i)-1
\equiv 0\, (\mod \alpha_{\bar v(i),n})$, otherwise, set
$\epsilon_{i,u} = 0$.
For $n\in\Nd^*_{\bar v(i)}\setminus\Nd$, we use the following definition.
\begin{itemize}
\item
In case \ref{rt:article}, set $\epsilon_{i,u} = 1$ if $\bar r_i = 1$, but
$\epsilon_{i,u} = 0$ otherwise.
\item
In case \ref{rt:Newton}, set $\epsilon_{i,u} = 0$ for all $i$.
\item
In case \ref{rt:spec}, set $\epsilon_{i,u} = 1$ if
$m_{\bar v(i)}(\bar Z_i) + \wt_{\bar v(i)}(x_1x_2x_3) - 1
\equiv 0\, (\mod \alpha_{n,n'})$, but
$\epsilon_{i,u} = 0$ otherwise.
\end{itemize}

Although in case \ref{rt:spec}, the sets $C_n$ are not technically cones,
we still refer to them as such.
\end{definition}

\begin{rem}
It can happen that for an $n\in\Nd_{\bar v(i)}$ and $u = u_{\bar v(i),n}$
we have $n = u$. In this case, $\alpha_{\bar v(i),n} = 1$, so the condition
$\beta_{\bar v(i),n} m_{\bar v(i)}(\bar Z_i) + m_n(\bar Z_i)-1
\equiv 0\, (\mod \alpha_{\bar v(i),n})$ is vacuous. Therefore,
$\epsilon_{i,u} = \epsilon_{i,n}$ is well defined.
\end{rem}

\begin{lemma} \label{lem:ceil}
For any $i\geq 0$ and $n\in\Nd$ we have
\begin{equation} \label{eq:ceil_n}
  m_n(\bar Z_i) =
  \begin{cases}
    \left\lceil \bar r_i m_n(Z_K-E) + \epsilon_{i,n} \right\rceil
	  & \textrm{in case }\,\ref{rt:article}, \\
    \left\lceil \bar r_i \wt_n(f) + \epsilon_{i,n} \right\rceil
	  & \textrm{in case }\,\ref{rt:Newton}, \\
    \left\lceil \bar r_i \wt_n(f) - \wt_n(x_1x_2x_3) + \epsilon_{i,n}
	  \right\rceil
	  & \textrm{in case }\,\ref{rt:spec}. \\
  \end{cases}
\end{equation}
Similarly, if $\bar v(i) = n$ and $u\in\V_n$, then
\begin{equation} \label{eq:ceil_u}
  m_u(\bar Z_i) =
  \begin{cases}
    \left\lceil \bar r_i m_u(Z_K-E) + \epsilon_{i,u} \right\rceil
	  & \textrm{in case }\,\ref{rt:article}, \\
    \left\lceil \bar r_i \wt_u(f) + \epsilon_{i,u} \right\rceil
	  & \textrm{in case }\,\ref{rt:Newton}, \\
    \left\lceil \bar r_i \wt_u(f) - \wt_u(x_1x_2x_3) + \epsilon_{i,u}
	  \right\rceil
	  & \textrm{in case }\,\ref{rt:spec}. \\
  \end{cases}
\end{equation}
\end{lemma}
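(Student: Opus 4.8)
The plan is to prove both formulas \cref{eq:ceil_n} and \cref{eq:ceil_u} simultaneously, by induction on $i$, treating the three ratio tests in parallel with a uniform notation: write $w_n$ for the relevant weight at a node $n\in\Nd$, namely $w_n=m_n(Z_K-E)$ in case \ref{rt:article} and $w_n=\wt_n(f)$ in cases \ref{rt:Newton}, \ref{rt:spec}, and set $s_n=0,0,\wt_n(x_1x_2x_3)$ respectively, so that in every case $\bar r_i=\min_{n\in\Nd}\,(m_n(\bar Z_i)+s_n)/w_n$ and the claimed identity reads $m_n(\bar Z_i)=\lceil \bar r_i w_n-s_n+\epsilon_{i,n}\rceil$. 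First I would record three soft facts. (a) Every $\bar Z_i$ is a fixed point of $x$: indeed $\bar Z_0=0=x(0)$ by \cref{lem:x_0Z} and $\bar Z_{i+1}=x(\bar Z_i+E_{\bar v(i)})$ is $x$-fixed by idempotence, \cref{prop:x_prop}\ref{it:x_prop_idemp}. (b) By \cref{prop:x_exist}\ref{it:x_exists_eq}, applying $x$ does not change node coefficients, so $m_n(\bar Z_{i+1})=m_n(\bar Z_i)$ for $n\neq\bar v(i)$ and $m_{\bar v(i)}(\bar Z_{i+1})=m_{\bar v(i)}(\bar Z_i)+1$; thus the node profile grows by one unit vector per step, always at a node realising $\bar r_i$. (c) Since the node coefficients are non-decreasing in $i$, the sequence $\bar r_i$ is non-decreasing.

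For \cref{eq:ceil_n} the key is a ``last-increment'' argument. Fix $i$ and $n$, put $\rho=\bar r_i$. Minimality of the ratio at step $i$ gives the lower bound $m_n(\bar Z_i)\geq \rho w_n-s_n$, hence $m_n(\bar Z_i)\geq\lceil \rho w_n-s_n\rceil$. For the upper bound, if $\bar v(j)=n$ for some $j\le i$, take $j$ maximal with this property; by (b), $m_n(\bar Z_i)=m_n(\bar Z_j)+1$, and since $\bar v(j)=n$ we have $m_n(\bar Z_j)+s_n=\bar r_j w_n$ exactly, an integer, with $\bar r_j\le\rho$ by (c). Therefore $m_n(\bar Z_j)=\lfloor \bar r_j w_n-s_n\rfloor\le\lfloor \rho w_n-s_n\rfloor$ and $m_n(\bar Z_i)\le\lfloor \rho w_n-s_n\rfloor+1$. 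Comparing the two bounds, splitting on whether $\rho w_n-s_n\in\Z$, and unwinding the definition of $\epsilon_{i,n}$ (which is $1$ precisely when $m_n(\bar Z_i)=\rho w_n-s_n+1$) then yields the formula. The residual cases are $i=0$ and nodes never chosen up to step $i$, where $m_n(\bar Z_i)=0$; here one checks directly that $\rho w_n-s_n$ lies in $(-1,0]$ or equals $-1$. In cases \ref{rt:article} and \ref{rt:Newton} this is immediate, since a surviving node has ratio $0$, forcing $\rho=0$; in case \ref{rt:spec} it needs the elementary geometric estimate $\ell_n(1,1,1)\le\wt_n(f)\,\ell_f(1,1,1)+1$, which reflects that $(1,1,1)$ lies strictly under $\Gamma(f)$ (\cref{prop:isol}).

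For \cref{eq:ceil_u}, suppose $\bar v(i)=n$ and $u=u_{n,n'}\in\V_n$ lies on the bamboo toward $n'\in\Nd^*_n$. Since $\bar Z_i=x(\bar Z_i)$, \cref{lem:x_interpol} gives $m_u(\bar Z_i)=\lceil(\beta_{n,n'}m_n(\bar Z_i)+m_{n'}(\bar Z_i))/\alpha_{n,n'}\rceil$, with $m_{n'}(\bar Z_i)=0$ if $n'\notin\Nd$. Now substitute: $\beta_{n,n'}m_n(\bar Z_i)=\beta_{n,n'}(\bar r_i w_n-s_n)$ is an integer because $\bar v(i)=n$, and $m_{n'}(\bar Z_i)$ is given by the already-proved \cref{eq:ceil_n}; the elementary identity $\lceil(a+\lceil b\rceil)/c\rceil=\lceil(a+b)/c\rceil$ for $a\in\Z$, $c\in\Z_{>0}$ removes the inner ceiling. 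Then \cref{lem:mod_ZKE}, together with its analogues for $\wt(f)$ and for $\wt(x_1x_2x_3)$ (which follow from the general \cref{lem:mod} exactly as in the proof of \cref{lem:mod_ZKE}, using that $(Z_K-E,E_v)=0$ and, under convenience, $(\wt(f),E_v)=0$ for off-node $v$), rewrites $(\beta_{n,n'}w_n+w_{n'})/\alpha_{n,n'}$ as $w_u$, resp. $(\beta_{n,n'}w_n-1)/\alpha_{n,n'}$ in the noncompact case. What remains is a fractional-part comparison showing that the leftover correction $\epsilon_{i,n'}/\alpha_{n,n'}$ (resp. its noncompact counterpart) raises the ceiling by exactly $\epsilon_{i,u}$ — which is precisely what the congruence in the definition of $\epsilon_{i,u}$ encodes. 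This bookkeeping, carried out case by case over \ref{rt:article}, \ref{rt:Newton}, \ref{rt:spec} and over $n'\in\Nd$ versus $n'\in\Nd^*\setminus\Nd$, is the main technical obstacle; the remainder is the structural input recorded above.
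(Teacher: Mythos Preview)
Your overall strategy coincides with the paper's: the ``last-increment'' argument for \cref{eq:ceil_n}, and \cref{lem:x_interpol} combined with \cref{lem:mod_ZKE} (plus its analogues for $\wt(f)$ and $\wt(x_1x_2x_3)$, which do follow from \cref{lem:mod} under convenience) for \cref{eq:ceil_u}. The unified notation $w_n,s_n$ and the facts (a)--(c) are exactly the structural inputs the paper uses, stated more explicitly.

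There is, however, a concrete error in your handling of the residual case in~\ref{rt:spec}. The ``elementary geometric estimate'' $\ell_n(1,1,1)\le\wt_n(f)\,\ell_f(1,1,1)+1$ is false already in the paper's own running example $f=x_1^4+x_1^3x_2^2+x_2^{10}+x_1^2x_3^3+x_2^3x_3^4+x_3^8$ (which is convenient with rational homology sphere link): for the face with normal $(15,8,6)$ one has $\ell_n(1,1,1)=29$ and $\wt_n(f)=48$, while $\ell_f(1,1,1)=23/43$ (attained at the face $(11,5,7)$), so the right-hand side is $48\cdot 23/43+1=1104/43+1\approx 26.7<29$. Thus at $i=0$ in case~\ref{rt:spec} one gets $\bar r_0\,\wt_n(f)-\wt_n(x_1x_2x_3)=-143/43$, and the ceiling in \cref{eq:ceil_n} equals $-3\neq 0=m_n(\bar Z_0)$. (Nor does \cref{prop:isol} say $(1,1,1)$ lies under $\Gamma(f)$; for $x_1^2+x_2^2+x_3^2$ it lies strictly above.) In fairness, the paper writes out only case~\ref{rt:article} and waves at the others as ``similar''; its definition of $i'$ already presupposes that $n$ has appeared as some $\bar v(j)$ with $j\le i$, so the residual case is not actually handled there either, and indeed \cref{eq:ceil_n} as stated appears to fail in case~\ref{rt:spec} for nodes not yet chosen. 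Since your derivation of \cref{eq:ceil_u} feeds \cref{eq:ceil_n} for the neighbouring node $n'$ into \cref{lem:x_interpol}, the gap propagates; what is really needed is a direct argument for \cref{eq:ceil_u} that bypasses \cref{eq:ceil_n} at an unchosen $n'$ (exploiting that the defect $\bar r_i w_{n'}-s_{n'}$ gets divided by $\alpha_{n,n'}$ in passing to $u$, with $\bar r_i w_n-s_n=0$ since $n=\bar v(i)$).
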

\begin{proof}
We prove \cref{eq:ceil_n} in case \ref{rt:article}, the other cases are similar.
For a fixed $i$ and $n\in\Nd$, set
$i' = \max\set{a\in\N}{a\leq i,\,\bar v(a) = n}$.
If $n = \bar v(i)$, then the statement is clear, so we will assume that
$i \neq i'$.
Then $m_n(\bar Z_i) = m_n(\bar Z_{i'})+1$.
The
ratio test \index{ratio test}
guarantees that the sequence $(\bar r_i)$ is increasing.
In particular, we have $\bar r_{i'} \leq \bar r_i$, hence
\[
  \frac{m_n(\bar Z_{i'})}{m_n(Z_K-E)}
  = \bar r_{i'} \leq \bar r_i
\]
and so $m_n(\bar Z_i) - 1 = m_n(\bar Z_{i'}) \leq \bar r_i m_n(Z_K-E)$.
The ratio test furthermore gives $\bar r_i m_n(Z_K-E) \leq m_n(\bar Z_i)$.
Therefore, we have
\[
  \bar r_i m_n(Z_K-E) \leq m_n(\bar Z_i) \leq 
  \bar r_i m_n(Z_K-E)+1.
\]
If we have equality in the second inequality above, then $\epsilon_{i,n} = 1$
and the result holds. Otherwise, we have $\epsilon_{i,n} = 0$ and
$m_n(\bar Z_i) = \lceil \bar r_i m_n(Z_K-E) \rceil$, which also proves the
result.

Next, we prove \cref{eq:ceil_u} in case \ref{rt:article}, the other cases
follow similarly. Assume first that $n = \bar v(i)$ for some $i$, and that
$u = u_{n,n'}$ for some $n'\in\Nd_n$. If $\epsilon_{i,u} = 1$, then
we get, by \cref{lem:x_interpol} and the definition of $\epsilon_{i,u}$
and the above result,
\[
\begin{split}
  m_u(\bar Z_i)
  &=
  \left\lceil
    \frac{\beta_{n,n'} m_n(\bar Z_i) + m_{n'}(\bar Z_i)}{\alpha_{n,n'}}
  \right\rceil \\
  &=
    \frac{\beta_{n,n'} m_n(\bar Z_i) + m_{n'}(\bar Z_i)-1}{\alpha_{n,n'}}
	+ 1 \\
  &=
    \bar r_i
    \frac{\beta_{n,n'} m_n(Z_K-E) + m_{n'}(Z_K-E)}{\alpha_{n,n'}}
	+ 1 \\
  &=
    \bar r_i m_u(Z_K-E) + 1.
\end{split}
\]
The result follows by a similar string of equalities in the case
$\epsilon_{i,n'} = 1 \neq \epsilon_{i,u}$ as in the case
$\epsilon_{i,n'} = 0 = \epsilon_{i,u}$.
If, on the other hand, $n'\in\Nd^*_n\setminus\Nd$, then
\[
\begin{split}
  m_u(\bar Z_i)
  &=
  \left\lceil
    \frac{\beta_{n,n'} m_n(\bar Z_i)}{\alpha_{n,n'}}
  \right\rceil \\
  &=
  \left\lceil
    \frac{\beta_{n,n'} \bar r_i m_n(Z_K-E)}{\alpha_{n,n'}}
  \right\rceil \\
  &=
  \left\lceil
    \bar r_i
    \frac{\alpha_{n,n'} m_u(Z_K-E)+1}{\alpha_{n,n'}}
  \right\rceil \\
  &=
  \begin{cases}
    \lceil \bar r_i m_u(Z_K-E)\rceil     & \bar r_i < 1, \\
    \lceil \bar r_i m_u(Z_K-E)\rceil + 1 & \bar r_i = 1.
  \end{cases}
\end{split}
\]
Here, the first equalities follow as before. The case $\bar r_i = 1$
is clear. The inequality $\bar r_i < 1$
is equivalent to $m_n(\bar Z_i) < m_n(Z_K-E)$. Assuming this, we must prove
\[
  \left\lceil
    \frac{m_n(\bar Z_i)\alpha_{n,n'} m_u(Z_K-E)+m_n(\bar Z_i)}
	     {m_n(Z_K-E) \alpha_{n,n'}}
  \right\rceil
  =
  \left\lceil
    \frac{m_n(\bar Z_i)\alpha_{n,n'} m_u(Z_K-E)}
	     {m_n(Z_K-E) \alpha_{n,n'}}
  \right\rceil.
\]
In order to prove the above equation, we will show that there is
no integer $k\in\Z$ satisfying
\[
  m_n(\bar Z_i)\alpha_{n,n'} m_u(Z_K-E)  + m_n(\bar Z_i)
  \geq
  m_n(Z_K-E) \alpha_{n,n'} k
  >
  m_n(\bar Z_i) \alpha_{n,n'} m_u(Z_K-E).
\]
Using \cref{lem:mod_ZKE}, this is equivalent to
\[
  m_n(\bar Z_i) \beta_{n,n'} m_n(Z_K-E)
  \geq
  m_n(Z_K-E) \alpha_{n,n'} k
  >
  m_n(\bar Z_i) (\beta_{n,n'} m_n(Z_K-E) - 1)
\]
i.e.
\[
  \beta_{n,n'} m_n(Z_K-E)
  \geq
  \alpha_{n,n'} k
  >
  \beta_{n,n'} m_n(Z_K-E) - \bar r_i.
\]
But this is impossible by the assumption $0 \leq \bar r_i < 1$.
\end{proof}

\begin{lemma} \label{lem:null_nopts}
Let $Z\in L$ and assume that $(Z,E_v) > 0$ for some $v\in \V$. Then
$\Fnb_v(Z) \cap \R_{\geq 0}^3 = \emptyset = F_v(Z)$.
If, furthermore, $v\in\Nd$, then 
$\Fnb_v(Z) = \emptyset$.
\end{lemma}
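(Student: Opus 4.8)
The plan is to exploit the linear \emph{balancing relation} \eqref{eq:nbr_sum}, which (as remarked just after that equation) holds for every $v\in\V$ in the form $-b_v\ell_v+\sum_{u\in\V^*_v}\ell_u=0$, together with two elementary facts: that $(Z,E_v)=-b_v m_v(Z)+\sum_{u\in\V_v}m_u(Z)$ (the graph has no multiple edges), and that each functional $\ell_w$ for $w\in\V^*$ is nonnegative on $\R^3_{\geq 0}$. The second fact holds because $\ell_w$ is bounded below on $\Gamma_+(f)=\bigcup_{p\in\supp(f)}(p+\R^3_{\geq 0})$, which forces all coordinates of $\ell_w$ to be nonnegative, hence $\ell_w\geq 0$ on $\R^3_{\geq 0}$. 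The idea is then that, evaluating the balancing relation at a hypothetical point of $\Fnb_v(Z)$ and inserting the halfspace inequalities defining that polygon, one recovers $(Z,E_v)$ with a sign opposite to the hypothesis.

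First I would treat the case $\V^*_v=\V_v$. This covers every node $v\in\Nd$, since $\V^*_n=\V_n$ by \cref{rem:Oka}\cref{it:Oka_nodes}, as well as every vertex lying on a bamboo between two nodes and every non-end vertex of a leg; the only exceptions are the ends of legs. Suppose $p\in\Fnb_v(Z)$. Then $\ell_v(p)=m_v(Z)$ since $p\in H^=_v(Z)$, and $\ell_u(p)\geq m_u(Z)$ for each $u\in\V_v$ since $p\in H^\geq_u(Z)$. Evaluating the balancing relation at $p$ gives
\[
  0=-b_v\ell_v(p)+\sum_{u\in\V_v}\ell_u(p)\geq -b_v m_v(Z)+\sum_{u\in\V_v}m_u(Z)=(Z,E_v),
\]
contradicting $(Z,E_v)>0$. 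Hence $\Fnb_v(Z)=\emptyset$ in this case; in particular this already proves the ``furthermore'' statement when $v\in\Nd$.

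Next I would treat the remaining case, where $v=e$ is the end of a leg. Here $\V^*_e=\V_e\sqcup\{n'\}$ with $n'\in\Nd^*\setminus\Nd$, so the balancing relation reads $\ell_{n'}=b_e\ell_e-\sum_{u\in\V_e}\ell_u$. If $p\in\Fnb_e(Z)\cap\R^3_{\geq 0}$, then $\ell_e(p)=m_e(Z)$ and $\ell_u(p)\geq m_u(Z)$ for $u\in\V_e$ as before, so
\[
  \ell_{n'}(p)=b_e m_e(Z)-\sum_{u\in\V_e}\ell_u(p)\leq b_e m_e(Z)-\sum_{u\in\V_e}m_u(Z)=-(Z,E_e)<0,
\]
which is impossible because $\ell_{n'}\geq 0$ on $\R^3_{\geq 0}$. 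Thus $\Fnb_e(Z)\cap\R^3_{\geq 0}=\emptyset$. Combining the two cases yields $\Fnb_v(Z)\cap\R^3_{\geq 0}=\emptyset$ for every $v\in\V$ with $(Z,E_v)>0$, and then $F_v(Z)=\Gamma_+(Z)\cap H^=_v(Z)\subseteq \R^3_{\geq 0}\cap\bigcap_{u\in\V_v}H^\geq_u(Z)\cap H^=_v(Z)=\Fnb_v(Z)\cap\R^3_{\geq 0}=\emptyset$, using $\V_v\subseteq\V$ and the definition of $\Gamma_+(Z)$.

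The only genuinely delicate point — and the step I expect to require the most care — is the combinatorial bookkeeping identifying $\V^*_v$: one must verify that $\V^*_v=\V_v$ for every vertex except the ends of legs, and that for an end $e$ the extra neighbour $n'$ of $e$ in $G^*$ lies in $\Nd^*\setminus\Nd$, so that the functional appearing in the balancing relation is exactly one of those known to be nonnegative on the octant. This is immediate from the construction of $G$ as $G^*$ with the vertices of $\Nd^*\setminus\Nd$ deleted, and from the description of the bamboos and the canonical primitive sequence in \cref{block:oka_alg}, but it is the only place where the proof uses the specific combinatorics of Oka's algorithm rather than purely formal lattice manipulations.
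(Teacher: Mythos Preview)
Your proof is correct and follows essentially the same route as the paper: both exploit the balancing relation \eqref{eq:nbr_sum} together with nonnegativity of the $\ell_w$ on $\R^3_{\geq 0}$ to derive $(Z,E_v)\leq 0$ from a hypothetical point of $\Fnb_v(Z)\cap\R^3_{\geq 0}$. The paper handles all vertices in a single inequality chain, writing $-b_v\ell_v(p)+\sum_{u\in\V_v}\ell_u(p)=-\sum_{u\in\V^*_v\setminus\V_v}\ell_u(p)\leq 0$, whereas you split into the two cases $\V^*_v=\V_v$ and $\V^*_v\setminus\V_v=\{n'\}$; this is the same argument, just with the (possibly empty) extra sum made explicit.
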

\begin{proof}
Assuming that there is a point $p\in\Fnb_v(Z)\cap\R^3_{\geq 0}$
we arrive at the following
contradiction 
\[
  0 <     -b_v m_v   (Z) + \sum_{u\in\V_v} m_u   (Z) 
    \leq  -b_v \ell_v(p) + \sum_{u\in\V_v} \ell_u(p) 
    =     -\sum_{u\in\V^*_v\setminus\V} \ell_u(p) 
	\leq  0
\]
where the equality is \cref{eq:nbr_sum}. The last inequality follows since
$\ell_v(p) \geq 0$ for all $v\in\V^*$ and $p\in\R_{\geq 0}^3$.
Furthermore, we have $F_v(Z) \subset \Fnb_v(Z)$.
The second statement follows in the same way, since, by construction, we
have $\V_v = \V_v^*$ if $v\in\Nd$.
\end{proof}

\begin{lemma} \label{lem:cone_eqs}
The cones $C_n$, for $n\in\Nd$, are given as follows:
\begin{enumerate}
\item \label{it:cone_eqs_i}
In case \ref{rt:article}
\[
  C_n = \set{p\in\R^3}
            {\fa{ n'\in\Nd_n^*}
             {
                \frac{\ell_{n'}(p)}{m_{n'}(Z_K-E)}  \geq
                \frac{\ell_n   (p)}{m_n   (Z_K-E)}
             }
            }
\]
where we replace $m_{n'}(Z_K-E)$ with $-1$ if $n'\in\Nd^*\setminus\Nd$.
\item \label{it:cone_eqs_ii}
In case \ref{rt:Newton}
\[
  C_n = \set{p\in\R^3}
            {\fa{ n'\in\Nd_n}
             {
                \frac{\ell_{n'}(p)}{\wt_{n'}(f)}   \geq
                \frac{\ell_n   (p)}{\wt_n   (f)}
             },\quad
             \fa{ n'\in\Nd^*_n\setminus\Nd}
             {
                \ell_{n'}(p) \geq 0
             }
            }.
\]
\item \label{it:cone_eqs_iii}
In case \ref{rt:spec}
\[
  C_n = \set{p\in\R^3_{> -1}}
            {
			\begin{array}{l}
			\displaystyle
             \fa{ n'\in\Nd_n}
             {
                \frac{\ell_{n'}(p)+\wt_{n'}(x_1x_2x_3)}{\wt_{n'}(f)}  \geq
                \frac{\ell_n   (p)+\wt_n   (x_1x_2x_3)}{\wt_n   (f)}
             },\\
             \fa{ n'\in\Nd^*_n\setminus\Nd}
             {
                \ell_{n'}(p) > -1
             }
            \end{array}
            }.
\]
\end{enumerate}
\end{lemma}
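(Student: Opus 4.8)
The plan is to start from the definitions in \cref{def:C_r} and translate each one into the claimed system of inequalities, working case by case. In each case $C_n$ is defined as a cone (or shifted cone) generated by a polygon $F_n(Z_K-E)$, $F_n$, or a shifted version, so the key is to recall that a face of a Newton polyhedron, generated by primitive support functions, can be cut out by finitely many linear inequalities, one for each adjacent face. Concretely, first I would use \cref{lem:mod_ZKE} together with the adjunction relations $(Z_K-E,E_v)=0$ for $\delta_v=2$ (and the analogous vanishing statements for $\wt(f)$ proved in \cref{lem:x_0Z}\ref{it:x_0Z_N} and for the shifted cycle in case \ref{rt:spec}) to pin down the linear functionals $\ell_u$ on the relevant affine hyperplanes. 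The point is that the ray $\R_{\geq 0}F_n(Z_K-E)$ consists of those $p\in\R^3$ lying on the correct side of each wall separating $F_n$ from a neighbouring face $F_{n'}$, and that wall is exactly where $\ell_n(p)/m_n(Z_K-E)=\ell_{n'}(p)/m_{n'}(Z_K-E)$ (with the convention $m_{n'}(Z_K-E)\mapsto -1$ for $n'\in\Nd^*\setminus\Nd$, which is precisely the relation appearing in the second half of \cref{lem:mod_ZKE}).

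For case \ref{rt:article}, I would argue as follows. By \cref{prop:Z_K} and its corollary, $\Gamma_+(Z_K-E)=(\Gamma_+(f)-(1,1,1))\cap\R^3_{\geq0}$, and the face $F_n(Z_K-E)$ is supported by $\ell_n$ at the value $m_n(Z_K-E)$. A point $p$ lies in $\R_{\geq0}F_n(Z_K-E)$ iff the scaled point $p/\lambda$ lies on that face for $\lambda=\ell_n(p)/m_n(Z_K-E)$, which happens iff $\ell_{n'}(p/\lambda)\geq m_{n'}(Z_K-E)$ for every face $F_{n'}$ sharing an edge with $F_n$, i.e. for every $n'\in\Nd^*_n$; rescaling gives $\ell_{n'}(p)/m_{n'}(Z_K-E)\geq \ell_n(p)/m_n(Z_K-E)$. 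The edge between $F_n$ and $F_{n'}$ is $F_n\cap F_{n'}$, and the intermediate bamboo functionals $\ell_u$ interpolate between $\ell_n$ and $\ell_{n'}$ in the sense recorded by \cref{lem:mod_ZKE}, so no inequalities beyond those indexed by $\Nd^*_n$ are needed — faces not adjacent to $F_n$ impose redundant constraints. Cases \ref{rt:Newton} and \ref{rt:spec} follow the same template: in case \ref{rt:Newton} the face is $F_n\subset\Gamma(f)$ itself supported by $\ell_n$ at value $\wt_n(f)$, and for $n'\in\Nd^*_n\setminus\Nd$ the neighbouring face lies in a coordinate hyperplane, so the constraint degenerates to $\ell_{n'}(p)\geq 0$ (the support function of a coordinate hyperplane, with $\wt_{n'}(f)=0$ handled by the convenient-diagram assumption and \cref{lem:x_0Z}\ref{it:x_0Z_N}); in case \ref{rt:spec} one translates everything by $(1,1,1)$, i.e. one replaces $\ell_{n'}(p)$ by $\ell_{n'}(p)+\wt_{n'}(x_1x_2x_3)=\ell_{n'}(p+(1,1,1))$ using $\wt_{n'}(x_1x_2x_3)=\ell_{n'}(1,1,1)$, intersects with the shifted positive octant $\R^3_{>-1}$, and notes that the noncompact walls now read $\ell_{n'}(p)>-1$.

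The main obstacle I expect is bookkeeping the boundary/degenerate cases correctly: verifying that for $n'\in\Nd^*\setminus\Nd$ the substitution $m_{n'}(Z_K-E)\mapsto -1$ in case \ref{rt:article} really reproduces the inequality coming from the noncompact adjacent face (this is where \cref{lem:mod_ZKE}'s second clause, $\alpha_{n,n'}m_u(Z_K-E)=\beta_{n,n'}m_n(Z_K-E)-1$, does the work), and checking in cases \ref{rt:Newton}, \ref{rt:spec} that the walls toward noncompact faces collapse to $\ell_{n'}(p)\geq 0$ resp.\ $\ell_{n'}(p)>-1$ rather than some scaled inequality. I would also need to confirm that the cone is \emph{exactly} the set cut out by the adjacent-face inequalities and not a proper subset — this uses that $F_n$ is a bounded face all of whose edges are shared with some $F_{n'}$, $n'\in\Nd^*_n$, which is part of the structure in \cref{block:oka_alg} and \cref{prop:min}\ref{it:min_corr}. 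Once the adjacency dictionary between edges of $F_n$ and elements of $\Nd^*_n$ is fixed, each of the three identities is a routine rescaling argument, so I would present case \ref{rt:article} in full and indicate that \ref{rt:Newton} and \ref{rt:spec} are obtained by the stated translation and degeneration.
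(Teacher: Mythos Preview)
Your plan is correct and follows essentially the same route as the paper: recognize $C_n$ as the cone over the relevant face, describe that face by the adjacency inequalities $\ell_{n'}\geq m_{n'}(\cdot)$ indexed by $n'\in\Nd^*_n$, and homogenize. The paper's proof is in fact much terser than your outline --- it simply states that $F_n-(1,1,1)$ is cut out by $\ell_n=m_n(Z_K-E)$ and $\ell_{n'}\geq m_{n'}(Z_K-E)$, observes $C_n$ is the cone over it, and handles case~\ref{rt:Newton} by noting that for $n'\in\Nd^*_n\setminus\Nd$ the functional $\ell_{n'}$ is a coordinate function; you are invoking more machinery (\cref{lem:mod_ZKE}, the bamboo interpolation, \cref{lem:x_0Z}) than is actually needed, but this does no harm.
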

\begin{proof}
The face $F_n - (1,1,1)$ is given
by the equation $\ell_n = m_n(Z_K-E)$ and the inequalities
$\ell_{n'} \geq m_{n'}(Z_K-E)$ for $n'\in\Nd_n$. \ref{it:cone_eqs_i} therefore
follows, since $C_n$ is the cone over $F_n-(1,1,1)$.

For \ref{it:cone_eqs_ii}, we have, similarly as above, that
$C_n$ is given by inequalities
$\ell_{n'}/\wt_{n'}(f) \geq \ell_n/\wt_n(f)$ for $n'\in\Nd_n$
and $\ell_{n'} \geq 0$ if $n'\in\Nd^*_n$.
If $n'\in\Nd^*_n\setminus\Nd$, then $\ell_{n'}$ is one of the coordinate
functions. Since $F_n\subset\R^3_{\geq 0}$, the above inequalities
are equivalent with
$\ell_{n'}/\wt_{n'}(f) \geq \ell_n/\wt_n(f)$ for $n'\in\Nd_n$
and $\ell_c \geq 0$ for $c=1,2,3$.

\ref{it:cone_eqs_iii} follows in a similar way as \ref{it:cone_eqs_ii}.
\end{proof}

\begin{lemma} \label{lem:nb_ineq}
Let $n\in\Nd$. We have $\Fnb_n(\wt(f)) = F_n$ and
$\Fnb_n(Z_K-E) = F_n - (1,1,1)$. Furthermore,
$\Fnb_n(Z_K-E)$ consists of those points $p \in H^=_n(Z_K-E)$ satisfying
$\ell_{n'}(p) \geq m_{n'}(Z_K-E)$ for $n'\in\Nd_n$ and
$\ell_{n'}(p) \geq -1$ for $n'\in\Nd^*_n\setminus\Nd$.
\end{lemma}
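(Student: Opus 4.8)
Here is the plan.

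\medskip

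The plan is to reduce the statement to one linear identity among support functions together with \cref{lem:mod_ZKE}. Fix $n\in\Nd$, and for each $n'\in\Nd^*_n$ put $u=u_{n,n'}\in\V_n$. From the recursion defining the canonical primitive sequence (see \cref{block:abc_comp,block:oka_alg}) one has $\alpha_{n,n'}\ell_u=\beta_{n,n'}\ell_n+\ell_{n'}$, i.e.
\[
  \ell_{n'}=\alpha_{n,n'}\,\ell_u-\beta_{n,n'}\,\ell_n
\]
as linear functions on $\R^3$; this is valid also when $\alpha_{n,n'}=1$, under either convention for $\beta_{n,n'}$. Moreover, as $n'$ runs over $\Nd^*_n$ the functions $\ell_{u_{n,n'}}$ run through all of $\{\ell_u:u\in\V_n\}$, since every neighbour of $n$ lies on a bamboo joining $n$ to a unique $n'\in\Nd^*_n$ and the functional at such a bamboo vertex depends only on that $n'$.

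First I would prove $\Fnb_n(\wt(f))=F_n$. Write $H=H^=_n(\wt(f))=\{\ell_n=\wt_n(f)\}$, the affine plane of $F_n$. Any point $q$ of the edge $F_n\cap F_{n'}$ of $\Gamma_+(f)$ is a common minimizer of $\ell_n$, $\ell_{n'}$ and $\ell_u$ on $\Gamma_+(f)$ — for $\ell_u$ because it is a nonnegative combination of $\ell_n$ and $\ell_{n'}$ — so evaluating the displayed identity at $q$ gives $\wt_{n'}(f)=\alpha_{n,n'}\wt_u(f)-\beta_{n,n'}\wt_n(f)$. Hence, on $H$, the halfspace $\{\ell_u\ge\wt_u(f)\}$ coincides with $\{\ell_{n'}\ge\wt_{n'}(f)\}$, and therefore
\[
  \Fnb_n(\wt(f))=H\cap\bigcap_{n'\in\Nd^*_n}\{\,p:\ell_{n'}(p)\ge\wt_{n'}(f)\,\}.
\]
The right-hand side is exactly $F_n$: since the two-dimensional faces of $\Gamma_+(f)$ are precisely the $F_{n''}$, $n''\in\Nd^*$, each edge of the polygon $F_n$ is the intersection of $F_n$ with another two-dimensional face, i.e.\ is a segment $F_n\cap F_{n'}$ with $n'\in\Nd^*_n$, supported inside $H$ by $\{\ell_{n'}=\wt_{n'}(f)\}$; and a polygon equals the intersection, inside its affine hull, of the halfspaces supporting its edges (the constraints $x_i\ge0$ being redundant among these). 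For the second equality of the first sentence, \cref{prop:Z_K} together with $\wt_v(x_1x_2x_3)=\ell_v((1,1,1))$ gives $m_v(Z_K-E)=\wt_v(f)-\ell_v((1,1,1))$ for every $v$, hence $H^\ge_v(Z_K-E)=H^\ge_v(\wt(f))-(1,1,1)$ and $H^=_v(Z_K-E)=H^=_v(\wt(f))-(1,1,1)$; translating the definition of $\Fnb_n$ by $-(1,1,1)$ then yields $\Fnb_n(Z_K-E)=\Fnb_n(\wt(f))-(1,1,1)=F_n-(1,1,1)$.

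For the last assertion I would argue directly on $H'=H^=_n(Z_K-E)=\{\ell_n=m_n(Z_K-E)\}$. For $p\in H'$ the displayed identity gives $\alpha_{n,n'}\ell_u(p)=\beta_{n,n'}m_n(Z_K-E)+\ell_{n'}(p)$, so
\[
  \ell_u(p)\ge m_u(Z_K-E)\iff\ell_{n'}(p)\ge\alpha_{n,n'}m_u(Z_K-E)-\beta_{n,n'}m_n(Z_K-E).
\]
By \cref{lem:mod_ZKE} the bound on the right equals $m_{n'}(Z_K-E)$ when $n'\in\Nd_n$ and equals $-1$ when $n'\in\Nd^*_n\setminus\Nd$. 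Intersecting over all $u\in\V_n$, equivalently over all $n'\in\Nd^*_n$, in $\Fnb_n(Z_K-E)=H'\cap\bigcap_{u\in\V_n}H^\ge_u(Z_K-E)$ gives precisely the stated description. The only step that is not purely formal is the identification $\Fnb_n(\wt(f))=F_n$ — concretely, that $\ell_{u_{n,n'}}$ is minimized on $\Gamma_+(f)$ along the edge $F_n\cap F_{n'}$ and that these edges exhaust $\partial F_n$ — but both are standard facts about the normal fan of $\Gamma_+(f)$ and Oka's subdivision, so I do not expect a genuine obstacle; the remaining effort is bookkeeping with the $\beta_{n,n'}$ conventions in the cases $\alpha_{n,n'}=1$.
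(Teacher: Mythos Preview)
Your proof is correct and follows essentially the same route as the paper: both reduce the halfspace conditions $\ell_u\ge m_u$ on $H^=_n$ to conditions $\ell_{n'}\ge m_{n'}$ via the identity $\alpha_{n,n'}\ell_u=\beta_{n,n'}\ell_n+\ell_{n'}$, identify $F_n$ with the intersection of these halfspaces, translate by $-(1,1,1)$ using \cref{prop:Z_K}, and then invoke \cref{lem:mod_ZKE} for the last description. The paper phrases the first equivalence slightly more geometrically (both halfplanes in $H$ have boundary the affine hull of $F_n\cap F_{n'}$ and contain $F_n$), but this is the same content as your explicit computation.
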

\begin{proof}
Start by observing that for $p \in H^=_n(\wt(f))$ and $n'\in\Nd^*_n$ we have
$\ell_{n'}(p) \geq \wt_{n'}(f)$ if and only if $\ell_u(p) \geq \wt_u(f)$,
where $u = u_{n,n'}$. Indeed, the halfplane defined by either inequality
has boundary the affine hull of the segment $F_n\cap F_{n'}$ and contains
$F_n$. By definition, the face $F_n$ is defined by the equation
$\ell_n(p) = \wt_n(f)$ and inequalities $\ell_{n'}(p) \geq \wt_{n'}(p)$
for $n'\in\Nd^*_n$. The equality $F_n = \Fnb_n(\wt(f))$ follows.
This result, combined with \cref{prop:Z_K}, provides
$\Fnb_n(Z_K-E) = F_n^{\vphantom{\mathrm{nb}}}-(1,1,1)$.

For the last statement, we observe as above that for $p\in H^=_n(Z_K-E)$,
$n'\in\Nd_n$ and $u = u_{n,n'}\in\V_n$,
the inequality $\ell_{n'}(p) \geq m_{n'}(Z_K-E)$ is equivalent
with $\ell_u(p) \geq m_u(Z_K-E)$.
Furthermore if $n'\in\Nd_n^*\setminus\Nd$, and $u = u_{n,n'}$, using
\begin{equation} \label{eq:nb_ineq_pf}
\begin{split}
  \alpha_{n,n'} \ell_u     &= \beta_{n,n'} \ell_n     + \ell_{n'} \\
  \alpha_{n,n'} m_u(Z_K-E) &= \beta_{n,n'} m_n(Z_K-E) - 1
\end{split}
\end{equation}
we find that $\ell_u(p) \geq m_u(Z_K-E)$ if and only if
$\ell_{n'}(p) \geq -1$, since we are assuming that $\ell_n(p) = m_n(Z_K-E)$.
Here, the first equality in \cref{eq:nb_ineq_pf} follows from
\cref{block:abc_comp} and the second one is \cref{lem:mod_ZKE}.
\end{proof}

\begin{definition} \label{def:cone_face}
For any $i$, let
$\Fcn_i = C_{\bar v(i)} \cap H^=_{\bar v(i)}(\bar Z_i)$.
\nomenclature[Fcni]{$\Fcn_i$}{Cone section}
For any $u\in\V_{\bar v(i)}$, let $S_{i,u}$ be the minimal set of
$\ell_u$ on $\Fcn_i$, and set
$\Fcnm_i = \Fcn_i\setminus\cup_{\epsilon_{i,u}=1} S_{u,i}$.
\nomenclature[Fcnmi]{$\Fcnm_i$}{Cone section with boundary conditions}
\end{definition}

\begin{lemma} \label{lem:pts_faces}
In cases \ref{rt:article}, \ref{rt:spec}, for $i=0, \ldots, k-1$ and
in case \ref{rt:Newton}, for $i\geq 0$, we have
\[
  \bar P_i
    = F_{\bar v(i)}(\bar Z_i)     \cap \Z^3
    = \Fnb_{\bar v(i)}(\bar Z_i)  \cap \Z^3
	= \Fcnm_i                     \cap \Z^3.
\]
\end{lemma}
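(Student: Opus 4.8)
\textbf{Proof plan for \cref{lem:pts_faces}.}

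The statement asserts a chain of three equalities between sets of integral points. My plan is to prove the three identifications largely independently, exploiting \cref{lem:ceil} to pin down the coefficients $m_v(\bar Z_i)$ in terms of the ratio $\bar r_i$, and \cref{lem:cone_eqs}, \cref{lem:nb_ineq} for the geometry of the cones $C_n$ and polygons $\Fnb_n(Z)$. Throughout, fix a step $i$, write $n = \bar v(i)$, and treat the three cases \ref{rt:article}, \ref{rt:Newton}, \ref{rt:spec} in parallel, keeping track of the shift $-(1,1,1)$ (resp.\ nothing, resp.\ $-(1,1,1)$) relating the relevant Newton object to $\Gamma_+(f)$.

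\emph{Step 1: $\bar P_i = F_n(\bar Z_i)\cap\Z^3$.} By \cref{def:a_i}, $\bar P_i = (\Gamma_+(Z_i)\setminus\Gamma_+(Z_{i+1}))\cap\Z^3$ where $Z_{i+1} = \bar Z_i + E_n$ (here I use $\bar Z_i = Z_{i'}$ for the appropriate $i'$; since the generalized Laufer steps only change coefficients along legs, which by \cref{lem:null_nopts} contribute no integral points when $(\bar Z_i,E_{v})>0$ — which holds for those steps by \cref{prop:x_cseq} and \cref{rem:EO52}\ref{it:EO52ii} — these intermediate steps are irrelevant and I should state this reduction carefully at the outset). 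A point $p\in\Z_{\geq0}^3$ lies in $\Gamma_+(\bar Z_i)\setminus\Gamma_+(\bar Z_i+E_n)$ iff $\ell_v(p)\geq m_v(\bar Z_i)$ for all $v$ but $\ell_n(p) < m_n(\bar Z_i)+1$, i.e.\ (integrality) $\ell_n(p) = m_n(\bar Z_i)$ and $\ell_v(p)\geq m_v(\bar Z_i)$ for $v\neq n$. Since by \cref{eq:nbr_sum} the functions $\ell_v$ for $v\notin\V_n\cup\{n\}$ are already forced nonnegative and the binding constraints on $H^=_n(\bar Z_i)$ are exactly $\ell_{n'}\geq m_{n'}(\bar Z_i)$ for $n'\in\Nd_n^*$, this is precisely $F_n(\bar Z_i)\cap\Z^3$.

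\emph{Step 2: $F_n(\bar Z_i)\cap\Z^3 = \Fnb_n(\bar Z_i)\cap\Z^3$.} Here I need: for an integral point $p$ with $\ell_n(p)=m_n(\bar Z_i)$, the condition $\ell_{n'}(p)\geq m_{n'}(\bar Z_i)$ for all $n'\in\Nd_n^*$ is equivalent to $\ell_u(p)\geq m_u(\bar Z_i)$ for all $u\in\V_n$. This is the content of the first paragraph of the proof of \cref{lem:nb_ineq} (the halfplane in $H^=_n$ cut out by either $\ell_{n'}$ or $\ell_{u_{n,n'}}$ is the same, as both have boundary the affine hull of $F_n\cap F_{n'}$ and contain $F_n$), which I should restate in the cycle-deformed setting; the coefficient compatibility $\alpha_{n,n'}m_u(\bar Z_i) = \beta_{n,n'}m_n(\bar Z_i) + m_{n'}(\bar Z_i)$ comes from \cref{lem:x_interpol} together with $\bar Z_i = x(\bar Z_i)$, which holds by \cref{def:comp_seq_constr}. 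The noncompact case $n'\in\Nd^*_n\setminus\Nd$ is handled identically using the relation $\alpha_{n,n'}\ell_u = \beta_{n,n'}\ell_n + \ell_{n'}$ from \cref{block:abc_comp}.

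\emph{Step 3: $\Fnb_n(\bar Z_i)\cap\Z^3 = \Fcnm_i\cap\Z^3$.} This is the heart and the step I expect to be the main obstacle, because it must reconcile the boundary-deletion bookkeeping ($\epsilon_{i,u}$) with a dilation factor. The idea: \cref{lem:ceil} says $m_u(\bar Z_i) = \lceil \bar r_i\,\omega_u + \epsilon_{i,u}\rceil$ where $\omega_u$ is $m_u(Z_K-E)$, $\wt_u(f)$, or $\wt_u(f)-\wt_u(x_1x_2x_3)$ according to the case, and $\bar r_i\,\omega_u$ is (up to the case-dependent shift) the value of $\ell_u$ on the hyperplane $\bar r_i$-dilate of the face $F_n$; similarly $m_n(\bar Z_i) = \lceil \bar r_i\,\omega_n + \epsilon_{i,n}\rceil$ but in fact by the ratio test and the definition of $\epsilon_{i,n}$ this is attained with $\ell_n(p) = m_n(\bar Z_i)$ sitting exactly at the dilated level. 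Thus an integral $p$ with $\ell_n(p)=m_n(\bar Z_i)$ satisfies $\ell_u(p)\geq m_u(\bar Z_i) = \lceil \bar r_i\omega_u + \epsilon_{i,u}\rceil$ iff $\ell_u(p)\geq \bar r_i\omega_u$ and, when $\epsilon_{i,u}=1$, additionally $\ell_u(p)$ is \emph{not} equal to the dilated boundary value $\bar r_i\omega_u$ — i.e.\ $p\notin S_{i,u}$; when $\epsilon_{i,u}=0$ the ceiling just rounds up to the next admissible value and no point is lost. Translating $\ell_u(p)\geq\bar r_i\omega_u$ for all $u\in\V_n$ (together with $\ell_n(p)=m_n(\bar Z_i)$) into the cone description of \cref{lem:cone_eqs} gives exactly $p\in C_n\cap H^=_n(\bar Z_i) = \Fcn_i$ (using \cref{def:cone_face}), and removing the $S_{i,u}$ with $\epsilon_{i,u}=1$ gives $\Fcnm_i$. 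The delicate points are: (a) checking that $C_n\cap H^=_n(\bar Z_i)$ really is the $\bar r_i$-dilate of $\Fnb_n$ — this uses that $m_n(\bar Z_i)$ equals $\bar r_i$ times the corresponding value on $F_n$ up to shift (exactly when $\epsilon_{i,n}=0$; when $\epsilon_{i,n}=1$ one must argue the extra $+1$ doesn't create or destroy integral points on the relevant hyperplane, which is why $\epsilon_{i,n}$ feeds into $\Fcnm_i$ via $\epsilon_{i,u}$ only through $\Nd_{\bar v(i)}$), and (b) the modular conditions defining $\epsilon_{i,u}$ for $u = u_{n,n'}$ (the congruence $\beta_{n,n'}m_n(\bar Z_i)+m_{n'}(\bar Z_i)-1\equiv0$) are precisely what decides whether $\lceil\cdot\rceil$ with $\epsilon=1$ loses the boundary lattice point or not — this needs the interpolation formula \cref{lem:x_interpol} and a short $\mathrm{mod}\ \alpha_{n,n'}$ computation, done separately for $n'\in\Nd_n$ and $n'\in\Nd^*_n\setminus\Nd$, and in case \ref{rt:spec} the $\R^3_{>-1}$ truncation must be matched with the coordinate-hyperplane conditions $\ell_{n'}(p)>-1$. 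I would organize Step 3 as: first identify $\Fcn_i\cap\Z^3$ with the points $p\in H^=_n(\bar Z_i)\cap\Z^3$ satisfying $\ell_u(p)\geq\bar r_i\omega_u$ for $u\in\V_n$ (via \cref{lem:cone_eqs}), then show this common inequality $\ell_u(p)\geq\bar r_i\omega_u$ is equivalent to $\ell_u(p)\geq m_u(\bar Z_i)-\epsilon_{i,u}$ using \cref{lem:ceil} and integrality of $\ell_u(p)$, and finally observe that deleting $\{p:\ell_u(p)=m_u(\bar Z_i)-1\}=S_{i,u}$ exactly when $\epsilon_{i,u}=1$ converts "$\geq m_u(\bar Z_i)-\epsilon_{i,u}$" back into "$\geq m_u(\bar Z_i)$ on $\Fcnm_i$", matching $\Fnb_n(\bar Z_i)$ by Step 2.
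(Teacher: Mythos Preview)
Your Step 1 is fine and coincides with the paper: the identity $\bar P_i = F_{\bar v(i)}(\bar Z_i)\cap\Z^3$ really is immediate from the definition of $\Gamma_+(\cdot)$ and integrality of $\ell_{\bar v(i)}$.

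The paper, however, does \emph{not} prove the remaining two equalities directly. It establishes only the forward chain
\[
  \bar P_i = F_{\bar v(i)}(\bar Z_i)\cap\Z^3
  \subset \Fnb_{\bar v(i)}(\bar Z_i)\cap\Z^3
  \subset \Fcnm_i\cap\Z^3
\]
(the first inclusion by definition, the second via exactly the computation you outline in Step 3, but carried out only in one direction), and then closes the loop by a global argument: the sets $\Fcnm_i\cap\Z^3$ are pairwise disjoint by \cref{lem:disj}, and since $\cup_i\bar P_i$ already exhausts the ambient set ($\Z^3_{\geq0}\setminus\Gamma_+(Z_K-E)$ in cases \ref{rt:article}, \ref{rt:spec}; all of $\Z^3_{\geq0}$ in case \ref{rt:Newton}), the chain must collapse to equalities.

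Your plan tries to bypass \cref{lem:disj}, but this leaves a genuine gap in Step 2. The reverse inclusion $\Fnb_n(\bar Z_i)\cap\Z^3\subset F_n(\bar Z_i)\cap\Z^3$ demands that an integral $p\in H^=_n(\bar Z_i)$ satisfying only the \emph{neighbor} inequalities $\ell_u(p)\geq m_u(\bar Z_i)$ for $u\in\V_n$ automatically lie in $\R^3_{\geq0}$ and satisfy $\ell_v(p)\geq m_v(\bar Z_i)$ for \emph{every} $v\in\V$, including nodes and bamboo vertices on other arms far from $n$. Your argument only delivers the equivalence between the $\V_n$-constraints and the $\Nd^*_n$-constraints; it says nothing about those distant constraints, nor about the positive-octant condition. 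This is also why the model result \cref{lem:nb_ineq} you invoke is stated only for the special cycles $\wt(f)$ and $Z_K-E$, not for a general $\bar Z_i$. Even if your Step 3 equivalence is fully correct (it is plausible for $n'\in\Nd_n$; the leg case $n'\in\Nd^*_n\setminus\Nd$ requires unpacking the separate $\epsilon_{i,u}$-conventions of \cref{def:C_r} in each case), you are left with $\bar P_i=F_n\cap\Z^3\subset\Fnb_n\cap\Z^3=\Fcnm_i\cap\Z^3$, and the middle inclusion remains open. The pairwise disjointness of \cref{lem:disj} is precisely the missing global input.
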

\begin{proof}
We start by proving the inclusions
\begin{equation} \label{eq:lem_incl}
  \bar P_i
    \subset F_{\bar v(i)}(\bar Z_i)     \cap \Z^3
    \subset \Fnb_{\bar v(i)}(\bar Z_i)  \cap \Z^3
	\subset \Fcnm_i                     \cap \Z^3.
\end{equation}
For the first inclusion in \cref{eq:lem_incl}, note that
\[
  \bar P_i = \set{ p\in\Z^3 \cap \Gamma(\bar Z_i) }
                  { m_{\bar v(i)}(\bar Z_i) \leq \ell_{\bar v(i)}(p)
                                       <     m_{\bar v(i)}(\bar Z_i) + 1}.
\]
Since the function $\ell_{\bar v(i)}$ takes integral values on integral
points, we may replace the two
inequalities with $\ell_{\bar v(i)}(p) = m_{\bar v(i)}(Z_i)$, yielding,
in fact, $\bar P_i = F_{\bar v(i)}(\bar Z_i)\cap\Z^3$.

The second inclusion in \cref{eq:lem_incl} follows from definition. 

For the third inclusion, we prove case \ref{rt:article},
cases \ref{rt:Newton} and \ref{rt:spec} follow in a similar way.
Take
$p\in\Fnb_{\bar v(i)}(\bar Z_i) \cap \Z^3$. Clearly, we have
$\ell_{\bar v(i)}(p) = m_{\bar v(i)}(p)$, thus
$p\in H_{\bar v(i)}^=(\bar Z_i)$.
We start with proving $p\in C_{\bar v(i)}$, i.e. that $p$ satisfies
the inequalities in \cref{lem:cone_eqs}\ref{it:cone_eqs_i}.
Take $n\in\Nd_{\bar v(i)}$ and set $u = u_{\bar v(i),n}$. Then
\[
\begin{split}
  \ell_n(p)
    &=    \alpha_{\bar v(i),n} \ell_u(p) -
	      \beta _{\bar v(i),n} \ell_{\bar v(i)}(p) \\
    &\geq \alpha_{\bar v(i),n} m_u(\bar Z_i) -
	      \beta _{\bar v(i),n} m_{\bar v(i)}(\bar Z_i) \\
    &\geq \bar r_i \left(\alpha_{\bar v(i),n} m_u(Z_K-E) -
	      \beta _{\bar v(i),n} m_{\bar v(i)}(Z_K-E)\right) \\
    &=    \bar r_i m_n(Z_K-E).
\end{split}
\]
By \cref{lem:cone_eqs}, this gives $p\in C_{\bar v(i)}$.
If $u = u_{\bar v(i),n}$ and $\epsilon_{i,u} = 1$, then the second inequality
above would be strict by \cref{lem:ceil}. By \cref{lem:cone_eqs}, this
implies $p\notin C_n$. By definition, we get $p\in \Fcnm_i$.

By \cref{lem:disj}, the sets $\Fcnm_{\bar v(i)}\cap\Z^3$ are pairwise disjoint.
Therefore, to prove equality in \cref{eq:lem_incl}, it is now enough to prove 
$\cup_{i=0}^{k-1} \bar P_i \supset \cup_{i=0}^{k-1} \Fcnm_{\bar v(i)}\cap\Z^3$
in cases \ref{rt:article} and \ref{rt:spec}, and
$\cup_{i=0}^\infty \bar P_i \supset \cup_{i=0}^\infty \Fcnm_{\bar v(i)}\cap\Z^3$
in case \ref{rt:Newton}. But this is clear, since, by construction,
we have 
$\cup_{i=0}^{j-1} \bar P_i = \Z_{\geq 0}^3\setminus \Gamma(\bar Z_j)$
for any $j$, hence
$\cup_{i=0}^{k-1} \bar P_i = \Z_{\geq 0}^3\setminus \Gamma(Z_K-E)$
in cases \ref{rt:article} and \ref{rt:spec}, and
$\cup_{i=0}^\infty \bar P_i = \Z_{\geq 0}^3$ in case \ref{rt:Newton}.
\end{proof}

\begin{lemma} \label{lem:last_ones}
In cases \ref{rt:article} and \ref{rt:spec}, if $\bar r_i = 1$
then $(\bar Z_i,E_{\bar v(i)}) > 0$.
Similarly, in cases \ref{rt:article} and \ref{rt:Newton}, if $\bar r_i = 0$,
then $(\bar Z_i,E_{\bar v(i)}) > 0$ unless $i = 0$.
\end{lemma}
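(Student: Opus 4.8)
The plan is to prove the two assertions separately, reducing the $\bar r_i=1$ statement for ratio test \ref{rt:spec} to that for ratio test \ref{rt:article}. Write $n=\bar v(i)$. Since $\bar Z_i$ is a fixed point of $x$ by construction, $(\bar Z_i,E_v)\le 0$ for all $v\in\V\setminus\Nd$; and since the link is a rational homology sphere, all $g_v=0$ (\cref{prop:H}), so the adjunction equalities give $(Z_K,E_v)=2-b_v$ for every $v\in\V$. In both cases \ref{rt:article} and \ref{rt:spec}, the relation $Z_K-E=\wt(f)-\wt(x_1x_2x_3)$ of \cref{prop:Z_K} shows that $\bar r_i=1$ is equivalent to $m_n(\bar Z_i)=m_n(Z_K-E)$, and that the ratio test having minimum value $1$ at step $i$ forces $m_{n'}(\bar Z_i)\ge m_{n'}(Z_K-E)=m_{n'}(x(Z_K-E))$ for every $n'\in\Nd$. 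Applying \cref{prop:x_prop}\ref{it:x_prop_mon} and \ref{it:x_prop_idemp} I conclude $\bar Z_i=x(\bar Z_i)\ge x(Z_K-E)$.

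For \ref{rt:spec} this is all that is needed: there $Z^{\ref{rt:spec}}=x(Z_K-E)$ by definition, while $\bar Z_i\le Z^{\ref{rt:spec}}$ and $\bar Z_i\neq Z^{\ref{rt:spec}}$ at every non-terminal step, so $\bar r_i=1$ cannot occur and the claim is vacuous. For \ref{rt:article}, \cref{lem:x_0Z}\ref{it:x_0Z_s} gives $x(Z_K-E)=Z_K-E+Z_{\mathrm{legs}}$, and the construction gives $\bar Z_i\le Z_K$ with $\bar Z_i\neq Z_K$; hence $D:=Z_K-\bar Z_i$ is a nonzero reduced cycle, and $\bar Z_i\ge Z_K-E+Z_{\mathrm{legs}}$ forces $\supp D\subseteq\V\setminus\supp Z_{\mathrm{legs}}$, i.e. $\supp D$ consists only of nodes and of vertices lying on chains between two nodes; also $n\in\supp D$, because $m_n(D)=m_n(E)=1$.

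Next I would extract the combinatorics. For $v\in\supp D$ one computes $(\bar Z_i,E_v)=(Z_K,E_v)-(D,E_v)=(2-b_v)-(-b_v+|\V_v\cap\supp D|)=2-|\V_v\cap\supp D|$. If $v\in\supp D$ is a chain vertex, then $v\notin\Nd$, so $(\bar Z_i,E_v)\le 0$ and both neighbours of $v$ lie in $\supp D$; propagating along a chain, each chain between two nodes is either contained in $\supp D$ together with its two endpoint nodes, or disjoint from $\supp D$. Contracting each such chain to an edge produces a finite graph $\mathcal G$ on the vertex set $\supp D\cap\Nd$; it is simple, since $t_{n',n''}\le 1$ for all node pairs when the link is a rational homology sphere (cf. the proof of \cref{prop:isol}), and $\deg_{\mathcal G}(n')=|\V_{n'}\cap\supp D|$ for each node $n'\in\supp D$ (the legs through $n'$ contribute nothing, as legs avoid $\supp D$). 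Now either some node $n'\in\supp D$ has $(\bar Z_i,E_{n'})>0$, and then, since every node of $\supp D$ is a minimizer of the ratio test and $n$ was chosen among these to maximize the intersection number, $(\bar Z_i,E_n)\ge(\bar Z_i,E_{n'})>0$; or every node of $\supp D$ has $|\V_{n'}\cap\supp D|\ge 2$, so $\mathcal G$ has minimum degree $\ge 2$ and contains a cycle, which lifts to a cycle in the tree $G$ — absurd. Either way $(\bar Z_i,E_n)>0$.

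Finally, for $\bar r_i=0$ in cases \ref{rt:article} and \ref{rt:Newton} with $i>0$: then $m_n(\bar Z_i)=0$, and the ratio-$0$ nodes are precisely those with vanishing $\bar Z_i$-coefficient. Suppose $(\bar Z_i,E_n)=\sum_{u\in\V_n}m_u(\bar Z_i)=0$; all terms being nonnegative, every neighbour of $n$ has vanishing coefficient. The tie-break rule gives $(\bar Z_i,E_{n'})\le(\bar Z_i,E_n)=0$ for every ratio-$0$ node $n'$, hence $=0$, so its neighbours have vanishing coefficient too; and for $v\in\V\setminus\Nd$ with $m_v(\bar Z_i)=0$ we have $(\bar Z_i,E_v)=\sum_u m_u(\bar Z_i)\le 0$, forcing the same. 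Thus $\V\setminus\supp\bar Z_i$ is a union of connected components of $G$; as $G$ is connected and contains $n$, this means $\bar Z_i=0$, impossible for $i>0$. Hence $(\bar Z_i,E_n)>0$. The main obstacle is the \ref{rt:article}, $\bar r_i=1$ case: passing from the fixed-point identity $\bar Z_i=x(\bar Z_i)$ to the statement that the defect cycle $D$ has a node with at most one neighbour inside $\supp D$, which is exactly where the tie-break and the acyclicity of $G$ enter.
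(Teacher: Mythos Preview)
Your proof is correct. The case-III $\bar r_i=1$ observation is a nice shortcut the paper does not make explicit: since $Z^{\ref{rt:spec}}=x(Z_K-E)$ and $m_{n'}(\bar Z_i)\le m_{n'}(Z_K-E)$ for all $n'\in\Nd$ along the sequence, the ratio can never reach $1$ before the terminal step, so the claim is vacuous there.

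Your approach to case~\ref{rt:article} with $\bar r_i=1$ is genuinely different in presentation, though it rests on the same combinatorial fact. The paper argues \emph{dynamically}: it shows $\bar Z_{k-|\Nd|}=x(Z_K-E)$, then tracks the shrinking set $H_j=\{\bar v(j),\ldots,\bar v(k-1)\}$ of unvisited nodes, proving inductively that each $H_j$ is a subtree of the node graph and that the tie-break forces $\bar v(j)$ to be a leaf of $H_j$, giving $(\bar Z_j,E_{\bar v(j)})=1$ (or $2$ at the last step). You argue \emph{statically}: you freeze a single step, introduce the reduced defect $D=Z_K-\bar Z_i$, compute $(\bar Z_i,E_v)=2-|\V_v\cap\supp D|$ for $v\in\supp D$, and observe that the induced graph $\mathcal G$ on $\supp D\cap\Nd$ is a nonempty forest and hence has a leaf. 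Your route avoids the induction on $j$ and the explicit interpolation formula for $m_u(\bar Z_j)$; the paper's route gives slightly more, namely the exact value of the intersection number at each step.

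For $\bar r_i=0$ you give a clean propagation-to-contradiction argument (zero coefficient spreads to neighbours, forcing $\bar Z_i=0$), whereas the paper sketches a constructive version: by the tie-break, $\bar v(i)$ has a node-neighbour $\bar v(i')$ with $i'<i$, and then \cref{lem:x_interpol} gives $m_{u_{\bar v(i),\bar v(i')}}(\bar Z_i)\ge 1$ directly. Both are short; yours is perhaps more self-contained since it does not invoke the interpolation lemma.
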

\begin{proof}
We start by proving the first statement.

For each $n\in\Nd$, there is a unique $i$ so that $\bar v(i) = n$ and
$\bar r_i = 1$. Since the sequence $\bar r_0,\ldots, \bar r_{k-1}$ is,
by construction, increasing, we see that
$\bar Z_{k-|\Nd|} = x(Z_K-E)$, that the sequence
$\bar v(k-|\Nd|), \bar v(k-|\Nd|+1), \ldots, \bar v(k-1)$ contains
each element in $\Nd$ exactly once and that $\bar r_i < 1$ for $i<k-|\Nd|$.
Recall that by \cref{lem:x_0Z}, we have $x(Z_K-E) = Z_K-E + Z_{\mathrm{legs}}$.

If $u = u_e \in \V_{\bar v(j)}$
for some $k-|\Nd|\leq j \leq k-1$, then we have
$m_u(\bar Z_{j'}) = m_u(Z_K)$ for $k-|\Nd|\leq j' \leq k-1$.
This clearly holds
for $j=k-|\Nd|$ by \cref{lem:x_0Z}, as well as for $j=k-1$. By monotonicity,
\cref{prop:x_prop}\ref{it:x_prop_mon}, the statement holds for all
$k-|\Nd|\leq j' \leq k-1$. By definition of $\epsilon_{i,n}$ we also see
$\epsilon_{j,\bar v(j')} = 1$ if and only if $j'<j$. Thus,
if $k-|\Nd|\leq j,j'\leq k-1$ and
$\bar v(j') \in \Nd_{\bar v(j)}$,
then, by \cref{lem:x_interpol,lem:mod_ZKE}, 
\[
  m_u(\bar Z_j)
  =
  \left\lceil
    \frac{\beta_{n,n'} m_n(Z_K-E) +
	      m_{n'}(Z_K-E)+\epsilon_{j,\bar v(j')}}{\alpha_{n,n'}}
  \right\rceil
  = m_u(Z_K-E) + \epsilon_{j,\bar v(j')}
\]
where $n = \bar v(j)$ and $n' = \bar v(j')$ and
$u = u_{n,n'}$.
Here we use \cref{lem:mod_ZKE}, which implies that
$\beta_{n,n'} m_n(Z_K-E) + m_{n'}(Z_K-E) \equiv 0\, (\mod \alpha_{n,n'})$.
Therefore, if $k-|\Nd| \leq j \leq j'$, we get
\begin{equation} \label{eq:leaves}
\begin{split}
  (\bar Z_j, E_{\bar v(j')})
   &=  (Z_K-E,E_{\bar v(j')})
	    + |\E_{v(j')}| + |\set{v(j'')}{j''< j}\cap\Nd_{v(j')}|\\
   &=  2 - |\set{v(j'')}{j''\geq j}\cap\Nd_{v(j')}|
\end{split}
\end{equation}
because $(Z_K-E,E_n) = 2-\delta_n$ and $\delta_n = |\E_n| + |\Nd_n|$
for all $n\in\Nd$.

For $j=k-|\Nd|,\ldots, k-1$, let
$H_j$ be the graph with vertex set $\bar v(j), \ldots, \bar v(k-1)$
and an edge between $n,n'$ if and only if $n'\in\Nd_n$. We will prove by
induction that the graphs $H_j$ are all trees, i.e. connected, and that
if $j<k-1$, then $v(j)$ is a leaf in $H_j$, that is, it has exactly
one neighbour in $H_j$.

We know already that $H_{k-|\Nd|}$ is a tree. Furthermore, removing a leaf from
a tree yields another tree. Therefore, it is enough to prove that
if, for some $j$, the graph $H_j$ is a tree, then $\bar v(j)$ is a leaf.
The ratio test says that we must indeed choose $\bar v(j)$ from
the graph $H_j$, maximising the intersection number
$(\bar Z_j, E_{\bar v(j)})$.  For simplicity, identify the
graph $H_j$ with its set of vertices.
Then \cref{eq:leaves} says that for any $n\in H_j$, we have
$(\bar Z_j,E_n) = 2-|H_j \cap \Nd_n|$. This number is clearly maximized
when $n$ is a leaf of $H_j$. Furthermore, we have
$(\bar Z_j,E_{\bar v(i)}) = 1$ for $j<k-1$ and
$(\bar Z_{k-1},E_{\bar v(k-1)}) = 2$, proving the first statement of the lemma.

We sketch the proof of the second statement. The sequence
$v(0),\ldots, v(|\Nd|-1)$ contains each element of $\Nd$ exactly once.
Therefore, we have $m_{\bar v(i)}(\bar Z_i) = 0$ for $i<|\Nd|$.
Similarly as in the case above, one shows that if $0<i<|\Nd|$, then
$\bar v(i)$ is chosen in such a way that there is an $i'<i$ so that
$\bar v(i')\in\Nd$, and hence, $m_u(\bar Z_i) > 0$ for
$u = u_{\bar v(i), \bar v(i')}$, which yields
$(\bar Z_i,E_{\bar v(i)}) > 0$.
\end{proof}

\begin{lemma} \label{lem:disj}
In cases \ref{rt:article}, \ref{rt:spec}, let $0\leq i'<i \leq k-1$, in case
\ref{rt:Newton}, let $0\leq i'<i$. Then
$\Fcnm_i \cap \Fcnm_{i'} \cap \Z^3
= \emptyset$.
\end{lemma}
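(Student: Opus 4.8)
The plan is to reduce the disjointness of the dilated cone–sections $\Fcnm_i$ to a statement about the sequence of ratios $(\bar r_i)$ and the cones $C_n$, exploiting the fact that, by \cref{lem:cone_eqs}, each $C_n$ is cut out by the linear inequalities comparing $\ell_n$ with $\ell_{n'}$ (suitably normalized). First I would observe that by \cref{lem:pts_faces} it is equivalent to show that the sets $\bar P_i$ are pairwise disjoint, and these are honestly disjoint essentially by construction: since the computation sequence $(Z_i)$ is increasing, the polyhedra $\Gamma_+(Z_i)$ are decreasing (see the remark after \cref{def:a_i}), so $\bar P_i = (\Gamma_+(\bar Z_i)\setminus\Gamma_+(\bar Z_{i+1}))\cap\Z^3$ partition $\Z^3_{\geq0}\setminus\Gamma_+(Z^*)$ into disjoint pieces indexed by $i$. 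However — and this is the subtlety — \cref{lem:pts_faces} is proved using \cref{lem:disj}, so that route is circular. The genuine task is therefore to prove disjointness of $\Fcnm_i\cap\Z^3$ directly, i.e. before we know it equals $\bar P_i$.

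So the real argument should go as follows. Fix $i'<i$ and suppose $p\in\Fcnm_i\cap\Fcnm_{i'}\cap\Z^3$. From $p\in\Fcn_i$ we get $p\in C_{\bar v(i)}$ and $\ell_{\bar v(i)}(p)=m_{\bar v(i)}(\bar Z_i)$; likewise $p\in C_{\bar v(i')}$ and $\ell_{\bar v(i')}(p)=m_{\bar v(i')}(\bar Z_{i'})$. In case \ref{rt:Newton}, the defining inequalities of $C_n$ (from \cref{lem:cone_eqs}\ref{it:cone_eqs_ii}) say precisely that the minimum $\min_{n\in\Nd}\ell_n(p)/\wt_n(f)$ is attained at $n=\bar v(i)$ with value $\ell_{\bar v(i)}(p)/\wt_{\bar v(i)}(p)$, and similarly for $i'$; since this minimum value is a single number, and $\ell_n(p)$ is determined by $p$, comparing the two expressions gives
\[
  \frac{m_{\bar v(i)}(\bar Z_i)}{\wt_{\bar v(i)}(f)}
  = \frac{m_{\bar v(i')}(\bar Z_{i'})}{\wt_{\bar v(i')}(f)},
\]
that is $\bar r_i=\bar r_{i'}$. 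But the ratio test guarantees that $(\bar r_j)$ is nondecreasing (this is recalled in the proof of \cref{lem:ceil}), and when two consecutive-in-value ratios coincide the tie-breaking rule and the structure of the algorithm force $\bar v(i)$ and $\bar v(i')$ to be distinct nodes with $\bar r_i=\bar r_{i'}$; then I would use the $\epsilon$-conditions. Precisely: if $\bar v(i')\in\Nd_{\bar v(i)}$ (they are adjacent through a bamboo), then membership of $p$ in $\Fcnm_i$ (the boundary face $S_{i,u}$ with $u=u_{\bar v(i),\bar v(i')}$ has been removed when $\epsilon_{i,u}=1$) together with the computation of $m_n(\bar Z_i)$ in \cref{lem:ceil} forces a strict inequality $\ell_{\bar v(i')}(p)>m_{\bar v(i')}(\bar Z_i)$, whereas $p\in\Fcn_{i'}$ forces $\ell_{\bar v(i')}(p)=m_{\bar v(i')}(\bar Z_{i'})\le m_{\bar v(i')}(\bar Z_i)$ (since $\bar Z_{i'}\le\bar Z_i$), a contradiction. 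If $\bar v(i')$ is not a neighbour of $\bar v(i)$, I would pass along the bamboo/chain of nodes joining them in $G$, applying \cref{lem:cone_eqs} at each intermediate node to propagate the equality of ratios, and then apply the same $\epsilon$-argument at the last edge. Cases \ref{rt:article} and \ref{rt:spec} are handled identically, replacing $\wt_n(f)$ by $m_n(Z_K-E)$ (resp. $\wt_n(f)$ with the shift $\wt_n(x_1x_2x_3)$) throughout, and using parts \ref{it:cone_eqs_i}, \ref{it:cone_eqs_iii} of \cref{lem:cone_eqs} and the corresponding branches of \cref{lem:ceil}; in case \ref{rt:spec} one also keeps track of the extra open halfspaces $\ell_{n'}(p)>-1$, which only helps.

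The main obstacle I anticipate is the bookkeeping in the tie-breaking step: when several nodes realize the same ratio $\bar r_i=\bar r_{i'}$, one must argue that the removed boundary faces $S_{i,u}$ (encoded by the $\epsilon_{i,u}$) are arranged exactly so that a lattice point cannot survive in two sections simultaneously — intuitively, the point $p$ must lie strictly on one side of the wall separating $C_{\bar v(i)}$ from $C_{\bar v(i')}$, and the $\epsilon$-conditions are precisely the device that removes the wall from exactly one of the two sections. Making this precise requires carefully invoking \cref{lem:ceil} to read off which face of $\Fcn_i$ the point $p$ can lie on, and the earlier structural description of the sequence (\cref{lem:x_interpol}, \cref{lem:mod_ZKE}) to control the coefficients $m_u(\bar Z_i)$ on the connecting bamboo. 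Once the single-edge case is nailed down, the general case follows by an easy induction along the chain of nodes joining $\bar v(i)$ and $\bar v(i')$ in the tree/near-tree structure of $G$ described in \cref{prop:anatomy}.
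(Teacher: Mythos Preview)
Your outline is on the right track for cases \ref{rt:article} and \ref{rt:spec}, and the first steps---deriving $\bar r_i=\bar r_{i'}$ from $p\in C_{\bar v(i)}\cap C_{\bar v(i')}$ and then showing $\epsilon_{i,n'}=1$ for $n'=\bar v(i')$---match the paper. However, you overlook a key simplification: in these two cases the nodes $n=\bar v(i)$ and $n'=\bar v(i')$ are \emph{automatically adjacent}. The paper rescales $p$ to a point $\tilde p$ lying on $\Gamma(f)-(1,1,1)$; since $\tilde p$ has nonnegative coordinates and the boundary $\partial\Gamma(f)-(1,1,1)$ lies entirely outside the nonnegative octant, $\tilde p$ is an interior point of the diagram, so $F_n\cap F_{n'}$ is one-dimensional and $n'\in\Nd_n$. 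Your proposed ``induction along the chain of nodes'' is therefore unnecessary in cases \ref{rt:article} and \ref{rt:spec}, and your adjacent-case argument (once made precise via $\epsilon_{i,u}=1$ forcing $p\in S_{i,u}\subset\Fcn_i\setminus\Fcnm_i$) suffices.

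The genuine gap is in case \ref{rt:Newton}. You claim it is ``handled identically'', but it is not: when the rescaled point $\tilde p=\bar r_i^{-1}p$ happens to be an integral point on $\partial\Gamma(f)$ (this can occur precisely because in case \ref{rt:Newton} the diagram is convenient and $p$ may lie on a coordinate hyperplane), several faces $F_{n_1},\ldots,F_{n_j}$ meet at $\tilde p$, and $n,n'$ need not be adjacent. The paper spends roughly two pages on this situation: one must analyse the order in which the algorithm visits $n_1,\ldots,n_j$, and the argument hinges essentially on the \emph{tie-breaking rule} in the ratio test (choosing the node maximising $(\bar Z_i,E_n)$ among those minimising the ratio). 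Concretely, the paper proves a dichotomy \cref{eq:3rd_N} for the intersection numbers $(\bar Z_{i_s},E_{n_{s''}})$ and uses it to show that for the node $n_s=n$ actually chosen at step $i$, one of the two neighbouring $\epsilon_{i,u}$ equals $1$. Your ``propagate the equality of ratios along the chain and apply the $\epsilon$-argument at the last edge'' does not give this: equality of ratios alone does not tell you \emph{which} $\epsilon_{i,u}$ is $1$, because that depends on which of $n_{s\pm1}$ was visited before $n_s$, and establishing this requires the maximality clause. Without engaging the tie-breaking rule explicitly, the case-\ref{rt:Newton} argument does not close.
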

\begin{proof}
We will assuma that we have a point $p \in \Fcnm_i \cap \Fcnm_{i'} \cap \Z^3$,
to arrive at a contradiction.
Set $n = \bar v(i)$ and $n' = \bar v(i')$.

We start with cases \ref{rt:article}, \ref{rt:spec}. In these cases
we will show that $n'\in\Nd_n$ and that $\epsilon_{i,u} = 1$ where
$u = u_{n,n'}$, hence, $p\notin \Fcnm_i$, a contradiction.
Since $p\in C_n\cap C_{n'}$, we have, by \cref{lem:cone_eqs}
\[
    \bar r_i
  = \frac{m    _n   (\bar Z_i   )}{m_n   (Z_K-E)}
  = \frac{\ell _n   (p          )}{m_n   (Z_K-E)}
  = \frac{\ell _{n'}(p          )}{m_{n'}(Z_K-E)}
  = \frac{m    _{n'}(\bar Z_{i'})}{m_{n'}(Z_K-E)}
  = \bar r_{i'}
\]
in case \ref{rt:article}. In case \ref{rt:spec}, we have, similarly,
\[
\begin{split}
     \bar r_i
  &= \frac{   m_n   (\bar Z_i   ) + \wt_n   (x_1x_2x_3)}{\wt_n   (f)} 
   = \frac{\ell_n   (p          ) + \wt_n   (x_1x_2x_3)}{\wt_n   (f)}  \\
  &= \frac{\ell_{n'}(p          ) + \wt_{n'}(x_1x_2x_3)}{\wt_{n'}(f)} 
   = \frac{   m_{n'}(\bar Z_{i'}) + \wt_{n'}(x_1x_2x_3)}{\wt_{n'}(f)} 
   = \bar r_i.
\end{split}
\]
The ratio test guarantees that the sequence $\bar r_0, \bar r_1, \ldots$
is increasing. In particular, there is no $i''$ with $i'<i''<i$ and
$v(i'') = n'$. Therefore, we find
$m_{n'}(\bar Z_i) = m_{n'}(\bar Z_{i'}) + 1$. By definition, we find
$\epsilon_{i,n'} = 1$. 

If $\bar r_i \neq 0$, then
define $\tilde p = \bar r_i^{-1} p$ in case \ref{rt:article} and
$\tilde p = \bar r_i^{-1} (p-(1,1,1)) + (1,1,1)$ in case \ref{rt:spec}.
In each case, we have $\tilde p\in (\Gamma(f) - (1,1,1)) \cap \R_{\geq 0}$,
as well as $\tilde p \in C_n\cap C_{n'}$.
In particular, $\tilde p$ is not in the boundary of the shifted diagram
$\partial\Gamma(f) - (1,1,1)$. Therefore, the intersection
$F_n\cap F_{n'}$ must be one dimensional. Thus, $n'\in\Nd_n$.
Furthermore, we have
$m_n(\bar Z_i) + m_{n'}(\bar Z_i) - 1 = 
m_n(\bar Z_i) + m_{n'}(\bar Z_{i'}) = 
\ell_n(p) + \ell_{n'}(p) \equiv 0\, (\mod\alpha_{n,n'})$, hence
$\epsilon_{i,u} = 1$, where $u = u_{n,n'}$.
But since $\tilde p \in F_n\cap F_{n'} -(1,1,1)$, the point 
$p$ is in the minimal set of $\ell_{n'}$ on $\Fcn_i$, thus
$p\notin \Fcnm_i$.
This concludes the proof in cases \ref{rt:article}, \ref{rt:spec}
if $\bar r_i \neq 0$.

By construction, we cannot have $\bar r_i = 0$ in case \ref{rt:spec},
and as we saw in the proof of \cref{lem:last_ones}, since
$i>0$, the node $n$ has a neighbour $n''$ for which $m_{n''}(\bar Z_i) = 1$,
hence $\epsilon_{u,i} = 1$ for $u = u_{n,n''}$, finishing the proof as above.

Next, we prove the lemma in case \ref{rt:Newton}.
For brevity, we cite some of the methods used above. For instance, we
find $\bar r_i = \bar r_{i'}$ in a similar way.
The case $\bar r_i = 0$ can also be treated in the same way as in
case \ref{rt:article}, so we will assume $\bar r_i > 0$. Set
$\tilde p = \bar r_i^{-1} p$. Then $\tilde p \in \Gamma(f)$.
Unless $\tilde p$ is an integral point, we see in the same way above
that $n'\in\Nd_n$ and that $\epsilon_{i,u} = 1$ for $u = u_{n,n'}$,
finishing the proof. Therefore, assume, that $\tilde p$ is integral.
Then $\tilde p$ lies on one
of the coordinate hyperplanes and it lies on the boundary
$\partial \Gamma(f)$. It follows that we have $n_1, \ldots, n_j\in\Nd$
so that $F_{n_s}$ for $1\leq s \leq j$ are precisely the faces of $\Gamma(f)$
containing $\tilde p$ and that $n_{s'} \in\Nd_{n_s}$ if and only
if $|s-s'| = 1$. There are also have numbers $i_1, \ldots, i_j\in\N$ so that
for each $s$, we have $\bar v(i_s) = n_s$ and
$m_{\bar v(i_s)}(\bar Z_{i_s}) = \ell_{\bar v(i_s)}(p)$.
Let $\sigma$ be a permutation on $1,\ldots,j$ which orders the numbers
$i_1, \ldots, i_j$, that is, $i_{\sigma(1)} < \ldots < i_{\sigma(j)}$.
Just like in the previous case, we see, by \cref{lem:cone_eqs},
that $\bar r_{i_s}$ is constant for $1\leq s\leq j$,
and so for $1\leq s,s' \leq j$ we have
$m_{n_s}(\bar Z_{v(i_{s'})}) = \ell_{n_s}(p) + \epsilon_{i_{s'},n_s}$ and
$\epsilon_{i_{s'},n_s} = 1$ if and only if $s<s'$.
Furthermore, if $u = u_{n_s,n_{s\pm 1}}$ for some $s$,
and $\epsilon_{i_s,n_{s\pm1}} = 1$, then we get $\epsilon_{i_s,u} = 1$
in the same way as before.

By the assumption $p\in C_n\cap C_{n'}$, there are $s,s'$ so that
$n = n_s = \bar v(i_s)$ and $n'=n_{s'} = \bar v(i_s)$.
In particular, $i_s > i_{s'} \geq  i_{\sigma(1)}$. Thus, the lemma is proved,
once we show that for any $s$ with $i_{\sigma(s)} > i_{\sigma(1)}$, we
have, either $i_\sigma(s+1)<i_{\sigma(s)}$ or $i_\sigma(s-1)<i_{\sigma(s)}$,
because, if e.g. $i_\sigma(s+1)<i_{\sigma(s)}$ then $\epsilon_{i,u} = 1$
where $u = u_{n_s, n_{s+1}}$, and so $p\notin \Fcnm_i$.
We will prove this using the following 
satement. If $\sigma(s'')\geq \sigma(s)$, then
\begin{equation} \label{eq:3rd_N}
\begin{gathered}
  (\bar Z_{i_s}, E_{n_{s''}})
  \begin{cases}
    \leq 0 & \mathrm{if}\quad \epsilon_{i_s,n_{s''+1}} = 0\,\mathrm{and}\,
                              \epsilon_{i_s,n_{s''-1}} = 0                \\
	>    0 & \mathrm{if}\quad1<s''<j \,\mathrm{and}\,
	                    \epsilon_{i_s,n_{s''-1}} = 1\,\mathrm{or}\,
	                    \epsilon_{i_s,n_{s''+1}} = 1.
  \end{cases}
\end{gathered}
\end{equation}
Here, we exlude the condition $\epsilon_{i_s,n_{s''+1}} = 0$ if $s''=j$
as well as the condition $\epsilon_{i_s,n_{s''-1}} = 0$ if $s''=1$, since
they have no meaning.

We finish proving the lemma assuming \cref{eq:3rd_N}. Assume that
$1\leq s \leq j$ and $\sigma(s) > \sigma(1)$. The node
$\bar v(i_s)$ is chosen according to the ratio test. If there is
an $s''$ so that $1<s''<j$ and $\epsilon_{i_s,n_{s''}} = 0$, then
this $s''$ can be chosen so that either $\epsilon_{i_s,n_{s''+1}} = 1$ or
$\epsilon_{i_s,n_{s''-1}} = 1$, and therefore
$(\bar Z_{i_s},E_{n_{s''}}) > 0$ by the second part of \cref{eq:3rd_N}.
By the maximality condition in the ratio test, we find
$(\bar Z_{i_s},E_{\bar v(i_s)}) > 0$, and therefore, by the first part
of \cref{eq:3rd_N}, either $\epsilon_{i_s,n_{s''+1}} = 1$ or 
$\epsilon_{i_s,n_{s''-1}} = 1$, that is, either
$i_\sigma(s+1)<i_{\sigma(s)}$ or $i_\sigma(s-1)<i_{\sigma(s)}$.

If, however, there is no such $s''$, then
$\epsilon_{i_s,n_{s''}} = 1$ for $1<s'' <j$, and so $\bar v(i_s) = n_1$
or $\bar v(i_s) = n_j$. In the fist case, we have
$\epsilon_{i_s,n_2} = 1$ and in the second case, we have
$\epsilon_{i_s,n_{j-1}} = 1$, which, in either case, finishes the proof.

We remark that if $k = \sum_{n\in\Nd} \wt_n(f)$, then
$\bar v(i+k) = \bar v(i)$ and $\epsilon_{i,v} = \epsilon_{i+k,v}$ for
any $v$ where $\epsilon_{i,v}$ is defined. It therefore suffices to prove
the above statement for $i<k$, which is equivalent to $\bar r_i < 0$.

We will now prove \cref{eq:3rd_N}. For the first part, take $1\leq s'' \leq j$
with $\sigma(s'') \geq \sigma(s)$, hence $\epsilon_{i_s,n_{s''}} = 0$.
Assume further the given condition, namely that, if $s'' > 1$, then
$\epsilon_{i_s,n_{s''-1}} = 0$ and if $s''<j$, then
$\epsilon_{i_s,n_{s''+1}} = 0$. Since
$m_{n_{s''}}(\bar Z_{i_s}) = \ell_{n_{s''}}(p)$, it is enough,
by \cref{eq:nbr_sum}, to show that $m_u(\bar Z_i) \leq \ell_u(p)$ 
for all $u\in\V^*_{n_{s''}}$.
Note that since $n_{s''} \in \Nd$ we have
$\V^*_{n_{s''}} = \V_{n_{s''}}$, see \cref{rem:Oka}\cref{it:Oka_nodes}.
If $u = u_{n_{s''},n_{s''\pm1}}$, then
\begin{equation} \label{eq:3rd_pm}
\begin{split}
  m_u(\bar Z_{i_s})
   &=
     \left\lceil
      \frac{\beta_{n_{s''},n_{s''\pm1}} m_{n_{s''}}(\bar Z_i)
	                                   +m_{n_{s''\pm1}}(\bar Z_i)}
           {\alpha_{n_{s''},n_{s''\pm1}}}
     \right\rceil\\
   &= \frac{\beta_{n_{s''},n_{s''\pm1}} \ell_{n_{s''}}(p)
	                                   +\ell_{n_{s''\pm1}}(p)}
           {\alpha_{n_{s''},n_{s''\pm1}}}\\
   &= \ell_u(p)
\end{split}
\end{equation}
by \cref{lem:x_interpol}.
If $u\in\V_{n_{s''}}$ is any other neighbour, then there is an $n''\in\Nd^*$
so that $u = u_{n_{s''},n''}$. If $n''\in\Nd^*\setminus\Nd$, then
$n'' = n^*_e$ for some $e\in\E_{n_{s''}}$, and
\begin{equation} \label{eq:3rd_e}
  m_u(\bar Z_i) =
  \left\lceil
    \frac{\beta_e m_{n_{s''}}(\bar Z_{i_s})}{\alpha_e}
  \right\rceil
  =
  \left\lceil
    \frac{\beta_e \ell_{n_{s''}}(p)}{\alpha_e}
  \right\rceil
  \leq
    \frac{\beta_e \ell_{n_{s''}}(p) + \ell_{n''}(p)}{\alpha_e}
  = \ell_u(p).
\end{equation}
If $n'' \in\Nd$ and $n''$ is not one of the nodes $n_1, \ldots, n_j$, then
$p\notin C_{n''}$, in particular, $p\in C_{n_{s''} }\setminus C_{n''}$,
and so by \cref{lem:cone_eqs}
\[
  \frac{\ell_{n''}(p)}{\wt_{n''}(f)} > 
  \frac{\ell_{n_{s''}}(p)}{\wt_{n_{s''}}(f)}
  = \bar r_{i_s}
\]
which, by \cref{lem:ceil}, gives
$\ell_{n''}(p) \geq \bar r_{i_s} \wt_{n''}(f) + \epsilon_{i_s,n''}
= m_{n''}(\bar Z_{i_s})$
because if $\epsilon_{i_s,n''} \neq 0$, then $\bar r_{i_s} \wt_{n''}(f)\in\Z$.
This yields
\begin{equation} \label{eq:3rd_ne}
\begin{split}
  m_u(\bar Z_{i_s})
   &=  
     \left\lceil
         \frac{\beta_{n_{s''},n''} m_{n_{s''}}(\bar Z_i)
	                              +m_{n''}(\bar Z_i)}
              {\alpha_{n_{s''},n''}}
     \right\rceil \\
   &\leq \frac{\beta_{n_{s''},n''} \ell_{n_{s''}}(p)
	                              +\ell_{n''}(p)}
              {\alpha_{n_{s''},n''}}\\
   &=    \ell_u(p).
\end{split}
\end{equation}
This finishes the first part of \cref{eq:3rd_N}.

We prove next the second part of \cref{eq:3rd_N}.
So, assume that $1<s''<j$ and that
$\epsilon_{i_{s\vphantom{'}},n_{s''+1}}
+ \epsilon_{i_{s\vphantom{'}},n_{s''-1}} > 0$.
As in \cref{eq:3rd_pm} we find
$m_u(\bar Z_{i_s}) = \ell_u(p) + \epsilon_{i_{s\vphantom{'}},n_{s''\pm1}}$,
if $u = u_{n_{s''}, n_{s''\pm1}}$. The result therefore follows
from \cref{eq:nbr_sum}, once we prove $m_u(\bar Z_{i_s}) = \ell_{u}(p)$
for $u\in\V_{n_{s''}}\setminus\{n_{s''\pm1}\}$.

We start with the case $u = u_e$ with $e\in\E_{n_{s''}}$. In this case,
we will show that we have, in fact, equality in \cref{eq:3rd_e} (where
$n'' = n_e^*$). This follows once we prove that
$\ell_{n_e^*}(p) < \alpha_e$.
Since the face $F_{n_{s''}}$ has at most four edges, the edge
$F_{n_{s''}} \cap F_{n_e^*}$ is adjacent to at least one of the edges
$F_{n_{s''}} \cap F_{n_{s''\pm1}}$, let us assume that it is adjacent
to $F_{n_{s''}} \cap F_{n_{s''+1}}$, and define $\tilde p_1$ as the point
of intersection of the two edges.
Define also $p_1 = \bar r_i \tilde p$.
Then $\tilde p_1$
is a vertex of the face $F_{n_{s''}}$. From \cref{cor:reg_vx}, we see that
this is in fact a regular vertex, and from \cref{prop:content} we have
$\ell_{n_e^*}(\tilde p - \tilde p_1) = \alpha_{n_{s''},n_e^*}$.
Furthermore, since, in case \ref{rt:Newton}, we assume that the diagram
is convenient, the function $\ell_{n_e^*}$ is one of the coordinates,
and $p_1$ is on the corresponding coordinate hyperplane, thus
$\ell_{n_e^*}(p_1) = 0$. We get
$\ell_{n_e^*}(p) =
\ell_{n_e^*}(p - p_1) = \bar r_i \alpha_e$
and $\bar r_i < 1$ since we are assuming case \ref{rt:Newton}.

For the case when $n''\in\Nd_{n_{s''}}$, equality in \cref{eq:3rd_ne} is proved
in a similar way. This finishes the proof of the second part of
\cref{eq:3rd_N}, and so, the lemma is proved.
\end{proof}

\begin{proof}[Proof of \cref{thm:pts}]
Take any $i$, with $0\leq i \leq k-1$ in cases \ref{rt:article}, \ref{rt:spec},
and $i\geq 0$ in case \ref{rt:Newton}.
Each edge $S$ of the
polygon \index{polygon}
$\Fcn_i$ is the minimal set of some $\ell_u$ with
$u\in\V_{\bar v(i)}$.
In this case, define $\epsilon_S = \epsilon_{i,u}$.
We have
$-b_{\bar v(i)}\ell_{\bar v(i)} + \sum_{u\in\V_{\bar v(i)}} \ell_u \equiv 0$.
Thus, for any $p\in H^=_{\bar v(i)}(Z_i)$, we have
$-b_{\bar v(i)} m_{\bar v(i)}(\bar Z_i)
= \sum_{u\in\V_{\bar v(i)}} \ell_u(p)$. This gives
\begin{equation} \label{eq:int}
  (-\bar Z_i, E_{\bar v(i)}) = 
    - b_{\bar v(i)}  m_{\bar v(i)}(\bar Z_i)
	  - \sum_{u\in\V_{\bar v(i)}} m_u(\bar Z_i) 
    = \sum_{u\in\V_{\bar v(i)}} \ell_u(p) - m_u(\bar Z_i).
\end{equation}
If $u\in\V_{\bar v(i)}$, then there is an $n\in\Nd_{\bar v(i)}^*$ so that
$u = u_{\bar v(i),n}$. Let $S\subset \Fcn_i$ be the minimal set
of $\ell_u$. We then have
\[
  \lceil\ell_u|_S\rceil
  =
  \left\{
  \begin{array}{ll}
    \lceil\bar r_i m_u(Z_K-E) \rceil & \textrm{in case}\,\ref{rt:article}\\
    \lceil\bar r_i \wt_u(f)   \rceil & \textrm{in case}\,\ref{rt:Newton}\\
	\displaystyle
	\left\lceil
	  \frac{\bar r_i m_u(Z_K-E) + \wt_u(x_1x_2x_3)}{\wt_u(f)}
	\right\rceil
                                     & \textrm{in case}\,\ref{rt:spec}\\
  \end{array} 
  \right\}
  =  m_u(\bar Z_i) - \epsilon_{i,u}
\]
by \cref{lem:ceil}.
Furthermore, $\ell_u|_{H^=_{\bar v(i)}(\bar Z_i)}$
is a primitive affine function, whose minimal set on $\bFcn_i$
is $\bar r_i S$.
Using notation from \cref{s:affine}, it follows, that
$\ell_{S} = \ell_u - m_u(\bar Z_i)+\epsilon_S$, and so, by
\cref{eq:int}, we have
$(-\bar Z_i,E_{\bar v(i)}) = c_{\Fcnm_i}$. \index{content}
The theorem therefore follows from \cref{thm:point_count}.
\end{proof}


\newpage
\section{The geometric genus and the spectrum}\label{s:pg}

In this section we assume that $(X,0) \subset (\C^3,0)$ is an isolated
singularity with rational homology sphere link, given by a function
$f\in\O_{\C^3,0}$ with Newton nondegenerate principal part.
Notation from previous sections is retained.

\subsection{A direct identification of $p_g$ and $\Sp_{\leq 0}(f,0)$}
\label{ss:pgspec}

In this subsection we give a simple formula for both the
geometric genus \index{geometric genus}
$p_g$ and part of the
spectrum, \index{spectrum}
$\Sp_{\leq 0}(f,0)$, in terms
of
computation sequences \index{computation sequence}
\ref{rt:article} and \ref{rt:spec}.
\Cref{eq:thm:pg_as} has already been proved in \cite{Nem_Bal} using the
same method.

\begin{thm} \label{thm:pg_as}
Let the
computation sequence \index{computation sequence}
$(\bar Z_i)_{i=0}^k$ be defined as in
\cref{def:comp_seq_constr}, cases \ref{rt:article}, \ref{rt:spec}. Recall
the numbers $\bar r_i \in [0,1]$ from
\cref{def:C_r}. Then, the
geometric genus \index{geometric genus}
of $(X,0)$ is given by the formula
\begin{equation} \label{eq:thm:pg_as}
  p_g = \sum_{i=0}^{k-1} \max \{ 0, (-\bar Z_i, E_{\bar v(i)}) + 1 \}.
\end{equation}
Furthermore, in case \ref{rt:spec} we have \index{spectrum}
\begin{equation} \label{eq:thm:pg_sp}
  \Sp_{\leq 0}(f,0) = \sum_{i=0}^{k-1}
      \max \{ 0, (-\bar Z_i, E_{\bar v(i)}) + 1 \} [\bar r_i] \in \Z[\Q].
\end{equation}
\end{thm}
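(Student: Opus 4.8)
The plan is to combine three ingredients already assembled in the excerpt: the point-counting result \cref{thm:pts}, the computation-sequence bound \cref{thm:comp_seq} together with \cref{rem:comp_seq_pg,rem:subseq}, and Saito's description of the spectrum \cref{prop:Saito_exp}. The key point that must be supplied here (and which is stated in the block following \cref{thm:main} to be proved in \cref{ss:pgspec} via a result of Ebeling and Gusein-Zade) is the inequality
\[
  |\bar P_i| \leq \dim_\C \frac{H^0(\X,\O_{\X}(-\bar Z_i))}
                               {H^0(\X,\O_{\X}(-\bar Z_{i+1}))}
\]
for each $i$, in cases \ref{rt:article} and \ref{rt:spec}; equivalently, that the sections of $\O_{\X}(-\bar Z_i)$ given by monomials $x^p$ with $p\in\bar P_i$ are linearly independent modulo $H^0(\X,\O_{\X}(-\bar Z_{i+1}))$. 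By \cref{lem:pts_faces}, $\bar P_i$ is exactly the set of integral points $p$ with $\ell_{\bar v(i)}(p) = m_{\bar v(i)}(\bar Z_i)$ lying in the appropriate shifted polygon, so these monomials have pairwise distinct $\wt_{\bar v(i)}$-values only after one passes to the right quotient; the Ebeling--Gusein-Zade statement is what guarantees the divisorial valuation $\div_{\bar v(i)}$ on $\O_{X,0}$ agrees with the weight $\wt_{\bar v(i)}$ on these monomials, so that distinct exponents on the face give independent classes. This is the step I expect to be the main obstacle, since it is exactly where the Newton-nondegeneracy hypothesis enters in an essential (non-combinatorial) way.

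Granting that inequality, the argument for \cref{eq:thm:pg_as} runs as follows. First I would invoke \cref{rem:EO52}\ref{it:EO52ii} and \cref{rem:subseq}: the full computation sequence $(Z_i)$ for $Z^{\ref{rt:article}} = Z_K$ (resp. $Z^{\ref{rt:spec}} = x(Z_K-E)$) has the property that all steps other than the $\bar Z_i$ are trivial, contributing $0$ to the right-hand side of \cref{eq:comp_seq} and automatically satisfying the surjectivity condition. So \cref{thm:comp_seq} gives $h_{Z^*} \leq \sum_i \bar a_i$ where $\bar a_i = \max\{0,(-\bar Z_i,E_{\bar v(i)})+1\}$. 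Next, by \cref{thm:pts}\ref{it:pts_as}, $\bar a_i = |\bar P_i|$ and the $\bar P_i$ partition $\Z_{\geq 0}^3\setminus\Gamma_+(Z_K-E)$; summing the Ebeling--Gusein-Zade inequality over $i$ and telescoping gives
\[
  \sum_{i=0}^{k-1}|\bar P_i|
    \leq \dim_\C\frac{H^0(\X,\O_\X)}{H^0(\X,\O_\X(-Z^*))}
    = h_{Z^*}.
\]
Hence all inequalities are equalities. In case \ref{rt:article}, $Z^* = Z_K$ and $h_{Z_K} = p_g$ by \cref{prop:pg_hl} (since $(X,0)$ is Gorenstein, hence numerically Gorenstein), giving \cref{eq:thm:pg_as}. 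In case \ref{rt:spec}, $Z^* = x(Z_K-E) = Z_K - E + Z_{\mathrm{legs}}$ by \cref{lem:x_0Z}\ref{it:x_0Z_s}; one checks via \cref{prop:pg_hl} (its formula \cref{eq:pg_hl} applies since $Z^*$ still satisfies $(Z^*,E_v)\leq(Z_K,E_v)$ for all $v$, the correction being concentrated on the legs where it vanishes) that $h_{Z^*} = p_g$ as well — this small verification I would spell out, tracking the term $(Z_K - Z^*, Z^*)/2$ and showing it is zero.

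Finally, for \cref{eq:thm:pg_sp} I would refine the above in case \ref{rt:spec} by keeping track of the grading. The point is that the monomials $x^p$ with $p\in\bar P_i$ all have Newton weight $\ell_f(x^p) = r_i$ where, by the definition of the ratio test \cref{rt:spec} and \cref{lem:pts_faces}, $r_i = \bar r_i = (m_{\bar v(i)}(\bar Z_i)+\wt_{\bar v(i)}(x_1x_2x_3))/\wt_{\bar v(i)}(f)$, and moreover by \cref{lem:pts_faces} every integral point of $\Z_{>0}^3$ under $\Gamma(f)$ occurs in exactly one $\bar P_i$ (the cone $C_n$ in case \ref{rt:spec} is the shifted cone over $F_n$, so the shift by $(1,1,1)$ matches points with all-positive coordinates). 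By Saito's \cref{prop:Saito_exp}, $\Sp_{\leq 0}(f,0) = \sum_{p\in\Z_{>0}^3\cap\Gamma_-(f)}(\ell_f(x^p)-1)$; I would set up the bijection $p\leftrightarrow(i,p)$ with $p\in\bar P_i\cap\Z_{>0}^3$ and, using that $\bar a_i$ counts precisely the points of $\bar P_i$ with strictly positive coordinates in case \ref{rt:spec} (the boundary conditions $\epsilon_{i,u}$ were chosen, per \cref{def:C_r} and the remark after \cref{def:comp_seq_constr}, exactly to strip off the coordinate-hyperplane faces), conclude that the weighted count matches term by term, giving \cref{eq:thm:pg_sp}. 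The identification with an element of $\Z[\Q]$ under $\Z[t^{\pm1/\infty}]\cong\Z[\Q]$ is then immediate from \cref{ss:spec}.
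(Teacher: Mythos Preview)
Your overall strategy is the paper's: the upper bound from \cref{thm:comp_seq}, the lower bound via linear independence of monomials using \cref{lem:Eb_GZ}, the identity $|\bar P_i|=\bar a_i$ from \cref{thm:pts}, and \cref{prop:Saito_exp} for the spectrum. But your account of the linear-independence step---the one place where Newton nondegeneracy enters analytically---has a genuine gap.

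You write that the monomials $x^p$, $p\in\bar P_i$, become independent because Ebeling--Gusein-Zade makes $\div_{\bar v(i)}$ agree with $\wt_{\bar v(i)}$ on them, ``so that distinct exponents on the face give independent classes.'' But all these monomials have the \emph{same} weight $\wt_{\bar v(i)}(x^p)=\ell_{\bar v(i)}(p)=m_{\bar v(i)}(\bar Z_i)$; no valuation separates them. The paper's argument is a support-size contradiction. If a linear combination $g=\sum_{p\in\bar P_i} b_p x^p$ lies in $H^0(\X,\O_\X(-\bar Z_{i+1}))$, then $\div_{\bar v(i)}(g)>\wt_{\bar v(i)}(g)$; since $g=g_{\bar v(i)}$ (all monomials lie on one supporting hyperplane), \cref{lem:Eb_GZ} gives $g=hf_{\bar v(i)}$ in the Laurent ring. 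By \cref{lem:pts_faces}, $\supp(g)$ sits in a translate of $\bar r_i F_{\bar v(i)}$ with $\bar r_i<1$, whereas the convex hull of $\supp(hf_{\bar v(i)})$ for $h\neq 0$ must contain a translate of the full face $F_{\bar v(i)}$. These are incompatible, so $g=0$. The key geometric input---that $\bar P_i$ is a \emph{strictly shrunken} copy of the face, too small to support a multiple of $f_{\bar v(i)}$---is precisely what the ratio test was designed to guarantee, and it is missing from your plan.

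Secondarily, your proposed verification of $h_{x(Z_K-E)}=p_g$ in case~\ref{rt:spec} via \cref{prop:pg_hl} does not work: at any bamboo vertex $v$ between two nodes with $b_v>2$ one has $(x(Z_K-E),E_v)=(Z_K-E,E_v)=0>2-b_v=(Z_K,E_v)$, so the hypothesis of \cref{prop:pg_hl} fails. The paper does not attempt this; once \cref{eq:thm:pg_sp} is established (via Saito and the partition $\amalg_i(\bar P_i+(1,1,1))=\Z_{>0}^3\cap\Gamma_-(f)$), \cref{eq:thm:pg_as} in case~\ref{rt:spec} follows by counting, or equivalently because \cref{thm:pts}\ref{it:pts_as} shows $\sum_i\bar a_i$ equals the same lattice-point count in both cases.
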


\begin{lemma}[Ebeling and Gusein-Zade \cite{Ebeling_Gusein-Zade}, proof
of Proposition 1]
\label{lem:Eb_GZ}
Let $g\in \O_{\C^3,0}$ and $n\in \Nd$. Writing $g = \sum_{p\in\N^3} b_p x^p$,
set $g_n = \sum_{\ell_n(p) = \wt_n(g)} b_p x^p$. Then $\wt_n(g) < \div_n(g)$
if and only if $g_n$ is divisible by $f_n$ in the localized ring
$\O_{\C^3,0}[x_1^{-1}, x_2^{-1}, x_3^{-1}]$.
\qed
\end{lemma}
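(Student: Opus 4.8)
The plan is to read off the valuation $\div_n$ from the explicit toric resolution produced by Oka's algorithm (\cref{ss:Okas_alg}) and to compute it on the monomials of $g$. Recall from \cref{rem:wtdiv} that $D_n$ is the component of the exceptional divisor of the toric modification $\sigma\colon Y\to\C^3$ attached to the ray $\R_{\ge 0}\ell_n$, that the strict transform $\X\subset Y$ of $X$ is a (good) resolution $\pi=\sigma|_\X\colon\X\to X$, and that $E_n=\X\cap D_n$, so that $\wt_n=\mathrm{ord}_{D_n}\circ\,\sigma^*$ on $\O_{\C^3,0}$ and $\div_n(\,\cdot\,)=\div_n(\,\overline{\phantom{g}}\,)=\mathrm{ord}_{E_n}\circ\,\pi^*$ on $\O_{X,0}$. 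Since $E_n\subset D_n$ one always has $\wt_n(g)\le\div_n(g)$, so only the strictness of this inequality is in question. I would work in a chart of $Y$ near a generic point of $E_n$: choose a smooth three dimensional cone of Oka's fan containing $\R_{\ge 0}\ell_n$, with primitive generators $\ell_n=v^{(1)},v^{(2)},v^{(3)}$ forming a $\Z$-basis. In the corresponding affine chart of $Y$ there are coordinates $z_1,z_2,z_3$ with $D_n=\{z_1=0\}$ and $\sigma^*x^p=z_1^{\ell_n(p)}z_2^{\langle v^{(2)},p\rangle}z_3^{\langle v^{(3)},p\rangle}$, all exponents being $\ge 0$ for $p\in\N^3$ because the cone lies in the positive octant. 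Writing $w=\wt_n(g)$ and $w_f=\wt_n(f)$ and factoring out $z_1$ gives $\sigma^*g=z_1^{w}G$ and $\sigma^*f=z_1^{w_f}\tilde f$ with $G,\tilde f\in\O_Y$ not divisible by $z_1$, and the restrictions $G|_{z_1=0}$, $\tilde f|_{z_1=0}$ are exactly the reductions of $g_n$, $f_n$ along the $\ell_n$-direction, i.e. their images under the quotient $(\C^*)^3\to T_{D_n}$ by the one parameter subgroup corresponding to $\ell_n$.

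Next I would use Newton nondegeneracy. Nondegeneracy with respect to the face $F_n$ (\cref{def:nondeg}) means $\{f_n=0\}$ is smooth in $(\C^*)^3$; in the chart this says $\tilde f|_{z_1=0}$ cuts out a smooth curve and $\X$ meets $D_n$ transversally along $E_n$, so $z_1$ restricts to a local uniformizer of $E_n$ in $\X$. Hence $\div_n(g)=\mathrm{ord}_{E_n}\bigl(z_1^{w}\,G|_\X\bigr)=w+\mathrm{ord}_{E_n}(G|_\X)$, and therefore $\div_n(g)>\wt_n(g)$ if and only if $G|_\X$ vanishes along $E_n$. Since $E_n\subset D_n=\{z_1=0\}$, this is equivalent to $G|_{z_1=0}$ vanishing along $E_n$, i.e. to the reduction of $g_n$ vanishing on $E_n$. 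Using that $E_n\cap T_{D_n}$ is dense in $E_n$ — which holds because $E$ is a normal crossing divisor, so no component of $E_n$ can be contained in the toric boundary of $D_n$ — this in turn is equivalent, after pulling back along $(\C^*)^3\to T_{D_n}$, to $g_n$ vanishing on $\{f_n=0\}\cap(\C^*)^3$.

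It then remains to identify this vanishing with divisibility. The Laurent polynomial ring $\C[x_1^{\pm1},x_2^{\pm1},x_3^{\pm1}]$ is a UFD, and smoothness of $\{f_n=0\}$ in $(\C^*)^3$ forces $f_n$ to be squarefree there (a repeated prime factor would produce singular points); hence $g_n$ vanishes on $\{f_n=0\}\cap(\C^*)^3$ if and only if $f_n\mid g_n$ in $\C[x_1^{\pm1},x_2^{\pm1},x_3^{\pm1}]$, by the Nullstellensatz applied to each irreducible factor. Finally, $f_n\mid g_n$ in this Laurent ring if and only if $f_n\mid g_n$ in $\O_{\C^3,0}[x_1^{-1},x_2^{-1},x_3^{-1}]$: one implication is the ring inclusion, and for the converse the quotient $g_n/f_n$ is an $\ell_n$-homogeneous rational function (both $f_n$ and $g_n$ being $\ell_n$-homogeneous, of weights $w_f$ and $w$) that lies in $\C\{x\}[x^{-1}]$, and such a function is necessarily a Laurent polynomial, since its reduced denominator is $\ell_n$-homogeneous and has no zero in a punctured neighbourhood of $0$ meeting the torus, hence is a monomial. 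Chaining these equivalences yields the lemma.

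The main obstacle is the middle step: converting the order of vanishing of $\pi^*\bar g$ along $E_n$ into the statement on the torus $T_{D_n}$. This requires the standard but somewhat delicate geometry of Oka's (equivalently Varchenko's) toric resolution of Newton nondegenerate singularities — that one may choose the chart at a point of $E_n$ away from the intersections $E_n\cap E_v$ and from the deeper toric strata, that there $\X$ is smooth and transverse to $D_n$, and that the reduction identifies $\tilde f|_{z_1=0}$ with $f_n$ up to a monomial unit. I would also record the harmless but necessary remarks about reducible faces $f_n$ and monomial factors of $f_n$, which disappear once one works in $\C[x_1^{\pm1},x_2^{\pm1},x_3^{\pm1}]$ and restricts to the torus. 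Alternatively, one may simply quote \cite{Ebeling_Gusein-Zade}, where this is the content of the proof of Proposition~1.
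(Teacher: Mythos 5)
The paper offers no proof of this lemma at all — it is quoted from Ebeling and Gusein-Zade's proof of their Proposition 1 — and your toric-chart computation (order along $D_n$ versus order along $E_n$ at a generic point of the open orbit, then nondegeneracy, squarefreeness of $f_n$ in the Laurent ring, and the Nullstellensatz) is essentially that reference's argument, so your proposal is correct and follows the same route, modulo the standard facts about Oka's toric resolution that you explicitly flag. The one implicit ingredient worth stating is that passing from vanishing of (the reduction of) $g_n$ on $E_n\cap T_{D_n}$ to vanishing on all of $\{f_n=0\}\cap(\C^*)^3$ uses that $\X\cap D_n$ is a single irreducible curve (one exceptional component per compact face, as built into Oka's algorithm under the paper's standing assumptions), since otherwise vanishing along the one component $E_n$ would only yield divisibility by the corresponding factor of $f_n$.
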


\begin{proof}[Proof of \cref{thm:pg_as}]
We start by proving \cref{eq:thm:pg_as}.
By \cref{prop:pg_hl}, we have $p_g = h_{Z_K}$. Therefore, \cref{eq:thm:pg_as}
follows from \cref{thm:comp_seq}, once we prove
\begin{equation} \label{eq:pf:pg_as}
  \dim_\C \frac{H^0(\X,\O_{\X}(-\bar Z_i))}{H^0(\X,\O_{\X}(-\bar Z_{i+1}))}
    \geq \max \{ 0, (\bar Z_i, E_{\bar v(i)}) + 1 \}
\end{equation}
for all $i = 0, \ldots, k-1$. We start by noticing that for any $p\in \bar P_i$
we have $x^p \in H^0(\X,\O_{\X}(-\bar Z_i))$ (we identify a function on
$(\C^3,0)$ with its restriction to $(X,0)$, as well as its
pullback via $\pi$ to $\X$).  By \cref{thm:pts}\ref{it:pts_as},
the right hand side of \cref{eq:pf:pg_as} is the cardinality of $\bar P_i$, and
so the inequality is proved once we show that the family
$(x^p)_{p\in \bar P_i}$ is linearly independent modulo 
$H^0(\X,\O_{\X}(-\bar Z_{i+1}))$.

Take an arbitrary $\C$-linear combination
$g = \sum_{p\in \bar P_i} b_p x^p$ and assume that
$g \in H^0(\X,\O_{\X}(-\bar Z_{i+1}))$. Since $g = g_{\bar v(i)}$,
\cref{lem:Eb_GZ} says that there is an
$h\in \O_{\C^3,0}[x_1^{-1}, x_2^{-1}, x_3^{-1}]$ so that $g = h f_n$.
In the case when $\bar r_i = 1$ we have
$\bar P_i = F_n(Z_K-E) \cap \Z_{\geq 0}^3 = \emptyset$. Otherwise,
we have $\bar r_i < 1$ and
the support of $g$ is contained in a translate of $r_i F_{\bar v(i)}$.
But the convex hull  of the support of
$h f_n$ must contain a translate of $F_{\bar v(i)}$, unless $h = 0$.
Therefore, we must have $g = 0$, proving the independence of
$(x^p)_{p\in \bar P_i}$.

For \cref{eq:thm:pg_sp}, we note
that if $p\in \bar P_i$, then $p + (1,1,1) \in \R_{\geq 0} F_{\bar v(i)}$
and so $\bar r_i = \ell_f(p)$ (see \cref{def:Newt} for $\ell_f$).
The family of sets $\bar P_i + (1,1,1)$ provides a partition of
the set $\Z_{>0}^3 \setminus \Gamma_+(f)$. Saito's
\cref{prop:Saito_exp} therefore gives
\[
  \Sp_{\leq 0}(f,0)
    = \sum_{i=0}^{k-1} \sum_{p\in \bar P_i} [\ell_f(p)]
    = \sum_{i=0}^{k-1} \max\{0,(-\bar Z_i,E_{\bar v(i)}+1\} [\bar r_i].
\]
\end{proof}

\begin{rem}
It follows from the proof of \cref{thm:pg_as} that the monomials
$x^p$ for $p\in\bar P_i$ form a basis for the vector space
\[
  \frac{H^0(\X,\O_{\X}(-\bar Z_i))}{H^0(\X,\O_{\X}(-\bar Z_{i+1}))}.
\]
\end{rem}

\subsection{The Poincar\'e series of the Newton filtration and the spectrum}
\label{ss:PNfilt}

In this subsection, we give a formula for the
Poincar\'e series \index{Poincar\'e series}
$P^\A_X(t)$
(see \cref{def:Newt})
in terms of
computation sequence \index{computation sequence}
\ref{rt:Newton}. In particular, we
recover $\Sp_{\leq 0}(f,0)$ again.

\begin{thm} \label{thm:Newt}
Let $(\bar Z_i)_{i=0}^\infty$ be the computation sequence defined in
\cref{def:comp_seq_constr}, case \ref{rt:Newton} and define
\begin{equation} \label{eq:thm:Newt}
  P^{\ref{rt:Newton}}_X(t)
  = \sum_{i=0}^\infty \max\{0,(-\bar Z_i,E_{\bar v(i)})+1\} t^{\bar r_i}.
\end{equation}
Then $P^\A_X(t) = P^{\ref{rt:Newton}}_X(t)$. In particular, we have
$P^{\ref{rt:Newton}}_X(t)$ is a rational Puiseux series and
$\Sp(f,0)_{\leq 0} = P^{\ref{rt:Newton},\mathrm{pol}}_X(t^{-1})$.
\end{thm}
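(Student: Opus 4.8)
The plan is to convert the partition of the positive octant furnished by \cref{thm:pts} into an identity of power series, with \cref{lem:PNseries} as the bridge between $P^\A_X$ and a monomial count, and then to read off the statement about the spectrum from \cref{thm:spec_pol_part}. Since we are in case \ref{rt:Newton}, by \cref{conv:alg} the diagram $\Gamma(f)$ is convenient, so all the results of \cref{s:seq} quoted below apply.

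First I would recall from \cref{lem:PNseries} that $P^\A_X(t) = (1-t)\,P^\A_{\C^3}(t)$ with $P^\A_{\C^3}(t) = \sum_{p\in\N^3} t^{\ell_f(x^p)}$, and reorganize this last sum along the partition $\Z_{\geq0}^3 = \amalg_{i\geq0}\bar P_i$ of \cref{thm:pts}\ref{it:pts_N}. The local input is that $\ell_f(p) = \bar r_i$ for every $p\in\bar P_i$: by \cref{lem:pts_faces} one has $\bar P_i\subset\Fnb_{\bar v(i)}(\bar Z_i)\cap\Z^3\subset C_{\bar v(i)}$, so the description of the cone $C_{\bar v(i)}$ in \cref{lem:cone_eqs}\ref{it:cone_eqs_ii} shows that the minimum defining $\ell_f(p)=\min_{n\in\Nd}\ell_n(p)/\wt_n(f)$ is attained at $n=\bar v(i)$; since moreover $p\in H^=_{\bar v(i)}(\bar Z_i)$, this gives $\ell_f(p) = \ell_{\bar v(i)}(p)/\wt_{\bar v(i)}(f) = m_{\bar v(i)}(\bar Z_i)/\wt_{\bar v(i)}(f) = \bar r_i$. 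Hence $P^\A_{\C^3}(t) = \sum_{i=0}^\infty |\bar P_i|\,t^{\bar r_i}$.

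Next I would exploit the periodicity of the continued sequence. With $k = \sum_{n\in\Nd}\wt_n(f)$ as in \cref{rem:EO52}, the continuation to infinity of \cref{def:comp_seq_constr} satisfies $\bar v(i+k) = \bar v(i)$ and $\bar Z_{i+k} = \bar Z_i + \wt(f)$, hence (using $m_v(\wt(f))=\wt_v(f)$) one gets $\bar r_{i+k} = \bar r_i + 1$. Writing $\bar a_i = \max\{0,(-\bar Z_i,E_{\bar v(i)})+1\}$ for the coefficients appearing in $P^{\ref{rt:Newton}}_X$, \cref{thm:pts}\ref{it:pts_N} gives $|\bar P_i| = \bar a_i$ for $i<k$ and $|\bar P_i| - |\bar P_{i-k}| = \bar a_i$ for $i\geq k$, so by induction $|\bar P_{i_0+mk}| = \sum_{j=0}^m \bar a_{i_0+jk}$ for $0\leq i_0<k$ and $m\geq0$. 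Splitting $\sum_{i\geq0}$ into residue classes modulo $k$, substituting this together with $\bar r_{i_0+mk} = \bar r_{i_0}+m$, and applying the elementary identity $\sum_{m\geq0}\bigl(\sum_{j\le m}a_j\bigr)t^m = (1-t)^{-1}\sum_{j\geq0}a_jt^j$ in each residue class, I obtain
\[
  (1-t)\,P^\A_{\C^3}(t)
  = \sum_{i_0=0}^{k-1}\sum_{j\geq0}\bar a_{i_0+jk}\,t^{\bar r_{i_0}+j}
  = \sum_{i\geq0}\bar a_i\,t^{\bar r_i}
  = P^{\ref{rt:Newton}}_X(t),
\]
so that $P^\A_X(t) = P^{\ref{rt:Newton}}_X(t)$. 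Finally, rationality of $P^{\ref{rt:Newton}}_X=P^\A_X$ follows from \cref{lem:PNseries} and \cref{lem:cone_pol_part} (as in the proof of \cref{thm:spec_pol_part}, where $P^\A_{\C^3}$ is written as a signed sum of the series $P_\sigma$), and the identity $\Sp_{\leq0}(f,0) = P^{\ref{rt:Newton},\mathrm{pol}}_X(t^{-1})$ is then exactly \cref{thm:spec_pol_part}.

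I expect the main obstacle to be the index bookkeeping in the periodic continuation — keeping the correspondence between $\bar P_i$, $\bar a_i$, $\bar r_i$ and the underlying full computation sequence straight, and checking that the residue-class decomposition genuinely reassembles $\sum_{i\geq 0}\bar a_i t^{\bar r_i}$ — together with the verification that $\ell_f$ is linear on the cone $C_{\bar v(i)}$ with the claimed value $\bar r_i$, which is where \cref{lem:pts_faces} and \cref{lem:cone_eqs}\ref{it:cone_eqs_ii} carry the real weight.
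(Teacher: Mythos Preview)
Your proposal is correct and follows essentially the same approach as the paper: both use the partition $\Z_{\geq0}^3=\amalg_i\bar P_i$ from \cref{thm:pts}\ref{it:pts_N}, the identity $\ell_f(p)=\bar r_i$ for $p\in\bar P_i$, the periodicity $\bar r_{i+k}=\bar r_i+1$ to pass from $\sum|\bar P_i|t^{\bar r_i}$ to $\sum\bar a_i t^{\bar r_i}$ via multiplication by $(1-t)$, and then \cref{lem:PNseries} and \cref{thm:spec_pol_part}. Your write-up is in fact more detailed than the paper's in justifying $\ell_f(p)=\bar r_i$ and in unwinding the $(1-t)$-identity via residue classes; one tiny slip is that the inclusion $\bar P_i\subset C_{\bar v(i)}$ should be read off from $\bar P_i=\Fcnm_i\cap\Z^3\subset\Fcn_i\subset C_{\bar v(i)}$ (\cref{def:cone_face}) rather than through $\Fnb_{\bar v(i)}(\bar Z_i)$, but the conclusion is unaffected.
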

\begin{proof}
If $i\geq k$, then we have $\bar r_{i-k} = \bar r_i - 1$.
Therefore, \cref{thm:pts}\ref{it:pts_N} can be rephrased as saying
$\sum_{i=0}^\infty \bar a_i t^{\bar r_i}
= (1-t)\sum_{i=0}^\infty |\bar P_i| t^{\bar r_i}$.
Furthermore, the family $(\bar P_i)$ is a partition of $\Z_{\geq 0}^3$,
and for any $i\geq 0$ and $p\in \bar P_i$, we have $\ell_f(p) = \bar r_i$
which gives $\sum_{i=0}^\infty |\bar P_i|t^{\bar r_i}$ by \cref{lem:PNseries}
and so
\[
   P^{\ref{rt:Newton}}_X(t)
    = (1-t)\sum_{i=0}^\infty |\bar P_i| t^{\bar r_i}
	= (1-t)\sum_{p\in\Z^3_{\geq 0}} t^{\ell_f(p)}
	= P_X^\A(t).
\]
The other statements now follow from \cref{thm:spec_pol_part}.
\end{proof}

\newpage
\section{The Seiberg--Witten invariant} \label{s:SW}

In this section we compare the numerical data obtained in \cref{s:seq}
with coefficients of the
counting function \index{counting function}
$Q_0(t)$ from
\cref{ss:ZQ}. Using \cref{prop:Q_SW}
we recover the
normalized Seiberg--Witten invariant
\index{Seiberg--Witten invariant!normalized}
associated with the
canonical $\spinc$ structure
\index{spinc structure@$\spinc$ structure!canonical}
on the link from computation sequence \ref{rt:article}
from \cref{def:comp_seq_constr}.
The strategy we will follow is similar to that of the
geometric genus. \index{geometric genus}
We do not know whether N\'emethi's main identity $Z_0 = P$ holds
(see \cite{Nem_Poinc}). We will, however, see that computation
sequence \ref{rt:article}
defined in \cref{s:seq} does in fact compute the normalized
Seiberg--Witten invariant $\sw_M(\scan) - (Z_K^2+|\V|)/8$,
using the counting function $Q_0(t)$ in the same way the
geometric genus was obtained using the
Hilbert series \index{Hilbert series}
$H(t)$.

In this section we assume that $M$, a
rational homology sphere, \index{homology sphere!rational}
is the
link \index{link}
of an isolated singularity
$(X,0) \subset (\C^3,0)$ given by a function $f\in\O_{\C^3}$ with
Newton nondegenerate \index{Newton nondegenerate}
principal part. We will assume that $G$ is the
minimal graph \index{minimal graph}
representing the link. Equivalently, it
is the graph obtained by Oka's algorithm in \cref{ss:Okas_alg}, under
the assumption that the diagram $\Gamma(f)$ is
minimal. \index{minimal diagram}
Furthermore, we have the series $Z, Q$ defined in \cref{ss:ZQ}.
We assume that $(\bar Z_i)$ is computation sequence \ref{rt:article} from
\cref{def:comp_seq_constr}.

\begin{thm} \label{thm:qZ_ident}
For $i=0,\ldots,k-1$ we have
\begin{equation} \label{eq:qZ_ident}
  q_{\bar Z_{i+1}} - q_{\bar Z_i}
    = \max\{ 0, (-\bar Z_i, E_{\bar v(i)}) + 1 \}.
\end{equation} \index{counting function}
In particular, we have
\begin{equation} \label{eq:qZ_ident_k}
   \sw_{M}(\scan) - \frac{Z_K^2+|\V|}{8}
     = \sum_{i=0}^{k-1} \max\{ 0, (-\bar Z_i, E_{\bar v(i)}) + 1 \}.
\end{equation} \index{Seiberg--Witten invariant!normalized}
\end{thm}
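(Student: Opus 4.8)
\textbf{Proof proposal for \cref{thm:qZ_ident}.}
The plan is to mirror, on the topological side, the argument used in \cref{thm:pg_as} for the geometric genus. The key input is \cref{prop:Q_SW}: since $Z_K \in L$ (hypersurface, hence Gorenstein) and $\bar Z_i \leq Z_K$ for all $i$ by construction of computation sequence \ref{rt:article}, each $\bar Z_i$ satisfies $(\bar Z_i, E_v) \leq (Z_K, E_v)$ for all $v$, so \cref{prop:Q_SW} applies to every $\bar Z_i$. Summing the telescoping differences $q_{\bar Z_{i+1}} - q_{\bar Z_i}$ from $i=0$ to $k-1$ and using $\bar Z_0 = 0$, $\bar Z_k = Z_K$, the quadratic correction terms collapse and \cref{eq:prop:Q_SW} yields
\[
  q_{Z_K} - q_0 = \sw_M(\scan) - \frac{Z_K^2 + |\V|}{8} - \left(\sw_M(0\cdot\scan) - \frac{(-Z_K)^2 + |\V|}{8}\right) + q_0^{\text{correction}},
\]
which one checks reduces exactly to $\sw_M(\scan) - (Z_K^2+|\V|)/8$ (here $q_0$ is a fixed reference value that cancels). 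Thus \cref{eq:qZ_ident_k} follows from \cref{eq:qZ_ident} by telescoping; the real content is the per-step identity \cref{eq:qZ_ident}.

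To prove \cref{eq:qZ_ident}, first reduce to the nontrivial steps: by \cref{rem:EO52}\ref{it:EO52ii} the full computation sequence $(Z_i)$ has $(Z_i, E_{v(i)}) > 0$ on all steps except those equal to some $\bar Z_{i'}$, and on such trivial steps $q_{Z_{i+1}} = q_{Z_i}$ (this needs a short lemma: the coefficient count $q_{l'}$ does not change across a step $Z \to Z+E_v$ with $(Z,E_v)>0$, analogous to \cref{rem:subseq} for $h$). So it suffices to show $q_{\bar Z_{i+1}} - q_{\bar Z_i} = |\bar P_i|$, because by \cref{thm:pts}\ref{it:pts_as} we have $|\bar P_i| = \bar a_i = \max\{0,(-\bar Z_i,E_{\bar v(i)})+1\}$. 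The target is therefore the "counting-function analogue" $q_{\bar Z_{i+1}} - q_{\bar Z_i} = |\bar P_i|$, exactly paralleling $h_{\bar Z_{i+1}} - h_{\bar Z_i} = |\bar P_i|$ from \cref{s:pg}.

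The main obstacle, and the technical heart of the section, is establishing $q_{\bar Z_{i+1}} - q_{\bar Z_i} = |\bar P_i|$ directly from the combinatorics of the zeta function. Recall $q_{l'} = \sum\set{z_{l'+l}}{l\in L,\, l\not\geq 0}$, so $q_{\bar Z_{i+1}} - q_{\bar Z_i} = \sum_{l\not\geq 0} (z_{\bar Z_{i+1}+l} - z_{\bar Z_i+l})$. Using the product formula \cref{eq:zeta} for $Z_0(t) = \prod_v (1-t^{E_v^*})^{\delta_v - 2}$ and the fact that only the node vertices contribute with nonzero exponent $\delta_n - 2$, one expands $Z_0(t)$ as a sum over multi-indices supported on $\Nd$; the passage from $\bar Z_i$ to $\bar Z_{i+1} = x(\bar Z_i + E_{\bar v(i)})$ changes exactly the coefficient $m_{\bar v(i)}$ by one (on the reduced node lattice, which is why the Laufer operator $x$ and \cref{ss:Laufer_seq} were set up). One then has to match the resulting change in the partial sum $\sum_{l\not\geq 0} z_{\bullet + l}$ with the lattice-point set $\bar P_i$. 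I expect this to require, as the excerpt warns, a case-by-case analysis following the classification of \cref{prop:anatomy,prop:arm_hands} (central triangle vs. trapezoid vs. central edges; nondegenerate vs. degenerate arms; the two leg-group types in \cref{prop:arm_hands}\ref{it:arm_hands_finger}), with the affine-geometry counting from \cref{thm:point_count} and \cref{lem:pts_faces} supplying, in each case, the identification of the relevant sum of zeta coefficients with $|\bar P_i| = |\Fcnm_i \cap \Z^3|$. The bookkeeping of which $z_{l'}$ enter the truncated sum $l \not\geq 0$ — i.e.\ controlling the "tail" near the Lipman cone boundary — is where the argument is most delicate, and is the step I would budget the most effort for.
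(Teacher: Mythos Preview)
Your overall strategy---reduce \cref{eq:qZ_ident} to $q_{\bar Z_{i+1}}-q_{\bar Z_i}=|\bar P_i|$ and then prove this by a case analysis following the anatomy classification \cref{prop:anatomy}---is exactly the paper's approach. Two specific steps in your setup are wrong, however, and would break the argument if you tried to carry it out.

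First, your derivation of \cref{eq:qZ_ident_k} from \cref{eq:qZ_ident} is not correct. The hypothesis of \cref{prop:Q_SW} is $(l',E_v)\le(Z_K,E_v)$ for all $v$, i.e.\ $l'\in Z_K+\Stop'$; this does \emph{not} follow from $\bar Z_i\le Z_K$ (an effective difference gives no control on the sign of $(Z_K-\bar Z_i,E_v)$), and in fact the intermediate $\bar Z_i$ do not satisfy it. The paper's argument is simpler: apply \cref{prop:Q_SW} only at $l'=Z_K$ to get $q_{Z_K}=\sw_M(\scan)-(Z_K^2+|\V|)/8$, and observe $q_0=0$ directly because $Z_0(t)$ is supported on $\Stop\subset\{l\ge 0\}$; then telescope.

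Second, your claim that ``only the node vertices contribute with nonzero exponent $\delta_n-2$'' is false: ends have $\delta_e=1$, exponent $-1$, so each end contributes a factor $(1-t^{E_e^*})^{-1}$, an infinite geometric series. This is not a technicality---it is precisely why the paper builds the reduction machinery of \cref{ss:red_coeff_z}: the map $\psi$ of \cref{lem:Sl_struct}, the fibres $\S_Z(l)$, and \cref{lem:factors} package the infinite sum over leg contributions into closed formulas $z^\Nd_{l,n}$ depending only on $(-\psi(l),E_n)$ and $\delta_n-|\E_n|$. The step you call ``a short lemma'' about trivial Laufer steps is likewise absorbed into this reduction via \cref{lem:psi_Z_ineq,lem:q_sum}, which express $q_{\bar Z_{i+1}}-q_{\bar Z_i}$ directly as $\sum_{l\in S_i}z^\Nd_l$ on the node lattice; \cref{lem:count_empty} then handles the case $(\bar Z_i,E_{\bar v(i)})>0$. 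Your final paragraph correctly identifies what the case analysis must accomplish, but the bookkeeping you anticipate rests on these reduced coefficients $z^\Nd_{l,n}$ rather than the raw $z_{l'}$.
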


\begin{cor}[SWIC for Newton nondegenerate hypersurfaces]  \label{cor:SWIC}
The
Seiberg--Witten invariant conjecture
\index{Seiberg--Witten invariant conjecture}
holds for
Newton nondegenerate \index{Newton nondegenerate}
hypersurface \index{hypersurface}
singularities (see \cref{ss:SWIC}).
\qed
\end{cor}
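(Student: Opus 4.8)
The plan is to prove \cref{thm:qZ_ident} — the statement asserting the identity
$q_{\bar Z_{i+1}} - q_{\bar Z_i} = \max\{0, (-\bar Z_i, E_{\bar v(i)})+1\}$ for each step of computation sequence \ref{rt:article} — from which the corollary \cref{cor:SWIC} follows essentially for free. Indeed, once \cref{eq:qZ_ident} is established, summing over $i = 0, \ldots, k-1$ and using $q_0 = 0$ together with \cref{prop:Q_SW} applied at $l' = Z_K$ (legal because $(Z_K,E_v) = (Z_K,E_v)$ trivially satisfies the required inequality, and because $-Z_K + 2Z_K = Z_K$) yields \cref{eq:qZ_ident_k}; comparing this with the already-proved formula $p_g = \sum_{i=0}^{k-1}\max\{0,(-\bar Z_i,E_{\bar v(i)})+1\}$ from \cref{thm:pg_as} gives $\sw_M(\scan) - (Z_K^2+|\V|)/8 = p_g$, which is exactly the equality case of the SWIC \cref{eq:SWIC} for our hypersurface; the inequality $\geq$ is then also immediate, and since hypersurfaces are Gorenstein, \cref{cor:SWIC} is proved.

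So the whole content sits in \cref{thm:qZ_ident}. The strategy mirrors the proof of \cref{thm:pg_as}: there, the key was the combinatorial identity $h_{\bar Z_{i+1}} - h_{\bar Z_i} = |\bar P_i|$ (together with $|\bar P_i| = \bar a_i$ from \cref{thm:pts}\ref{it:pts_as}); here the analogous goal is $q_{\bar Z_{i+1}} - q_{\bar Z_i} = |\bar P_i|$. First I would unwind the definition of $q_{l'}$ from \cref{def:zeta}: $q_{l} = \sum\{z_{l+l'} : l'\in L,\, l'\not\geq 0\}$, so that $q_{\bar Z_{i+1}} - q_{\bar Z_i} = \sum\{z_{\bar Z_i + l'} : l'\in L,\, l' \not\geq 0,\, l' + E_{\bar v(i)} \geq 0\}$, i.e. a sum of zeta-function coefficients over the "slab" of lattice points $l'$ with $l'\not\geq 0$ but $l' + E_{\bar v(i)} \geq 0$ — equivalently $m_{\bar v(i)}(l') = -1$ and $m_w(l') \geq 0$ for $w \neq \bar v(i)$. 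The task is then to show this slab sum of the $z$'s equals $|\bar P_i|$. Since $Z_0(t)$ is supported on the Lipman cone and has an explicit product formula \cref{eq:zeta}, one can in principle localize: restrict attention to the variables adjacent to the bamboo/star around $\bar v(i)$ and compute the relevant partial sums of $z$'s as a lattice-point count in the dual cone. This is where the combinatorial interplay with the Newton diagram re-enters: one expects the slab sum to again be expressible as the number of integral points in the diluted polygon $\Fcnm_i$, which by \cref{lem:pts_faces} is exactly $|\bar P_i| = \bar a_i$.

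The main obstacle — and the part the paper itself flags as "technical" and "split into cases" — is establishing $q_{\bar Z_{i+1}} - q_{\bar Z_i} = |\bar P_i|$ directly, because unlike the geometric-genus side (where one has the clean cohomological exact sequence of \cref{thm:comp_seq} and Ebeling–Gusein-Zade's \cref{lem:Eb_GZ} to control $H^0$), here there is no a priori identity $Z_0 = P$ to lean on; one must argue purely combinatorially with the zeta function. I would proceed by: (i) reducing, via \cref{rem:SW}\ref{it:SW_MI} and the structure of $Z(t)$, the slab sum to a finite sum governed by the local star of $\bar v(i)$ and its legs/bamboos, using the continued-fraction data $(\alpha_{n,n'}, \beta_{n,n'})$ and \cref{lem:x_interpol}, \cref{lem:ceil}; (ii) identifying this local sum with a lattice-point count in the affine hyperplane $H^=_{\bar v(i)}(\bar Z_i)$ via the same reduction-to-a-plane machinery of \cref{s:affine} and \cref{thm:point_count}; (iii) matching the resulting polygon with $\Fcnm_i$ and invoking \cref{lem:pts_faces} and \cref{thm:pts} to conclude $= |\bar P_i| = \bar a_i$. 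The case analysis (central triangle versus trapezoid versus central edges, degenerate versus nondegenerate arms, as classified in \cref{prop:anatomy} and \cref{prop:arm_hands}) will be needed precisely at step (i)–(ii), since the local combinatorics of the zeta-coefficient sum depends on the shape of the face $F_{\bar v(i)}$ and on which neighbours are nodes versus ends; each case will require careful bookkeeping of the $\epsilon_{i,u}$ boundary corrections, exactly as in the proof of \cref{lem:disj}. Once all cases give $q_{\bar Z_{i+1}} - q_{\bar Z_i} = \bar a_i$, the theorem and hence \cref{cor:SWIC} follow.
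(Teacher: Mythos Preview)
Your first paragraph is correct and is precisely how the paper obtains the corollary: \cref{thm:qZ_ident} gives $\sw_M(\scan)-(Z_K^2+|\V|)/8=\sum_i\max\{0,(-\bar Z_i,E_{\bar v(i)})+1\}$, \cref{thm:pg_as} gives $p_g$ equal to the same sum, and hypersurfaces are Gorenstein; this is exactly why the paper writes only \qed.

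Your sketch of \cref{thm:qZ_ident} has the right architecture (reduce $q_{\bar Z_{i+1}}-q_{\bar Z_i}$ to a combinatorial sum, then case analysis per \cref{prop:anatomy}) and matches the paper's approach, but your slab formula is wrong in two ways. First, the sign: writing $l=\bar Z_i+l'$, the difference runs over $l'\geq 0$ with $l'\not\geq\bar Z_{i+1}-\bar Z_i$, not $l'\not\geq 0$. Second, and this is the substantive point, $\bar Z_{i+1}-\bar Z_i\neq E_{\bar v(i)}$: the Laufer operator $x$ moves many non-node coordinates, so the slab in $L$ has no clean description. The paper handles this by the node reduction of \cref{ss:red_coeff_z}: for $l$ in the support of $Z_0$ one has $l\geq\bar Z_i\iff l|_\Nd\geq\bar Z_i|_\Nd$ (\cref{lem:psi_Z_ineq}), and since $\bar Z_{i+1}|_\Nd=\bar Z_i|_\Nd+E_{\bar v(i)}$, \cref{lem:q_sum} gives the correct slab formula $q_{\bar Z_{i+1}}-q_{\bar Z_i}=\sum_{l\in S_i}z_l^\Nd$ in $L^\Nd$. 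The reduced coefficients $z_{l,n}^\Nd$ are computed explicitly in \cref{lem:factors}, and the case analysis (\cref{lem:1nd,lem:central_node,lem:onetwo_arms,lem:three_arms}) then shows $\sum_{l\in S_i}z_l^\Nd=|\bar P_i|$, not by reapplying \cref{thm:point_count} but by producing, for each $p\in\bar P_i$, essentially one $l\in S_{i,p}$ with $z_l^\Nd=1$ and checking that any residual terms cancel. None of this affects the corollary itself, since you may cite \cref{thm:qZ_ident} as given.
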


\begin{proof}[Proof of \cref{thm:qZ_ident}]
If the graph $G$ contains a single node, that is, $|\Nd| = 1$, then
\cref{eq:qZ_ident} follows from \cref{lem:1nd}. If $\bar v(i)$ is
a central node (and not the only node),
then \cref{eq:qZ_ident} follows from \cref{lem:central_node}.
If $G$ contains exactly one or two nondegenerate arms and
$\bar v(i)$ is not central, then
\cref{eq:qZ_ident} follows from
\cref{lem:onetwo_arms}.
If $G$ contains three nondegenerate arms and $\bar v(i)$ is not central,
then \cref{eq:qZ_ident} follows from \cref{lem:three_arms}.
By \cref{prop:anatomy}, there are no other cases to consider.

Summing the left hand side of \cref{eq:qZ_ident} gives a telescopic series
yielding $q_{Z_K}-q_0$. We have $q_0 = 0$ because $Z_0(t)$ is supported
on the Lipman cone $\Stop\subset\Z_{\geq 0}\langle\V\rangle$, and
$q_{Z_K} = \sw_{M}(\scan) - (Z_K^2+|\V|)/8$ by \cref{prop:Q_SW}.
\end{proof}

\subsection{Coefficients of the reduced zeta function} \label{ss:red_coeff_z}

In this subsection we will describe a reduction process which will simplify
the proof, as well as computing the coefficients of the
reduced zeta function. \index{zeta function!reduced}
The reduction is a special case of a general reduction theory
established by L\'aszl\'o \cite{Laszlo_th}.

\begin{definition} \label{def:red_groups}
Define $L^\Nd = \Z\gen{E_n}{n\in\Nd}\subset L$ and let $\pi^\Nd:L\to L^\Nd$
be the canonical projection. Set
$V'_Z =  \Z\gen{E^*_v}{v\in\Nd\cup\E}$
\nomenclature[VZ']{$V'_Z$}{$\Z\gen{E^*_v}{v\in\Nd\cup\E}$}
and
$V_Z = V'_Z\cap L$
\nomenclature[VZ]{$V_Z$}{$V'_Z\cap L$}
and $V_Z^\Nd = \pi^\Nd(V_Z)$. For $l\in L$ we also write
$\pi^\Nd(l) = l|_\Nd$.
\nomenclature[lNd]{$l\vert_\Nd$}{Reduction to nodes}
\end{definition}

\begin{lemma} \label{lem:mod}
We have
\begin{equation} \label{eq:lem:mod}
  V_Z^\Nd = \set{l\in L^\Nd}{\fa{ n\in\Nd,\,n'\in\Nd_n}
    {\frac{\beta_{n,n'} m_n(l) + m_{n'}(l)}{\alpha_{n,n'}}\in\Z}}.
\end{equation}
Furthermore, assuming $l'\in V_Z$ with $l'|_\Nd = l$ and $n\in\Nd$,
then, for $n'\in\Nd_n$, we have
\begin{equation} \label{eq:mod_Nd}
  m_u(l') = \frac{\beta_{n,n'} m_n(l) + m_{n'}(l)}{\alpha_{n,n'}},
\end{equation}
where $u=u_{n,n'}$, and for $n'\in\Nd^*_n\setminus\Nd$
\begin{equation} \label{eq:mod_Nds}
  m_u(l') = \frac{\beta_{n,n'} m_n(l) - (l',E_e)}{\alpha_{n,n'}},
\end{equation}
where $e\in\E_n$ so that $n'=n_e^*$ and, again, $u=u_{n,n'}$.
\end{lemma}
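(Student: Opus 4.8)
\textbf{Proof plan for \cref{lem:mod}.}
The plan is to analyze the structure of $L'$ near a node $n\in\Nd$ using the local description of the graph $G$ coming from Oka's algorithm, together with the already-established \cref{block:abc_comp} (which gives $\alpha_{n,n'}\ell_u=\beta_{n,n'}\ell_n+\ell_{n'}$ for the relevant primitive functionals) and \cref{lem:x_interpol}. The key observation is that the bamboo between a node $n$ and a neighbouring face $n'$ behaves like a Hirzebruch--Jung string, so that the value of any $l'\in L'$ on the bamboo vertices is determined by its values $m_n(l'), m_{n'}(l')$ (or $(l',E_e)$ in the non-compact case) through the unique solution of the string's linear recursion $m_{r-1}-b_{v_r}m_r+m_{r+1}=0$. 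Concretely, first I would take $l'\in V'_Z=\Z\gen{E^*_v}{v\in\Nd\cup\E}$, which by definition satisfies $(l',E_v)=0$ for every $v\in\V\setminus(\Nd\cup\E)$ — i.e.\ on every interior bamboo vertex — and $(l',E_e)\in\Z$ is the only possibly nonzero contribution on an end $e\in\E$. This is exactly the hypothesis that makes the string recursion homogeneous along the interior of each bamboo, so the computation reducing to the endpoints goes through as in the proof of \cref{lem:x_interpol}.

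The concrete steps are: (1) Fix $n\in\Nd$ and $n'\in\Nd_n$, let $v_1,\dots,v_s$ be the bamboo between them as in \cref{fig:Bamboo}, set $v_0=n$, $v_{s+1}=n'$, and $m_r=m_{v_r}(l')$. Since $(l',E_{v_r})=0$ for $1\le r\le s$ (these are interior vertices, not in $\Nd\cup\E$, hence not in $\E$ either as they have degree $2$), the $m_r$ satisfy the homogeneous recursion with boundary values $m_0=m_n(l)$, $m_{s+1}=m_{n'}(l)$. By the standard continued-fraction identity for Hirzebruch--Jung strings (as cited via \cite{Riemenschneider_DefQ}, \cite{BHPVdV} III.5 in the proof of \cref{lem:x_interpol}), the solution exists in $\Q$ and has $m_1=\bigl(\beta_{n,n'}m_0+m_{n'}(l)\bigr)/\alpha_{n,n'}$; integrality of this solution at every string vertex is equivalent to integrality of $m_1$. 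Since $u=u_{n,n'}=v_1$, this is precisely \cref{eq:mod_Nd}, and the integrality condition characterizes $V_Z^\Nd$ as the right-hand side of \cref{eq:lem:mod}. (2) For $n'\in\Nd^*_n\setminus\Nd$, the same bamboo ends at an end vertex $e=v_s\in\E$, where now $(l',E_e)$ need not vanish; running the analogous (now slightly inhomogeneous at the last step) recursion with boundary data $m_0=m_n(l)$ and the prescribed value $(l',E_e)$ gives $m_1=\bigl(\beta_{n,n'}m_n(l)-(l',E_e)\bigr)/\alpha_{n,n'}$, which is \cref{eq:mod_Nds}. (3) Finally, check that the conditions listed over all pairs $(n,n')$ are not only necessary but sufficient for $l\in L^\Nd$ to lift to some $l'\in V_Z$: given $l$ satisfying them, define $m_1$ on each $u_{n,n'}$ by the formulas, propagate along each bamboo by the recursion, and verify the resulting cycle lies in $V'_Z=\Z\gen{E^*_v}{v\in\Nd\cup\E}$ — equivalently $(l',E_v)=0$ off $\Nd\cup\E$ — which holds by construction of the recursion, and $l'\in L$ by the integrality just arranged.

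I expect the main obstacle to be the bookkeeping in step (3): one must confirm that the locally-defined lifts along different bamboos emanating from the same node $n$ are compatible and that the global cycle indeed lies in the sublattice $V'_Z$ (not merely in $L'$), i.e.\ that imposing $(l',E_v)=0$ for $v\notin\Nd\cup\E$ plus the prescribed values on $\Nd\cup\E$ pins down $l'$ uniquely and that this $l'$ has the asserted coefficients. The subtlety is that $E^*_v$ for $v\in\Nd\cup\E$ form a basis of $V'_Z$ but the coefficient extraction requires inverting the intersection form restricted to the bamboos, which is exactly the continued-fraction computation; I would lean on \cref{lem:x_interpol} and its proof rather than redo it. A secondary point to be careful about is the degenerate cases $s=0$ or $s=1$ (and $\alpha_{n,n'}=1$, where $u=u_{n,n'}$ may equal $n$ or $n'$): these should be handled by the same formulas read with the conventions from \cref{block:oka_alg} and the \cref{rem:alpbet}-style normalization of $\beta$, and I would dispatch them with a short remark as is done elsewhere in the paper.
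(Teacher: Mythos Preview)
Your proposal is correct and follows essentially the same approach as the paper: both arguments use the homogeneous linear recursion $m_{r-1}-b_{v_r}m_r+m_{r+1}=0$ along each bamboo to express $m_{u_{n,n'}}(l')$ in terms of the endpoint data, derive \cref{eq:mod_Nd} and \cref{eq:mod_Nds} from this, and then construct an explicit lift for the reverse inclusion. The paper cites \cref{block:abc_comp} directly rather than going through \cref{lem:x_interpol}, and dispatches the leg case in the lifting step by noting one may choose $m_u(l')$ freely (so legs impose no integrality constraint on $l$, which is why only $n'\in\Nd_n$ appears in \cref{eq:lem:mod}); your anticipated ``compatibility'' obstacle in step~(3) does not arise, since distinct bamboos share only the prescribed node values $m_n(l)$.
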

\begin{proof}
We start by noting that \cref{eq:mod_Nd,eq:mod_Nds} follow 
from \cref{block:abc_comp}. In fact, this proves the inclusion
$\subset$ in \cref{eq:lem:mod}. By further application of
\cref{block:abc_comp},
given an $l$ in the right hand side of \cref{eq:lem:mod},
$n\in\Nd$ and $n'\in\Nd_n$, we can construct a
sequence $m_{v_1}(l'), \ldots, m_{v_s}(l')$ between
$m_n(l') := m_n(l)$ and $m_{n'}(l') := m_{n'}(l)$, where $v_1, \ldots, v_s$
are as in \cref{fig:Bamboo}. In fact, we have
$m_{v_1}(l') = (\beta_{n,n'}m_n(l) + m_{n'}(l))/\alpha_{n,n'}$ and
the other multiplicities are determined by
$m_{v_{s-1}}(l') - b_{v_s}m_{v_s}(l') + m_{v_{s+1}}(l') = 0$.
For $n'\in\Nd_n^*\setminus\Nd$, we can choose $m_u(l')$ randomly for
$u=u_{n,n'}$ and construct a similar sequence. This yields an element
$l'\in L$ satisfying $(l',E_v) = 0$ for any $v\in\V$ with $\delta_v = 2$,
that is, $l'\in V_Z$, proving the inclusion $\supset$ in \cref{eq:lem:mod}, 
hence equality.
\end{proof}

\begin{definition} \label{def:D_legs}
For any $e\in \E$, set $D_e = \alpha_e E_e^* - E_{n_e}^*$.
\end{definition}

\begin{lemma} \label{lem:end_nodes}
Let $n\in\Nd$ and $e\in\E_n$. Then $D_e$ is an effective integral cycle
which is supported on the leg containing $e$.
In fact, the family $(D_e)_{e\in\E}$ is a $\Z$-basis for
$\ker(V_Z \to V_Z^\Nd)$.
\end{lemma}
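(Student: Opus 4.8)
The statement has three parts: (a) each $D_e$ is an integral cycle, (b) $D_e$ is effective and supported on the leg containing $e$, and (c) $(D_e)_{e\in\E}$ is a $\Z$-basis for $\ker(V_Z\to V_Z^\Nd)$. I would treat these in that order.

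For (a) and (b), recall from \cref{def:dual_c} that $E_v^* = \sum_{w}-I^{-1}_{v,w}E_w$ and that by \cref{lem:Es_pos} all coefficients $-I^{-1}_{v,w}$ are positive. I would compute $D_e = \alpha_e E_e^* - E_{n_e}^*$ by testing against the basis $(E_v)_{v\in\V}$: we have $(D_e,E_v) = -\alpha_e\delta_{e,v} + \delta_{n_e,v}$, so $(D_e,E_v)=0$ for $v\notin\{e,n_e\}$, $(D_e,E_e) = -\alpha_e$, and $(D_e,E_{n_e}) = 1$. Thus $D_e$ is characterized (over $\Q$) by being supported as a "dual-type" cycle concentrated on the leg between $n_e$ and $e$; restricting to the subgraph $\overline G$ obtained by deleting $\Nd$ (as in the proof of \cref{prop:x_exist}), $D_e$ is the unique cycle in $L_{\overline G,\Q}$ with $(D_e,E_u) = 0$ for interior leg vertices $u$, $(D_e,E_e) = -\alpha_e$, and the "boundary term" from $n_e$ accounted for. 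The leg is a chain with self-intersections $-b_{v_1},\dots,-b_{v_s}$ where $[b_{v_1},\dots,b_{v_s}] = \alpha_e/\beta_e$, and a direct continued-fraction computation (using \cref{block:abc_comp}, or \cite{BHPVdV} III.5) shows the coefficients of $D_e$ along the leg are exactly the numerators appearing in the continued fraction expansion, hence positive integers; in particular $m_{v_1}(D_e) = \beta_e$ (or a related explicit integer). This gives integrality, effectivity, and the support claim simultaneously. I should double-check the degenerate cases $\alpha_e = 1$ separately, where the leg may be a single $-1$ vertex or empty, but these are easily handled by the conventions in \cref{def:legs}.

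For (c), first note $D_e\in\ker(V_Z\to V_Z^\Nd)$: indeed $\pi^\Nd(D_e) = 0$ since $D_e$ is supported off $\Nd$ by (b), and $D_e\in V_Z$ because $D_e\in\Z\gen{E^*_v}{v\in\Nd\cup\E}$ by construction and $D_e\in L$ by (a). Conversely, an element $l'\in\ker(V_Z\to V_Z^\Nd)$ satisfies $m_n(l') = 0$ for all $n\in\Nd$ and $(l',E_v) = 0$ for all $v$ with $\delta_v = 2$ (the defining property of $V_Z$ via \cref{lem:mod}), so $l'$ is supported on the union of all legs and is determined on each leg by its value $(l',E_e)$ at the end together with the vanishing at interior vertices. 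Writing $(l',E_e) = -c_e$, the cycle $l' - \sum_{e}\tfrac{c_e}{\alpha_e}D_e$ pairs to zero with every $E_v$, hence is zero by negative definiteness, giving $l' = \sum_e \tfrac{c_e}{\alpha_e}D_e$ over $\Q$. To upgrade this to a $\Z$-basis I would argue that the map $V_Z\to L'$ sending $l'\mapsto ((l',E_e))_{e\in\E}$ followed by the observation that $\ker(V_Z\to V_Z^\Nd)\to \prod_e\Z$, $l'\mapsto (l',E_e)$, has image exactly $\prod_e \alpha_e\Z$ (since $(D_e,E_{e'}) = -\alpha_e\delta_{e,e'}$ and by \cref{lem:mod} the multiplicities $m_u(l')$ must be integers, forcing $\alpha_e \mid (l',E_e)$ via \cref{eq:mod_Nds} with $m_n(l) = 0$). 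Then $D_e\mapsto -\alpha_e\mathbf{e}_e$ shows the $D_e$ generate the kernel over $\Z$, and they are linearly independent since their images are. The main obstacle I anticipate is part (c): pinning down that $\alpha_e$ exactly (not a proper multiple) is the divisibility constraint, which requires carefully invoking \cref{lem:mod} — specifically \cref{eq:mod_Nds} — rather than just negative definiteness; the continued-fraction bookkeeping in (a)/(b) is routine but must be done with the right conventions for the degenerate legs.
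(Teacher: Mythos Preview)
Your argument for part (c) is essentially the paper's, just phrased through the map $l'\mapsto((l',E_e))_e$ rather than the dual basis $\lambda_e(l)=m_{u_e}(l)$; these are the same thing once you note that \cref{eq:mod_Nds} with $m_n=0$ gives $m_{u_e}(l')=-(l',E_e)/\alpha_e$, so $\lambda_e(D_{e'})=\delta_{e,e'}$ is exactly your observation $(D_{e'},E_e)=-\alpha_e\delta_{e,e'}$.

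There is, however, a genuine gap in your treatment of support. Knowing $(D_e,E_v)=0$ for $v\notin\{e,n_e\}$ does \emph{not} by itself give $m_v(D_e)=0$ off the leg: these are intersection numbers, not coefficients. You then assert ``$D_e$ is the unique cycle in $L_{\overline G,\Q}$ with\ldots'', but this presupposes $D_e$ is supported off $\Nd$, which is exactly what needs proof. The paper handles this by citing the identity $I^{-1}_{n,v}=\alpha_e I^{-1}_{e,v}$ for $v$ off the leg (Neumann--Wahl, Eisenbud--Neumann), from which $m_v(D_e)=0$ is immediate. Your route can be salvaged: construct a cycle $D'$ supported on the leg by solving the string recursion with boundary $m_{n_e}(D')=0$ and $(D',E_e)=-\alpha_e$, then check $(D',E_v)=(D_e,E_v)$ for \emph{every} $v$ and conclude $D'=D_e$ by nondegeneracy. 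The only nontrivial vertex is $v=n_e$, where you need $(D',E_{n_e})=m_{u_e}(D')=1$.

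This brings up your computational slip: the continued-fraction recursion with $m_{v_0}=0$ and endpoint condition $m_{v_{s-1}}-b_sm_{v_s}=-\alpha_e$ forces $m_{v_1}=1$, not $\beta_e$. (Equivalently, once support is known, $(E_{n_e},D_e)=\alpha_e(E_{n_e},E_e^*)-(E_{n_e},E_{n_e}^*)=0-(-1)=1$, and the left side equals $m_{u_e}(D_e)$.) Getting $m_{u_e}=1$ is precisely what makes the nondegeneracy argument close; with $\beta_e$ it would not. After that, integrality follows from the recursion $m_{v_{r+1}}=b_{v_r}m_{v_r}-m_{v_{r-1}}$ starting from $m_{v_0}=0,m_{v_1}=1$, and effectivity from $(D_e,E_v)\le 0$ on the leg together with \cref{lem:Es_pos}, just as in the paper.
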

\begin{proof}
First, if $v\in\V$ is a vertex outside the leg containing $e$, then
$I_{n,v}^{-1} = \alpha_e I_{e,v}^{-1}$ (recall the notation for the
intersection matrix \index{intersection matrix}
and its inverse, \cref{def:intersection_form}).
This follows from \cite{NeumWa_SpQ}, Theorem 12.2, see also
\cite{EisNeu}, Lemma 20.2.
Thus, $m_v(D_e) = 0$ for $v\in\V$ not on the leg,
i.e. $D_e$ is supported on the leg.
Let $u_e$ be the neighbour of $n$ on this leg. We find
$m_{u_e}(D_e) = (E_n, D_e) = 1$. Furthermore, if the leg consists of
vertices $v_1, \ldots, v_s$ as in \cref{fig:Bamboo}, then the equations
$m_{v_{r-1}}(D_e) - b_{v_r} m_{v_r}(D_e) + m_{v_{r+1}}(D_e) = 0$
recursively show that $m_{v_r} \in \Z$ for all $r$. Thus, we have
$D_e \in L$. Since $(D_e,E_v) \leq 0$ for all $v$ on the leg,
we find, by \cref{lem:Es_pos}, that $m_v(D_e) > 0$ for any such $v$, that is,
$D_e$ is effective and its support is the leg.

For the last statement, set $K = \ker (V_Z \to V_Z^\Nd)$. Note first that by
\ref{lem:mod} we have $\rk K = |\E|$. It is then enough to
find a dual basis, that is, $\lambda_e \in \Hom(K,\Z)$ satisfying
$\lambda_e(D_{e'}) = \delta_{e,e'}$. By what we have just shown,
this is satisfied by $\lambda_e(l) = m_{u_e}(l)$.
\end{proof}

\begin{definition} \label{def:chop_Stop}
Recall the definition of the
Lipman cone \index{Lipman cone}
$\Stop$ in \cref{def:Stop}. 
Set
$\S_Z = \Stop \cap V_Z$
\nomenclature[SZ]{$\S_Z$}{$\Stop \cap V_Z$}
and for $l\in V_Z^\Nd$, define
$\S_Z(l) = \S_Z\cap(\pi^\Nd)^{-1}(l)$.
\nomenclature[SZl]{$\S_Z(l)$}{$\S_Z\cap(\pi^\Nd)^{-1}(l)$}
Define also
$\S_Z^\Nd = \pi^\Nd(\S_Z)$.
\nomenclature[SZNd]{$\S_Z^\Nd$}{$\pi^\Nd(\S_Z)$}
\end{definition}

\begin{lemma} \label{lem:Sl_struct}
Let $l\in V_Z^\Nd$ and choose $l'\in V_Z$ so that $l'|_\Nd = l$.
The element
\begin{equation} \label{eq:def_psi}
  \psi(l) = l' -
            \sum_{e\in\E}
			\left\lfloor
			  \frac{(-l',E_e)}{\alpha_e}
			\right\rfloor
    D_e
\end{equation}
\nomenclature[psl]{$\psi(l)$}{Section to $\pi^\Nd$}
is independent of the choice of $l'$. Furthermore, the set $\S_Z(l)$ consists
of the elements $\psi(l) + \sum_{e\in\E} k_e D_e$
where $k_e \in\N$ satisfy $\sum_{e\in\E_n} k_e \leq (-\psi(l),E_n)$
for all $n\in\Nd$.
\end{lemma}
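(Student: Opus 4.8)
\textbf{Proof proposal for \cref{lem:Sl_struct}.}

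The plan is to verify independence of $\psi(l)$ from the choice of $l'$ first, and then to describe $\S_Z(l)$ explicitly. For the independence claim, suppose $l', l'' \in V_Z$ both restrict to $l$ on $\Nd$. Then $l' - l'' \in \ker(V_Z \to V_Z^\Nd)$, so by \cref{lem:end_nodes} we may write $l' - l'' = \sum_{e\in\E} c_e D_e$ with $c_e \in \Z$. Since $(D_{e'}, E_e) = 0$ whenever $e \neq e'$ (because $D_{e'}$ is supported on the leg through $e'$, which does not meet $E_e$) and $(D_e, E_e) = -\alpha_e$ (this is $\alpha_e(E_e^*,E_e) - (E_{n_e}^*,E_e) = -\alpha_e$), we get $(-l',E_e) = (-l'',E_e) - \alpha_e c_e$. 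Hence $\lfloor (-l',E_e)/\alpha_e \rfloor = \lfloor (-l'',E_e)/\alpha_e \rfloor - c_e$, and substituting into \cref{eq:def_psi} the $c_e D_e$ terms cancel; so $\psi(l)$ is well defined. First I would record the two formulas $(D_e,E_e) = -\alpha_e$ and $(D_e, E_{e'}) = 0$ for $e\neq e'$, which are the computational backbone of the whole argument.

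Next I would characterize $\S_Z(l)$. Any element of $\S_Z(l)$ lies in $V_Z$ and restricts to $l$, so by the independence just proved it equals $\psi(l) + \sum_{e\in\E} k_e D_e$ for a unique tuple $(k_e)_{e\in\E} \in \Z^{\E}$; conversely every such combination lies in $V_Z$ and restricts to $l$. So the task reduces to: for which integer tuples $(k_e)$ does $Z(k) := \psi(l) + \sum_e k_e D_e$ lie in $\Stop$, i.e. satisfy $(Z(k), E_v) \le 0$ for all $v\in\V$? I would split the inequalities by the type of $v$. For $v \in \V\setminus(\Nd\cup\E')$ where $\E'$ denotes the legs, one checks $(Z(k),E_v) = 0$ automatically since $\psi(l), D_e \in V_Z$ and such $v$ have $\delta_v = 2$. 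For $v$ on a leg through $e$ but $v\notin\E$, again $(D_{e'},E_v)$ contributes only for $e'=e$ and the relevant combination vanishes by the defining recursion of $D_e$, so $(Z(k),E_v)=0$. The genuinely constraining vertices are the ends $e\in\E$ and the nodes $n\in\Nd$.

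At an end $e$: $(Z(k),E_e) = (\psi(l),E_e) + k_e(D_e,E_e) = (\psi(l),E_e) - k_e\alpha_e$. Now $(\psi(l),E_e) = (l',E_e) - \lfloor (-l',E_e)/\alpha_e\rfloor(D_e,E_e) = (l',E_e) + \alpha_e\lfloor (-l',E_e)/\alpha_e\rfloor = -\left((-l',E_e) \bmod \alpha_e\right)$, a number in $\{-(\alpha_e-1),\dots,0\}$, in particular $\le 0$. So $(Z(k),E_e)\le 0 \iff k_e\alpha_e \ge (\psi(l),E_e)$, which for $\alpha_e>0$ holds exactly when $k_e \ge 0$ (since the right side is $\le 0$ and $> -\alpha_e$). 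At a node $n$: $(Z(k),E_n) = (\psi(l),E_n) + \sum_{e} k_e(D_e,E_n)$. Here $(D_e,E_n) = \alpha_e(E_e^*,E_n) - (E_{n_e}^*,E_n)$; for $e\notin\E_n$ the leg through $e$ does not touch $n$ unless $n_e$-related terms intervene, but in fact one shows $(D_e,E_n) = 0$ unless $n = n_e$, in which case $(D_e,E_n) = -(E_{n_e}^*,E_n)\cdot(\text{something}) $... more precisely $(D_e, E_{n_e}) = \alpha_e(E_e^*, E_{n_e}) - (E_{n_e}^*, E_{n_e}) = 0 + 1 = 1$, and $(D_e,E_n)=0$ for $n\neq n_e$. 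Hence $(Z(k),E_n) = (\psi(l),E_n) + \sum_{e\in\E_n} k_e = -(-\psi(l),E_n) + \sum_{e\in\E_n}k_e$, and the condition $(Z(k),E_n)\le 0$ becomes exactly $\sum_{e\in\E_n} k_e \le (-\psi(l),E_n)$. Combining, $Z(k)\in\S_Z(l)$ iff $k_e\in\N$ for all $e$ and $\sum_{e\in\E_n}k_e \le (-\psi(l),E_n)$ for all $n\in\Nd$, which is the claim.

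The main obstacle I anticipate is pinning down the intersection numbers $(D_e, E_v)$ for the various vertex types cleanly — in particular the identity $I_{n,v}^{-1} = \alpha_e I_{e,v}^{-1}$ for $v$ off the leg (cited from \cite{NeumWa_SpQ, EisNeu} in \cref{lem:end_nodes}) must be leveraged to see that $D_e$ interacts only with its own leg and the node $n_e$, and that $(D_e, E_{n_e}) = 1$, $(D_e,E_e) = -\alpha_e$. Once those local computations are in hand, the rest is bookkeeping: the floor in \cref{eq:def_psi} is precisely engineered so that $(\psi(l),E_e) \in (-\alpha_e, 0]$, which is what makes "$k_e\alpha_e \ge (\psi(l),E_e)$" collapse to "$k_e\ge 0$". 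I would also double-check the edge case $\alpha_e = 1$ (where $\beta_e$ may be $0$ or $1$ by convention), but there the leg is empty or trivial and all the inequalities degenerate harmlessly.
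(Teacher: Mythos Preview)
Your approach is essentially the same as the paper's, and the argument is sound apart from one sign slip. In the independence step you write $(-l',E_e) = (-l'',E_e) - \alpha_e c_e$; in fact, from $l'-l'' = \sum_e c_e D_e$ and $(D_e,E_e) = -\alpha_e$ one gets $(-l',E_e) = (-l'',E_e) + \alpha_e c_e$, hence $\lfloor(-l',E_e)/\alpha_e\rfloor = \lfloor(-l'',E_e)/\alpha_e\rfloor + c_e$, and with this sign the cancellation you describe actually occurs (with your sign it would not). The paper handles independence slightly differently: it first observes that $\psi'$ built from $l'$ satisfies $0\le(-\psi',E_e)<\alpha_e$, then writes $l'' = \psi' + \sum_e k_e D_e$ and reads off $k_e = \lfloor(-l'',E_e)/\alpha_e\rfloor$ directly from that normalization; this is a characterization of $\psi(l)$ rather than a direct comparison, but the content is the same.

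For the description of $\S_Z(l)$, your argument is correct but does more work than necessary. Since $\psi(l)$ and each $D_e$ lie in $V_Z' = \Z\langle E_v^*: v\in\Nd\cup\E\rangle$, the intersection $(Z(k),E_v)$ vanishes automatically for every $v\notin\Nd\cup\E$; there is no need for a separate case ``$v$ on a leg but not an end''. The paper simply checks $v\in\E\cup\Nd$, and your computations of $(D_e,E_e)=-\alpha_e$, $(D_e,E_{n_e})=1$, $(D_e,E_n)=0$ for $n\neq n_e$, together with the observation $(\psi(l),E_e)\in(-\alpha_e,0]$, are exactly what is used there.
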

\begin{proof}
Let $\psi'$ be the element on the right hand side of \cref{eq:def_psi}.
For any $l''\in V_Z$, also satisfying $l''|_\Nd = l$
define $\psi''$ similarly, using $l''$.
By \cref{lem:end_nodes}, there exist $k_e\in\Z$ for $e\in\E$, so that
$l'' = \psi' + \sum_{e\in\E} k_e D_e$.
By definition, we have $0\leq (-\psi',E_e) < \alpha_e$, and so
$k_e = \left\lfloor \frac{(-l'',E_e)}{\alpha_e} \right\rfloor$,
which gives $\psi'' = \psi'$.

For the second statement, we note first that by \cref{lem:end_nodes}, any
element $l'\in V_Z$, restricting to $l$, is of the form
$\psi(l) + \sum_{e\in\E} k_e D_e$ for some $k_e\in\Z$. Also, we have
$l'\in\S_Z(l)$ if and only if $(l',E_v) \leq 0$ for all $v\in\E\cup\Nd$.
For $e\in\E$ we have $-\alpha_e < (\psi(l),E_e) \leq 0$
and $(l',E_e) = (\psi(l),E_e) - k_e\alpha_e$, showing $(l',E_e) \leq 0$
if and only if $k_e\geq 0$. Using \cref{lem:end_nodes} and the results
found in its proof, we find $(l',E_n) = (\psi(l),E_n) + \sum_{e\in\E} k_e$.
Thus, $(l',E_n) \leq 0$ if and only if $\sum_{e\in\E} k_e \leq (-\psi(l),E_n)$.
\end{proof}

\begin{rem}
Let $l\in V_Z^\Nd$.  By \cref{lem:Sl_struct}, we have  $\S_Z(l) \neq \emptyset$ 
if and only if $\psi(l) \in \S_Z$, which is equivalent to
$(\psi(l),E_n) \leq 0$ for all $n\in\Nd$.
\end{rem}

\begin{lemma} \label{lem:psi_inters}
Let $l\in V_Z$ and $e\in\E$. If $u=u_e$, then
\[
  m_u(\psi(l)) =
	\left\lceil \frac{\beta_e m_n(l)}{\alpha_e} \right\rceil.
\]
\end{lemma}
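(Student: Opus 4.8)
The plan is a direct coefficient computation at $u=u_e$, feeding in the formulas already established for $\psi$ and for the cycles $D_e$. Since $l\in V_Z$ is itself a lift of $l|_\Nd$, one may take $l'=l$ in \cref{eq:def_psi}. By \cref{lem:end_nodes} each cycle $D_{e'}$ is supported on the leg ending at $e'$, distinct ends lie on disjoint legs, and $m_{u_e}(D_e)=(E_{n_e},D_e)=1$ as recorded in the proof of that lemma; hence $m_{u_e}(D_{e'})=\delta_{e,e'}$ for all $e'\in\E$. Substituting this into \cref{eq:def_psi} and using $\lceil -x\rceil=-\lfloor x\rfloor$ gives
\[
  m_u(\psi(l)) = m_u(l) - \left\lfloor\frac{(-l,E_e)}{\alpha_e}\right\rfloor
               = m_u(l) + \left\lceil\frac{(l,E_e)}{\alpha_e}\right\rceil .
\]

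Next I would rewrite $m_u(l)$ by means of \cref{lem:mod}: with $n=n_e$ and $n'=n_e^*\in\Nd^*_n\setminus\Nd$, \cref{eq:mod_Nds} reads $m_u(l)=(\beta_e m_n(l)-(l,E_e))/\alpha_e$, since $\alpha_e=\alpha_{n_e,n_e^*}$ and $\beta_e=\beta_{n_e,n_e^*}$ by \cref{def:legs}. Writing $A=(l,E_e)\in\Z$ and $N=\beta_e m_n(l)$, the previous display becomes $m_u(\psi(l)) = (N-A)/\alpha_e + \lceil A/\alpha_e\rceil$. Because $l\in L$, the coefficient $m_u(l)=(N-A)/\alpha_e$ is an integer, so $N\equiv A\pmod{\alpha_e}$.

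It then remains to invoke the elementary fact that for integers $A,N$ with $N\equiv A\pmod{\alpha_e}$ one has $(N-A)/\alpha_e+\lceil A/\alpha_e\rceil=\lceil N/\alpha_e\rceil$; this follows at once by writing $A=\alpha_e q+r$ with $0\le r<\alpha_e$, so that $N=\alpha_e q'+r$ for some $q'\in\Z$, and comparing the two sides in the cases $r=0$ and $r>0$. Applying it with $N=\beta_e m_{n_e}(l)$ yields $m_u(\psi(l))=\lceil\beta_e m_{n_e}(l)/\alpha_e\rceil$, which is the assertion. I do not expect a genuine obstacle here; the only points needing care are the sign bookkeeping (the floor/ceiling flip, and the observation that nothing about the sign of $(l,E_e)$ is used, so the computation is valid for every $l\in V_Z$) and keeping straight the index identifications $\alpha_e=\alpha_{n_e,n_e^*}$, $\beta_e=\beta_{n_e,n_e^*}$, $u_e=u_{n_e,n_e^*}$.
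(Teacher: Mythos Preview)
Your proof is correct. The paper takes a slightly more direct route: instead of expanding $\psi(l)$ via \cref{eq:def_psi} and computing the $u$-coefficient of the correction term, it applies \cref{eq:mod_Nds} directly to $\psi(l)\in V_Z$, obtaining $m_u(\psi(l))=(\beta_e m_n(l)-(\psi(l),E_e))/\alpha_e$, and then invokes the defining inequality $0\le -(\psi(l),E_e)<\alpha_e$ (established in the proof of \cref{lem:Sl_struct}) to identify this integer as the ceiling $\lceil\beta_e m_n(l)/\alpha_e\rceil$. Your approach instead applies \cref{eq:mod_Nds} to the original $l$ and then tracks the effect of subtracting the $D_e$-terms; this is a bit longer but has the minor advantage of making explicit exactly which structural facts about the $D_{e'}$ (namely $m_{u_e}(D_{e'})=\delta_{e,e'}$) are being used.
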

\begin{proof}
This follows from \cref{eq:mod_Nds} and the fact that
$0 \leq (-\psi(l),E_e) < \alpha_e$.
\end{proof}

\begin{lemma} \label{lem:psi_Z_ineq}
Let  $l'\in \S_Z$ and take $Z \in \Stop$ satisfying $Z = x(Z)$ (see 
\cref{ss:Laufer_seq}). Then $l'\geq Z$ if and only if $l'|_\Nd\geq Z|_\Nd$.
\end{lemma}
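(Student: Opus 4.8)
The forward implication $l' \geq Z \Rightarrow l'|_\Nd \geq Z|_\Nd$ is immediate, since $\pi^\Nd$ only forgets coordinates and preserves the partial order on the surviving ones. So the whole content is the converse: assuming $l'|_\Nd \geq Z|_\Nd$, I must show $m_v(l') \geq m_v(Z)$ for \emph{all} $v\in\V$, not just for $v\in\Nd$. The plan is to compare $l'$ and $Z$ one ``bamboo at a time'': both are supported in a way controlled by their values at nodes, so the inequality should propagate from the nodes along each string of degree-$2$ vertices and each leg.

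\textbf{Key steps.} First I would fix a vertex $v\in\V\setminus\Nd$ and locate the bamboo it lies on: there are $n\in\Nd$ and $n'\in\Nd^*_n$ with $v$ among $v_1,\dots,v_s$ on the string joining $n$ and $n'$ (notation as in \cref{fig:Bamboo}). For $Z$, the hypothesis $Z = x(Z)$ together with \cref{lem:x_interpol} pins down $m_u(Z)$ for $u = u_{n,n'}$ as a ceiling of $(\beta_{n,n'}m_n(Z)+m_{n'}(Z))/\alpha_{n,n'}$, and then the remaining multiplicities along the bamboo are the \emph{minimal} solution of the string inequalities $m_{r-1} - b_{v_r} m_r + m_{r+1} \leq 0$ with the two endpoint values fixed (this is exactly how the proof of \cref{lem:x_interpol} describes $x(Z)$, via \cref{prop:x_prop}\cref{it:x_prop_integ}). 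For $l'\in\S_Z = \Stop\cap V_Z$, on the other hand, $l'\in V_Z$ forces $(l',E_{v_r}) = 0$ for $1\le r\le s$, i.e. the multiplicities of $l'$ along the bamboo satisfy the string \emph{equalities} $m_{r-1} - b_{v_r} m_r + m_{r+1} = 0$ — so they are determined by linear interpolation from the endpoint values $m_n(l')$ and $m_{n'}(l')$ (for legs, $m_{n'}(l')$ is replaced by the data $(l',E_e)$, cf. \cref{eq:mod_Nds}). The decisive comparison is then at the node-adjacent vertex $u = u_{n,n'}$: using $m_n(l')\geq m_n(Z)$, and either $m_{n'}(l')\geq m_{n'}(Z)$ (when $n'\in\Nd$, again from the hypothesis) or the sign condition coming from $l'\in\Stop$ (when $n'$ is a leg end, where $-\psi$-type inequalities of \cref{lem:psi_inters}/\cref{lem:Sl_struct} give $m_u(l')=\lceil \beta_e m_n(l')/\alpha_e\rceil \geq \lceil \beta_e m_n(Z)/\alpha_e\rceil = m_u(Z)$), I get $m_u(l')\geq m_u(Z)$. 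Finally, since $l'$ interpolates linearly while $Z$ is the minimal solution of the corresponding inequalities with endpoint values that are no larger, a standard string/convexity argument (the same kind used repeatedly in \cref{ss:Laufer_seq}) shows $m_{v_r}(l') \geq m_{v_r}(Z)$ for every $r$, hence in particular for $v$.

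\textbf{Main obstacle.} The routine part is the monotone-interpolation lemma for strings; the part requiring care is the leg case, where the ``endpoint at $n'$'' is not an honest multiplicity but the negative intersection number $(-l',E_e)$, and one has to translate the condition $l'\in\Stop$ (i.e. $(l',E_v)\le 0$ along the leg) into the right ceiling inequality and check it dominates the corresponding quantity for $Z = x(Z)$ — this is exactly the content packaged in \cref{lem:psi_inters} and \cref{lem:x_interpol}, so it should go through, but it is where the sign bookkeeping is most delicate. One also has to be slightly careful that a degree-$2$ vertex could lie on a bamboo joining two distinct nodes, in which case both endpoint comparisons are of the ``node'' type and there is no subtlety; the genuinely new input is only needed on legs.
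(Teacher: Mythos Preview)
Your approach is correct, but the paper's proof is dramatically shorter and you are working much harder than necessary. The paper argues as follows: since $l'\in\S_Z\subset\Stop$, in particular $(l',E_v)\le 0$ for every $v\in\V\setminus\Nd$; thus $l'$ itself satisfies conditions \ref{it:x_exists_eq} and \ref{it:x_exists_ineq} of \cref{prop:x_exist} (with $Z$ there replaced by $l'$), and by minimality of $x(l')$ one gets $l'\ge x(l')$. Now the hypothesis $l'|_\Nd\ge Z|_\Nd$ together with monotonicity of $x$ (\cref{prop:x_prop}\ref{it:x_prop_mon}) gives $x(l')\ge x(Z)=Z$, and chaining the two inequalities finishes the proof. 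No bamboo-by-bamboo analysis, no interpolation formulas, no leg case.

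What you are doing is essentially re-proving the monotonicity of $x$ by hand in this concrete situation, unpacking \cref{lem:x_interpol} and \cref{lem:psi_inters} on each string. That works, but note two small inaccuracies in your sketch: on a leg one only has $m_u(l')\ge\lceil\beta_e m_n(l')/\alpha_e\rceil$, not equality (since $l'=\psi(l)+\sum k_eD_e$ with $k_e\ge 0$ by \cref{lem:Sl_struct}); and ``linear interpolation'' is not quite the right phrase for the solution of the string equalities unless all $b_{v_r}=2$. Neither affects the conclusion, since you only need the inequality. The real lesson is that the abstract minimality-plus-monotonicity package of \cref{prop:x_exist} and \cref{prop:x_prop} is designed precisely so that you never have to open up the bamboos.
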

\begin{proof}
The ``only if'' part of the statement is trivial.
For the ``if'' part, take $l'\in\S_Z(l)$. We find
$l' \geq x(l')$ by the definition of $x$.
The result therefore follows from the monotonicity of $x$,
\cref{prop:x_prop}\ref{it:x_prop_mon}.
\end{proof}

\begin{definition} \label{def:red_zed}
Define the
\emph{reduced zeta function} \index{zeta function!reduced}
$Z^\Nd_0(t)$ \nomenclature[ZNd0]{$Z^\Nd_0(t)$}{Reduced zeta function}
in $|\Nd|$ variables
by setting $t_v = 1$ in $Z_0(t)$ if $v\notin \Nd$. Thus, we have
$Z_0^\Nd(t) = \sum_{l\in L^\Nd} z_l^\Nd t^l$ where
$z_l^\Nd = \sum \set{z_{l'}}{l'\in \S_Z(l)}$.
\nomenclature[zlNd]{$z_l^\Nd$}{Coefficient of reduced zeta function}
This series is supported on $S_Z^\Nd$.
\end{definition}

\begin{block}
Take $l'\in V_Z$ and write $l' = \sum_{v\in\Nd\cup\E} a_v E_v^*$. Using
\cref{eq:zeta} and the linear independence of the family $(E_v^*)_{v\in\V}$,
we see that $z_{l'} = \prod_{v\in\Nd\cup\E} z_{l',v}$,
\nomenclature[zl'v]{$z_{l',v}$}{Factor of coefficient of zeta function}
where we set
\[
  z_{l',v} =
  \begin{cases}
    1                                 & \mathrm{if}\, v\in\E ,\,0\leq a_v,    \\
    \displaystyle
    (-1)^{a_v}\binom{\delta_v-2}{a_v} & \mathrm{if}\,
	                                    v\in\Nd,\,0\leq a_v \leq\delta_v - 2, \\
    0                                 & \mathrm{otherwise}.
  \end{cases}
\]
For any $l\in L^\Nd$, we therefore have, using \cref{lem:Sl_struct},
\begin{equation} \label{eq:factors}
\begin{split}
  z_l^\Nd
   &= \sum_{\substack{(k_e) \in \N^\E\\
	        \fa{n'\in\Nd}{\sum_{e\in\E_{n'}} k_e\leq(-\psi(l),E_{n'})}}}
			    \prod_{n\in\Nd}
				 z_{\psi(l)+\sum_e k_e D_e,n} \\
   &= \prod_{n\in\Nd}
      \sum_{\substack{(k_e) \in \N^{\E_n}\\
	        \sum_{e\in\E_n} k_e\leq(-\psi(l),E_n)}}
			  (-1)^{(-\psi(l),E_n)-\sum_{e\in\E_n} k_e}
			  \binom{\delta_n-2}{(-\psi(l),E_n)-\sum_{e\in\E_n} k_e}.
\end{split}
\end{equation}
Define
$z_{l,n}^\Nd$
\nomenclature[zlnNd]{$z_{l,n}^\Nd$}{Factor of coefficient of
reduced zeta function}
as the $n^{\mathrm{th}}$ factor in the product
on the right hand side
above, so that $z_l^\Nd = \prod_{n\in\Nd} z_{l,n}^\Nd$.
\end{block}

\begin{lemma} \label{lem:factors}
Let $l\in V_Z^\Nd$ and $n\in\Nd$.
\begin{enumerate}
\item \label{it:factors_1}
If $\delta_n - |\E_n| = 1$, then
\[
  z_{l,n}^\Nd =
  \begin{cases}
  1 & \mathrm{if}\;(-\psi(l),E_n) \geq 0, \\
  0 & \mathrm{else}.
  \end{cases}
\]
\item \label{it:factors_2}
If $\delta_n - |\E_n| = 2$, then
\[
  z_{l,n}^\Nd =
  \begin{cases}
  1 & \mathrm{if}\;(-\psi(l),E_n) =    0, \\
  0 & \mathrm{else}.
  \end{cases}
\]
\item \label{it:factors_3}
If $\delta_n - |\E_n| = 3$, then
\[
  z_{l,n}^\Nd =
  \begin{cases}
  1 & \mathrm{if}\;(-\psi(l),E_n) =    0, \\
 -1 & \mathrm{if}\;(-\psi(l),E_n) =    1, \\
  0 & \mathrm{else}.
  \end{cases}
\]

\item \label{it:factors_0}
If $\delta_n - |\E_n| = 0$, then $z_{l,n}^\Nd = \max\{ 0, (-\psi(l),E_n)+1 \}$.
\end{enumerate}
\end{lemma}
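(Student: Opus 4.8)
\textbf{Proof plan for Lemma~\ref{lem:factors}.}
The four cases are all instances of evaluating the single inner sum
\[
  z_{l,n}^\Nd
    = \sum_{\substack{(k_e)\in\N^{\E_n}\\ \sum_{e\in\E_n}k_e\leq(-\psi(l),E_n)}}
        (-1)^{(-\psi(l),E_n)-\sum_{e\in\E_n}k_e}
        \binom{\delta_n-2}{(-\psi(l),E_n)-\sum_{e\in\E_n}k_e},
\]
so the first thing I would do is abbreviate $N=(-\psi(l),E_n)$, $d=\delta_n-2$, $r=|\E_n|$, and note $d-r=\delta_n-|\E_n|-2$ is the quantity being stratified (so the four cases are $d-r=-1,0,1,-2$ respectively). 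If $N<0$ the index set is empty and $z_{l,n}^\Nd=0$ in all cases, which disposes of the ``else'' clauses whenever $N<0$; so assume $N\geq 0$. Changing variables to $j=N-\sum_e k_e$, which ranges over $0,\dots,N$, the number of $(k_e)\in\N^{\E_n}$ with $\sum_e k_e=N-j$ is $\binom{N-j+r-1}{r-1}$, hence
\[
  z_{l,n}^\Nd = \sum_{j=0}^{N}(-1)^{j}\binom{d}{j}\binom{N-j+r-1}{r-1}.
\]
This is the one identity to establish; everything else is substitution.

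The cleanest way to evaluate this alternating convolution is generating functions: $\sum_j (-1)^j\binom{d}{j}x^j=(1-x)^d$ and $\sum_{m\geq 0}\binom{m+r-1}{r-1}x^m=(1-x)^{-r}$, so $z_{l,n}^\Nd$ is the coefficient of $x^N$ in $(1-x)^{d}(1-x)^{-r}=(1-x)^{d-r}$. For case \ref{it:factors_1}, $d-r=1$, so $(1-x)^1=1-x$ has coefficient $1$ at $x^0$ and $0$ at $x^N$ for $N\geq 1$ (and $0$ for $N<0$), giving exactly the stated value. For case \ref{it:factors_2}, $d-r=0$, so $(1-x)^0=1$ has coefficient $1$ at $x^0$ and $0$ otherwise. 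For case \ref{it:factors_3}, $d-r=1$ again --- wait, $\delta_n-|\E_n|=3$ means $d-r=\delta_n-2-|\E_n|=1$; I must recompute: $d-r=(\delta_n-2)-|\E_n|=(\delta_n-|\E_n|)-2$, so case \ref{it:factors_3} gives $d-r=1$ and case \ref{it:factors_1} gives $d-r=-1$. Then for $d-r=-1$ we use $(1-x)^{-1}=\sum_{N\geq 0}x^N$, coefficient $1$ for all $N\geq 0$, matching \ref{it:factors_1}; for $d-r=1$, $(1-x)^1$ gives $1,-1,0,0,\dots$, matching \ref{it:factors_3}; for $d-r=0$, constant $1$ then $0$, matching \ref{it:factors_2}. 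So I would present the computation in the order $d-r\in\{-1,0,1\}$ and remark that the coefficient of $x^N$ in $(1-x)^{d-r}$ is $(-1)^N\binom{d-r}{N}$, which for these three small exponents gives precisely the tabulated patterns, and is $0$ whenever $N<0$.

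For case \ref{it:factors_0}, $\delta_n-|\E_n|=0$, i.e.\ $d-r=-2$, so $z_{l,n}^\Nd$ is the coefficient of $x^N$ in $(1-x)^{-2}=\sum_{N\geq 0}(N+1)x^N$, which is $N+1$ for $N\geq 0$ and $0$ for $N<0$; together these say $z_{l,n}^\Nd=\max\{0,(-\psi(l),E_n)+1\}$, as claimed.

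\textbf{Main obstacle.} The computation itself is routine; the one point that deserves care is the bookkeeping identifying $\delta_n-|\E_n|$ with the exponent $d-r=(\delta_n-2)-|\E_n|$ and handling the empty-index-set case $N=(-\psi(l),E_n)<0$ uniformly. I would also double-check at the start that $\delta_n\geq 2$ always holds for $n\in\Nd$ (true since nodes have degree $\geq 3$ when $|\Nd|>1$, and the single-node case is covered elsewhere), so that $\binom{\delta_n-2}{\cdot}$ and the generating-function manipulation make sense, and that $(-\psi(l),E_n)$ being an integer follows from $\psi(l)\in V_Z\subset L$. Beyond that, the proof is a direct appeal to the generating-function identity $\sum_j(-1)^j\binom{d}{j}\binom{N-j+r-1}{r-1}=[x^N](1-x)^{d-r}$.
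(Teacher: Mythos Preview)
Your proof is correct and is essentially the paper's own argument: the paper also groups the sum by the total $k=\sum_e k_e$ (you use the complementary variable $j=N-k$), recognizes the result as the coefficient of $t^{(-\psi(l),E_n)}$ in the product $(1-t)^{-|\E_n|}\cdot(1-t)^{\delta_n-2}=(1-t)^{\delta_n-2-|\E_n|}$, and reads off the four cases. Your momentary confusion in matching $\delta_n-|\E_n|$ to the exponent $d-r$ is self-corrected; in a clean write-up just state $d-r=(\delta_n-|\E_n|)-2$ once at the outset.
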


\begin{proof}
From \cref{eq:factors}, we find (setting $k = \sum_{e\in\E_n} k_e$)
\[
  z_{l,n}^\Nd  = \sum_{k=0}^{(-\psi(l),E_n)}
			         (-1)^{(-\psi(l),E_n)-k}
                     \binom{|\E_n| + k - 1}{k}
                     \binom{\delta_n-2}{(-\psi(l),E_n)-k}
               = c_{(-\psi(l),E_n)}
\]
where we set $C(t) = \sum_{k=0}^\infty c_k t^k = A(t) \cdot B(t)$, where
\[
\begin{split}
  A(t) &= \sum_{k=0}^\infty \binom{|\E_n| + k - 1}{k} t^k
	    = (1-t)^{-|\E_n|}, \\
  B(t) &= \sum_{k=0}^\infty (-1)^k \binom{\delta_n-2}{k} t^k
        = (1-t)^{\delta_n-2},
\end{split}
\]
hence $C(t) = (1-t)^{\delta_n-2-|\E_n|}$. In each case, this proves the lemma.
\end{proof}

\begin{lemma} \label{lem:q_sum}
We have
\[
  q_{\bar Z_{i+1}} - q_{\bar Z_i}
    = \sum \set{z_l^\Nd}{l\in V_Z^\Nd,\, l\geq \bar Z_i|_\Nd,\,
	                      m_{\bar v(i)}(l) = m_{\bar v(i)}(\bar Z_i) }.
\]
\end{lemma}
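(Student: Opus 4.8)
The plan is to compute the difference $q_{\bar Z_{i+1}} - q_{\bar Z_i}$ by unwinding the definition of the coefficients $q_{l'}$ of the counting function and then restricting attention to the reduced lattice $V_Z^\Nd$ via the machinery built up in \cref{ss:red_coeff_z}. Recall that $q_{l'} = \sum\set{z_{l'+l}}{l\in L,\, l\not\geq 0}$, so by the support property of the zeta function (the coefficients $z_{l''}$ vanish unless $l''\in\Stop$), we may rewrite $q_{\bar Z_j} = \sum\set{z_{l'}}{l'\in\Stop,\, l'\not\geq \bar Z_j}$, since $l' = \bar Z_j + l$ with $l\not\geq 0$ and $z_{l'}\neq 0$ together force $l' \in\Stop$ and $l'\not\geq \bar Z_j$, and conversely. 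Hence
\[
  q_{\bar Z_{i+1}} - q_{\bar Z_i}
    = \sum\set{z_{l'}}{l'\in\Stop,\, l'\geq \bar Z_i,\, l'\not\geq \bar Z_{i+1}}.
\]
Since $\bar Z_{i+1}$ differs from $\bar Z_i$ only... well, not quite — in the algorithm of \cref{def:comp_seq_constr}, $\bar Z_{i+1} = x(\bar Z_i + E_{\bar v(i)})$, so we must be careful. But the key is \cref{lem:psi_Z_ineq}: for $l'\in\S_Z$ and $Z\in\Stop$ with $Z = x(Z)$, we have $l'\geq Z$ iff $l'|_\Nd \geq Z|_\Nd$. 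Applying this with $Z = \bar Z_i$ and $Z = \bar Z_{i+1}$ (both of which equal their own $x$-image, being outputs of the Laufer process), the condition $l'\geq \bar Z_i,\, l'\not\geq\bar Z_{i+1}$ becomes $l'|_\Nd \geq \bar Z_i|_\Nd$ and $l'|_\Nd \not\geq \bar Z_{i+1}|_\Nd$.

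Next I would argue that we may restrict the sum over $\Stop$ to a sum over $\S_Z = \Stop\cap V_Z$. This is where I expect a subtlety: a priori the support of $Z_0(t)$ inside $\Stop$ need not lie in $V_Z$. However, from \cref{eq:zeta} the zeta function is $\prod_v (1-[E_v^*]t^{E_v^*})^{\delta_v-2}$, and any vertex $v$ with $\delta_v = 2$ (i.e. $v\in\V\setminus(\Nd\cup\E)$, on a bamboo) contributes exponent zero, while ends $e\in\E$ have $\delta_e = 1$ contributing exponent $-1$; only nodes contribute. Writing an arbitrary monomial $t^{l'}$ appearing in $Z_0(t)$ as $l' = \sum_{v\in\Nd\cup\E} a_v E_v^*$ with $a_v\geq 0$, we see $l'\in\N\gen{E_v^*}{v\in\Nd\cup\E} \subset V'_Z$, and since $l'\in L$ (as $Z_0(t)\in\Z[[t^L]]$), in fact $l'\in V_Z$. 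So the support of $Z_0$ lies in $\S_Z$, and the sum reduces to $\sum\set{z_{l'}}{l'\in\S_Z,\, l'|_\Nd \geq \bar Z_i|_\Nd,\, l'|_\Nd\not\geq\bar Z_{i+1}|_\Nd}$.

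Finally, I would reorganize this sum by grouping the $l'\in\S_Z$ according to their reduction $l = l'|_\Nd \in V_Z^\Nd = \pi^\Nd(V_Z)$. Within each fiber $\S_Z(l) = \S_Z\cap(\pi^\Nd)^{-1}(l)$, the partial sum of $z_{l'}$ over $l'\in\S_Z(l)$ is exactly $z_l^\Nd$ by \cref{def:red_zed}. Thus
\[
  q_{\bar Z_{i+1}} - q_{\bar Z_i}
    = \sum\set{z_l^\Nd}{l\in V_Z^\Nd,\, l\geq \bar Z_i|_\Nd,\, l\not\geq\bar Z_{i+1}|_\Nd}.
\]
It remains to simplify the index condition. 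Since $\bar Z_{i+1} = x(\bar Z_i+E_{\bar v(i)})$, we have $m_n(\bar Z_{i+1}) = m_n(\bar Z_i)$ for all $n\in\Nd\setminus\{\bar v(i)\}$ by property \cref{it:x_exists_eq} of \cref{prop:x_exist} together with the fact that the generalized Laufer sequence from $\bar Z_i + E_{\bar v(i)}$ to $\bar Z_{i+1}$ only increases coefficients at non-nodes (all its steps are trivial in the sense of \cref{rem:subseq}), while $m_{\bar v(i)}(\bar Z_{i+1}) = m_{\bar v(i)}(\bar Z_i)+1$. Therefore, for $l\in V_Z^\Nd$ with $l \geq \bar Z_i|_\Nd$, the condition $l\not\geq \bar Z_{i+1}|_\Nd$ is equivalent to $m_{\bar v(i)}(l) < m_{\bar v(i)}(\bar Z_i)+1$, i.e. $m_{\bar v(i)}(l) = m_{\bar v(i)}(\bar Z_i)$ (using that $l\geq\bar Z_i|_\Nd$ forces $m_{\bar v(i)}(l)\geq m_{\bar v(i)}(\bar Z_i)$ and integrality). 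This yields exactly the claimed formula. The main obstacle I anticipate is the bookkeeping around $\bar Z_{i+1} = x(\bar Z_i + E_{\bar v(i)})$ rather than $\bar Z_i + E_{\bar v(i)}$ itself: one must verify carefully that the reduction to node-coordinates is insensitive to the difference, which is precisely guaranteed by \cref{lem:psi_Z_ineq} and the node-coordinate behavior of $x$, but deserves an explicit check.
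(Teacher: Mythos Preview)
Your proposal is correct and follows essentially the same approach as the paper's proof: rewrite $q_{\bar Z_j}$ as a sum of $z_{l'}$ over $l'$ in the support of $Z_0$ with $l'\not\geq\bar Z_j$, take the difference, reduce to $\S_Z$ (using that the support of $Z_0$ lies in $V_Z$), apply \cref{lem:psi_Z_ineq} to pass to node-coordinates, group into fibers to get $z_l^\Nd$, and finally use $\bar Z_{i+1}|_\Nd = \bar Z_i|_\Nd + E_{\bar v(i)}$ to simplify the index condition. Your version is in fact more explicit than the paper's on two points the paper leaves implicit (why the support lies in $V_Z$, and why the node-restriction of $\bar Z_{i+1}$ differs from that of $\bar Z_i$ only at $\bar v(i)$); note also that the hypothesis $Z\in\Stop$ in \cref{lem:psi_Z_ineq} is not actually used in its proof, so its application to $\bar Z_i$ and $\bar Z_{i+1}$ is unproblematic.
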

\begin{proof}
By definition, $q_{\bar Z_{i+1}}$ is the sum of $z_{l'}$
for $l'\in V_Z$ with $l'\not\geq \bar Z_{i+1}$.
Subtracting $q_{\bar Z_i}$, we cancel
out those summands for which $l'\not\geq \bar Z_i$. Note that these all
appear in the formula for $q_{\bar Z_{i+1}}$ since $\bar Z_{i+1} > \bar Z_i$.
Thus, by \cref{lem:psi_Z_ineq} and the definition of $z_l^\Nd$, we have
$q_{\bar Z_{i+1}} - q_{\bar Z_i} =
\sum \set{z_l^\Nd}
{l\in V_Z^\Nd,\, l\geq \bar Z_i|_\Nd,\, l\not\geq \bar Z_{i+1}|_\Nd}$.
Since $\bar Z_{i+1}|_\Nd = \bar Z_i|_\Nd + E_{\bar v(i)}$, the
condition $l\not\geq \bar Z_{i+1}|_\Nd$ is equivalent to
$m_{\bar v(i)}(l) = m_{\bar v(i)}(\bar Z_i)$, assuming
$l \geq \bar Z_i|_\Nd$.
\end{proof}

\begin{definition} \label{def:Si}
For each step $i$ in the computation sequence, set
\[
  S_i = \set{l\in V_Z^\Nd}{l\geq \bar Z_i|_\Nd,\,
	                       m_{\bar v(i)}(l) = m_{\bar v(i)}(\bar Z_i),\,
						   z_l^\Nd \neq 0}.
\]
\nomenclature[Si]{$S_i$}{Set of $l\in L^\Nd$ contributing to the sum
$q_{Z_{i+1}}-q_{Z_i}$}
\end{definition}

\begin{cor}
For each $i$, \cref{eq:qZ_ident} is equivalent to
\[
  \sum_{l\in S_i} z_l^\Nd = |\bar P_i|.
\]
\qed
\end{cor}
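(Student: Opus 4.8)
The plan is to recognize that this corollary is a purely formal consequence of \cref{lem:q_sum}, \cref{def:Si} and \cref{thm:pts}\ref{it:pts_as}: each side of \cref{eq:qZ_ident} admits a direct, reversible rewriting, and after both rewritings the identity \cref{eq:qZ_ident} becomes literally the equation $\sum_{l\in S_i} z_l^\Nd = |\bar P_i|$. So there is essentially nothing to prove beyond matching up the two expressions; the substantive content (that this common equation actually holds) is the subject of the subsequent case analysis, not of this corollary.

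First I would rewrite the left-hand side of \cref{eq:qZ_ident}. By \cref{lem:q_sum},
\[
  q_{\bar Z_{i+1}} - q_{\bar Z_i}
    = \sum \set{z_l^\Nd}{l\in V_Z^\Nd,\ l\geq \bar Z_i|_\Nd,\
                          m_{\bar v(i)}(l) = m_{\bar v(i)}(\bar Z_i)}.
\]
The summands with $z_l^\Nd = 0$ contribute nothing, so the index set may be shrunk to those $l$ that additionally satisfy $z_l^\Nd \neq 0$; by \cref{def:Si} this shrunken index set is exactly $S_i$. Hence $q_{\bar Z_{i+1}} - q_{\bar Z_i} = \sum_{l\in S_i} z_l^\Nd$ unconditionally. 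Next I would rewrite the right-hand side. With $\bar Z_i = Z_{i'}$ and $\bar v(i) = v(i')$ for the relevant index $i'$, we have $(-\bar Z_i, E_{\bar v(i)}) = (-Z_{i'}, E_{v(i')})$, so by \cref{def:a_i}, $\bar a_i = a_{i'} = \max\{0, (-\bar Z_i, E_{\bar v(i)}) + 1\}$. Since we work throughout \cref{s:SW} in case \ref{rt:article} with $0\le i\le k-1$, \cref{thm:pts}\ref{it:pts_as} gives $|\bar P_i| = \bar a_i$, hence $\max\{0, (-\bar Z_i, E_{\bar v(i)}) + 1\} = |\bar P_i|$, again unconditionally.

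Substituting these two identities into \cref{eq:qZ_ident} turns it into $\sum_{l\in S_i} z_l^\Nd = |\bar P_i|$, and running the substitutions in reverse recovers \cref{eq:qZ_ident}; this is the asserted equivalence. There is no real obstacle. The only points requiring a word of care are that discarding the vanishing summands is precisely what identifies the index set of \cref{lem:q_sum} with $S_i$, and that the appeal to \cref{thm:pts}\ref{it:pts_as} is legitimate because the standing hypotheses of \cref{s:SW} place us in case \ref{rt:article} with a minimal diagram. The genuinely hard claim, namely that $\sum_{l\in S_i} z_l^\Nd$ really equals $|\bar P_i|$, is established case by case in \cref{lem:1nd,lem:central_node,lem:onetwo_arms,lem:three_arms} and lies outside the scope of this corollary.
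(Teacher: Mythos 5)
Your proposal is correct and is exactly the argument the paper intends (the corollary is stated with a \qed precisely because it follows formally from \cref{lem:q_sum}, \cref{def:Si}, \cref{def:a_i} and \cref{thm:pts}): rewrite the left side of \cref{eq:qZ_ident} via \cref{lem:q_sum} after discarding vanishing summands, and the right side via $|\bar P_i| = \bar a_i$. Nothing is missing.
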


\subsection{The one node case} \label{ss:SW_1nd}

In the case when the diagram $\Gamma(f)$ contains only a single face,
the graph $G$ is starshaped, i.e. contains a single node $n_0$. Our function
$f$ then has the form
$f = f_{n_0} + f^+$, where
$\wt_{n_0}(f^+) > \wt_{n_0}(f)$. The deformation
$f_t(x) = f_{n_0} + tf^+$ has constant topological type, so for 
computations involving the zeta function, or any other topological
invariant, we may assume that $f = f_{n_0}$, i.e. that $f$ is
weighted homogeneous. \index{weighted homogeneous}
In other words, the variety $X = \{ f = 0\}\subset\C^3$ has 
a
good $\C^*$ action. \index{good $\C^*$ action}
The singularities of such varieties have been studied
in \cite{Pinkham,Neu_AbCovQH,Nem_Nico_SWII} (to name a few).

\begin{lemma} \label{lem:1nd}
Assume that $\Nd = \{n_0\}$. For any $i$ we have $\bar v(i) = n_0$ and
there is at most one element $l_i \in S_i$. In that case, we have
\begin{equation} \label{eq:1nd}
  z_{l_i}^\Nd = \max \{0,(-\bar Z_i, E_{\bar v(i)})+1\}.
\end{equation}
In particular, \cref{eq:qZ_ident} holds.
\end{lemma}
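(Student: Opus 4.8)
The plan is to reduce \cref{eq:qZ_ident} in this one-node situation to a direct computation of the single relevant coefficient of the reduced zeta function, using the structural results of \cref{ss:red_coeff_z}. Since $\Nd=\{n_0\}$, the reduced lattice $L^\Nd$ is one-dimensional and computation sequence \ref{rt:article} increases $m_{n_0}$ by one at each step, so indeed $\bar v(i)=n_0$ for all $i$. By the corollary following \cref{def:Si}, it suffices to show $\sum_{l\in S_i}z_l^\Nd=|\bar P_i|$, and here $S_i$ has at most one element: the condition $m_{n_0}(l)=m_{n_0}(\bar Z_i)$ pins down the unique coordinate of $l\in L^\Nd$, so $S_i=\{l_i\}$ if the $l\in V_Z^\Nd$ with that coordinate happens to satisfy $l\geq \bar Z_i|_\Nd$ (automatic, since both have the same $n_0$-coefficient) and $z^\Nd_l\neq 0$, and $S_i=\emptyset$ otherwise. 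Thus I must identify $z_{l_i}^\Nd$ with $\max\{0,(-\bar Z_i,E_{n_0})+1\}$ when $S_i\neq\emptyset$, and show that when $S_i=\emptyset$ the quantity $\max\{0,(-\bar Z_i,E_{n_0})+1\}$ vanishes (or rather, that $|\bar P_i|=0$, which by \cref{thm:pts}\ref{it:pts_as} is the same thing).

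First I would invoke \cref{lem:mod} to describe $V_Z^\Nd$: the membership condition is that for each end $e\in\E=\E_{n_0}$, the quantity $(\beta_e m_{n_0}(l)+m_{n^*_e}(l))/\alpha_e$ — interpreted via \cref{eq:mod_Nds}, so really $\beta_e m_{n_0}(l)$ reduced appropriately — be integral; concretely $l=m E_{n_0}$ projects from $V_Z$ iff a congruence on $m$ holds. When that congruence fails, $S_i=\emptyset$; I would then check, via \cref{lem:pts_faces} (case \ref{rt:article}) and the structure of $F_{n_0}(\bar Z_i)$, that $\bar P_i$ is empty in exactly those steps (these are the "trivial" steps in the sense of \cref{rem:subseq}, where $(\bar Z_i,E_{n_0})>0$), matching the right-hand side. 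When the congruence holds, $\psi(l_i)$ is well defined by \cref{lem:Sl_struct}, and by \cref{lem:psi_inters} its multiplicities on the legs are $m_{u_e}(\psi(l_i))=\lceil \beta_e m_{n_0}/\alpha_e\rceil$. Since $n_0$ is the only node, $\delta_{n_0}-|\E_{n_0}|=0$, so \cref{lem:factors}\ref{it:factors_0} gives immediately
\[
  z_{l_i}^\Nd = z_{l_i,n_0}^\Nd = \max\{0,(-\psi(l_i),E_{n_0})+1\}.
\]

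It therefore remains to prove $(-\psi(l_i),E_{n_0})=(-\bar Z_i,E_{n_0})$ whenever $S_i\neq\emptyset$. Both cycles have the same $n_0$-coefficient $m:=m_{n_0}(\bar Z_i)$, so the difference of intersection numbers is $\sum_{e\in\E}\big(m_{u_e}(\psi(l_i))-m_{u_e}(\bar Z_i)\big)$. Now $\bar Z_i=x(\bar Z_i)$ by construction, so \cref{lem:x_interpol} gives $m_{u_e}(\bar Z_i)=\lceil \beta_e m/\alpha_e\rceil$, which is exactly $m_{u_e}(\psi(l_i))$ by \cref{lem:psi_inters}. Hence the two intersection numbers agree, $z_{l_i}^\Nd=\max\{0,(-\bar Z_i,E_{n_0})+1\}$, and combining with $|\bar P_i|=z_{l_i}^\Nd$ (which is $\sum_{l\in S_i}z_l^\Nd$) yields \cref{eq:qZ_ident}. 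The main obstacle I anticipate is the bookkeeping for the steps where $S_i=\emptyset$: I must make sure that the congruence characterizing $V_Z^\Nd$ fails at a given step \emph{exactly} when $\bar P_i=\emptyset$, i.e.\ when $\bar r_i$ is such that $F_{n_0}(\bar Z_i)$ contains no integral point — this is where \cref{lem:ceil}, \cref{lem:pts_faces} and \cref{lem:last_ones} (handling the endpoint $\bar r_i=1$) must be assembled carefully, but it is the same mechanism already used in \cref{thm:pts}, so no genuinely new difficulty arises.
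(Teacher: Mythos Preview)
Your core argument is correct and matches the paper's proof: apply \cref{lem:factors}\ref{it:factors_0} (since $\delta_{n_0}-|\E_{n_0}|=0$) to get $z_{l_i}^\Nd=\max\{0,(-\psi(l_i),E_{n_0})+1\}$, then identify $(-\psi(l_i),E_{n_0})$ with $(-\bar Z_i,E_{n_0})$ via \cref{lem:x_interpol} and \cref{lem:psi_inters}.

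There is, however, a misreading of \cref{lem:mod} that sends you on an unnecessary detour. The congruence conditions in \cref{eq:lem:mod} range over $n'\in\Nd_n$, i.e.\ over \emph{neighbouring nodes}, not over ends $e\in\E_{n_0}$. When $\Nd=\{n_0\}$ we have $\Nd_{n_0}=\emptyset$, so the condition is vacuous and $V_Z^\Nd=L^\Nd\cong\Z$. Thus the unique candidate $l_i$ with $m_{n_0}(l_i)=m_{n_0}(\bar Z_i)$ always lies in $V_Z^\Nd$; there is no congruence that could fail. The anticipated obstacle---matching the steps where a congruence fails with the steps where $\bar P_i=\emptyset$---simply does not exist. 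The case $S_i=\emptyset$ (which occurs when $z_{l_i}^\Nd=0$) is handled automatically by the formula: once you have $z_{l_i}^\Nd=\max\{0,(-\bar Z_i,E_{n_0})+1\}$, the sum $\sum_{l\in S_i}z_l^\Nd$ equals this quantity whether or not $l_i$ actually lies in $S_i$. So drop the discussion of congruences on ends and the proof is complete, exactly as in the paper.
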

\begin{proof}
It is clear that $\bar v(i) = n_0$ for all $i$ and that $m_{n_0}(l_i) = i$
determines a unique element $l_i \in L^\Nd = V_\Z^\Nd \cong \Z$ (for
$L^\Nd = V_Z^\Nd$, see \cref{lem:mod}).
By \cref{lem:factors}\ref{it:factors_0}, we have
$z_{l_i}^\Nd = \max\{0,(-\psi(l_i),E_{\bar v(i)})+1\}$. By
\cref{lem:x_interpol,lem:psi_inters} we have
$m_u(\bar Z_i) = m_u(\psi(l_i))$ for $u\in\V_{\bar v(i)}$ and, furthermore,
$m_{\bar v(i)}(\bar Z_i) = i = m_{\bar v(i)}(\psi(l_i))$.
Therefore, $(-\psi(l_i),E_{\bar v(i)}) = (-\bar Z_i,E_{\bar v(i)})$,
proving \cref{eq:1nd}.
\end{proof}

\subsection{Multiplicities along arms}

In this subsection we use \cref{lem:factors} to determine multiplicities
along arms given ``local data'', i.e. multiplicities on two nodes.
Recall the definition of
arms \index{arm}
in \cref{ss:anatomy}.

\begin{block} \label{block:arm_not}
Assume that the diagram $\Gamma(f)$ has a nondegenerate
arm \index{arm}
consisting of
faces $F_{n_1}, \ldots, F_{n_j}$ so that for $s=2,\ldots, j-1$ we have
$\Nd_{n_s} = \{ n_{s-1}, n_{s+1} \}$ and $\Nd_{n_j} = \{ n_{j-1} \}$.
In this case, we either have $\Nd_{n_1} = \{ n_2 \}$, or there is a
node $n_0 \in \Nd$ so that $\Nd_{n_1} = \{n_0, n_2\}$ or $\{n_0\}$, depending
on whether $j>1$ or $j=1$.
If there is such a node $n_0$, then we set
$\nu = 0$, \nomenclature[n]{$\nu$}{Either $0$ or $1$}
otherwise, set
$\nu = 1$. Note that if $\nu = 1$, then $\Nd = \{n_1,\ldots,n_j\}$.

We fix the following notation as well.
Let
$\alpha_s = \alpha_{n_s, n_{s+1}}$
\nomenclature[as]{$\alpha_s$}{$\alpha_{n_s, n_{s+1}}$}
and
$\beta_s = \beta_{n_s, n_{s+1}}$,
\nomenclature[bs]{$\beta_s$}{$\beta_{n_s, n_{s+1}}$}
for $\nu \leq s < j$. Also,
let
$\overline\beta_s = \beta_{n_{s+1}, n_s}$,
\nomenclature[bs]{$\overline\beta_s$}{$\beta_s^{-1}\,(\mod \alpha_s)$}
so that
$\beta_s \overline\beta_s \equiv 1\, (\mod \alpha_s)$.
This way, the two equations 
$\beta_s m_s + m_{s+1} \equiv 0$ and
$m_s + \overline\beta_s m_{s+1} \equiv 0\, (\mod \alpha_s)$ are equivalent.

We always assume that $\nu < j$. If $\nu = j$, then we necessarily have
$\nu = j = 1$ and $\Nd = \{ n_1 \}$. This case is covered in
\cref{ss:SW_1nd}.
Note that \cref{lem:hand} does not make sense unless we make this assumption.
\end{block}

\begin{lemma} \label{lem:m_exists}
Assume the notation given in \cref{block:arm_not}.
Let $\nu\leq s < j$ and assume that we have numbers $m_s, m_{s+1}$ satisfying 
$\beta_s m_s + m_{s+1} \equiv 0 \, (\mod \alpha_s)$
Then there exist unique numbers $m_\nu, \ldots, m_j$ (with $m_s$ and
$m_{s+1}$ unchanged), so that for any $r$ we have
\begin{equation} \label{eq:m_exists_mod}
  \beta_r m_r + m_{r+1} \equiv 0 \, (\mod \alpha_r)
\end{equation}
and
\begin{equation} \label{eq:m_exists}
  \frac{m_{r-1} + \bar \beta_{r-1} m_r}{\alpha_{r-1}} +
    E_{n_r}^2 m_r + 
    \frac{\beta_r m_r + m_{r+1}}{\alpha_{r+1}} +
    \sum_{e\in\E_{n_r}}
    \left\lceil\frac{\beta_e m_r}{\alpha_e}\right\rceil = 0.
\end{equation}
\end{lemma}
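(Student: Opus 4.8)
The plan is to read \eqref{eq:m_exists} as a recursion that propagates the prescribed data $m_s,m_{s+1}$ outward along the path $n_\nu,\dots,n_j$, and to verify that integrality and the congruences \eqref{eq:m_exists_mod} are preserved at every step. First recall, via \cref{lem:mod} (formula \eqref{eq:mod_Nd}) and \cref{lem:psi_inters}, that for $l\in L^\Nd$ with $m_{n_{r'}}(l)=m_{r'}$ the left hand side of \eqref{eq:m_exists} is exactly $(\psi(l),E_{n_r})$; so the assertion is that the $m_{r'}$ can be chosen uniquely, keeping $m_s,m_{s+1}$ fixed, so that $(\psi(l),E_{n_r})=0$ for every interior node $n_r$ (that is, $\nu<r<j$) and all the congruences hold. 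Since every $\alpha_r$ is positive, in \eqref{eq:m_exists} the coefficient of $m_{r+1}$, and likewise that of $m_{r-1}$, is a nonzero rational number; hence the equation at $n_r$ can be solved for either of $m_{r-1},m_{r+1}$ once the other two members of the triple $m_{r-1},m_r,m_{r+1}$ are known.

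Uniqueness is then immediate: the equation at $n_{s+1}$ determines $m_{s+2}$ from $m_s,m_{s+1}$, the equation at $n_{s+2}$ determines $m_{s+3}$, and so on up to $m_j$; symmetrically (when $\nu<s$) the equation at $n_s$ determines $m_{s-1}$, the equation at $n_{s-1}$ determines $m_{s-2}$, and so on down to $m_\nu$. The nodes used, one for each element of $\{n_{\nu+1},\dots,n_{j-1}\}$, match in number the unknowns $\{m_\nu,\dots,m_j\}\setminus\{m_s,m_{s+1}\}$ and each is used once, so the $m_r$ are forced. For existence I would run exactly this recursion and check by induction, with the hypothesis $\beta_s m_s+m_{s+1}\equiv 0\pmod{\alpha_s}$ as the seed, that every $m_r$ produced is an integer and that \eqref{eq:m_exists_mod} holds. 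In the ascending step, producing $m_{r+1}$ from $m_{r-1},m_r$ through the equation at $n_r$: by the already-established congruence at index $r-1$ the term $(m_{r-1}+\bar\beta_{r-1}m_r)/\alpha_{r-1}$ is an integer, $E_{n_r}^2 m_r$ is an integer, and each $\lceil\beta_e m_r/\alpha_e\rceil$ is an integer; since \eqref{eq:m_exists} forces the remaining term to equal minus the sum of these, that term is an integer, which is precisely the statement that $m_{r+1}$ is an integer satisfying the congruence at index $r$. The descending step is the same after rewriting the congruence at index $r$ as $m_r+\bar\beta_r m_{r+1}\equiv 0\pmod{\alpha_r}$ and solving the equation at $n_r$ for $m_{r-1}$. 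The ascent starts at $n_{s+1}$, whose equation needs the congruence at $s$ (given), and the descent starts at $n_s$, whose equation also needs the congruence at $s$ (given), so the induction closes and yields integers $m_\nu,\dots,m_j$ satisfying all of \eqref{eq:m_exists_mod} and \eqref{eq:m_exists}.

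The only delicate point, and the place where the hypothesis on $m_s,m_{s+1}$ genuinely enters, is this integrality bookkeeping: everything rests on the observation that, once the congruence one index behind is in force, all terms of \eqref{eq:m_exists} other than the one being solved for are visibly integral, forcing the solved-for term to be integral too. Granting that, the construction is mechanical, and no negative-definiteness or global argument is needed beyond the positivity of the $\alpha_r$.
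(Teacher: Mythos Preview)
Your proof is correct and follows essentially the same approach as the paper: read \eqref{eq:m_exists} as a two-sided recursion from the seed pair $(m_s,m_{s+1})$, and observe inductively that once the congruence one index behind holds, all other terms are integral, forcing the new term to be an integer satisfying the next congruence. Your write-up is somewhat more explicit about the integrality bookkeeping and includes the interpretation of the left side of \eqref{eq:m_exists} as $(\psi(l),E_{n_r})$, which the paper places in the remark following the lemma rather than in the proof itself.
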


\begin{rem} \label{rem:m_exists}
Assume that $\nu<r<j$.
If $l\in L^\Nd$ and $m_{n_s}(l) = m_s$ for $s=r-1, r, r+1$, then
\ref{eq:m_exists} is equivalent to $z_{l,n_r}^\Nd \neq 0$, which again
is equivalent to $z_{l,n_r}^\Nd = 1$.
This follow from \cref{lem:factors}\ref{it:factors_2}, and the
fact that the left hand side of \cref{eq:m_exists} equals
$(\psi(l),E_{n_r})$ by \cref{lem:mod,lem:psi_inters}.
\end{rem}

\begin{proof}[Proof of \cref{lem:m_exists}]
Assume that $\nu\leq r<j$ and that we have integers $m_r$ and $m_{r+1}$
satisfying $\beta_r m_r + m_{r+1} \equiv 0 \, (\mod \alpha_r)$. Then
\cref{eq:m_exists} defines an integer $m_{n_{r-1}}$ which satisfies
\cref{eq:m_exists}. It is clear from this definition that
$m_{r-1} + \overline\beta_{r-1} m_r \equiv 0\, (\mod\alpha_{r-1})$, or
equivalently, $\beta_{r-1}m_{r-1} + m_r \equiv 0\, (\mod\alpha_{r-1})$.
This way, we obtain $m_\nu, \ldots, m_{s-1}$ recursively.
A similar process produces the numbers $m_{s+2}, \ldots m_j$.
\end{proof}

\begin{definition} \label{def:arm_seq}
We will refer to a sequence of numbers $m_\nu,\ldots, m_j \in \Z$ satisfying
\cref{eq:m_exists_mod,eq:m_exists} as an
\emph{arm sequence}.  \index{arm sequence}
When there are more than one arms in the diagram, it will be clear from
context which arm is being referred to.
\end{definition}

\begin{rem}
We have $\delta_{n_r} - |\E_{n_r}| = |\Nd_{n_r}| = 2$ for $\nu<r<j$ (for
$1<r<j$ if there is no $n_0$). Therefore, it follows form \cref{lem:factors}
and \cref{lem:psi_inters} that if $l\in L^\Nd$ and $z_l^\Nd \neq 0$
then the sequence given by $m_r = m_{n_r}(l)$ must be an
arm sequence. \index{arm sequence}
\end{rem}

\begin{lemma} \label{lem:armseq_points}
Let $m_\nu,\ldots, m_j$ be an arm sequence. There exist unique points
$p_{\nu+1}, \ldots, p_{j-1} \in \Z^3$
so that for each $\nu<s<j$
and $r=s-1, s, s+1$ we have $\ell_{n_r}(p_s) = m_r$.
\end{lemma}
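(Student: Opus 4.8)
The plan is to reconstruct each point $p_s$ from the three linear constraints $\ell_{n_{s-1}}(p_s)=m_{s-1}$, $\ell_{n_s}(p_s)=m_s$, $\ell_{n_{s+1}}(p_s)=m_{s+1}$, and then to verify that these constraints are consistent (so a point exists) and that the three functionals involved are linearly independent (so the point is unique). Existence of a solution of this $3\times 3$ linear system over $\Z$ will require the ceiling/divisibility conditions in the arm sequence; this is exactly the purpose of \cref{eq:m_exists_mod}, which says $\beta_r m_r + m_{r+1}\equiv 0\,(\mathrm{mod}\,\alpha_r)$ for $r=s-1$ and $r=s$.

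First I would fix $\nu<s<j$ and recall from \cref{block:abc_comp} and \cref{prop:content} that, since the faces $F_{n_{s-1}},F_{n_s},F_{n_{s+1}}$ are consecutive along a nondegenerate arm, the pairs of support functions $(\ell_{n_{s-1}},\ell_{n_s})$ and $(\ell_{n_s},\ell_{n_{s+1}})$ meet along the two primitive edges of $F_{n_s}$ adjacent at a common vertex, and by \cref{cor:reg_vx} that vertex is regular. Regularity means precisely that primitive vectors along the two edges form an integral basis of the affine hyperplane $H_{n_s}^{=}(m_s)$, hence the restrictions $\ell_{n_{s-1}}|_{H},\ell_{n_{s+1}}|_H$ to $H=H_{n_s}^{=}(m_s)$ together form an integral affine coordinate system on $H\cap\Z^3$. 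Therefore, once $\ell_{n_s}(p_s)=m_s$ is imposed, there is a unique real point $p_s$ with $\ell_{n_{s-1}}(p_s)=m_{s-1}$, $\ell_{n_{s+1}}(p_s)=m_{s+1}$, and it lies in $\Z^3$ iff the values $m_{s-1},m_{s+1}$ lie in the image lattices of these restricted functionals. The content computation in \cref{prop:content} identifies those images with $m_s$-dependent cosets, and unwinding the definitions shows membership in these cosets is equivalent to $\beta_{s-1}m_{s-1}+m_s\equiv0\,(\mathrm{mod}\,\alpha_{s-1})$ and $\beta_s m_s+m_{s+1}\equiv0\,(\mathrm{mod}\,\alpha_s)$ — which is \cref{eq:m_exists_mod} at $r=s-1,s$. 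Uniqueness is immediate once existence is known, since three affinely independent hyperplanes in $\R^3$ meet in at most one point, and affine independence of $\ell_{n_{s-1}},\ell_{n_s},\ell_{n_{s+1}}$ follows because $\alpha_{s-1},\alpha_s>1$ (the faces are genuinely distinct two-dimensional faces sharing only an edge), so no two of the $\ell_{n_r}$ are proportional and the third is not an affine combination of the other two on $H$.

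The main obstacle I anticipate is the bookkeeping translating the abstract ``content/denominator'' language of \cref{block:abc_comp} and \cref{prop:content} into the concrete statement that the integral point $p_s$ exists exactly under the congruences \cref{eq:m_exists_mod}; one must be careful that the relevant vertex of $F_{n_s}$ is regular (this uses the rational-homology-sphere hypothesis via \cref{cor:reg_vx}, applied to the empty polygon $F_{n_s}$ from \cref{ex:Newton_poly}) and that the primitive edge vectors emanating from it indeed span $H\cap\Z^3$. A secondary but purely mechanical point is handling the endpoints of the range, $s=\nu+1$ and $s=j-1$, where one of the neighbouring nodes may be the special node $n_0$ or $n_1$; the same argument applies verbatim since \cref{block:arm_not} guarantees $\Nd_{n_{s}}=\{n_{s-1},n_{s+1}\}$ for exactly these interior indices, so three genuine node-functionals are always available. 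Once these geometric facts are in place the proof is a short linear-algebra-over-$\Z$ argument.
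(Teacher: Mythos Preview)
Your approach has a genuine gap. You claim that the restrictions $\ell_{n_{s-1}}|_H$ and $\ell_{n_{s+1}}|_H$ form an integral affine coordinate system on $H\cap\Z^3$, but they do not: by \cref{prop:content} these restrictions have content $\alpha_{s-1}$ and $\alpha_s$, so they are not even primitive. Even if you pass to the primitive neighbour functionals $\ell_{u_-},\ell_{u_+}$ (with $u_\pm=u_{n_s,n_{s\pm1}}$), these still need not give integral coordinates on $H$: the face $F_{n_s}$ is a $t$-triangle whose two \emph{arm} edges $F_{n_s}\cap F_{n_{s\pm1}}$ are the two primitive edges meeting at the apex, and by \cref{cor:reg_vx} that apex is precisely the (possibly) singular vertex when $t=|\E_{n_s}|>1$. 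So regularity of a vertex of $F_{n_s}$ is available, but not at the vertex your argument needs.

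Relatedly, your argument uses only the congruences \cref{eq:m_exists_mod} and never the full equation \cref{eq:m_exists}, which is the other half of the definition of an arm sequence. The paper's proof fixes both problems at once: it uses $\ell_{u_+}|_H$ together with a \emph{leg} functional $\ell_{u_0}|_H$ (for $u_0=u_e$, $e\in\E_{n_s}$) as the integral coordinates, since the corresponding vertex of $F_{n_s}$ is regular. Then $\ell_{u_+}(p_s)\in\Z$ follows from \cref{eq:m_exists_mod}, while integrality of $\ell_{u_0}(p_s)$ is obtained by subtracting \cref{eq:nbr_sum} (evaluated at $p_s$) from \cref{eq:m_exists} and dividing by $|\E_{n_s}|$, which yields $\ell_{u_0}(p_s)=\lceil\beta_e m_s/\alpha_e\rceil$. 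Your uniqueness argument (linear independence of $\ell_{n_{s-1}},\ell_{n_s},\ell_{n_{s+1}}$) is fine and matches the paper.
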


\begin{proof}
Let $s$ be given, $\nu<s<j$. For simplicity, set $\ell_r = \ell_{n_r}$ for
all $r$. We note first that the functionals $\ell_{s-1}, \ell_s, \ell_{s+1}$
are linearly independent. This follows from the fact that the
functions $\ell_r - \wt_{n_r}(f)$ for $r = s-1, s+1$ restricted to 
the hyperplane $\ell_s = \wt_{n_s}(f)$ support adjacent edges of the polygon
$F_{n_s}$. Thus, $\ell_{s-1}, \ell_{s+1}$ induce an isomorphism
$H^=(\wt(f))\to\R^2$, so the three functions form a dual basis
of $\R^3$. The existence of $p_s\in\R^3$ follows, but we must show that
$p_s$ has integral coordinates.

Define $u_+, u_-,u_0\in\V_{n_s}$ by $u_\pm = u_{n_s,n_{s\pm1}}$, and let
$u_0$ be some other neighbour.  Since the
functional $\ell_{n_s}$ is primitive, the hyperplane $H = H^=_{n_s}(m_s)$
contains a two dimensional affine lattice $H\cap\Z^3$. The restrictions
$\ell_{u_\pm}|_H, \ell_{u_0}|_H$ are all primitive, and by \cref{cor:reg_vx}
the functions $\ell_{u_+}|_H, \ell_{u_0}|_H$ give affine coordinates over $\Z$
of this lattice. It is therefore enough to show that these functionals
take integral values on $p_s$. First, we find
\[
  \ell_{u_+}(p_s) =
    \left(
    \frac{\beta_s \ell_{n_s} + \ell_{n_{s+1}}}{\alpha_s}
    \right)(p_s)
	= 
    \frac{\beta_s m_s + m_{s+1}}{\alpha_s} \in \Z
\]
by \cref{eq:m_exists_mod}, and a similar formula for $\ell_{u_-}(p_s)$.
Subtracting \cref{eq:nbr_sum} from
\cref{eq:m_exists}, evaluated at $p_s$, and dividing by $|\E_n|$ one finds
\begin{equation} \label{eq:leg_ceil}
  \ell_{u_0}(p_s) =
  \left\lceil \frac{\beta_e m_s}{\alpha_e} \right\rceil \in \Z,
\end{equation}
where $e\in\E_{n_s}$.
Note that here we use the fact that $\ell_{u_0}$ does not depend on the choice
of $u_0\in\V_{n_s}$, as long as $u_0\neq u_\pm$, and similarly,
$\alpha_e, \beta_e$
do not depend on $e\in\E_n$. This is because $F_{n_s}$ is a triangle, and
all legs of $n_s$ are associated with one edge of this triangle, see also
\cref{prop:arm_hands}.
\end{proof}

\begin{definition} \label{def:assoc}
Let $m_\nu,\ldots, m_j$ be an arm sequence. We call the points
$p_{\nu+1}, \ldots, p_{j-1}$ the
\emph{associated vertices}. \index{associated vertices}
\nomenclature[ps]{$p_s$}{Associated vertices}
The
\emph{associated lines} \index{associated lines}
are defined as
$L_s = \set{p\in\R^3}{\ell_{n_s}(p) = m_s,\,\ell_{n_{s+1}}(p) = m_{s+1}}$
\nomenclature[Ls]{$L_s$}{Associated lines}
for $\nu\leq s < j$.
Thus, we have $p_s, p_{s+1} \in L_s$, whenever these are defined.
\end{definition}

\begin{lemma} \label{lem:hand}
Let $m_\nu, \ldots, m_j$ be an
arm sequence, \index{arm sequence}
and assume that the
arm goes in the direction of the $x_3$-axis.
Assume furthermore that $l\in V_Z^\Nd$ with
$m_{n_s}(l) = m_s$ for all $s$.
The following are equivalent.
\begin{enumerate}

\item \label{lem:hand_exists}
There is an $\nu\leq s<j$ so that $L_s$
contains an integral point with nonnegative $x_1$ and $x_2$ coordinates.

\item \label{lem:hand_forall}
The line $L_s$
contains an integral point with nonnegative $x_1$ and $x_2$ coordinates
for all $0\leq s<j$.

\item \label{lem:hand_ineq}
We have
\begin{equation} \label{eq:hand_ineq}
  \frac{m_{j-1} + \bar\beta_{j-1} m_{j-1}}{\alpha_{j-1}}
    - b_{n_j} m_j + 
    \sum_{e\in\E_{n_j}}
    \left\lceil\frac{\beta_e m_j}{\alpha_e}\right\rceil
  \leq 0.
\end{equation}

\item \label{lem:hand_nzero}
We have $z_{l,n_j}^\Nd \neq 0$.

\item \label{lem:hand_one}
We have $z_{l,n_j}^\Nd = 1$.

\end{enumerate}
\end{lemma}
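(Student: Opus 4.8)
\textbf{Proof plan for \cref{lem:hand}.}

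The plan is to establish the cycle of implications \ref{lem:hand_forall} $\Rightarrow$ \ref{lem:hand_exists} $\Rightarrow$ \ref{lem:hand_ineq} $\Leftrightarrow$ \ref{lem:hand_nzero} $\Leftrightarrow$ \ref{lem:hand_one}, and then close the loop with \ref{lem:hand_exists} $\Rightarrow$ \ref{lem:hand_forall}. The implication \ref{lem:hand_forall} $\Rightarrow$ \ref{lem:hand_exists} is trivial (recall $\nu<j$ by the standing assumption in \cref{block:arm_not}, so there is at least one valid index $s$). The equivalences \ref{lem:hand_ineq} $\Leftrightarrow$ \ref{lem:hand_nzero} $\Leftrightarrow$ \ref{lem:hand_one} should be essentially bookkeeping: since $n_j$ is an extremal node of the arm, $\Nd_{n_j} = \{n_{j-1}\}$, so $\delta_{n_j} - |\E_{n_j}| = |\Nd_{n_j}| = 1$, and \cref{lem:factors}\ref{it:factors_1} tells us that $z_{l,n_j}^\Nd$ is $1$ when $(-\psi(l),E_{n_j}) \geq 0$ and $0$ otherwise. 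It remains only to identify the left-hand side of \cref{eq:hand_ineq} with $(\psi(l),E_{n_j})$. This follows from \cref{lem:mod} (which gives $m_{u_{n_j,n_{j-1}}}(\psi(l)) = (m_{j-1} + \bar\beta_{j-1} m_j)/\alpha_{j-1}$, using \cref{rem:alpbet} to pass between $\beta_{n_j,n_{j-1}}$ and $\bar\beta_{j-1}$) together with \cref{lem:psi_inters} (which gives $m_{u_e}(\psi(l)) = \lceil \beta_e m_j/\alpha_e \rceil$ for each $e\in\E_{n_j}$), plugged into the expansion $(\psi(l),E_{n_j}) = -b_{n_j}m_j + \sum_{u\in\V_{n_j}} m_u(\psi(l))$.

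The geometric content is in \ref{lem:hand_exists} $\Rightarrow$ \ref{lem:hand_ineq} and \ref{lem:hand_exists} $\Rightarrow$ \ref{lem:hand_forall}; I would prove both at once. Suppose $L_{s_0}$ contains an integral point $p$ with $p_1, p_2 \geq 0$ for some $\nu \leq s_0 < j$. The first task is to propagate this along the arm. Using the associated vertices $p_{\nu+1},\dots,p_{j-1}$ from \cref{lem:armseq_points} and the associated lines $L_s$ from \cref{def:assoc}, I want to show that each $L_s$ meets $\Z^3 \cap \{p_1\geq 0, p_2 \geq 0\}$. The key structural input is \cref{prop:arm_hands}: along the arm in the $x_3$-direction, the leg-support functions $\ell_{n_e^*}$ are among the coordinate functions $\ell_1, \ell_2$ (for $s<j$ there is a unique leg group, so the relevant coordinate hyperplane is the same for all ends of $n_s$), and consequently the faces $F_{n_s}$ are positioned so that moving from one associated line to an adjacent one corresponds to sliding along the boundary of the shifted diagram, staying in the positive-coordinate region. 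Concretely, I expect to argue: the points $p_s$ all have nonnegative coordinates (they are associated vertices, lying on edges of faces $F_{n_s} \subset \Gamma(f) \subset \R^3_{\geq 0}$), and $p \in L_{s_0}$ with $p_1,p_2\geq 0$ forces $p$ to lie on the ray from $p_{s_0}$ (or $p_{s_0+1}$) in the direction where $x_3$ increases and $x_1, x_2$ stay nonnegative; the primitive generator of $L_s$ always points in such a direction because $F_{n_s}$ is an empty triangle (\cref{cor:reg_vx}, \cref{prop:poly_class}) with an edge on the relevant coordinate hyperplane. This shows \ref{lem:hand_forall}.

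For \ref{lem:hand_exists} $\Rightarrow$ \ref{lem:hand_ineq}: once we know $L_{j-1}$ contains an integral point $q$ with $q_1, q_2 \geq 0$, we have $q \in \R^3_{\geq 0}$ (the $x_3$-coordinate is automatically nonnegative since $q\in L_{j-1}$ and $\ell_{n_j}, \ell_{n_{j-1}}$ are nonnegative on such points — or more directly, $q$ lies above a point of $F_{n_{j-1}}$), and then I evaluate. By \cref{lem:armseq_points}-type reasoning $\ell_{n_j}(q) = m_j$ and $\ell_{n_{j-1}}(q) = m_{j-1}$, the neighbour multiplicities of $\psi(l)$ at $n_j$ are exactly the values of the $\ell_u$ at $q$ (by \cref{lem:mod}, \cref{lem:psi_inters}, \cref{eq:leg_ceil}), and \cref{eq:nbr_sum} gives $-b_{n_j}m_j + \sum_{u\in\V_{n_j}}\ell_u(q) = -\sum_{u \in \V^*_{n_j}\setminus\V_{n_j}}\ell_u(q) \leq 0$ since all $\ell_u$ are nonnegative on $\R^3_{\geq 0}$; hence $(\psi(l),E_{n_j}) \leq 0$, i.e. \cref{eq:hand_ineq}. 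Conversely, for \ref{lem:hand_ineq} $\Rightarrow$ \ref{lem:hand_exists} I would argue contrapositively: if no $L_s$ contains an integral point with nonnegative first two coordinates, then in particular $F_{n_j}(\psi(l))$ (the relevant face of the Newton polyhedron of $\psi(l)$, using \cref{def:HGam}) either is empty or lies outside $\R^3_{\geq 0}$, and by \cref{lem:null_nopts} applied in the reverse direction — or rather by a direct computation analogous to \cref{lem:null_nopts}'s proof — one gets $(\psi(l), E_{n_j}) > 0$, contradicting \cref{eq:hand_ineq}. The main obstacle I anticipate is the propagation step \ref{lem:hand_exists} $\Rightarrow$ \ref{lem:hand_forall}: making precise why a single integral point on one associated line with nonnegative $x_1, x_2$ forces the same on all the others requires careful use of the combinatorics of the arm (\cref{prop:arm_hands}, \cref{prop:anatomy}) and the empty-polygon classification, and it is easy to be sloppy about which coordinate hyperplane each leg sits on and about the direction of the primitive generators of the $L_s$. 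Everything else is essentially substitution into already-established identities.
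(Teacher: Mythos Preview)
Your identification of \ref{lem:hand_ineq} $\Leftrightarrow$ \ref{lem:hand_nzero} $\Leftrightarrow$ \ref{lem:hand_one} is correct and matches the paper exactly. Your \ref{lem:hand_forall} $\Rightarrow$ \ref{lem:hand_ineq} is also essentially right, though note that $\V^*_{n_j} = \V_{n_j}$ since $n_j\in\Nd$ (\cref{rem:Oka}\ref{it:Oka_nodes}), so the identity from \cref{eq:nbr_sum} gives exactly $0$, not $\leq 0$; the inequality comes from $m_{u_e}(\psi(l)) = \lceil \beta_e m_j/\alpha_e\rceil \leq \ell_{u_e}(q)$ for the leg neighbours.

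There are two genuine gaps. First, in the propagation \ref{lem:hand_exists} $\Rightarrow$ \ref{lem:hand_forall}, the assertion that ``the points $p_s$ all have nonnegative coordinates (they are associated vertices, lying on edges of faces $F_{n_s}\subset\Gamma(f)$)'' is false: the associated vertices satisfy $\ell_{n_r}(p_s)=m_r$ for the \emph{given} arm sequence, not $\ell_{n_r}(p_s)=\wt_{n_r}(f)$, so $p_s$ need not lie on $\Gamma(f)$ or in $\R^3_{\geq 0}$. The paper's mechanism is different: from \cref{eq:leg_ceil} one extracts $0\leq \ell_1(p_s)<\alpha_e$, and since $\ell_1|_{L_s}$ has content $\alpha_e$ (\cref{prop:content}), $p_s$ is the \emph{unique} integral point on $L_s$ in this strip; then $\ell_1,\ell_2$ have opposite orientations on $L_s$, so any good point on $L_s$ forces $\ell_2(p_s)\geq 0$. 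This pins down $p_s$ as good, and since $p_s\in L_{s-1}\cap L_s$ one propagates in both directions.

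Second, your \ref{lem:hand_ineq} $\Rightarrow$ \ref{lem:hand_exists} via a reversed \cref{lem:null_nopts} does not work: that lemma only gives one implication, and ``a direct computation analogous to its proof'' understates what is needed. The paper constructs a good point on $L_{j-1}$ directly, and this is where \cref{prop:arm_hands} is essential: $n_j$ has \emph{two} leg groups with support functions $\tilde\ell_1=\ell_1$ and $\tilde\ell_2\in\{\ell_2,\,a\ell_2+\ell_1\}$. One first takes the unique integral $p\in L_{j-1}$ with $0\leq\ell_1(p)<\alpha_{e_1}$ (for $e_1\in\E_{n_j}^1$), which kills the $\E_{n_j}^1$-summands when comparing \cref{eq:hand_ineq} to \cref{eq:nbr_sum}; the remaining $\E_{n_j}^2$-summands then force $\tilde\ell_2(p)\geq 0$, and the dichotomy on $\tilde\ell_2$ converts this into $\ell_2(p)\geq 0$. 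Without this two-leg-group analysis there is no reason the inequality \cref{eq:hand_ineq} alone produces a point with \emph{both} coordinates nonnegative.
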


\begin{proof}
Using \cref{lem:psi_inters,lem:factors}, we see that \ref{lem:hand_ineq},
\ref{lem:hand_nzero} and \ref{lem:hand_one} are equivalent.
For brevity, let us say (in this proof) that $p\in\R^3$ is \emph{good} if
it has integral coordinates, with the $x_1$ and $x_2$ coordinates nonnegative.
Let $p_{\nu+1}, \ldots, p_{j-1}$ be the points associated with the arm
sequence. We start by proving the following

\emph{Claim.} Assume that $L_s$ contains a good point for some
$\nu\leq s < j$. Then $p_s$ is a good point if $s>\nu$, and
$p_{s+1}$ is good if $s+1<j$.

We prove the claim for $p_s$, the proof for $p_{s+1}$ is the same.
By \cref{prop:arm_hands}, $F_{n_s}$ is a triangle with exactly one edge
on the boundary $\partial \Gamma(f)$, and we can assume that this
edge lies on the $x_2x_3$ plane.
For $k=1,2,3$, let $\ell_k$ be the standard coordinate functions in
$\R^3$, that is, $\ell_1(p) = \langle p,(1,0,0)\rangle$, etc.
Using notation as in \cref{eq:leg_ceil}, we find
\[
  \left(\frac{\beta_e \ell_{n_s} + \ell_1}{\alpha_e}\right)(p_s)
    = \ell_{u_0}(p_s)
	= \left\lceil \frac{\beta_e m_j}{\alpha_e} \right\rceil,
\]
where $e\in\E_n$, which shows that $0\leq \ell_1(p_s) < \alpha_e$. From
\cref{prop:content}
we see that the restricted function $\ell_1|_{L_s}$ has content
$\alpha_e$. This shows that $\ell_1|_{L_s\cap\Z^3}$ takes its minimal
nonnegative value at $p_s$.  Since $L_s$ is parallel to
the edge $F_{n_s} \cap F_{n_{s+1}}$, we find that $\ell_1$ and $\ell_2$
define opposite orientations on $L_s$. Thus, if $\ell_2(p_s) < 0$, 
we have $\ell_2(p) < 0$ for all integral points $p\in L_s$ for which
$\ell_1(p) \geq 0$. By the assumption, this is not the case, so
$\ell_2(p_s) \geq 0$, proving the claim.

The implication \ref{lem:hand_exists}$\Rightarrow$\ref{lem:hand_forall}
now follows from repeated usage of the claim. Namely, if
\ref{lem:hand_exists} holds for some $s<j-1$, then $p_{s+1} \in L_s$ is
good. But this means that $p_{s+1}\in L_{s+1}$ is good, and we can apply the
claim to $L_{s+1}$. This proves that $L_r$ contains good points for any
$r\geq s$. A similar induction proves the same statement for $r<s$.
Thus, \ref{lem:hand_forall} holds.

Next, we prove \ref{lem:hand_forall}$\Rightarrow$\ref{lem:hand_ineq}.
Let $p\in L_{j-1}$ be good.
Subtracting \cref{eq:nbr_sum}, with $n=n_j$ and evaluated at $p$,
from \cref{eq:hand_ineq}, we see that it is enough to prove
$\lceil \beta_e m_j / \alpha_e \rceil \leq \ell_u(p)$
for any $e \in \E_{n_j}$, with $u=u_e$.
Let $n' = n_e^*$. By \cref{prop:arm_hands}, 
$\ell_{n'}$ is a nonnegative linear combination of $\ell_1$ and $\ell_2$.
Therefore, we have $\ell_{n'}(p) \geq 0$. The formula
$\alpha_e \ell_u = \beta_e\ell_n + \ell_{n'}$ therefore gives
$\ell_u(p) \geq \beta_e m_j / \alpha_e$, hence 
$\ell_u(p) \geq \lceil \beta_e m_j / \alpha_e\rceil$, since
$\ell_u(p) \in \Z$.

Finally, we prove \ref{lem:hand_ineq}$\Rightarrow$\ref{lem:hand_exists}.
Assuming \ref{lem:hand_ineq}, we will prove \ref{lem:hand_exists} with
$s = j-1$.
Let $u_- = u_{n_j,n_{j-1}}$ and let $\tilde\ell_1,\tilde\ell_2$
and $\E_{n_j}^1,\E_{n_j}^2$ be as in \cref{prop:arm_hands}.
Take $e_1\in\E_{n_j}^1$. Then $\tilde\ell_1 = \ell_1$ restricted
to the line $L_s$ has content $\alpha_{e_1}$ by \cref{prop:content}.
Thus, there is a unique
integral point $p\in L_s$ so that $0\leq \ell_1(p) < \alpha_{e_1}$
and it suffices to show $\ell_2(p) \geq 0$.
Subtract \cref{eq:nbr_sum} for $n = n_j$, evaluated at $p$
from \cref{eq:hand_ineq} to find
\begin{equation} \label{eq:lem_hand_ceil}
  \sum_{e\in\E_{n_j}}
    \left\lceil \frac{\beta_e m_j}{\alpha_e} \right\rceil
	-
	\ell_{u_e}(p)
  \leq
    0.
\end{equation}
We have
\[
  \ell_{u_{e_1}}(p) = \frac{\beta_e m_j + \ell_1(p)}{\alpha_e}
  = \left\lceil \frac{\beta_e m_j}{\alpha_{e_1}} \right\rceil
\]
by the definition of $p$, and the fact that $\ell_{u_{e_1}}(p) \in \Z$.
Therefore, the summands in \cref{eq:lem_hand_ceil} corresponding to
$e\in\E_{n_j}^1$ vanish, and we are left with summands corresponding
to $e\in \E_{n_j}^2$, yielding
\[
  \frac{\beta_e m_j + \tilde\ell_2(p)}{\alpha_e}
    =    \ell_u(p)
    \geq \left\lceil \frac{\beta_e m_j}{\alpha_e} \right\rceil
\]
for $e\in\E_{n_j}^2$, hence $\tilde\ell_2(p) \geq 0$.
If $\tilde\ell_2 = \ell_2$, then we are done.
Otherwise, we have $\tilde\ell_2 = \alpha_{e_1} \ell_2 + \ell_1$
so we find $\ell_2(p) \geq 0$, since $\ell_1(p) < \alpha_{e_1}$.
\end{proof}

\begin{lemma} \label{lem:arm_mono}
Let $m_\nu, \ldots, m_j$ be a nonzero arm sequence
and assume that $\nu<r<j$. Assume furthermore that the equivalent properties
in \ref{lem:hand} hold. Then, for $\nu<s<j$ we have
\begin{equation} \label{eq:arm_mono+}
  \frac{m_{s-1}}{m_{s-1}(Z_K-E)} \leq \frac{m_s}{m_s(Z_K-E)}
  \Rightarrow
  \frac{m_s}{m_s(Z_K-E)} < \frac{m_{s+1}}{m_{s+1}(Z_K-E)}
\end{equation}
and
\begin{equation} \label{eq:arm_mono-}
  \frac{m_{s+1}}{m_{s+1}(Z_K-E)} \leq \frac{m_s}{m_s(Z_K-E)}
  \Rightarrow
  \frac{m_s}{m_s(Z_K-E)} < \frac{m_{s-1}}{m_{s-1}(Z_K-E)}.
\end{equation}
\end{lemma}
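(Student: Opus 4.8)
\textbf{Proof plan for \cref{lem:arm_mono}.}
The plan is to exploit the defining relation \cref{eq:m_exists} of an arm sequence together with the analogous adjunction-type relation satisfied by the coefficients $m_s(Z_K-E)$ along the arm. First I would record that for $\nu<s<j$ the cycle $Z_K-E$ satisfies $(Z_K-E,E_{n_s})=2-\delta_{n_s}=-|\E_{n_s}|$, and that, by \cref{lem:mod_ZKE} and \cref{lem:x_interpol} applied to $x(Z_K-E)$ (recall $x(Z_K-E)=Z_K-E+Z_{\mathrm{legs}}$ from \cref{lem:x_0Z}\ref{it:x_0Z_s}), the numbers $m_s(Z_K-E)$ obey a recursion of exactly the same shape as \cref{eq:m_exists}, only with the value $2-\delta_{n_s}$ on the right in place of $0$ and with the ceiling terms $\lceil\beta_e m_s/\alpha_e\rceil$ replaced by the corresponding leg-contributions of $Z_K-E$. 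Writing $\mu_s=m_s$ and $\kappa_s=m_s(Z_K-E)$, the heart of the matter is a discrete comparison/convexity statement: the ratio $s\mapsto \mu_s/\kappa_s$ cannot have an interior weak local minimum, because the ceiling functions appearing in \cref{eq:m_exists} are (weakly) convex and subadditive in the relevant sense, while the relation for $\kappa_s$ is the ``homogeneous'' version of the one for $\mu_s$.

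The key steps, in order, would be: (1) derive the $\kappa_s$-recursion explicitly from \cref{lem:mod_ZKE}, \cref{prop:Z_K} and the structure of $x(Z_K-E)$, so that both $\mu$ and $\kappa$ solve the same linear second-order difference inequality of the form $\frac{m_{s-1}+\bar\beta_{s-1}m_s}{\alpha_{s-1}}+E_{n_s}^2 m_s+\frac{\beta_s m_s+m_{s+1}}{\alpha_s}+(\text{leg terms})_s=c_s$, with $c_s=0$ for $\mu$ and $c_s=(Z_K-E,E_{n_s})+|\E_{n_s}|=0$ for $\kappa$ — i.e. the \emph{same} constant; (2) subtract the two relations to obtain, for the ``cross difference'' $D_s:=\mu_s\kappa_{s+1}-\mu_{s+1}\kappa_s$ (which is, up to the positive factor $\kappa_s\kappa_{s+1}$, exactly the increment of the ratio $\mu/\kappa$), a sign-propagation identity: $D_s$ and $D_{s-1}$ are related by a relation forcing that $D_{s-1}\ge 0 \Rightarrow D_s>0$; (3) handle the ceiling terms carefully, since they are the only nonlinear ingredient — here I would use that $\lceil\beta_e\mu_s/\alpha_e\rceil-\lceil\beta_e\kappa_s/\alpha_e\rceil$ differs from $\frac{\beta_e}{\alpha_e}(\mu_s-\kappa_s)$ by a bounded fractional error, and that summing over $e\in\E_{n_s}$ the $\kappa$-side error is pinned down by \cref{lem:hand}\ref{lem:hand_forall} (the associated line $L_s$ meets the good region), which is precisely the hypothesis we are allowed to assume; (4) use the nonvanishing/positivity of the arm sequence (``nonzero arm sequence'') and \cref{lem:Es_pos}-type positivity of $Z_K-E$ along the arm to guarantee $\kappa_s>0$, so that dividing by $\kappa_s\kappa_{s+1}$ is legitimate and the sign of $D_s$ genuinely records the direction of the ratio; (5) read off \cref{eq:arm_mono+} and, by the symmetric argument running the index the other way (or by applying the same lemma to the reversed arm $n_j,\dots,n_\nu$), \cref{eq:arm_mono-}.

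The main obstacle I expect is step (3): controlling the ceiling terms. The linear parts of the two recursions cancel cleanly, but the fractional parts $\{\beta_e\mu_s/\alpha_e\}$ and $\{\beta_e\kappa_s/\alpha_e\}$ do not cancel termwise, and a naive bound loses too much. The resolution should be that the arm-sequence congruences \cref{eq:m_exists_mod} together with \cref{prop:arm_hands} (each $n_s$, $\nu<s<j$, has a \emph{single} leg group, so all $e\in\E_{n_s}$ share one pair $\alpha_e,\beta_e$) collapse the sum $\sum_{e\in\E_{n_s}}\lceil\cdot\rceil$ to $|\E_{n_s}|\lceil\beta_{e}m_s/\alpha_{e}\rceil$, and then \cref{lem:armseq_points}/\cref{lem:hand} identify this ceiling with the honest value $\ell_{u_0}(p_s)$ of a linear functional at the associated vertex $p_s$ — at which point the comparison becomes linear and the convexity argument goes through. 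So the real content is to pass from the ceiling expressions to evaluations of linear functionals at the associated vertices, and then the sign propagation is a short computation.

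\textbf{Remark on the symmetric case.} For \cref{eq:arm_mono-} one simply notes that reversing the order of the nodes in an arm again produces an arm sequence (the recursion \cref{eq:m_exists} is symmetric under $s\mapsto j+\nu-s$ once $\alpha,\beta,\bar\beta$ are swapped appropriately), and the hypotheses of \cref{lem:hand} are preserved; hence \cref{eq:arm_mono-} is \cref{eq:arm_mono+} for the reversed arm.
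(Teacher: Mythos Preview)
Your plan diverges substantially from the paper's argument, and step (3) is where it breaks. You are right that both $\mu_s=m_s$ and $\kappa_s=m_s(Z_K-E)$ satisfy the same arm-sequence recursion \cref{eq:m_exists} (indeed $\beta_e\kappa_s\equiv 1\pmod{\alpha_e}$, so $\lceil\beta_e\kappa_s/\alpha_e\rceil=(\beta_e\kappa_s-1)/\alpha_e+1$, and the $\kappa$-relation reduces to \cref{eq:m_exists} with right-hand side~$0$). But when you form the cross-difference, the ceiling contribution becomes
\[
  |\E_{n_s}|\Bigl(\kappa_s\Bigl\lceil\tfrac{\beta_e\mu_s}{\alpha_e}\Bigr\rceil-\mu_s\Bigl\lceil\tfrac{\beta_e\kappa_s}{\alpha_e}\Bigr\rceil\Bigr)
  =\frac{|\E_{n_s}|}{\alpha_e}\bigl(a\,\kappa_s-(\alpha_e-1)\mu_s\bigr),\qquad a=\ell_{n_e^*}(p_s)\in\{0,\dots,\alpha_e-1\},
\]
and this has no fixed sign (take $a=\alpha_e-1$ with $\mu_s<\kappa_s$). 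A bounded-error estimate therefore cannot force $D_{s-1}\le 0\Rightarrow D_s<0$ (note also that with your sign convention $D_s=\mu_s\kappa_{s+1}-\mu_{s+1}\kappa_s$ the hypothesis is $D_{s-1}\le 0$, not $\ge 0$; $D_s$ is the \emph{negative} of the ratio increment). Replacing the ceiling by $\ell_{u_0}(p_s)$ only linearizes the $\mu$-relation at $p_s$; the $\kappa$-relation is linearized at a different point, so the comparison still carries the unsigned correction above. What is really needed is a relation between $a=\ell_{n_e^*}(p_s)$ and the ratio $\mu_s/\kappa_s$, and that relation is precisely the geometric statement the paper proves directly.

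The paper's proof bypasses all of this. Since the associated vertex $p_s$ satisfies $\ell_{n_t}(p_s)=m_t$ for $t=s-1,s,s+1$, the hypothesis of \cref{eq:arm_mono+} says exactly that the linear functional $\ell_s:=\ell_{n_{s-1}}/\kappa_{s-1}-\ell_{n_s}/\kappa_s$ is $\le 0$ at $p_s$, and the conclusion says $\ell_{s+1}(p_s)<0$. Setting $B_r=\bigcup_{t\ge r}C_{n_t}$, the functionals $\ell_r$ are the separating hyperplanes between consecutive cones in the arm, so $\ell_s(p_s)\le 0$ reads $p_s\notin B_s^\circ$ and $\ell_{s+1}(p_s)<0$ reads $p_s\notin B_{s+1}$; the implication is then the trivial inclusion $B_{s+1}\subset B_s$ together with the fact that the only shared boundary of $B_{s+1}$ and $B_s$ inside the arm region is strictly inside $B_s$. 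No recursion comparison and no ceiling bookkeeping are needed; the whole lemma is a two-line consequence of \cref{lem:armseq_points} and \cref{lem:cone_eqs}. Your closing remark (``pass from the ceiling expressions to evaluations at $p_s$'') is pointing exactly here --- the paper simply \emph{starts} at $p_s$ rather than arriving there after the algebra.
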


\begin{proof}
We will prove \cref{eq:arm_mono+}, \cref{eq:arm_mono-} follows similarly.
Assume that the arm goes in the direction of the $x_3$ coordinate
and let $p_{\nu+1}, \ldots, p_{j-1}$ be the associated vertices.
Let $B_r = \cup_{t=r}^j C_{n_r}(Z_K-E)$.
The functional
\[
  \ell_r = 
  \frac{\ell_{n_{r-1}}}{m_{r-1}(Z_K-E)} 
  - \frac{\ell_{n_r}}{m_r(Z_K-E)}
\]
seperates the diagram $\Gamma(Z_K-E)$ into two parts, namely
$F_{n_r}(Z_K-E),\ldots, F_{n_j}(Z_K-E)$, where it is nonnegative, and the other
faces where it is nonpositive. Therefore,  $p_s \in B_r$ if
and only if $\ell_r(p_s) \geq 0$, with equality if and ony if
$p_s \in \partial B_r$. Thus, the left hand side of \cref{eq:arm_mono+}
gives $p_s \notin B_s^\circ$, which gives $p_s \notin B_{s+1}$, which, 
again, translates to the right hand side of \cref{eq:arm_mono+}.
\end{proof}

\subsection{Multiplicities around $v(i)$}

In this subsection we assume a fixed step $i$ of the
computation sequence \index{computation sequence}
from \cref{def:comp_seq_constr}. We also assume $|\Nd| > 1$.

\begin{lemma} \label{lem:ell_min}
Let $u\in\V_{\bar v(i)}$ and assume $\bar P_i \neq \emptyset$.
Then
\[
  m_u(\bar Z_i)= \min\set{\ell_u(p)}{p\in \bar P_i}.
\]
\end{lemma}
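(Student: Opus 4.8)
The statement asserts that for a fixed step $i$ of computation sequence \ref{rt:article}, if $\bar P_i \neq \emptyset$, then for every neighbour $u \in \V_{\bar v(i)}$ the multiplicity $m_u(\bar Z_i)$ equals the minimum of $\ell_u(p)$ over $p \in \bar P_i$. The plan is to combine the description of $\bar P_i$ as the lattice points of a dilated polygon from \cref{lem:pts_faces} with the ceiling formulas of \cref{lem:ceil} for the multiplicities $m_u(\bar Z_i)$.

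First I would recall from \cref{lem:pts_faces} that $\bar P_i = \Fcnm_i \cap \Z^3$, where $\Fcn_i = C_{\bar v(i)} \cap H^=_{\bar v(i)}(\bar Z_i)$, and that each edge $S$ of the polygon $\Fcn_i$ is the minimal set of some $\ell_u$ with $u \in \V_{\bar v(i)}$. Since $\Fcnm_i$ is obtained from $\Fcn_i$ by removing those edges $S$ with $\epsilon_{i,u} = 1$, and since $\bar P_i \neq \emptyset$ means $\Fcnm_i$ contains an integral point, the function $\ell_u$ restricted to $\Fcn_i$ attains its minimum on the edge $S_{i,u}$; one shows that the integral points on $S_{i,u}$ (which survive in $\Fcnm_i$ precisely when $\epsilon_{i,u} = 0$, and one integral point off of $S_{i,u}$ when $\epsilon_{i,u} = 1$) realize the value $\min\{\ell_u(p) : p \in \bar P_i\}$. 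The key computation is then to identify this value with $m_u(\bar Z_i)$.

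The next step is to invoke \cref{lem:ceil}, which in case \ref{rt:article} gives $m_u(\bar Z_i) = \lceil \bar r_i\, m_u(Z_K - E) + \epsilon_{i,u} \rceil$. On the other hand, from the proof of \cref{thm:pts} (see the displayed computation of $\lceil \ell_u|_S \rceil$), one has that $\ell_u$ restricted to the edge $\bar r_i S$ of $\Fcn_i$ takes the minimal value $\lceil \bar r_i\, m_u(Z_K - E) \rceil = m_u(\bar Z_i) - \epsilon_{i,u}$, where $S$ is the corresponding edge of the undilated polygon $F_{\bar v(i)}(Z_K - E)$. Thus the smallest value $\ell_u$ attains on integral points of $\Fcn_i$ is $m_u(\bar Z_i) - \epsilon_{i,u}$ on the edge $S_{i,u}$ itself. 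When $\epsilon_{i,u} = 0$, this edge is kept in $\Fcnm_i$, so the minimum over $\bar P_i$ is exactly $m_u(\bar Z_i)$. When $\epsilon_{i,u} = 1$, the edge $S_{i,u}$ is removed, and one must check that the next-smallest value of $\ell_u$ on the integral points of $\Fcnm_i$ is $m_u(\bar Z_i) - \epsilon_{i,u} + 1 = m_u(\bar Z_i)$; this uses that $\ell_u|_{H^=_{\bar v(i)}(\bar Z_i)}$ is primitive, so consecutive integral values differ by exactly $1$, together with the nonemptiness of $\bar P_i$ to guarantee such a point exists.

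The main obstacle I anticipate is the bookkeeping in the case $\epsilon_{i,u} = 1$: one needs to verify that after deleting the edge $S_{i,u}$ the polygon $\Fcnm_i$ still contains an integral point at $\ell_u$-value exactly $m_u(\bar Z_i)$, rather than having a gap. This should follow from the classification of empty polygons (\cref{prop:poly_class}) — the relevant faces $F_{\bar v(i)}$ are triangles or trapezoids with controlled edge lengths — combined with the primitivity of the restricted functionals at regular vertices (\cref{cor:reg_vx,prop:content}), which ensures the lattice points on $\Fcn_i$ at each $\ell_u$-level form an unbroken progression. A careful treatment will split into the subcases of $u$ being the neighbour $u_{\bar v(i), n'}$ for $n' \in \Nd$ versus $n' \in \Nd^* \setminus \Nd$, mirroring the case analysis already carried out in the proof of \cref{lem:ceil} and \cref{lem:disj}.
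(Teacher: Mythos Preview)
Your approach is plausible but more circuitous than the paper's, and the obstacle you correctly flag at the end is one the paper avoids entirely by a better choice of description of $\bar P_i$.

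The paper uses the \emph{second} identity in \cref{lem:pts_faces}, namely
\[
  \bar P_i \;=\; \Fnb_{\bar v(i)}(\bar Z_i)\cap\Z^3
  \;=\; \set{p\in H^=_{\bar v(i)}(\bar Z_i)\cap\Z^3}
             {\fa{u\in\V_{\bar v(i)}}{\ell_u(p)\ge m_u(\bar Z_i)}},
\]
rather than the $\Fcnm$ description you chose. This already gives the inequality $\min_{p\in\bar P_i}\ell_u(p)\ge m_u(\bar Z_i)$ for free, with no $\epsilon$'s in sight, so all that remains is to exhibit a single $p\in\bar P_i$ with $\ell_u(p)=m_u(\bar Z_i)$. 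The paper does this by picking (via \cref{cor:reg_vx}) a neighbour $u'$ such that $\ell_u,\ell_{u'}$ form integral affine coordinates on $H^=_{\bar v(i)}(\bar Z_i)$, defining $p$ by $\ell_u(p)=m_u(\bar Z_i)$ and $\ell_{u'}(p)=m_{u'}(\bar Z_i)$, and then checking the remaining inequalities directly. A short case split (triangle; trapezoid with $u$ not on the top edge; trapezoid with $u$ on the top edge) finishes the argument, the last case using that the top edge has length one.

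By contrast, your route through $\Fcnm_i$ forces you to track when edges are deleted ($\epsilon_{i,u}=1$), to argue that the edge $S_{i,u}$ lies at $\ell_u$-level $m_u(\bar Z_i)-1$, and then to show that after deletion the next integral level $m_u(\bar Z_i)$ is still realised inside $\Fcnm_i$. That last step is exactly the obstacle you identify, and while it can be pushed through with the same empty-polygon classification, it is genuinely more work than the paper's version: you are reconstructing by hand the inequality that the $\Fnb$ description gives you immediately. There is also a small imprecision in your sketch: the edge $S_{i,u}$ of the dilated polygon need not contain any integral points (since $\bar r_i$ is typically irrational), so the phrase ``integral points on $S_{i,u}$ which survive in $\Fcnm_i$'' needs to be replaced by a statement about the smallest integral level of $\ell_u$ meeting $\Fcn_i$, which again brings you back to the ceiling formulas and the case analysis.

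In short: your plan can be completed, but switching from $\Fcnm$ to $\Fnb$ at the outset eliminates the $\epsilon_{i,u}=1$ obstacle and leads to the cleaner corner-point construction the paper uses.
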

\begin{proof}
By \cref{lem:pts_faces} we have
\[
  \bar P_i = \set{p\in H^=_{\bar v(i)}(\bar Z_i) \cap \Z^3}
                 {\fa{ u\in\V_{\bar v(i)}}{ \ell_u(p) \geq m_u(\bar Z_i)}}.
\]
It is therefore enough to show that for any $u\in\V_{\bar v(i)}$, there is a
$p\in \bar P_i$ so that $\ell_u(p) = m_u(\bar Z_i)$.
By \cref{cor:reg_vx}, there is a $u'\in\V_{\bar v(i)}$
so that $\ell_u, \ell_{u'}$ form an affine basis when restricted to
$H^=_{\bar v(i)}(\bar Z_i)$. Therefore, there is a
$p \in H^=_{\bar v(i)}(\bar Z_i)$
so that $\ell_{u}(p) = m_u(\bar Z_i)$ and 
$\ell_{u'}(p) = m_{u'}(\bar Z_i)$. If
$F_{v(i)}$ is a triangle, then
there is a $u''\in\V_{\bar v(i)}$ so that $u,u',u''$ represent all bamboos
and leg groups of $\bar v(i)$. Furthermore, we must have
$\ell_{u''}(p) \geq m_{u''}(\bar Z_i)$, since otherwise, by the above
description, we would have $\bar P_i = \emptyset$, a contradiction.

Assume now that $F_{\bar v(i)}$ is a trapezoid. If $u$ lies on a bamboo not
corresponding to the top edge of $F_{\bar v(i)}(f)$ (see \cref{def:top_edge}),
then we may choose $u'$ with the same property. Now define $p$ in the 
same way as above (note that all vertices of a trapezoid are regular).
It is then easy to see
(from e.g. \cref{prop:poly_class}) that for any $u''\in\V_{\bar v(i)}$ with
$\ell_{u''} \neq \ell_u, \ell_{u'}$, the function $\ell_{u''}$ restricted
to the cone
\[
  \set{p'\in H^=_{\bar v(i)}(\bar Z_i)}
                 {\ell_ u  (p') \geq m_ u  (\bar Z_i),\,
				  \ell_{u'}(p') \geq m_{u'}(\bar Z_i)   }
\]
takes
its maximal value at the vertex $p$. From the assumption
$\bar P_i \neq \emptyset$,
we now find $\ell_{u''}(p) \geq m_{u''}(\bar Z_i)$ for all
$u''\in\V_{\bar v(i)}$ and therefore $p\in \bar P_i$.

The last case we must consider is when $F_{\bar v(i)}$ is a trapezoid and
$u$ lies on a bamboo corresponding to a top face. We have
$\bar P_i \subset \bar r_i F_{\bar v(i)}(Z_K-E)$. Since the length of the
interval $\ell_u(F_{\bar v(i)}(Z_K-E))$ is one, we find that if $\ell_u$ takes
an integral value on $\bar r_i F_{\bar v(i)}(Z_K-E)$, then it must be
$\lceil \bar r_i m_u(Z_K-E) \rceil$. In other words, if $p\in P_i$, then
$\ell_u(p) = \lceil \bar r_i m_u(Z_K-E) \rceil$
(in the case $\bar r_i = 1$, this gives
$\ell_u(p) = m_u(Z_K-E)$ or
$\ell_u(p) = m_u(Z_K-E)+1$, but in the latter case, the point $p$ has a
negative coordinate).
This finishes the proof of the lemma.
\end{proof}

\begin{cor} \label{cor:trap_top}
Assume that $F_{n_0}$ is a
trapezoid, \index{trapezoid}
and that $n_1\in\Nd$ so that
$F_{n_0}\cap F_{n_1}$ is the
top edge \index{top edge}
of the trapezoid and that
$\bar v(i) = n_0$. Then
$\ell_u(p) = m_u(\bar Z_i)$ for all $p\in \bar P_i$, where
$u = u_{n_0,n_1}$.
\end{cor}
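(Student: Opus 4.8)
The plan is to deduce \Cref{cor:trap_top} directly from \Cref{lem:ell_min}, reusing the analysis already carried out in the last case of its proof. Recall that $\Fcn_i$, and hence $\bar P_i$, is contained in a translate of $\bar r_i F_{\bar v(i)}$ (by \Cref{lem:pts_faces} together with \Cref{def:C_r}), so in particular every $p\in\bar P_i$ lies in $\bar r_i F_{n_0}(Z_K-E)$ after the appropriate identification. The key geometric fact is that $u = u_{n_0,n_1}$ lies on the bamboo corresponding to the top edge $F_{n_0}\cap F_{n_1}$ of the trapezoid, and by \Cref{def:top_edge} the top edge has length one, so the support function $\ell_u$ restricted to $F_{n_0}(Z_K-E)$ takes values in an interval of length exactly one.

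First I would record that, since $\ell_u$ ranges over an interval of length one on $F_{n_0}(Z_K-E)$, it ranges over an interval of length $\bar r_i \le 1$ on $\bar r_i F_{n_0}(Z_K-E)$. An interval of length at most one contains at most one integer unless $\bar r_i = 1$ and the endpoints are consecutive integers. Hence for any $p\in\bar P_i$ (which has integral coordinates) the value $\ell_u(p)$ is forced: it must be $\lceil \bar r_i m_u(Z_K-E)\rceil$, exactly as argued at the end of the proof of \Cref{lem:ell_min}. By \Cref{lem:ceil} (equation \cref{eq:ceil_u} in case \ref{rt:article}), we have $m_u(\bar Z_i) = \lceil \bar r_i m_u(Z_K-E) + \epsilon_{i,u}\rceil$; I would check that $\epsilon_{i,u}=0$ here, or more simply invoke \Cref{lem:ell_min} itself, which gives $m_u(\bar Z_i) = \min\{\ell_u(p) : p\in\bar P_i\}$, and combine it with the fact that $\ell_u$ is constant on $\bar P_i$ to conclude $\ell_u(p) = m_u(\bar Z_i)$ for all $p\in\bar P_i$.

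Concretely, the cleanest route is: apply \Cref{lem:ell_min} to get that $m_u(\bar Z_i)$ is the minimum of $\ell_u$ over $\bar P_i$; then show $\ell_u$ is in fact \emph{constant} on $\bar P_i$ using the length-one property of the top edge together with the containment $\bar P_i \subset \bar r_i F_{n_0}(Z_K-E)$ and integrality of points in $\bar P_i$; the minimum of a constant function equals that constant, giving the claim. One subtlety to handle explicitly is the borderline case $\bar r_i = 1$, where $\ell_u$ could a priori take the two values $m_u(Z_K-E)$ and $m_u(Z_K-E)+1$ on $F_{n_0}(Z_K-E)$; but then the point realizing the larger value has a negative coordinate (as noted in the proof of \Cref{lem:ell_min}) and so does not lie in $\bar P_i \subset \Z^3_{\ge 0}$, so again only one value occurs on $\bar P_i$.

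I do not expect a serious obstacle here: the statement is essentially a corollary extracted from the final paragraph of the proof of \Cref{lem:ell_min}, and the only care needed is to phrase the "interval of length one" argument cleanly and to dispose of the $\bar r_i = 1$ boundary case. If $\bar P_i = \emptyset$ the statement is vacuous, so that case needs no comment. The main thing to get right is the identification under which $\Fcn_i$ sits inside a dilate of $F_{n_0}(Z_K-E)$, so that "$\ell_u$ varies by at most one" is literally correct; this is exactly the setup of \Cref{def:cone_face} and \Cref{lem:pts_faces}, which I would cite rather than re-derive.
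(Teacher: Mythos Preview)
Your proposal is correct and follows exactly the approach of the paper: the paper's proof is simply ``This follows from the above proof,'' referring to the final paragraph of the proof of \cref{lem:ell_min}, and you have spelled out precisely that argument (the length-one interval for $\ell_u$ on the top edge, the containment $\bar P_i\subset \bar r_i F_{\bar v(i)}(Z_K-E)$, and the $\bar r_i=1$ boundary case). Your route via \cref{lem:ell_min} to identify the constant value with $m_u(\bar Z_i)$ is clean and avoids having to check $\epsilon_{i,u}=0$ directly.
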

\begin{proof}
This follows from the above proof.
\end{proof}

\begin{lemma} \label{lem:arm_init}
Assume the notation in \cref{block:arm_not} and that $\bar v(i) = n_r$
for some $\nu < r < j$. For any $l\in S_i$, there is
a unique $p\in \bar P_i$ so that $m_n(l) = \ell_n(p)$ for all
$n\in\Nd_{\bar v(i)}$.
\end{lemma}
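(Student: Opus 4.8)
\textbf{Proof plan for \cref{lem:arm_init}.}
The goal is to show that for $\bar v(i) = n_r$ with $\nu<r<j$ and any $l\in S_i$, there is a unique $p\in\bar P_i$ with $\ell_n(p) = m_n(l)$ for all $n\in\Nd_{\bar v(i)} = \{n_{r-1}, n_{r+1}\}$. The natural candidate is the \emph{associated vertex} $p_r$ from \cref{def:assoc}, built from the arm sequence $m_s = m_{n_s}(l)$. First I would observe that since $l\in S_i$ we have $z_l^\Nd\neq 0$, so in particular $z_{l,n_s}^\Nd\neq 0$ for every $s$ with $\nu<s<j$; by \cref{lem:factors} this forces the sequence $(m_s)$ to be an arm sequence (in the sense of \cref{def:arm_seq}), and moreover $z_{l,n_j}^\Nd\neq 0$, so the equivalent conditions of \cref{lem:hand} hold. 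This is exactly the hypothesis needed to invoke \cref{lem:armseq_points}, which produces the point $p_r = p_{\bar v(i)}\in\Z^3$ characterized by $\ell_{n_{r-1}}(p_r) = m_{r-1}$, $\ell_{n_r}(p_r) = m_r$, $\ell_{n_{r+1}}(p_r) = m_{r+1}$. Since $m_r = m_{n_r}(l) = m_{n_r}(\bar Z_i)$ (because $l\in S_i$ and $\bar v(i)=n_r$), we get $p_r\in H^=_{\bar v(i)}(\bar Z_i)$.

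Uniqueness is immediate: any $p\in\bar P_i\subset H^=_{\bar v(i)}(\bar Z_i)$ satisfying the two equations $\ell_{n_{r\pm1}}(p) = m_{r\pm1}$ must coincide with $p_r$, because (as noted in the proof of \cref{lem:armseq_points}) the three functionals $\ell_{n_{r-1}}, \ell_{n_r}, \ell_{n_{r+1}}$ are linearly independent and hence determine a point of $\R^3$ uniquely. So the entire content of the lemma is the \emph{existence} claim $p_r\in\bar P_i$. By \cref{lem:pts_faces},
\[
  \bar P_i = \set{p\in H^=_{\bar v(i)}(\bar Z_i)\cap\Z^3}
                 {\fa{u\in\V_{\bar v(i)}}{\ell_u(p)\geq m_u(\bar Z_i)}},
\]
so I must check $\ell_u(p_r)\geq m_u(\bar Z_i)$ for every $u\in\V_{n_r}$. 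The plan is to split by the type of $u$: for $u = u_{n_r,n_{r\pm1}}$ one has $\ell_u(p_r) = (\beta_{r\mp1}m_r + m_{r\pm1})/\alpha_{\ldots}$, which is an integer by \cref{eq:m_exists_mod} and equals $m_u(\bar Z_i)$ by \cref{eq:ceil_u} together with \cref{lem:ceil} (or directly via \cref{lem:x_interpol}), so the inequality holds with equality; for $u = u_e$ with $e\in\E_{n_r}$ a leg end, \cref{eq:leg_ceil} gives $\ell_u(p_r) = \lceil\beta_e m_r/\alpha_e\rceil = m_u(\bar Z_i)$ by \cref{lem:psi_inters} and the fact that $m_u(\bar Z_i) = m_u(\psi(l))$ (using $Z = x(Z)$ and \cref{lem:x_interpol,lem:psi_inters}); and for any further node neighbour $n''\in\Nd_{n_r}^*$ not on the arm, $p_r\notin C_{n''}$ combined with \cref{lem:cone_eqs} gives the strict inequality $\ell_{n''}(p_r) > \bar r_i\wt_{n''}(f)$, hence $\ell_{u''}(p_r)\geq m_{u''}(\bar Z_i)$ by the argument already used in \cref{eq:3rd_ne}.

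The main obstacle I anticipate is verifying that $p_r$ has \emph{nonnegative coordinates}, i.e. $p_r\in\R^3_{\geq0}$, since $\bar P_i\subset\Z^3_{\geq0}$ whereas a priori the associated vertex only lives in some affine lattice. This is precisely where \cref{lem:hand} earns its keep: the equivalence \ref{lem:hand_ineq}$\Leftrightarrow$\ref{lem:hand_forall} tells us that each associated line $L_s$ contains a good point (integral, with nonnegative $x_1,x_2$), and the \emph{Claim} inside the proof of \cref{lem:hand} then shows each associated vertex $p_s$ is itself good. Combined with $p_r\in C_{n_r}\subset\Gamma_+(\bar Z_i)\subset\R^3_{\geq0}$ — which follows since $p_r$ satisfies the cone inequalities of \cref{lem:cone_eqs} by the neighbour analysis above, and $C_{n_r}$ (in case \ref{rt:article}) is the cone over a face of the shifted diagram lying in the positive octant — we conclude $p_r\in\bar P_i$. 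I would organize the write-up so that the reduction to \cref{lem:armseq_points,lem:hand} is done first, then dispatch the membership inequalities neighbour-by-neighbour, citing \cref{lem:pts_faces} for the final identification.
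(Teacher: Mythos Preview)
Your approach is correct in spirit but takes a more roundabout path than the paper, and there is one genuine misconception worth flagging.

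\textbf{The nonnegativity detour is unnecessary.} You correctly cite \cref{lem:pts_faces} in the form
\[
  \bar P_i = \Fnb_{\bar v(i)}(\bar Z_i)\cap\Z^3
           = \set{p\in H^=_{\bar v(i)}(\bar Z_i)\cap\Z^3}
                 {\fa{u\in\V_{\bar v(i)}}{\ell_u(p)\geq m_u(\bar Z_i)}},
\]
and this characterization involves \emph{no} positive-octant constraint. Once you verify the neighbour inequalities, membership in $\bar P_i$ is automatic; the inclusion $\bar P_i\subset\Z^3_{\geq 0}$ is a \emph{consequence} of \cref{lem:pts_faces}, not an additional hypothesis to check. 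So the whole paragraph invoking \cref{lem:hand} and $p_r\in C_{n_r}$ can be dropped. (Your argument that $p_r\in C_{n_r}$ via \cref{lem:cone_eqs} is in fact shaky for the boundary neighbours $n'\in\Nd^*_{n_r}\setminus\Nd$, but fortunately you do not need it.)

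\textbf{A minor slip.} For $u=u_{n_r,n_{r\pm1}}$ you claim $\ell_u(p_r)=m_u(\bar Z_i)$, but only $\geq$ holds in general: $\ell_u(p_r)=(\beta m_r + m_{r\pm1})/\alpha$ with $m_{r\pm1}=m_{n_{r\pm1}}(l)\geq m_{n_{r\pm1}}(\bar Z_i)$, possibly strictly. This is harmless since only the inequality is needed. Also, for $\nu<r<j$ there are no ``further node neighbours'' beyond $n_{r\pm1}$ and the legs, so that case is vacuous.

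\textbf{Comparison with the paper.} The paper avoids the arm-sequence apparatus entirely. It observes that $\ell_{\bar v(i)},\ell_{u_e},\ell_{u_+}$ form an integral dual basis (via \cref{cor:reg_vx}), defines $p$ directly by the three equations $\ell_{\bar v(i)}(p)=m_{\bar v(i)}(\psi(l))$, $\ell_{u_e}(p)=m_{u_e}(\psi(l))$, $\ell_{u_+}(p)=m_{u_+}(\psi(l))$, and then uses $(\psi(l),E_{\bar v(i)})=0$ (from \cref{lem:factors}\ref{it:factors_2}) together with \cref{eq:nbr_sum} to deduce $\ell_{u_-}(p)=m_{u_-}(\psi(l))$ as well. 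This immediately gives $\ell_u(p)=m_u(\psi(l))\geq m_u(\bar Z_i)$ for every $u\in\V_{\bar v(i)}$, hence $p\in\bar P_i$. Your route through \cref{lem:armseq_points} produces the same point $p_r=p$ and verifies the same inequalities, just packaged differently; the paper's version is shorter because it never needs to name the full arm sequence or invoke \cref{lem:hand}.
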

\begin{proof}
If $e\in\E_{\bar v(i)}$, then
\cref{lem:psi_inters,lem:x_interpol}
\[
  m_{u_e}(\psi(l)) =
  \left\lceil
    \frac{\beta_e m_{\bar v(i)}(l)}{\alpha_e}
  \right\rceil
  = m_{u_e}(\bar Z_i).
\]
Let $u_+,u_-\in\V_{\bar v(i)}$, $u_\pm = u_{\bar v(i),n_{r\pm1}}$ and take
$e\in\E_{\bar v(i)}$.
By \cref{cor:reg_vx}, the functionals
$\ell_{\bar v(i)}, \ell_{u_e}, \ell_{u_+}$ form a dual basis of $\Z^3$.
Therefore, there is a $p\in\Z^3$ satisfying
\begin{equation} \label{eq:arm_init_pf}
\begin{gathered}
\begin{split}
  \ell_{\bar v(i)}(p) &= m_{\bar v(i)}(     l ) = m_{\bar v(i)}(\bar Z_i), \\
  \ell_{     u_e }(p) &= m_{     u_e }(\psi(l)) = m_{     u_e }(\bar Z_i), \\
  \ell_{     u_+ }(p) &= m_{u_+}(\psi(l))
                       = \frac{\beta_r m_{\bar v(i)}(l) + m_{n_{r+1}}(l)}
                              {\alpha_r}
                       \geq m_{u_+}(\bar Z_i).
\end{split}
\end{gathered}
\end{equation}
The formula for $\ell_{u_+}(p)$ gives an integer by \cref{lem:mod}.
Furthermore,
the inequality holds by \cref{lem:x_interpol}, using $l \geq \bar Z_i|_\Nd$.
We have therefore shown
that $\ell_u(p) = m_u(\psi(l))$ for all $u\in\V_{\bar v(i)}$ except for
$u_-$. But, since $z_l^\Nd \neq 0$, we have $(\psi(l),E_{\bar v(i)}) = 0$
by \cref{lem:factors}, hence
\[
  -b_{\bar v(i)} m_{\bar v(i)}(\psi(l))
    + \sum_{u\in\V_{\bar v(i)}} m_u(\psi(l))
  = 0 =
  -b_{\bar v(i)} \ell_{\bar v(i)}(p)
    + \sum_{u\in\V_{\bar v(i)}} \ell_u(p).
\]
Cancelling out, we obtain $m_{u_-}(\psi(l)) = \ell_{u_-}(p)$ as well.
This shows that
we could have replaced the third equation in \cref{eq:arm_init_pf} with
a corresponding line with $u_+$ replaced by $u_-$. In particular, we have
$\ell_{u_-}(p) = m_{u_-}(\psi(l)) \geq m_{u_-}(\bar Z_i)$.
We have therefore shown $\ell_u(p) = m_u(\psi(l)) \geq m_u(\bar Z_i)$
for all $u\in\V_{\bar v(i)}$. By \cref{lem:pts_faces} we have
$p\in \bar P_i$. Now, we have
$m_{n_{r+1}}(l) = \alpha_r m_{u_+}(\psi(l)) - m_{\bar v(i)}(l)
= \alpha_r \ell_{u_+}(p) - \ell_{\bar v(i)}(p) = \ell_{n_{r+1}}(p)$,
and $m_{n_{r-1}}(l) = \ell_{n_{r-1}}(p)$ similarly. Since the functionals
$\ell_{n_s}$ with $s=r-1, r, r+1$ form a dual basis of $\Q^3$, uniqueness
follows.
\end{proof}

\begin{lemma} \label{lem:hand_init}
Assume the notation in \cref{block:arm_not} and that either $\bar v(i) = n_j$,
or $\nu = 1$ and $\bar v(i) = n_1$.
For any $l\in S_i$
there is a unique $p\in \bar P_i$ so that $m_{n_{j-1}}(l) = \ell_{n_{j-1}}(p)$.
\end{lemma}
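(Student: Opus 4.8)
The plan is to mirror the proof of \cref{lem:arm_init}, which handles an interior node of an arm, adapting it to the endpoint node $n_j$ (or the initial node $n_1$ in the case $\nu=1$). The essential difference is that $n_j$ has only one node-neighbour, $n_{j-1}$, so instead of two functionals $\ell_{u_+},\ell_{u_-}$ coming from neighbouring nodes, we have only $\ell_{u_-} = \ell_{u_{n_j,n_{j-1}}}$, and the remaining neighbours of $n_j$ all lie on legs. Fix $l\in S_i$, so $l\in V_Z^\Nd$, $l\geq\bar Z_i|_\Nd$, $m_{\bar v(i)}(l) = m_{\bar v(i)}(\bar Z_i)$ and $z_l^\Nd\neq 0$. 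In particular $z_{l,n_j}^\Nd\neq 0$, so by \cref{lem:hand} (with $\bar v(i)=n_j$) the equivalent conditions there hold, and by \cref{lem:factors}\ref{it:factors_1} we get $(\psi(l),E_{n_j}) = 0$.

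First I would record the leg multiplicities: for every $e\in\E_{n_j}$, \cref{lem:psi_inters} combined with $\ell_f$-type rounding gives
\[
  m_{u_e}(\psi(l)) = \left\lceil \frac{\beta_e m_{n_j}(l)}{\alpha_e} \right\rceil = m_{u_e}(\bar Z_i),
\]
the last equality because $m_{n_j}(l) = m_{n_j}(\bar Z_i)$ and \cref{lem:ceil}/\cref{lem:x_interpol} give the same ceiling expression for $m_{u_e}(\bar Z_i)$. Next, since $F_{n_j}$ is a triangle with exactly one edge on $\partial\Gamma(f)$ by \cref{prop:arm_hands}, \cref{cor:reg_vx} tells us that $\ell_{n_j}, \ell_{u_e}, \ell_{u_-}$ form a dual $\Z$-basis of $\Z^3$ for a chosen $e\in\E_{n_j}$ (or, if $F_{n_j}$ has two leg groups as in \cref{prop:arm_hands}\ref{it:arm_hands_finger}, one picks representatives from the two edges not on $\partial\Gamma(f)$ and argues analogously). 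Hence there is a unique $p\in\Z^3$ with
\[
  \ell_{n_j}(p) = m_{n_j}(l),\quad \ell_{u_e}(p) = m_{u_e}(\psi(l)),\quad \ell_{u_-}(p) = m_{u_-}(\psi(l)) = \left\lceil\frac{\beta_{j-1}m_{n_j}(l) + m_{n_{j-1}}(l)}{\alpha_{j-1}}\right\rceil \geq m_{u_-}(\bar Z_i),
\]
where integrality of $\ell_{u_-}(p)$ comes from \cref{lem:mod} and the inequality from $l\geq\bar Z_i|_\Nd$ together with \cref{lem:x_interpol}. Now using $(\psi(l),E_{n_j}) = 0$ and \cref{eq:nbr_sum}, exactly as in \cref{lem:arm_init}, the equation $-b_{n_j}m_{n_j}(\psi(l)) + \sum_{u\in\V_{n_j}} m_u(\psi(l)) = 0 = -b_{n_j}\ell_{n_j}(p) + \sum_{u\in\V_{n_j}}\ell_u(p)$ forces $\ell_u(p) = m_u(\psi(l)) \geq m_u(\bar Z_i)$ for \emph{every} remaining $u\in\V_{n_j}$ (the other legs are handled in one stroke since $\ell_u$, $\alpha_e$, $\beta_e$ are independent of the choice of leg in a leg group). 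By \cref{lem:pts_faces} this places $p\in\bar P_i$. Finally, $m_{n_{j-1}}(l) = \alpha_{j-1}m_{u_-}(\psi(l)) - \beta_{j-1}m_{n_j}(l) = \alpha_{j-1}\ell_{u_-}(p) - \beta_{j-1}\ell_{n_j}(p) = \ell_{n_{j-1}}(p)$, which is the asserted identity; uniqueness follows because $p$ is the unique point of $\bar P_i$ with prescribed values of $\ell_{n_j}$ and $\ell_{n_{j-1}}$ (these two functionals plus one leg functional form a basis of $\Q^3$, and $\ell_{n_j}(p) = m_{n_j}(\bar Z_i)$ is forced on all of $\bar P_i$, while the leg value is then determined by $\ell_{u_-}(p)$).

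The main obstacle I expect is the case distinction in \cref{prop:arm_hands}\ref{it:arm_hands_finger}: when $F_{n_j}$ has two leg groups with $\tilde\ell_2 = a\ell_2 + \ell_1$ rather than $\tilde\ell_2 = \ell_2$, one must check carefully that the chosen triple of functionals is still a $\Z$-basis and that the ceiling formulas for the leg multiplicities on both groups are simultaneously consistent with the single constraint $(\psi(l),E_{n_j}) = 0$. This is precisely the bookkeeping already carried out in the proof of \cref{lem:hand} (the implication \ref{lem:hand_ineq}$\Rightarrow$\ref{lem:hand_exists}), so I would quote that argument rather than repeat it. The case $\nu = 1$, $\bar v(i) = n_1$ is symmetric: $n_1$ then also has a unique node-neighbour $n_2$, and the same argument applies verbatim with $j-1$ replaced by the pair $(n_1,n_2)$ and $\beta_{j-1}$ replaced by $\bar\beta_1$.
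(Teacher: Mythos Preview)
Your existence argument follows the paper's line, but you misquote \cref{lem:factors}\ref{it:factors_1}: since $\delta_{n_j} - |\E_{n_j}| = |\Nd_{n_j}| = 1$, that lemma only yields $(\psi(l),E_{n_j}) \leq 0$, not $=0$. This does not actually break existence: subtracting \cref{eq:nbr_sum} from the inequality still gives $\ell_{u}(p) \geq m_{u}(\psi(l)) = m_{u}(\bar Z_i)$ for the neighbours $u$ in the remaining leg group, which via \cref{lem:pts_faces} is enough to place $p\in\bar P_i$. The identity $m_{n_{j-1}}(l) = \ell_{n_{j-1}}(p)$ then follows directly from your choice $\ell_{u_-}(p) = m_{u_-}(\psi(l))$ and \cref{lem:mod} (no ceiling needed there). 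So your existence half is essentially the paper's proof with this one correctable slip.

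The genuine gap is uniqueness. You claim the leg value is ``determined by $\ell_{u_-}(p)$'', but $\ell_{u_-}$ is a rational combination of $\ell_{n_j}$ and $\ell_{n_{j-1}}$, so fixing $\ell_{u_-}(p)$ adds no information once $\ell_{n_j}(p)$ and $\ell_{n_{j-1}}(p)$ are prescribed. The two conditions $\ell_{n_j}(p)=m_{n_j}(\bar Z_i)$ and $\ell_{n_{j-1}}(p)=m_{n_{j-1}}(l)$ only pin $p$ to the line $L_{j-1}$, which a priori may contain several integral points of $\bar P_i$. The paper closes this with a geometric count: by \cref{prop:content} the restriction $\ell_1|_{L_{j-1}}$ has content $\alpha_e$ (for $e\in\E^1_{n_j}$), while $\bar P_i \subset C_{n_j}\cap H^=_{n_j}(\bar Z_i)$ forces $0\leq \ell_1(p)\leq \max_{F_{n_j}(Z_K-E)}\ell_1 = \alpha_e - 1$. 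Hence exactly one integral point of $L_{j-1}$ can lie in $\bar P_i$. You need this content argument (or an equivalent); the basis reasoning you gave does not establish uniqueness.
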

\begin{proof}
We prove the lemma in the case when $\bar v(i) = n_j$, the case
$\bar v(i) = n_1$ is similar.

Let $u_- = u_{n_j,n_{j-1}} \in \V_{\bar v(i)}$ as above and
$u_0 = u_e \in\V_{\bar v(i)}$ for some $e\in\E_{n_j}$.
If $\bar v(i)$ has a
leg group \index{leg group}
with more than
one element, choose $u_0$ from this leg group, otherwise choose $u_0$
arbitrarily. Then there is a unique $u_+\in\V_{\bar v(i)}$ lying on a leg
not in the same leg group as the leg containing $u_0$.
Define $p\in\Z^3$ using \cref{eq:arm_init_pf}, but with $u_+$ and
$n_{r+1}$ replaced with $u_-$ and $n_{r-1}$ in the third line.
Similarly as above, we find $p\in\Z^3$, as well as
$\ell_{u_+}(p) \geq m_{u_+}(\psi(l)) = m_{u_+}(\bar Z_i)$ showing
$p\in P_i$. The equation $m_{n_{r-1}} = \ell_{n_{r-1}}(p)$ now follows
from $m_{u_-}(\psi(l)) = \ell_{u_-}(p)$ as above.

Next we prove uniqueness. Use the notation in \cref{prop:arm_hands}.
We can assume that $\ell_v$ for $v=u_{n_j,n_{j-1}}, \bar v(i), u_{e_1}$
with $e_1\in\E^1_{n_j}$ form a dual basis of $\Z^3$.
Let $L_0,\ldots,L_{j-1}$ be the
associated lines. \index{associated lines}
We have
$p\in L_{j-1} \cap \convx(F_{n_j}(Z_K-E) \cup \{0\}) \cap \R_{\geq 0}^3$.
By \cref{prop:content}, we see that
$\max_{F_{n_j}(Z_K-E)} \ell_1 = \max_{F_{n_j}} \ell_1 - 1 = \alpha_e - 1$.
By the same lemma, the restriction $\ell_1|_{L_{j-1}}$ has
content \index{content}
$\alpha_e$. Therefore, $p$ is determined as the unique point on
$L_{j-1}$ for which $0\leq \ell_1(p) < \alpha_e$.
\end{proof}

\begin{lemma} \label{lem:high_pts}
Assume the same notation as above and assume that $v(i) = n_r$ for some
$\nu < r \leq j$.
Let $p\in \bar P_i$ be as defined in \cref{lem:arm_init} or
\cref{lem:hand_init}, depending on whether $r<j$ or $r=j$.
Define $m_{r-1} = \ell_{n_{r-1}}(p)$ and $m_r = \ell_{n_r}(p)$.
We have $\beta_{r-1} m_{r-1} + m_r \equiv 0 \,(\mod \alpha_{r-1})$, and we
define an
arm sequence \index{arm sequence}
$m_0, \ldots, m_j$ as in \cref{lem:m_exists}, with
associated vertices $p_{\nu+1}, \ldots, p_{j-1}$.
Then $p_s = p$ for all $s \leq r$.
\end{lemma}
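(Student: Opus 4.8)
\textbf{Proof proposal for \cref{lem:high_pts}.}

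The plan is to argue by downward induction on $s$, starting from $s=r$ where the claim $p_r = p$ (or $p_{j-1}=p$ when $r=j$) holds essentially by construction of the point $p$ in \cref{lem:arm_init} (respectively \cref{lem:hand_init}). Indeed, in those lemmas $p$ is pinned down by the equations $\ell_{n_{r-1}}(p) = m_{r-1}$, $\ell_{n_r}(p) = m_r$ together with one further dual-basis coordinate, and the definition of the associated vertex $p_r$ in \cref{def:assoc} uses exactly $\ell_{n_{r-1}}(p_r) = m_{r-1}$, $\ell_{n_r}(p_r) = m_r$, $\ell_{n_{r+1}}(p_r) = m_{r+1}$; since $\ell_{n_{r-1}},\ell_{n_r},\ell_{n_{r+1}}$ form a dual basis of $\Q^3$ (as established in the proof of \cref{lem:armseq_points}), it suffices to check that the third defining equation of $p$ agrees with $\ell_{n_{r+1}}(p_r) = m_{r+1}$. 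This is where I would invoke the relation $m_{u_+}(\psi(l)) = (\beta_r m_{\bar v(i)}(l) + m_{n_{r+1}}(l))/\alpha_r$ from \cref{lem:arm_init} (or its analogue), which forces $\ell_{n_{r+1}}(p)$ to equal $m_{n_{r+1}}(l)$; and $m_{n_{r+1}}(l) = m_{r+1}$ because $l \in S_i$ and, per \cref{lem:arm_init}, $m_n(l) = \ell_n(p)$ for all $n \in \Nd_{\bar v(i)}$, while the arm sequence $(m_s)$ built in \cref{lem:m_exists} is the \emph{unique} one extending the given pair $(m_{r-1}, m_r)$.

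For the inductive step, suppose $p_{s+1} = p$ for some $s < r$; I want $p_s = p$. Both $p_s$ and $p_{s+1}$ lie on the associated line $L_s = \{q : \ell_{n_s}(q) = m_s,\ \ell_{n_{s+1}}(q) = m_{s+1}\}$ by \cref{def:assoc}, and $p = p_{s+1} \in L_s$; so $p$ and $p_s$ both lie on the one-dimensional line $L_s$. To conclude they coincide I would use the characterization appearing in the proof of \cref{lem:hand}: the associated vertex $p_s$ is the unique integral point of $L_s$ at which the standard coordinate $\ell_1$ (the coordinate whose axis the arm goes along, after the normalization of \cref{prop:arm_hands}) takes its minimal nonnegative value, equivalently $0 \le \ell_1(p_s) < \alpha_{e}$ for $e \in \E_{n_s}$, via \cref{prop:content} which says $\ell_1|_{L_s}$ has content $\alpha_e$. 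So it remains to show $0 \le \ell_1(p) < \alpha_e$, i.e. that $p$ itself already sits in this fundamental domain. This I would extract from the leg-multiplicity formula: \cref{eq:leg_ceil} (or \cref{lem:psi_inters,lem:x_interpol}) gives $\ell_{u_0}(p) = \lceil \beta_e m_{n_s}/\alpha_e \rceil$ for the leg neighbour $u_0 = u_e$ of $n_s$, and combining this with $\alpha_e \ell_{u_0} = \beta_e \ell_{n_s} + \ell_1$ yields $\ell_1(p) = \alpha_e \lceil \beta_e m_{n_s}/\alpha_e\rceil - \beta_e m_{n_s} \in [0,\alpha_e)$. That is precisely the bound required.

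Two points need care, and I expect the bookkeeping around them to be the main obstacle. First, I must verify that $p$ really does satisfy the leg-neighbour equation $\ell_{u_0}(p) = m_{u_0}(\psi(l))$ for the relevant $u_0$ at \emph{every} node $n_s$ with $s \le r$, not just at $\bar v(i) = n_r$; this should follow because $z^\Nd_l \ne 0$ forces $(\psi(l), E_{n_s}) = 0$ for all $\nu < s < j$ by \cref{lem:factors}\ref{it:factors_2}, which together with \cref{lem:mod,lem:psi_inters} is exactly the relation \cref{eq:m_exists} defining the arm sequence, so the multiplicities $m_u(\psi(l))$ along the whole arm are the ``interpolated'' ones and the vertices $p_s$ are genuinely realized. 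Second, one must handle the boundary orientation subtlety already met in \cref{lem:hand}: $\ell_1$ and $\ell_2$ induce opposite orientations on $L_s$, so pinning down $p$ by $0 \le \ell_1(p) < \alpha_e$ automatically places it on the correct side, and the nonnegativity of the other coordinate is not needed here since we only want uniqueness on $L_s$, not membership in $\bar P_s$. I would also remark that the congruence $\beta_{r-1}m_{r-1} + m_r \equiv 0 \pmod{\alpha_{r-1}}$ asserted in the statement is immediate from \cref{lem:mod}\ (\cref{eq:lem:mod}), since $l \in V_Z^\Nd$ and $m_{n_{r-1}}(l) = m_{r-1}$, $m_{n_r}(l) = m_r$ — so the arm sequence in \cref{lem:m_exists} is well-defined to begin with.
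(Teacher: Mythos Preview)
Your overall plan---downward induction on $s$, with the inductive step reducing to the leg identity $\ell_{u_e}(p) = \lceil \beta_e m_s/\alpha_e\rceil$ at $n_s$, equivalently $0\le \ell_{n_e^*}(p) < \alpha_e$---is exactly the paper's approach. The base case and the reduction are handled essentially as in the paper.

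The gap is in how you justify $0\le \ell_{n_e^*}(p) < \alpha_e$ at the nodes $n_s$ with $s<r$. You appeal to \cref{eq:leg_ceil}, but that formula is proved for the associated vertex $p_s$, not for $p$; since $p=p_s$ is what you are trying to establish, this is circular. Your fallback, invoking $(\psi(l),E_{n_s})=0$, does not close the gap either: subtracting this from \cref{eq:nbr_sum} evaluated at $p$ only yields the relation
\[
\frac{\ell_{n_{s-1}}(p)-m_{s-1}}{\alpha_{s-1}}
  \;+\;|\E_{n_s}|\!\left(\frac{\beta_e m_s+\ell_{n_e^*}(p)}{\alpha_e}
  -\left\lceil\frac{\beta_e m_s}{\alpha_e}\right\rceil\right)=0,
\]
which links the unknowns $\ell_{n_{s-1}}(p)-m_{s-1}$ and $\ell_{n_e^*}(p)$ but does not determine either one on its own. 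You still need an independent bound $\ell_{n_e^*}(p)<\alpha_e$ to force the leg term to vanish and hence get $\ell_{n_{s-1}}(p)=m_{s-1}$.

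The paper supplies precisely this missing bound by a geometric argument: since $p\in \bar P_i$ with $\bar v(i)=n_r$, the point $p+(1,1,1)$ lies in the cone over $F_{n_r}$, and by \cref{prop:content} the projections of the faces $F_{n_t}$, $t\ge s$, to the $x_1x_2$ plane all sit inside a triangle with one vertex at the origin and the relevant edge of length $\alpha_e$. This forces $\ell_{n_e^*}(p)<\alpha_e$. Without this (or an equivalent) geometric input your induction does not go through.
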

\begin{proof}
By definition, it is equivalent to show $m_s = \ell_{n_s}(p)$ for $s\leq r$,
as well as $m_{r+1} = \ell_{n_{r+1}}(p)$ in case $r<j$.

First, assume that $r<j$.
Take $u_\pm = u_{n_r,n_{r\pm1}} \in \V_{n_r}$. We have
\[
  \ell_{u_-}(p)
    = \frac{\ell_{n_{r-1}}(p) + \overline\beta_{r-1}\ell_{n_r}(p)}
	       {\alpha_{r-1}}
    = \frac{m_{r-1} + \overline\beta_{r-1}m_r}
	       {\alpha_{r-1}}.
\]
Similarly as in the proof of \cref{lem:hand_init}, we have
$0\leq \ell_{n_e^*}(p) < \alpha_e$, and so
\begin{equation} \label{eq:high_pts_pf}
  \ell_u(p)
    =
      \frac{\beta_e m_r + \ell_{n_e^*}(p)}
	       {\alpha_e}
    =
	\left\lceil
      \frac{\beta_e m_r}
	       {\alpha_e}
    \right\rceil
\end{equation}
for $u = u_e \in\V_{n_r}$, where $e\in\E_{n_r}$. Hence,
subtracting \cref{eq:m_exists} from \cref{eq:nbr_sum} we get
\[
    \frac{\overline\beta_r\ell_{n_r}(p) + \ell_{n_{r+1}}(p)}
         {\alpha_r}
  = \frac{\beta_r m_r + m_{r+1}}
         {\alpha_r}
\]
showing $\ell_{n_{r+1}}(p) = m_{r+1}$. This shows $p = p_r$.
Next, we prove by descending induction that $p_s = p$ for $s< r$.
Indeed, assuming that $p_{s+1} = p$, we have
$\ell_{n_s}(p) = m_s$ and $\ell_{n_{s+1}} = m_{s+1}$. We can then follow the
same procedure as above, once we prove \cref{eq:high_pts_pf} for
$u=u_e\in\V_{n_s}$ with $e\in\E_{n_s}$.
Since $\alpha_e \ell_u = \beta_e \ell_{n_s} + \ell_{n_e^*}$, it is enough
to prove $0 \leq \ell_{n_s}(p) < \alpha_e$. The first inequality is clear,
since $p\in\Z_{\geq0}^3$. For the second, by permutation of coordinates,
we may assume that the arm $n_1, \ldots, n_j$ goes in the direction of
the coordinate $x_3$, and that $\ell_{n_e^*} = \ell_1$. By construction,
the projection of the sets $\convx(F_{n_t})\cup\{(0,0,0)\})$ to the
$x_1x_2$ plane lie within the triangle with vertices $(0,0)$, $(\alpha_e,0)$
and $(0,a)$ for some $a\in\Z_{> 0}$, for $t\geq s$, by \cref{prop:content}.
In particular, we find,
$\ell_{n_s}(p+(1,1,1)) \leq \min_{p'\in F_{n_r}} \ell_{n_s}(p'+(1,1,1))
\leq \alpha_r + \ell_{n_s}(1,1,1)$.
Equality can only hold if $p+(1,1,1) = (\alpha_r,0,*)$, which is impossible
since $p\in\Z_{\geq 0}^3$.
\end{proof}

\subsection{Plan of the proof}

The proof of \cref{thm:qZ_ident} will be broken into cases in the remaining
subsections of this section, each dealing with various technical issues
that arise. In this subsection we describe some general strategies common
to these cases.

\begin{block} \label{block:SP_bij}
For any $i$ and $p\in \bar P_i$, let
$S_{i,p} = \set{l\in S_i}{\fa{n\in\Nd_{\bar v(i)}}{m_n(l) = \ell_n(p)}}$.
\nomenclature[Sip]{$S_{i,p}$}{Subset of $S_i$}
Also, let
$S'_i = S_i \setminus \cup_{p\in \bar P_i} S_{i,p}$.
\nomenclature[Si']{$S'_i$}{Subset of $S_i$}
By \cref{lem:arm_init,lem:hand_init}, we have
$S_i = \amalg_{p\in \bar P_i} S_{i,p}$ if $\bar v(i)$ is not a central
vertex. 
By \cref{thm:pts}, the
right hand side of \cref{eq:qZ_ident} equals $|\bar P_i|$, while
the left hand side is $\sum_{l\in S_i} z_l^\Nd$. \Cref{thm:qZ_ident} 
is therefore proved as soon as we prove the equations
\begin{equation} \label{eq:count_0}
  \sum_{l\in S'_i} z_l^\Nd = 0
\end{equation}
and
\begin{equation} \label{eq:count_1}
  \sum_{l\in S_{i,p}} z_l^\Nd = 1.
\end{equation}
Although this is not always the case, 
we will follow this course of action in many of the cases.
\end{block}

\begin{lemma} \label{lem:ineq_Zi}
Let $m_\nu,\ldots, m_j$ be an
arm sequence \index{arm sequence}
as in \cref{def:arm_seq} and assume
that for some $r<j$ we have $m_r \geq m_r(\bar Z_i)$ and
$m_{r+1} \geq m_{r+1}(\bar Z_i)$, as well as 
\begin{equation} \label{eq:ineq_Zi1}
  \frac{m_ r             }{m_ r   (Z_K-E)} \leq
  \frac{m_{r+1}          }{m_{r+1}(Z_K-E)}.
\end{equation}
Then $m_s \geq m_s(\bar Z_i)$ for all $s\geq r$.
Similarly, if $r>\nu$ and $m_r \geq m_r(\bar Z_i)$ and
$m_{r-1} \geq m_{r-1}(\bar Z_i)$, as well as 
\begin{equation} \label{eq:ineq_Zi2}
  \frac{m_ r             }{m_ r   (Z_K-E)} \leq
  \frac{m_{r-1}          }{m_{r-1}(Z_K-E)},
\end{equation}
then $m_s \geq m_s(\bar Z_i)$ for all $s\leq r$.
\end{lemma}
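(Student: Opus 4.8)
\textbf{Proof plan for \cref{lem:ineq_Zi}.}

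The plan is to prove the first statement by induction on $s\geq r$; the second is entirely symmetric, exchanging the roles of $s+1$ and $s-1$ throughout, so I would only write out the first and remark that the other follows \emph{mutatis mutandis}. The base cases $s=r$ and $s=r+1$ are given by hypothesis. For the inductive step, suppose $m_s\geq m_s(\bar Z_i)$ and $m_{s-1}\geq m_{s-1}(\bar Z_i)$ for some $s$ with $r<s<j$; I want to conclude $m_{s+1}\geq m_{s+1}(\bar Z_i)$. The key observation is that the defining relation \cref{eq:m_exists} for an arm sequence is, up to the ceiling terms coming from the legs, the same linear three-term recursion that the restriction $\bar Z_i|_\Nd$ would satisfy if it too extended to an arm sequence — but in general $\bar Z_i$ only satisfies the corresponding \emph{inequality} $(\bar Z_i,E_{n_s})\leq 0$ at non-node vertices, which by \cref{lem:x_interpol} (since $\bar Z_i = x(\bar Z_i)$) translates into $m_u(\bar Z_i)=\lceil(\beta\,m_n(\bar Z_i)+\cdots)/\alpha\rceil$ at the neighbours. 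So the monotonicity will come from comparing the exact recursion satisfied by $(m_s)$ with the "ceiling" recursion satisfied by $(m_s(\bar Z_i))$.

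Concretely, here is the step I would carry out. Solving \cref{eq:m_exists} for $m_{s+1}$ gives
\[
  \frac{m_{s+1}}{\alpha_s}
    = -\frac{m_{s-1}+\bar\beta_{s-1}m_s}{\alpha_{s-1}}
      - E_{n_s}^2 m_s
      - \sum_{e\in\E_{n_s}}\left\lceil\frac{\beta_e m_s}{\alpha_e}\right\rceil
      + \frac{\beta_s m_s}{\alpha_s}.
\]
A parallel inequality holds for $\bar Z_i$: applying \cref{eq:nbr_sum} at $n_s$ together with $(\bar Z_i,E_{n_s})\leq 0$ and \cref{lem:x_interpol} at the neighbours $u_{n_s,n_{s-1}}$, $u_{n_s,n_{s\pm1}}$ and the leg-neighbours $u_e$, one gets
\[
  \frac{m_{s+1}(\bar Z_i)}{\alpha_s}
    \leq -\left\lceil\frac{m_{s-1}(\bar Z_i)+\bar\beta_{s-1}m_s(\bar Z_i)}{\alpha_{s-1}}\right\rceil
      - E_{n_s}^2 m_s(\bar Z_i)
      - \sum_{e\in\E_{n_s}}\left\lceil\frac{\beta_e m_s(\bar Z_i)}{\alpha_e}\right\rceil
      + \left\lceil\frac{\beta_s m_s(\bar Z_i)}{\alpha_s}\right\rceil,
\]
(using that $m_{u}(\bar Z_i)$ is the relevant ceiling, and $-b_{n_s}=E_{n_s}^2$). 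Subtracting, and using the inductive hypotheses $m_{s-1}\geq m_{s-1}(\bar Z_i)$, $m_s\geq m_s(\bar Z_i)$ together with monotonicity of $x\mapsto\lceil x\rceil$, every term on the right side is $\geq$ the corresponding term for $\bar Z_i$, \emph{except} possibly the sign of $-E_{n_s}^2=b_{n_s}>0$, which helps; so $m_{s+1}\geq m_{s+1}(\bar Z_i)$. The role of hypothesis \cref{eq:ineq_Zi1} is to guarantee that, geometrically, the associated vertex $p_{r+1}$ — and hence the whole tail of the arm — lies in the region $B_r$ where the separating functionals of \cref{lem:arm_mono} are nonnegative, which is exactly what keeps the arm sequence from dropping below $\bar Z_i$; I would phrase this via \cref{lem:arm_mono} and \cref{lem:armseq_points} rather than re-deriving it, observing that \cref{eq:ineq_Zi1} and $m_r\geq m_r(\bar Z_i)$, $m_{r+1}\geq m_{r+1}(\bar Z_i)$ together pin down that $p_s\in B_s$ for all $s\geq r$, and translating back.

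The main obstacle I anticipate is bookkeeping the ceiling functions correctly: the arm sequence satisfies an \emph{equality} with a ceiling term only in the leg contributions, whereas $\bar Z_i$ satisfies \emph{inequalities} at every non-node neighbour, so one must be careful that the ceilings line up the right way and that the inequality is preserved rather than reversed when one passes from $m_s$ to $m_{s+1}$. I would handle this by reducing everything, via \cref{lem:x_interpol} and \cref{lem:psi_inters}, to a statement purely about the multiplicities at nodes, and then invoking \cref{lem:arm_mono} to control the direction of the relevant fractions, so that the comparison becomes term-by-term monotone. Once the node multiplicities are controlled, extending the inequality to all vertices on the arm is automatic because $\bar Z_i=x(\bar Z_i)$ is determined from its node values by the minimal (ceiling) interpolation of \cref{lem:x_interpol}, and the arm sequence dominates that interpolation by \cref{prop:x_prop}\ref{it:x_prop_mon}.
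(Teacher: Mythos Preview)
Your main line of argument --- the direct inductive comparison of the recursion \cref{eq:m_exists} against the ceiling-inequality satisfied by $\bar Z_i$ --- does not work as stated. When you solve \cref{eq:m_exists} for $m_{s+1}/\alpha_s$, the term $-\dfrac{m_{s-1}+\bar\beta_{s-1}m_s}{\alpha_{s-1}}$ carries a \emph{negative} coefficient on $m_{s-1}$, and likewise $-\sum_{e}\lceil\beta_e m_s/\alpha_e\rceil$ is monotone \emph{decreasing} in $m_s$. So from $m_{s-1}\geq m_{s-1}(\bar Z_i)$ and $m_s\geq m_s(\bar Z_i)$ these terms become \emph{smaller}, not larger, than their $\bar Z_i$-counterparts. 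Your assertion that ``every term on the right side is $\geq$ the corresponding term for $\bar Z_i$'' is therefore false, and the inductive step collapses. This is not a bookkeeping issue with ceilings; it is a genuine sign obstruction.

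The paper's proof avoids the recursion entirely. It uses \cref{lem:arm_mono} (applied iteratively, starting from the hypothesis \cref{eq:ineq_Zi1}) to obtain the chain of \emph{strict} inequalities
\[
  \bar r_i \;\leq\; \frac{m_{r+1}}{m_{n_{r+1}}(Z_K-E)} \;<\; \frac{m_{r+2}}{m_{n_{r+2}}(Z_K-E)} \;<\; \cdots \;<\; \frac{m_j}{m_{n_j}(Z_K-E)},
\]
where the first inequality comes from $m_{r+1}\geq m_{r+1}(\bar Z_i)\geq \bar r_i\, m_{n_{r+1}}(Z_K-E)$. For $s>r+1$ this gives $m_s>\bar r_i\, m_{n_s}(Z_K-E)$ strictly, and then \cref{lem:ceil} (together with the observation that $\epsilon_{i,n_s}\neq 0$ forces $\bar r_i\, m_{n_s}(Z_K-E)\in\Z$) converts this into $m_s\geq m_{n_s}(\bar Z_i)$. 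You gesture at \cref{lem:arm_mono} in your final paragraph, but you never identify this translation via \cref{lem:ceil}, which is the crux; the ``$p_s\in B_s$'' reformulation alone does not yield $m_s\geq m_s(\bar Z_i)$.
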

\begin{proof}
We give the proof of the first statement, the second one is similar.
We need to prove the inequality $m_s \geq m_s(\bar Z_i)$ for $s> r+1$
as the cases $s=r,r+1$ are assumed.
By \cref{lem:ceil}, it is enough to prove $m_s > \bar r_i m_s(Z_K-E)$
(note that we can have $\epsilon_{i,n_s} \neq 0$ only if
$\bar r_i m_{n_s}(Z_K-E) \in \Z$). But this follows by using
\cref{lem:arm_mono} iteratively to find
\[
  \bar r_i \leq
  \frac{m_{r+1}}{m_{n_{r+1}}(Z_K-E)}
  <
  \frac{m_{r+2}}{m_{n_{r+2}}(Z_K-E)}
  < \ldots <
  \frac{m_j}{m_{n_j}(Z_K-E)}.
\]
\end{proof}

\begin{lemma} \label{lem:trap_pos}
Assume that a face $F_{n_0}\subset \Gamma(f)$ is a
central trapezoid \index{trapezoid}
with a nondegenerate arm $n_1, \ldots, n_j$ in the direction of the $x_1$ axis
as in \cref{block:arm_not}. Let $p_1\in F_{n_0}$ be one of the endpoints
of the segment $F_{n_0}\cap F_{n_1}$, and let $p_2\in F_{n_0}$ be the closest
integral point on the adjacent boundary segment.
Then the vector $p_2-p_1$ has nonnegative $x_2$ and $x_3$ coordinates.
\end{lemma}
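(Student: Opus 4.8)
\textbf{Proof proposal for \cref{lem:trap_pos}.}

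The plan is to exploit the combinatorial classification of central trapezoids together with the description of arms in \cref{prop:anatomy}\ref{it:anatomy_central_face} and \cref{prop:arm_hands}. First I would normalize coordinates: since $F_{n_0}$ is a central trapezoid, by \cref{def:central} its vertices (after permuting coordinates) are of the form $(0,p,a)$, $(q,0,a)$, $(r_1,r_2,0)$, $(r_1',r_2',0)$ with $(r_1',r_2',0)-(r_1,r_2,0) = k(-q,p,0)$ for some $k>0$. The arm in the direction of the $x_1$ axis attaches along the edge that lies in the plane spanned by the $x_1$ axis and one further direction; concretely, $F_{n_0}\cap F_{n_1}$ is the edge of $F_{n_0}$ whose two endpoints have $x_2 = 0$ and $x_3=0$ respectively, i.e.\ it is the segment $[(q,0,a),(r_1,r_2,0)]$ or $[(q,0,a),(r_1',r_2',0)]$ (the two ``slanted'' edges meeting the $x_1$ axis side), because by \cref{prop:arm_hands} the faces in the arm have vertices on the union of the $x_1x_2$ and $x_1x_3$ coordinate planes. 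I would fix which endpoint is $p_1$: it is an endpoint of $F_{n_0}\cap F_{n_1}$, so $p_1$ is one of the two listed vertices on that slanted edge.

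Next I would identify $p_2$. The point $p_1$ is a vertex of the trapezoid $F_{n_0}$, so it has two adjacent edges: one is $F_{n_0}\cap F_{n_1}$, and the other is an edge of $F_{n_0}$ contained in $\partial\Gamma(f)$ — by \cref{prop:anatomy}, since we are in the central-face case, every edge of $F_{n_0}$ other than those shared with arm faces lies on a coordinate hyperplane. The point $p_2$ is the closest integral point to $p_1$ along this adjacent boundary edge. The key observation is that this boundary edge lies in a coordinate hyperplane, and in fact, because the arm goes in the direction of $x_1$, the relevant boundary edge at $p_1$ lies either in the plane $x_2=0$ or the plane $x_3=0$ (depending on which endpoint $p_1$ is). Then $p_2-p_1$ is a primitive vector tangent to that coordinate hyperplane, hence has a zero coordinate, and I must show the remaining two coordinates (the $x_2$ and $x_3$ coordinates) are $\geq 0$.

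The heart of the argument is a sign/orientation check. Using the explicit vertex coordinates above, I would compute $p_2-p_1$ directly. If $p_1 = (q,0,a)$ (the vertex on the $x_3=a$... rather, the endpoint whose $x_2$-coordinate vanishes lies in the plane $x_2=0$), then the boundary edge adjacent to $p_1$ inside $\partial\Gamma(f)$ runs within $x_2 = 0$ toward a vertex with larger $x_1$ and $x_3$ — but since $p_1$ already has $x_2=0$, moving along this edge the $x_2$-coordinate stays $0\geq 0$, and I must check the $x_3$-coordinate does not decrease; this follows because the neighbouring face in the arm has its diagram above $\Gamma(f)$, so the edge descends toward the $x_1$ axis, forcing $p_2$ to have $x_3\geq a > $ (the $x_3$-coordinate of $p_1$) — wait, more carefully: $p_1$ lies at the ``top'' of that slanted edge and $p_2$ is the next lattice point along the coordinate-plane edge heading away, whose coordinates are componentwise comparable. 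The symmetric case $p_1 = (r_1,r_2,0)$ (in the plane $x_3=0$) is handled the same way with $x_2$ and $x_3$ swapped. In both cases convexity of $\Gamma_+(f)$ pins down the sign. The main obstacle I anticipate is bookkeeping: making sure I have correctly matched ``which endpoint is $p_1$'' with ``which coordinate plane the adjacent boundary edge lies in,'' and checking that the convexity of the Newton polyhedron indeed forces the inequality rather than its reverse — this is where I would draw on \cref{cor:reg_vx} (the vertices of an empty polygon, hence of the trapezoid, are regular away from the $t$-triangle exception, which does not occur here) to guarantee $p_2-p_1$ is primitive and that the local picture is the standard one, and on \cref{prop:content} to control the relevant lengths. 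Once the coordinate geometry is set up correctly, the inequality is immediate from reading off the vertex coordinates.
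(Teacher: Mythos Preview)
Your setup correctly identifies half of the argument: since the arm goes in the direction of $x_1$, the endpoint $p_1$ of $F_{n_0}\cap F_{n_1}$ lies on one of the coordinate hyperplanes $x_2=0$ or $x_3=0$. Taking (as the paper does) $p_1$ on the $x_1x_2$-plane, one has $\ell_3(p_1)=0$ and hence $\ell_3(p_2-p_1)=\ell_3(p_2)\geq 0$ because $p_2\in F_{n_0}\subset\R^3_{\geq 0}$. That part is fine.

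The gap is in your treatment of the \emph{other} coordinate. You assert that the adjacent boundary segment of $F_{n_0}$ at $p_1$ lies in the plane $x_2=0$ or $x_3=0$, so that ``the $x_2$-coordinate stays $0$''. This is false in general: ``adjacent boundary segment'' means the neighbouring edge of the polygon $\partial F_{n_0}$, not a segment of $\partial\Gamma(f)$. In a central trapezoid with three nondegenerate arms, the edges of $F_{n_0}$ are shared with arm faces $F_{n_1^\kappa}$ and need not lie in any coordinate hyperplane; indeed from the explicit vertex list $(0,p,a),(q,0,a),(r_1,r_2,0),(r_1',r_2',0)$ only one edge sits in a coordinate plane. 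So from $\ell_3(p_1)=0$ you get nothing for free about $\ell_2(p_2-p_1)$.

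The paper closes this gap by exploiting the trapezoid's parallel-edge structure rather than any coordinate-plane containment. Let $p_4$ be the other endpoint of $F_{n_0}\cap F_{n_1}$ and $p_3$ the analogous neighbouring point at $p_4$; by symmetry $\ell_2(p_4)=0$, so $\ell_2(p_3-p_4)\geq 0$. Now either $[p_1,p_4]$ is the top edge, in which case $[p_2,p_3]$ is the bottom edge and $p_2-p_3$ is a positive multiple of $p_1-p_4$, allowing a direct computation of $\ell_2(p_2-p_1)$; or the top edge is one of $[p_1,p_2]$, $[p_3,p_4]$, in which case these two edges are parallel and $\ell_2(p_2-p_1)$ has the same sign as $\ell_2(p_3-p_4)\geq 0$. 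This case analysis on which edge is the top edge is the missing idea in your argument.
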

\begin{proof}
We can assume that $p_1$ is on the $x_1x_2$ coordinate hyperplane.
Then $\ell_3(p_1) = 0$, thus $\ell_3(p_2 - p_1) = \ell_{p_2} \geq 0$.
Take the remaining vertices $p_3, p_4\in F_{n_0}$ so that
$[p_1,p_4] = F_{n_0} \cap F_{n_1}$. By the same argument as above, we then
have $\ell_2(p_3-p_4) \geq 0$.

If the segment $[p_1,p_4]$ is a top edge,
then $[p_2,p_3]$ is a bottom edge, and so we have $p_2-p_3 = a(p_1-p_4)$
for some integer $a>0$. Thus,
$\ell_2(p_2-p_1) = \ell_2(p_4-p_1) + \ell_2(p_3-p_4) + \ell_2(p_2-p_3)
\geq (a-1)\ell_2(p_1-p_4) = (a-1)\ell_2(p_1) \geq 0$.

If $[p_1,p_4]$ is not the top edge, then the top edge is either
$[p_3,p_4]$ or $[p_1,p_2]$.
In either case, these two edges are parallel, and so 
$\ell_2(p_2-p_1)$ and $\ell_2(p_3-p_4)$ have the same sign and the result
follows since we already proved $\ell_2(p_3-p_4) \geq 0$.
\end{proof}

\begin{lemma} \label{lem:count_empty}
If $(\bar Z_i,E_{\bar v(i)}) > 0$, then $S_i = \emptyset$.
\end{lemma}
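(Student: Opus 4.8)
The statement asserts that if $(\bar Z_i, E_{\bar v(i)}) > 0$, then the set $S_i$ from \cref{def:Si} is empty. I would argue via \cref{lem:null_nopts} and \cref{lem:factors}. The point is that the condition $l \in S_i$ requires both $z_l^\Nd \neq 0$ and $l \geq \bar Z_i|_\Nd$ with $m_{\bar v(i)}(l) = m_{\bar v(i)}(\bar Z_i)$, and I want to show these conditions are incompatible with $(\bar Z_i, E_{\bar v(i)}) > 0$.

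First I would recall, using \cref{thm:pts}\ref{it:pts_as} (or the analysis in \cref{lem:pts_faces}), that $(\bar Z_i, E_{\bar v(i)}) > 0$ forces $\bar P_i = \emptyset$: indeed by \cref{lem:null_nopts} applied with $Z = \bar Z_i$ and $v = \bar v(i) \in \Nd$, we get $\Fnb_{\bar v(i)}(\bar Z_i) = \emptyset$, hence $\bar P_i = \Fnb_{\bar v(i)}(\bar Z_i)\cap\Z^3 = \emptyset$ by \cref{lem:pts_faces}. So the target equality $\sum_{l\in S_i} z_l^\Nd = |\bar P_i|$ (the content of \cref{eq:qZ_ident}) would in this case say the sum over $S_i$ vanishes; the present lemma strengthens this to $S_i$ itself being empty.

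Next, suppose for contradiction that $l \in S_i$. Since $z_l^\Nd = \prod_{n\in\Nd} z_{l,n}^\Nd \neq 0$, each factor $z_{l,n}^\Nd$ is nonzero; in particular $z_{l,\bar v(i)}^\Nd \neq 0$. By \cref{lem:mod} and \cref{lem:psi_inters}, the quantity $(-\psi(l), E_{\bar v(i)})$ is computed from $m_n(l)$ for $n$ in a neighbourhood of $\bar v(i)$; and the constraint $m_{\bar v(i)}(l) = m_{\bar v(i)}(\bar Z_i)$ together with $l \geq \bar Z_i|_\Nd$ and \cref{lem:x_interpol}/\cref{lem:psi_inters} gives $m_u(\psi(l)) \geq m_u(\bar Z_i)$ for each $u \in \V_{\bar v(i)}$, so that $(-\psi(l), E_{\bar v(i)}) \leq (-\bar Z_i, E_{\bar v(i)}) < 0$. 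But by \cref{lem:factors} (cases \ref{it:factors_1}, \ref{it:factors_2}, \ref{it:factors_3}, \ref{it:factors_0}, according to the value of $\delta_{\bar v(i)} - |\E_{\bar v(i)}|$), a nonzero value of $z_{l,\bar v(i)}^\Nd$ forces $(-\psi(l), E_{\bar v(i)}) \geq 0$. This is the desired contradiction, and therefore $S_i = \emptyset$.

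\textbf{Main obstacle.} The delicate step is the inequality $m_u(\psi(l)) \geq m_u(\bar Z_i)$ for all $u \in \V_{\bar v(i)}$: one must check that the interpolation formulas \cref{lem:x_interpol,lem:psi_inters} for $m_u$ along the legs and bamboos emanating from $\bar v(i)$ are monotone in the data $(m_n(l))_{n}$, and that the hypothesis $l \geq \bar Z_i|_\Nd$ at the nodes propagates correctly to the non-node vertices adjacent to $\bar v(i)$ — the ceiling functions in those formulas need to be handled with care, exactly as in the proof of \cref{lem:arm_init}. Once this monotonicity is in place, the conclusion $(-\psi(l), E_{\bar v(i)}) \leq (-\bar Z_i, E_{\bar v(i)})$ is immediate from \cref{eq:nbr_sum} and the comparison is finished by \cref{lem:factors}.
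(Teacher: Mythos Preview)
Your argument is correct and reaches the same contradiction as the paper: if $l\in S_i$ then $(\psi(l),E_{\bar v(i)})>0$, which is impossible. The paper, however, packages the ``delicate step'' you flag into a single appeal to \cref{lem:psi_Z_ineq}: since $z_l^\Nd\neq 0$ forces $\S_Z(l)\neq\emptyset$ and hence $\psi(l)\in\S_Z\subset\Stop$, and since $\bar Z_i=x(\bar Z_i)$, \cref{lem:psi_Z_ineq} gives $\psi(l)\geq\bar Z_i$ globally; then $m_{\bar v(i)}(\psi(l))=m_{\bar v(i)}(\bar Z_i)$ yields $(\psi(l),E_{\bar v(i)})\geq(\bar Z_i,E_{\bar v(i)})>0$, contradicting $\psi(l)\in\Stop$. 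This sidesteps entirely the neighbour-by-neighbour ceiling computations via \cref{lem:x_interpol,lem:psi_inters} that you carry out (and correctly so), and it also makes the appeal to \cref{lem:factors} unnecessary, since $\psi(l)\in\Stop$ already gives the contradiction directly. Your opening paragraph on $\bar P_i=\emptyset$ is a correct observation but, as you note yourself, plays no role in the proof.
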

\begin{proof}
If $l\in S_i$, then $l\geq \bar Z_i|_\Nd$ and
$m_{\bar v(i)}(l) = m_{\bar v(i)}(Z_i)$ and $\S_Z(l) \neq \emptyset$.
By \cref{lem:Sl_struct}, we then have $\psi(l) \in \S_Z(l)$, and so
$\psi(l) \geq \bar Z_i$, by \cref{lem:psi_Z_ineq}.
We get $(\psi(l),E_{\bar v(i)}) \geq (\bar Z_i,E_{\bar v(i)}) > 0$,
a contradiction.
\end{proof}

\subsection{Case: $\bar v(i)$ is central} \label{ss:case_central}

In this section we will assume that $\bar v(i)$ is a
central node. \index{central node}
We will use the notation given in
\cref{prop:anatomy}\ref{it:anatomy_central_face}.

\begin{block} \label{block:central}
Assume that $\Gamma(f)$ has three nondegenerate
arms. \index{arm}
For $k = (k_1, k_2, k_3)\in\Z^3$, define an element $l_k\in V_Z^\Nd$ as 
follows. Require $m_{n_0^1}(l_k) = m_{n_0^1}(\bar Z_i)$ and
$m_{n_1^\kappa}(l_k) = \min_{p\in \bar P_i}\ell_{n_1^\kappa}(p)
+ k_\kappa\alpha_{n_0^\kappa,n_1^\kappa}$.
Furthermore, require that for each $\kappa=1,2,3$, the sequence
$m_{n_r^\kappa}(l_k)$ is an arm sequence.
Since
$\beta_{n_0^\kappa,n_1^\kappa}\ell_{n_0^\kappa}
+ \ell_{n_1^\kappa}|_{\Z^3} \equiv 0\,
(\mod\alpha_{n_0^\kappa,n_1^\kappa})$, we have
$\beta_{n_0^\kappa,n_1^\kappa} m_{n_0^\kappa}(\bar Z_i)
+ \min_{p\in \bar P_i}\ell_{n_1^\kappa}(p)
\equiv 0\, (\mod\alpha_{n_0^\kappa,n_1^\kappa})$. Thus, $l_k$ with the
required properties exists and is unique by \cref{lem:m_exists}.
Now, by \cref{lem:mod} and \cref{rem:m_exists}, we find that if
$l \in S_i$, then there is a $k$ so that $l = l_k$. Indeed, we find
$k_\kappa = (m_{n_1^\kappa}(l) - \min_{p\in \bar P_i}\ell_{n_1^\kappa}(p))
/\alpha_{n_0^\kappa,n_1^\kappa}$.

In the case when $\Gamma(f)$ has two nondegenerate arms, define
$l_k\in V_Z^\Nd$ for $k\in \Z^2$ as above, and similarly for $k=k_1\in\Z$
if $\Gamma$ contains a single nondegenerate arm.
\end{block}

\begin{lemma} \label{lem:cgeq}
We have $l_k \geq \bar Z_i$ if and only if $k\geq 0$, that is,
$k_\kappa \geq 0$ for $\kappa=1,2,3$.
\end{lemma}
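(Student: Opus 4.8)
Looking at Lemma~\ref{lem:cgeq}, I need to show $l_k \geq \bar Z_i|_\Nd$ if and only if $k \geq 0$ (componentwise).

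\textbf{The plan.} The statement compares two elements of $L^\Nd = \Z\langle E_n \mid n\in\Nd\rangle$, so it suffices to compare coefficients $m_n(l_k)$ and $m_n(\bar Z_i)$ for every $n\in\Nd$. The ``only if'' direction should be immediate: by construction $m_{n_1^\kappa}(l_k) = \min_{p\in\bar P_i}\ell_{n_1^\kappa}(p) + k_\kappa\alpha_{n_0^\kappa,n_1^\kappa}$, and by Lemma~\ref{lem:ell_min} we have $\min_{p\in\bar P_i}\ell_{n_1^\kappa}(p) = m_{n_1^\kappa}(\bar Z_i)$ (noting $\bar v(i) = n_0^\kappa$ so that $n_1^\kappa\in\V_{\bar v(i)}$). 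Hence $m_{n_1^\kappa}(l_k) \geq m_{n_1^\kappa}(\bar Z_i)$ forces $k_\kappa\alpha_{n_0^\kappa,n_1^\kappa}\geq 0$, i.e.\ $k_\kappa\geq 0$.

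\textbf{The ``if'' direction} is the substantive part. Assume $k\geq 0$. We already have equality at the central node $n_0^\kappa$ by definition and the desired inequality at each first arm node $n_1^\kappa$. For the remaining arm nodes $n_r^\kappa$ with $r\geq 2$, I would apply Lemma~\ref{lem:ineq_Zi}: the sequence $(m_{n_r^\kappa}(l_k))_r$ is an arm sequence by construction, so once I verify $m_{n_0^\kappa}(l_k)\geq m_{n_0^\kappa}(\bar Z_i)$ (equality, done), $m_{n_1^\kappa}(l_k)\geq m_{n_1^\kappa}(\bar Z_i)$ (done), and the ratio inequality $m_{n_1^\kappa}(l_k)/m_{n_1^\kappa}(Z_K-E) \geq m_{n_0^\kappa}(l_k)/m_{n_0^\kappa}(Z_K-E)$ (equivalently $\bar r_i \leq m_{n_1^\kappa}(l_k)/m_{n_1^\kappa}(Z_K-E)$, since $\bar r_i = m_{n_0^\kappa}(\bar Z_i)/m_{n_0^\kappa}(Z_K-E)$), Lemma~\ref{lem:ineq_Zi} propagates $m_{n_s^\kappa}(l_k)\geq m_{n_s^\kappa}(\bar Z_i)$ for all $s\geq 1$ along the arm.

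\textbf{The main obstacle} will be establishing that ratio inequality at the first arm node, i.e.\ that $\bar r_i \leq m_{n_1^\kappa}(l_k)/m_{n_1^\kappa}(Z_K-E)$. When $k_\kappa > 0$ this should be slack and easy. When $k_\kappa = 0$, I need $\bar r_i \le m_{n_1^\kappa}(\bar Z_i)/m_{n_1^\kappa}(Z_K-E)$; this is exactly the statement that the associated point/vertex lies in the cone $C_{\bar v(i)}$, which is where $\bar P_i$ lives by Lemma~\ref{lem:pts_faces} — so I would extract it from the fact that $\bar P_i\neq\emptyset$ (if $\bar P_i = \emptyset$ the lemma is about comparing with a sequence that need not exist, but then $S_i=\emptyset$ by Lemma~\ref{lem:count_empty} and there is nothing to prove, since $z_{l_k}^\Nd = 0$ forces no constraint). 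Concretely, taking $p\in\bar P_i$ with $\ell_{n_1^\kappa}(p) = m_{n_1^\kappa}(\bar Z_i)$ (Lemma~\ref{lem:ell_min}), the membership $p\in C_{\bar v(i)}$ from Lemma~\ref{lem:pts_faces} combined with Lemma~\ref{lem:cone_eqs}\ref{it:cone_eqs_i} yields precisely $\ell_{n_1^\kappa}(p)/m_{n_1^\kappa}(Z_K-E) \geq \ell_{n_0^\kappa}(p)/m_{n_0^\kappa}(Z_K-E) = \bar r_i$, as needed. I would run the same argument in each of the three (or two, or one) arm directions, with no interaction between arms since each arm sequence is determined independently; this handles all of $\Nd$.
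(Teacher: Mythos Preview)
Your overall strategy matches the paper's: compute what happens at the first arm node $n_1^\kappa$ using Lemma~\ref{lem:ell_min}, then propagate along each arm via Lemma~\ref{lem:ineq_Zi}. However, there is a genuine gap in how you invoke Lemma~\ref{lem:ell_min}.

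You write that $n_1^\kappa\in\V_{\bar v(i)}$ and conclude $\min_{p\in\bar P_i}\ell_{n_1^\kappa}(p)=m_{n_1^\kappa}(\bar Z_i)$. But $\V_{\bar v(i)}$ is the set of neighbours of $\bar v(i)=n_0^\kappa$ in the graph $G$, and $n_0^\kappa$, $n_1^\kappa$ are in general joined by a bamboo of positive length; the actual neighbour is $u=u_{n_0^\kappa,n_1^\kappa}$, which equals $n_1^\kappa$ only when $\alpha_{n_0^\kappa,n_1^\kappa}=1$. Consequently your identity $\min_{p\in\bar P_i}\ell_{n_1^\kappa}(p)=m_{n_1^\kappa}(\bar Z_i)$ is false in general. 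The paper instead applies Lemma~\ref{lem:ell_min} to $u$, uses $\ell_{n_1^\kappa}=\alpha_{n_0^\kappa,n_1^\kappa}\ell_u-\beta_{n_0^\kappa,n_1^\kappa}\ell_{n_0^\kappa}$ (with $\ell_{n_0^\kappa}$ constant on $\bar P_i$), and then invokes Lemma~\ref{lem:x_interpol} to express $m_u(\bar Z_i)$ as a ceiling in $m_{n_0^\kappa}(\bar Z_i)$, $m_{n_1^\kappa}(\bar Z_i)$. The outcome is
\[
m_{n_1^\kappa}(l_k)-m_{n_1^\kappa}(\bar Z_i)=\alpha_{n_0^\kappa,n_1^\kappa}k_\kappa + r,\qquad 0\le r<\alpha_{n_0^\kappa,n_1^\kappa},
\]
from which the equivalence $m_{n_1^\kappa}(l_k)\ge m_{n_1^\kappa}(\bar Z_i)\iff k_\kappa\ge 0$ follows. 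Your version drops the $r$ term. Both your ``only if'' step (where you need $k_\kappa\alpha\ge 0$) and your ``if'' step (where you need $m_{n_1^\kappa}(l_k)\ge m_{n_1^\kappa}(\bar Z_i)$ to feed into Lemma~\ref{lem:ineq_Zi}) rest on this incorrect formula; the conclusions happen to survive because $0\le r<\alpha$, but your argument as written does not establish them. Once you route the computation through $u$ and Lemma~\ref{lem:x_interpol} as the paper does, the rest of your outline (including the ratio inequality via $p\in C_{\bar v(i)}$ and Lemma~\ref{lem:cone_eqs}\ref{it:cone_eqs_i}) goes through.
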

\begin{proof}
Using \cref{lem:ell_min} and \cref{lem:x_interpol} we find
\[
\begin{split}
  m_{n_1^\kappa}(l_k)
  =& \min_{p\in\bar P_i}
    \left(
      \alpha_{n_0^\kappa,n_1^\kappa}
	    (\ell_{u_{n_0^\kappa,n_1^\kappa}}(p)+k_\kappa)
        - \beta_{n_0^\kappa,n_1^\kappa} \ell_{n_0^\kappa}(p)
    \right) \\
  =& \alpha_{n_0^\kappa,n_1^\kappa}
       (m_{u_{n_0^\kappa,n_1^\kappa}}(\bar Z_i) + k_\kappa)
    - \beta_{n_0^\kappa,n_1^\kappa} m_{n_0^\kappa}(\bar Z_i) \\
  =& \alpha_{n_0^\kappa,n_1^\kappa}
     \left\lceil
	   \frac{m_{n_1^\kappa}(\bar Z_i)
	         + \beta_{n_0^\kappa,n_1^\kappa} m_{n_0^\kappa}(\bar Z_i)}
	        {\alpha_{n_0^\kappa,n_1^\kappa}}
     \right\rceil
    - \beta_{n_0^\kappa,n_1^\kappa} m_{n_0^\kappa}(\bar Z_i)
    + \alpha_{n_0^\kappa,n_1^\kappa} k_\kappa
\end{split}
\]
and so $m_{n_1^\kappa}(l_k) \geq  m_{n_1^\kappa}(\bar Z_i)$
if and only if $k_\kappa \geq 0$.

Now, assuming $k\geq 0$, we get $l_k \geq \bar Z_i$ from \cref{lem:ineq_Zi}.
\end{proof}

\begin{lemma} \label{lem:lk_inter}
With $l_k$ as above, we have
$(\psi(l_k), E_{\bar v(i)}) = (\bar Z_i, E_{\bar v(i)}) + \sum_\kappa k_\kappa$.
\end{lemma}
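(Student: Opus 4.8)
The strategy is to expand both intersection numbers via the formula $(Z,E_{\bar v(i)}) = -b_{\bar v(i)}m_{\bar v(i)}(Z) + \sum_{u\in\V_{\bar v(i)}}m_u(Z)$, applied to $Z = \psi(l_k)$ and to $Z = \bar Z_i$, and to compare the two sums term by term. First I would note that by construction of $l_k$ in \cref{block:central} we have $m_{\bar v(i)}(l_k) = m_{\bar v(i)}(\bar Z_i)$, and since $\psi$ preserves all node multiplicities (see \cref{lem:mod}), also $m_{\bar v(i)}(\psi(l_k)) = m_{\bar v(i)}(\bar Z_i)$; hence the $-b_{\bar v(i)}m_{\bar v(i)}$ terms coincide and the whole difference comes from the neighbours $u\in\V_{\bar v(i)}$.

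Next I would split $\V_{\bar v(i)}$ into the neighbours $u_\kappa := u_{n_0^\kappa,n_1^\kappa}$ lying on the (nondegenerate) arms, $\kappa = 1,2,3$, and the remaining neighbours (ends of legs, or, when an arm is degenerate, the neighbour towards $\Nd^*\setminus\Nd$). For the latter, \cref{lem:psi_inters} together with $m_{\bar v(i)}(l_k) = m_{\bar v(i)}(\bar Z_i)$ and $\bar Z_i = x(\bar Z_i)$ (apply \cref{lem:x_interpol}) gives $m_u(\psi(l_k)) = \lceil \beta_e m_{\bar v(i)}(\bar Z_i)/\alpha_e\rceil = m_u(\bar Z_i)$, so these contribute nothing to the difference and carry no $k$-dependence. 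For $u_\kappa$, I would use \cref{eq:mod_Nd}:
\[
  m_{u_\kappa}(\psi(l_k))
    = \frac{\beta_{n_0^\kappa,n_1^\kappa}\, m_{n_0^\kappa}(l_k)
            + m_{n_1^\kappa}(l_k)}{\alpha_{n_0^\kappa,n_1^\kappa}},
\]
and then substitute the explicit value of $m_{n_1^\kappa}(l_k)$.

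The value $m_{n_1^\kappa}(l_k)$ is handled exactly as in the proof of \cref{lem:cgeq}: from $m_{n_1^\kappa}(l_k) = \min_{p\in\bar P_i}\ell_{n_1^\kappa}(p) + k_\kappa \alpha_{n_0^\kappa,n_1^\kappa}$, the relation $\ell_{n_1^\kappa} = \alpha_{n_0^\kappa,n_1^\kappa}\ell_{u_\kappa} - \beta_{n_0^\kappa,n_1^\kappa}\ell_{n_0^\kappa}$ (\cref{block:abc_comp}), the constancy $\ell_{n_0^\kappa}\equiv m_{n_0^\kappa}(\bar Z_i)$ on $\bar P_i$ (since $\bar P_i\subset H^=_{\bar v(i)}(\bar Z_i)$), and \cref{lem:ell_min} giving $\min_{p\in\bar P_i}\ell_{u_\kappa}(p) = m_{u_\kappa}(\bar Z_i)$, one gets $m_{n_1^\kappa}(l_k) = \alpha_{n_0^\kappa,n_1^\kappa} m_{u_\kappa}(\bar Z_i) - \beta_{n_0^\kappa,n_1^\kappa} m_{n_0^\kappa}(\bar Z_i) + k_\kappa\alpha_{n_0^\kappa,n_1^\kappa}$. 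Plugging this into the displayed formula and using $m_{n_0^\kappa}(l_k) = m_{n_0^\kappa}(\bar Z_i)$ yields $m_{u_\kappa}(\psi(l_k)) = m_{u_\kappa}(\bar Z_i) + k_\kappa$. Summing over all neighbours then gives $(\psi(l_k),E_{\bar v(i)}) - (\bar Z_i,E_{\bar v(i)}) = \sum_\kappa k_\kappa$, as claimed; the same computation works verbatim in the one- and two-arm cases of \cref{block:central}, where $\kappa$ ranges only over the nondegenerate arms. The only delicate point — and the step I expect to need the most care — is the bookkeeping of exactly which neighbours of the central node are $u_\kappa$'s versus leg ends (and the case of a central trapezoid, where one edge may be a top edge), i.e. making sure every neighbour is accounted for in one of the two classes above; this is routine given \cref{prop:anatomy}\ref{it:anatomy_central_face} and \cref{prop:arm_hands} but must be checked.
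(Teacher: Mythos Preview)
Your proposal is correct and follows essentially the same approach as the paper's own proof: both compare $m_u(\psi(l_k))$ and $m_u(\bar Z_i)$ for each neighbour $u\in\V_{\bar v(i)}$, using \cref{lem:psi_inters} and \cref{lem:x_interpol} for the leg neighbours and the formula from \cref{eq:mod_Nd} together with \cref{lem:ell_min} for the arm neighbours $u_\kappa$. Your write-up is in fact more explicit than the paper's (which compresses the arm computation into a single line via the minimizing point $p_\kappa$), and your remark about handling degenerate arms and the one- or two-arm cases is a welcome clarification the paper leaves implicit.
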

\begin{proof}
By construction we have
$m_{n_1^\kappa}(l_k) = \ell(p_\kappa)
+ k_\kappa \alpha_{n_0^\kappa,n_1^\kappa}$ for each $\kappa$,
where $p_\kappa\in \bar P_i$ minimizes $\ell_{n_1^\kappa}$.
Therefore
$m_{u_{n_0^\kappa,n_1^\kappa}}(\psi(l_k))
= \ell_{u_{n_0^\kappa,n_1^\kappa}}(p_\kappa) + k_\kappa
= m_{u_{n_0^\kappa,n_1^\kappa}}(\bar Z_i) + k_\kappa$.
Furthermore, if $e\in\E_{\bar v(i)}$, then
$m_{u_e}(\psi(l_k)) = m_{u_e}(\bar Z_i)$ by \cref{lem:psi_inters} and
\cref{lem:x_interpol}. Therefore,
$(l_k, E_{\bar v(i)}) = (\bar Z_i, E_{\bar v(i)}) + \sum_\kappa k_\kappa$.
\end{proof}

\begin{lemma} \label{lem:c_0}
Assume that $\bar v(i)$ is a
central node \index{central node}
and that
$(\bar Z_i,E_{\bar v(i)}) = 0$. Then the set $S_i$ consists of a single
element $l$ satisfying $z_l^\Nd = 1$.
\end{lemma}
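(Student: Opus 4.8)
The plan is to show that $S_i=\{l_0\}$ and $z_{l_0}^{\Nd}=1$, using the parametrization of candidate elements by multi-indices $k\geq 0$ set up in \cref{block:central}. First I would record two consequences of the hypothesis $(\bar Z_i,E_{\bar v(i)})=0$. By \cref{thm:pts}\ref{it:pts_as} the set $\bar P_i$ has exactly $\max\{0,(-\bar Z_i,E_{\bar v(i)})+1\}=1$ element; call it $p$. By \cref{lem:pts_faces}, $p\in F_{\bar v(i)}(\bar Z_i)\cap\Z_{\geq 0}^3$, so $\ell_{\bar v(i)}(p)=m_{\bar v(i)}(\bar Z_i)$, and since $\bar P_i=\{p\}$ we get, for each nondegenerate arm $\kappa$, that $m_{n_1^\kappa}(l_0)=\min_{q\in\bar P_i}\ell_{n_1^\kappa}(q)=\ell_{n_1^\kappa}(p)$; hence $p$ lies on the associated line $L_0$ of that arm. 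Write $d\in\{1,2,3\}$ for the number of nondegenerate arms, so that $\delta_{\bar v(i)}-|\E_{\bar v(i)}|=|\Nd_{\bar v(i)}|=d$.

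Next I would pin down $S_i$. Let $l\in S_i$. By \cref{block:central} we have $l=l_k$ for some $k$, and $k\geq 0$ by \cref{lem:cgeq}. Since $z_{l_k}^{\Nd}\neq 0$ forces $z_{l_k,\bar v(i)}^{\Nd}\neq 0$, \cref{lem:factors}\ref{it:factors_1}, \ref{it:factors_2} or \ref{it:factors_3} (according to $d$) requires $(-\psi(l_k),E_{\bar v(i)})\geq 0$. On the other hand \cref{lem:lk_inter} together with $(\bar Z_i,E_{\bar v(i)})=0$ gives $(-\psi(l_k),E_{\bar v(i)})=-\sum_\kappa k_\kappa\leq 0$. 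Hence $\sum_\kappa k_\kappa=0$, and since $k\geq 0$ this forces $k=0$; thus $S_i\subseteq\{l_0\}$.

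Finally I would check $l_0\in S_i$ and $z_{l_0}^{\Nd}=1$. The conditions $l_0\geq\bar Z_i|_\Nd$ and $m_{\bar v(i)}(l_0)=m_{\bar v(i)}(\bar Z_i)$ hold by \cref{lem:cgeq} and the construction, so the point is to compute $z_{l_0}^{\Nd}=\prod_{n\in\Nd}z_{l_0,n}^{\Nd}$ and see it equals $1$. At the central node, \cref{lem:lk_inter} gives $(-\psi(l_0),E_{\bar v(i)})=0$, so the factor is $1$ by \cref{lem:factors}. On each nondegenerate arm $n_1^\kappa,\dots,n_{j^\kappa}^\kappa$ the sequence $r\mapsto m_{n_r^\kappa}(l_0)$ is an arm sequence by construction, so \cref{eq:m_exists} holds and \cref{rem:m_exists} yields $z_{l_0,n_r^\kappa}^{\Nd}=1$ for $0<r<j^\kappa$; and for $r=j^\kappa$ I would invoke \cref{lem:hand}, whose hypothesis is met because $p\in L_0$ has nonnegative coordinates (being in $\Z_{\geq 0}^3$), giving $z_{l_0,n_{j^\kappa}^\kappa}^{\Nd}=1$. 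Degenerate arms contribute no nodes. Hence $z_{l_0}^{\Nd}=1$, so $l_0\in S_i$, and therefore $S_i=\{l_0\}$ with $z_{l_0}^{\Nd}=1$.

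The only step I expect to require genuine care is the geometric one: identifying the unique lattice point $p\in\bar P_i$ with a point on the associated line $L_0$ of each arm so that \cref{lem:hand} applies, and making sure the boundary/congruence bookkeeping of \cref{block:central} really does force every element of $S_i$ into the family $\{l_k\}_{k\geq 0}$. Everything else is a short computation combining \cref{lem:factors}, \cref{lem:lk_inter} and \cref{lem:cgeq}.
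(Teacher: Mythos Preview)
Your proposal is correct and follows essentially the same approach as the paper's proof: parametrize candidates by $l_k$ via \cref{block:central} and \cref{lem:cgeq}, use \cref{lem:lk_inter} together with \cref{lem:factors} and the hypothesis $(\bar Z_i,E_{\bar v(i)})=0$ to force $k=0$, then verify $z_{l_0}^{\Nd}=1$ by observing that the unique point $p\in\bar P_i$ (from \cref{thm:pts}) lies on each $L_0^\kappa$ so that \cref{lem:hand} applies at the end of every arm. Your write-up is in fact more explicit than the paper's in spelling out the factors at interior arm nodes and the inequality $l_0\geq\bar Z_i|_{\Nd}$.
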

\begin{proof}
By \cref{block:central} and \cref{lem:cgeq}, we have $l=l_k$ for some
$k\geq 0$ if $l\in S_i$. By \cref{lem:lk_inter},
we have $(l_k,E_{\bar v(i)}) = \sum_\kappa k_\kappa$,
so $z_{l_k,\bar v(i)}^\Nd = 0$
unless $k = 0$ by \cref{lem:factors}.
Thus, to prove the lemma, we must show that, indeed,
$l_0 \in S_i$. For this, we must show $z_{l_0,n_{j^c}^c}^\Nd = 1$ for all
$c$.
By \cref{thm:pts}, we have $|\bar P_i| = 1$, let $p$ be the unique point
in $\bar P_i$. Let $L_s^\kappa$ be the lines associated with the arm data
$m_{n_s^\kappa}(l_0)$ for any $\kappa$. Then $p \in L_0^\kappa$, thus
$z_{l_0,n_{j^\kappa}^\kappa}^\Nd = 1$ by \cref{lem:hand}.
\end{proof}

\begin{lemma} \label{lem:c1}
Assume that $F_{\bar v(i)}$ is a
trapezoid \index{trapezoid}
and that $\bar P_i \neq \emptyset$.
If $k$ is as in \cref{block:central} with $k\geq 0$, then
$z_{l_k,n_{j^\kappa}^\kappa}^\Nd = 1$ if $j^\kappa > 0$.
\end{lemma}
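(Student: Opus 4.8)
The plan is to verify, for the end node $n_{j^\kappa}^\kappa$ of the $\kappa$-th arm, the single nonvanishing condition of \cref{lem:factors}\ref{it:factors_1}, by exhibiting an explicit lattice point via \cref{lem:hand} together with the trapezoid geometry of \cref{lem:trap_pos}. Since $j^\kappa>0$ the $\kappa$-arm is nondegenerate, so the setup of \cref{block:arm_not} applies with $\nu=0$, and $\delta_{n_{j^\kappa}^\kappa}-|\E_{n_{j^\kappa}^\kappa}|=|\Nd_{n_{j^\kappa}^\kappa}|=1$. Hence by \cref{lem:factors}\ref{it:factors_1} we have $z_{l_k,n_{j^\kappa}^\kappa}^\Nd\in\{0,1\}$, and by \cref{lem:hand}, applied to the arm sequence $\bigl(m_{n_s^\kappa}(l_k)\bigr)_s$ of $l_k$ (an arm sequence by the construction in \cref{block:central}, and $l_k\in V_Z^\Nd$), this value is $1$ as soon as one of the associated lines $L_s^\kappa$, $0\le s<j^\kappa$, carries an integral point that is ``good'', i.e. has nonnegative coordinates in the two directions transverse to the arm. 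After permuting coordinates once we may assume the arm runs in the direction of the $x_1$-axis, so the ``good'' condition reads: integral with nonnegative $x_2$- and $x_3$-coordinates, which is exactly the form in which \cref{lem:trap_pos} is phrased.

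First I would produce a point on $L_0^\kappa$. As $\bar P_i$ is finite and nonempty, choose $p_\kappa\in\bar P_i$ attaining $\min_{p\in\bar P_i}\ell_{n_1^\kappa}(p)$ (one may instead invoke \cref{lem:ell_min}). By the definition of $l_k$ in \cref{block:central} this minimum equals $m_{n_1^\kappa}(l_0)=m_{n_1^\kappa}(l_k)-k_\kappa\alpha_{n_0^\kappa,n_1^\kappa}$, while $\ell_{n_0^\kappa}(p_\kappa)=m_{n_0^\kappa}(\bar Z_i)=m_{n_0^\kappa}(l_k)$ because $p_\kappa\in H^=_{n_0^\kappa}(\bar Z_i)$. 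Let $p_1$ be the endpoint of the edge $F_{n_0^\kappa}\cap F_{n_1^\kappa}$ and $p_2$ the closest integral point on the adjacent boundary segment of the trapezoid $F_{n_0^\kappa}$, as in \cref{lem:trap_pos}. All vertices of an empty trapezoid are regular, so $p_1$ is a regular vertex of $F_{n_0^\kappa}$ and \cref{prop:content} gives $\ell_{n_1^\kappa}(p_2-p_1)=\alpha_{n_0^\kappa,n_1^\kappa}$; also $\ell_{n_0^\kappa}(p_2-p_1)=0$ since $p_1,p_2\in F_{n_0^\kappa}$.

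Then I would set $q=p_\kappa+k_\kappa(p_2-p_1)$ and check it does the job. It is integral, and the two identities above give $\ell_{n_0^\kappa}(q)=m_{n_0^\kappa}(l_k)$ and $\ell_{n_1^\kappa}(q)=m_{n_1^\kappa}(l_k)$, so $q$ lies on the associated line $L_0^\kappa=L_\nu^\kappa$ of the arm sequence of $l_k$. Moreover $q$ is ``good'': $p_\kappa\in\Z^3_{\ge0}$, $k_\kappa\ge0$, and by \cref{lem:trap_pos} the vector $p_2-p_1$ has nonnegative $x_2$- and $x_3$-coordinates, whence $q$ does as well. Feeding $q$ into \cref{lem:hand}, condition \ref{lem:hand_exists} holds with $s=0$, so $z_{l_k,n_{j^\kappa}^\kappa}^\Nd\ne0$, hence $=1$.

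The main obstacle is bookkeeping rather than conceptual: keeping the two coordinate conventions of \cref{lem:hand} (arm along $x_3$) and \cref{lem:trap_pos} (arm along $x_1$) consistent when feeding one into the other; making sure $p_1$ is genuinely a regular vertex of $F_{n_0^\kappa}$ so that \cref{prop:content} yields $\alpha_1=\alpha_{n_0^\kappa,n_1^\kappa}$ exactly (not a proper multiple); and confirming that the line $L_0^\kappa$ of \cref{def:assoc} attached to the arm sequence of $l_k$ is precisely $\{\ell_{n_0^\kappa}=m_{n_0^\kappa}(l_k),\ \ell_{n_1^\kappa}=m_{n_1^\kappa}(l_k)\}$. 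None of these requires ideas beyond \cref{lem:factors}, \cref{lem:hand}, \cref{lem:trap_pos} and \cref{prop:content}.
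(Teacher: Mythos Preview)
Your proof is correct and follows essentially the same approach as the paper: pick $p\in\bar P_i$ realizing the minimum of $\ell_{n_1^\kappa}$, shift by $k_\kappa(p_2-p_1)$ using the trapezoid edge vectors of \cref{lem:trap_pos}, verify via \cref{prop:content} that the resulting point lies on $L_0^\kappa$ with the required nonnegative transverse coordinates, and conclude with \cref{lem:hand}. Your writeup is in fact slightly more explicit than the paper's, spelling out the regularity of the trapezoid vertex and the coordinate-convention bookkeeping between \cref{lem:hand} and \cref{lem:trap_pos}.
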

\begin{proof}
Let $L_0^\kappa,\ldots, L^\kappa_{j^\kappa-1}$
be the lines associated with the arm sequence
$m_{n_0^\kappa}(l_k),\ldots,m_{n_{j^\kappa}^\kappa}(l_k)$.
Let $p_1$ be one of the
endpoints of the segment $F_{n_0^\kappa}\cap F_{n_1^\kappa}$, and $p_2$
the closest integral point to $p_1$ on the adjacent edge of $F_{n_0^\kappa}$
with endpoint $p_1$. Take
$p\in\bar P_i$ so that $m_{n_1^\kappa}(l_k) = \ell_{n_1^\kappa}(p)$ and
set $p_0 = p+k_\kappa(p_2-p_1)$. Take $\kappa',\kappa''\in\Z$ so that
$\{\kappa,\kappa',\kappa''\} = \{1,2,3\}$. By \cref{lem:trap_pos}, $p_0$ has
nonnegative $x_{\kappa'}$ and $x_{\kappa''}$ coordinates.
Furthermore, $p_0\in L^\kappa_0$ because $\ell_{n_0^\kappa}(p_2-p_1) = 0$ and
$\ell_{n_1^\kappa}(p_2-p_1) = \alpha_{n_0^\kappa,n_1^\kappa}$.
The lemma now follows from \cref{lem:hand}.
\end{proof}

\begin{lemma} \label{lem:central_node}
If $\bar v(i)$ is a
central node, \index{central node}
then
$\sum_{l\in S_i} = \max\{0, (-\bar Z_i,E_{\bar v(i)}) + 1\}$.
\end{lemma}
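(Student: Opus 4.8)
\textbf{Proof plan for \cref{lem:central_node}.}
The plan is to split into the two cases distinguished by the sign of $(\bar Z_i, E_{\bar v(i)})$, exactly as the preceding lemmas have been set up. If $(\bar Z_i, E_{\bar v(i)}) > 0$, then \cref{lem:count_empty} gives $S_i = \emptyset$, so $\sum_{l\in S_i} z_l^\Nd = 0 = \max\{0, (-\bar Z_i, E_{\bar v(i)})+1\}$ and there is nothing more to do. If $(\bar Z_i, E_{\bar v(i)}) = 0$, then \cref{lem:c_0} says $S_i$ is a single element $l$ with $z_l^\Nd = 1$, and since $\max\{0, (-\bar Z_i, E_{\bar v(i)})+1\} = 1$ in this case, we are again done. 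So the whole content reduces to handling $(\bar Z_i, E_{\bar v(i)}) < 0$, i.e. where the right-hand side equals $(-\bar Z_i, E_{\bar v(i)})+1 = |\bar P_i| > 0$ by \cref{thm:pts}\ref{it:pts_as}.

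For the remaining case I would follow the strategy outlined in \cref{block:SP_bij}: parametrize $S_i$ by the vectors $l_k$ from \cref{block:central}. By \cref{lem:cgeq}, the elements of $S_i$ among the $l_k$ are exactly those with $k\geq 0$ (the condition $z_{l_k}^\Nd\neq 0$ will force $k$ into a finite range anyway). By \cref{lem:lk_inter} we have $(\psi(l_k), E_{\bar v(i)}) = (\bar Z_i, E_{\bar v(i)}) + \sum_\kappa k_\kappa$, and since $\bar v(i)$ is central with $\delta_{\bar v(i)} - |\E_{\bar v(i)}| = |\Nd_{\bar v(i)}| = 3$, \cref{lem:factors}\ref{it:factors_3} gives
\[
  z_{l_k, \bar v(i)}^\Nd =
  \begin{cases}
    1 & \text{if } \sum_\kappa k_\kappa = -(\bar Z_i, E_{\bar v(i)}), \\
   -1 & \text{if } \sum_\kappa k_\kappa = -(\bar Z_i, E_{\bar v(i)})+1, \\
    0 & \text{otherwise.}
  \end{cases}
\]
(If $F_{\bar v(i)}$ is a central triangle one coordinate, say $k_3$, is absent and $\sum$ runs over $\kappa=1,2$; if it is a trapezoid all three are present.) For the remaining nodes $n_{j^\kappa}^\kappa$ at the ends of the arms, I would invoke \cref{lem:c1} (in the trapezoid case, using \cref{lem:trap_pos} as that lemma does) to conclude $z_{l_k, n_{j^\kappa}^\kappa}^\Nd = 1$ whenever $j^\kappa>0$ and $k\geq 0$; in the central triangle case the analogous statement follows from \cref{lem:hand} since the relevant associated line $L_0^\kappa$ still meets the positive octant (one needs the point $p + k_\kappa(p_2-p_1)$ to lie in $L_0^\kappa\cap\R^3_{\geq0}$, which is where \cref{lem:trap_pos}, or its triangle analogue, enters). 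For the interior nodes of each arm, $z_{l_k, n_r^\kappa}^\Nd = 1$ automatically because $m_{n_r^\kappa}(l_k)$ is an arm sequence (\cref{rem:m_exists}). Hence $z_{l_k}^\Nd = z_{l_k,\bar v(i)}^\Nd$ for all $k\geq 0$.

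It then remains to count. The elements of $S_i$ with $z_{l_k}^\Nd \neq 0$ are those $k\geq 0$ with $\sum_\kappa k_\kappa \in \{-(\bar Z_i, E_{\bar v(i)}), -(\bar Z_i, E_{\bar v(i)})+1\}$, contributing $+1$ and $-1$ respectively. Writing $N = -(\bar Z_i, E_{\bar v(i)}) = |\bar P_i| - 1 \geq 0$, the signed count is
\[
  \sum_{l\in S_i} z_l^\Nd
    = \#\{k\geq 0 : \textstyle\sum_\kappa k_\kappa = N\}
      - \#\{k\geq 0 : \textstyle\sum_\kappa k_\kappa = N+1\}.
\]
For the trapezoid case these are $\binom{N+2}{2} - \binom{N+3}{2} = -(N+2)$, which is \emph{not} $N+1$ — so the naive count must be corrected: not every $l_k$ with $k\geq 0$ and $z_{l_k}^\Nd\neq 0$ actually lies in $S_i$, because membership also requires $\S_Z(l_k)\neq\emptyset$, equivalently $\psi(l_k)\in\Stop$. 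I expect the main obstacle to be precisely pinning down which $l_k$ survive this constraint: one must show that the condition $(\psi(l_k), E_n)\leq 0$ at the \emph{other} nodes $n$ (not $\bar v(i)$ and not the arm-ends, but the arm-interior nodes, where $z^\Nd=1$ does not by itself guarantee the Lipman inequality) cuts the index set $\{k\geq 0\}$ down to exactly the $\bar P_i$-indexed family of \cref{block:SP_bij}, so that $S_i = \amalg_{p\in\bar P_i} S_{i,p}$ with each $S_{i,p}$ contributing net $1$ via \cref{eq:count_1} and the leftover $S_i'$ contributing $0$ via \cref{eq:count_0}. Concretely I would: (i) show $S_{i,p}$ is nonempty for each $p\in\bar P_i$ by exhibiting the arm sequence through $p$ (using \cref{lem:high_pts}) and checking it gives an element of $\S_Z$; (ii) show $z_l^\Nd$ telescopes to $1$ on each $S_{i,p}$ using the sign pattern above restricted to the sub-family over a fixed $p$; and (iii) show $S_i' = \emptyset$ or that its $z^\Nd$-sum vanishes by the same $\pm1$ cancellation. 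The bookkeeping in step (ii)/(iii) — matching the $\{N, N+1\}$ two-term alternating contribution at the central node against the geometry of $\bar P_i$ — is the delicate part, and it is where the choice in the ratio test (maximizing $(Z,E_n)$) and \cref{cor:trap_top} will be needed to rule out spurious configurations.
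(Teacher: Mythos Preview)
Your reduction to $(\bar Z_i,E_{\bar v(i)})<0$ via \cref{lem:count_empty} and \cref{lem:c_0} is correct, and the idea of parametrizing $S_i$ by the $l_k$ from \cref{block:central} is exactly what the paper does. But there are two genuine errors that derail the rest.

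First, a sign mistake. From \cref{lem:lk_inter}, $(-\psi(l_k),E_{\bar v(i)}) = N - \sum_\kappa k_\kappa$ with $N = -(\bar Z_i,E_{\bar v(i)})$, so by \cref{lem:factors}\ref{it:factors_3} the value $-1$ occurs at $\sum_\kappa k_\kappa = N-1$, not $N+1$. With the correct sign, the three-arm count is $\binom{N+2}{2}-\binom{N+1}{2}=N+1$, and there is no deficit to explain. Your entire ``rescue'' via the $S_{i,p}$ decomposition, the Lipman condition $\psi(l_k)\in\Stop$, and \cref{cor:trap_top} is chasing a problem that does not exist.

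Second, and more structurally, you conflate the shape of $F_{\bar v(i)}$ with the number of nondegenerate arms. The dimension of the index $k$ in \cref{block:central} is the number of nondegenerate arms, and $\delta_{\bar v(i)}-|\E_{\bar v(i)}|=|\Nd_{\bar v(i)}|$ equals that same number; it is \emph{not} determined by whether $F_{\bar v(i)}$ is a triangle or a trapezoid. The paper handles the three possibilities $|\Nd_{\bar v(i)}|=1,2,3$ separately, using parts \ref{it:factors_1}, \ref{it:factors_2}, \ref{it:factors_3} of \cref{lem:factors} respectively, and each count gives $N+1$. What the triangle/trapezoid distinction \emph{does} control is the applicability of \cref{lem:c1}: that lemma is stated only for trapezoids, and there is no ``triangle analogue'' of \cref{lem:trap_pos} available. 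The paper deals with this by first proving that if $F_{\bar v(i)}$ is a central triangle then $(\bar Z_i,E_{\bar v(i)})\geq 0$ always (via $(\bar Z_i,E_{\bar v(i)})\geq \bar r_i(Z_K-E,E_{\bar v(i)})=-\bar r_i$ and a short case check on $\bar r_i$), so that the case $(\bar Z_i,E_{\bar v(i)})<0$ only arises for trapezoids. You are missing this step, and without it you cannot justify $z_{l_k,n_{j^\kappa}^\kappa}^\Nd=1$ for all $k\geq 0$.
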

\begin{proof}
The case when $(\bar Z_i,E_{\bar v(i)}) \geq 0$ is covered by
\cref{lem:count_empty,lem:c_0}. We start by showing that if $F_{\bar v(i)}$
is a triangle, then this is indeed the case. We have
$(\bar Z_i,E_{\bar v(i)}) \geq \bar r_i ((Z_K-E),E_{\bar v(i)}) = -\bar r_i$,
because $\bar Z_i \geq \bar r_i(Z_K-E)$ and
$m_{\bar v(i)}(\bar Z_i) = \bar r_im_{\bar v(i)}(Z_K-E)$.
If $\bar r_i < 1$, then the statement follows.
If $\bar r_i = 1$, then
$\bar P_i \subset (F_{\bar v(i)} - (1,1,1)) \cap \Z_{\geq 0}^3 = \emptyset$
and so $(\bar Z_i,E_{\bar v(i)}) > 1$ by \cref{thm:pts}.

We therefore assume that $F_{\bar v(i)}$ is a trapezoid and that
$(\bar Z_i, E_{\bar v(i)}) < 0$. In that case, if $k\geq 0$,
we have $z_{l_k,n}^\Nd = 1$ for all $\Nd\ni n\neq \bar v(i)$.
Writing $n = n_r^\kappa$
with $r > 0$, this follows from construction if $r < j^\kappa$, and from
\cref{lem:c1} if $r = j^\kappa$.
We therefore have $S_i = \set{l_k}{k\geq 0,\,z_{l_k,\bar v(i)}^\Nd \neq 0}$.

If $\Gamma(f)$ has exactly one nondegenerate arm, then
$\Nd_{\bar v(i)} = \{n_1^1\}$. By \cref{lem:lk_inter,lem:factors}, we have
$z_{l_k,\bar v(i)}^\Nd = 1$ if $k \leq (-\bar Z_i, E_{\bar v(i)})$, and
$z_{l_k,\bar v(i)}^\Nd = 0$ otherwise. Therefore,
\[
  \sum_{l\in S_i} z_l^\Nd
    =
	  \left|
	    \set{k\in\Z}{0\leq k \leq (-\bar Z_i,E_{\bar v(i)})}
      \right|
	= \max\{0, (-\bar Z_i,E_{\bar v(i)}) + 1\}.
\]

If $\Gamma(f)$ has two nondegenerate arms, then, for $k = (k_1, k_2)$,
we have $z_{l_k,\bar v(i)}^\Nd = 1$ if $k_1+k_2 = (-\bar Z_i,E_{\bar v(i)})$
and $z_{l_k}^\Nd = 0$ otherwise. Therefore,
\[
  \sum_{l\in S_i} z_l^\Nd
    =
	  \left|
	    \set{k\in\Z^2_{\geq0}}{k_1+k_2 = (-\bar Z_i,E_{\bar v(i)})}
      \right|
	= \max\{0,(-\bar Z_i,E_{\bar v(i)}) + 1\}.
\]

If $\Gamma(f)$ has three nondegenerate arms, then
we have $z_{l_k,\bar v(i)}^\Nd = 1$ if
$\sum_\kappa k_\kappa = (-\bar Z_i,E_{\bar v(i)})$,
$z_{l_k,\bar v(i)}^\Nd = -1$ if
$\sum_\kappa k_\kappa = (-\bar Z_i,E_{\bar v(i)})-1$ and
$z_{l_k}^\Nd = 0$ otherwise. Therefore,
\[
\begin{split}
  \sum_{l\in S_i} z_l^\Nd
    =& 
	  \left|
	    \set{k\in\Z^3_{\geq 0}}
		    {\sum_\kappa k_\kappa = (-\bar Z_i,E_{\bar v(i)})}
      \right| \\
	 & -
	  \left|
	    \set{k\in\Z^3_{\geq 0}}
		{\sum_\kappa k_\kappa = (-\bar Z_i,E_{\bar v(i)})-1}
      \right|\\
	=& \max\{0, (-\bar Z_i,E_{\bar v(i)}) + 1\}.
\end{split}
\]
\end{proof}

\begin{rem}
It is simple to verify that in the case when $\Gamma(f)$ has three
nondegenerate
arms, \index{arm}
then, for each $p\in \bar P_i$, there is a unique
element $l_p \in S_{i,p}$ and that $z_{l_p}^\Nd = 1$. Therefore
\cref{eq:count_0,eq:count_1} do indeed hold in this case.
This is, however, not generally true in the case when $\Gamma(f)$
contains a trapezoid, and only one or two arms.
\end{rem}

\subsection{Case: One or two nondegenerate arms}

In this subsection we assume that the diagram $\Gamma(f)$ has one or two
nondegenerate
arms. \index{arm}
We will assume given a fixed step $i$ in the computation sequence
and that $\bar v(i)$ is not the central node.

\begin{lemma} \label{lem:onetwo_arms}
We have $\sum_{l\in S_i} z_l^\Nd = \max\{0,(-\bar Z_i,E_{\bar v(i)})\}$.
\end{lemma}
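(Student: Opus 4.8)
The strategy is to reduce the claim, via \cref{block:SP_bij}, to the statement $\sum_{l\in S_{i,p}}z_l^\Nd=1$ for each $p\in\bar P_i$, and then to exploit that when $\Gamma(f)$ has at most two nondegenerate arms \emph{every} node $n$ satisfies $\delta_n-|\E_n|\le 2$ (the central node $n_0$ has $\delta_{n_0}-|\E_{n_0}|$ equal to the number of nondegenerate arms, and any other node lies on an arm and has one or two node–neighbours). Hence by \cref{lem:factors}\ref{it:factors_1},\ref{it:factors_2} each factor $z_{l,n}^\Nd$ lies in $\{0,1\}$, so $z_l^\Nd\in\{0,1\}$ for all $l\in V_Z^\Nd$, and for $l\in S_i$ one has $z_l^\Nd=1$. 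Therefore $\sum_{l\in S_i}z_l^\Nd=|S_i|$, and since $\bar v(i)$ is not central, \cref{lem:arm_init} and \cref{lem:hand_init} give $S_i=\amalg_{p\in\bar P_i}S_{i,p}$, so it suffices to show $|S_{i,p}|=1$ for each $p$; summing and invoking \cref{thm:pts} then yields $|S_i|=|\bar P_i|$, the claimed value. First I would dispose of the trivial cases: if $(\bar Z_i,E_{\bar v(i)})>0$ then $S_i=\emptyset$ by \cref{lem:count_empty} while the right–hand side vanishes by \cref{thm:pts}; and if $\bar P_i=\emptyset$ then $S_i=\emptyset$ by the above decomposition, again matching the right–hand side. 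So from now on $\bar P_i\neq\emptyset$ and $(\bar Z_i,E_{\bar v(i)})\le 0$.

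Fix $p\in\bar P_i$ and write $\bar v(i)=n_r^\kappa$ on the unique arm carrying it, $1\le r\le j^\kappa$. By \cref{lem:arm_init}/\cref{lem:hand_init} the point attached to any $l\in S_{i,p}$ is $p$ itself, and by \cref{lem:high_pts} the whole arm sequence of $l$ along arm $\kappa$ — in particular the value $m_{n_0}(l)$ at the central node — is forced to be $\ell_{n_s^\kappa}(p)$. Consequently the multiplicities of $\psi(l)$ at all neighbours of $\bar v(i)$ are determined by $p$ alone: arm–neighbours through \cref{eq:mod_Nd}, leg–neighbours through \cref{lem:psi_inters}. Using $-b_{\bar v(i)}\ell_{\bar v(i)}+\sum_{u\in\V_{\bar v(i)}}\ell_u\equiv 0$ and the description of $\bar P_i$ in \cref{lem:pts_faces} (so that $p$ is a genuine lattice point of the diagram face), I would check that $(-\psi(l),E_{\bar v(i)})=0$ when $r<j^\kappa$, respectively $\ge 0$ when $r=j^\kappa$, so that the $\bar v(i)$–factor equals $1$. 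If $\Gamma(f)$ has exactly one nondegenerate arm, this already determines $m_n(l)$ for every $n\in\Nd$, so $|S_{i,p}|\le 1$; that this $l$ indeed lies in $S_i$ follows because the interior arm–nodes contribute $1$ by the arm–sequence equation, the hand $n_{j^\kappa}^\kappa$ contributes $1$ since $p\in L_r^\kappa\cap\R_{\ge 0}^3$ triggers \cref{lem:hand}\ref{lem:hand_exists}$\Rightarrow$\ref{lem:hand_one}, and the central node contributes $1$ by \cref{lem:factors}\ref{it:factors_1} as $(-\psi(l),E_{n_0})\ge 0$ (again because $p\ge 0$).

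In the two–arm case the only remaining freedom is the value $m_{n_1^{\kappa'}}(l)$ on the other nondegenerate arm $\kappa'$, which ranges over $\{m_0^{\kappa'}+k\,\alpha_{n_0,n_1^{\kappa'}}:k\ge 0\}$ for the minimal admissible base value $m_0^{\kappa'}$; parametrising $l=l_k$ as in \cref{block:central}, the interior arm–$\kappa'$ nodes and the central node once more give factor $1$, so $z_{l_k}^\Nd\neq 0$ is governed solely by the hand $n_{j^{\kappa'}}^{\kappa'}$. By \cref{lem:hand} this factor is $1$ exactly when a line $L_s^{\kappa'}$ meets the nonnegative octant; and by a monotonicity argument for the ratios $m_{n_s^{\kappa'}}(l_k)/m_{n_s^{\kappa'}}(Z_K-E)$ along the arm, in the spirit of \cref{lem:arm_mono} and \cref{lem:ineq_Zi} (together with \cref{lem:trap_pos} to handle a trapezoidal central face), the set of admissible $k$ is a single point. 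Hence $|S_{i,p}|=1$, completing the count.

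The main obstacle will be the last step: pinning down that among $(l_k)_{k\ge0}$ \emph{exactly one} has nonvanishing hand–factor on arm $\kappa'$, rather than an interval or none. This requires carefully tracking the boundary data $\epsilon_{i,\cdot}$ at the first node $n_1^{\kappa'}$, distinguishing whether $F_{n_0}$ is a triangle or a trapezoid (and, in the trapezoidal case, whether $F_{n_0}\cap F_{n_1^{\kappa'}}$ is the top edge, cf.\ \cref{cor:trap_top}), and combining the convexity of the ratio sequence along arm $\kappa'$ with the geometric criterion of \cref{lem:hand} to exclude both vanishing and multiplicity; the bookkeeping of these boundary conditions is where the proof is delicate.
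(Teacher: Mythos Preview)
Your overall strategy is the paper's: split $S_i=\amalg_{p\in\bar P_i}S_{i,p}$ via \cref{lem:arm_init,lem:hand_init}, and then show $\sum_{l\in S_{i,p}}z_l^\Nd=1$. Your observation that with at most two nondegenerate arms every node satisfies $\delta_n-|\E_n|\le 2$, so that each $z^\Nd_{l,n}\in\{0,1\}$ and hence $\sum_{l\in S_i}z_l^\Nd=|S_i|$, is correct and a pleasant simplification. The one–arm argument is essentially the paper's \cref{lem:one_arm}.

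The gap is in the two–arm step. You write that, after fixing arm $\kappa$ and the central node from $p$, ``the central node once more give[s] factor $1$'' for every $l_k$, so that admissibility is ``governed solely by the hand $n^{\kappa'}_{j^{\kappa'}}$''. This is false: since $\delta_{n_0}-|\E_{n_0}|=2$, \cref{lem:factors}\ref{it:factors_2} forces $(-\psi(l),E_{n_0})=0$, and this single linear equation in $m_{n_1^{\kappa'}}(l)$ already pins down $k$ (equivalently, determines $m_1^{\kappa'}$ uniquely; this is exactly how the paper defines $m_1^2$ in \cref{lem:two_arms}). Conversely, the hand criterion does \emph{not} single out one $k$: already for a trapezoidal central face, the argument of \cref{lem:c1} shows $z^\Nd_{l_k,n_{j^{\kappa'}}^{\kappa'}}=1$ for \emph{all} $k\ge 0$, so your proposed ``monotonicity/hand'' selection cannot work. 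The genuine content of the two–arm case is the opposite of what you describe: uniqueness of $l_p$ is immediate from the $n_0$–equation, and the nontrivial check is that this forced $l_p$ actually lies in $S_i$, namely $l_p\ge\bar Z_i|_\Nd$ (via $m_1^{\kappa'}\ge\ell_{n_1^{\kappa'}}(p)$ and \cref{lem:ineq_Zi}) and $z^\Nd_{l_p,n_{j^{\kappa'}}^{\kappa'}}=1$ (by producing a point of $L_0^{\kappa'}$ with nonnegative transverse coordinates, which the paper does separately for central triangle and trapezoid). You also do not address the central–edge configuration of \cref{prop:anatomy}\ref{it:anatomy_central_edge}, where there is no central node, the two arms overlap on $c$ nodes, and uniqueness of $l_p$ comes instead from the identification $n_s^1=n_{c-s}^2$ fixing $m_0^2,m_1^2$ directly.
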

\begin{proof}
This will follow from \cref{lem:one_arm,lem:two_arms} and
\cref{lem:arm_init,lem:hand_init}, as well as
\cref{thm:pts}.
\end{proof}

\begin{lemma} \label{lem:one_arm}
Assume that the diagram $\Gamma(f)$ contains exactly one nondegenerate
arm and that $F_{\bar v(i)}$ is not a central face. Then, for each
$p\in \bar P_i$, the set $S_{i,p}$ contains a unique element $l_p$
and $z_{l_p}^\Nd = 1$.
\end{lemma}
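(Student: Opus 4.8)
The plan is to show that the map $S_i \to \bar P_i$, $l \mapsto p$, supplied by \cref{lem:arm_init} and \cref{lem:hand_init}, is a bijection and that $z_l^\Nd = 1$ for every $l \in S_i$; this is exactly the statement that each $S_{i,p}$ is a singleton $\{l_p\}$ with $z_{l_p}^\Nd = 1$. Throughout I write $\bar v(i) = n_r$ on the unique nondegenerate arm $n_\nu,\ldots,n_j$ (notation of \cref{block:arm_not}), so that $\Nd$ is this arm, together with the central node $n_0 = n_\nu$ when $\nu = 0$; in that case $n_0$ has only $n_1$ as a neighbour in $\Nd$, so every leg end $e$ of $n_0$ has $\ell_{n_e^*}$ equal to a coordinate function (its $n_e^*$ cannot be a second node of $\Nd$). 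Since $\bar v(i)$ is not central, $1 \le r \le j$; I will describe the case $\nu < r < j$ (using \cref{lem:arm_init}) and $r = j$ (using \cref{lem:hand_init}), the remaining position $r = \nu = 1$ being handled by reversing the arm so that it becomes the far end.

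First I would establish uniqueness. If $l \in S_{i,p}$, then $l \in V_Z^\Nd$, so the congruences of \cref{lem:mod} hold, and $z_l^\Nd \ne 0$, so $z_{l,n_s}^\Nd = 1$ at every interior node $n_s$, which by \cref{rem:m_exists} means the relation \cref{eq:m_exists} holds there; thus $s \mapsto m_{n_s}(l)$ is an arm sequence in the sense of \cref{def:arm_seq}. Two consecutive of its values are forced by $p$: namely $m_{n_r}(l) = m_{n_r}(\bar Z_i) = \ell_{n_r}(p)$ (as $p \in \bar P_i \subset F_{n_r}(\bar Z_i)$ by \cref{lem:pts_faces}) and $m_{n_{r-1}}(l) = \ell_{n_{r-1}}(p)$ since $n_{r-1} \in \Nd_{\bar v(i)}$ (or, when $r = \nu = 1$, the pair $n_r, n_{r+1}$). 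Since an arm sequence is determined by two consecutive values (\cref{lem:m_exists}) and $\Nd$ consists of the arm (plus $n_0 = n_\nu$), this determines $l$; call the resulting candidate $l_p \in V_Z^\Nd$.

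For existence I would check $l_p \in S_i$. By \cref{lem:high_pts}, $p$ is the associated vertex $p_s$ for all $s \le r$, so $m_n(l_p) = \ell_n(p)$ for $n \in \Nd_{\bar v(i)}$ and $m_{\bar v(i)}(l_p) = m_{\bar v(i)}(\bar Z_i)$, as needed for $l_p \in S_{i,p}$. For $l_p \ge \bar Z_i|_\Nd$ I would apply \cref{lem:ineq_Zi}: at $n_r$ one has $m_{n_r}(l_p) = m_{n_r}(\bar Z_i) = \bar r_i\, m_{n_r}(Z_K - E)$, while $m_{n_{r\pm 1}}(l_p) = \ell_{n_{r\pm 1}}(p) \ge m_{n_{r\pm 1}}(\bar Z_i) \ge \bar r_i\, m_{n_{r\pm 1}}(Z_K - E)$, the first inequality from \cref{lem:pts_faces} and \cref{lem:x_interpol} and the second from \cref{lem:ceil}; these are the ratio hypotheses of \cref{lem:ineq_Zi}, giving $m_{n_s}(l_p) \ge m_{n_s}(\bar Z_i)$ for all $s$ (including $s = 0$ when $\nu = 0$). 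It then remains to compute $z_{l_p}^\Nd = \prod_{n \in \Nd} z_{l_p,n}^\Nd$ and show each factor is $1$, which also gives $z_{l_p}^\Nd \ne 0$. At an interior node the arm sequence satisfies \cref{eq:m_exists}, so the factor is $1$ by \cref{rem:m_exists}. At $n_j$, \cref{lem:high_pts} places the good integral point $p$ on the associated line $L_{\min(r,j-1)}$, so the equivalent conditions of \cref{lem:hand} hold and the factor is $1$. When $\nu = 0$, the node $n_0$ has $\delta_{n_0} - |\E_{n_0}| = 1$, so by \cref{lem:factors}\ref{it:factors_1} it suffices to see $(-\psi(l_p), E_{n_0}) \ge 0$; by \cref{eq:nbr_sum} this equals $\sum_{u \in \V_{n_0}} \bigl(\ell_u(p) - m_u(\psi(l_p))\bigr)$, where the term for $u = u_{n_0,n_1}$ vanishes because $p \in L_0$, and for a leg end $e$ one has $\ell_{n_e^*}(p) \ge 0$ (it is a coordinate of $p \in \Z_{\ge 0}^3$), hence $\ell_{u_e}(p) \ge \lceil \beta_e m_{n_0}(l_p)/\alpha_e\rceil = m_{u_e}(\psi(l_p))$ by \cref{lem:psi_inters}. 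This shows $S_{i,p} = \{l_p\}$ with $z_{l_p}^\Nd = 1$.

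The main obstacle will be the case bookkeeping: verifying the hypotheses of \cref{lem:high_pts}, \cref{lem:hand}, \cref{lem:arm_init} and \cref{lem:hand_init} for each position of $\bar v(i)$ (interior, the far end $n_j$, or the near end when $\nu = 1$, handled by reversing the arm), and for both the central triangle and central trapezoid shapes of $F_{n_0}$. The ratio-monotonicity along the arm (\cref{lem:arm_mono}, entering via \cref{lem:ineq_Zi}) is what makes $l_p \ge \bar Z_i|_\Nd$ hold, and the fact that every boundary edge of a central face other than the one where the arm attaches lies on a coordinate hyperplane is what forces the central-node factor to equal $1$.
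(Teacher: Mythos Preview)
Your proposal is correct and follows essentially the same approach as the paper's own proof: construct the arm sequence from the two consecutive values $\ell_{n_{r-1}}(p),\ell_{n_r}(p)$ (or $\ell_{n_r}(p),\ell_{n_{r+1}}(p)$ at an endpoint), define $l_p$ by these values, verify $l_p\ge\bar Z_i|_\Nd$ via \cref{lem:ineq_Zi}, and check the factors $z_{l_p,n}^\Nd$ using \cref{lem:hand} at $n_j$ and \cref{lem:factors}\ref{it:factors_1} together with \cref{lem:psi_inters} at $n_\nu$. The paper handles the endpoint case $r=\nu=1$ directly (defining the arm sequence from $m_1=\ell_{n_1}(p)$, $m_2=\ell_{n_2}(p)$) rather than by your ``reversing the arm'' remark, and it leaves the uniqueness of $l_p$ implicit in \cref{lem:m_exists} and \cref{lem:arm_init}/\cref{lem:hand_init} where you spell it out, but these are only differences of presentation.
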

\begin{proof}
As in \cref{block:arm_not}, assume that $\Nd = \{n_\nu, \ldots, n_j\}$.
We can then assume that $\bar v(i) = n_r$ for some $r>0$.
Fix a $p\in \bar P_i$.
If $r>\nu$, let $m_\nu,\ldots,m_j$ be the arm sequence constructed
in \cref{lem:high_pts}, with the requirement $m_r = \ell_{n_r}(p)$
and $m_{r-1} = \ell_{n_{r-1}}(p)$. If $r = \nu = 1$, let $m_1,\ldots, m_j$
be the arms sequence defined by requiring $m_1 = \ell_{n_1}(p)$
and $m_2 = \ell_{n_2}(p)$, which exists and is unique by \cref{lem:m_exists}.
We then have an element $l_p\in V_Z^\Nd$ with $m_{n_s}(l) = m_s$ for all $s$.

The inequality $l_p \geq \bar Z_i$ follows from \cref{lem:ineq_Zi}.

Let $L_s$ for $s=\nu,\ldots,j-1$ be the lines associated with the arm sequence
$m_\nu,\ldots, m_j$. We then have $p\in L_{r-1}$ if $r>\nu$ and $p\in L_r$
if $r<j$. By \cref{lem:hand} we therefore get $z_{l_p,n_j}^\Nd = 1$.
In order to show $z_{l_p,n_\nu}^\Nd = 1$,
we must, by \cref{lem:factors}, prove $(\psi(l_p),E_{n_\nu}) \leq 0$.
We have $m_{n_\nu}(l_p) = \ell_{n_\nu}(p)$ by \cref{lem:high_pts}.
Since $-b_{n_\nu}\ell_{n_\nu}(p) + \sum_{u\in\V_{n_\nu}}\ell_u(p) = 0$, 
it is enough to show $m_u(\psi(l)) \leq \ell_u(p)$ for $u\in \V_{\bar v(i)}$.
In the case $u = u_{n_\nu,n_{\nu+1}}$, we have
$m_u = \beta_{n_\nu,n_{\nu+1}} m_{n_\nu} + m_{n_{\nu+1}} =
\beta_{n_\nu,n_{\nu+1}} \ell_{n_\nu}(p) + \ell_{n_{\nu+1}}(p)
= \ell_u(p)$ by \cref{lem:mod} and the definition of $\ell_u$.
If, however, $u = u_e$ for some $e\in\E_{n_\nu}$, then
\[
  m_u(\psi(l)) = \left\lceil \frac{\beta_e m_{n_0}}{\alpha_e} \right\rceil
    \leq \frac{\beta_e \ell_{n_0}(p) + \ell_{n_e^*}(p)}{\alpha_e}
	=     \ell_u(p).
\]
by \cref{lem:psi_inters} and the fact that
$\beta_e \ell_{n_0}(p) + \ell_{n_e^*}(p) \equiv 0\,(\mod \alpha_e)$
and $\ell_{n_e^*}(p) \geq 0$ since $p\in\Z_{\geq 0}^3$.
\end{proof}

\begin{lemma} \label{lem:two_arms}
Assume that the diagram $\Gamma(f)$ contains exactly two nondegenerate
arms and that $\bar v(i)$ is not a central face. Then, for each
$p\in \bar P_i$, the set $S_{i,p}$ contains a unique element $l_p$
and $z_{l_p}^\Nd = 1$.
\end{lemma}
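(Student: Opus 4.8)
The plan is to treat \cref{lem:two_arms} as a two-armed analogue of \cref{lem:one_arm}, reusing the machinery of arm sequences and their associated vertices. Fix $p\in\bar P_i$. The node $\bar v(i)$ lies on one of the two nondegenerate arms; call this arm $n_1,\ldots,n_j$ in the notation of \cref{block:arm_not}, so $\bar v(i)=n_r$ for some $r\geq 1$ (with $r>0$ since $\bar v(i)$ is not the central node $n_0$), and call the other arm $n_1',\ldots,n_{j'}'$. Unlike the one-arm case, here $\Nd$ contains $n_0$ together with both arms, so after pinning down the multiplicities along the arm through $\bar v(i)$ (exactly as in \cref{lem:one_arm}, via \cref{lem:high_pts} when $r>\nu$ or \cref{lem:m_exists} when $r=\nu=1$), I still have to produce the multiplicities on $n_0$ and on the second arm.

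First I would use \cref{lem:high_pts} to build an arm sequence $m_\nu,\ldots,m_j$ on the first arm with $m_r=\ell_{n_r}(p)$, $m_{r-1}=\ell_{n_{r-1}}(p)$, and conclude $p=p_s$ for all $s\le r$; in particular this determines $m_{n_0}:=\ell_{n_0}(p)$ (note that $n_0=n_1^{\kappa'}$ for the other arm's index $\kappa'$, so $n_0$ serves as the zeroth node of the second arm). Then, since $\beta_{n_0,n_1'}m_{n_0}+m_{n_1'}\equiv 0\,(\mathrm{mod}\,\alpha_{n_0,n_1'})$ is forced by integrality (\cref{lem:mod}), with $m_{n_1'}:=\ell_{n_1'}(p)$ I apply \cref{lem:m_exists} to extend to a full arm sequence $m_0=m_{n_0},m_1',\ldots,m_{j'}'$ along the second arm. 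This defines an element $l_p\in V_Z^\Nd$ with $m_{n_s}(l_p)=m_s$ on the first arm, $m_{n_0}(l_p)=m_{n_0}$, and $m_{n_s'}(l_p)=m_s'$ on the second arm; uniqueness of $l_p$ in $S_{i,p}$ follows because the required values on $\Nd_{\bar v(i)}$ determine the whole arm sequence through $\bar v(i)$ (by \cref{lem:mod} and \cref{rem:m_exists}), which in turn determines $m_{n_0}$ and hence the second arm.

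Next I would verify $l_p\geq \bar Z_i|_\Nd$ and $z_{l_p}^\Nd=1$. The inequality on the first arm follows from \cref{lem:ineq_Zi} exactly as before. For the second arm I need to check the relevant ratio inequality \cref{eq:ineq_Zi1} holds at the junction $n_0,n_1'$ so that \cref{lem:ineq_Zi} again applies; here I would invoke the ratio test — $\bar v(i)$ was chosen to minimize $m_n(\bar Z_i)/m_n(Z_K-E)$ — together with the fact that $p\in\bar P_i\subset C_{\bar v(i)}$ and $p+(1,1,1)$ lies under $\Gamma(f)$, to control the ratios $m_{n_0}/m_{n_0}(Z_K-E)$ versus $m_{n_1'}/m_{n_1'}(Z_K-E)$ via \cref{lem:cone_eqs} and \cref{lem:arm_mono}. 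Then $z_{l_p,n}^\Nd=1$ for every interior node $n$ of either arm by \cref{rem:m_exists} (they all have $\delta_n-|\E_n|=2$), $z_{l_p,n_j}^\Nd=1$ and $z_{l_p,n_{j'}'}^\Nd=1$ by \cref{lem:hand} applied to each arm (using that $p$ lies on the associated line $L_{r-1}$ or $L_r$ of the first arm, and on $L_0'$ of the second, the latter since $p=p_0'$ is the associated vertex at $n_0$), and finally $z_{l_p,n_0}^\Nd=1$: since $F_{n_0}$ is the central face with exactly two nondegenerate arms, \cref{lem:factors}\ref{it:factors_2} applies if $\delta_{n_0}-|\E_{n_0}|=2$, and I check $(\psi(l_p),E_{n_0})=0$ by the same neighbour-by-neighbour argument as at the end of the proof of \cref{lem:one_arm}, using \cref{lem:mod}, \cref{lem:psi_inters} and $m_{n_0}(l_p)=\ell_{n_0}(p)$.

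The main obstacle I anticipate is the bookkeeping at the central node $n_0$ and the ratio comparison across the $n_0$–$n_1'$ junction: in the two-arm (and trapezoid) situation the remark after \cref{lem:central_node} warns that the naive "$S_i=\amalg_p S_{i,p}$ with $z_{l_p}^\Nd=1$" picture fails when $\bar v(i)$ \emph{is} central, so I must make sure that when $\bar v(i)$ is \emph{not} central the argument genuinely goes through — in particular that $F_{n_0}$ being a central face (triangle or trapezoid) still gives $\delta_{n_0}-|\E_{n_0}|=2$ here, which is exactly the content of \cref{prop:anatomy}\ref{it:anatomy_central_face} when there are two arms (the central node has two arm-neighbours and possibly legs, so $|\Nd_{n_0}|=2$). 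Once that is pinned down, combining $S_i=\amalg_{p\in\bar P_i}S_{i,p}$ (from \cref{lem:arm_init,lem:hand_init}, valid since $\bar v(i)$ is not central) with $|\S_{i,p}|=1$, $z_{l_p}^\Nd=1$ and $|\bar P_i|=\max\{0,(-\bar Z_i,E_{\bar v(i)})+1\}$ from \cref{thm:pts} yields \cref{lem:onetwo_arms}, hence \cref{eq:qZ_ident} in this case.
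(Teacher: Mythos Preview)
Your proposal has a genuine gap at the central node $n_0$. You set $m_{n_1'}:=\ell_{n_1'}(p)$ and then claim $z_{l_p,n_0}^\Nd=1$ by the same neighbour-by-neighbour argument as in \cref{lem:one_arm}. But that argument only yields $(\psi(l_p),E_{n_0})\leq 0$, which sufficed there because $\delta_{n_\nu}-|\E_{n_\nu}|=1$ and \cref{lem:factors}\ref{it:factors_1} applied. Here $\delta_{n_0}-|\E_{n_0}|=|\Nd_{n_0}|=2$, so \cref{lem:factors}\ref{it:factors_2} demands $(\psi(l_p),E_{n_0})=0$ exactly. Concretely, for $e\in\E_{n_0}$ one has $m_{u_e}(\psi(l_p))=\lceil\beta_e m_{n_0}/\alpha_e\rceil\leq \ell_{u_e}(p)$ with strict inequality whenever $\ell_{n_e^*}(p)\geq\alpha_e$; in that case your $l_p$ gives $z_{l_p,n_0}^\Nd=0$, so it is not even in $S_{i,p}$, and uniqueness fails too, since the actual element of $S_{i,p}$ has $m_{n_1'}(l)>\ell_{n_1'}(p)$.

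The paper fixes this by \emph{defining} $m_1^2$ as the unique solution of $(\psi(l),E_{n_0})=0$, then proving $m_1^2\geq\ell_{n_1^2}(p)$ (to feed \cref{lem:ineq_Zi}) and $m_1^2\equiv\ell_{n_1^2}(p)\pmod{\alpha_{n_0^2,n_1^2}}$. The cost is that $p\notin L_0^2$ in general, so \cref{lem:hand} for the second arm needs a separately constructed integral point on $L_0^2$ with nonnegative coordinates; the paper does this case-by-case (trapezoid via \cref{lem:trap_pos}, central triangle via an auxiliary point $p_0$ and a sign analysis). You should also note that the two-arm situation includes the central-edge case of \cref{prop:anatomy}\ref{it:anatomy_central_edge}, where there is no single $n_0$ but rather an overlap $n_s^1=n_{c-s}^2$; there the second arm sequence is forced by the first through this overlap and one uses $L_0^2=L_{c-1}^1\ni p$ directly.
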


\begin{proof}
Use the notation given in \cref{prop:anatomy}.
We can then assume that $\bar v(i) = n_r^1$ for some $r\geq 1$.
Similarly as above, using \cref{lem:m_exists}, we find numbers
$m_0^1,\ldots, m_{j^1}^1\in\Z$ so that if $l\in S_{i,p}$, then
$m_{n_r^1}(l) = m_r^1$ for $0\leq r \leq j^1$. Furthermore,
by \cref{lem:high_pts}, we have $m_s^1 = \ell_{n_s^1}(p)$ for
$s\leq r+1$. If $\Gamma(f)$ contains a central edge, let $c$ be the
number of central edges. If $\Gamma(f)$ contains a central node,
set $c = 0$. In either case, we have $n_s^1 = n_{c-s}^2$ for
$s\leq c$.
Note that in the case of a central edge, we can assume $j^\kappa>c-1$
for $\kappa=1,2$, since otherwise the statement is covered by
\cref{lem:one_arm}. In particular, we have nodes
$n_0^1,n_0^2\in\Nd$.

In the case of a central edge, we therefore have
$m_{n_s^2}(l) = m_{c-s}^1$ for $s=0,1$, for all $l\in S_{i,p}$.
Let $m_0^2,\ldots, m_{j^2}^2$ be the arm sequence with
$m_0^2 = m_c^1$ and $m_1^2 = m_{c-1}^2$.
Then, for any $l\in S_{i,p}$, we have $m_{n_s^2}(l) = m_s^2$.

In the case of a central node, we have a number $m_1^2\in\Z$, uniquely
determined by the equation
\[
  \frac{m_1^2 + \beta_{n_0^2,n_1^2} m_0^1}{\alpha_{n_0^2,n_1^2}}
  - b_{n_0^1} m_0^1 +
  \frac{\beta_{n_0^1,n_1^1} m_0^1 + m_1^1}{\alpha_{n_0^1,n_1^1}}
  + \sum_{e\in\E_{n_0^1}} \frac{\beta_e m_0^1}{\alpha_e}
  = 0.
\]
Setting $m_0^2 = m_0^1$, \cref{lem:m_exists} determines an arm sequence
$m_s^2$ with $m_{n_s^2}(l) = m_s^2$ for all $0\leq s\leq j^2$ and
$l\in S_{i,p}$.

We define $l_p\in V_Z^\Nd$ by $m_{n_s^e}(l_p) = m_s^e$.
We have proved that if $l\in S_{i,p}$, then $l = l_p$.
To prove the lemma, we must show that indeed, $l_p \in S_{i,p}$.
For this, we need to prove that $z_{l_p,n_{j^e}^e}^\Nd = 1$ for $e=1,2$
and that $l_p \geq \bar Z_i$. As in the case of a single nondegenerate
arm, we find $z_{l_p,n_{j^1}^1}^\Nd = 1$, and
$m_{n_s^1}(l_p) \geq m_{n_s^1}(\bar Z_i)$ for all $s$.

Let $L_s^\kappa$ be the lines associated with the arm sequence
$m_0^\kappa,\ldots, m_{j^\kappa}^\kappa$. In the case when $\Gamma(f)$
contains a central edge, note that $L_0^2 = L_{c-1}^1$. In particular,
$p\in L_0^2$, and so $z_{l_p,n_{j^2}^2}^\Nd = 1$ by \cref{lem:hand}.
It is also clear that $m_s^2 \geq m_{n_s^2}(\bar Z_i)$ for $s = c,c-1$
and that 
\[
  \frac{ \ell_{n_{c-1}^2}(p) }{ m_{n_{c-1}^2}(Z_K-E) }
  \leq
  \frac{ \ell_{n_c^2}(p) }{ m_{n_c^2}(Z_K-E) }
\]
since $p\in C_{n_r^2}$. Therefore, by \cref{lem:ineq_Zi}, we
have $m_{n_s^2}(l_p) \geq m_{n_s^2}(\bar Z_i)$ for $s\geq c-1$,
hence $l_p \geq \bar Z_i$.

Next we consider the case when $\Gamma(f)$ contains a central node.
We need to prove $m_s^2 \geq m_{n_s^2}(\bar Z_i)$ for $s\geq 1$
and $z_{l_p,n_{j^2}^2}^\Nd = 1$. The former follows in a similar way
as above as soon as we prove
\begin{equation} \label{eq:58}
  \frac{ m_0^2 }{ m_{n_0^2}(Z_K-E) }
  \leq
  \frac{ m_1^2 }{ m_{n_1^2}(Z_K-E) }.
\end{equation}
Comparing the two equations
\[
  \frac{m_1^2 + \beta_{n_0^2,n_1^2} m_0^2}{\alpha_{n_0^2,n_1^2}}
  - b_{n_0^1} m_0^1 +
  \frac{\beta_{n_0^1,n_1^1} m_0^1 + m_1^1}{\alpha_{n_0^1,n_1^1}}
  + \sum_{e\in\E_{n_0^1}}
    \left\lceil
	  \frac{\beta_e m_0^1}{\alpha_e}
    \right\rceil
  = 0
\]
and
\[
  \frac{\ell_{n_1^2} + \beta_{n_0^2,n_1^2} \ell_{n_0^2}(p)}
       {\alpha_{n_0^2,n_1^2}}(p)
  - b_{n_0^1} \ell_{n_0^1} +
  \frac{\beta_{n_0^1,n_1^1} \ell{n_0^1}(p) + \ell{n_1^1}(p)}
       {\alpha_{n_0^1,n_1^1}}
  + \sum_{e\in\E_{n_0^1}}
	  \frac{\beta_e \ell_{n_0^1}(p) + \ell_{n_e^*}(p)}
	       {\alpha_e}
  = 0,
\]
and the fact that $\ell_{n_0^1}(p) = m_0^1 = m_0^2$ and
$\ell_{n_1^1}(p) = m_1^1$ we find $m_1^2 \geq \ell_{n_1^2}(p)$, hence
\[
  \frac{ m_0^2 }{ m_{n_0^2}(Z_K-E) }
  \leq
  \frac{ \ell_{n_0^2}(p) }{ m_{n_0^2}(Z_K-E) }
  \leq
  \frac{ \ell_{n_1^2}(p) }{ m_{n_1^2}(Z_K-E) }
  \leq
  \frac{ m_1^2 }{ m_{n_1^2}(Z_K-E) },
\]
proving \cref{eq:58}. We observe from these equations that we also have
$m_1^2 \equiv \ell_{n_1^2}(p)\, (\mod \alpha_{n_0^2,n_1^2} )$

Finally, we will prove $z_{l_p,n_{j^2}^2}^\Nd = 1$. By \cref{lem:hand},
it is enough to prove that the line $L_0^2$ contains a point
with nonnegative $x_1$ and $x_3$ coordinates.

We start with the case when $F_{n_0^1}$ is a trapezoid. Let
$p_1$ be one of the endpoints of the segment $F_{n_0^2} \cap F_{n_1^2}$,
and $p_2$ the closest integral point on an adjacent boundary segment
of $F_{n_0^2}$. By \cref{lem:trap_pos}, the vector $p_2-p_1$ has
nonnegative $x_1$ and $x_3$ coordinates. Since $m_1^2 \geq \ell_{n_1^2}(p)$,
as we proved above, the same holds for the point
\[
  p_0 = p + \frac{m_1^2 - \ell_{n_1^2}(p)}{\alpha_{n_0^2,n_1^2}}(p_2-p_1)
\]
which is an integral point by our previous observation.
Since $\ell_{n_0^2}(p_2-p_1) = 0$ and 
$\ell_{n_1^2}(p_2-p_1) = \alpha_{n_0^2,n_1^2}$, by \cref{prop:content},
we find $p_0 \in L_0^2$.

Next, we will prove $z_{l_p,n_{j^2}^2}^\Nd = 1$, assuming
that $F_{n_0^1}$ is a central triangle.
Define a point $p_0$
by requiring $\ell_n(p) = m_n(l_p)$ for $n = n_1^1, n_0^1, n_1^2$.
Using the same proof as in \cref{lem:armseq_points}, we see that
$p_0$ exist, is unique, and $p_0\in\Z^3$. By definition, we also have
$p_0\in L_0^1$ and $p_0\in L_1^2$, so, as in the previous case, it
suffices to show that $\ell_1(p_0) \geq 0$ and $\ell_3(p_0) \geq 0$.
Since $F_{n_0^1}$ is a central triangle, and $\Gamma(f)$ has one
degenerate arm, we have $|\E_{n_0^1}| = 1$. Let $e\in\E_{n_0^1}$ be the
unique element in this set.
We then have
\[
  m_{u_e}(l_p) =
    \left\lceil
      \frac{\beta_e m_{n_0^1}(l_p)}{\alpha_e}
    \right\rceil
    \leq \frac{\beta_e \ell_{n_0^1}(p) + \ell_{n_e^*}(p)}{\alpha_e}
	=    \ell_{u_e}(p).
\]
Furthermore, subtracting \cref{eq:nbr_sum} (with $n = n_0^1$),
evaluated at $p_0$, from
$(l_p,E_{n_0^2}) = 0$, we get $m_{u_e}(l_p) = \ell_{u_e}(p_0)$, thus,
$\ell_{u_e}(p-p_0) \geq 0$.
Evaluating \cref{eq:nbr_sum} at $p-p_0$ gives
$\ell_{u_e}(p-p_0) + \ell_{n_1^2}(p-p_0)/\alpha_{n_0^2,n_1^2} = 0$ and so
$\ell_{n_1^2}(p-p_0) \leq 0$.

Give names $q_1, q_2, q_3$ to the vertices of the triangle $F_{n_0^1}$
as in \cref{fig:shoulders}, that is, $q_1$ lies on the $x_2x_3$ axis, etc.
By definition, we have $p, p_0 \in L_0^1$. Furthermore, the line $L_0^1$ is
parallel to the primitive vector $q_2-q_3$. Therefore, there is a $k\in\Z$ so
that $p_0 = p + k(q_2-q_3)$. By convexity of $\Gamma_+(f)$ we have
$\ell_{n_1^2}(q_2-q_3) \geq 0$. Therefore, by the previous inequality,
we get $k \geq 0$. Since $\ell_3(q_2-q_3) = \ell_3(q_2) \geq 0$, we have
$\ell_3(p_0) \geq 0$.
If $k = 0$, then $p_0 = p$, and we get $\ell_1(p_0) \geq 0$.
Otherwise, we have $p_0 = p + k(q_2-q_3)$ with $k > 0$. Since
$p\in (\cup_{r=1}^{j^1} C_{n_1^r} \cap \Gamma_-(f))-(1,1,1)$, we have
$\ell_2(q_3 - q_2) > \ell_2(p)$, therefore, $\ell_2(p_0) < 0$.
Since the arm in the direction of the $x_3$ axis is assumed degenerate,
we have $a,b\in\Z_{>0}$ so that $\ell_{n_e^*} = a\ell_1 + b\ell_2$.
Furthermore, we have
\[
  \ell_{n_e^*}(p_0) = \alpha_e\ell_{n_0^1}(p_0) - \beta_e\ell_{u_e}(p_0)
    = 
    \left\lceil
      \frac{\beta_e m_{n_0^1}(l_p)}{\alpha_e}
    \right\rceil
	-\beta_e m_{n_0^1}(l_p)
	\geq 0.
\]
All this gives $\ell_1(p_0) \geq 0$, finishing the proof.
\end{proof}

\subsection{Case: Three nondegenerate arms}

In this subsection we will assume that the diagram $\Gamma(f)$ contains
a
central node \index{central node}
and three nondegenerate
arms. \index{arm}
We will assume given a fixed step $i$ in the
computation sequence \index{computation sequence}
and that $\bar v(i)$ is not the central node.

\begin{block}
We use the notation
introduced in \cref{prop:anatomy}\ref{it:anatomy_central_face}.
We can assume that for some $1\leq r \leq j^1$ we have $\bar v(i) = n_r^1$.
By \cref{lem:arm_init,lem:hand_init}, we have $S_i' = \emptyset$,
so in order to prove \cref{eq:qZ_ident}, it is enough to prove
$\sum_{l\in S_{i,p}} z_l^\Nd = 1$ for all $p\in \bar P_i$.
For $\kappa=2,3$, define
\[
  S_{i,p}^\kappa
  = \set{ l\in S_{i,p} }{ m_{n_1^\kappa}(l) < \ell_{n_1^\kappa}(p) }
\] \nomenclature[Sipk]{$S_{i,p}^\kappa$}{Subset of $S_i$}
and set $S_{i,p}^0 = S_{i,p} \setminus (S_{i,p}^2 \cup S_{i,p}^3)$.
\end{block}

\begin{lemma} \label{lem:three_arms}
We have $\sum_{l\in S_i} z_l^\Nd = \max\{0,(-\bar Z_i,E_{\bar v(i)})\}$.
\end{lemma}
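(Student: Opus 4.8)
The plan is to carry out the reduction already set up before the statement: by \cref{lem:arm_init,lem:hand_init} we have $S_i=\amalg_{p\in\bar P_i}S_{i,p}$, so, identifying the right-hand side with $|\bar P_i|$ via \cref{thm:pts}, it is enough to prove $\sum_{l\in S_{i,p}}z_l^\Nd=1$ for every $p\in\bar P_i$. Fix such a $p$ and, after permuting coordinates, assume $\bar v(i)=n^1_r$ for some $1\le r\le j^1$. By \cref{lem:high_pts} every $l\in S_{i,p}$ has all of its multiplicities along the first arm, and at the central node $m_{n_0}(l)=\ell_{n_0}(p)$, determined by $p$; for $\kappa=2,3$ the multiplicities along the $\kappa$-th arm are instead governed by a single integer $k_\kappa$ via $m_{n^\kappa_1}(l)=\ell_{n^\kappa_1}(p)+k_\kappa\alpha_{n_0,n^\kappa_1}$, the remaining values forming the arm sequence it generates (\cref{lem:m_exists,lem:mod}). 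Write $l_k$ for this element, $k=(k_2,k_3)\in\Z^2$.

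I would then compute $z_{l_k}^\Nd=\prod_{n\in\Nd}z_{l_k,n}^\Nd$ with \cref{lem:factors,lem:psi_inters}. Each interior node of the three arms contributes $1$ because the pertinent sequences are arm sequences by construction (\cref{rem:m_exists}), and $z_{l_k,n^1_{j^1}}^\Nd=1$ because $p$ lies on the associated lines of the first arm (\cref{lem:high_pts}) and $p\in\Z^3_{\ge0}$, so \cref{lem:hand} applies. Hence the only factors that can differ from $1$ are the central one $z_{l_k,n_0}^\Nd$ and the two end factors $z_{l_k,n^\kappa_{j^\kappa}}^\Nd$, $\kappa=2,3$. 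Since $\Gamma(f)$ has three nondegenerate arms the central face is a triangle with $\E_{n_0}=\emptyset$, so $\delta_{n_0}-|\E_{n_0}|=3$; evaluating \cref{eq:nbr_sum} at $p$ gives $(-\psi(l_p),E_{n_0})=0$ for the element $l_p$ with $k=(0,0)$, and because a shift in $k_\kappa$ moves $m_{u_{n_0,n^\kappa_1}}(\psi(l_k))$ by $k_\kappa$ for $\kappa=2,3$ while leaving the arm-$1$ neighbour of $n_0$ fixed, $(-\psi(l_k),E_{n_0})=-(k_2+k_3)$. By \cref{lem:factors}\ref{it:factors_3} this factor is $1$ when $k_2+k_3=0$, is $-1$ when $k_2+k_3=-1$, and vanishes otherwise; in particular two strictly negative shifts annihilate $z_{l_k}^\Nd$, so $S_{i,p}=S^0_{i,p}\amalg S^2_{i,p}\amalg S^3_{i,p}$ and $k=(0,0)$ is the only element of $S^0_{i,p}$.

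The remaining content is a telescoping sum. For $\kappa=2,3$ set $f_\kappa(k_\kappa)=1$ if $l_k\ge\bar Z_i$ holds along the $\kappa$-th arm and $z_{l_k,n^\kappa_{j^\kappa}}^\Nd=1$, and $f_\kappa(k_\kappa)=0$ otherwise; this depends on $k_\kappa$ alone, and $l_k\ge\bar Z_i$ along the first arm holds for every $k$ because $p\in\bar P_i$. The assertion I would prove is that each $f_\kappa$ is the indicator of an up-set $\{k_\kappa\ge\tau_\kappa\}$ with $\tau_\kappa\le 0$. Granting this and the previous paragraph,
\[
  \sum_{l\in S_{i,p}}z_l^\Nd
  =\sum_{k_2,k_3}\bigl([k_2+k_3=0]-[k_2+k_3=-1]\bigr)f_2(k_2)f_3(k_3)
  =(-\tau_2-\tau_3+1)-(-\tau_2-\tau_3)=1,
\]
since the two inner sums count the integers $k_2$ with $\tau_2\le k_2\le-\tau_3$ and with $\tau_2\le k_2\le-1-\tau_3$ respectively. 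Summing over $p\in\bar P_i$ and invoking \cref{thm:pts} then yields the lemma.

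Proving that $f_\kappa$ is an up-set indicator is the step I expect to be the main obstacle, and it has three ingredients. First, monotonicity of the condition $l_k\ge\bar Z_i$ in $k_\kappa$: enlarging $k_\kappa$ enlarges every multiplicity along the arm, which follows from \cref{prop:x_prop,lem:x_interpol} combined with \cref{lem:ineq_Zi} to propagate the inequality along the arm from $n^\kappa_1$. Second, that $k_\kappa=0$ lies in the up-set, i.e.\ $l_p\in S_i$: one checks $m_n(l_p)=\ell_n(p)\ge m_n(\bar Z_i)$ for all $n\in\Nd$ by noting that $p\in C_{\bar v(i)}=\R_{\ge0}F_{\bar v(i)}(Z_K-E)$ forces $\ell_n(p)\ge\bar r_i\,m_n(Z_K-E)$ for every $n$ (the ray through $p$ meets $\Gamma_+(Z_K-E)$ on $F_{\bar v(i)}(Z_K-E)$), while the borderline case $\ell_n(p)=\bar r_i\,m_n(Z_K-E)$ with $\epsilon_{i,n}=1$ is excluded for $p\in\bar P_i=\Fcnm_i\cap\Z^3$; \cref{lem:hand} then gives $z_{l_p,n^\kappa_{j^\kappa}}^\Nd=1$ because $p$ itself is an integral point with nonnegative coordinates on $L^\kappa_0$. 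Third, and most delicate, that the end factor stays $1$ throughout the up-set: for $k_\kappa\ge0$ this is the ``shoulders'' construction of \cref{lem:c1,lem:two_arms}, transporting $p$ by a multiple of an edge vector $q_2-q_3$ of the central triangle $F_{n_0}$ (via \cref{prop:content,prop:arm_hands,lem:trap_pos}) onto a good integral point of the translated line $L^\kappa_0$; for $-\tau_\kappa\le k_\kappa<0$ one shows that whenever the arm multiplicities of $l_k$ dominate those of $\bar Z_i$ the inequality \ref{lem:hand_ineq} of \cref{lem:hand} is automatically satisfied, so that the end factor cannot fall to $0$ before the $\bar Z_i$-condition does — which is precisely what makes $\tau_\kappa$ a single threshold. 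Assembling these with the telescoping identity completes the proof.
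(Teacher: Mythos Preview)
Your reduction to $\sum_{l\in S_{i,p}}z_l^{\Nd}=1$ and the parametrisation by shifts $(k_2,k_3)$ on arms $2$ and $3$ are correct and match the paper's setup, but two points go wrong.

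First, the central face need not be a triangle: with three nondegenerate arms $F_{n_0}$ can be a trapezoid, and then one edge of $F_{n_0}$ lies on $\partial\Gamma(f)$ so $\E_{n_0}\neq\emptyset$. Your formula $(-\psi(l_k),E_{n_0})=-(k_2+k_3)$ then fails; the correct expression carries a shift $k_0\geq 0$ coming from the leg contributions $\sum_{e\in\E_{n_0}}\bigl(\ell_{u_e}(p)-\lceil\beta_e m_{n_0}/\alpha_e\rceil\bigr)$, cf.\ \cref{block:mn13}. (Also, your claim $m_n(l_p)=\ell_n(p)$ for all $n$ is not correct beyond $n_1^\kappa$ on arms $2,3$: the arm-sequence recursion involves ceilings that do not match $\ell_{u_e}(p)$ when $p$ lies far from that arm. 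The conclusion $l_p\in S_{i,p}$ is still true, but one gets it from \cref{lem:t_geq} and \cref{lem:hand}, not from that identity.)

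Second, and more importantly, your telescoping rests on the assertion that each $f_\kappa$ is the indicator of an up-set, and your ``third ingredient'' does not establish this. There is no evident reason the end factor on arm~$2$ is monotone in $k_2$: in the triangle case, as $k_2$ varies the line $L_0^2$ moves and whether it contains an integral point with the required nonnegative coordinates depends on the geometry of $F_{n_0}$ and the position of $p$. The paper does \emph{not} prove any such monotonicity. Instead it splits $S_{i,p}=S_{i,p}^0\amalg S_{i,p}^2\amalg S_{i,p}^3$ by the signs of $k_2,k_3$ (\cref{lem:3_inters}), shows directly that all candidates with $k_2,k_3\geq 0$ belong to $S_{i,p}^0$ (\cref{lem:3_sum_1} via \cref{lem:hand_one2,lem:t_geq}), and for $S_{i,p}^2$ pairs $l_p^{k,0}$ with $l_p^{k,-1}$ at fixed $k_2=k<0$; since the arm-$2$ data agree within a pair, the only question is whether the arm-$3$ end factor matches, and this is handled by a geometric case analysis on $F_{n_0}$ and $p$ in \cref{lem:all_k}. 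The key insight is that one never needs to know \emph{when} $f_2(k_2)=1$ for $k_2<0$, only that membership in $S_{i,p}$ is the same for both members of each pair. The case $S_{i,p}^3$ is symmetric.
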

\begin{proof} 
This will follow from \cref{lem:3_inters,lem:3_sum_0,lem:3_sum_1} and
\cref{thm:pts}.
\end{proof}

\begin{block} \label{block:mn13}
By \cref{lem:m_exists}, there is an
arm sequence \index{arm sequence}
$m_0^1,\ldots, m_{j^1}^1$
satisfying $m_{r-1}^1 = \ell_{n_{r-1}^1}(p)$ and $m_r^1 = \ell_{n_r^1}(p)$,
and we have $m_{n_s^1}(l) = m_s^1$ for all $l\in S_{i,p}$.
Fix an $l\in S_{i,p}$. We then have
$m_{n_1^2}(l) \equiv - \beta_{n_0^2,n_1^2} m_{n_0^2}(l)
= - \beta_{n_0^2,n_1^2} \ell_{n_0^2}(p) \equiv \ell_{n_1^2}(p)\,
(\mod \alpha_{n_0^2,n_1^2} )$, and so there is a $k\in \Z$ so that
$m_{n_1^2}(l) = \ell_{n_1^2}(p) + k \alpha_{n_0^2,n_1^2}$.
Using the equation
\[
  -b_{n_0^1} \ell_{n_0^1} +
  \sum_{\kappa=1}^3 \frac{\beta_{n_0^\kappa,n_1^\kappa} \ell_{n_0^\kappa}
                          + \ell_{n_1^\kappa}}
                    {\alpha_{n_0^\kappa,n_1^\kappa}}
  +
  \sum_{e\in\E_{n_0^1}} \frac{\beta_e \ell_{n_0^1} + \ell_{n_e^*}}
                             {\alpha_e}
  = 0
\]
and the fact that $(\psi(l),E_{n_0^1}) =: \eta \in \{ 0, -1 \}$ by
\cref{lem:factors}, we find that
\begin{equation} \label{eq:mn13}
  \frac{\ell_{n_1^3}(p) - m_1^{3,k,\eta}}{\alpha_{n_0^3,n_1^3}}
  =
  k - \eta +
  \sum_{e\in\E_{n_0^1}}
    \left\lceil
      \frac{\beta_e m_{n_0^1}}{\alpha_e}
    \right\rceil
	-
    \frac{\beta_e \ell_{n_0^1}(p) + \ell_{n_e^*}(p)}{\alpha_e}.
\end{equation}
where $m_1^{3,k,\eta} = m_{n_1^3}(l)$.
Note that since $(\beta_e \ell_{n_0^1} + \ell_{n_e^*})/\alpha_e$ is an integral
functional, the summand corresponding to $e\in\E_{n_0^1}$
on the right in \cref{eq:mn13} is integral.
Since $\ell_{n_0^1}(p) = m_{n_0^1}$ and $\ell_{n_e^*}(p) \geq 0$,
each such summand is $<1$. Thus, it follows that these summands are nonpositive.
\end{block}

\begin{definition}
For $k\in \Z$, define $m_0^{2,k} = m_0^1$ and
$m_1^{2,k} = \ell_{n_1^2}(p) + k \alpha_{n_0^2,n_1^2}$.
Furthermore, for $\eta = 0,-1$, let $m_0^{3,k,\eta} = m_0^1$ and
define $m_1^{3,k,\eta}$ as the unique solution to \cref{eq:mn13}.
Then, by \cref{lem:m_exists}, there exist unique arm sequences
$(m_s^{2,k})_{s=0}^{j^2}$ and
$(m_s^{3,k,\eta})_{s=0}^{j^3}$ with the given first two initial terms.
Define $l_p^{k,\eta}\in V_Z^\Nd$ by
$m_{n_s^1}(l_p^{k,\eta}) = m_s^1$ for $0\leq s \leq j^1$,
$m_{n_s^2}(l_p^{k,\eta}) = m_s^{2,k}$ for $0\leq s\leq j^2$ and
$m_{n_s^3}(l_p^{k,\eta}) = m_s^{3,k,\eta}$ for $0\leq s\leq j^3$.
\end{definition}

In \cref{block:mn13} we have thus proven
\begin{lemma} \label{lem:lpke}
If $l\in S_{i,p}$, then $l = l_p^{k,\eta}$ for some $\Z$ and $\eta\in\{0,-1\}$.
In fact, we have
\[
  S_{i,p} = \set{l_p^{k,\eta}}
                {
				  l_p^{k,\eta} \geq \bar Z_i,\,
				  z_{l_p^{k,\eta},n_{j^2}^2}^\Nd = 
				  z_{l_p^{k,\eta},n_{j^3}^3}^\Nd = 1
                }.
\]
\qed
\end{lemma}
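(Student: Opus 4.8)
\textbf{Plan of proof of \cref{lem:lpke}.}
The first assertion---that every $l \in S_{i,p}$ equals some $l_p^{k,\eta}$---has essentially been carried out in \cref{block:mn13}. Indeed, for $l\in S_{i,p}$ the multiplicities along the first arm are forced to be $m_{n_s^1}(l) = m_s^1$ (by \cref{lem:high_pts} together with \cref{lem:m_exists}); the congruence $m_{n_1^2}(l) \equiv \ell_{n_1^2}(p) \, (\mod \alpha_{n_0^2,n_1^2})$ produces the integer $k$ with $m_{n_1^2}(l) = \ell_{n_1^2}(p) + k\alpha_{n_0^2,n_1^2}$; the value $\eta = (\psi(l),E_{n_0^1}) \in \{0,-1\}$ is one of the two allowed by \cref{lem:factors}\ref{it:factors_3}, using $\delta_{n_0^1} - |\E_{n_0^1}| = |\Nd_{n_0^1}| = 3$; and then \cref{eq:mn13} determines $m_{n_1^3}(l) = m_1^{3,k,\eta}$. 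Since arm sequences are determined by their first two terms (\cref{lem:m_exists}), the remaining multiplicities of $l$ along arms $2$ and $3$ agree with those of $l_p^{k,\eta}$, and $m_{n_s^1}(l) = m_s^1$ on arm $1$. Hence $l = l_p^{k,\eta}$. The plan is to write this out cleanly, citing \cref{block:mn13} for the bulk of the computation.

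For the second assertion I would argue by mutual inclusion. The inclusion ``$\subseteq$'' is immediate: if $l \in S_{i,p}$, then by \cref{def:Si} we have $l \geq \bar Z_i|_\Nd$ and $z_l^\Nd \neq 0$; the first gives $l_p^{k,\eta} = l \geq \bar Z_i$ (note $m_{\bar v(i)}(l) = m_{\bar v(i)}(\bar Z_i)$ holds by the definition of $S_{i,p} \subseteq S_i$), and $z_l^\Nd = \prod_{n\in\Nd} z_{l,n}^\Nd \neq 0$ forces each factor to be nonzero; by \cref{lem:hand} (the equivalence \ref{lem:hand_nzero}$\Leftrightarrow$\ref{lem:hand_one}) the end-node factors $z_{l,n_{j^2}^2}^\Nd$ and $z_{l,n_{j^3}^3}^\Nd$ must then equal $1$. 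For ``$\supseteq$'', suppose $l_p^{k,\eta}$ satisfies $l_p^{k,\eta} \geq \bar Z_i$ and the two end-node factors are $1$; I must check $l_p^{k,\eta} \in S_{i,p}$, i.e. that $l_p^{k,\eta} \in V_Z^\Nd$ (clear by construction via \cref{lem:m_exists,lem:mod}), that $m_{\bar v(i)}(l_p^{k,\eta}) = m_{\bar v(i)}(\bar Z_i)$ (true since $m_{n_r^1}(l_p^{k,\eta}) = m_r^1 = \ell_{n_r^1}(p)$ and $p\in\bar P_i$ lies on $H^=_{\bar v(i)}(\bar Z_i)$), that $z_{l_p^{k,\eta}}^\Nd \neq 0$, and that $m_n(l_p^{k,\eta}) = \ell_n(p)$ for $n \in \Nd_{\bar v(i)}$. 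The condition on $\Nd_{\bar v(i)}$ follows from \cref{lem:high_pts} applied on arm $1$ (which gives $m_s^1 = \ell_{n_s^1}(p)$ for $s \leq r+1$), exactly as in the proof of \cref{lem:one_arm}. For $z_{l_p^{k,\eta}}^\Nd \neq 0$ I need every factor $z_{l_p^{k,\eta},n}^\Nd$ to be nonzero: the factors for non-end, non-central nodes along the arms are $1$ by \cref{rem:m_exists} (these are arm sequences by construction); the central node factor $z_{l_p^{k,\eta},n_0^1}^\Nd$ is $1$ or $-1$ depending on whether $\eta = 0$ or $\eta = -1$, by \cref{lem:factors}\ref{it:factors_3} and the fact that $(\psi(l_p^{k,\eta}),E_{n_0^1}) = \eta$ was built into \cref{eq:mn13}; the end-node factors on arms $2,3$ are $1$ by hypothesis; and on arm $1$ the end-node factor $z_{l_p^{k,\eta},n_{j^1}^1}^\Nd = 1$ follows from \cref{lem:hand} since $p$ lies on the associated line $L_{r-1}^1$ (or $L_r^1$), just as in \cref{lem:one_arm}. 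That leaves checking $z_{l_p^{k,\eta},n_0^1}^\Nd \neq 0$ reduces to $\eta\in\{0,-1\}$, which is how it was defined.

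The main obstacle is bookkeeping rather than any genuinely new idea: one must verify that all the arm-sequence machinery (\cref{lem:m_exists,lem:high_pts,rem:m_exists}) applies here despite there being \emph{three} arms meeting at a central node of type $\delta - |\E| = 3$, so that the central-node factor can be $-1$ as well as $1$---this is what forces the parameter $\eta$ and the two families $l_p^{k,\eta}$, $\eta = 0,-1$, and it is the source of the subtlety that the count $\sum_{l\in S_{i,p}} z_l^\Nd$ need not be a sum of $1$'s alone (this is handled in the subsequent \cref{lem:3_inters,lem:3_sum_0,lem:3_sum_1}, not here). Within this lemma, the only delicate point is making sure the equation \cref{eq:mn13} genuinely has a unique integral solution $m_1^{3,k,\eta}$ with $m_0^{3,k,\eta} = m_0^1$ compatible with forming an arm sequence---i.e., that the relevant congruence mod $\alpha_{n_0^3,n_1^3}$ is satisfied---which I would extract from the displayed equation in \cref{block:mn13} by noting its right-hand side is an integer (each $\E_{n_0^1}$-summand being integral, as remarked there). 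Once that is in place, the lemma follows by assembling the citations above; I expect the write-up to be short, essentially a guided tour through \cref{block:mn13} and the analogue of the argument in \cref{lem:one_arm}.
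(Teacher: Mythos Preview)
Your proposal is correct and follows the paper's approach. The paper treats this lemma as an immediate consequence of \cref{block:mn13} (hence the bare \qed), and your write-up simply unpacks that: the first assertion is exactly what \cref{block:mn13} establishes, and the set description is then a routine mutual-inclusion argument using \cref{lem:factors}, \cref{lem:hand}, and \cref{rem:m_exists}. One remark: the verification that $z_{l_p^{k,\eta},n_{j^1}^1}^\Nd = 1$ (needed for the ``$\supseteq$'' direction) is something the paper actually defers to a separate short lemma placed \emph{after} \cref{lem:lpke}, so your choice to fold that argument in here---citing \cref{lem:hand} via $p \in L_{r-1}^1$ as in \cref{lem:one_arm}---is if anything cleaner than the paper's ordering.
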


\begin{definition}
Let $k_0\in \Z$ be the unique number so that $m_1^{3,k_0,0} = \ell_{n_1^3}(p)$.
\end{definition}

It is clear from the remark after \cref{eq:mn13} that $k_0 \geq 0$.

\begin{lemma} \label{lem:3_inters}
We have $S_{i,p}^2 \cap S_{i,p}^3 = \emptyset$.
\end{lemma}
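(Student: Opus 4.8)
\textbf{Plan of proof of \cref{lem:3_inters}.}

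The plan is to suppose, for contradiction, that there is an $l \in S_{i,p}^2 \cap S_{i,p}^3$, so that $m_{n_1^2}(l) < \ell_{n_1^2}(p)$ and $m_{n_1^3}(l) < \ell_{n_1^3}(p)$ simultaneously. First I would write $l = l_p^{k,\eta}$ using \cref{lem:lpke}, so that $m_1^{2,k} = \ell_{n_1^2}(p) + k\alpha_{n_0^2,n_1^2}$ with $k < 0$, and $m_1^{3,k,\eta} = m_{n_1^3}(l) < \ell_{n_1^3}(p)$. The latter inequality, combined with \cref{eq:mn13} and the observation following it (that each summand over $e\in\E_{n_0^1}$ is nonpositive), forces the left-hand side of \cref{eq:mn13} to be positive, hence $k - \eta > 0$, i.e. $k \geq 1 + \eta \geq 0$ since $\eta\in\{0,-1\}$. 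This directly contradicts $k < 0$, unless $\eta = -1$ and $k = 0$; but then $m_1^{2,0} = \ell_{n_1^2}(p)$, which contradicts $l \in S_{i,p}^2$. So the combinatorial bookkeeping in \cref{eq:mn13} essentially already encodes the disjointness, and the proof reduces to reading off the sign constraints carefully.

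The key steps, in order, are: (1) use \cref{lem:lpke} to parametrise the hypothetical common element as $l_p^{k,\eta}$ and translate membership in $S_{i,p}^2$ into $k<0$; (2) recall from \cref{block:mn13} that the summands $\sum_{e\in\E_{n_0^1}}\bigl(\lceil \beta_e m_{n_0^1}/\alpha_e\rceil - (\beta_e\ell_{n_0^1}(p)+\ell_{n_e^*}(p))/\alpha_e\bigr)$ are each $\leq 0$, because $\ell_{n_0^1}(p) = m_{n_0^1}$ and $\ell_{n_e^*}(p)\geq 0$ for $p\in\Z_{\geq0}^3$; (3) rewrite \cref{eq:mn13} as $\bigl(\ell_{n_1^3}(p) - m_1^{3,k,\eta}\bigr)/\alpha_{n_0^3,n_1^3} = (k-\eta) + (\text{nonpositive})$, and note that membership in $S_{i,p}^3$ means the left side is strictly positive, forcing $k - \eta \geq 1$; (4) combine $k<0$ with $k\geq 1+\eta$ and $\eta\in\{0,-1\}$ to get $k \geq 0$ in all cases where the nonpositive correction term vanishes, and otherwise sharpen: if the correction is strictly negative we would need $k-\eta\geq 2$, so in any case $k\geq 1+\eta\geq 0$; (5) conclude that the only escape is $\eta=-1,\ k=0$, handle this by noting $m_1^{2,0} = \ell_{n_1^2}(p)$ contradicts $l\in S_{i,p}^2$, and close the contradiction.

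The main obstacle I anticipate is being precise about the correction term: one must verify that when $k=0$ and $\eta=-1$ the correction term over $\E_{n_0^1}$ does not somehow produce a positive left-hand side of \cref{eq:mn13} that could sneak $m_1^{3,0,-1}$ below $\ell_{n_1^3}(p)$ while keeping $k=0$ compatible with $S_{i,p}^2$ — but since $S_{i,p}^2$ already fails at $k=0$ (as $m_1^{2,0}=\ell_{n_1^2}(p)$, not strictly less), this case is vacuous. A secondary subtlety is ensuring the integrality remark after \cref{eq:mn13} is invoked correctly, so that "strictly positive left-hand side" really does mean $\geq 1/\alpha_{n_0^3,n_1^3}$ worth, i.e. that $\ell_{n_1^3}(p) - m_1^{3,k,\eta}$ is a positive multiple of $\alpha_{n_0^3,n_1^3}$; this is where the structure of arm sequences via \cref{lem:mod} and \cref{block:abc_comp} is used. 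Everything else is a short sign-chase, so the proof should be only a few lines once these points are laid out.
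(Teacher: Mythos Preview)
Your approach is correct and essentially the same as the paper's: both translate membership in $S_{i,p}^2$ and $S_{i,p}^3$ into inequalities on the integer $k$ (via \cref{eq:mn13} and the nonpositivity of the correction sum) and derive the contradiction $k<0$ and $k\geq 0$. The paper packages the correction term into the integer $k_0$ (so $S_{i,p}^3$ becomes $k>k_0+\eta$), making the proof a one-liner; your step~(5) case analysis of $\eta=-1,\ k=0$ is redundant, since $k\geq 1+\eta\geq 0$ already contradicts $k<0$ without further splitting.
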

\begin{proof}
If $l_p^{k,\eta} \in S_{i,p}^2 \cap S_{i,p}^3$, then, by definition,
$k < 0$ and $k > k_0+\eta \geq 0$. This is clearly impossible.
\end{proof}

\begin{lemma}
We have $m_{n_s^1}(l_p^{k,\eta}) \geq m_{n_s^1}(\bar Z_i)$ for
$0\leq s\leq j^1$ and $z_{l_p^{k,\eta},n_{j^1}^1}^\Nd = 1$
for any $k,\eta$.
\end{lemma}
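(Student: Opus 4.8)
The statement asserts two things about the arm going in the direction of the $x_1$-axis (the arm containing $\bar v(i) = n_r^1$): first, that the values $m_{n_s^1}(l_p^{k,\eta})$ stay above the corresponding multiplicities of $\bar Z_i$ for every $0 \le s \le j^1$; and second, that the zeta-factor $z_{l_p^{k,\eta},n_{j^1}^1}^\Nd$ equals $1$, i.e.\ the "hand" node $n_{j^1}^1$ at the end of this arm contributes trivially. The key observation that drives everything is that along the $n^1$-arm the sequence $(m_s^1)_{s=0}^{j^1}$ was pinned down in \cref{block:mn13} by $m_{r-1}^1 = \ell_{n_{r-1}^1}(p)$ and $m_r^1 = \ell_{n_r^1}(p)$ for the fixed point $p \in \bar P_i$, and this is exactly the setup of \cref{lem:high_pts}, which tells us that the associated vertices satisfy $p_s = p$ for all $s \le r$. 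In particular the whole arm sequence along $n^1$ is the one naturally associated to the single point $p$.

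First I would establish the "zeta-factor $=1$" claim. By \cref{lem:hand} (applied to the arm $n_1^1, \ldots, n_{j^1}^1$ with the arm sequence $(m_s^1)$), the equivalent conditions \ref{lem:hand_exists}--\ref{lem:hand_one} are all tied together; so it suffices to verify that some associated line $L_s^1$ contains an integral point with nonnegative coordinates in the two "free" directions. But by \cref{lem:high_pts} we have $p_s = p$ for $s \le r$, so $p$ itself lies on $L_{r-1}^1$ (and on $L_r^1$ if $r<j^1$), and $p \in \bar P_i \subset \Z_{\ge 0}^3$, which furnishes exactly such a point. Hence \ref{lem:hand_one} of \cref{lem:hand} gives $z_{l_p^{k,\eta},n_{j^1}^1}^\Nd = 1$. (This mirrors verbatim the argument used in the single- and two-arm cases, e.g.\ the proof of \cref{lem:one_arm}, where the same appeal to \cref{lem:hand} via an associated line through $p$ was made.)

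Next I would prove $m_{n_s^1}(l_p^{k,\eta}) \ge m_{n_s^1}(\bar Z_i)$ for all $0 \le s \le j^1$. For $s = r-1, r$ this is immediate: $m_s^1 = \ell_{n_s^1}(p)$ with $p \in \bar P_i$, and by \cref{lem:pts_faces} (together with \cref{lem:ell_min}) points of $\bar P_i$ satisfy $\ell_{n_s^1}(p) \ge m_{n_s^1}(\bar Z_i)$ for neighbours of $\bar v(i)$; more directly, since $l_p^{k,\eta} \in V_Z^\Nd$ is being tested for membership in $S_{i,p}$, the relevant inequalities at $\bar v(i)$ and its neighbours hold by construction. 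For the remaining indices I would apply \cref{lem:ineq_Zi} in both directions, starting from the pair $(r-1,r)$: I need the ratio inequality $m_r^1/m_{n_r^1}(Z_K-E) \le m_{r\pm1}^1/m_{n_{r\pm1}^1}(Z_K-E)$, which holds because $p \in C_{n_r^1}$ and the cones $C_n$ are precisely cut out by such ratio inequalities (\cref{lem:cone_eqs}, case \ref{rt:article}). Feeding this into \cref{lem:ineq_Zi} propagates $m_s^1 \ge m_{n_s^1}(\bar Z_i)$ to all $s \ge r$ and all $s \le r$, covering the whole arm.

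The main obstacle I anticipate is making the base step at indices $r-1$ and $r$ fully rigorous while simultaneously knowing the ratio-test inequality needed to invoke \cref{lem:ineq_Zi}: one must carefully check that $p \in C_{n_r^1}$ — which comes from \cref{lem:pts_faces}, since $\bar P_i = \Fcnm_i \cap \Z^3 \subset C_{\bar v(i)}$ — and that this cone membership, combined with $p$ being an actual lattice point so that $\ell_{n_{r\pm1}^1}(p) \in \Z$, really does give the ratio comparison in the exact form \ref{eq:ineq_Zi1}/\ref{eq:ineq_Zi2} demands. Once that bookkeeping is in place the rest is a direct citation of \cref{lem:high_pts}, \cref{lem:hand}, \cref{lem:ineq_Zi}, and \cref{lem:cone_eqs}.
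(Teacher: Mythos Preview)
Your proposal is correct and follows exactly the approach the paper takes: the paper's proof simply says ``This follows in exactly the same way as the corresponding statement in the proof of \cref{lem:one_arm},'' and you have unpacked precisely that argument---using \cref{lem:high_pts} to identify the associated vertices with $p$, \cref{lem:hand} via $p \in L^1_{r-1}$ (or $L^1_r$) for the zeta-factor, and \cref{lem:ineq_Zi} together with the cone description of \cref{lem:cone_eqs} for the inequalities along the arm. Your identification of the bookkeeping needed for the base step is accurate and matches what is implicit in the proof of \cref{lem:one_arm}.
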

\begin{proof}
This follows in exactly the same way as the corresponding statement
in the proof of \cref{lem:one_arm}.
\end{proof}

\begin{lemma} \label{lem:t_geq}
If $k\geq 0$ then $m_{n_s^2}(l_p^{k,\eta}) \geq m_{n_s^2}(\bar Z_i)$
for $0\leq s \leq j^2$. Similarly, if $k \leq k_0+\eta$, then
$m_{n_s^3}(l^{k,\eta}) \geq m_{n_s^3}(\bar Z_i)$
for $0\leq s \leq j^3$.
\end{lemma}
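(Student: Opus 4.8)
The plan is to reduce both statements to the inequality criterion in \cref{lem:ineq_Zi}, exactly as was done for the single-arm case in the proof of \cref{lem:one_arm} and for the second arm in \cref{lem:two_arms}. First I would treat the arm in the direction of $n^2_\bullet$. By construction we have $m_{n^2_0}(l_p^{k,\eta}) = m_0^1 = \ell_{n^1_0}(p) = \ell_{n^2_0}(p)$ since $n^1_0 = n^2_0$, and $m_{n^2_1}(l_p^{k,\eta}) = \ell_{n^2_1}(p) + k\alpha_{n^2_0,n^2_1}$. Thus for $k\geq 0$ we get $m_{n^2_1}(l_p^{k,\eta}) \geq \ell_{n^2_1}(p) \geq m_{n^2_1}(\bar Z_i)$, the last inequality because $p\in\bar P_i$ and \cref{lem:ell_min} (applied via $u = u_{n^2_0,n^2_1}$, together with \cref{lem:x_interpol}) gives $\ell_{n^2_1}(p)\geq m_{n^2_1}(\bar Z_i)$; alternatively one uses $m_{n^2_0}(l_p^{k,\eta}) = m_{n^2_0}(\bar Z_i)$ directly. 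Then the hypothesis of \cref{lem:ineq_Zi} is met provided
\[
  \frac{m_{n^2_1}(l_p^{k,\eta})}{m_{n^2_1}(Z_K-E)}
  \leq
  \frac{m_{n^2_0}(l_p^{k,\eta})}{m_{n^2_0}(Z_K-E)}
\]
fails in the right direction — more precisely, I would verify the inequality needed to run \cref{lem:ineq_Zi} going \emph{outward} along the arm (increasing $s$), using that $p\in C_{n^1_r}$ and $\ell_{n^2_1}(p)\geq \bar r_i\, m_{n^2_1}(Z_K-E)$; this is the same computation as inequality \cref{eq:58} in the proof of \cref{lem:two_arms}, and it yields $m_{n^2_s}(l_p^{k,\eta}) \geq m_{n^2_s}(\bar Z_i)$ for all $0\leq s\leq j^2$. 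For $s=0$ it is an equality, for $s=1$ it follows from $k\geq 0$, and for $s\geq 2$ \cref{lem:ineq_Zi} applies.

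For the arm $n^3_\bullet$ the argument is parallel but the key point is the comparison $m_{n^3_1}(l_p^{k,\eta}) = m_1^{3,k,\eta} \geq m_{n^3_1}(\bar Z_i)$. By the definition of $k_0$ we have $m_1^{3,k_0,0} = \ell_{n^3_1}(p)$, and from \cref{eq:mn13} the quantity $(\ell_{n^3_1}(p) - m_1^{3,k,\eta})/\alpha_{n^3_0,n^3_1}$ equals $k-\eta$ plus the sum of nonpositive integers indexed by $e\in\E_{n^1_0}$ (the nonpositivity being the last observation recorded in \cref{block:mn13}); hence $m_1^{3,k,\eta} \geq \ell_{n^3_1}(p) - (k-\eta)\alpha_{n^3_0,n^3_1}$, and more usefully $m_1^{3,k,\eta}$ is a nonincreasing affine function of $k$. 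Since $k\leq k_0+\eta$, comparing with the value at $k = k_0+\eta$ (where one checks $m_1^{3,k_0+\eta,\eta} = \ell_{n^3_1}(p)$ using \cref{eq:mn13} again) gives $m_1^{3,k,\eta} \geq \ell_{n^3_1}(p) \geq m_{n^3_1}(\bar Z_i)$. Together with $m_{n^3_0}(l_p^{k,\eta}) = m_0^1 = m_{n^3_0}(\bar Z_i)$ and the same $C_{n^1_r}$-based ratio inequality as for the second arm, \cref{lem:ineq_Zi} propagates the inequality to all $0\leq s\leq j^3$.

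The main obstacle I anticipate is the bookkeeping in \cref{eq:mn13}: one must be careful that the error terms $\bigl\lceil \beta_e m_{n^1_0}/\alpha_e\bigr\rceil - (\beta_e\ell_{n^1_0}(p)+\ell_{n^*_e}(p))/\alpha_e$ are genuinely $\leq 0$ integers (not merely $<1$), which rests on $\ell_{n^1_0}(p) = m_{n^1_0}(l_p^{k,\eta})$ and on $(\beta_e\ell_{n^1_0}+\ell_{n^*_e})/\alpha_e$ being an integral functional — this was already justified in \cref{block:mn13}, so I would simply cite it. Once that is in hand, the monotonicity of $k\mapsto m_1^{3,k,\eta}$ and the identification of the endpoint value $m_1^{3,k_0+\eta,\eta}=\ell_{n^3_1}(p)$ reduce everything to two applications of \cref{lem:ineq_Zi}, one per arm, with the ratio hypotheses supplied by $p\in C_{\bar v(i)}=C_{n^1_r}$ exactly as in the one- and two-arm cases.
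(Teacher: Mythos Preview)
Your approach is essentially the paper's: verify the base cases $s=0,1$ and the ratio inequality between them, then invoke \cref{lem:ineq_Zi}; the paper treats only the second arm and says the third follows ``similarly'', while you correctly spell out that case via \cref{eq:mn13} and the definition of $k_0$, obtaining $m_1^{3,k_0+\eta,\eta}=\ell_{n_1^3}(p)$ and hence $m_1^{3,k,\eta}\geq\ell_{n_1^3}(p)$ for $k\leq k_0+\eta$. Two small corrections: the equalities $m_{n^\kappa_0}(l_p^{k,\eta}) = m_{n^\kappa_0}(\bar Z_i)$ you invoke are in general only inequalities $\geq$ (since $m_0^1=\ell_{n_0^1}(p)$ and $p\in\Gamma_+(\bar Z_i)$), but $\geq$ is all \cref{lem:ineq_Zi} requires; and \cref{lem:ell_min} does not apply at $u=u_{n^2_0,n^2_1}$, which is not a neighbour of $\bar v(i)=n_r^1$---the needed bound $\ell_{n^2_1}(p)\geq m_{n^2_1}(\bar Z_i)$ follows directly from $p\in\Gamma_+(\bar Z_i)$.
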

\begin{proof}
We prove the statement for the second arm. The statement for the third arm
follows similarly.
If $k\geq 0$, then
\[
  \frac{m_{n_1^2}(l_p^{k,\eta})}{m_{n_1^2}(Z_K-E)} \geq
  \frac{\ell_{n_1^2}(p)}{m_{n_1^2}(Z_K-E)} \geq
  \frac{\ell_{n_0^2}(p)}{m_{n_0^2}(Z_K-E)} =
  \frac{m_{n_0^2}(l_p^{k,\eta})}{m_{n_0^2}(Z_K-E)},
\]
since $p\in\R^3_{\geq 0} \setminus\cup_{r=1}^{j^2}C_{n_r^2}$ (as in the proof
of \cref{lem:arm_mono}). Thus, the result follows from \cref{lem:ineq_Zi}.
\end{proof}

\begin{lemma} \label{lem:hand_one2}
\begin{enumerate}

\item \label{it:hand_one_trak}
If $F_{n_0^1}$ is a trapezoid and $k \geq 0$, then
$z_{l_p^{k,\eta}, n_{j^2}^2}^\Nd = 1$.

\item \label{it:hand_one_trak0}
If $F_{n_0^1}$ is a trapezoid and $k \leq k_0 + \eta$, then
$z_{l_p^{k,\eta}, n_{j^3}^3}^\Nd = 1$.

\item \label{it:hand_one_tri}
If $F_{n_0^1}$ is a triangle, then
$z_{l_p^{0,0}, n_{j^2}^2}^\Nd = 1$ and
$z_{l_p^{0,0}, n_{j^3}^3}^\Nd = 1$.

\end{enumerate}
\end{lemma}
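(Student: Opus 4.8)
\textbf{Proof plan for Lemma~\ref{lem:hand_one2}.}

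The plan is to verify in each case the hypothesis \ref{lem:hand_exists} of \cref{lem:hand} for the appropriate arm, namely that one of the associated lines $L_s^\kappa$ (coming from the arm sequence defining $l_p^{k,\eta}$ along the $\kappa$-th arm) contains an integral point with nonnegative coordinates transverse to that arm. Once that is established, \cref{lem:hand} gives $z_{l_p^{k,\eta},n_{j^\kappa}^\kappa}^\Nd=1$ directly. So the whole argument reduces to producing a single good lattice point on $L_0^\kappa$ for $\kappa = 2,3$.

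For parts \ref{it:hand_one_trak} and \ref{it:hand_one_trak0}, where $F_{n_0^1}$ is a trapezoid, I would mimic the construction in the trapezoid part of the proof of \cref{lem:two_arms}. Fix $\kappa\in\{2,3\}$; let $p_1$ be an endpoint of the segment $F_{n_0^\kappa}\cap F_{n_1^\kappa}$ and $p_2$ the nearest integral point on the adjacent boundary segment of $F_{n_0^\kappa}$. By \cref{lem:trap_pos}, the vector $p_2-p_1$ has nonnegative coordinates in the two directions transverse to the $\kappa$-th arm, while $\ell_{n_0^\kappa}(p_2-p_1)=0$ and $\ell_{n_1^\kappa}(p_2-p_1)=\alpha_{n_0^\kappa,n_1^\kappa}$ by \cref{prop:content}. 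For $\kappa=2$ and $k\geq0$ one has $m_{n_1^2}(l_p^{k,\eta})=\ell_{n_1^2}(p)+k\alpha_{n_0^2,n_1^2}\geq\ell_{n_1^2}(p)$, so
\[
  p_0 = p + \frac{m_{n_1^2}(l_p^{k,\eta})-\ell_{n_1^2}(p)}{\alpha_{n_0^2,n_1^2}}(p_2-p_1)
\]
is an integral point lying on $L_0^2$ with the required nonnegative coordinates. For $\kappa=3$ and $k\leq k_0+\eta$, the remark following \cref{eq:mn13} together with the definition of $k_0$ gives $m_{n_1^3}(l_p^{k,\eta})\geq\ell_{n_1^3}(p)$ (the coefficient of $k$ in \cref{eq:mn13} is $-1$, so decreasing $k$ increases $m_1^{3,k,\eta}$, and $k\le k_0+\eta$ is exactly the threshold at which $m_1^{3,k,\eta}\ge m_1^{3,k_0,0}=\ell_{n_1^3}(p)$), and the same recipe produces a good point on $L_0^3$. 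Then \cref{lem:hand} finishes these two parts.

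For part \ref{it:hand_one_tri}, where $F_{n_0^1}$ is a central triangle, I would follow the triangle portion of the proof of \cref{lem:two_arms} but now treating both the second and third arms symmetrically. Define $p_0\in\Z^3$ by requiring $\ell_{n_1^1}(p_0)=m_1^1$, $\ell_{n_0^1}(p_0)=m_0^1$ and $\ell_{n_1^\kappa}(p_0)=m_{n_1^\kappa}(l_p^{0,0})$ for the chosen $\kappa$; as in \cref{lem:armseq_points} these three functionals form a $\Z$-basis, so $p_0$ exists, is unique, and is integral. By construction $p_0\in L_0^1$ and $p_0\in L_1^\kappa$, hence $p_0\in L_0^\kappa$. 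It remains to check that the two coordinates of $p_0$ transverse to the $\kappa$-th arm are nonnegative. For this I would argue as in the corresponding passage of \cref{lem:two_arms}: using the single end $e\in\E_{n_0^1}$ of the degenerate arm, compare \cref{eq:nbr_sum} evaluated at $p_0$ with the relation $(l_p^{0,0},E_{n_0^\kappa})=0$ to get $\ell_{u_e}(p_0)=\ell_{u_e}(p)$ and hence $\ell_{u_e}(p-p_0)\ge0$, which via \cref{eq:nbr_sum} at $p-p_0$ controls the sign of $\ell_{n_1^\kappa}(p-p_0)$; convexity of $\Gamma_+(f)$ controls the sign of $\ell_{n_1^\kappa}$ on the primitive direction of $L_0^1$, pinning down the sign of the relevant integer multiple $k$; the two transverse coordinates are then handled separately, the ``easy'' one from the $k\ge0$ (or $k=0$) case and the other from the structure $\ell_{n_e^*}=a\ell_i+b\ell_j$ with $a,b>0$ of the degenerate arm together with $\ell_{n_e^*}(p_0)=\lceil\beta_e m_{n_0^1}/\alpha_e\rceil-\beta_e m_{n_0^1}\ge0$.

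The main obstacle is the triangle case \ref{it:hand_one_tri}: unlike the trapezoid case, one cannot simply slide $p$ along an edge of $F_{n_0^1}$ keeping $\ell_{n_1^\kappa}$ nonnegative in both transverse directions, so the positivity of the transverse coordinates of $p_0$ must be extracted from the degenerate-arm relation $\ell_{n_e^*}=a\ell_i+b\ell_j$ and a careful sign analysis of $k$, exactly the kind of bookkeeping carried out in the final paragraph of the proof of \cref{lem:two_arms}. I expect this to be essentially a transcription of that argument, once the correct point $p_0$ is identified and the relevant inequalities $m_{n_1^\kappa}(l_p^{0,0})\ge\ell_{n_1^\kappa}(p)$ are in place from \cref{block:mn13}.
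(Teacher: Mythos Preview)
Your trapezoid arguments \ref{it:hand_one_trak} and \ref{it:hand_one_trak0} are correct and coincide with the paper's proof: slide $p$ along the edge direction $p_2-p_1$ using \cref{lem:trap_pos} and \cref{prop:content}, then apply \cref{lem:hand}.

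Your plan for the triangle case \ref{it:hand_one_tri}, however, has a genuine gap. You propose to transcribe the central--triangle argument from \cref{lem:two_arms}, in particular the step ``using the single end $e\in\E_{n_0^1}$ of the degenerate arm''. But we are now in the section with \emph{three nondegenerate arms}. If $F_{n_0^1}$ is a triangle, each of its three edges is $F_{n_0^\kappa}\cap F_{n_1^\kappa}$ for some $\kappa$, and each of these edges has combinatorial length~$1$ (otherwise $G$ would have a cycle). Hence $\delta_{n_0^1}=3=|\Nd_{n_0^1}|$ and $\E_{n_0^1}=\emptyset$: there is no leg at $n_0^1$, no degenerate arm, and no functional $\ell_{n_e^*}=a\ell_i+b\ell_j$ to exploit. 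The whole sign--bookkeeping you sketch therefore never gets off the ground.

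The paper's argument is much shorter and uses exactly this observation. Since $\E_{n_0^1}=\emptyset$, the sum in \cref{eq:mn13} is empty, so $k_0=0$ and $m_{n_1^3}(l_p^{0,0})=m_1^{3,0,0}=\ell_{n_1^3}(p)$. By definition also $m_{n_1^2}(l_p^{0,0})=\ell_{n_1^2}(p)$ and $m_{n_0^\kappa}(l_p^{0,0})=\ell_{n_0^\kappa}(p)$. Thus the point $p$ itself lies on both $L_0^2$ and $L_0^3$. As $p\in\bar P_i\subset\Z_{\geq 0}^3$, it has nonnegative coordinates transverse to either arm, and \cref{lem:hand} immediately gives $z_{l_p^{0,0},n_{j^2}^2}^\Nd=z_{l_p^{0,0},n_{j^3}^3}^\Nd=1$. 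No auxiliary point $p_0$ and no sign analysis are needed.
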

\begin{proof}
We start by proving \ref{it:hand_one_trak}, the proof of 
\ref{it:hand_one_trak0} is similar.
Let $p_1$ be one of the endpoints of the segment $F_{n_0^2}\cap F_{n_0^2}$
and $p_2$ the closest integral point to $p_1$ on the adjacent
boundary segment of $F_{n_0^2}$. Let $p_0 = p + k(p_2-p_1)$.
By \cref{prop:content} we then have
$\ell_n(p_0) = m_n(l_p^{k,\eta})$ for $n = n_0^2, n_1^2$. Thus, the result
follows from \cref{lem:hand}

\ref{it:hand_one_tri} follows in a similar way, since
$m_n(l_p^{0,0}) = \ell_n(p)$ for $n=n_0^2,n_1^2,n_1^3$.
\end{proof}

\begin{lemma} \label{lem:3_sum_0}
We have $\sum_{l\in S_{i,p}^\kappa} z_l^\Nd = 0$ for $\kappa=2,3$. 
\end{lemma}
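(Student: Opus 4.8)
The statement to be proved is that for $\kappa = 2, 3$ we have $\sum_{l \in S_{i,p}^\kappa} z_l^\Nd = 0$. The idea is to show that the contributions of the elements $l_p^{k,\eta}$ with $l \in S_{i,p}^\kappa$ cancel in pairs, by matching the $\eta = 0$ and $\eta = -1$ choices for a fixed $k$. I will treat $\kappa = 2$; the case $\kappa = 3$ is symmetric by the same argument applied to the third arm (with the appropriate reindexing involving $k_0$ built into \cref{lem:3_inters} and the definition of $S_{i,p}^3$).

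\textbf{Step 1: identify the summands.} By \cref{lem:lpke}, every $l \in S_{i,p}$ equals $l_p^{k,\eta}$ for a unique pair $(k, \eta) \in \Z \times \{0, -1\}$, and membership in $S_{i,p}$ requires $l_p^{k,\eta} \geq \bar Z_i$ and $z_{l_p^{k,\eta}, n_{j^2}^2}^\Nd = z_{l_p^{k,\eta}, n_{j^3}^3}^\Nd = 1$. For $l_p^{k,\eta} \in S_{i,p}^2$, the extra condition is $m_{n_1^2}(l_p^{k,\eta}) < \ell_{n_1^2}(p)$, i.e. $k < 0$. First I would check that for such $k < 0$ the positivity and ``$=1$'' conditions that define $S_{i,p}$ do not depend on $\eta$: the first arm multiplicities $m_s^1$ and the second arm sequence $(m_s^{2,k})$ are manifestly independent of $\eta$ (only $m_1^{3,k,\eta}$, hence the third arm, sees $\eta$), so $l_p^{k,\eta} \geq \bar Z_i$ restricted to the first two arms and $z_{l_p^{k,\eta}, n_{j^1}^1}^\Nd = z_{l_p^{k,\eta}, n_{j^2}^2}^\Nd = 1$ are $\eta$-independent. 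For $k < 0 \le k_0 + \eta$ we have by \cref{lem:t_geq} that the third-arm multiplicities also dominate those of $\bar Z_i$, again for both values of $\eta$; and for $k < 0$ one checks (via \cref{lem:hand}, exactly as in the proof of \cref{lem:3_sum_1} or \cref{lem:hand_one2}) that $z_{l_p^{k,\eta}, n_{j^3}^3}^\Nd = 1$ as well, independently of $\eta$. Hence for each fixed $k < 0$, either both $l_p^{k,0}$ and $l_p^{k,-1}$ lie in $S_{i,p}^2$, or neither does.

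\textbf{Step 2: the cancellation.} Assuming both lie in $S_{i,p}^2$, I claim $z_{l_p^{k,0}}^\Nd = - z_{l_p^{k,-1}}^\Nd$. Write $z_l^\Nd = \prod_{n\in\Nd} z_{l,n}^\Nd$. All factors $z_{l_p^{k,\eta}, n}^\Nd$ for $n$ on the first or second arm are $\eta$-independent and equal to $1$ by Step 1; the same holds for the factors $z_{l_p^{k,\eta}, n_s^3}^\Nd$ with $0 < s < j^3$ by \cref{rem:m_exists} (they record that the third arm sequence is genuinely an arm sequence, which it is by construction for either $\eta$), and for $z_{l_p^{k,\eta}, n_{j^3}^3}^\Nd = 1$. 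The only factor that distinguishes $\eta = 0$ from $\eta = -1$ is the central one, $z_{l_p^{k,\eta}, n_0^1}^\Nd$: by \cref{lem:factors}\ref{it:factors_3} (the central node has $\delta - |\E| = 3$, the three arms), this factor is $1$ when $(-\psi(l_p^{k,\eta}), E_{n_0^1}) = 0$ and $-1$ when $(-\psi(l_p^{k,\eta}), E_{n_0^1}) = 1$; and by \cref{block:mn13} the value $\eta = (\psi(l_p^{k,\eta}), E_{n_0^1})$ is precisely what was used to define $m_1^{3,k,\eta}$. So $l_p^{k,0}$ has central factor $1$ and $l_p^{k,-1}$ has central factor $-1$, giving $z_{l_p^{k,0}}^\Nd + z_{l_p^{k,-1}}^\Nd = 0$. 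Summing over all $k < 0$ gives $\sum_{l \in S_{i,p}^2} z_l^\Nd = 0$.

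\textbf{Main obstacle.} The delicate point is Step 1: showing that the defining conditions of $S_{i,p}$ — especially $z_{l_p^{k,\eta}, n_{j^3}^3}^\Nd = 1$ and $l_p^{k,\eta} \geq \bar Z_i$ along the third arm — hold for \emph{both} values of $\eta$ whenever they hold for one, so that the pairing $(k,0) \leftrightarrow (k,-1)$ is well-defined on $S_{i,p}^2$. This requires unwinding \cref{eq:mn13}: changing $\eta$ from $0$ to $-1$ shifts $\ell_{n_1^3}(p) - m_1^{3,k,\eta}$ by $-\alpha_{n_0^3,n_1^3}$, i.e. increases $m_1^{3,k,\eta}$ by $\alpha_{n_0^3,n_1^3}$, so $m_1^{3,k,-1} > m_1^{3,k,0}$; I would argue, using \cref{lem:ineq_Zi} and \cref{lem:hand} (as in \cref{lem:t_geq} and \cref{lem:hand_one2}), that the larger value still satisfies both the domination and the ``$=1$'' conditions once the smaller one does (for $\kappa = 2$ we are in the regime $k < 0 \le k_0$, so $k \le k_0 + \eta$ for both $\eta$, which is what \cref{lem:t_geq} needs). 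Once this $\eta$-independence is secured, the sign flip in Step 2 is immediate from \cref{lem:factors}\ref{it:factors_3}.
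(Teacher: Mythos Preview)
Your overall strategy is exactly the paper's: pair $l_p^{k,0}$ with $l_p^{k,-1}$ for each $k<0$ and use that the only factor depending on $\eta$ is the central one $z^{\Nd}_{l,n_0^1}=(-1)^\eta$. The reduction to ``for each $k<0$, either both $l_p^{k,0},l_p^{k,-1}$ lie in $S_{i,p}^2$ or neither does'' is correct, and the observations that the first and second arm data are $\eta$-independent and that \cref{lem:t_geq} handles third-arm domination (since $k<0\le k_0+\eta$ for both $\eta$) are exactly as in the paper.

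There is, however, a genuine gap in your justification of the remaining condition, the $\eta$-independence of $z^{\Nd}_{l_p^{k,\eta},n_{j^3}^3}=1$. You claim this follows ``as in \cref{lem:hand_one2}'', but \cref{lem:hand_one2}\ref{it:hand_one_trak0} only covers the trapezoid case, and \ref{it:hand_one_tri} only gives the single value $k=0,\eta=0$ for the triangle. In the triangle case it is \emph{not} always true that $z^{\Nd}_{l_p^{k,\eta},n_{j^3}^3}=1$ for $k<0$: when $\ell_2(p)<\ell_2(q_2)$ and $\ell_1(q_2)>\ell_1(q_3)$ (notation of \cref{lem:all_k}) this factor may fail to be $1$. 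What rescues the pairing there is a different mechanism: one shows instead that $z^{\Nd}_{l_p^{k,\eta},n_{j^2}^2}=0$ for all $k<0$, so neither $l_p^{k,0}$ nor $l_p^{k,-1}$ is in $S_{i,p}^2$ to begin with. This trichotomy (trapezoid; triangle with the favorable inequalities; triangle with the unfavorable inequalities) is the content of \cref{lem:all_k}, which involves a nontrivial geometric argument (projecting to the $x_1x_2$-plane and comparing segment lengths) not captured by \cref{lem:hand} alone.

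A minor point: in your ``Main obstacle'' paragraph the sign is reversed. From \cref{eq:mn13}, passing from $\eta=0$ to $\eta=-1$ increases $-\eta$ by $1$, so $\ell_{n_1^3}(p)-m_1^{3,k,\eta}$ increases by $\alpha_{n_0^3,n_1^3}$, i.e.\ $m_1^{3,k,-1}=m_1^{3,k,0}-\alpha_{n_0^3,n_1^3}$ is \emph{smaller}, not larger. This does not affect the domination argument (still covered by \cref{lem:t_geq}), but it does mean your monotonicity heuristic for the third-arm end factor points the wrong way and would not close the gap even if the direction were right.
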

\begin{proof}
We prove the lemma for $\kappa=2$, the case $\kappa=3$ follows similarly.

For any $l\in S_{i,p}^2$ we have $z_l^\Nd = \pm 1$. In fact, there are
$k\in \Z_{<0}$ and $\eta \in \{0,-1\}$ so that $l = l_p^{k,\eta}$.
Then $z_l^\Nd = (-1)^\eta$.
Therefore, the lemma is proved as soon as we prove that
for any $k\in\Z_{<0}$ we have $l_p^{k,0} \in S_{i,p}^2$ if and
only if $l_p^{k,-1} \in S_{i,p}^2$. Now,
$m_{n_s^2}(l_p^{k,0}) = m_{n_s^2}(l_p^{k,-1})$ for all $k$. In particular
we have $z_{l_p^{k,0},n_{j^2}^2}^\Nd = z_{l_p^{k,-1},n_{j^2}^2}^\Nd$.
Furthermore, by \cref{lem:t_geq}, we have
$m_{n_s^3}(l_p^{k,\eta}) \geq m_{n_s^3}(\bar Z_i)$ for any $k<0$ (since
$k_0 \geq 0$). It therefore suffices to prove that
if $z_{l_p^{k,0},n_{j^2}^2}^\Nd = 1$ then
$z_{l_p^{k,0},n_{j^3}^3}^\Nd = 1$ if and only if
$z_{l_p^{k,-1},n_{j^3}^3}^\Nd = 1$ for all $k<0$.
But this follow immediately from \cref{lem:all_k}
\end{proof}

\begin{lemma} \label{lem:all_k}
If $F_{n_0^1}$ is a triangle and $\{a,b,c\} = \{1,2,3\}$, define
the points $q_1, q_2, q_3 \in \Z^3$ as the vertices of 
$F_{n_0^1}$ so that $q_a,q_b$ are the end points of the segment
$F_{n_0^c}\cap F_{n_1^c}$.
\begin{enumerate}

\item \label{it:all_k_trap}
If $F_{n_0^1}$ is a trapezoid, then $z_{l_p^{k,\eta},n_{j^3}^3}^\Nd = 1$
for any $k<0$ and $\eta\in\{0,-1\}$.

\item \label{it:all_k_1}
If $F_{n_0^1}$ is a triangle and either $\ell_2(p) \geq \ell_2(q_2)$
or $\ell_1(q_2) \leq \ell_1(q_3)$,
then $z_{l_p^{k,\eta},n_{j^3}^3}^\Nd = 1$
for any $k<0$ and $\eta\in\{0,-1\}$.

\item \label{it:all_k_0}
If $F_{n_0^1}$ is a triangle, $\ell_2(p) < \ell_2(q_2)$, and
$\ell_1(q_2) \geq \ell_1(q_3)$, then
then $z_{l_p^{k,\eta},n_{j^2}^2}^\Nd = 0$
for any $k<0$ and $\eta\in\{0,-1\}$.

\end{enumerate}
\end{lemma}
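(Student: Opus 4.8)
The plan is to deduce all three statements from the characterization in \cref{lem:hand}: for the arm going in the direction of the $x_c$-axis, $z^\Nd_{l,n_{j^c}^c}=1$ exactly when the base line $L^c_0$ of the associated arm sequence contains an integral point whose two coordinates transverse to $x_c$ are both nonnegative. Fix $k<0$ and $\eta\in\{0,-1\}$ and write $l=l_p^{k,\eta}$. From \cref{eq:mn13}, since $k-\eta\le 0$ and each summand there indexed by $e\in\E_{n_0^1}$ is a nonpositive integer, we get $m_1^{3,k,\eta}\ge\ell_{n_1^3}(p)$ and $m_1^{3,k,\eta}\equiv\ell_{n_1^3}(p)\ (\mod\,\alpha_{n_0^3,n_1^3})$; also the arm-$2$ sequence $(m_s^{2,k})$ is independent of $\eta$, so the value $z^\Nd_{l,n_{j^2}^2}$ depends only on $k$.

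Statement \ref{it:all_k_trap} is immediate from \cref{lem:hand_one2}\ref{it:hand_one_trak0}, whose hypothesis holds because $k<0\le k_0+\eta$: with $p_1$ an endpoint of $F_{n_0^3}\cap F_{n_1^3}$ and $p_2$ the nearest integral point on an adjacent boundary edge of $F_{n_0^1}$, \cref{lem:trap_pos} (up to the permutation carrying the $x_1$-arm to the $x_3$-arm) gives that $p_2-p_1$ has nonnegative $x_1$ and $x_2$ coordinates, \cref{prop:content} gives $\ell_{n_0^3}(p_2-p_1)=0$ and $\ell_{n_1^3}(p_2-p_1)=\alpha_{n_0^3,n_1^3}$, so $p_0=p+\frac{m_1^{3,k,\eta}-\ell_{n_1^3}(p)}{\alpha_{n_0^3,n_1^3}}(p_2-p_1)$ is an integral point of $L^3_0$ with nonnegative $x_1,x_2$ coordinates, and \cref{lem:hand} applies. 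For \ref{it:all_k_1}, with $F_{n_0^1}$ a triangle, I would instead build the good point by intersecting hyperplanes, as in the triangle case of \cref{lem:two_arms}. By \cref{cor:reg_vx} the functionals $\ell_{n_1^1},\ell_{n_0^1},\ell_{n_1^3}$ form a dual $\Z$-basis, so there is a unique $p_0$ with $\ell_n(p_0)=m_n(l)$ for $n=n_1^1,n_0^1,n_1^3$; integrality of $p_0$ follows from the ceiling identities at the legs of $n_0^1$ exactly as in \cref{lem:armseq_points}. Then $p_0\in L^1_0\cap L^3_0$, and since also $p\in L^1_0$ and $L^1_0$ is parallel to the primitive vector $q_2-q_3$ (the endpoints of $F_{n_0^1}\cap F_{n_1^1}$), we may write $p_0=p+m(q_2-q_3)$ with $m\in\Z$; the sign of $m$ is determined by comparing $\ell_{n_1^3}$-values, using $m_1^{3,k,\eta}\ge\ell_{n_1^3}(p)$ and the sign of $\ell_{n_1^3}(q_2-q_3)$ coming from convexity of $\Gamma_+(f)$. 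Expanding $\ell_1(p_0)$ and $\ell_2(p_0)$ and substituting $\ell_1(q_1)=\ell_2(q_2)=\ell_3(q_3)=0$, the inclusion of $p$ in the relevant cones $C_{n_r^2}$, and whichever of $\ell_2(p)\ge\ell_2(q_2)$, $\ell_1(q_2)\le\ell_1(q_3)$ holds, one checks $\ell_1(p_0)\ge 0$ and $\ell_2(p_0)\ge 0$; \cref{lem:hand} then gives $z^\Nd_{l,n_{j^3}^3}=1$, and the argument is uniform in $\eta$ since only $m_1^{3,k,\eta}\ge\ell_{n_1^3}(p)$ was used.

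For \ref{it:all_k_0} I would show $L^2_0$ (which is parallel to $q_1-q_3$, the endpoints of $F_{n_0^2}\cap F_{n_1^2}$) contains no integral point with nonnegative $x_1$ and $x_3$ coordinates, so that \cref{lem:hand} for arm $2$ yields $z^\Nd_{l,n_{j^2}^2}=0$; since $(m_s^{2,k})$ is independent of $\eta$ it suffices to treat one $\eta$. For $k<0$ the line $L^2_0$ lies strictly below the edge $F_{n_0^2}\cap F_{n_1^2}$ in the $\ell_{n_1^2}$-direction; the hypothesis $\ell_1(q_2)\ge\ell_1(q_3)$ fixes the orientation of $L^2_0$ so that moving along it to make the $x_1$-coordinate nonnegative decreases the $x_3$-coordinate, while $\ell_2(p)<\ell_2(q_2)$ places $p$, hence $L^2_0$, far enough on the arm-$2$ side; combining these as in the closing paragraph of the proof of \cref{lem:two_arms} shows that any integral point of $L^2_0$ with $x_1\ge 0$ has $x_3<0$.

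The main obstacle will be the sign bookkeeping in \ref{it:all_k_1} and \ref{it:all_k_0}: correctly orienting each base line $L^\kappa_0$ against the standard coordinates, turning ``$p$ lies in $C_{n_r^\kappa}$'' and ``$p\in\Gamma_-(f)$'' into the right linear inequalities among the $\ell_\kappa(q_i)$ and $\ell_\kappa(p)$, and verifying that the two alternative hypotheses of \ref{it:all_k_1} and the two joint hypotheses of \ref{it:all_k_0} are precisely what force the coordinate signs — the underlying picture being the three shoulders of the central triangle $F_{n_0^1}$ meeting the three arms, with \cref{prop:content} and \cref{prop:arm_hands} controlling the lattice geometry at each shoulder.
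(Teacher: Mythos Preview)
Your reduction of all three items to \cref{lem:hand}—produce or exclude an integral point on the base line $L^\kappa_0$ with the correct coordinate signs—is exactly the paper's framework, and your handling of \ref{it:all_k_trap} and \ref{it:all_k_0} matches the paper's. One minor slip in \ref{it:all_k_trap}: the inequality $0\le k_0+\eta$ can fail (take $k_0=0$, $\eta=-1$); what you actually need, and what does hold, is $k\le -1\le k_0+\eta$. Also, in \ref{it:all_k_0} the paper does not merely argue qualitatively but parametrises $L^2_0\cap\Z^3=\{p_0+h(q_1-q_3):h\in\Z\}$ with $p_0=p+k(q_2-q_3)$ and then checks the two coordinate signs against the sign of $h$; your sketch is compatible with this but would need these explicit computations.

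The real divergence is in \ref{it:all_k_1}. You propose to pin down the single point $p_0\in L^1_0\cap L^3_0$ (as in the triangle case of \cref{lem:two_arms}) and then verify $\ell_1(p_0),\ell_2(p_0)\ge 0$. Writing $p_0=p+|m|(q_3-q_2)$ with $|m|\ge 0$, one always gets $\ell_2(p_0)\ge\ell_2(p)\ge 0$, and $\ell_1(p_0)\ge 0$ follows when $\ell_1(q_2)\le\ell_1(q_3)$; so your explicit point handles the second alternative. But under the first alternative alone, with $\ell_1(q_2)>\ell_1(q_3)$, your $\ell_1(p_0)=\ell_1(p)-|m|(\ell_1(q_2)-\ell_1(q_3))$ can be negative for large $|m|$: the explicit intersection point need not lie in $A=\{x_1,x_2\ge 0\}$. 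The paper acknowledges exactly this (``it is not necessarily true that $p_0\in A$'') and switches to a different, nonconstructive argument: it projects to the $x_1x_2$-plane, compares $L^3_0\cap A$ with the parallel segment through $p$ (which contains both $p$ and $p+q_1-q_2$ by the first hypothesis, hence has length $\ge 1$), and concludes that $L^3_0\cap A$ is long enough to contain an integral point without exhibiting it. This length/projection step is the one idea missing from your proposal, and it is precisely the content hidden behind your ``one checks''.
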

\begin{proof}
Take $k\in\Z_{<0}$ and $\eta\in\{0,-1\}$.
Let $L_0^3,\ldots, L_{j^3-1}^3$ be the lines associated with the arm sequence
$m_{n_0^3}(l_p^{k,\eta}),\ldots, m_{n_{j^3}^3}(l_p^{k,\eta})$.

\mynd{shoulders}{ht}{$q_1, q_2, q_3$ are the vertices
of the triangle $F_{n_0^1}$.}{fig:shoulders}

\ref{it:all_k_trap}  Define $p_0$ by
$ p_0 = p + \left( k_0+\eta - k \right) (q_3 - q_1)$.
Using \cref{lem:trap_pos}, and the fact that $k_0+\eta - k \geq 0$, we find
that $p_0$ has nonnegative $x_1$ and $x_2$ coordinates. Furthermore, 
\cref{prop:content} gives $\ell_{n_0^3}(p_0) = m_{n_0^3}(l_p^{k,\eta})$ and
$\ell_{n_1^3}(p_0) = m_{n_1^3}(l_p^{k,\eta})$, that is,
$p_0 \in L_0^3$. The result now follows from \cref{lem:hand}.

\ref{it:all_k_1}
As in the previous case, the result will follow as soon as we prove that
$L_0^3$ contains an integral point with nonnegative $x_1$ and $x_2$
coordinates.

First, we assume $\ell_2(p) \geq \ell_2(q_2)$.
Let $A = \R_{\geq 0}\times \R_{\geq 0} \times \R\subset \R^3$.
We want to show $L_0\cap A \cap \Z^3 \neq \emptyset$.
Now, (for the purposes of this proof only) let $\pi^3$ be the canonical
projection from $\R^3$ to the $x_1x_2$ plane. Furthermore, let
$\ell$ be a linear function on the $x_1x_2$ plane so that
$\ell(\pi(q_1)) = \ell(\pi(q_2)) > 0$. It is then clear that
$\ell(\pi(q_3)) > \ell(\pi(q_2))$.
If we define $p_0$ by the same method as in the previous case, it is not
necessarily true that $p_0 \in A$. We see, however, that
$\ell(\pi(p_0)) \geq \ell(\pi(p))$. Let $L\in \R^3$ be the line which
is parallel to $L_0^3$ and passes through $p$. We find that
the segment $\pi(L)\cap A$ is longer than the segment
$\pi(L_s^3)\cap A$. This implies that the segment $L_0^3\cap A$ is longer
than the segment $L\cap A$. Now, the segment $L\cap A$ contains
$p$, as well as $p + q_1-q_2$, by hypothesis, and so has length at least
one. Thus, $L_0^3\cap A$ has length at least one as well. But a segment 
of length at least one contains an integral point.

\mynd{proj}{ht}{A projection.}{fig:proj}

Now, if $\ell_1(q_2) \geq \ell_1(q_3)$, then we can proceed in a similar
fashion as in \ref{it:all_k_trap}. Indeed, if we define
$p_0 = p + k(q_3-q_2)$, then, by our assumptions, we find that $p_0$
has nonnegative $x_1$ and $x_3$ coordinates. Furthermore,
we have $p_0\in L_0^3$, and so the result follows from \cref{lem:hand}.

\ref{it:all_k_0}
In this case, let $L_0^2,\ldots, L_{j^3}^2$ be the lines associated with
the arms sequence $m_{n_0^2}(l_p^{k,\eta}),\ldots,m_{n_{j^2}^2}(l_p^{k,\eta})$.
Using \cref{prop:content}, we find that
$p_0 = p + k(q_2-q_3) \in L_0^2$. The vector $q_1 - q_3$ is primitive
and we have $\ell_{n_0^2}(q_1 - q_3) = \ell_{n_1^2}(q_1 - q_3) = 0$.
Thus, $L_0^2 \cap \Z^3 = \set{p_0 + h(q_1 - q_3)}{h\in \Z}$.
It is clear that $\ell_3(q_2) > \ell_3(p)$
and $\ell_3(q_3) = 0$. Since $k<0$, we get $\ell_3(p_0) < 0$,
so we find $\ell_3(p_0 + h(q_1 - q_3)) < 0$ for all $h\geq 0$.
If, however, $h>0$, then
\[
\begin{split}
  \ell_1(p_0 + h(q_1 - q_3))
    &=    \ell_1(p_0 + (q_1-q_3)) + (h-1)\ell_1(q_1-q_3) \\
    &\leq \ell_1(p_0 + (q_1-q_3)) \\
    &=    \ell_1(p + k(q_2-q_3) + (q_1-q_3)) \\
    &=    \ell_1(p + (k+1)(q_2-q_3) + (q_1-q_2)) \\
    &=    \ell_1(p + (q_1-q_2)) + \ell_1((k+1)(q_2-q_3)) \\
	&<    0.
\end{split}
\]
Here we use both assumptions in the last inequality.
We have thus proved that no integral point in the line $L_0^2$
has nonnegative $x_1$ and $x_3$ coordinates. By \cref{lem:hand}
we get $z_{l_p^{k,\eta},n_{j^2}^2}^\Nd = 0$.
\end{proof}

\begin{lemma} \label{lem:3_sum_1}
We have $\sum_{l\in S_{i,p}^0} z_l^\Nd = 1$.
\end{lemma}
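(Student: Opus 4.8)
The plan is to work in the setting of \cref{block:mn13}, where we have already parametrised $S_{i,p}$ by pairs $(k,\eta)$ via the elements $l_p^{k,\eta}$. By \cref{lem:lpke} and \cref{lem:3_sum_0}, the sum $\sum_{l\in S_{i,p}}z_l^\Nd$ equals $\sum_{l\in S_{i,p}^0}z_l^\Nd$ plus two sums that vanish, so it suffices to understand $S_{i,p}^0$. By definition, $l_p^{k,\eta}\in S_{i,p}^0$ forces $k\geq 0$ and $k\geq k_0+\eta+1$ (i.e. $k > k_0+\eta$), together with $l_p^{k,\eta}\geq\bar Z_i$ and $z_{l_p^{k,\eta},n_{j^2}^2}^\Nd=z_{l_p^{k,\eta},n_{j^3}^3}^\Nd=1$. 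First I would use \cref{lem:t_geq} to dispose of the inequality $l_p^{k,\eta}\geq\bar Z_i$: for $k\geq 0$ the second arm is automatically $\geq\bar Z_i$, while the first arm is handled by the lemma preceding \cref{lem:t_geq}; the remaining constraint on the third arm will be absorbed into the analysis of $z_{l_p^{k,\eta},n_{j^3}^3}^\Nd$.

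Next I would split into the two cases according to whether $F_{n_0^1}$ is a trapezoid or a triangle, mirroring the structure of \cref{lem:central_node} and \cref{lem:two_arms}. In the trapezoid case, \cref{lem:hand_one2}\ref{it:hand_one_trak} gives $z_{l_p^{k,\eta},n_{j^2}^2}^\Nd=1$ for all $k\geq 0$, and \cref{lem:hand_one2}\ref{it:hand_one_trak0} gives $z_{l_p^{k,\eta},n_{j^3}^3}^\Nd=1$ precisely when $k\leq k_0+\eta$. Combined with the $S_{i,p}^0$ condition $k>k_0+\eta$, I expect that the third-arm factor forces a single surviving pair once one also accounts for $z_{l_p^{k,\eta},n_{j^3}^3}$ being nonzero — the point is that the integer $m_1^{3,k,\eta}$ moves linearly with $k-\eta$ by \cref{eq:mn13}, so exactly the boundary value $k=k_0+\eta$... more precisely, I would recompute $z_{l_p^{k,\eta},n_{j^3}^3}^\Nd$ as a function of $k-\eta$ using \cref{lem:factors}\ref{it:factors_3} (since $n_{j^3}^3$ has $\delta-|\E|=1$ when $j^3>0$, or reduce to \cref{lem:hand} otherwise) and check that it contributes $+1$ exactly for the single admissible pair and $0$ elsewhere in $S_{i,p}^0$, giving the sum $1$. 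In the triangle case, \cref{lem:hand_one2}\ref{it:hand_one_tri} already pins down $l_p^{0,0}$ as a valid element with all node-factors equal to $1$, hence $z_{l_p^{0,0}}^\Nd=1$; then I would show that no other $l_p^{k,\eta}$ lies in $S_{i,p}^0$ with nonzero $z^\Nd$, using \cref{lem:all_k} — parts \ref{it:all_k_1} and \ref{it:all_k_0} cover the two subcases of the triangle geometry ($\ell_2(p)\geq\ell_2(q_2)$ or $\ell_1(q_2)\leq\ell_1(q_3)$ versus the complementary case), showing respectively that $z_{l_p^{k,\eta},n_{j^3}^3}^\Nd$ behaves like a difference that telescopes to give a net $1$, or that $z_{l_p^{k,\eta},n_{j^2}^2}^\Nd=0$ kills the extra terms.

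Concretely, the cleanest bookkeeping is probably to write $\sum_{l\in S_{i,p}^0}z_l^\Nd = \sum_{k,\eta}z_{l_p^{k,\eta},n_{j^1}^1}^\Nd z_{l_p^{k,\eta},n_{j^2}^2}^\Nd z_{l_p^{k,\eta},n_{j^3}^3}^\Nd$ over the range defining $S_{i,p}^0$, substitute the value $1$ for the first factor, and then recognise the remaining double sum over $(k,\eta)$ as a telescoping sum in the variable $k$ (with the $\eta=0$ and $\eta=-1$ rows having opposite signs where they overlap), exactly as the generating-function computation $C(t)=(1-t)^{\delta_n-2-|\E_n|}$ in the proof of \cref{lem:factors} produces cancellation. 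The boundary term that survives is the one coming from the smallest admissible $k$, and its value is $1$.

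The main obstacle I anticipate is the triangle case: unlike the trapezoid, where \cref{lem:hand_one2} directly gives clean yes/no answers for each arm factor, in the triangle case one must track the geometry of the associated lines $L_s^\kappa$ relative to the vertices $q_1,q_2,q_3$ of $F_{n_0^1}$, and the dichotomy in \cref{lem:all_k} between cases \ref{it:all_k_1} and \ref{it:all_k_0} has to be matched carefully against the sign of $z_{l_p^{k,\eta},n_{j^2}^2}^\Nd$ and $z_{l_p^{k,\eta},n_{j^3}^3}^\Nd$ so that the contributions from $k>0$ genuinely cancel in pairs and only $l_p^{0,0}$ is left. Verifying that the ranges of $k$ for which each of the three node-factors is nonzero overlap in exactly the way needed for the telescoping — in particular that there is no off-by-one discrepancy between the $S_{i,p}^0$ defining inequality $k>k_0+\eta$ and the vanishing locus of the third-arm factor — is the delicate point, and I would check it by evaluating \cref{eq:mn13} at the extreme values of $k$ and comparing with \cref{lem:factors}.
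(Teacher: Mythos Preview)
You have the defining inequalities of $S_{i,p}^0$ backwards, and this derails everything that follows. By definition, $S_{i,p}^\kappa$ consists of those $l\in S_{i,p}$ with $m_{n_1^\kappa}(l) < \ell_{n_1^\kappa}(p)$, and $S_{i,p}^0 = S_{i,p}\setminus(S_{i,p}^2\cup S_{i,p}^3)$. Since $m_{n_1^2}(l_p^{k,\eta}) = \ell_{n_1^2}(p) + k\alpha_{n_0^2,n_1^2}$ and, from \cref{eq:mn13} together with the definition of $k_0$, $m_{n_1^3}(l_p^{k,\eta}) = \ell_{n_1^3}(p) - (k-\eta-k_0)\alpha_{n_0^3,n_1^3}$, the conditions for $S_{i,p}^2$ and $S_{i,p}^3$ are $k<0$ and $k>k_0+\eta$ respectively. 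Hence $l_p^{k,\eta}\in S_{i,p}^0$ forces $0\leq k\leq k_0+\eta$, not $k>k_0+\eta$ as you wrote. The set in question is therefore the finite box
\[
  \{\,l_p^{0,0},\ldots,l_p^{k_0,0}\,\}\;\cup\;\{\,l_p^{0,-1},\ldots,l_p^{k_0-1,-1}\,\},
\]
and the whole point is that on this box both \cref{lem:t_geq} and \cref{lem:hand_one2} apply simultaneously (arm~2 because $k\geq 0$, arm~3 because $k\leq k_0+\eta$), so every listed element actually lies in $S_{i,p}^0$ with $z_{l,n_{j^\kappa}^\kappa}^\Nd=1$ for all three $\kappa$.

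Once equality holds in that inclusion, there is nothing to telescope: the only nontrivial factor of $z_{l_p^{k,\eta}}^\Nd$ is $z_{l_p^{k,\eta},n_0^1}^\Nd=(-1)^\eta$, and the sum is $(k_0+1)\cdot 1 + k_0\cdot(-1)=1$. In the triangle case one checks directly from \cref{eq:mn13} that $k_0=0$ (the sum over $e\in\E_{n_0^1}$ is empty), so $S_{i,p}^0=\{l_p^{0,0}\}$. Your attempt to invoke \cref{lem:all_k} here is misplaced: that lemma only concerns $k<0$ and is used for $S_{i,p}^2$ and $S_{i,p}^3$, not for $S_{i,p}^0$. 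Similarly, $n_{j^3}^3$ has $\delta-|\E|=1$, so it is \cref{lem:factors}\ref{it:factors_1}, not \ref{it:factors_3}, that governs its factor; this is precisely what makes \cref{lem:hand} the relevant criterion.
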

\begin{proof}
By definition, and \cref{lem:lpke}, we have
\begin{equation} \label{eq:S0_descr}
  S_{i,p}^0 \subset
  \{
    l_p^{0, 0},\ldots,l_p^{k_0  , 0},
    l_p^{0,-1},\ldots,l_p^{k_0-1,-1}
  \}.
\end{equation}
Since
$z_{l_p^{k,\eta},n_0^1}^\Nd = (-1)^\eta$
by \cref{lem:factors}, the lemma is proved as soon as we prove
equality in \cref{eq:S0_descr}.

In the case of a triangle, it follows from definition that  $k_0 = 0$.
Therefore,
\cref{lem:hand_one2} shows that for any element $l$ of
the right hand side of \cref{eq:S0_descr}, we have
$z_{l,n_{j^2}^2}^\Nd = z_{l,n_{j^3}^3}^\Nd = 1$. Furhtermore,
\cref{lem:t_geq} shows that for such an $l$ we have $l\geq \bar Z_i$.
Thus, $l\in S_{i,p}$ and so equality holds in \cref{eq:S0_descr}.
\end{proof}

\newpage
\bibliographystyle{bplain}
\bibliography{bibliography}

\newpage

\printnomenclature

\printindex

\end{document}